\numberwithin{equation}{section}
\newcommand{\ndN}{\mathbb{N}}
\newcommand{\ndZ}{\mathbb{Z}}
\newcommand{\ndR}{\mathbb{R}}
\newcommand{\ndC}{\mathbb{C}}
\newcommand{\ndK}{\mathbb{K}}
\newcommand{\ndA}{\mathbb{A}}
\renewcommand{\Pr}[1]{\mathbb{P}(#1)}
\newcommand{\Prb}[1]{\mathbb{P}\left(#1\right)}
\newcommand{\Pb}[1]{\mathbb{P}\left(#1\right)}
\newcommand{\Ex}[1]{\mathbb{E}[#1]}
\newcommand{\Va}[1]{\mathbb{V}[#1]}
\newcommand{\one}{\mathbbm{1}}
\newcommand{\id}{\mathrm{id}}
\newcommand{\scM}{\mathscr{M}}
\newcommand{\scT}{\mathscr{T}}
\newcommand{\om}{\mathbf{w}}
\newcommand{\cO}{\mathcal{O}}
\newcommand{\cC}{\mathcal{C}}
\newcommand{\cL}{\mathcal{L}}
\newcommand{\cM}{\mathcal{M}}
\newcommand{\cF}{\mathcal{F}}
\newcommand{\cB}{\mathcal{B}}
\newcommand{\cN}{\mathcal{N}}
\newcommand{\cK}{\mathcal{K}}
\newcommand{\cS}{\mathcal{S}}
\newcommand{\cG}{\mathcal{G}}
\newcommand{\cT}{\mathcal{T}}
\newcommand{\cR}{\mathcal{R}}
\newcommand{\cV}{\mathcal{V}}
\newcommand{\cA}{\mathcal{A}}
\newcommand{\cH}{\mathcal{H}}
\newcommand{\cD}{\mathcal{D}}
\newcommand{\cX}{\mathcal{X}}
\newcommand{\cQ}{\mathcal{Q}}
\newcommand{\fmT}{\mathfrak{T}}
\newcommand{\fmA}{\mathfrak{A}}
\newcommand{\mA}{\mathsf{A}}
\newcommand{\mC}{\mathsf{C}}
\newcommand{\mD}{\mathsf{D}}
\newcommand{\mB}{\mathsf{B}}
\newcommand{\mF}{\mathsf{F}}
\newcommand{\mM}{\mathsf{M}}
\newcommand{\mO}{\mathsf{O}}
\newcommand{\mK}{\mathsf{K}}
\newcommand{\mG}{\mathsf{G}}
\newcommand{\mH}{\mathsf{H}}
\newcommand{\mS}{\mathsf{S}}
\newcommand{\mR}{\mathsf{R}}
\newcommand{\mQ}{\mathsf{Q}}
\newcommand{\mX}{\mathsf{X}}
\newcommand{\me}{\mathsf{e}}
\newcommand{\CRT}{\mathcal{T}_\me}
\newcommand{\mGWT}{\hat{\mathcal{T}}}
\newcommand{\cE}{\mathcal{E}}
\newcommand{\hei}{\mathrm{h}}
\newcommand{\he}{\mathrm{h}}
\newcommand{\He}{\textnormal{H}}
\newcommand{\Di}{\textnormal{D}}
\newcommand{\emb}{\textnormal{emb}}
\newcommand{\spa}{\mathsf{span}}
\newcommand{\supp}{\mathsf{supp}}
\newcommand{\UHT}{\mathcal{U}_{\infty}}
\newcommand{\VHT}{\mathcal{V}_{\infty}}
\newcommand{\eqdist}{\,{\buildrel d \over =}\,}
\newcommand{\convdis}{\,{\buildrel d \over \longrightarrow}\,}
\newcommand{\convp}{\,{\buildrel p \over \longrightarrow}\,}
\newcommand{\Set}{\textsc{SET}}
\newcommand{\Seq}{\textsc{SEQ}}
\newcommand{\cZ}{\mathcal{Z}}
\newtheorem{theorem}{Theorem}[section]
\newtheorem{conjecture}[theorem]{Conjecture}
\newtheorem{question}[theorem]{Question}
\newtheorem{corollary}[theorem]{Corollary}
\newtheorem{proposition}[theorem]{Proposition}
\newtheorem{lemma}[theorem]{Lemma}
\newtheorem{definition}[theorem]{Definition}
\newtheorem{remark}[theorem]{Remark}
\newtheorem{example}[theorem]{Example}
\numberwithin{equation}{section}
\keywords{Random Graphs, Local Convergence, Tree-like Structures, Branching Processes, Scaling Limits}
\title{Limits of random tree-like discrete structures}
\date{}
\author{Benedikt Stufler \thanks{The author acknowledges support by the German Research Foundation DFG, STU 679/1-1.}}
\address[Benedikt Stufler]{Institute of Mathematics, University of Zurich}
\email{benedikt.stufler@math.uzh.ch}
\begin{document}



\begin{abstract}
	We study a model of random $\mathcal{R}$-enriched trees that is based on weights on the $\mathcal{R}$-structures and allows for a unified treatment of a large family of random discrete structures. We establish novel distributional limits describing local convergence around fixed and random points in this general context, limit theorems for component sizes when $\mathcal{R}$ is a composite class, and a Gromov--Hausdorff scaling limit of random metric spaces patched together from independently drawn metrics on the $\mathcal{R}$-structures.  Our main applications treat a selection of  examples encompassed by this model. We consider random outerplanar maps sampled according to arbitrary weights assigned to their inner faces, and  classify in complete generality  distributional limits for both the asymptotic local behaviour near the root-edge and near a uniformly at random drawn vertex. We consider random connected graphs drawn according to weights assigned to their blocks and establish a Benjamini--Schramm limit. We also apply our framework to recover in a probabilistic way a central limit theorem for the size of the largest $2$-connected component in random graphs from planar-like classes. We prove Benjamini--Schramm convergence of random $k$-dimensional trees and establish both scaling limits and local weak limits for random planar maps drawn according to Boltzmann-weights assigned to their $2$-connected components.
\end{abstract}

\maketitle

\newpage

{ \begin{adjustwidth}{-1cm}{-1cm}
\tableofcontents
\end{adjustwidth}
}

\newpage

\section{Introduction and main results}
\label{sec:intro}

In recent years, there has been considerable progress in understanding the asymptotic shape of large random discrete structures. Some focus was put on local weak convergence, which describes the behaviour of neighbourhoods around random points \cite{Stephenson2016, MR3083919, MR2243873, MR1873300,   MR3256879,MR3183575,MR2013797}. Asymptotic global geometric properties are, on the other hand, better described by scaling limits with respect to the Gromov--Hausdorff metric \cite{MR3342658, MR3414449, 2015arXiv150306738A,  MR2336042, MR2778796, MR3112934, MR3070569}, and more recent works \cite{2016arXiv160801129B} combine both viewpoints in local Gromov--Hausdorff scaling limits. A very successful approach in this field is to make use of appropriate combinatorial bijections that relate the objects under consideration to simpler structures such as different kinds of trees. To name only a few examples, the  Ambj{\o}rn--Budd bijection \cite{MR3090757}, the Cori--Vauquelin--Schaeffer bijection \cite{MR638363,SPHD} and the Bouttier--di Francesco--Guitter bijection \cite{MR2097335} have become well-known for their usefulness in this regard.  

The main difference to the present work is that instead of presenting a specific example of a random structure and afterwards a suitable bijection for this model, we consider an abstract family of all discrete structures that admit a certain  {\em type} of bijective encoding. Specifically, we consider the family of all objects admitting an $\cR$-enriched tree encoding, with $\cR$ ranging over all combinatorial classes. This high level of generality allows for a unified approach for studying a large family of random structures. 

The concept of enriched trees goes back to Labelle \cite{MR642392} who used it to provide a combinatorial proof of the Lagrange inversion formula. Roughly speaking, given a class $\cR$ of combinatorial objects, an $\cR$-enriched tree is a rooted tree together with a function that assigns to each vertex an $\cR$-structure {\em on} its offspring. For example, the structure can be a linear or cyclic order, a graph structure, or any other combinatorial construction.
If we assign a non-negative weight to each $\cR$-structure, we may draw an $\cR$-enriched tree of a given size at random with probability proportional to the product of its weights.  The list of random structures that may be described by this model is long, and includes random graphs sampled according to weights assigned to its maximal $2$-connected components, random outerplanar maps sampled according to weights assigned to their inner faces, likewise random dissections sampled according to such face-weights,  random planar maps with a given number of edges and weights on the blocks, and  subclasses of random $k$-dimensional trees with a given number of vertices.

In analytic combinatorics, random structures involving some sort of composition scheme are usually classified into subcritical, critical and supercritical regimes, depending on how the behaviour of the singularities of the inner and outer structure combine in order to determine the behaviour of the compound structure \cite[Ch. VI]{MR2483235}. For example, random graphs from so called subcritical classes of graphs have received considerable attention in the literature in the past decade \cite{MR2873207,MR2534261,MR3184197}.  We are going to deviate from this classification and instead use notions originating from a  probabilistic context.
Our study commences with the observation, that any random discrete structure admitting an enriched-tree type encoding has a canonical coupling with a simply generated tree. Janson's survey \cite{MR2908619} on the subject classifies this model of random trees into three kinds I, II and III, with two further possible subdivisions of the first into I$\alpha$ and I$\beta$, or I$a$ and I$b$. We recall the details in the preliminary Section~\ref{sec:simplygen}. This allows us to use the same classification for the random enriched-tree type structures under consideration, and gives a more fine-grained terminology. 

The core of our study of random weighted enriched trees describes asymptotic global and local properties, such as convergence of extended enriched fringe subtrees and left-balls, limit theorems for component sizes and scaling limits of associated random metric spaces. We provide applications to prominent examples of random discrete structures encompassed by this framework. The main novel applications of the present work may be summarized as follows.

{\em Random outerplanar maps and dissections of polygons.}
We consider random outerplanar maps with $n$ vertices sampled according to the product of weights assigned to their inner faces. 
The case of uniform random outerplanar maps where each face receives weight $1$ has received some attention in the recent literature from both combinatorial and probabilistic viewpoints \cite{MR2185278, caraceni2016, MR3634279}.

As our first main application, we establish for arbitrary weight-sequences a distributional limit that encodes convergence of neighbourhoods of the origin of the root-edge as the size of the map tends to infinity, and also a  Benjamini--Schramm limit that describes the asymptotic local behaviour around a uniformly at random selected vertex. We compare and precisely describe the distributions of both limit objects in terms of weighted Boltzmann distributions. The limits admit a canonical embedding in the plane and the local convergence preserves the planar structure of the random maps, that is, we really obtain convergence of the neighbourhoods with their embedding in the plane.  The approaches for obtaining the two limits are different, as for the first we use the local convergence of simply generated trees with a fixed number of vertices or leaves, and for the second we consider extended fringe subtrees at randomly selected vertices. 

In the type I case, we exploit the fact that the weak limits of the enriched tree encoding with respect to both a fixed and random root are locally finite and correspond to actual outerplanar maps. In the subcase I$\alpha$, where the diameter of this model of planar maps has order $\sqrt{n}$, we even obtain convergence in total variation of arbitrary $o(\sqrt{n})$-diameter neighbourhoods of the fixed and random roots.  This is best possible in this context, as the convergence fails for $\epsilon \sqrt{n}$-neighbourhoods for any fixed positive constant $\epsilon$.

In the type II regime, we apply  the condensation phenomenon observed for large conditioned Galton--Watson trees \cite{MR2764126,MR2908619, MR3183575}, and also establish a similar result for extended enriched fringe-subtrees. In this way, we obtain qualitatively different and interesting distributional limits, which contrarily to the type I case contain doubly infinite paths.  We also obtain limit theorems for the sizes of the $k \ge 1$ largest blocks and faces, in particular a central limit theorem for $k=1$, if the face-weights may additionally be tilted to probability weight-sequences that lie in the domain of attraction of some stable law. One of the ingredients for treating outerplanar maps is to understand the Benjamini--Schramm limits of large dissections of polygons sampled according to the product of weights assigned to their inner faces, for which we provide a complete description of their limits in the same spirit as for loop-trees in \cite{MR3317412}. Random face-weighted dissections have sparked the interest of probabilists in recent works \cite{MR3178472,MR3245291,MR3382675}. We identify dissections as enriched trees using the Ehrenborg--M\'endez transformation, which allows us to study them in a unified way using the same framework as for general enriched trees. If such a random dissection has type I, then its Benjamini--Schramm limit is given by an infinite planar map whose dual-tree is distributed like a modified Kesten tree. In the type II regime, giant faces emerge and the local weak limit contains a doubly-infinite path corresponding to the boundary of the large face nearest to the random root. Random dissections with type III converge in the local weak sense toward a deterministic doubly-infinite path. As for random outerplanar in the type II regime, we may locate a submap given by an ordered sequence of dissections whose random size (typically) becomes large. This is a special case  of a Gibbs partition, a general model of random partitions of sets which appear naturally in combinatorial stochastic processes~\cite{MR2245368}. Using recent results for convergent type  Gibbs partitions  \cite{doi:10.1002/rsa.20771}, we identify a giant component in this sequence. 
Roughly speaking, this implies that random outerplanar maps in this setting  contain "large" and "small"  dissections, and if we look close the root-edge of the map, we typically see at most one that is large. A priori, it would be possible that these "dissection-cores" have type I and hence  converge toward the Kesten-tree-like limit object. However, we check that if the map has type II, then so do the dissections. Thus the large dissections in type II outerplanar maps also have large faces. This allows us to deduce local convergence of random outerplanar maps toward limit objects containing a doubly infinite path that corresponds to the frontier of a large face.  We detail the explicit distribution of the limits in terms of weighted Boltzmann-distributions. If the random outerplanar map has type III, then it's local behaviour is typically determined by single large $2$-connected submap. In this case, the local weak limit for both the fixed and random root is given by a deterministic doubly-finite path, and hence agrees with the behaviour of type III dissections. Thus, our methods allow us to  classify the local behaviour of random face-weighted outerplanar maps and face-weighted dissections of polygons for both the vicinity of the root-edge and the neighbourhood of a uniformly at random selected vertex.

{\em Random graphs.}
The main example of random graphs in our setting is drawing a connected $n$-vertex graph with probability proportional to weights assigned to its maximal $2$-connected subgraphs. This generalizes the model of uniform random graphs from addable minor-closed graphs and also that of uniform random graphs from block-stable classes, which have received growing attention in recent literature, see in particular McDiarmid~\cite{MR2507738},  McDiarmid and Scott~\cite{MR3530623}, and Noy~\cite{noysurvey}. It  encompasses in particular the model of random graphs from planar-like classes introduced by Gim\'enez, Noy and Ru\'e \cite{MR3068033}, and so called subcritical graph classes as studied by Drmota, Fusy, Kang, Kraus and Ru\'e \cite{MR2873207}. 

It is not a restriction to treat connected graphs. If we draw a random possibly disconnected graph in the same way, then a giant component emerges with a stochastically bounded remainder, and hence  properties for the connected case carry over automatically to the disconnected case. This has been observed by McDiarmid~\cite{MR2507738} for uniform random graphs from proper addable minor-closed classes, then recovered and extended by probabilistic methods in Stufler~\cite[Thm. 4.2 and Section 5]{doi:10.1002/rsa.20771} to random block-weighted classes with analytic generating functions. In the present work we additionally establish results for Gibbs partitions with superexponential weights and apply these to complete the picture, showing in complete generality that random block-weighted graphs exhibit a giant component with a stochastically bounded remainder.



Our results for random enriched trees readily yield Benjamini--Schramm convergence in the type I setting, and the strong $o(\sqrt{n})$-neighbourhood convergence in the type I$\alpha$ setting. The limit object has a natural coupling with Kesten's modified Galton--Watson tree, which is reflected in the fact that it admits only one-sided infinite paths. In the less general type I$a$ setting, which roughly corresponds to a weighted version of random graphs from subcritical graph classes, this also yields laws of large numbers for the number of spanning trees and subgraph counts by results due to Lyons~\cite{MR2160416} and Kurauskas~\cite{2015arXiv150408103K}. The $o(\sqrt{n})$-neighbourhood convergence is best possible, as the diameter of these graphs has order $\sqrt{n}$. In the I$\beta$ setting, there are examples with a polynomially smaller expected diameter. So the asymptotic global geometric properties differ greatly, but interestingly we still obtain Benjamini--Schramm convergence toward a similar limit object. 

For random graphs of type II, such as the uniform $n$-vertex planar graphs or random graphs from planar-like classes, we obtain convergence toward a limit enriched tree that contains a vertex with infinite degree and hence does not correspond directly to a random graph. We still obtain convergence of the probability for the block-neighbourhood of a random vertex to be of a specific shape, but this does not amount to Benjamini--Schramm convergence, as it describes the asymptotic behaviour of neighbourhoods away from all large $2$-connected subgraphs. However, by combining results for the asymptotic behaviour of Gibbs-partitions, the convergence toward the limit tree, and projective limits of probability spaces, we show that there is sequence of random numbers $K_n \convdis \infty$ such that the random connected graph with $n$-vertices converges in the Benjamini--Schramm sense \emph{if and only if} the random $2$-connected graph drawn with probability proportional to its weight among all $K_n$-sized $2$-connected does. We detail the distribution of the limit of the connected graph in this case in terms of weighted Boltzmann-distributions and the $2$-connected limit. This is particularly interesting, when considering random weighted graphs and not just uniform choices from fixed graph classes. Apart from the class of planar graphs and related families, "most" graph classes in combinatorics are subcritical, and hence uniform graphs from such classes have the described behaviour of type I$a$ random weighted graphs. But from a probabilistic perspective it is natural to not only consider the uniform measure and we may easily force random weighted graphs from subcritical classes into the type II or critical regime, by adjusting the weights. For example, the uniform random outerplanar graph has type I$a$, but if we adjust the block-weights to the nongeneric type II regime, we obtain a new qualitatively different limit object, as $2$-connected outerplanar graphs behave like random dissections of polygons. This example also illustrates nicely the differences and similarities in the asymptotic behaviour of outerplanar maps and graphs. Likewise, we may force many other examples of subcritical graph classes such as cacti graphs into the type II regime, yielding a whole family of qualitatively different Benjamini--Schramm limits. As for uniform random graphs from addable minor-closed graph classes, it is known that these belong either to the type I or type II regime. In the type I case we immediately obtain distributional convergence, and in the type II case our results fully describe the relation to the  $2$-connected case. As we detail in Section~\ref{sec:blockcl}, this seems to be a first step in a promising direction for establishing and describing the Benjamini--Schramm limit of uniform random  planar graphs. 

As a further main result, we obtain in a purely probabilistic way limits for the extremal block-sizes  of random graphs from planar-like classes, which encompasses the uniform $n$-vertex planar graph. The limit laws for the size of the $i$-th largest blocks in this setting appear to be new for $i \ge 2$ and the central limit theorem for the size of the largest block has previously been observed by Gim\'enez, Noy and Ru\'e \cite{MR3068033}, who even showed a stronger local limit theorem by means of singularity analysis and the saddle-point method. The main contribution of the present paper in this regard is, however, the simple probabilistic approach, which shows that everything known about the extremal degree behaviour of simply generated trees may be transferred to the setting of random graphs. 
As a byproduct, the framework of enriched trees also yields results for the block-diameter of random graphs. McDiarmid and Scott \cite[Thm. 1.2]{MR3530623} showed using interesting combinatorial methods that with high probability any path in the random $n$-vertex graph from a block-class passes through at most $5 \sqrt{n \log(n)}$ blocks. They conjectured, that the extra factor $\sqrt{\log(n)}$ may be replaced by any sequence tending to infinity. In the tree-like representation of graphs considered here, the block-diameter corresponds up to an additive constant to the diameter of a simply generated tree, and hence we may support this conjecture by verifying it for various families of classes.  We also observe that the conjecture would be entirely verified, if one could affirm a question by Janson~\cite[Problem 21.8]{MR2908619}, who asked whether in general the diameter of any type of simply generated trees has no larger order than $\sqrt{n}$.

{\em Random $k$-dimensional trees.}
The notion of $k$-trees generalizes the graph-theoretic concept of trees.  A $k$-tree consists either of a complete graph with $k$ vertices, or is obtained from a smaller $k$-tree by adding a vertex and connecting it with $k$ distinct vertices of the smaller $k$-tree.  Such objects are interesting from a combinatorial point of view, as their enumeration problem has a long history, see \cite{MR0237382,MR3007180,MR1888841,MR0299535,MR2577935,MR0234868,MR0357214}. They are also interesting from an algorithmic point of view, as many NP-hard problems on graphs have polynomial algorithms when restricted to $k$-trees \cite{MR985145,MR2484635}. We apply results for extended fringe subtrees of random enriched trees to provide a Benjamini--Schramm limit of random $k$-trees. Even more ambitiously, we verify total variational convergence of $o(\sqrt{n})$-neighbourhoods, which is the strongest possible form of convergence in this context, as the diameter of random $k$-trees has order $\sqrt{n}$  \cite{2016arXiv160505191D}. We compare the limit graph with a local limit established in \cite{2016arXiv160505191D} that encodes convergence of neighbourhoods around a random $k$-clique. The limit objects are distinct, which is already evident from the different behaviour of the degree of a random vertex and a  vertex of a random front. Interestingly, we may however verify that the two limits are identically distributed as random  unrooted  graphs.

{\em Random planar maps.}
The study of random planar maps as their number of edges becomes large has been the driving force for numerous discoveries in the past decade, and their scaling limit and local limit are interesting objects in their own right. Tutte's core decomposition shows that planar maps are special cases of $\cR$-enriched trees, if we let $\cR$ denote the class of non-separable maps. Hence our results for random weighted enriched trees apply to random planar maps sampled according to the product of weights assigned to their maximal non-separable submaps. This includes the case of uniform $n$-vertex bipartite maps, loop-less maps, and many other natural classes of maps, whose constraints may be expressed in terms of constraints for the $2$-connected components. We establish a local weak limit for type I random block-weighted planar maps, and a scaling limit in the type I$a$ regime with respect to the first-passage percolation metric, for which we also strengthen the local convergence to total variational convergence of $o(\sqrt{n})$-neighbourhoods. In the type II case, which encompasses the mentioned examples of uniform planar maps with constraints, we apply the condensation phenomenon to establish a general principle stating that whenever random weighted non-separable maps converge in the local weak sense, then so does the corresponding random block-weighted planar map. The enriched tree corresponding to a random planar map is simply generated and its outdegrees correspond to the number of half-edges in the maximal non-separable submaps. Hence available limit theorems and bounds for extremal outdegrees in simply generated trees also hold for random block-weighted planar maps. A similar connection to simply generated trees has  been observed  by Addario-Berry~\cite{2015arXiv150308159A}. Specifically, the coupling with a simply generated tree in \cite[Prop.~1]{2015arXiv150308159A} is encompassed by Lemma~\ref{le:coupling} for the special case where $\cR$ is the species of non-separable planar maps.

Random enriched trees may also be considered up to symmetry. The combinatorial techniques necessary for this  task are not required for the present exposition concerning random labelled or asymmetric structures. For this reason, we undertake this endeavour in~\cite{StEJC2018}.

\subsection*{Plan of the paper}
Section~\ref{sec:intro} gives an informal introduction and overview of the main applications.
Section~\ref{sec:background} recalls basic notions related to graphs, trees and planar maps, and discusses the concepts of local weak convergence and Gromov--Hausdorff convergence. Section~\ref{sec:simplygen} fixes notation  regarding simply generated trees and their limits. Section~\ref{sec:combspec} discusses an algebraic formalization of weighted combinatorial structures and associated Boltzmann probability measures. Section~\ref{sec:prob} briefly recalls probabilistic tools that we will apply in our proofs, in particular projective limits of probability spaces. Section~\ref{sec:mainstudy} presents the contributions of the present paper in detail. Specifically, Subsection~\ref{sec:intro1} introduces our model of random weighted $\cR$-enriched trees and discusses how this encompasses various models of random graphs, dissections of polygons, outerplanar maps, planar maps and $k$-trees. Subsection~\ref{sec:partB} provides general results for the convergence of trimmings, left-balls and extended fringe-subtrees in enriched trees. In Subsection~\ref{sec:schroeder} we establish similar results for Schr\"oder enriched parenthesizations. Subsection~\ref{sec:gibbs} discusses the limits of Gibbs-partitions, which will be crucial in the application to type II and III random structures. In Subsection~\ref{sec:compsize} we provide limits for the extremal sizes of components for random $\cR$-enriched trees when $\cR$ is a composite structure. Subsection~\ref{sec:appcomb} discusses applications to prominent examples of random enriched trees and establishes further main results, such as the classification of local limits of face-weighted outerplanar maps and dissections. Subsection~\ref{sec:partA} introduces a general model of random semi-metric spaces  patched together from random semi-metrics associated to the $\cR$-structures. A scaling limit and a tail-bound for the diameter are established and applied to random block-weighted planar maps. In Section~\ref{sec:allproofs} we present the proofs of our main results. 

\subsection*{Notation}
Throughout, we set
\[
\ndN=\{1,2,\ldots\}, \qquad \ndN_0 = \{0\} \cup \ndN, \qquad [n]=\{1,2,\ldots, n\}, \qquad n \in \ndN_0.
\]
We usually assume that all considered random variables are defined on a common probability space whose measure we denote by $\mathbb{P}$, and let $\mathbb{L}_p$ denote the corresponding space of $p$-integrable real-valued functions.  All unspecified limits are taken as $n$ becomes large, possibly taking only values in a subset of the natural numbers.  We write $\convdis$ and $\convp$ for convergence in distribution and probability, and $\eqdist$ for equality in distribution. An event holds with high probability, if its probability tends to $1$ as $n \to \infty$.
 We let $O_p(1)$ denote an unspecified random variable $X_n$ of a stochastically bounded sequence $(X_n)_n$, and write $o_p(1)$ for a random variable $X_n$ with $X_n \convp 0$. We write $\cL(X)$ to denote the law of a random variable $X$. The total variation distance of measures and random variables is denoted by $d_{\textsc{TV}}$.

\section{Background on graph limits and combinatorial structures}
\label{sec:background}

\subsection{Graphs, trees and planar maps}

\subsubsection{Graphs}
A {\em graph} $G=(V(G),E(G))$ consists of a set of {\em labels} or {\em vertices} $V(G)$ and a set of {\em edges} $E(G)$ which are $2$-element subsets of the vertex set. Instead of writing $v \in V(G)$ we will often just write $v \in G$. We say an edge $e=\{v,w\}$ is {\em incident} to its {\em ends} $v$ and $w$, and will shortly denote by $e=vw$. The number of edges incident to a vertex $v$ is its degree \label{pp:deg}$d_G(v)$. A graph is {\em locally finite} if every vertex has finite degree, and {\em finite} if it has only finitely many vertices. Graphs $H$ with $V(H) \subset V(G)$ are {\em subgraphs} of $G$. We denote this by $H \subset G$. The graph $H$ is an {\em induced} subgraph, if additionally any edge of $G$ with both ends in $V(H)$ also belongs to $E(H)$. A {\em path} $v_0, v_1, \ldots, v_\ell$ in $G$ is a subgraph $P$ of the form
\[
V(P) = \{v_i \mid 0 \le i \le \ell \}, \quad E(P) = \{ v_iv_{i+1} \mid 0 \le i \le \ell -1\},
\]
with $v_i \ne v_j$ for all $i \ne j$.
The non-negative integer $\ell$ is the {\em length} of the path. We say $P$ {\em joins} or {\em connects} its {\em endvertices} $v_0$ and $v_\ell$. We will also encounter one-sided directed infinite paths $v_0, v_1, \ldots$ which {\em start} at the vertex $v_0$. Two-sided infinite paths $\ldots, v_{-1}, v_0, v_1, \ldots$ are defined analogously. A graph $G$ is \emph{connected}, if any two vertices may be joined by a path.  \label{pp:dg}The graph distance is a metric on the vertex set $V(G)$. The corresponding metric space is, by abuse of notation, usually denoted by $(G, d_G)$ and we write $v \in G$ instead of $v \in V(G)$. We let \label{pp:di}$\Di(G)= \sup_{x,y \in G} d_G(x,y)$ denote the {\em diameter} of $G$. A {\em cutvertex} is a vertex whose removal disconnects the graph. A connected graph is {\em $k$-connected}, if it has at least $k+1$-vertices and removing any $k$ vertices does not disconnect the graph. The {\em complete} graph \label{pp:com}$K_m$ with $m$ vertices has vertex set $[m]$ and any two distinct vertices are joined by an edge. 

A subgraph $B$ of a connected graph $G$ is a {\em block}, if it is $2$-connected or isomorphic to $K_2$, and if it is maximal with this property. That is, any subgraph $B \subsetneq B' \subset G$ must have a  cutvertex. Connected graphs have a tree-like block-structure, whose details are explicitly given in Diestel's book \cite[Ch. 3.1]{MR2744811}. We mention a few properties, that we are going to use. Any two blocks of $G$ overlap in at most one vertex. The cutvertices of $G$ are precisely the vertices that belong to more than one block. Many properties of $G$ are evident from looking at its blocks. For example, the graph $G$ is termed {\em bipartite}, if its vertex set may be partitioned into two disjoint sets $A$ and $B$, such that no edge with both ends in $A$ or both ends in $B$ exist. This is equivalent to requiring that every block of $G$ is bipartite.

A {\em graph isomorphism} between graphs $G$ and $H$ is a bijection $\phi: V(G) \to V(H)$ such that any two vertices $v,w \in V(G)$ are joined by an edge in $G$ if and only if their images $\phi(v), \phi(w)$ are joined by an edge in $H$. The graphs $G$ and $H$ are {\em structurally equivalent} or {\em isomorphic}, denoted by $G \simeq H$, if there exists at least one {\em graph isomorphism} between them. If we distinguish a vertex $v_G \in V(G)$, then the pair $G^\bullet = (G,v_G)$ is a {\em rooted} graph with {\em root vertex} $v_G$. We let \label{pp:he}$\He(G^\bullet)= \sup_{x \in G}d_G(v_G,x)$ denote the {\em height} of $G^\bullet$. For any vertex $x \in G$, we let $\he_{G^\bullet}(x) = d_G(v_G,x)$ the height of $x$ in $G$. A graph isomorphism between rooted graphs $G^\bullet$ and $H^\bullet$ is a graph isomorphism $\phi$ between the unrooted graphs $G$ and $H$ that satisfies $\phi(v_G) = v_H$. A graph considered up to isomorphism is an {\em unlabelled graph}. That is, any two unlabelled graphs are distinct if they are not isomorphic. Formally, unlabelled graphs are defined as isomorphism classes of graphs. Unlabelled rooted graphs are defined analogously.

\subsubsection{Trees} A {\em tree} $T$ is a  graph in which any two vertices are joined by a unique path. A rooted tree $T^\bullet$ has a natural partial order $\preccurlyeq$ on its vertex set, with $v \preccurlyeq w$ if the unique path from the root vertex to $w$ passes through $v$. If additionally $w \ne v$ and no vertex $u \notin \{v,w\}$ with $v \preccurlyeq u \preccurlyeq w$ exists, then $w$ is a {\em direct successor} or an {\em offspring} of $v$. The {\em offspring set} of a vertex $v$ is the collection of all its direct successors. Its cardinality is the {\em outdegree} \label{pp:dout}$d_T^+(v)$. 

Unlabelled rooted trees are also called {\em P\'olya trees}. Besides the four types of {\em unordered trees} that may be labelled or unlabelled, rooted or unrooted, there are also {\em ordered trees}. These trees are always rooted, but may be labelled or unlabelled. An ordered tree is a rooted labelled tree in which each offspring set is endowed with a linear order. That is, each vertex may have a first offspring, second offspring, and so on. Unlabelled ordered trees are usually called {\em plane trees}.

\begin{figure}[t]
	\centering
	\begin{minipage}{1.0\textwidth}
		\centering
		\includegraphics[width=0.2\textwidth]{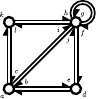}
	\caption{Corners and half-edges of planar maps.}
	\label{fi:halfedges}
	\end{minipage}
\end{figure}  

\subsubsection{Planar maps} A {\em multigraph} is a graph which may have multiple edges between vertices and in which an edge may of identical endpoints. Regular graphs are also often called {\em simple} graphs in order to distinguish the two notions. A graph or multigraph is {\em planar} if it may be embedded in the sphere or plane such that edges may only intersect at their endpoints. {\em Planar maps} are embeddings of connected planar multigraphs in the sphere, considered up to  orientation-preserving homeomorphism. We will not go into details and refer the reader to the book by Mohar and Thomassen \cite{MR1844449} for a complete exposition. Usually one studies {\em rooted} maps, in which one of the edges is distinguished and given an orientation. This oriented edge is called the {\em root edge} of the map and its origin is termed the {\em root vertex}. The complement of a map is divided into disjoint connected components, its {\em faces}. The face to the left of the root edge is termed the {\em root face} and the face to the right the {\em outer face}. The outer face is taken as the infinite face in plane representations. It is notationally convenient to also consider the map consisting of a single vertex as rooted, although it has no edges to be rooted at.

Many algorithms in computational geometry use a {\em half-edge} data structure in order to represent planar maps. Here any edge of the map is split into two directed half-edges that point in opposite directions. The half-edges correspond bijectively to the {\em corners} of the map, see Figure~\ref{fi:halfedges} for an illustration where corners are denoted by letters and half-edges by directed arrows. Formally, a corner incident to a vertex $v$ may be defined as a pair of consecutive (not necessarily distinct) elements in the cyclically ordered list of ends of edges incident to $v$. 

A map is termed {\em separable}, if its edge set may be partitioned into two non-empty subsets $S$ and $T$ such that there is precisely one vertex $v$ incident with both a member of $S$ and of $T$. In this case, $v$ is termed a {\em cutvertex} of the map. Note that this notion is more general than cutvertices of graphs. For example, an isolated vertex with two loops attached to it is a cutvertex of the map but not of the corresponding graph. A planar map that is not separable is termed {\em non-separable}. Note that a non-separable map with less than three vertices consists either of two vertices with an arbitrary positive number of edges between them, or a single vertex with at most one loop-edge attached to it. A simple rooted map is termed {\em outerplanar} if every vertex lies on the boundary of the outer face. Finally, a map is termed \emph{bipartite}, if the corresponding graph is bipartite.

\subsection{Local weak convergence}
\label{sec:lowe}
Let $G^\bullet = (G, v_G)$ and $
H^\bullet = (H, v_H)$ be two connected, rooted, and locally finite graphs. For any non-negative integer $k$ we may consider the {\em $k$-neighbourhoods} \label{pp:vk}$V_k(G^\bullet)$ and $V_k(H^\bullet)$ which are the subgraphs induced by all vertices with distance $k$ from the roots. The $k$-neighbourhoods are considered as rooted at $v_G$ and $v_H$, respectively. We may consider the distance
\begin{align}
\label{eq:thebsdistance}
d_{\textsc{BS}}(G^\bullet, H^\bullet) =  2^{-\sup \{k \in \ndN_0 \,\mid\, V_k(G^\bullet) \simeq V_k(H^\bullet) \}}
\end{align}
with  $V_k(G^\bullet) \simeq V_k(H^\bullet)$ denoting isomorphism of rooted graphs, that is, the existence of a graph isomorphism $\phi: V_k(G^\bullet) \to V_k(H^\bullet)$ satisfying 
$\phi(v_G) = v_H$.  This defines a premetric on the collection of all rooted locally finite connected graphs.  Two such graphs have distance zero, if and only if they are isomorphic. Hence this defines a metric on the collection $\mathbb{B}$ of all unlabelled, connected, rooted, locally finite graphs. The space $(\mathbb{B}, d_{\textsc{BS}})$ is complete and separable, that is, a Polish space. We refer the reader to the lecture notes \cite{clecture} for a detailed proof. 

A random rooted graph $\mG^\bullet$ from  $\mathbb{B}$ is the the {\em local weak limit} of a sequence $\mG_n^\bullet = (\mG_n, v_n)$, $n\in \ndN$  of random elements of $\mathbb{B}$, if it is the weak limit with respect to this metric. That is, if 
\begin{align}
\label{eq:cone1}
\lim_{n \to \infty} \Ex{f(G_n^\bullet)} = \Ex{f(G^\bullet)}
\end{align} for any bounded continuous function $f: \mathbb{B} \to \ndR$. This is equivalent to stating
\begin{align}
\label{eq:cone2}
\lim_{n \to \infty} \Pr{V_k(\mG_n^\bullet) \simeq G^\bullet} = \Pr{V_k(\mG^\bullet) \simeq G^\bullet}.
\end{align}
for any rooted graph $G^\bullet$. If the conditional distribution of $v_n$ given the graph $\mG_n$ is uniform on the vertex set $V(\mG_n)$, then the limit $G^\bullet$ is often also called the Benjamini--Schramm limit of the sequence $(\mG_n)_n$.

We are also going to consider the block-metric \label{pp:db}$d_{\textsc{block}}$ on the graph $G$ defined as follows.  Given vertices $u,v \in V(G)$, consider any shortest path $P$ connecting $u$ and $v$ in $G$, and let $d_{\textsc{block}}(u,v) \in \ndN_0$ denote the minimum number of blocks of $C$ required to cover the edges of $P$. Given a non-negative integer $k$, we let \label{pp:uk}$U_k(G^\bullet)$ denote the subgraph induced by all vertices with block-distance at most $k$. This graph may be considered as rooted at the vertex $v$. As $V_k(G^\bullet) \subset U_k(G^\bullet)$, verifying
\begin{align}
\label{eq:cone3}
\lim_{n \to \infty} \Pr{U_k(\mG_n^\bullet) \simeq G^\bullet} = \Pr{U_k(\mG^\bullet) \simeq G^\bullet}
\end{align}
for {\em all} rooted graphs $G^\bullet$ verifies \eqref{eq:cone2}, and hence implies distributional convergence of $\mG_n^\bullet$ to the graph $\mG^\bullet$.

\subsection{Gromov--Hausdorff convergence}
Let $X^\bullet =  (X,d_X, x_0)$ and $Y^\bullet = (Y,d_Y,y_0)$ be pointed compact metric spaces. A {\em correspondence} between $X^\bullet$ and $Y^\bullet$ is a subset $R \subset X \times Y$ containing $(x_0,y_0)$ such that for any $x \in X$ there is a $y \in Y$ with $(x,y) \in R$, and conversely for any $y \in Y$ there is a $x \in X$ with $(x,y) \in R$. The {\em distortion} of the correspondence is defined as the supremum \[\text{dis}(R) = \sup \{|d_X(x_1, x_2) - d_Y(y_1, y_2) \mid (x_1, y_1), (x_2, y_2) \in R \}.\] The {\em Gromov--Hausdorff distance} between the pointed spaces $X^\bullet$ and $Y^\bullet$ is given by \[d_{\textsc{GH}}(X,Y) = \frac{1}{2} \inf_R \text{dis}(R)\] with the index $R$ ranging over all correspondences between $X^\bullet$ and $Y^\bullet$. The factor $1/2$ is only required in order to stay consistent with an alternative definition of the Gromov--Hausdorff distance via the Hausdorff distance of embeddings of $X^\bullet$ and $Y^\bullet$ into common metric spaces, see \cite[Prop.\ 3.6]{MR3025391} and \cite[Thm.\ 7.3.25]{MR1835418}. This distance satisfies the axioms of a premetric on the collection of all compact rooted metric spaces. Two such spaces have distance zero from each other, if and only if they are isometric. That is, if there is a distance preserving bijection between the two that also preserves the root vertices. Hence we obtain a metric on the collection $\ndK^\bullet$ of isometry classes of pointed compact metric spaces. The space $(\ndK^\bullet, d_{\text{GH}})$ is known to be Polish (complete and separable), see \cite[Thm.\ 3.5]{MR3025391} and \cite[Thm. 7.3.30 and 7.4.15]{MR1835418}.

\section{Convergence of simply generated trees}
\label{sec:simplygen}
Simply generated trees are a model of random trees that generalize the concept of Galton--Watson trees conditioned on having a specific number of vertices. We recall relevant notions and results that we are going to use later in our study of combinatorial objects satisfying bijective encodings as enriched trees. This exposition follows parts of Janson's survey \cite{MR2908619}.
\subsection{Simply generated trees}
\label{sec:pretree}
\subsubsection{Random plane trees} Let $\om = (\omega_k)_{k \in \ndN_0}$ with $\omega_k \in \ndR_{\ge 0}$ for all $k$ denote a {\em weight sequence} satisfying $\omega_0 >0$ and $\omega_k > 0 $ for some $k \ge 2$. Then to each plane tree $T$ we assign its corresponding {\em weight}
\[
\omega(T) = \prod_{v \in T} \omega_{d_T^+(v)}.
\]
Let $\fmT_n$ denote the set of plane trees with $n$ vertices. The {\em partition function} is defined by
\[
Z_n = \sum_{T \in \fmT_n} \omega(T).
\]
The {\em support} of $\om$ is defined by \[\supp(\om) = \{k \mid \omega_k > 0 \}\] and the span $\spa(\om)$ is the greatest common divisor of the support. If the partition function $Z_n$ is positive, then $n \equiv 1 \mod \spa(\om)$. Conversely, if $n$ is large enough, then $n \equiv 1 \mod \spa(\omega)$ also implies $Z_n>0$, see \cite[Cor. 15.6]{MR2908619}. For any integer $n$ with $Z_n > 0$ we may draw a random tree $\cT_n$ from $\fmT_n$ with distribution given by
\[
\Pr{\cT_n = T} = \omega(T) / Z_n.
\]
Prominent examples of such {\em simply generated trees} are Galton--Watson trees conditioned on having $n$ vertices. 

\subsubsection{Types of weight sequences}
\label{sec:types}
It is convenient to partition the set of weight sequences into the three cases I, II, and III, as weight sequence having the same type share similar properties. 

In order to define these types, consider the power series \label{mark:p1}\[\phi(z) = \sum_{k \ge 0} \omega_k z^k\] and let \label{mark:p2}$\rho_\phi$ denote its radius of convergence. If $\rho_\phi>0$, then by \cite[Lem. 3.1]{MR2908619} the function \label{mark:p3}\[\psi(t) := \frac{t \phi'(t)}{\phi(t)}\] defined on $[0, \rho_\phi[$ is finite, continuous and strictly increasing. \label{mark:p5}
If $\rho_\phi >0$, set \[\nu := \psi(\rho_\phi) := \lim_{t \nearrow \rho_\phi} \psi(t) \in ]0, \infty].\] Otherwise, if $\rho_\phi=0$, set $\nu := \psi(0) := 0$. 

The constant $\nu$ has a natural interpretation. Unless $\rho_\phi=0$ (which is equivalent to $\nu=0$), $\nu$ is the supremum of the means of all probability weight sequences equivalent to $\mathbf{w}$. See Section 4 and in particular Remark 4.3 of Janson's survey \cite{MR2908619} for details.

\label{mark:p4}
We define the number $0 \le \tau < \infty$ as follows. If $\nu \ge 1$, let $\tau \in [0, \rho_\phi]$ be the unique number satisfying $\psi(\tau) = 1$. Otherwise, let $\tau := \rho_\phi$. \label{mark:p6}Define the probability distribution $(\pi_k)_k$ on $\ndN_0$ by \begin{align} \label{eq:tt} \pi_k = \tau^k \omega_k / \phi(\tau).\end{align} The mean and variance of the distribution $(\pi_k)_k$ are given by \begin{align}\label{eq:mu}\mu = \psi(\tau) = \min(\nu,1)\end{align} and \begin{align}\label{eq:variance} \sigma^2 = \tau \psi'(\tau) \le \infty.\end{align}

\label{mark:we}We define the cases I) $\nu \ge 1$, II) $0 < \nu < 1$ and III) $\nu=0$. The case I) may be subdivided into mutually exclusive cases by either I$a$) $\nu > 1$ and I$b$) $\nu=1$, or I$\alpha$) $\nu \ge 1$ and $\sigma < \infty$ and I$\beta$) $\nu=1$ and $\sigma=\infty$. In the cases I) and II) the simply generated tree with $n$ vertices is distributed like a Galton--Watson tree conditioned on having size $n$ with offspring distribution $(\pi_k)_k$. In the case III) the weight sequence does not correspond to any offspring distribution.

The generating function \[\cZ(z) = \sum_{n=0}^\infty Z_n z^n,\] with $Z_n$ denoting the partition function, is the unique power series satisfying \[\cZ(z) = z \phi(\cZ(z)).\] This follows from \cite[Rem. 3.2]{MR2908619} and the Lagrange inversion formula. Let $\rho_{\cZ}$ denote its radius of convergence. By \cite[Ch. 7]{MR2908619} we have that $0 \le \rho_\cZ < \infty$  and \begin{align}\label{eq:z1}\rho_\cZ = \tau / \phi(\tau).\end{align} Moreover, $\rho_\cZ = 0 \Leftrightarrow \rho_\phi =0 \Leftrightarrow \tau = 0$ and it holds that \begin{align}\label{eq:z2} \tau = \cZ(\rho_\cZ).\end{align}

\subsection{Local convergence of simply generated trees}
\label{se:locosi}
Simply generated trees convergence weakly toward an infinite limit tree, which, depending on the weight sequence, need not be locally finite.

\subsubsection{The modified Galton--Watson tree}
\label{se:modgwt}
Let $\xi$ be a random non-negative integer with average value $\mu := \Ex{\xi} \le 1$ and let $(\pi_k)_{k \ge 0}$ denote its distribution. The {\em modified Galton--Watson tree} $\mGWT$ is defined in \cite[Ch. 5]{MR2908619} as follows.   Any vertex is either {\em normal} or {\em special} and we start with a root vertex that is declared special. Normal vertices have offspring according to an independent copy of $\xi$ and special vertices have offspring (outdegree) according to an independent copy of the random variable $\hat{\xi}$ with distribution given by
\[
\Pr{\hat{\xi} = k} = \begin{cases} k \pi_k, & k \in \ndN_0,\\ 1 - \mu, & k=\infty. \end{cases}
\] 
All children of a normal vertex are declared normal and if a special node gets an infinite number of children all are declared normal as well. When a special vertex gets finitely many children all are declared normal with one uniformly at random chosen exception which is declared special. The special vertices form a path which is called the {\em spine} of the tree $\mGWT$. Note that if $\mu < 1$ (the subcritical case) then $\mGWT$ has almost surely a finite spine ending with an explosion. The length of the spine follows a geometric distribution. If $\mu = 1$ then $\mGWT$ is almost surely locally finite and has an infinite spine.

\subsubsection{Local convergence}
\label{se:loconv}
The {\em Ulam--Harris tree} $\UHT$ is an infinite plane tree in which each vertex has countably infinitely many offspring. Its vertex set \begin{align}
\label{pp:vht}
\VHT = \{\emptyset\} \cup \ndN^{1} \cup \ndN^{2} \cup \cdots
\end{align} is the set of all finite strings of positive integers. Its root is given by the empty string $\emptyset$, and any string $v = (v_1, \ldots, v_\ell)$ has ordered offspring $(v, 1), (v,2), \ldots$. 

Any plane tree $T$ can be viewed as subtree of the Ulam--Harris tree $\UHT$ and is uniquely determined by its sequence of outdegrees $(d_T^+(v))_{v \in \VHT} \in \overline{\ndN}_0^{\VHT}$ with $\overline{\ndN}_0 = \ndN_0 \cup \{\infty\}$. We endow the set $\overline{\ndN}_0$ with a compact topology as the one-point compactification of the discrete space $\ndN_0$. The space $\overline{\ndN}_0^{\VHT}$ is a compact Polish-space since it is the product of countably many such spaces.
We let $\fmT \subset \overline{\ndN}_0^{\VHT}$ denote the subspace of trees, allowing nodes with infinite outdegree. The subset $\fmT$ is closed and hence also compact.

Let $\om=(\omega_k)_k$ be a weight sequence with $\omega_0 > 0$ and $\omega_k > 0$ for some $k \ge 2$. Let $\cT_n$ denote the simply generated random tree with $n$ vertices. Let $\mGWT$ denote the modified Galton--Watson tree corresponding to the distribution $(\pi_k)_k$ defined in Section~\ref{sec:types}. 

\begin{theorem}[{Local limit of simply generated trees, \cite[Thm. 7.1]{MR2908619}}]
	\label{te:convsigen}
	 It holds that $\cT_n \convdis \mGWT$ in the metric space $\fmT$ as $n \equiv 1 \mod \spa(\om)$ tends to infinity.
\end{theorem}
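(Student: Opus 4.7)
The plan is to exploit the compactness of $\fmT$: as a closed subset of the compact product space $\overline{\ndN}_0^{\VHT}$, the space $\fmT$ is itself compact, so the family $(\cT_n)_n$ is automatically tight and it suffices to identify the limit. Weak convergence in the product topology is determined by finite-dimensional marginals, so the task reduces to showing, for every finite prefix-closed $U \subset \VHT$ and every consistent outdegree profile $(k_u)_{u \in U} \in \ndN_0^U$, that
\begin{align*}
\Pr{d_{\cT_n}^+(u) = k_u \text{ for all } u \in U} \longrightarrow \Pr{d_{\mGWT}^+(u) = k_u \text{ for all } u \in U}.
\end{align*}
Cylinder events involving $k_u = \infty$ are then recovered by a truncation limit.

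For the simply generated tree, decomposing a plane tree into the prescribed skeleton together with unconstrained dangling subtrees at the $\ell := 1 + \sum_{u \in U} k_u - |U|$ pendant positions yields the exact formula
\begin{align*}
\Pr{d_{\cT_n}^+(u) = k_u, \, \forall u \in U} = \frac{\prod_{u \in U} \omega_{k_u}}{Z_n} \cdot [z^{n-|U|}] \cZ(z)^{\ell}.
\end{align*}
The analytic heart of the argument is the ratio asymptotic
\begin{align*}
\frac{[z^{n-|U|}] \cZ(z)^{\ell}}{Z_n} \longrightarrow \ell \, \tau^{\ell - 1} \rho_\cZ^{|U|} \qquad \text{along } n \equiv 1 \pmod{\spa(\om)},
\end{align*}
obtained from the functional equation $\cZ(z) = z \phi(\cZ(z))$ via singularity analysis and \eqref{eq:z1}--\eqref{eq:z2}. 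Substituting $\pi_k = \tau^k \omega_k / \phi(\tau)$ and $\rho_\cZ = \tau/\phi(\tau)$ then collapses the limit probability to $\ell \prod_{u \in U} \pi_{k_u}$. Matching this with $\mGWT$ is a combinatorial bookkeeping exercise: summing over the possible trajectories of the spine inside $U$, each spine vertex $u_j$ contributes $k_{u_j} \pi_{k_{u_j}}$ from $\hxi$ together with a factor $1/k_{u_j}$ from the uniform choice of its successor on the spine, while each non-spine vertex contributes $\pi_{k_v}$. After the $1/k_{u_j}$ factors telescope against the size-biasing, the total sum reproduces exactly $\ell \prod_{u \in U} \pi_{k_u}$.

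The main obstacle is establishing the ratio asymptotic uniformly across the three regimes of Section~\ref{sec:types}. In type I$\alpha$, $\cZ$ has a classical square-root singularity at $\rho_\cZ > 0$ and an Otter--Meir--Moon estimate gives $Z_n \sim c \, n^{-3/2} \rho_\cZ^{-n}$, closing the argument quickly. Type I$\beta$ requires stable-tail local-limit refinements because $\sigma = \infty$. Type II is more delicate: the singular profile of $\cZ$ at $\rho_\cZ$ is inherited from that of $\phi$ at $\rho_\phi$, with $\psi(\rho_\phi) < 1$ producing different local-limit asymptotics and, on the $\mGWT$ side, a geometric spine length ending in an explosion with $\Pr{\hxi = \infty} = 1 - \mu$. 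Finally, type III (where $\rho_\phi = \rho_\cZ = 0$) lies outside the scope of analytic singularity analysis; here I would switch to a direct probabilistic condensation argument, using moment estimates to show that one vertex among the skeleton absorbs an order-$n$ outdegree, matching the $\Pr{\hxi = \infty} = 1$ behaviour at the root of $\mGWT$ in this regime.
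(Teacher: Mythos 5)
The paper itself does not prove this theorem; it cites it verbatim from Janson's survey, so there is no internal proof to compare against. Your proposal is therefore a sketch of an independent argument, and its skeleton is sound: compactness of $\fmT$ reduces the task to identifying the limit, weak convergence in the product $\overline{\ndN}_0^{\VHT}$ is indeed determined by the finite-valued cylinder marginals, the decomposition formula $\Pr{d^+_{\cT_n}(u)=k_u,\,\forall u\in U}=\bigl(\prod_{u}\omega_{k_u}/Z_n\bigr)[z^{n-|U|}]\cZ(z)^\ell$ is exact, and the substitution $\pi_k=\tau^k\omega_k/\phi(\tau)$, $\rho_\cZ=\tau/\phi(\tau)$ does collapse $\ell\tau^{\ell-1}\rho_\cZ^{|U|}\prod\omega_{k_u}$ to $\ell\prod\pi_{k_u}$, which is what the spine bookkeeping for $\mGWT$ yields in every regime. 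This is a legitimately different route from Janson's, which is built around \L{}ukasiewicz paths, the cycle lemma, and local limit theorems for the associated random walk rather than around coefficient asymptotics of $\cZ$.

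The gap lies in how the ratio asymptotic $[z^{n-|U|}]\cZ(z)^\ell / Z_n \to \ell\,\tau^{\ell-1}\rho_\cZ^{|U|}$ is to be justified. Attributing it to ``singularity analysis'' of the functional equation only works in subcase I$\alpha$, where $\cZ$ really does have an isolated square-root singularity at $\rho_\cZ$. In subcases I$\beta$ and II the singular profile of $\cZ$ at $\rho_\cZ$ can be essentially arbitrary (it is inherited from the tail of $\mathbf{w}$, and is not of algebraic-logarithmic type in general), so transfer theorems do not apply; the tool that actually delivers the stated ratio along $n\equiv 1\bmod\spa(\mathbf{w})$ is the subexponentiality of $\cZ(z)/z$ together with the closure of $\mathscr{S}_d$ under analytic composition — precisely Lemma~\ref{le:subexp} and Proposition~\ref{pro:sleeky} of this paper — which you do not invoke. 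Type~III is a separate and strictly harder case: $\rho_\cZ=0$, every finite-valued cylinder probability must be shown to tend to $0$, and this cannot be reduced to any generating-function estimate because $\cZ$ has no disc of convergence. Your one-sentence gesture toward a ``direct probabilistic condensation argument'' names the phenomenon but does not supply the argument; this is the part of Janson's Theorem~7.1 that took genuine new work to prove, and it needs to be written out, not pointed at, for the proposal to be a proof rather than a plan.
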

Note that the limit object $\mGWT$ is almost surely locally finite if and only if the weight sequence has type I. In this case, convergence in $\fmT$ implies convergence in the local weak sense of Section~\ref{sec:lowe}.

\subsection{Scaling limits of simply generated trees}

\subsubsection{The continuum random tree}
The (Brownian) continuum random tree (CRT) is a random metric space constructed by Aldous in his pioneering papers \cite{MR1085326, MR1166406, MR1207226}. Its construction is as follows. To any continuous function $f: [0,1] \to [0, \infty[$  satisfying $f(0) = f(1) = 0$ we may associate a premetric $d$ on the unit interval $[0,1]$ given by
\[
d(u,v) = f(u) + f(v) - 2 \inf_{u \le s \le v} f(s)
\]
for $u \le v$. The corresponding quotient space $(\cT_f, d_{\cT_f}) = ([0,1]/ \mathord{\sim}, \bar{d})$, in which points with distance zero from each other are identified, is considered as rooted at the coset $\bar{0}$ of the point zero. This pointed metric space is an $\ndR$-tree, see  \cite{MR2147221, MR3025391} for the definition of $\ndR$-trees and further details. The CRT may be defined as the random pointed metric space \label{pp:crt}$(\CRT, d_{\CRT}, \bar{0})$ corresponding to Brownian excursion $\me = (\me_t)_{0 \le t \le 1}$ of duration one.

\subsubsection{Convergence toward the continuum random tree}
Depending on the weight sequence, the simply generated tree $\cT_n$ may or may not admit a scaling limit with respect to the Gromov--Hausdorff metric. In the case I$\alpha$), the tree $\cT_n$ is distributed like a critical Galton--Watson tree conditioned on having $n$ vertices, with the offspring distribution having finite non-zero variance.

\begin{theorem}[Scaling limit of simply generated trees, \cite{MR1207226}]
	If the weight-sequence $\mathbf{w}$ has type I$\alpha$, then
	\[
	(\cT_n, \frac{\sigma}{2} n^{-1/2} d_{\cT_n}, \emptyset) \convdis (\CRT, d_{\CRT}, \bar{0})
	\]
	in the Gromov--Hausdorff sense, with $\sigma$  given  in Equation~\eqref{eq:variance}.
\end{theorem}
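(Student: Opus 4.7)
The plan is to reduce to the classical scaling limit of a conditioned critical Galton--Watson tree with finite-variance offspring distribution, and then translate this into a Gromov--Hausdorff statement via the continuous encoding of discrete trees by their contour functions. First I would pass from the simply generated tree $\cT_n$ to a conditioned Galton--Watson tree using the equivalent probability weight sequence $(\pi_k)_k$ defined in \eqref{eq:tt}. In the type I$\alpha$ case, $\nu \ge 1$ forces $\tau$ to satisfy $\psi(\tau) = 1$, so by \eqref{eq:mu} and \eqref{eq:variance} the law $(\pi_k)_k$ is \emph{critical} ($\mu = 1$) with finite variance $\sigma^2 = \tau \psi'(\tau) < \infty$. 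A short computation shows that $\omega(T)/Z_n$ coincides with the distribution of a Galton--Watson tree with offspring law $(\pi_k)_k$ conditioned to have $n$ vertices, since the Radon--Nikodym factor $\prod_{v \in T} \bigl(\tau^{d^+(v)} \omega_{d^+(v)}/\phi(\tau)\bigr) = \tau^{n-1} \phi(\tau)^{-n} \omega(T)$ depends on $T$ only through $n$.

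Next I would encode $\cT_n$ by its depth-first \L{}ukasiewicz path $W_n \colon \{0, \dots, n\} \to \ndZ$, whose increments are $d^+_{\cT_n}(v_i) - 1$, and by its contour function $C_n \colon [0, 2(n-1)] \to [0, \infty)$. Under the Galton--Watson conditioning, $W_n$ is a random walk with centered i.i.d.\ increments of finite variance $\sigma^2$, conditioned to stay non-negative on $\{0, \dots, n-1\}$ and to first hit $-1$ at time $n$. Combining Donsker's invariance principle with the Kaigh--Bolthausen--Vervaat cyclic-shift representation of the Brownian excursion, and then the standard comparison of Marckert--Mokkadem and Duquesne--Le Gall between the \L{}ukasiewicz, height, and contour processes, yields
\[
\Bigl(\tfrac{\sigma}{2\sqrt{n}}\, C_n\bigl(2(n-1) s\bigr)\Bigr)_{s \in [0,1]} \,\convdis\, (\me_s)_{s \in [0,1]}
\]
in $C([0,1])$, where $\me$ is the normalised Brownian excursion.

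Then I would lift this functional convergence to Gromov--Hausdorff convergence. The rescaled pointed tree $(\cT_n, \frac{\sigma}{2} n^{-1/2} d_{\cT_n}, \emptyset)$ is, up to a Gromov--Hausdorff error of order $O(1/\sqrt{n})$ arising from edge subdivision, isometric to the real tree $\cT_{f_n}$ associated with $f_n = \tfrac{\sigma}{2\sqrt{n}} C_n(2(n-1) \cdot)$, because the graph distance between two vertices of $\cT_n$ equals $C_n(s) + C_n(t) - 2 \min_{[s,t]} C_n$ for any pair of contour times $s \le t$ encoding them. Since the map $f \mapsto (\cT_f, d_{\cT_f}, \bar 0)$ from $C([0,1])$ into $(\ndK^\bullet, d_{\textsc{GH}})$ is $2$-Lipschitz for the uniform norm, the continuous mapping theorem delivers the desired convergence toward $(\CRT, d_{\CRT}, \bar 0)$.

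The main obstacle is the invariance principle for the \L{}ukasiewicz path conditioned on $\{W_n(n) = -1,\ W_n \ge 0 \text{ on } \{0, \dots, n-1\}\}$: ordinary Donsker only delivers the unconditioned convergence, and one needs a local central limit theorem to control the denominator $\Pr{S_n = -1}$, together with Vervaat's cyclic-shift argument to identify the resulting conditioned law with Brownian excursion. The finite-variance hypothesis $\sigma < \infty$, which is precisely what distinguishes type I$\alpha$ from type I$\beta$, is exactly what makes this step work; once it is in hand, the remaining steps are robust functional arguments.
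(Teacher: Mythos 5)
The paper does not prove this theorem: it is quoted as a known result, with the citation to Aldous \cite{MR1207226} and the remark that extensions appear in Duquesne, Duquesne--Le~Gall, and Haas--Miermont. So there is no internal proof to compare against, only the attribution.

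Your sketch is correct and is what has become the standard modern route. The reduction from the simply generated tree to the conditioned Galton--Watson tree with offspring law $(\pi_k)_k$, via the Radon--Nikodym factor $\tau^{n-1}\phi(\tau)^{-n}$ depending only on $n$, is exactly right, and the identification of type I$\alpha$ with the ``critical, finite variance'' regime is precisely what \eqref{eq:mu}--\eqref{eq:variance} give you. The three-step argument --- Donsker plus a local CLT to control $\Pr{S_n=-1}$ and Vervaat's cyclic shift to identify the conditioned \L{}ukasiewicz path with a Brownian excursion bridge, then the Marckert--Mokkadem / Duquesne--Le~Gall comparison to the contour process, then the $2$-Lipschitz continuity of $f \mapsto (\cT_f, d_{\cT_f}, \bar 0)$ from $(C([0,1]), \|\cdot\|_\infty)$ to $(\ndK^\bullet, d_{\textsc{GH}})$ --- is the proof as presented, e.g., in Le~Gall's surveys. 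Two small remarks. First, Aldous's original argument in \cite{MR1207226} is phrased as convergence of the rescaled search-depth (contour) process rather than as a Gromov--Hausdorff statement; the GH reformulation you use came later with the coding-function machinery, so you are really proving a mild repackaging of Aldous's theorem rather than reproducing his argument verbatim. Second, the local CLT step should be run on the lattice $\spa(\mathbf{w})\ndZ$ and the limit taken along $n\equiv 1\pmod{\spa(\mathbf{w})}$, exactly as the paper's Lemma~5.2 is set up and as the theorem statement implicitly requires; otherwise $\Pr{S_n=-1}$ can vanish. Neither point is a gap, just bookkeeping worth making explicit.
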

This invariance principle is due to Aldous \cite{MR1207226} and there exist various extensions, see for example Duquesne \cite{MR1964956}, Duquesne and Le Gall \cite{MR2147221}, Haas and Miermont \cite{MR3050512}.

\subsubsection{Depth-first-search, height and width} \label{sec:dfs}
Suppose that the weight sequence $\mathbf{w}$ has type I$\alpha$. We are going to list a few known results that we are going to use frequently in our proofs later on. Addario-Berry, Devroye and Janson {\cite[Thm. 1.2]{MR3077536}} showed that there are constants $C,c>0$ such that for all $n$ and $h \ge 0$
\begin{align}
	\label{eq:gwttail}
	\Pr{\He(\cT_n) \ge h)} \le C \exp(-c h^2/n).
\end{align}	
Janson \cite[Problem 21.9]{MR2908619} posed the question, whether such a bound holds for all types of weight sequences. While this question has not been answered fully yet, significant progress was made in recent work by Kortchemski \cite{MR3651047, MR3335012}. A corresponding left-tail upper bound of the form
\begin{align}
	\label{eq:gwtlefttail}
\Pr{\He(\cT_n) \le h} \le C \exp(-c(n-2)/h^2)
\end{align}
for all $n$ and $h \ge 0$ is given in \cite[p. 6]{MR3077536}. The first moment of the number \label{pp:lk}$L_k(\cT_n)$ of all vertices $v$ with height $h_{\cT_n}(v)= k$ admits a bound of the form
\begin{align}
	\label{eq:ineqstr1}
	\Ex{L_k(\cT_n)} \le C k \exp(-ck^2/n).
\end{align}
for all $n$ and $k \ge 1$. See \cite[Thm. 1.5]{MR3077536}.

\begin{figure}[t]
	\centering
	\begin{minipage}{1.0\textwidth}
		\centering
		\includegraphics[width=0.22\textwidth]{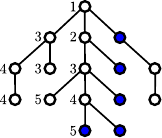}
	\caption{The lexicographic DFS-queue.}
	\label{fi:dfs}
	\end{minipage}
\end{figure}

Recall that the \emph{lexicographic depth-first-search (DFS)} of the plane tree $\cT_n$ is defined by listing the vertices in lexicographic order $v_0, v_1, \ldots, v_{n-1}$ and defining the queue $(Q_i)_{0 \le i \le n}$ by $Q_0=1$ and the recursion 
\[
Q_i = Q_{i-1} -1 + d_{\cT_n}^+(v_{i-1}).
\] Compare with Figure \ref{fi:dfs}, in which the numbers $Q_i$ are adjacent to the vertices $v_i$. We may also consider the \emph{reverse DFS} $(Q'_i)_{0 \le i \le n}$ as the DFS of the tree obtained from $\cT_n$ by reversing the ordering on each offspring set. Then $(Q_i)_i$ and $(Q'_i)_i$ agree in distribution and by \cite[Ineq.\ (4.4)]{MR3077536} there are constants $C,c>0$ such that
\begin{align}
\label{eq:dfsbound}
\Pr{\max_{j} Q_j \ge x} \le C \exp(-c x^2 /n)
\end{align}
for all $n$ and $x \ge 0$. Given a vertex $v$ of $\cT_n$ let $j$ and $k$ denote the corresponding indices in the DFS and reverse DFS. In particular, $v = v_j$ in the lexicographic ordering. Then
\begin{align}
\label{eq:queues}
Q_j + Q'_k = 2 + \sum_{u} d_{\cT_n}^+(u) - h_{\cT_n}(v)
\end{align} 
with the index $u$ ranging over all ancestors of the vertex $v$.

\section{Combinatorial species and weighted Boltzmann distributions}
\label{sec:combspec}
The language of combinatorial species was developed by Joyal \cite{MR633783} as a unified way to describe combinatorial structures and their symmetries. It provides a clean and powerful framework in which complex combinatorial bijection may be stated using simple algebraic terms. Rota predicted its rise in importance in various mathematical disciplines in the foreword of the book by Bergeron, Labelle and Leroux \cite{MR1629341}. The present work aims to make a contribution by showing its usefulness in combinatorial probability theory.

\subsection{Weighted combinatorial species}
\label{sec:preop}
We take a gentle approach in introducing the required notions, following \cite{MR633783,MR1629341}.
A {\em combinatorial species} $\cF$ is a rule that produces for each finite set $U$ a finite set $\cF[U]$ of $\cF$-objects and for each bijection $\sigma: U \to V$ a bijective map $\cF[\sigma]: \cF[U] \to \cF[V]$ such that the following properties hold.
\begin{enumerate}[1)]
	\item $\cF$ preserves identity maps, that is for any finite set $U$ it holds that \[\cF[\text{id}_U] = \text{id}_{\cF[U]}.\]
	\item $\cF$ preserves composition of maps, i.e. for any bijections of finite sets $\sigma:U \to V$ and $\sigma': V \to W$ we require that \[\cF[\sigma' \sigma] = \cF[\sigma']\cF[\sigma].\]
\end{enumerate}

A combinatorial species $\cF$ maps any finite set $U$ of {\em labels} to the finite set $\cF[U]$ of {\em $\cF$-objects}  and any bijection $\sigma: U \to V$ to the {\em transport function} $\cF[\sigma]$. 
For example, we may consider the species of finite graphs that maps any finite set $U$ to the set of graphs with vertex set $U$. In this context, the size of a graph is its number of vertices. Any bijection of finite sets is mapped to the relabelling bijection between the corresponding sets of graphs.

We are going to study random labelled $\cF$-objects over a fixed set, drawn with probability proportional to certain weights. To this end, we require the notion of a weighting of a species. Letting $\ndA=\ndR_{\ge 0}$ denote the non-negative real numbers, an {\em $\ndA$-weighted species} $\cF^\omega$ consists of a species $\cF$ and a {\em weighting} $\omega$ that produces for any finite set $U$ a map \[\omega_U: \cF[U] \to \ndA\] such that $\omega_U = \omega_V \circ \cF[\sigma]$ for any bijection $\sigma: U \to V$. Any object $F \in \cF[U]$ has {\em weight} $\omega_U(F)$ and we may form the {\em inventory} \[|\cF[U]|_\omega = \sum_{F \in \cF[U]} \omega_U(F).\] By abuse of notation we will often drop the index and write $\omega(F)$ instead of $\omega_U(F)$. Isomorphic structures have the same weight, hence we may define the weight of an unlabelled $\cF$-object  to be the weight of any representative. The inventory $|\tilde{\cF}[n]|_\omega$ is defined as the sum of weights of all unlabelled $\cF$-objects of size $n$. Any species may be considered as a weighted species by assigning weight $1$ to each structure, and in this case the inventory counts the number of $\cF$-objects. To any weighted species $\cF^\omega$ we associate its {\em exponential generating series} \[\cF^\omega(z) = \sum_{n\ge 0} |\cF[n]|_\omega z^n/n!. \]Two species $\cF$ and $\cG$ are termed {\em isomorphic}, denoted by $\cF \simeq \cG$, if there is a family $(\alpha_U)_U$ of bijections $\alpha_U: \cF[U] \to \cG[U]$, with the index $U$ ranging over all finite sets, such that the following diagram commutes for any bijection $\sigma: U \to V$ of finite sets.
\[
\xymatrix{ \cF[U] \ar[d]^{\alpha_U} \ar[r]^{\cF[\sigma]} &\cF[V]\ar[d]^{\alpha_V}\\
	\cG[U] \ar[r]^{\cG[\sigma]} 		    &\cG[V]}
\]
We say the family $(\alpha_U)_U$ is a {\em species isomorphism} from $\cF$ to $\cG$.

Two weighted species $\cF^{\omega}$ and $\cG^\nu$ are called isomorphic, if there exists a species isomorphism $(\alpha_U)_U$ from $\cF$ to $\cG$ that preserves the weights, that is, with $\nu(\alpha_U(F)) = \omega(F)$ for each finite set $U$ and $\cF$-object $F \in \cF[U]$. 

There are some natural examples of species that we are going to encounter frequently. The species \label{pp:set}$\Set$ with $\Set[U] = \{U\}$ has only one structure of each size and its exponential generating series is given by 
\[
	\Set(z) = \exp(z).
\]
The species \label{pp:seq}$\Seq$ of linear orders assigns to each finite set $U$ the set $\Seq[U]$ of tuples $(u_1, \ldots, u_t)$ of distinct elements with  $U=\{u_1, \ldots, u_t\}$. Its exponential generating series is given by
\[
	\Seq(z) = 1/(1-z).
\]
Finally, the species \label{pp:x}$\cX$ is given by $\cX[U] = \emptyset$ if $|U| \ne 1$ and $\cX[U] = \{U\}$ if $U$ is a singleton.

\subsection{Operations on species}
\label{sec:opspe}
Species may be combined in several ways to form new species. We discuss the the relevant operations following \cite{MR633783, MR1629341}.

\subsubsection{Products}
The {\em product} $\cF \cdot \cG$ of two species $\cF$ and $\cG$ is the species given by \[(\cF \cdot \cG)[U] = \bigsqcup_{(U_1, U_2)} \cF[U_1] \times \cG[U_2]\] with the index ranging over all ordered 2-partitions of $U$, that is, ordered pairs of (possibly empty) disjoint sets whose union equals $U$. The transport of the product along a bijection is defined componentwise. Given weightings $\omega$ on $\cF$ and $\nu$ on $\cG$, there is a canonical weighting on the product given by \[\mu(F,G) = \omega(F)\nu(G).\] This defines the product of weighted species
\[(\cF \cdot \cG)^\mu = \cF^\omega \cdot \cG^\nu.\]
The corresponding generating sums  satisfy \[(\cF \cdot \cG)^\mu(z) = \cF^\omega(z) \cG^\nu(z).\]

\subsubsection{Sums} Let $(\cF_i)_{i \in I}$ be a family of species such that for any finite set $U$ only finitely many indices $i$ with $\cF_i[U] \ne \emptyset$ exist. Then the {\em sum} $\sum_{i \in I} \cF_i$ is a species defined by \[(\sum_{i \in I} \cF_i)[U] = \bigsqcup_{i \in I} \cF_i[U].\] Given weightings $\omega_i$ on $\cF_i$, there is a canonical weighting $\mu$ on the sum given by \[\mu(F) = \omega_i(F)\] for any $i$ and $F \in \cF_i[U]$. This defines the sum of the weighted species
\[
(\sum_{i \in I} \cF_i)^\mu = \sum_{i \in I} \cF_i^{\omega_i}.
\]
The corresponding exponential generating series is given by \[
(\sum_i \cF_i^{\omega_i})(z) = \sum_i \cF_i^{\omega_i}(z).
\]

\subsubsection{Derived species}
Given a species $\cF$, the corresponding {\em derived} species $\cF'$ is given by
\[
\cF'[U] = \cF[U \cup \{*_U\}]
\]
with $*_U$ referring to an arbitrary fixed element not contained in the set~$U$. (For example, we could set $*_U = \{U\}$.) Any weighting $\omega$ on $\cF$ may also be viewed as a weighting on $\cF'$, by letting the weight of a derived object $F \in \cF'[U]$ be given by $\omega_{U \cup \{*_U\}}(F)$. The transport along a bijection $\sigma: U \to V$ is done by applying the transport $\cF[\sigma']$ of the bijection $\sigma': U \cup \{*_U\} \to V \cup \{*_V\}$ with $\sigma'|U = \sigma$. The generating series of the weighted derived species $(\cF^\omega)'$ is satisfies \[(\cF^\omega)'(z) = \frac{d}{dz} \cF^\omega(z).\] 

\subsubsection{Pointing} For any species $\cF$ we may form the {\em pointed} species $\cF^\bullet$. It is given by the product of species \[\cF^\bullet = \cX \cdot \cF'\] with $\cX$ denoting the species consisting of single object of size $1$. In other words, an $\cF^\bullet$-object is pair $(m, v)$ of an $\cF$-object $m$ and a distinguished label $v$ which we call the root of the object. Any weighting $\omega$ on $\cF$ may also be considered as a weighting on $\cF^\bullet$, by letting the weight of $(m,v)$ be given by $\omega(m)$. This choice of weighting is consistent with the natural weighting given by the product and derivation operation $\cX \cdot \cF'$, if we assign weight $1$ to the unique object of $\cX$. The corresponding exponential generating series is consequently given by
\[
(\cF^\bullet)^\omega(z) = z \frac{d}{dz} \cF^\omega(z).
\]
\subsubsection{Substitution} Given species $\cF$ and $\cG$ with  $\cG[\emptyset] = \emptyset$, we may form the composition $\cF \circ \cG$ as the species with object sets \[(\cF \circ \cG)[U] = \bigcup_\pi \left ( \{\pi \} \times \cF[\pi] \times \prod_{Q \in \pi} \cG[Q] \right ),\] with the index $\pi$ ranging over all unordered partitions of the set $U$. Here the transport $(\cF \circ \cG)[\sigma]$ along a bijection $\sigma: U \to V$ is done as follows. For any object $(\pi, F, (G_Q)_{Q \in \pi})$ in $(\cF \circ \cG)[U]$ define the partition
\[
\hat{\pi} = \{\sigma(Q) \mid Q \in \pi \},
\]
and let
\[
\hat{\sigma}: \pi \to \hat{\pi}
\]
denote the induced bijection betweenthe partitions. Then set
\[
(\cF \circ \cG)[\sigma](\pi, F, (G_Q)_{Q \in \pi}) = (\hat{\pi}, \cF[\hat{\sigma}](F), (\cG[\sigma|_Q](g_Q))_{ \sigma(Q) \in \hat{\pi}}).
\]
That is, the transport along the induced bijection of partitions gets applied to the $\cF$-object and the transports along the restrictions $\sigma|_Q$, $Q \in \pi$ get applied to the $\cG$-objects. Often, we are going to write $\cF(\cG)$ instead of $\cF \circ \cG$. Given a weighting $\omega$ on $\cF$ and a weighting $\nu$ on $\cG$, there is a canonical weighting $\mu$ on the composition given by \[\mu(\pi, F, (G_Q)_{Q \in \pi}) = \omega(F) \prod_{Q \in \pi} \nu(Q).\] This defines the composition of weighted species
\[
(\cF \circ \cG)^\mu = \cF^\omega \circ \cG^\nu.
\]
The corresponding generating series is given by
\begin{align}
\label{eq:cyccomp}
(\cF \circ \cG)^\mu(z) = \cF^\omega(\cG^\nu(z)).
\end{align}
\subsubsection{Restriction}
For any subset $\Omega \subset \ndN_0$ we may restrict a weighted species $\cF^\omega$ to objects whose size lies in $\Omega$. The result is denoted by $\cF^\omega_{\Omega}$. For convenience, we are also going to use the notation  $\cF^\omega_{\ge k}$ for the special case $\Omega = \{k, k+1, \ldots\}$, and define $\cF^\omega_{> k}$, $\cF^\omega_{< k}$, and $\cF^\omega_{\le k}$ analogously.

\subsubsection{Interplay between the  operators}
There are many natural isomorphisms that describe the interplay of the operations discussed in this section. The two most important are the product rule and the chain rule, which we are going to use frequently.
\begin{proposition}[Product rule and chain rule, \cite{MR633783}]
	\label{pro:chainprod}
	Let $\cF^\omega$ and $\cG^\nu$ be weighted species.
	\begin{enumerate}
		\item There is a canonical choice for an isomorphism
	\[
	(\cF^\omega \cdot \cG^\nu)' \simeq (\cF^\omega)'\cdot \cG^\nu + \cF^\omega \cdot(\cG^\nu)'.
	\]
		\item Suppose that $\cG[\emptyset]=\emptyset$. Then there is also a canonical isomorphism
	\[
		(\cF^\omega \circ \cG^\nu)' \simeq ((\cF^\omega)' \circ \cG^\nu) \cdot (\cG^\nu)'.
	\]
	\end{enumerate}
\end{proposition}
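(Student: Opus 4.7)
My plan is to construct explicit natural bijections for both isomorphisms by unpacking the definitions from Section~\ref{sec:opspe}, and then to verify that these bijections commute with the transport maps and preserve weights. This is the standard combinatorial argument for the Leibniz and chain rules in species theory, and the weighted versions follow because all constructions are local in the sense that the weight of each composite object is defined as the product of the weights of its constituents.

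For the product rule, I would start from the definition $(\cF \cdot \cG)'[U] = (\cF \cdot \cG)[U \cup \{*_U\}]$, whose elements are triples $((U_1,U_2), F, G)$ with $(U_1,U_2)$ an ordered $2$-partition of $U \cup \{*_U\}$. I then split according to whether $*_U \in U_1$ or $*_U \in U_2$, which gives a disjoint union. In the first case, writing $U_1 = U_1' \cup \{*_U\}$, the triple corresponds naturally to an element of $\cF'[U_1'] \times \cG[U_2]$, giving an $(\cF' \cdot \cG)$-object. In the second case we analogously obtain an $(\cF \cdot \cG')$-object. Naturality under bijections $\sigma: U \to V$ is immediate since the extension $\sigma': U \cup \{*_U\} \to V \cup \{*_V\}$ with $\sigma'(*_U) = *_V$ preserves the location of $*_U$, and the weights match because the weight of a product object is the product of the component weights on both sides.

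For the chain rule, given $(\pi, F, (G_Q)_{Q \in \pi}) \in (\cF \circ \cG)'[U]$, I identify the unique block $Q^* \in \pi$ containing $*_U$ and set $Q^{*\prime} = Q^* \setminus \{*_U\}$, $U_2 = Q^{*\prime}$, $U_1 = U \setminus U_2$, and $\pi_1 = \pi \setminus \{Q^*\}$, so that $\pi_1$ is a partition of $U_1$. The hypothesis $\cG[\emptyset] = \emptyset$ is not actually needed to make the construction well-defined (it is needed to define the composition $\cF \circ \cG$ itself), and $Q^{*\prime}$ may well be empty. I then view $F \in \cF[\pi]$ as an element of $\cF'[\pi_1] = \cF[\pi_1 \cup \{*_{\pi_1}\}]$ via the canonical bijection sending $Q^* \mapsto *_{\pi_1}$, and view $G_{Q^*} \in \cG[Q^*] = \cG[Q^{*\prime} \cup \{*_U\}]$ as an element of $\cG'[U_2]$ via the bijection sending $*_U \mapsto *_{U_2}$. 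The remaining data $(\pi_1, F, (G_Q)_{Q \in \pi_1})$ gives an $(\cF' \circ \cG)$-object on $U_1$, and together with the $\cG'$-object on $U_2$ we obtain an $((\cF' \circ \cG) \cdot \cG')$-object on $U$. The inverse is obtained by reversing all identifications.

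The routine but important step is checking that these families of bijections commute with the transport maps, i.e.\ that they constitute species isomorphisms in the sense of Section~\ref{sec:preop}, and that they preserve weights. For naturality one verifies directly that the location of $*_U$ and the block structure are preserved by any relabelling, and the identifications of pointed sets with sets plus a distinguished element are canonical. For weights, observe that in both the product rule and the chain rule the overall weight is defined as a product over components, and the bijections merely regroup these components without altering them; thus $\omega(F)\nu(G)$ on one side equals the corresponding product on the other. The main point requiring slight care is the chain rule when $Q^*$ is the block containing only $*_U$, since then $U_2 = \emptyset$ and the $\cG'$-component lives on $\emptyset$ -- this is consistent because $\cG'[\emptyset] = \cG[\{*_\emptyset\}]$ is nonempty in general even when $\cG[\emptyset] = \emptyset$.
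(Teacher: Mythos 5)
Your proof is correct and follows exactly the approach the paper sketches: for the product rule you split by whether $*_U$ lands in the $\cF$-part or the $\cG$-part, and for the chain rule you isolate the partition block containing $*_U$ to produce the distinguished $\cF$-atom and the extra $\cG'$-factor. The paper itself gives only this sketch and defers details to the cited literature, so your write-up is simply a fuller version of the same argument, with the naturality and weight-preservation checks and the edge case $Q^* = \{*_U\}$ correctly noted.
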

The product rule is easily verified, as the $*$-label in $(\cF^\omega \cdot \cG^\nu)'$ may either belong the $\cF$-structure, accounting for the summand $(\cF^\omega)' \cdot \cG^\nu$, or to the $\cG$-structure, accounting for the second summand. The idea behind the chain rule is that the partition class containing the $*$-label in an $(\cF^\omega \circ \cG^\nu)'$-structure distinguishes an atom of the $\cF$-structure. We refer the reader to the cited literature for details and further properties.

\subsection{Weighted Boltzmann distributions and samplers}
Boltzmann distributions appear naturally in the local limit of random discrete structures and in the limit of certain convergent Gibbs partitions. A {\em Boltzmann sampler} is a  procedure involving random choices that generates a structure according a Boltzmann distribution.

\subsubsection{Boltzmann distributions}
\label{sec:bodistr}
Let $\cF^\omega$ be a weighted species. For any parameter $y>0$ with $0< \cF^\omega(y) < \infty$ we may consider the  {\em Boltzmann distribution for labelled $\cF$-objects with parameter $y$}, given by
\begin{align}
\label{pp:bol}
\mathbb{P}_{\cF^\omega, y}(F) = \cF^\omega({y})^{-1}  \omega(F) \frac{ {y}^{|F|}}{|F|!}, \quad F \in  \bigsqcup_{m \ge 0} \cF[m].
\end{align}

\subsubsection{Boltzmann samplers}
\label{sec:WeBoSa}
The following lemma allows us to construct Boltzmann distributed random variables for the sum, product and composition of species. The results in this subsection are a straight-forward generalizations of corresponding results in a setting without weights, see for example Duchon, Flajolet, Louchard, and Schaeffer \cite{MR2095975} and Bodirsky, Fusy, Kang and Vigerske \cite[Prop. 38]{MR2810913}.

\begin{lemma}[Weigthed Boltzmann distributions and operations on species]
	\label{le:bole}
	\hspace{1pt}
\begin{enumerate} 
	\item Let $\cF^\omega$ and $\cG^\nu$ be weighted species, and let $X$ and $Y$ be independent random variables with distributions $\mathcal{L}(X) = \mathbb{P}_{\cF^\omega, y}$ and $\mathcal{L}(Y) = \mathbb{P}_{\cG^\nu, y}$. Then $(X,Y)$ may be interpreted as an $\cF \cdot \cG$-structure over the set $[|X|] \sqcup [|Y|]$. If $\alpha$ denotes a uniformly at random drawn bijection from this set to $[|X| + |Y|]$, then
	\[
		\cL \left ( (\cF \cdot \cG)[\alpha](X,Y) \right ) = \mathbb{P}_{\cF^\omega \cdot \cG^\nu, y}.
	\]
	\item Let $(\cF_i^{\omega_i})_{i \in I}$ be a family of weighted species, $y \ge 0$ a parameter with $\sum_i \cF_i^{\omega_i}(y) < \infty$,  and $(X_i)_{i \in I}$ a family of independent random variables with distributions $\cL(X_i) = \mathbb{P}_{\cF_i^{\omega_i}, y}$. If $K \in I$ gets drawn at random with probability proportional to $\cF_K^{\omega_K}(y)$, that is \[\Pr{K=k} = \cF_k^{\omega_k}(y) / \sum_i \cF_i^{\omega_i}(y),\] then 
	\[
		\cL(X_K) = \mathbb{P}_{\sum_i \cF_i^{\omega_i}, y}.
	\] 
	\item Let $\cF^\omega$ and $\cG^\nu$ be species such that $\cG^\nu(0) = 0$ and let $y > 0$ be parameter with $0< \cG^\nu(y) < \infty$ and $0 < \cF^\omega(\cG^\nu(y)) < \infty$. Let $X$ be a $\mathbb{P}_{\cF^\omega, \cG^\nu(y)}$-distributed random $\cF$-object and $(Y_i)_{i \in \ndN}$ a family of independent $\mathbb{P}_{\cG^\nu, y}$-distributed random $\cG$-objects, that are also independent of $X$. Then $(X, Y_1, \ldots, Y_{|X|})$ may be interpreted as an $\cF \circ \cG$-object with partition $\{ [|Y_i|] \times \{i\} \mid 1 \le i \le |X|\}$. Let $\alpha$ denote a uniformly at random drawn bijection from the underlying set to the set $[|Y_1| + \ldots + |Y_{|X|}|]$. Then
	\[
	\cL( (\cF \circ \cG)[\alpha](X, Y_1, \ldots, Y_{|X|})) = \mathbb{P}_{\cF^\omega \circ \cG^\nu, y}.
	\]
\end{enumerate}
\end{lemma}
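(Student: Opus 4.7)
The plan is to verify each claim by directly computing the probability that the sampler outputs a given labelled structure $H$ over $[n]$ and matching it with the explicit Boltzmann mass from the definition of $\mathbb{P}_{\,\cdot\,,\,y}$. The main tool throughout is the invariance of weights under transport: for any bijection $\sigma$ one has $\omega(\cF[\sigma](F)) = \omega(F)$. Combined with the bijectivity of transports, this lets me evaluate probabilities of the form $\mathbb{P}(X = \cF[\sigma^{-1}](F))$ purely in terms of $\omega(F)$ and $|F|$, without having to track the particular labelling.

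For part (i) I would fix $H \in (\cF\cdot\cG)[n]$ decomposing as $(U_1,U_2,F,G)$ with $F \in \cF[U_1]$ and $G \in \cG[U_2]$. The event $(\cF\cdot\cG)[\alpha](X,Y) = H$ forces $|X|=|U_1|$, $|Y|=|U_2|$, and $\alpha$ to map $[|X|]$ bijectively onto $U_1$ and $[|Y|]$ onto $U_2$; there are exactly $|U_1|!\,|U_2|!$ such $\alpha$ among the $n!$ possibilities, and for each, the preimage condition on $(X,Y)$ pins down one configuration. Summing over these, using independence of $X$ and $Y$, the factorials telescope and the normalizer $\cF^\omega(y)\cG^\nu(y) = (\cF^\omega\cdot\cG^\nu)(y)$ appears on cue, leaving precisely $(\cF^\omega\cdot\cG^\nu)(y)^{-1}\mu(H)\,y^n/n!$. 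Part (ii) is immediate from the tower property: $\mathbb{P}(X_K=F) = \mathbb{P}(K=k)\,\mathbb{P}(X_k=F)$ for $F \in \cF_k[n]$, and the factor $\cF_k^{\omega_k}(y)$ cancels with the denominator of $\mathbb{P}(X_k=F)$, producing the announced normalizer $\sum_i \cF_i^{\omega_i}(y)$.

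The main work lies in part (iii). I would fix $H = (\pi, F, (G_Q)_{Q\in\pi})$ with $|\pi|=k$ and $n_Q = |Q|$, and decompose the event of producing $H$ into a choice of bijection $[k] \to \pi$ at the partition-class level, followed by, for each class, a bijection $[n_i] \to Q$ at the atom level. A uniform $\alpha$ on $[n]$ realizes both simultaneously, and exactly $k!\,\prod_Q n_Q!$ of the $n!$ possibilities are compatible with $H$. Plugging in the Boltzmann mass of $X$ at parameter $\cG^\nu(y)$ and of each $Y_i$ at parameter $y$, invoking invariance of weights so that $\omega(F)$ and the $\nu(G_Q)$ surface directly, the factorial bookkeeping collapses to $\mu(H)\,y^n/n!$ divided by $\cF^\omega(\cG^\nu(y))$. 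The identity $(\cF\circ\cG)^\mu(y) = \cF^\omega(\cG^\nu(y))$ recalled in \eqref{eq:cyccomp} then supplies the correct normalizer.

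I expect the main obstacle to lie in part (iii), namely in cleanly distinguishing the unordered partition data carried by an $\cF\circ\cG$-object from the ordered tuple $(Y_1, \ldots, Y_{|X|})$ produced by the sampler, and in tracking how a single uniform $\alpha$ encodes both the partition-class-level identification of $[|X|]$ with $\pi$ and the within-class relabellings $[|Y_i|]\to Q$ without double-counting. Once these multiplicities are pinned down and the invariance of weights under transport is used at each step, the cascade of factorial cancellations is routine.
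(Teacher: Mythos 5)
Your argument is correct, and the paper itself does not supply a proof of this lemma: it simply asserts that the statements are straightforward weighted generalizations of the unweighted Boltzmann-sampler results in Duchon--Flajolet--Louchard--Schaeffer and Bodirsky--Fusy--Kang--Vigerske, so there is no internal "paper route" to compare against. Your direct verification, computing the mass function of the sampler's output and matching it against the definition in \eqref{pp:bol}, is exactly the kind of computation the authors are implicitly pointing to, and it works.

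The one place worth double-checking is the multiplicity count in part (iii), which you rightly flag as the subtle step. The clean way to see that $k!\,\prod_{Q\in\pi} |Q|!$ is correct is to enumerate not bijections $\alpha$ alone but triples $(x_0,(y_0^i)_{i\le k},\alpha_0)$ in the underlying sample space that map to $H$: choosing a bijection $i\mapsto Q_i$ from $[k]$ onto $\pi$ (which simultaneously fixes the sizes $|Y_i|=|Q_i|$, so there are $k!$ choices with no size constraint to worry about) and then, for each $i$, a bijection $\{i\}\times[|Q_i|]\to Q_i$ (giving $\prod_Q|Q|!$ choices), determines $\alpha_0$ uniquely and, by invertibility of the transport, pins down $x_0=\cF[\hat\alpha^{-1}_0](F)$ and $y_0^i=\cG[(\alpha_0|_i)^{-1}](G_{Q_i})$. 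Each such triple carries probability
\[
\cF^\omega(\cG^\nu(y))^{-1}\omega(F)\,\frac{\cG^\nu(y)^k}{k!}\cdot\prod_{Q\in\pi}\cG^\nu(y)^{-1}\nu(G_Q)\,\frac{y^{|Q|}}{|Q|!}\cdot\frac{1}{n!},
\]
by weight-invariance under transport, and multiplying by the $k!\prod_Q|Q|!$ triples collapses everything to $(\cF^\omega\circ\cG^\nu)(y)^{-1}\mu(H)\,y^n/n!$ as required. So the cascade of cancellations you anticipate does close up; the phrasing in your sketch where the sizes $(n_1,\ldots,n_k)$ appear to be fixed in advance of the bijection $[k]\to\pi$ should be cleaned up to make clear that they are \emph{determined} by the class-level matching, which is what makes $k!$ (rather than a multiset-respecting count) the right factor.
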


\section{Probabilistic tools}
\label{sec:prob}
For ease of reference, we explicitly state a selection of classical results that we are going to use in our proofs.

\subsection{Projective limits of  probability spaces}

Let $(I, \preccurlyeq)$ be a \emph{directed} non-empty set. That is, we assume that the relation $\preccurlyeq$ is reflexive and transitive, and every pair of elements in $I$ has an upper bound. Let $(X_i, \scT_i)_{i \in I}$ be a family of topological spaces. Suppose that for each pair $i, j \in I$ with $i \preccurlyeq j$ we are given a continuous map \[
f_{i,j}: X_j \to X_i,\]
such that $f_{i,i} = \id_{X_i}$ for all $i$, and for all $i \preccurlyeq j \preccurlyeq k$ the diagram
\[
\xymatrix{ X_k  \ar[r]^{f_{j,k}} \ar[dr]^{f_{i,k}} 
	&X_j \ar[d]^{f_{i,j}}\\
	&X_i}
\]
commutes. The system $((X_i, \scT_i)_{i \in I}, (f_{i,j})_{i \preccurlyeq j})$ is termed a \emph{projective system} of topological spaces.

Let $(X, \scT_X)$ be a topological space, and for each $i \in I$ let  $f_i: X \to X_i$ be a continuous map. Suppose that for all $i \preccurlyeq j$ the diagram
\begin{align}
\label{dia:property}
\xymatrix{ X  \ar[r]^{f_{j}} \ar[dr]^{f_{i}} 
	&X_j \ar[d]^{f_{i,j}}\\
	&X_i}
\end{align}
commutes. The space $(X, \scT_X)$ is termed a \emph{projective limit} of the system $((X_i, \scT_i)_{i \in I}, (f_{i,j})_{i \preccurlyeq j})$, if for any topological space $(Y, \scT_Y)$ and any family of continuous maps $g_i: Y \to Y_i$ that also satisfy~\eqref{dia:property} there is a unique continuous map $f: Y \to X$ such that for all $i \preccurlyeq j$ the diagram
\begin{align}
\label{dia:univ}
	\xymatrix{
			& Y \ar[d]^>>>>{f} 	\ar[ddl]_{g_j} \ar[ddr]^{g_i}	& 			\\
			& X \ar[dl]^{f_j} \ar[dr]_{f_i}				&			\\
		X_j \ar[rr]^{f_{i,j}}  	&				& X_i
}
\end{align}
commutes. In particular, between any two projective limits there is a canonical homeomorphism that is compatible with the projections of the system. 

The projective limit always exist. We may define the space $X$ as the subset $X \subset \prod_{i \in I} X_i$ of all families $x = (x_i)_i$ that satisfy $x_i = f_{i,j}(x_j)$ for all $i \preccurlyeq j$. For each $i$ we let  $f_i: X \to X_i$ denote the projection to the $i$th coordinate. Let $\scT$ denote the smallest topology on $X$ that makes all projections $f_i$ continuous. Then the space $(X, \scT)$ together with $(f_i)_{i \in I}$ is a projective limit of the system $((X_i, \scT_i)_{i \in I}, (f_{i,j})_{i \preccurlyeq j})$. 


If we equip each of the topological spaces $(X_i, \scT_i)$ with its Borel $\sigma$-algebra $\sigma(\scT_i)$, then the maps $f_{i,j}$ become measurable. The smallest $\sigma$-algebra on the projective limit $(X, \scT)$ that makes all  projections $f_i$ measurable coincides with its Borel $\sigma$-algebra $\sigma(\scT)$.

Suppose that for each $i \in I$ we are given a probability measure $\mu_i$ on $\sigma(\scT_i)$. We say $(\mu_i)_{i \in I}$ is a \emph{projective family}, if for all $i \preccurlyeq j$ the measure $\mu_i$ is the image measure of $\mu_j$ under $f_{i,j}$. That is, for each event $A \in \sigma(\scT_i)$ we require  that $\mu_i(A) = \mu_j(f^{-1}_{i,j}(A))$. 

\begin{lemma}[{\cite[Ch. 9, \S 4, No. 3, Theorem 2]{MR0276436}}]
	\label{le:project}
	Let $(X_i, \scT_i)_{i \in I}$, $(f_{i,j})_{i \preccurlyeq j}$ be a projective system of topological spaces, and $(\mu_i)_{i \in I}$ a projective family of probability measures on the Borel $\sigma$-algebras $\sigma(\scT_i)$, $i \in I$. If the index set $I$ is countable, then there exists a  probability measure $\mu$ on the projective limit $(X, \scT)$ such that for all $i \in I$ the measure $\mu_i$ is the image of $\mu$ under the projection $f_i$.
\end{lemma}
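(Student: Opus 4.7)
The plan is to first reduce to the case $I = \mathbb{N}$ by extracting a cofinal sequence, and then construct $\mu$ by a Kolmogorov-type extension argument on the countable product $\prod_{n \ge 1} X_n$. Because $I$ is countable and directed, I can inductively build an increasing sequence $i_1 \preccurlyeq i_2 \preccurlyeq \cdots$ whose range is cofinal in $I$: enumerating $I = \{j_1, j_2, \ldots\}$, at step $n+1$ I pick $i_{n+1}$ to be an upper bound of $\{i_n, j_n\}$, which exists by directedness. The projective limit of the full system is canonically homeomorphic to the projective limit of the subsystem along $(i_n)_n$, and the projective family $(\mu_i)_{i \in I}$ restricts to a projective family on the subsystem; any measure on the restricted limit with the correct marginals $\mu_{i_n}$ automatically has the correct marginals $\mu_i$ for every $i \in I$ by composition of projections $f_{i, i_n}$. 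Thus I may assume $I = \mathbb{N}$ with bonding maps $f_n := f_{n, n+1}\colon X_{n+1} \to X_n$.

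Now I identify $X$ with the subset of the product $\prod_{n \ge 1} X_n$, equipped with its product $\sigma$-algebra, consisting of sequences with $x_n = f_n(x_{n+1})$ for all $n$. On the algebra $\mathscr{C}$ of cylinder sets I define a pre-measure $\tilde{\mu}$ by $\tilde{\mu}(A \times \prod_{m > n} X_m) = \mu_n(A)$ for any Borel $A \subset X_n$. The compatibility $f_n \mu_{n+1} = \mu_n$ guarantees that this assignment is well-defined across the different representations of one and the same cylinder, and that it is finitely additive. Assuming $\tilde{\mu}$ is also countably additive on $\mathscr{C}$, Carath\'eodory's extension theorem produces a probability measure on the full product $\sigma$-algebra. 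The compatibility relations also imply that each of the sets $E_n = \{(x_m)_m : f_n(x_{n+1}) = x_n\}$ has full $\tilde{\mu}$-measure, hence so does $X = \bigcap_n E_n$, and the restriction of the extension to $X$ is the desired $\mu$. Finally, the observation at the end of the excerpt that the Borel $\sigma$-algebra of the projective limit coincides with the $\sigma$-algebra generated by the projections $f_i$ tells me that the condition ``$\mu_i$ is the image of $\mu$ under $f_i$'' for all $i$ really does characterise $\mu$ uniquely.

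The principal obstacle, and the step that genuinely requires the topological data rather than the measurable structure alone, is countable additivity of $\tilde{\mu}$ on $\mathscr{C}$, that is, showing $\tilde{\mu}(B_k) \downarrow 0$ whenever $B_k \downarrow \emptyset$. Without regularity this can fail, which is why Bourbaki's theorem is set up in a framework of Radon measures on Hausdorff spaces. Granting inner regularity of each $\mu_n$ by compact sets, one approximates each $B_k$ from inside by a compact cylinder $K_k$ with $\tilde{\mu}(B_k \setminus K_k)$ small, replaces $K_k$ by the decreasing sequence $\tilde{K}_k = K_1 \cap \cdots \cap K_k$, and, under the assumption $\inf_k \tilde{\mu}(B_k) > 0$, uses continuity of the bonding maps $f_n$ together with the finite intersection property of the compact bases to produce an element of $\bigcap_k B_k$, contradicting $B_k \downarrow \emptyset$. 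The sequential structure of the reduced index set is what makes this compactness argument succeed in a single countable limit, which is ultimately why countability of $I$ is indispensable in the hypothesis of the lemma.
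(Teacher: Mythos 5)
The paper does not prove this lemma: it is cited verbatim from Bourbaki's \emph{Int\'egration}, Ch.~IX, \S4, no.~3, Th\'eor\`eme~2, so there is no in-paper argument against which to check your proposal. Your outline, however, is essentially the standard route (and roughly Bourbaki's): extract a cofinal sequence using countability and directedness, realise the projective limit inside the countable product, set up a finitely additive set function on cylinders, and establish $\sigma$-additivity by an inner-regularity/compactness argument. The reduction to $I = \ndN$ is fine, and you correctly locate the crux in the countable additivity step.

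Two points deserve sharpening. First, a bookkeeping issue: the formula $\tilde{\mu}(A \times \prod_{m>n} X_m) = \mu_n(A)$ with $A \subset X_n$ only assigns values to cylinders determined by a single coordinate, and those do not form an algebra in the full product $\prod_n X_n$ (the intersection of two of them generally depends on two coordinates). You should either extend the definition to arbitrary finite-dimensional cylinders, or --- more cleanly --- work on $X$ from the start, where the sets $f_n^{-1}(A)$, $A \in \sigma(\scT_n)$, do form an algebra because on the projective limit the $n$th coordinate determines all earlier ones. Relatedly, the claim that $E_n = \{x : f_n(x_{n+1}) = x_n\}$ is measurable in the product $\sigma$-algebra requires the diagonal of $X_n \times X_n$ to lie in $\sigma(\scT_n) \otimes \sigma(\scT_n)$; this is automatic for second-countable Hausdorff $X_n$ but not for arbitrary topological spaces, which is another reason to argue directly on $X$ rather than on the ambient product.

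Second, and more substantively: you write ``granting inner regularity of each $\mu_n$ by compact sets.'' This is not an optional convenience --- the theorem is \emph{false} for arbitrary Borel probability measures on arbitrary topological spaces even with $I = \ndN$ (the Andersen--Jessen counterexamples). It holds because in Bourbaki's framework a ``measure'' is by definition a Radon measure, so inner regularity is part of the hypotheses, not something to be granted. The paper's formulation of the lemma quietly drops this hypothesis, which is harmless only because in the one place the lemma is applied (step~8 of the proof of Theorem~\ref{te:convpl}) the spaces $X_i$ are Polish and every Borel probability measure on a Polish space is tight. You should make this dependence explicit: either restate the lemma for Radon measures, or remark that tightness is automatic in the Polish setting where it is used. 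With that clarification, your compactness argument for $\tilde{\mu}(B_k) \downarrow 0$ does go through along standard lines.
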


\subsection{A central local limit theorem}
The following lattice version of the local limit theorem for sums of independent random variables is taken from Durrett's book.

\begin{lemma}[{\cite[Ch. 3.5]{MR2722836}}]
	\label{le:llt1dim}
	Let $(X_n)_n$ a family of independent identically distributed random integers with first moment $\mu = \Ex{X_1}$ and finite non-zero variance $\sigma^2 = \Va{X_1}$. Let $d \ge 1$ denote the smallest integer such that the support
	$
	\{k \mid \Pr{X_1=k}\}
	$
	is contained in a lattice of the form $a + d \ndZ$ for some $a\in \ndZ$. Then the sum $S_n = X_1 + \ldots X_n$ satisfies the local limit theorem
	\[\lim_{n \to \infty} \sup_{x \in  a + d\ndZ} \left | \sqrt{n}\Pr{S_n = x} - \frac{d}{\sqrt{2 \pi \sigma^2}}\exp(-\frac{(n \mu -x)^2}{2n\sigma^2}) \right | = 0.
	\]
\end{lemma}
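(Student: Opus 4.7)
\medskip

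The plan is to establish this via the classical Fourier inversion argument, which reduces the uniform-in-$x$ estimate to an integral bound on the difference of characteristic functions. Write $\phi(t) = \Ex{e^{itX_1}}$. Since $X_1 \in a + d\ndZ$, one has $S_n \in na + d\ndZ$, and for $x$ in this coset the lattice Fourier inversion formula reads
\[
\Pr{S_n = x} = \frac{d}{2\pi}\int_{-\pi/d}^{\pi/d} e^{-itx}\phi(t)^n\,dt,
\]
while the normal density satisfies
\[
\frac{d}{\sqrt{2\pi\sigma^2 n}}\exp\Bigl(-\tfrac{(x-n\mu)^2}{2n\sigma^2}\Bigr) = \frac{d}{2\pi}\int_{\ndR}e^{-itx}e^{itn\mu - t^2 n\sigma^2/2}\,dt.
\]

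The next step is to substitute $t = s/\sqrt{n}$ and factor out $e^{is\sqrt{n}\mu}$, so that the integrand difference involves the characteristic function $\psi_n(s) := \phi(s/\sqrt{n})^n e^{-is\sqrt{n}\mu}$ of the centered, rescaled sum $(S_n-n\mu)/\sqrt{n}$. Since $|e^{-isx/\sqrt{n}}| = 1$, taking absolute values inside the integral yields the uniform-in-$x$ bound
\[
\sup_{x \in a + d\ndZ}\Bigl|\sqrt{n}\Pr{S_n=x} - \tfrac{d}{\sqrt{2\pi\sigma^2}}\exp\bigl(-\tfrac{(n\mu-x)^2}{2n\sigma^2}\bigr)\Bigr| \le \frac{d}{2\pi}\int_{\ndR}\bigl|\psi_n(s)\,\one_{|s|\le \pi\sqrt{n}/d} - e^{-s^2\sigma^2/2}\bigr|\,ds.
\]
The uniformity in $x$ is thereby automatic, and it remains to show the right-hand side tends to $0$.

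To bound this integral, I would split the domain into three parts. On $|s|\le A$, the Taylor expansion $\log\phi(t) = it\mu - t^2\sigma^2/2 + o(t^2)$ as $t \to 0$ gives pointwise $\psi_n(s) \to e^{-s^2\sigma^2/2}$, and the bound $|\psi_n(s)|\le e^{-s^2\sigma^2/4}$ valid on $|s|\le \delta\sqrt{n}$ for $\delta$ small (again from the second-order expansion) lets dominated convergence kill the integral on any fixed interval. On the moderate range $A\le|s|\le \delta\sqrt{n}$ the same bound plus the Gaussian tail make the contribution small by choosing $A$ large. Finally, on $\delta\sqrt{n}\le|s|\le \pi\sqrt{n}/d$, the integrand is bounded by $|\phi(s/\sqrt{n})|^n + e^{-s^2\sigma^2/2}$ and one controls the first term by the supremum $r := \sup_{\delta \le |t|\le \pi/d}|\phi(t)|$, yielding an $O(\sqrt{n}\,r^n) = o(1)$ bound.

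The main obstacle is precisely the last point: one needs the strict inequality $r < 1$, and this is where the minimality of $d$ enters. If $|\phi(t_0)| = 1$ for some $t_0 \in (0,\pi/d]$, then $X_1$ is supported on a single coset of $\{k : e^{it_0 k} = 1\}$, a lattice $a' + d'\ndZ$ with $d' > d$, contradicting the defining minimality of $d$ (equivalently, $d$ being the gcd of the support differences). Combined with continuity of $\phi$ and compactness of $\{\delta\le |t|\le \pi/d\}$, this yields $r<1$. Once this separation from $1$ is in hand, the three estimates combine to prove the claim.
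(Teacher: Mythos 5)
This lemma is quoted from Durrett's textbook (the paper's reference \cite{MR2722836}) and the paper offers no proof of its own, so the natural comparison is to that source. Your proposal reproduces the standard Fourier-inversion proof that appears there — lattice inversion formula, factoring out the mean, splitting the frequency axis into a fixed window, a moderate range where a Gaussian-type bound on $|\phi(t)|$ applies, and a high-frequency tail controlled by $\sup_{\delta\le|t|\le\pi/d}|\phi(t)|<1$ — and the argument is correct. One wrinkle worth noting: the lemma's statement says ``the \emph{smallest} integer $d$ such that the support lies in $a+d\ndZ$,'' which is a typo ($d=1$ always works); it should be the \emph{largest} such $d$, equivalently the gcd of the support differences. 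Your final step, showing $|\phi(t_0)|<1$ on $(0,\pi/d]$, is correct when read with the gcd characterization you mention parenthetically, but phrasing the contradiction in terms of ``minimality'' is a misstatement — finding that the support lies on a lattice coarser than $d$ contradicts maximality, not minimality.
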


\subsection{A deviation inequality}
\label{sec:deviation}
The following deviation inequality is found in most textbooks on the subject.

\begin{lemma}[Medium deviation inequality for one-dimensional random walk]
	\label{le:deviation}
	Let $(X_i)_{i \in \ndN}$ be an i.i.d. family of real-valued random variables with $\Ex{X_1} = 0$ and $\Ex{e^{t X_1}} < \infty$ for all $t$ in some open interval containing zero. Then there are constants $\delta, c>0$ such that for all $n\in \ndN$, $x \ge 0$ and $0 \le\lambda\le\delta$ it holds that \[\Pr{|X_1 + \ldots + X_n| \ge x} \le 2 \exp(c n \lambda^2 - \lambda x).\]
\end{lemma}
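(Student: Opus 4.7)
The plan is to apply the standard Chernoff-type exponential moment method. Write $S_n = X_1 + \cdots + X_n$ and let $M(\lambda) = \Ex{e^{\lambda X_1}}$ denote the moment generating function of $X_1$, which by hypothesis is finite on some interval $(-\delta_0, \delta_0)$ around $0$.

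First I would establish a quadratic upper bound for $\log M(\lambda)$ near $0$. On its interval of finiteness, $M$ is smooth (one can differentiate under the integral sign using dominated convergence), with $M(0) = 1$, $M'(0) = \Ex{X_1} = 0$, and $M''(0) = \Ex{X_1^2} =: \sigma^2 < \infty$. A second-order Taylor expansion together with continuity of $M''$ then yields a constant $c > 0$ and a radius $\delta \in (0, \delta_0)$ such that
\[
M(\lambda) \le \exp(c \lambda^2) \qquad \text{for all } |\lambda| \le \delta.
\]

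Next, for the upper tail, I apply the exponential Markov inequality: for $0 \le \lambda \le \delta$ and $x \ge 0$,
\[
\Pb{S_n \ge x} = \Pb{e^{\lambda S_n} \ge e^{\lambda x}} \le e^{-\lambda x}\, \Exb{e^{\lambda S_n}} = e^{-\lambda x}\, M(\lambda)^n \le \exp(c n \lambda^2 - \lambda x),
\]
where I used independence to factor the moment generating function of $S_n$ and then inserted the quadratic bound. The same reasoning applied to the i.i.d.\ sequence $(-X_i)_i$ (which also has mean zero and finite exponential moments in a neighborhood of zero, with the same $\delta$ and $c$ after possibly shrinking $\delta$) gives the matching lower-tail bound $\Pr{S_n \le -x} \le \exp(cn\lambda^2 - \lambda x)$.

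Finally, I combine both bounds via the union bound
\[
\Pb{|S_n| \ge x} \le \Pb{S_n \ge x} + \Pb{S_n \le -x} \le 2 \exp(c n \lambda^2 - \lambda x),
\]
which is the claimed inequality. There is no real obstacle here; the only point requiring mild care is verifying that $M$ is $C^2$ at the origin so that the Taylor expansion is legitimate, and choosing $\delta$ small enough that the remainder in the Taylor expansion is absorbed into the leading $c\lambda^2$ term uniformly in $|\lambda| \le \delta$.
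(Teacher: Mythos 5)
Your proof is correct and is the standard Chernoff/moment-generating-function argument: bound $\log M(\lambda)$ by $c\lambda^2$ near the origin using $M(0)=1$, $M'(0)=0$, $M''(0)<\infty$, apply Markov's inequality to $e^{\lambda S_n}$ and $e^{-\lambda S_n}$ separately, and union-bound to pick up the factor $2$. This is what the paper intends as well. Worth noting: the paper's one-sentence sketch says to observe that $\Ex{e^{\lambda|X_1|}} \le 1 + c\lambda^2$ and to apply Markov to $\exp(\lambda(|X_1|+\cdots+|X_n|))$, but this cannot be taken literally, since $\Ex{e^{\lambda|X_1|}} = 1 + \lambda\,\Ex{|X_1|} + O(\lambda^2)$ has a nonvanishing linear term whenever $X_1 \not\equiv 0$, so the quadratic bound fails for small $\lambda>0$. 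The intended observation is surely $\Ex{e^{\lambda X_1}} \le 1 + c\lambda^2$ (using $\Ex{X_1}=0$), which is exactly what you prove, and the two one-sided tails then have to be handled separately as you do rather than through a single application to $\sum_i |X_i|$. So your writeup is the correct and careful version of the paper's intended argument.
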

The proof is by observing that $\Ex{e^{\lambda |X_1|}} \le 1 + c\lambda^2$ for some constant $c$ and sufficiently small $\lambda$, and applying Markov's inequality to the random variable $\exp(\lambda(|X_1| + \ldots + |X_n|))$.

\section{A probabilistic study of tree-like discrete structures}
\label{sec:mainstudy}

In this section, we develop a framework for random enriched trees and present our main results as well as their applications to specific models of random discrete structures. 

\vspace{10pt}

\paragraph*{\em Index of notation} 

The following list summarizes frequently used terminology in this section.

\begin{center}

\begin{tabularx}{\linewidth}{ll}
$\cR^\kappa$ & $\kappa$-weighted species of $\cR$-structures, page \pageref{mark:rk}\\[2pt]
$\cA_\cR^\omega$ & $\omega$-weighted species of $\cR$-enriched trees, page \pageref{mark:ao}\\[2pt]
$\mA_n^\cR$ & random $n$-sized $\cR$-enriched tree, page \pageref{mark:an}\\[2pt]
$(\cT_n, \beta_n)$ & random $n$-vertex $\cR$-enriched plane tree coupled to $\mA_n^\cR$, page \pageref{le:coupling}\\[2pt]
$(\hat{\cT}, \hat{\beta})$ & random modified $\cR$-enriched plane tree with a spine, page \pageref{te:local}\\[2pt]
$(\cT^*, {\beta}^*)$ & another random modified $\cR$-enriched plane tree with a spine  that\\& grows backwards, page \pageref{de:tinf}\\[2pt]
$f(A,x)$ & the enriched fringe subtree of a vertex $x$ in an enriched \\& tree $A$, page \pageref{mark:fr} \\[2pt]
$(T,\alpha)^{[k]}$ & the enriched tree pruned at height $k$, page \pageref{pp:trim} \\[2pt]
 $\mathbf{w}=(\omega_k)_k$& weight-sequence associated to $\cR^\kappa$ and $\cT_n$, page \pageref{sec:partB}\\[2pt]
 $\phi(z)$ & generating series of $\mathbf{w}$, page \pageref{mark:p1}\\[2pt]
 $\rho_\phi$ & radius of convergence of $\phi(z)$, page \pageref{mark:p2}\\[2pt]
 $\psi$ & series $\psi(t) = t\phi'(t)/\phi(t)$, page \pageref{mark:p3}\\[2pt]
 $\tau$ & limit $\tau = \lim_{t \uparrow \rho_\phi} \psi(t)$, page \pageref{mark:p4}\\[2pt]
 $\nu$ & maximal first moment of probability weight sequences \\&equivalent to $\mathbf{w}$, page \pageref{mark:p5}\\[2pt]
 $(\pi_k)_k$ & canonical probability weight sequence equivalent to $\mathbf{w}$, page \pageref{mark:p6}\\[2pt]
  $\xi$ & random variable with distribution $(\pi_k)_k$, page \pageref{pp:xi}\\[2pt]
    $\hat{\xi}$ & size-biased version of $\xi$, page \pageref{pp:xih}\\[2pt]
      $\VHT$ & vertex set of the Ulam--Harris tree, page \pageref{pp:vht}\\[2pt]
 I, I$a$, I$\alpha$, I$\beta$, II, III& types of weight sequences, page \pageref{mark:we}\\[2pt]
    $d_T^+(x)$ & outdegree of a vertex $x$ in a rooted tree $T$, page \pageref{pp:dout}\\[2pt]
    $d_G(x)$ & degree of a vertex $x$ in a graph $G$, page \pageref{pp:deg}\\[2pt]
    $d(G^\bullet)$ & degree of the root-vertex in a rooted graph $G^\bullet$, page \pageref{pp:deg}\\[2pt]
   $d_G(x,y)$ & graph-distance between $x,y \in G$, page \pageref{pp:dg}\\[2pt]
   $d_{\textsc{BLOCK}}$ & block-metric, page \pageref{pp:db}\\[2pt]
   $d_{\textsc{FPP}}$ & first-passage-percolation metric, page \pageref{pp:fpp}\\[2pt] 
  $V_k(\cdot)$ & graph metric $k$-neighbourhood, page \pageref{pp:uk}\\[2pt]
  $U_k(\cdot)$ & block metric $k$-neighbourhood, page \pageref{pp:vk}\\[2pt]
 $\mathbf{Ex}(\cdot)$& graph class defined by excluded minors, page \pageref{pp:ex}\\[2pt]
 $\cC$ & species of connected graph, page \pageref{pp:c}\\[2pt]
  $\mC_n^\omega$ & random $n$-vertex connected graph, page \pageref{pp:cn}\\[2pt]
 $\cB$ & species of $2$-connected graph, page \pageref{pp:b}\\[2pt]
  $\cB_n^\gamma$ & random $n$-vertex $2$-connected graph, page \pageref{pp:bn}\\[2pt]
 $\Set$ & exponential species, page \pageref{pp:set}\\[2pt]
 $\Seq$ & species of linear orders, page \pageref{pp:seq}\\[2pt]
 $\cX$ & single point species, page \pageref{pp:x}\\[2pt]
 $\cD$ & species of edge-rooted dissections of polygons, page \pageref{pp:d}\\[2pt]
 $\mD^\omega_n$ & random dissection of an $n$-gon, page \pageref{pp:dn}\\[2pt]
 $\cO$ & species of simple outerplanar maps, page \pageref{pp:o}\\[2pt]
  $\mO_n^\omega$ & random $n$-vertex outerplanar map, page \pageref{pp:on}\\[2pt]
 $\cM$ & species of planar maps, page \pageref{pp:m}\\[2pt]
  $\mM_n^\omega$ & random planar map with $n$ edges, page \pageref{pp:mn}\\[2pt]
 $\cQ$ & species of $2$-connected planar maps, page \pageref{pp:q}\\[2pt]
  $\mQ_n^\kappa$ & random $2$-connected planar map with $n$ edges, page \pageref{pp:qn}\\[2pt]
 $\cK$ & species of $k$-trees, page \pageref{pp:k}\\[2pt]
$\mK_n$ & uniform random $k$-tree with $n$ hedra, page \pageref{pp:kn}\\[2pt]
 $\cK^\circ$ & species of front-rooted $k$-trees, page \pageref{pp:kc}\\[2pt]
 $\cK_1^\circ$ & species of front-rooted $k$-trees where the root-front is contained in \\& a unique hedra, page \pageref{pp:kc1}\\[2pt]
  $K_k$& the complete graph with $k$ vertices, page \pageref{pp:com}\\[2pt]
    $\mathbb{P}_{\cF^\omega, y}$& Boltzmann distribution for a weighted species $\cF^\omega$ with \\& parameter $y$, page \pageref{pp:bol}\\[2pt]
  $\hat{\mC}$ & Benjamini--Schramm limit of random graphs, pages \pageref{pp:ch},\pageref{pp:ch2}\\[2pt]
  $\hat{\mO}$ & distributional limit of outerplanar maps, pages \pageref{pp:oh},\pageref{pp:oh2}\\[2pt]
  $\hat{\mO}_*$ & Benjamini--Schramm limit of outerplanar maps, page \pageref{pp:ohs}\\[2pt]
  $\hat{\mD}$ & Benjamini--Schramm limit of dissections, page \pageref{pp:dh}\\[2pt]
  $\hat{\mK}$ & Benjamini--Schramm limit of $k$-trees, page \pageref{pp:hk}\\[2pt]
  $L_k(\cdot)$& number of vertices with height $k$ in a rooted graph, page \pageref{pp:lk}\\[2pt]
\end{tabularx}

\end{center}


\subsection{Prominent examples of weighted $\cR$-enriched trees}
\label{sec:intro1}

In this section, we state the formal definition of  $\mathcal{R}$-enriched which were introduced by Labelle~\cite{MR642392} using the language of combinatorial species by Joyal \cite{MR633783}. These notions allow for a unified treatment of a large class of combinatorial objects. We introduce a model of random enriched trees and explain how this generalizes many well-known models of random discrete structures.

As a motivation, consider the species $\cA$ of rooted unordered trees. Any such tree consists of a root vertex together with an unordered list of rooted trees attached to it. This may be expressed in the grammar of Section~\ref{sec:opspe} by an isomorphism
\begin{align}
\label{eq:tmp1}
\cA \simeq \cX \cdot \Set(\cA),
\end{align}
with $\cX$ denoting the species consisting of a single object of size $1$, and $\Set$ the species having a single object of size $k$ for each $k \in \ndN_0$. Enriched trees are rooted trees where the offspring set of each vertex is decorated with an additional structure. They are characterized by a similar isomorphism as \eqref{eq:tmp1}. Let $\cR$ be a combinatorial species. The species of {\em $\cR$-enriched trees} $\cA_\cR$ is constructed as follows. For each finite set $U$ let $\cA_\cR[U]$ be the set of all pairs $(A, \alpha)$ with $A \in \cA[U]$ a rooted unordered tree with labels in $U$, and $\alpha$ a function that assigns to each vertex $v$ of $A$ with offspring set $M_v$ an $\cR$-structure $\alpha(v) \in \cR[M_v]$. The transport along a bijection $\sigma: U \to V$ relabels the vertices of the tree and the $\cR$-structures on the offspring sets accordingly. That is, $\cA_\cR[\sigma]$ maps the enriched tree $(A, \alpha)$ to the tree $(B, \beta)$ with $B=\cA[\sigma](A)$ and $\beta(\sigma(v)) = \cR[\sigma|_{M_{v}}](\alpha(v))$ for each $v \in A$. Analogous to \eqref{eq:tmp1}, the species of $\cR$-enriched trees satisfies an isomorphism
\begin{align}
\label{eq:renr}
\cA_\cR \simeq \cX \cdot \cR(\cA_\cR),
\end{align}
as any $\cR$-enriched tree consists of a root vertex (corresponding to the factor $\cX$) together with an $\cR$-structure, in which each atom is identified with the root of a further $\cR$-enriched tree. Conversely, Joyal's theorem of implicit species \cite[Thm. 6]{MR633783} ensures that given any species $\cF$ with an isomorphism $\cF \simeq \cX \cdot \cR(\cF)$, there is a natural choice of an isomorphism
$
\cF \simeq \cA_\cR.
$ As the examples below show, many classes of combinatorial objects that have been studied by both combinatorialists and probabilists admit a decomposition as in \eqref{eq:renr}, and may hence be treated in a unified way by working with enriched trees.

We consider weightings on the enriched trees that are based on weights on the $\cR$-structures. Let $\kappa$ be a weighting on the species $\cR$. \label{mark:rk} Then we obtain a weighting $\omega$ on the species $\cA_\cR$ given by \label{mark:ao}
\begin{align}
\label{eq:omw}
\omega(A, \alpha) = \prod_{v \in A} \kappa(\alpha(v)).
\end{align}
This weighting is consistent with the isomorphism in \eqref{eq:renr}, that is,
\begin{align}
\label{eq:weighting}
\cA_\cR^\omega \simeq \cX \cdot \cR^\kappa(\cA_\cR^\omega).
\end{align}

\label{mark:an} For each integer $n$ with  $|\cA_\cR[n]|_\omega >0$ we may consider the random labelled enriched tree $\mA_n^\cR$, drawn with probability  proportional to its weight among all labelled objects from $\cA_\cR[n]$. In the following, we illustrate how this models of random enriched trees generalizes a large variety of random combinatorial objects.

\subsubsection{Simply generated (plane) trees}
A natural example of a random enriched tree is the simply generated plane tree $\cT_n$ discussed in Section~\ref{sec:simplygen}. Given a weight sequence $\om = (\omega_k)_{k \in \ndN_0}$ of non-negative real numbers with $\omega_0>0$ and $\omega_k >0$ for at least one $k \ge 2$, we may consider the weighting $\kappa$ on the species $\cR = \Seq$ of linear orders that assigns weight $\omega_k$ to each linear order on a $k$-element set. Then $\cA_\cR$ is the species of ordered rooted trees and $\mA_n^\cR$ is the random simply generated plane tree $\cT_n$ with $n$ vertices. Strictly speaking, the vertices of $\mA_n^\cR$ are additionally labelled from $1$ to $n$, but as any plane tree with $n$ vertices has $n!$ different labellings, this does not make a difference. 

\subsubsection{Random block-weighted graphs}
\label{sec:bijblock}

Let \label{pp:c}$\cC$ denote the species of connected graphs and \label{pp:b}$\cB$ the subspecies of graphs that are $2$-connected or consist of two distinct vertices joined by an edge.
There is a well-known decomposition 
\begin{align}
\label{eq:blode}
\cC^\bullet \simeq \cX \cdot \Set(\cB'(\cC^\bullet))
\end{align}
illustrated in Figure~\ref{fi:enrblockgraph}, that allows us to identify the species  $\cC^\bullet$  of rooted connected graphs with $\Set \circ \cB'$-enriched trees. That is, rooted trees, in which each offspring set gets partitioned, and each partition class $Q$ carries a $\cB'$-structure, that has $|Q| +1$ vertices, as the $*$-vertex receives no label. The isomorphism \eqref{eq:blode} can be found for example in Harary and Palmer \cite[1.3.3, 8.7.1]{MR0357214},  Robinson \cite[Thm. 4]{MR0284380}, and Labelle \cite[2.10]{MR699986}. 

The idea behind \eqref{eq:blode} is the block-decomposition of connected graphs. A block of a graph $G$ is a maximal connected subgraph that does not contain a cutvertex of itself, that is, deleting any vertex does not disconnect the block. Any edge of the graph lies in precisely one block and any two blocks may intersect in at most one vertex. The cutvertices of $G$ are precisely the vertices that belong to more than one block, see for example Diestel's book on graph theory \cite[Ch.3]{MR2744811}. Hence any rooted graph consists of the root-vertex (accounting for the factor $\cX$ in \eqref{eq:blode}), and an unordered list of blocks incident to the root vertex, where at each non-root vertex a further rooted graph is inserted (accounting for the factor $\Set \circ \cB' \circ \cC^\bullet$).

\begin{figure}[t]
	\centering
	\begin{minipage}{1.0\textwidth}
		\centering
		\includegraphics[width=1.0\textwidth]{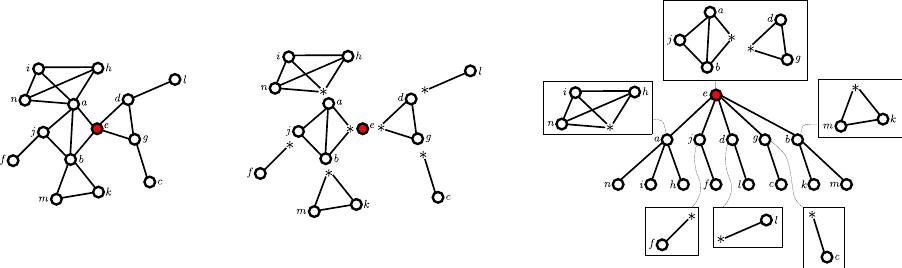}
		\caption{Correspondence of rooted connected graphs and enriched trees.}
		\label{fi:enrblockgraph}
	\end{minipage}
\end{figure}

If we fix a weighting $\gamma$ on $\cB$, we may consider the weighting $\omega$ on $\cC$ that assigns weight $\omega(C) = \prod_B \gamma(B)$ to any graph $C$, with the index $B$ ranging over the blocks of $C$. The random graph $\mC_n^\omega$ drawn from $\cC[n]$ with probability proportional to its $\omega$-weight is distributed like the random enriched tree $\mA_n^\cR$ for the weighted species $\cR^\kappa=(\Set \circ \cB')^\kappa$, with $\kappa$ assigning the product of the $\gamma$-weights of the individual classes to any assembly of $\cB'$-structures. Note that formally $\mA_n^\cR$ is a random rooted graph from $\cC^\bullet[n]$, but we may simply drop the root in order to obtain \label{pp:cn} $\mC_n^\omega$.

If we set the $\gamma$-weights of blocks to zero, we obtain random connected graphs from so called  {\em block-stable} classes, that is, classes of graphs defined by placing constraints on the allowed blocks. A well-known example is the class of planar graphs, where each graph (equivalently, each block of the graph) is required to admit an embedding in the complex plane, such that any two distinct edges may only intersect at their endpoints. More generally, any class of graphs \label{pp:ex}$\mathbf{Ex}(M)$ that may be defined by excluding a set $M$ of $2$-connected minors is also block-stable. Here a {\em minor} of a graph $G$ refers to any graph that may be obtained from $G$ by repeated deletion and contraction of edges. Kuratowski's theorem \cite[Thm. 4.4.6]{MR2744811} states that any graph is planar if and only if it does not admit the complete graph $K_5$ or the complete bipartite graph $K_{3,3}$ as minor, identifying the class of planar graphs with $\mathbf{Ex}(K_5, K_{3,3})$. Further prominent examples are outerplanar graphs ($\mathbf{Ex}(K_4, K_{2,3})$), that may be drawn in the plane such that each vertex lies on the frontier of the infinite face, and series-parallel graphs ($\mathbf{Ex}(K_4)$), that may be constructed similar to electric networks in terms of repeated serial and parallel composition. These two classes fall under the more general setting of random graphs from {\em subcritical} block-classes in the sense of  Drmota, Fusy, Kang, Kraus and Ru\'e \cite{MR2873207}, which also are special cases of the random graph $\mC_n^\omega$.

\subsubsection{Random dissections of polygons and  Schr\"oder enriched parenthesizations}

\label{sec:diss}

\begin{figure}[t]
	\centering
	\begin{minipage}{1.0\textwidth}
		\centering
		\includegraphics[width=0.4\textwidth]{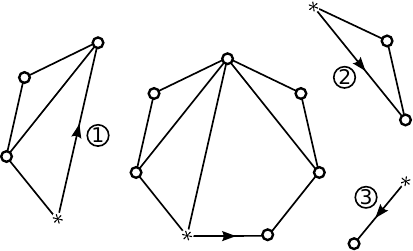}
	\caption{Decomposition of edge-rooted dissections of polygons.}
	\label{fi:disspol}
	\end{minipage}
\end{figure}

Consider a convex polygon $P$ in the complex plane, whose corners are the $n$-th roots of unity. If we add an arbitrary number of diagonals to $P$ in such a way, that different diagonals may only intersect at their endpoints, we obtain a {\em dissection} of $P$. We may interpret dissections of polygons as simple rooted planar maps, by distinguishing the edge from $1$ to $\exp(2\pi i/n)$. Let \label{pp:d}$\cD$ denote the class of edge-rooted dissections of polygons. It will be convenient to define the size of any $\cD$-object to be the number of non-root vertices (that is, vertices different from the origin of the root-edge), and allow a "degenerate" dissection consisting of a single root edge to be an element of $\cD$.

Any dissection consists of a root face, where each non-root edge  is identified with the root edge of a smaller dissection. Hence any element $D \in \cD$ where the root-face has degree $k$, may be interpreted as an ordered sequence $(D_1,\ldots, D_{k-1})$ of $k-1$ smaller $\cD$-objects. Since we do not count root vertices, the  size of $D$ agrees with the sum of the sizes of the $D_i$. This yields an isomorphism
\begin{align}
\label{eq:decompdis}
\cD \simeq \cX + \Seq_{\ge 2} \circ \cD,
\end{align}
with the summand $\cX$ corresponding to a single root-edge, and $\Seq_{\ge \ell}$ denoting the species of linear orders with length at least $\ell$. Compare with Figure~\ref{fi:disspol}, where the root vertices are depicted as a $*$-placeholders, in order to illustrate that they do not count as regular vertices. The isomorphism in \eqref{eq:decompdis} is a slight modification of a decomposition established by Bernasconi, Pangiotou and Steger \cite[Eq. (3.1)]{MR2789731}.

Given a species $\cN$ with no structures of size zero or one, we may consider the species $\cS_{\cN}$ of {\em Schr\"oder $\cN$-enriched parenthesizations}. For any finite set $U$, a structure in $\cS_{\cN}[U]$ can be described as a rooted tree whose leaves are labelled with elements of $U$, such that to each (unlabelled) internal vertex $v$ with offspring set $M_v$ an $\cN$-structure from $\cN[M_v]$ is assigned. The species $\cS_{\cN}$ satisfies an isomorphism of the form
\begin{align}
\label{eq:schr}
\cS_{\cN} \simeq \cX + \cN \circ \cS_\cN,
\end{align}
see Ehrenborg and M\'endez \cite[Def. 2.1]{MR1284403}. Joyal's theorem of implicit species \cite[Thm. 6]{MR633783} ensures that given any species $\cS$ with an isomorphism $\cS \simeq \cX + \cN(\cS)$, there is a natural choice of an isomorphism
$
\cS \simeq \cS_\cN.
$
In particular, \eqref{eq:decompdis} allows us to identify the class $\cD$ of edge-rooted dissections of polygons with $\Seq_{\ge 2}$-enriched Schr\"oder parenthesizations.

\begin{figure}[t]
	\centering
	\begin{minipage}{1.0\textwidth}
		\centering
		\includegraphics[width=0.95\textwidth]{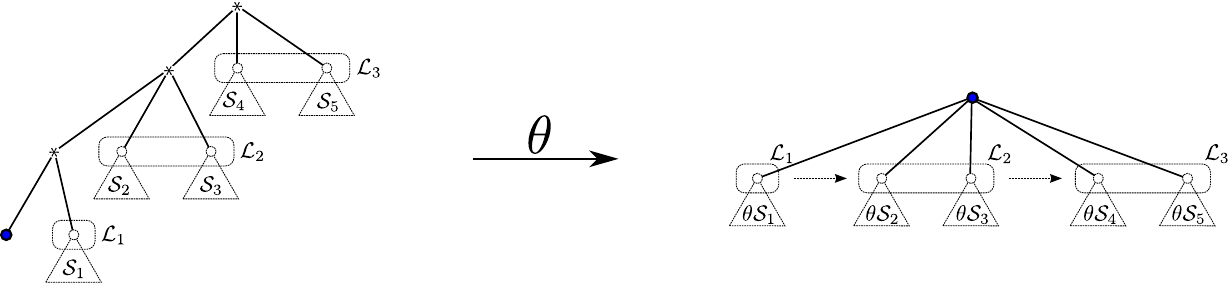}
		\caption{The Ehrenborg--M\'endez transformation of Schr\"oder ($\cX \cdot \cL$)-enriched parenthesizations into $\Seq\circ\cL$-enriched trees.}
		\label{fi:ehrenb}
	\end{minipage}
\end{figure}

Suppose that each object of the species $\cN$ admits a canonical point of reference, that is, $\cN \simeq \cX \cdot \cL$ for some species $\cL$. Ehrenborg and M\'endez \cite[Prop. 2.1]{MR1284403} showed that there is an isomorphism
\begin{align}
	\label{eq:schrenr}
	\cS_{\cN} \simeq \cA_{\Seq \circ \cL}
\end{align}
which identifies Schr\"oder $\cN$-enriched parenthesizations with $\Seq \circ \cL$-enriched trees. The idea is that any $\cS_{\cN}$-object  consists of a leaf-labelled tree where each unlabelled internal vertex $v$ with offspring set $M_v$ has a preferred son $v_0 \in M_v$ and an $\cL$-structure on $M_v \setminus \{v_0\}$. Starting at the root, we may follow the preferred sons until reaching a leaf, and the $\cL$-structures along that path form a $\Seq\circ \cL$-structure that we assign to the label of the leaf. Compare with Figure~\ref{fi:ehrenb}. 
Each atom of the $\Seq \circ \cL$-structure is the root of a smaller Schr\"oder $\cN$-enriched parenthesization, hence we may continue in this way until the whole parenthesization got explored, yielding a $\Seq \circ \cL$-enriched tree.

We may choose the last element of any $\Seq_{\ge2}$-structure as its point of reference, yielding an isomorphism between $\Seq_{\ge2}$ and $\cX \cdot \Seq_{\ge 1}$. Hence the isomorphism \eqref{eq:schrenr} allows us to identify the species $\cD$ of edge-rooted dissections of polygons with $\Seq \circ \Seq_{\ge 1}$-enriched trees. That is,
\begin{align}
	\label{eq:isodis}
	\cD \simeq \cX \cdot \Seq(\Seq_{\ge 1}(\cD)).
\end{align}
Here any $\Seq \circ \Seq_{\ge 1}$-structure corresponds to a dissection of a polygon, where each diagonal must be incident with the destination of the root-edge, and the vertices incident to the root-edge do not count as regular vertices. See  Figure~\ref{fi:decdis} for an illustration of the correspondence \eqref{eq:isodis}.

\begin{figure}[t]
	\centering
	\begin{minipage}{1.0\textwidth}
		\centering
		\includegraphics[width=1.0\textwidth]{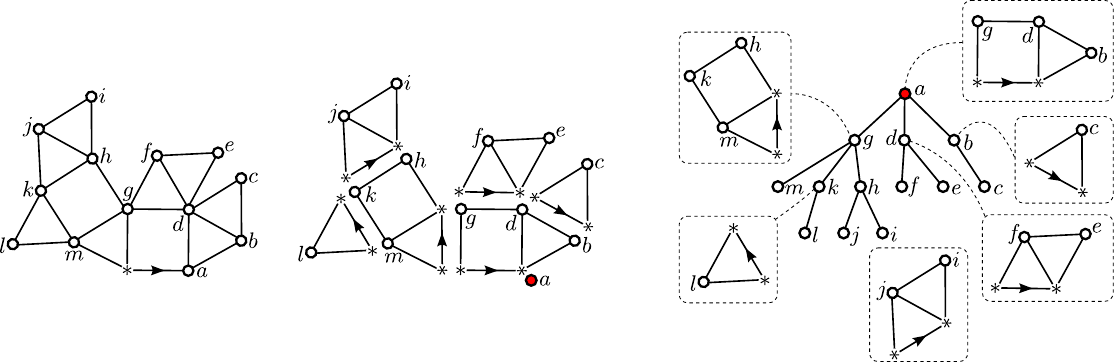}
		\caption{Correspondence of edge-rooted dissections of polygons and enriched trees}
		\label{fi:decdis}
	\end{minipage}
\end{figure}

Given a sequence of non-negative weights $\gamma_3, \gamma_4 \ldots$ with $\gamma_k >0$ for at least one $k$, we may assign to each dissection  \label{pp:dn}$D$ of a polygon the weight \[\omega(D) = \prod_F \gamma_{|F|},\] with the index $F$ ranging over the inner faces of $D$, and $|F|$ denoting the face-degree. The random dissection $\mD_n^\omega$ of an $n$-gon that gets drawn with probability proportional to its $\omega$-weight is distributed like the random enriched tree $\mA_{n-1}^\cR$ for the weighted species $\cR^\kappa = \Seq \circ \Seq_{\ge 1}^\gamma$ with the weighting $\gamma$ given by $\Seq_{\ge 1}^\gamma(z) = \sum_{k=1}^\infty \gamma_{k+2} z^k$. This model of a random plane graph has received some attention in recent literature. A particular highlight is the work by Curien, Haas and Kortchemski \cite{MR3382675}, who established the continuum random tree as the scaling limit of $\mD_n^\omega$, if the weight-sequence $(\gamma_k)_k$ satisfies certain conditions.

\subsubsection{Random outerplanar maps with face weights or block weights}
\label{sec:decompouter}

Half-edge rooted planar maps are so called {\em asymmetric} objects. That is, any object with $n$ vertices may be labelled in precisely $n!$ ways, using a fixed $n$-element set of labels. Hence it makes no difference, whether we treat random labelled or unlabelled maps. In the following we are going to work with classes of labelled maps, in order to stay consistent with the framework of the present paper.

Let \label{pp:o}$\cO$ denote the class of rooted simple outerplanar maps with vertices as atoms. Moreover, let $\cD$ denote the class of rooted non-separable simple outerplanar maps, in which the origin of the root-edge is replaced by a $*$-vertex that does not contribute to the size of the maps. Any non-separable simple outerplanar map with at least $3$ vertices has a unique Hamilton cycle given by the boundary of the outer face. Hence $\cD$ is the class of dissections of edge-rooted  polygons.

The class of simple outerplanar maps admits a tree-like decomposition according to the blocks, which was established in Stufler \cite{MR3634279}. Any such map can be constructed in a unique manner as follows. Start with a root vertex, then take an ordered (possibly empty) sequence of dissections and glue them together at the root vertex in a counter-clockwise way. The root edge of the first map in the sequence becomes the root edge of the resulting map, and we declare root vertex as marked. For each unmarked vertex left, take another ordered (possibly empty) sequence of dissections, glue them together in a counterclockwise way at that vertex, and finally declare that vertex as marked. Repeat the last step, until no unmarked vertices are left.

\begin{figure}[t]
	\centering
	\begin{minipage}{1.0\textwidth}
		\centering
		\includegraphics[width=1.0\textwidth]{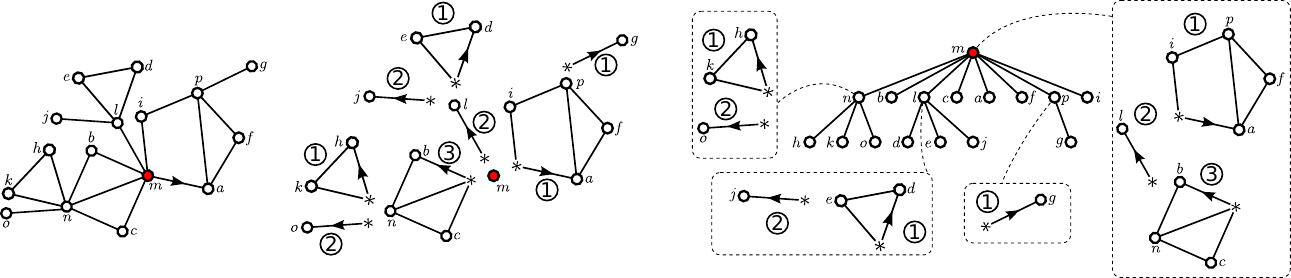}
		\caption{Correspondence of rooted simple outerplanar maps to labelled enriched trees.}
		\label{fi:decompout}
	\end{minipage}
\end{figure}

This may be expressed in the language of species as follows. Let $\Seq$ denote the species of linear orders. Hence $\Seq \circ \cD$ is the class of ordered sequences of dissections, in which the root vertices of the dissections do not contribute to the total size of the objects. If for each vertex $v$ of an outerplanar map we let $\alpha(v)$ denote the  $\Seq \circ \cD$-object corresponding to $v$ in the above decomposition, and declare each non-$*$-vertex of $\alpha(v)$ as the offspring of $v$, then we end up with an encoding of this map as an $\Seq \circ \cD$-enriched tree $(T,\alpha)$. This yields an isomorphism between $\cM^\text{out}$ and the species of $\Seq \circ \cD$-enriched trees. The corresponding recursive isomorphism as in \eqref{eq:renr} reads as follows:
\begin{align}
	\label{eq:suboutl}
	\cO \simeq \cX \cdot \Seq(\cD(\cO)).
\end{align}

If we fix a weighting $\gamma$ on $\cD$, for example the weighting considered in Section~\ref{sec:diss}, then we may consider the weighting
$\omega$ on $\cO$ that assigns weight $\omega(M) = \prod_D \gamma(D)$ to any outerplanar map $M$, with the index $D$ ranging over the blocks of $M$. The random map \label{pp:on}$\mO_n^\omega$ drawn from $\cO[n]$ with probability proportional to its $\omega$-weight is distributed like the random enriched tree $\mA_n^\cR$ for the weighted species $\cR^\kappa=(\Seq \circ \cD)^\kappa$, with $\kappa$ assigning the product of the $\gamma$-weights of the individual dissections to any ordered sequence of $\cD$-structures. This encompasses the uniform outerplanar map, which received some attention  in recent literature, particularly due to the work by Caraceni~\cite{caraceni2016}, who established the continuum random tree as its scaling limit. A further natural example of $\mO_n^\omega$ is that of random bipartite outerplanar maps, which is obtained by  setting the $\gamma$-weights of unwanted (that is, not bipartite) dissections to zero.

The Ehrenborg--M\'endez isomorphism discussed in Section~\ref{sec:diss} yields a weight-preserving isomorphism
\begin{align}
	\label{eq:outerschroeder}
	\cO^\omega \simeq \cX + (\cX \cdot \cD^\gamma)(\cO^\omega)
\end{align}
which identifies weighted outerplanar maps as Schr\"oder $(\cX \cdot \cD^\gamma)$-enriched parenthesizations. The combinatorial interpretation of Equation~\eqref{eq:outerschroeder} is that any outerplanar map is either a single vertex (accounting for the summand $\cX$) or an edge-rooted dissection of a polygon, where each vertex (including the origin of the root-edge, which is why we multiply $\cD^\gamma$ by $\cX$) gets identified with the origin of the root-edge of another outerplanar map.

\begin{figure}[t]
	\centering
	\begin{minipage}{1.0\textwidth}
		\centering
		\includegraphics[width=1.0\textwidth]{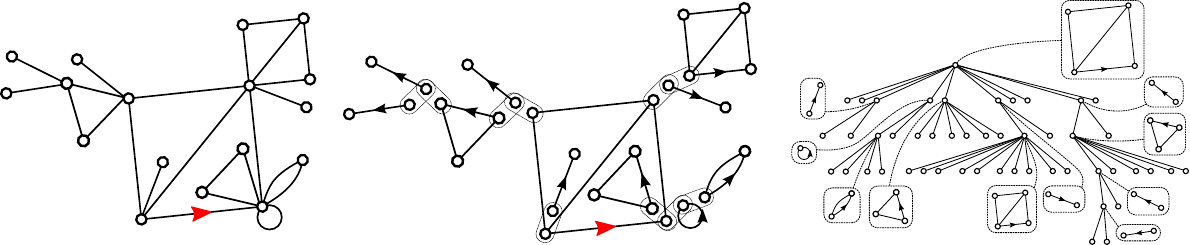}
		\caption{Correspondence of rooted planar maps to trees enriched with non-separable maps.}
		\label{fi:decplanar}
	\end{minipage}
\end{figure}

\subsubsection{Random planar maps with block-weights}
\label{sec:decmaps}
Let \label{pp:m}$\cM$ denote the species of rooted planar maps whose atoms are corners, or equivalently half-edges. Let $\cQ$ denote the subclass of all non-separable maps. Tutte's "substitution decomposition" (see for example Banderier, Flajolet, Schaeffer, 
and Soria \cite{MR1871555} and Flajolet and Sedgewick  \cite[Ex. IX.42]{MR2483235}) states that any rooted planar map consists of a non-separable block or core $Q$ that contains the root-edge, where for each vertex $v$ of $Q$ and each corner $c$ incident to $v$ an arbitrary  rooted map $M_c$ is attached to $v$  by drawing $M_c$ in the face corresponding to $c$ and identifying the root-vertex of $M_c$ with the vertex $v$. Hence
\begin{align}
	\cM \simeq \cQ(\cX \cdot \cM).
\end{align}
This identifies the species $\cX \cdot \cM$ as $\cQ$-enriched trees. The canonical isomorphism is illustrated in Figure~\ref{fi:decplanar}. Given a weighting $\kappa$ on the species \label{pp:q}$\cQ$, we may assign the weight 
\[
	 \omega(M) = \prod_Q \kappa(Q)
\]
to any map $M$, with the index $Q$ ranging over all maximal non-separable submaps of $M$. Let \label{pp:mn}$\mM^\omega_n$ denote the random planar map with $n$ edges drawn with probability proportional to its $\omega$-weight. Then $\mM^\omega_n$ is distributed like the map corresponding to the random enriched tree $\mA_{2n+1}^{\cQ^\kappa}$.

A planar map is simple if and only if all its maximal non-separable submaps are simple. The same holds for many other properties, such as being bipartite, loopless, or bridgeless. We may set the $\kappa$-weight of unwanted blocks to zero in order for the random map $\mM_n^\omega$ to satisfy any subset of these constraints.

\subsubsection{Random $k$-dimensional trees}
\label{sec:decktrees}
A $k$-tree is a simple graph obtained by starting with a $k$-clique $K_k$ and adding in each step a vertex and $k$ distinct edges from the vertex to the graph. For example, $1$-trees are simply unordered trees. Any $k$-clique in a $k$-tree is termed a front, a $(k+1)$-clique a hedron. Throughout we fix $k$ and let \label{pp:k}$\cK$ denote the species of $k$-trees. Let \label{pp:kn}$\mK_n$ denote the uniform random $k$-tree with $n+k$ vertices, or equivalently $n$ hedra. Any $k$-tree with $n$ hedra may be rooted at $\binom{n+k}{k}$ different fronts. So if  \label{pp:kc}$\cK^\circ$ denotes the species of $k$-trees that are rooted at a front consisting of distinct $*$-placeholder vertices, then $\mK_n$ may be sampled by taking a uniform random element from $\cK^\circ[n]$.
This reduces the study of labelled $k$-trees to the study of front-rooted $k$-trees, for which a decomposition is available \cite{MR2577935}.

Let \label{pp:kc1} $\cK_1^\circ$ denote the subspecies of $\cK^\circ$ where the root-front is contained in precisely one hedron. Clearly any element from $\cK^\circ$ may be obtained in a unique way by  glueing an arbitrary unordered collection of $\cK_{1}^\circ$-objects together at their root-fronts. Hence
\begin{align}
	\label{eq:lab1}
	\cK^\circ \simeq \Set(\cK_{1}^\circ).
\end{align}

\begin{figure}[t]
	\centering
	\begin{minipage}{1.0\textwidth}
		\centering
		\includegraphics[width=0.27\textwidth]{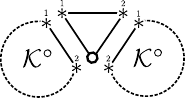}
	\caption{Decomposition of the class $\cK_1^\circ$ for $k=2$.}
	\label{fi:ktree}
	\end{minipage}
\end{figure}

Any $\cK_{1}^\circ$-object may be constructed in a unique way as illustrated in Figure~\ref{fi:ktree}, by starting with a hedron $H$ consisting of the root-front and a vertex $v$, and then choosing, for each front $M$ of $H$ that contains $v$, a $k$-tree from $\cK^\circ$ whose root-front gets identified in a canonical way with $M$. Hence
\begin{align}
	\label{eq:lab2}
	\cK_{1}^\circ \simeq \cX \cdot \Seq_{\{k\}}(\cK^\circ).
\end{align}
Combining the isomorphisms in \eqref{eq:lab1} and \eqref{eq:lab2} yields
\begin{align}
	\label{eq:lab3}
	\cK_{1}^\circ \simeq \cX \cdot (\Seq_{\{k\}}\circ \Set)(\cK_{1}^\circ).
\end{align}
This identifies the species $\cK_{1}^\circ$ as $\Seq_{\{k\}}\circ \Set$-enriched trees, and the species $\cK^\circ$ as unordered forest of enriched trees.

\subsubsection{Simply generated trees with leaves as atoms}
\label{sec:lgwt}

We may consider the species $\cT_\ell$ of plane trees with leaves as atoms, such that no vertex is allowed to have outdegree $1$. This way, only finitely many trees correspond to any given finite set of atoms. 
The species $\cT_\ell$ admits the decomposition
\[
\cT_\ell \simeq \cX + \Seq_{\ge 2}(\cT_\ell),
\]
as any such tree is either a single root-vertex or an internal vertex, that does not contribute to the total size, with an ordered sequence of at least two such trees dangling from it. 

In combinatorial terminology, the class $\cT_\ell$ is a so called Schr\"oder-enriched parenthesization. We may write \[\Seq_{\ge 2} \simeq \cX \cdot \Seq_{\ge 1}\] by distinguishing any canonical element of the order, for example the left-most or the right-most.  The Ehrenborg--M\'endez transformation \eqref{eq:schr}  illustrated in Figure~\ref{fi:ehrenb} now yields
\begin{align}
\label{eq:emiso}
\cT_\ell \simeq \cX \cdot (\Seq \circ \Seq_{\ge 1})(\cT_\ell).
\end{align}
Given a weight-sequence $(\gamma_k)_{k \ge 2}$ of non-negative real numbers with $\gamma_k >0$ for at least one $k$, we may assign the weight
\[
\omega(T) = \prod_{v} \gamma_{d_T^+(v)}
\]
to any plane tree $T$, with the index $v$ ranging over all internal vertices of $T$. That is, vertices that are not leaves. 
We define a weighting $\gamma$ on $\Seq_{\ge 1}$, such that
\[
\Seq^\gamma_{\ge 1}(z) = \sum_{k \ge 1} \gamma_{k+1} z^{k}.
\]
As the Ehrenborg--M\'endez isomorphism \eqref{eq:emiso} is compatible with these weightings, we obtain
\[
\cT_\ell^\omega \simeq \cX \cdot (\Seq \circ \Seq_{\ge 1}^\gamma)(\cT_\ell^\omega).
\]
Thus, for $\cR^\kappa = \Seq \circ \Seq^\gamma_{\ge1}$, the random enriched tree $\mA_n^\cR$ corresponds to a random plane tree with $n$ leaves drawn with probability proportional to its $\omega$-weight.

\subsection{Local convergence of random enriched trees near the root node}
\label{sec:partB}
{\em Throughout this  section, let $\cR^\kappa$ be a weighted species such that the weight sequence $\mathbf{w} = (\omega_k)_k$ with $\omega_k = |\cR[k]|_\kappa / k!$ satisfies $\omega_0>0$ and $\omega_k > 0$ for some $k \ge 2$. Moreover, let $\omega$ be the corresponding weighting on the species $\cA_\cR$ of $\cR$-enriched trees, as given in Equation~\eqref{eq:weighting}. }

\label{sec:convnearroot}
In order to formalize local convergence, it is convenient to work with objects that we will call {\em $\cR$-enriched plane trees} in the following, that is, pairs $(T, \beta)$ of a plane tree $T$ and a map $\beta$ that maps each vertex $v$ of $T$  to an $\cR$-structure $\beta(v) \in \cR[d_T^+(v)]$ with $d_T^+(v)$ denoting the outdegree.

Recall that, as discussed in Section~\ref{se:loconv}, any plane tree may be viewed as a subtree of the infinite {\em Ulam--Harris tree} $\UHT$ whose vertex set $\VHT$ consists of the finite sequences of positive integers.  As there is a  canonical bijection between the set of numbers $[d_T^+(v)] = \{1, 2, \ldots,  d_T^+(v)\}$ and the offspring set $v1, v2, \ldots, vd_T^+(v)$ for any vertex $v \in T$, this allows us to interpret an enriched plane tree $(T, \beta)$ as an enriched tree.

The following lemma provides a coupling that allows us to make use of the wealth of results for simply generated trees in order to study random enriched trees.

\begin{lemma}[A coupling of random $\cR$-enriched trees with simply generated trees]
	\label{le:coupling}
	Let $n \in \ndN$ with $|\cA_\cR[n]|_\omega > 0$ be given.
	The outcome $\mA^\cR_n = (\mA_n, \alpha_n)$ of the following procedure draws a random enriched tree from the set $\cA_\cR[n]$ with probability proportional to its $\omega$-weight.
	\begin{enumerate}
		\item Draw a simply generated plane tree $\cT_n$ of size $n$ according to the weight sequence $\om$.
		\item For each vertex $v \in V(\cT_n)$ choose an $\cR$-structure \[\beta_n(v) \in \cR[d_{\cT_n}^+(v)]\] at random with conditional distribution given by
		\[
		\Pr{\beta_n(v) = R \mid \cT_n} = \kappa(R) / |\cR[d_{\cT_n}^+(v)]|_\kappa
		\]
		for all $R \in \cR[d_{\cT_n}^+(v)]$.
		\item Choose a bijection \[\sigma: V(\cT_n) \to [n]\] between the vertex set of $\cT_n$ and the set $[n]$ uniformly at random, and distribute labels by applying the transport function: \[(\mA_n, \alpha_n) = \cA_\cR[\sigma](\cT_n, \beta_n).\]
	\end{enumerate}
\end{lemma}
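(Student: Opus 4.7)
The plan is to compute $\Pr{\mA_n^\cR = F}$ directly for an arbitrary fixed target $F = (A,\alpha) \in \cA_\cR[n]$ and show it is proportional to $\omega(F)$, uniformly in $F$. The key is a bookkeeping identity that matches the $d_A^+(u)!$ factors appearing in orderings of offspring sets against the ratio $\omega_k / |\cR[k]|_\kappa = 1/k!$ built into the simply generated weight sequence.

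First I would write
\[
\Pr{\mA_n^\cR = F} \;=\; \sum_{(T,\beta,\sigma)} \Pr{\cT_n = T}\,\Pr{\beta_n = \beta \mid \cT_n = T}\,\tfrac{1}{n!},
\]
where the sum ranges over triples such that $\cA_\cR[\sigma](T,\beta) = F$. Since the functoriality of $\cA_\cR$ breaks up as a transport of $\cA$ on the rooted tree part and a transport of $\cR$ on each offspring set, once $(T,\sigma)$ has been fixed the decoration $\beta$ is uniquely recovered from $\alpha$ via the inverse transport $\cR[\sigma|_{M_v}^{-1}]$ on each offspring set $M_v$. The sum thus reduces to one over pairs $(T,\sigma)$.

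Next I would enumerate the pairs $(T,\sigma)$ with $\cA[\sigma](T) = A$. The shape $T \subset \UHT$ and the bijection $\sigma : V(T) \to [n]$ together are equivalent to endowing $A$ with a plane-tree structure (i.e. a total ordering of each offspring set), since reading off that ordering recovers $\sigma$ by sending the first offspring of the root to the label of the first child in $A$, and so on. Consequently the number of admissible pairs equals $\prod_{u \in A} d_A^+(u)!$.

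For each such triple, $\Pr{\cT_n = T} = Z_n^{-1}\prod_v \omega_{d_T^+(v)}$ and $\Pr{\beta_n = \beta \mid \cT_n = T} = \prod_v \kappa(\beta(v))/|\cR[d_T^+(v)]|_\kappa$. Since the transport preserves $\kappa$-weights, $\prod_v \kappa(\beta(v)) = \prod_{u \in A} \kappa(\alpha(u)) = \omega(F)$, and since $\omega_k = |\cR[k]|_\kappa/k!$ by the definition of $\mathbf{w}$, we have $\omega_{d_T^+(v)}/|\cR[d_T^+(v)]|_\kappa = 1/d_A^+(\sigma(v))!$. Collecting everything,
\[
\Pr{\mA_n^\cR = F} \;=\; \frac{\omega(F)}{Z_n\, n!\,\prod_{u \in A} d_A^+(u)!}\cdot \prod_{u \in A} d_A^+(u)! \;=\; \frac{\omega(F)}{Z_n\, n!}.
\]
Summing over $F \in \cA_\cR[n]$ yields $|\cA_\cR[n]|_\omega = Z_n\, n!$ as a byproduct, and so $\Pr{\mA_n^\cR = F} = \omega(F)/|\cA_\cR[n]|_\omega$, which is the desired distribution.

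The only delicate step is the counting in the second paragraph: one must be careful to separate the combinatorial role of the plane structure of $T$ (which provides a canonical identification of the abstract offspring set $\{v1,\dots,vd_T^+(v)\}$ with $[d_T^+(v)]$, so that $\cR$-structures are defined on a linearly ordered set) from the labelling role of $\sigma$, and to verify that the transport $\cA_\cR[\sigma]$ acts compatibly on both pieces. Once this is stated cleanly the computation is essentially automatic.
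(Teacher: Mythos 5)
Your proof is correct and takes essentially the same approach as the paper's: you compute $\Pr{\mA_n^\cR = F}$ directly, reduce the sum over $(T,\beta,\sigma)$-triples to a count of $\prod_{u \in A} d_A^+(u)!$ plane structures on $A$, and let the $1/k!$ factor built into $\omega_k = |\cR[k]|_\kappa/k!$ cancel this count. The paper organizes the enumeration by summing over enriched plane trees $(T,\beta)$ (with $\sigma$ determined) while you sum over pairs $(T,\sigma)$ (with $\beta$ determined), but this is the same bijection read in the other direction, and the resulting identity $|\cA_\cR[n]|_\omega = n!\,Z_n$ appears in both.
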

By corresponding results for simply generated trees recalled in Section~\ref{sec:pretree}, we know that $|\cA_\cR[n]|>0$ implies  $n \equiv 1 \mod \spa(\mathbf{w})$ and conversely, if $n \equiv 1 \mod \spa(\mathbf{w})$ is large enough, then $|\cA_\cR[n]|>0$.  The random enriched plane tree $(\cT_n, \beta_n)$ encodes all information about the enriched tree $\mA_n$ apart from the labeling. The vertices of enriched plane trees have unique coordinates which allow us to encode these objects as elements of a product space as follows.

If the maximum size of an $\cR$-object is finite, we equip the finite set
\begin{align}
	\label{eq:spacex0}
	X := \{*\} \sqcup \bigsqcup_{n \ge 0} \cR[n]
\end{align}
 with the discrete metric. Here $*$ denotes some placeholder value. Otherwise, if the sizes of $\cR$-objects are unbounded, we instead let
\begin{align}
\label{eq:spacex}
X := \{*, \infty\} \sqcup \bigsqcup_{n \ge 0} \cR[n]
\end{align} such that the set $X \setminus \{\infty\}$ is equipped with the discrete topology and the space $X$ is the corresponding one-point compactification. Clearly $X$ is a compact Polish space in both cases, and so is the product $X^{\VHT}$ with countably many factors.

An $\cR$-enriched plane tree $(T, \beta)$ may be encoded as an element of $X^{\VHT}$ by setting $\beta(v) := *$ for all vertices $v \in \VHT \setminus V(T)$. (There could be various $\cR$-structures of size $0$, which is why we make use of the $*$-placeholder.) 
Let $\fmA \subset X^{\VHT}$ denote the subset of all $\cR$-enriched plane trees that may have vertices with infinite degree. We do not require the offspring set of such a vertex $v$ to be endowed with an additional structure and set $\beta(v) := \infty$. The subset $\fmA$ is closed and hence also a compact Polish space, see the proof of Theorem \ref{te:local} in Section~\ref{sec:propartA} for details.

We may now state our first main theorem which ensures the local convergence of our model of random enriched trees. Janson \cite[Thm. 7.1]{MR2908619} showed the local convergence of simply generated trees (i.e. the case $\cR = \Seq$) in this generality, and our proof builds on this result. Various subcases of simply generated trees were treated separately earlier, see Kennedy \cite{MR0386042}, Aldous and Pitman \cite{MR1641670}, Jonsson, and Stef{\'a}nsson \cite{MR2764126}, and Janson, Jonsson, and Stef{\'a}nsson \cite{MR2860856}. 
\begin{theorem}[Local convergence of random $\cR$-enriched trees]
	\label{te:local}

	Let $(\cT_n, \beta_n)$ denote the random $\cR$-enriched plane tree from Lemma~\ref{le:coupling}. 
	We define the random modified $\cR$-enriched plane tree $(\hat{\cT}, \hat{\beta})$ as follows.
	\begin{enumerate}
		\item Let ${\hat{\cT}} \in \fmT$ be the modified Galton--Watson tree defined in Theorem~\ref{te:convsigen} that corresponds to the weight sequence $(\omega_k)_k$.
		\item For each vertex $v \in V(\hat{\cT})$ with finite outdegree $d_{\hat{\cT}}^+(v) < \infty$ choose \[\hat{\beta}(v) \in \cR[d_{\hat{\cT}}^+(v)]\] at random with conditional distribution
		\[
		\Pr{\hat{\beta}(v) = R \mid {\hat{\cT}}} = \kappa(R) / |\cR[d_{\hat{\cT}}^+(v)]|_\kappa
		\]
		for all $\cR$-structures $R \in \cR[d_{\hat{\cT}}^+(v)]$.
		For each vertex $v \in V({\hat{\cT}})$ with $d_{\hat{\cT}}^+(v) = \infty$ set $\hat{\beta}(v) = \infty$.
	\end{enumerate}
	Then $(\cT_n, \beta_n)$ converges in distribution toward $(\hat{\cT}, \hat{\beta})$ in the metric space $\fmA$.
\end{theorem}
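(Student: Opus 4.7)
The plan is to leverage Janson's local convergence theorem for simply generated trees (Theorem~\ref{te:convsigen}) together with the conditional independence structure given by the coupling of Lemma~\ref{le:coupling}, so as to reduce everything to continuity of a suitable conditional expectation. First I would verify the topological setup: that $\fmA \subset X^{\VHT}$ is closed, by checking that both the ``support forms a plane tree'' condition and the ``size of $\beta(v)$ matches $d_T^+(v)$'' condition pass to limits in the product topology, making $\fmA$ a compact Polish space on which weak convergence is controlled by bounded continuous test functions.

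For any bounded continuous $f:\fmA\to\ndR$, I would introduce the conditional-expectation functional
\[
g(T) \;=\; \Exb{f(T,\beta_T)},\qquad T \in \fmT,
\]
where $\beta_T$ is a random enrichment of $T$ drawn exactly as in step 2 of Lemma~\ref{le:coupling}, extended by $\beta_T(v)=\infty$ at vertices of infinite outdegree. By the coupling and by the construction of $(\hat\cT,\hat\beta)$ in the statement, we have $\Ex{f(\cT_n,\beta_n)} = \Ex{g(\cT_n)}$ and $\Ex{f(\hat\cT,\hat\beta)} = \Ex{g(\hat\cT)}$, so it suffices to prove that $g$ is bounded and continuous on $\fmT$ and invoke Theorem~\ref{te:convsigen}.

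Continuity of $g$ is the heart of the argument. If $T_n\to T$ in $\fmT$, then for every $v\in\VHT$ the outdegrees converge, $d_{T_n}^+(v)\to d_T^+(v)$, in the one-point compactification $\overline{\ndN}_0$. At each vertex with $d_T^+(v)<\infty$, this convergence is eventual equality, so the conditional law of $\beta_{T_n}(v)$ coincides with that of $\beta_T(v)$ for all large $n$. At each vertex with $d_T^+(v)=\infty$, the random variable $\beta_{T_n}(v)$ takes values in the finite sets $\cR[d_{T_n}^+(v)]$ whose indices diverge; by the very definition of the one-point compactification of the discrete space $X\setminus\{\infty\}$, this forces $\beta_{T_n}(v)\convdis\infty = \beta_T(v)$ in $X$. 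Independence of the enrichments across vertices then gives joint weak convergence of $\beta_{T_n}$ to $\beta_T$ in $X^{\VHT}$, whence $(T_n,\beta_{T_n}) \convdis (T,\beta_T)$ in $\fmA$ and $g(T_n)\to g(T)$ because $f$ is bounded continuous.

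With $g$ bounded and continuous on $\fmT$, Theorem~\ref{te:convsigen} applied along $n\equiv 1\pmod{\spa(\mathbf{w})}$ yields $\Ex{g(\cT_n)}\to\Ex{g(\hat\cT)}$, completing the proof. The main obstacle I anticipate is the type II/III case, in which $\hat\cT$ almost surely has vertices of infinite outdegree: one must be precise about the topology on the set $X$ of $\cR$-structures so that growing-size sequences converge to the formal symbol $\infty$, and verify that $\fmA$ really is a closed (hence compact Polish) subspace of $X^{\VHT}$ in which the coordinate-wise convergence argument above produces bona fide weak convergence of the enriched plane trees. Everything else is a routine unwinding of the coupling in Lemma~\ref{le:coupling}.
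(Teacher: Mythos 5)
Your proposal is correct, but it takes a genuinely different route from the paper's proof. The paper establishes compactness of $\fmA$, extracts a subsequential weak limit $(\bar{\cT},\bar\beta)$, projects via $\varphi^{\VHT}$ to identify the tree marginal $\bar{\cT}\eqdist\hat{\cT}$ using Theorem~\ref{te:convsigen}, and then verifies by hand (through finite-dimensional distribution computations, with separate cases for whether a finite vertex set contains an infinite-degree coordinate) that the enrichment marginals agree. Your argument instead packages the enrichment kernel $T\mapsto\cL(\beta_T)$ into a conditional-expectation functional $g(T)=\Ex{f(T,\beta_T)}$, proves $g$ is continuous on $\fmT$ by checking the two cases $d_T^+(v)<\infty$ (eventual equality of conditional laws) and $d_T^+(v)=\infty$ (divergent sizes force convergence to the ideal point $\infty$ of the one-point compactification), invokes the product structure of independent coordinates to get weak convergence of $\beta_{T_n}$, and then applies Theorem~\ref{te:convsigen} directly to conclude $\Ex{g(\cT_n)}\to\Ex{g(\hat\cT)}$. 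This avoids the compactness/subsequence step entirely, is conceptually the cleaner ``weak continuity of a Markov kernel'' argument, and in fact directly yields the more general Lemma~\ref{le:extension} (replace $\cT_n\to\hat\cT$ by any locally convergent sequence $\tau_n\to\hat\tau$). One small detail to make explicit: when you say ``this convergence is eventual equality'' for $d_T^+(v)<\infty$, you should also note that membership $v\in T_n$ versus $v\notin T_n$ (which distinguishes $\beta_T(v)\in\cR[0]$ from $\beta_T(v)=*$ in the $X$-encoding) is also eventually decided correctly, by inducting along the ancestral path to $v$.
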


In particular, the $\cR$-structure $\beta_n(o)$ of the root converges in distribution to $\hat{\beta}(o)$ in the space $X$. 
Recall that, as discussed in Section~\ref{sec:types}, the weight sequence $\mathbf{w}$ may be classified into certain types according the supremum of the means of all possible equivalent probability weight sequences. Depending on the weight sequence, we may strengthen the form of convergence in Theorem~ \ref{te:local} as follows.

For any enriched tree $(T, \alpha)$ and any integer $k$, let \label{pp:trim}$(T,\alpha)^{[k]} = (T^{[k]}, \alpha^{[k-1]})$ denote the corresponding tree that gets trimmed at height $k$. That is, $T^{[k]}$ is obtained from $T$ by deleting all vertices with height greater than $k$, and $\alpha^{[k-1]} = (\alpha(v))_{v \in T^{[k-1]}}$.  Hence $(T, \alpha)^{[k]}$ is a tree where each vertex with height less than $k$ is enriched with an $\cR$-structure on its offspring set. If the weight-sequence $\mathbf{w}$ has type I, then Theorem~\ref{te:local} implies that \[
(\cT_n, \beta_n)^{[k]} \convdis (\hat{\cT}, \hat{\beta})^{[k]}
\] for every fixed $k \ge 1$. If  $\mathbf{w}$ has type I$\alpha$, this may be strengthened to convergence of trees pruned at height $o(\sqrt{n})$:

\begin{theorem}[A stronger form of local convergence of random $\cR$-enriched trees]
	\label{te:strengthend}
	Suppose that the weight sequence $\mathbf{w}$ has type $I\alpha$. Then for any sequence of positive integers $k_n = o(n^{1/2})$ it holds that
	\[
	d_{\textsc{TV}}((\cT_n, \beta_n)^{[k_n]}, (\hat{\cT}, \hat{\beta})^{[k_n]}) \to 0
	\]
	as $n \equiv 1 \mod \spa(\mathbf{w})$ becomes large. 
\end{theorem}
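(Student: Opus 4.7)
The approach is to reduce the enriched statement to a statement about the underlying simply generated plane trees, and then to carry out an explicit total variation comparison by combining Kesten's spinal decomposition with the Dwass cyclic lemma and the lattice local central limit theorem.

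\emph{Step 1: Reduction to plane trees.} By Lemma~\ref{le:coupling} the enriched plane tree $(\cT_n,\beta_n)$ arises from the simply generated plane tree $\cT_n$ by drawing, independently at each vertex $v$ and conditionally on $\cT_n$, an $\cR$-structure $\beta_n(v)$ with distribution $\kappa(\cdot)/|\cR[d_{\cT_n}^+(v)]|_\kappa$. By the construction in Theorem~\ref{te:local} (and noting that in the type I$\alpha$ case $\mu=1$, so the spine of $\hat\cT$ is infinite but $\hat\cT$ is locally finite), the limit object $(\hat\cT,\hat\beta)$ is obtained from $\hat\cT$ in exactly the same manner. Because $(T,\alpha)^{[k]}$ retains enrichments only at vertices of height $<k$, and the joint conditional law of these enrichments depends solely on the outdegree sequence of the trimmed tree, the data-processing inequality yields
\[
d_{\textsc{TV}}\bigl((\cT_n,\beta_n)^{[k_n]},(\hat\cT,\hat\beta)^{[k_n]}\bigr)\le d_{\textsc{TV}}\bigl(\cT_n^{[k_n]},\hat\cT^{[k_n]}\bigr),
\]
so it suffices to prove the strengthened local convergence for the underlying plane trees.

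\emph{Step 2: Explicit densities.} Fix a plane tree $t$ of height at most $k_n$ and write $m:=L_{k_n}(t)$ for the number of vertices of $t$ at height exactly $k_n$, $w(t):=\prod_{v\in t,\,h(v)<k_n}\pi_{d_t^+(v)}$, and $s:=n-|t|+m$. A direct branching computation, summing over the $m$ possible positions of the spine at height $k_n$ and using that each spinal factor $d_t^+(v)\pi_{d_t^+(v)}\cdot(1/d_t^+(v))$ collapses to $\pi_{d_t^+(v)}$, gives $\Pr{\hat\cT^{[k_n]}=t}=m\cdot w(t)$. On the conditioned side, writing $|\cT_1|,|\cT_2|,\dots$ for i.i.d.\ copies of the unconditioned Galton--Watson tree size with offspring law $(\pi_k)$,
\[
\Pr{\cT_n^{[k_n]}=t}=\frac{\Pr{|\cT_1|+\cdots+|\cT_m|=s}}{\Pr{|\cT|=n}}\cdot w(t).
\]

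\emph{Step 3: Local CLT and uniform ratio estimate.} The Dwass--Otter cyclic lemma gives $\Pr{|\cT_1|+\cdots+|\cT_m|=s}=(m/s)\Pr{X_1+\cdots+X_s=s-m}$ and $\Pr{|\cT|=n}=n^{-1}\Pr{X_1+\cdots+X_n=n-1}$, where $X_i$ are i.i.d.\ with law $(\pi_k)$, mean $1$, and variance $\sigma^2\in(0,\infty)$ (by the type I$\alpha$ hypothesis). Hence
\[
\frac{\Pr{\cT_n^{[k_n]}=t}}{\Pr{\hat\cT^{[k_n]}=t}}=\frac{n}{s}\cdot\frac{\Pr{X_1+\cdots+X_s=s-m}}{\Pr{X_1+\cdots+X_n=n-1}}.
\]
Applying Lemma~\ref{le:llt1dim} to numerator and denominator, this ratio is $1+o(1)$ uniformly on an event of the form $\{|t|\le N_n,\ m\le M_n\}$ provided $N_n=o(n)$ and $M_n=o(\sqrt n)$, since then $s=n(1+o(1))$ and the Gaussian factor $\exp(-m^2/(2\sigma^2 s))$ tends to $1$.

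\emph{Step 4: Tail control and conclusion.} Choose a slowly growing sequence $\omega_n\to\infty$ with $\omega_n k_n^2=o(n)$ (possible since $k_n=o(\sqrt n)$), and set $M_n:=\omega_n k_n$, $N_n:=\omega_n k_n^2$. The bound \eqref{eq:ineqstr1} yields $\Ex{L_{k_n}(\cT_n)}\le Ck_n$ and an analogous sum yields $\Ex{|\cT_n^{[k_n]}|}=O(k_n^2)$, so by Markov the $\cT_n$-tail outside the good event has probability $O(1/\omega_n)$. On the Kesten side a direct spinal computation gives $\Ex{L_{k_n}(\hat\cT)}=1+\sigma^2 k_n$ and $\Ex{|\hat\cT^{[k_n]}|}\sim\sigma^2 k_n^2/2$, so the $\hat\cT$-tail is also $O(1/\omega_n)$. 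Combining the uniform ratio estimate on the good event with these tail bounds yields $d_{\textsc{TV}}(\cT_n^{[k_n]},\hat\cT^{[k_n]})\to 0$, which together with Step 1 proves the theorem.

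\emph{Principal obstacle.} The delicate point is the simultaneous control of the local CLT uniformly in the admissible range of $(|t|,m)$ and the tail contributions summed against $w(t)$. The type I$\alpha$ hypothesis is used twice here in an essential way: $\nu\ge 1$ forces $\mu=1$ and hence an infinite spine in $\hat\cT$ (and $\Pr{|\cT|=n}>0$ for large $n$ in the correct residue class), while $\sigma<\infty$ delivers the classical Gaussian local CLT together with the polynomial tail $\Pr{|\cT|=n}\sim c n^{-3/2}$. Relaxing either hypothesis breaks the $\sqrt n$ scale that underlies the threshold $k_n=o(\sqrt n)$.
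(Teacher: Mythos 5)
Your proof is correct and follows essentially the same approach as the paper: reduce to the underlying simply generated trees, write down explicit densities for the trimmed tree laws (you re-derive them via Dwass's cycle lemma where the paper quotes Janson's formula (15.11), but the content is identical), compare via the lattice local CLT on a good event, and control tails with the expectation bounds on $L_{k_n}$. The only loose end is the case $\He(t)<k_n$, i.e.\ $m=0$, where the ratio in Step~3 is degenerate; as in the paper, this is disposed of by the left-tail bound \eqref{eq:gwtlefttail}, which shows $\Pr{\He(\cT_n)<k_n}\to 0$, while $\He(\hat\cT)=\infty$ almost surely in type I$\alpha$.
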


In Lemma~\ref{le:coupling} and Theorem~\ref{te:local} we observed a "natural" construction that crops up when working with enriched trees: First we sample a random plane tree, and then we draw for each vertex $v$ with finite degree $d^+(v)$ an $\cR$-structure from $\cR[d^+(v)]$ with probability proportional to its weight. The proof of Theorem~\ref{te:local} may easily be extended to the following result.

\begin{lemma}
	\label{le:extension}
	Let $(\tau_n)_{n \ge 1}$ be a sequence of random locally finite plane trees, that convergence weakly in the space $\fmT$ toward a random limit tree $\hat{\tau}$.  Then the naturally enriched tree $(\hat{\tau}, \hat{\beta})$ is the weak limit of the naturally enriched trees $(\tau_n, \beta_n)$ in the space $\fmA$.
\end{lemma}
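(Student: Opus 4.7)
The plan is to reduce distributional convergence in $\fmA$ to coordinate-wise almost sure convergence via a Skorokhod coupling. Since $\fmT$ is a compact Polish space and $\tau_n \convdis \hat{\tau}$ in it, I would first invoke the Skorokhod representation theorem to realise all trees on a common probability space with $\tau_n \to \hat{\tau}$ almost surely. Convergence in $\fmT \subset \overline{\ndN}_0^{\VHT}$ endowed with the product topology is nothing but pointwise convergence of the outdegree sequence, so for every $v \in \VHT$ one has $d_{\tau_n}^+(v) \to d_{\hat{\tau}}^+(v)$ in $\overline{\ndN}_0$ almost surely.

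Next I would write down an explicit joint coupling of the enrichments. Take an independent family $(U_v)_{v \in \VHT}$ of i.i.d.\ uniform $[0,1]$ random variables, independent of all trees, and for each integer $k$ with $|\cR[k]|_\kappa > 0$ fix once and for all a measurable map $\phi_k \colon [0,1] \to \cR[k]$ such that $\phi_k(U_v)$ has the $\kappa$-weighted distribution on $\cR[k]$. Set $\beta_n(v) := \phi_{d_{\tau_n}^+(v)}(U_v)$ whenever $v \in V(\tau_n)$ has finite outdegree, $\beta_n(v) := \infty$ whenever $v \in V(\tau_n)$ has infinite outdegree, and $\beta_n(v) := *$ whenever $v \notin V(\tau_n)$; define $\hat{\beta}$ analogously from $\hat{\tau}$. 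By construction $(\tau_n, \beta_n)$ and $(\hat{\tau}, \hat{\beta})$ have the marginal laws prescribed by Lemma~\ref{le:coupling} and Theorem~\ref{te:local}.

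What remains is to verify coordinate-wise almost sure convergence $\beta_n(v) \to \hat{\beta}(v)$ in $X$ for every $v \in \VHT$, which by the product topology on $\fmA$ immediately yields the desired weak limit. I would split this into three cases: if $v \in V(\hat{\tau})$ and $d_{\hat{\tau}}^+(v) < \infty$, the outdegrees at $v$ stabilise, so $\beta_n(v) = \phi_{d_{\hat{\tau}}^+(v)}(U_v) = \hat{\beta}(v)$ eventually; if $v \in V(\hat{\tau})$ and $d_{\hat{\tau}}^+(v) = \infty$, then the underlying integer $k$ with $\beta_n(v) \in \cR[k]$ tends to infinity, which by the definition of $X$ as the one-point compactification of the discrete space $\{*\} \sqcup \bigsqcup_{k \ge 0} \cR[k]$ is precisely convergence to the added point $\infty = \hat{\beta}(v)$; and if $v \notin V(\hat{\tau})$, a short induction on the depth of $v$ in $\UHT$ shows $v \notin V(\tau_n)$ eventually, so both sides equal $*$. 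The only delicate point, and the main (though mild) obstacle, is to rule out that $v \notin V(\hat{\tau})$ yet $v$ lies below an ancestor whose $\hat{\tau}$-outdegree is infinite; however, any descendant of an infinite-degree vertex of $\hat{\tau}$ automatically belongs to $V(\hat{\tau})$ under the convention identifying plane trees with subsets of $\UHT$, so this scenario does not arise and the induction closes.
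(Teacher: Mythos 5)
Your argument is correct, and it takes a genuinely different route from the paper's. The paper exploits compactness of $\fmA$: it extracts a subsequential distributional limit $(\bar\tau,\bar\beta)$ and identifies it with $(\hat\tau,\hat\beta)$ by checking finite-dimensional distributions, using the projection $\varphi:\fmA\to\fmT$ plus the conditional-independence structure of the enrichments (exactly as in Equations~\eqref{eq:orff1}--\eqref{eq:orff3} of the proof of Theorem~\ref{te:local}). You instead build a Skorokhod coupling of the trees and an explicit coupling of the decorations via shared i.i.d.\ uniforms $(U_v)_v$ and fixed measurable inverse-CDF-type maps $\phi_k$, then verify coordinate-wise almost sure convergence in $X^{\VHT}$, which (the index set being countable) implies a.s.\ convergence in $\fmA$ and hence weak convergence. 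The upshots differ slightly in flavour: the paper's tightness-plus-uniqueness scheme needs no coupling machinery and identifies the limit ``from the outside,'' while your construction exhibits the convergence concretely and makes the mechanism by which the enrichments converge transparent. One place where you are slightly terse: for your first two cases you still need that $v\in V(\tau_n)$ eventually, not only that the outdegree value at $v$ stabilises or diverges; this follows by exactly the same finite induction along the ancestor chain $u_0,\dots,u_{\ell-1}$ of $v$ that you invoke in the third case (using that $d_{\tau_n}^+(u_j)\to d_{\hat\tau}^+(u_j)$ eventually exceeds or equals the relevant coordinate for each $j$), so it is only an expositional omission and not a gap in the argument. You also implicitly assume that $|\cR[k]|_\kappa>0$ for every $k$ that occurs as a finite outdegree of $\hat\tau$ with positive probability, so that the maps $\phi_k$ exist on the relevant event; the paper's version sidesteps this edge case by only comparing probabilities of events that are nonzero on both sides.
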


Many models of random plane trees are known to converge weakly in $\fmT$. See in particular Abraham and Delmas~\cite{MR3227065, MR3164755} and Janson~\cite[Ch. 22]{MR2908619}. We are going to apply Lemma~\ref{le:extension} in Section~\ref{sec:schroeder} to Schr\"oder enriched parenthesizations such as random face-weighted outerplanar maps.

\subsection{Convergence of random enriched trees that are centered at a random vertex}
{\em As in the previous section, we let $\cR^\kappa$ denote a weighted species such that the weight sequence $\mathbf{w} = (\omega_k)_k$ with $\omega_k = |\cR[k]|_\kappa / k!$ satisfies $\omega_0>0$ and $\omega_k > 0$ for some $k \ge 2$. By Equation~\eqref{eq:weighting}, this induces a weighting on the species $\cA_\cR$ of $\cR$-enriched trees, which we denote by $\omega$.}

\label{sec:fringe}

The present section is dedicated to studying the  random $\cR$-enriched trees locally around a uniformly at random selected vertex. This is a natural question, as this behaviour may differ from the behaviour around the fixed root-vertex. Our main application will be to face-weighted random outerplanar maps in Section~\ref{sec:outerplanar}, for which we characterize different limit graphs depending on whether we look at the vicinity of the root-edge or of a uniformly at random drawn vertex.


\subsubsection{The space of pointed plane trees}

We start with the construction of an infinite plane tree $\UHT^\bullet$ having a spine $(u_i)_{i \ge 0}$ that grows backwards, that is, such that $u_{i}$ is a parent of $u_{i-1}$ for all $i \ge 1$. Any vertex $u_i$ with $i \ge 1$ has an infinite number of offspring vertices to the left and to the right of its distinguished offspring $u_{i-1}$, and each of these non-centered offspring vertices is the root of a copy of the Ulam--Harris tree $\UHT$.  To conclude the construction, the start-vertex $u_0$ of the spine also gets identified with the root of a copy of $\UHT$. We let $\VHT^\bullet$ denote the vertex-set of the tree $\UHT^\bullet$. 

Any  plane tree $T$ together with a distinguished vertex $v_0$ may be interpreted in a canonical way as a subtree of $\UHT^\bullet$. To do so, let $v_0, v_1, \ldots, v_k$ denote the path from $v_0$ to the root of $T$. This way, any vertex $v_i$ for $i \ge 1$ may have offspring to the left and to the right of $v_{i-1}$. Thus there is a unique order-preserving and outdegree preserving embedding of $T$ into $\UHT^\bullet$ such that $v_i$ corresponds to $u_i$ for all $0 \le i \le k$.

Similarly as for the encoding of plane trees, we may identify the pair $T^\bullet = (T,v_0)$ with the corresponding family of outdegrees $(\bar{d}_{T^\bullet}^+(v))_{v \in \VHT^\bullet}$, such that \[\bar{d}^+_{T^\bullet}(v) \in \bar{\ndN}_0 = \ndN_0 \cup \{ \infty\}\]
for $v \notin \{u_1, u_2, \ldots\}$, and
\[
	\bar{d}^+_{T^\bullet}(u_i) \in \{*\} \sqcup (\bar{\ndN}_0 \times  \bar{\ndN}_0), \qquad i \ge 1
\]
such that the two numbers represent the number of offspring vertices to the left and right of the distinguished son $u_{i-1}$, and the $*$-placeholder represents the fact that the vertex does not belong to the tree.


We may consider $\bar{\ndN}_0$ as a compact Polish space given by the one-point compactification of the discrete space $\ndN_0$. A metrization that is complete and separable is given by
\[
	d_{\bar{\ndN}_0}(a,b) = \begin{cases} 0, &a = b \\ (1 + \min(a,b))^{-1}, &a \ne b.\end{cases}
\]
Consequently, the product  $\bar{\ndN}_0 \times  \bar{\ndN}_0$ is also compact and  Polish. A possible metrization is given by
\[
	d_{\bar{\ndN}_0 \times \bar{\ndN}_0}( (a_1, a_2), (b_1, b_2) ) = \max( d_{\bar{\ndN}_0}(a_1,b_1), d_{\bar{\ndN}_0}(a_2, b_2)).
\]
The same goes for the disjoint union topology on $\{*\} \sqcup (\bar{\ndN}_0 \times  \bar{\ndN}_0)$, where we let the $*$-point  have distance $2$ from any other vertex.
Hence the space
\[
	\{ (\bar{d}^+(v))_{v \in \cV^\bullet_\infty} \mid d^+(v) \in \bar{\ndN}_0 \text{ for $v \notin \{u_1, u_2, \ldots\}$},\,\bar{d}^+(v) \in \{*\} \sqcup (\bar{\ndN}_0 \times  \bar{\ndN}_0) \text{ for $v \in \{u_1, u_2, \ldots\}$ } \}  
\]
is the product of countably many compact Polish spaces, and hence also compact and  Polish.  Any element of this space corresponds to a subgraph of the tree $\UHT^\bullet$, and we say it is a tree if this subgraph is connected. The subset $\fmT^\bullet$ of all elements that correspond to trees is closed, and hence also a compact Polish space with respect to the subspace topology. 

Any  enriched plane tree together with a distinguished vertex has a spine given by the unique directed path from the distinguished vertex to the root. This makes the $\cR$-structures along that path (apart from the $\cR$-structure of the distinguished vertex) actually $\cR^\bullet$-structures, because each contains a unique distinguished vertex from the spine. We could easily generalize the space $\fmT^\bullet$ to a space whose elements encode pointed $\cR$-enriched trees, but there is no need for this, as we may phrase our limit theorems in a way that avoids this construction.

\subsubsection{The limit objects}
\label{sec:rndlimit}

As discussed in Section~\ref{sec:types} there is a probability distribution $(\pi_k)_k$ associated to the weight sequence $\mathbf{w}$, with density given in \eqref{eq:tt}. Let $\xi$ \label{pp:xi} be distributed according to $(\pi_k)_k$ and let $\cT$ be a $\xi$-Galton--Watson tree. By Equation~\eqref{eq:mu} it holds that $\mu := \Ex{\xi}\le1$. We may consider the {\em size-biased} random variable $\hat{\xi} \in \ndN_0 \cup \{\infty\}$\label{pp:xih}, distributed according to
\[
\Pr{\hat{\xi} = k} = k \pi_k \quad \text{and} \quad \Pr{\hat{\xi} = \infty} = 1 - \mu.
\]

\paragraph*{The type I regime}
If the weight-sequence $\mathbf{w}$ has type I, then $\hat{\xi} < \infty$ almost surely, and we define the random tree $\cT^*$ in the space $\fmT^\bullet$ as follows.
Let $u_0$ be the root of an independent copy of the Galton--Watson tree $\cT$. For each $i \ge 1$, we let $u_i$ receive offspring according to an independent copy of $\hat{\xi}$. The vertex $u_{i-1}$ gets identified with an uniformly at random chosen offspring of $u_i$. All other offspring vertices of $u_i$ become roots of independent copies of the Galton--Watson tree $\cT$.  The construction of the $\cT^*$ goes back to Aldous \cite{MR1102319}, who established it as a limit of large critical Galton--Watson trees re-rooted at a random vertex.

For each vertex $v \in \cT^*$, we draw an $\cR$-structure $\beta^*(v) \in \cR[d^+_{\cT^*}(v)]$ with probability proportional to its weight. The atoms of the $\cR$-structure are matched in a canonical order preserving way with the offspring vertices of the vertex $v$. This yields an infinite locally finite $\cR$-enriched tree~$(\cT^*,\beta^*)$. \label{de:tinf} Note that for $i \ge 1$, the $\cR$-structure $\beta^*(u_i)$ becomes  an $\cR^\bullet$-structure by distinguishing the atom corresponding to the vertex $u_{i-1}$.

\paragraph*{The condensation regime}
Suppose that the weight-sequence $\mathbf{w}$ has type II or III, that is, $0 \le \nu < 1$.  Let $o$ denote the root vertex of the simply generated tree $\cT_n$.  Janson~\cite[Lem. 19.32, Lem. 15.7]{MR2908619} showed that there is a deterministic sequence $\Omega_n$ that tends to infinity sufficiently slowly, such that for any sequence $K_n \to \infty$ with $K_n \le \Omega_n$ it holds that 
\begin{align}
\label{eq:om}
\lim_{n \to \infty} \Pr{d_{\cT_n}^+(o) > K_n} = 1 - \nu.
\end{align}
Let $\tilde{D}_n$ be a sequence of random variables with distribution given by
\begin{align}
	\label{eq:thed}
	\tilde{D}_n \eqdist (d_{\cT_n}^+(o) \mid  d_{\cT_n}^+(o) > \Omega_n).
\end{align}
We construct the random pointed tree $\cT^*_n$ as follows. The center $u_0$ becomes the root of an independent copy of the Galton--Watson tree $\cT$.  For $i=1, 2, \ldots$ the vertex $u_i$ receives offspring according to an independent copy $\hat{\xi}_i$ of $\hat{\xi}$, where a randomly chosen son gets identified with $u_{i-1}$ and the rest become roots of independent copies of $\cT$. We proceed in this way for $i=1,2, \ldots$ \emph{until} it occurs for the first time $i_1$ that $\hat{\xi}_{i_1}=\infty$. When $\hat{\xi}_1, \ldots, \hat{\xi}_{i_1-1}< \infty$ and $\hat{\xi}_{i_1} = \infty$, then $u_{i_1}$ receives offspring according to $\tilde{D}_n$, among which we select one uniformly at random and identify it with $u_{i_1 -1}$. Each of these vertices (except $u_{i_1-1}$ of course) gets identified with the root of an independent copy of the Galton--Watson tree $\cT$. 

We form the finite pointed $\cR$-enriched tree $(\cT_n^*, \beta_n^*)$ by drawing for each vertex $v$ of the tree $\cT_n^*$  an $\cR$-structure $\beta^*_n(v)$ from $\cR[d^+_{\cT_n^*}(v)]$ with probability proportional to its weight.  The atoms of the $\cR$-structure are matched in a canonical order-preserving way with the offspring vertices of $v$. For $i \ge 1$, the $\cR$-structure $\beta^*(u_i)$ becomes  an $\cR^\bullet$-structure by distinguishing the atom corresponding to the vertex $u_{i-1}$.



\subsubsection{Convergence of the vicinity of a random node}
\label{sec:covic}

\label{mark:fr} Given a pointed $\cR$-enriched tree $A^\bullet=(A,x)$, we may consider the {\em enriched fringe subtree} $f(A^\bullet)$ which is the maximal enriched subtree of $A$ that is rooted at $x$. For all $k \ge 0$, we may also consider the pointed enriched tree $f_k(A^\bullet)$ given by the enriched fringe subtree at the $k$-th ancestor of $x$, that we consider as pointed at the vertex $x$. If the vertex $x$ has height  less than $k$ in $A^\bullet$, then we set $f_k(A^\bullet) = \diamond$ for some placeholder symbol $\diamond$.  The following theorem builds upon results by Janson~\cite{MR2908619,MR3432572} for simply generated trees:


\begin{theorem}[Convergence of type I re-rooted trees]
	\label{te:thmben}
	Let the $\cR$-enriched plane tree $(\cT_n, \beta_n)$ be pointed at a uniformly at random selected vertex $v_0$. For all $k \ge 0$ let
	\[
		\mH_k = f_k( (\cT_n, \beta_n), v_0)
	\]
	denote the pointed fringe-subtree at the $k$-th ancestor of $v_0$.
	\begin{enumerate}
		\item If the weight-sequence $\mathbf{w}$ has type I, then for each fixed $k \ge 0$ it holds that	
		\[
			\mH_k \convdis f_k(\cT^*, \beta^*)
		\]
		as  random elements of the countable set of finite pointed enriched trees, that we equip with the discrete metric. 
		
		\item If the weight-sequence $\mathbf{w}$ has type I$\alpha$, then for any sequence $k_n = o(\sqrt{n})$ it even holds that
		\[
		d_{\textsc{TV}}( \mH_{k_n}, f_{k_n}(\cT^*, \beta^*)) \to 0
		\]
		as $n$ becomes large. 
		\item  If the weight-sequence $\mathbf{w}$ has type I, then there is even a constant $c>0$ such that for any possible value $H$ of $\mH_k$ the number
		\[
			N_H := |\{v \in \cT_n \mid f_k((\cT_n, \beta_n),v) = H \}|
		\]
		satisfies
		\[
		\Prb{ \left| \frac{N_H}{n} -  \pi_H\right| > \epsilon} \le \exp(n(o(1) - c\epsilon^2))
		\]
		with $\pi_H= \Pr{f_k(\cT^*, \beta^*) = H}$ and an $o(1)$ term that does not depend on $H$ or $\epsilon$. Hence
		\[
		\frac{N_H }{n} \to \pi_H
		\]
		holds almost surely.
		\item If the weight-sequence $\mathbf{w}$ has type I$\alpha$ and $\Pr{f_k(\cT^*, \beta^*) = H}>0$, then there is a constant $\sigma_H>0$ with 
		\[
		\frac{N_H - n \pi_H}{\sqrt{n}} \convdis \cN(0, \sigma_H^2).
		\]

	\end{enumerate}
\end{theorem}

Let $m \ge 0$ be an integer and  $T^\bullet = (T,x_0)$ a pointed tree with spine $x_0 , x_1, \ldots, x_k$.  We consider the pointed subtree $P_m( T^\bullet)$ obtained by pruning away  the descendants of all siblings of $x_{k-1}$ that lie more than $m$ to the left or $m$ to the right of $x_{k-1}$. That is, all these siblings become leaves.

If $A^\bullet$ is a pointed $\cR$-enriched plane tree consisting of the pointed tree $T^\bullet$ and a family of $\cR$-structures $(\gamma(v))_{v \in T}$, we may likewise consider the pruned tree $P_m( A^\bullet)$, that is given by the pointed tree $P_m(T^\bullet)$ such that all vertices $v \in P_m(T^\bullet)$ keep their original $\cR$-structure, except for the siblings of the vertex $x_{k-1}$ that lie more than $m$ to the left or to the right of it, whose $\cR$-structure we set to some placeholder value.

\begin{theorem}[Convergence of re-centered trees in the condensation regime]
	\label{te:thmco}
	Let the $\cR$-enriched plane tree $(\cT_n, \beta_n)$ be pointed at a uniformly at random selected vertex $v_0$. Let $k_n \ge 0$ be minimal with the property, that the $k_n$th ancestor $v_{k_n}$ of $v_0$ in the tree $\cT_n$ has outdegree $d^+_{\cT_n}(v_{k_n}) > \Omega_n$. We set
	\[
		\mH_{k_n} = f_{k_n}( (\cT_n, \beta_n), v_0).
	\]
	If no such ancestor exists, we set $k_n = \infty$ and $\mH_{k_n} = \diamond$.  It holds that $1 \le k_n < \infty$ with probability tending to $1$ as $n$ becomes large, and for each $m \ge 0$ 
	\[
		d_{\textsc{TV}}(P_m(\mH_{k_n}), P_m(\cT_n^*, \beta_n^*)) \to 0.
	\]
\end{theorem}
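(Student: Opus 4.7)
The plan is to first reduce to the underlying plane-tree skeleton via Lemma~\ref{le:coupling}: conditional on $\cT_n$, the enrichments $\beta_n(v)$ are independent Boltzmann $\cR$-structures drawn with probability proportional to $\kappa$, and by construction the same independence governs the limit $(\cT_n^*,\beta_n^*)$ in Section~\ref{sec:rndlimit}. A short argument will show that total-variation convergence of the pruned pointed plane trees transfers to total-variation convergence of the pruned pointed enriched trees, since attaching independent Boltzmann enrichments on top of coupled skeletons preserves the total-variation distance.

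Next I would invoke the condensation phenomenon of Section~\ref{se:modgwt} in the regime $\nu<1$: by~\eqref{eq:om} and the stronger form due to Janson and to Jonsson--Stef\'ansson~\cite{MR2764126}, with probability tending to $1$ the tree $\cT_n$ contains a \emph{unique} vertex $v^*$ of outdegree exceeding $\Omega_n$, and this vertex has outdegree distributed as $\tilde D_n$ (see~\eqref{eq:thed}). Combining this with the fact that $v_0$ is a uniform vertex and $v^*$ is an ancestor of a $(1-o(1))$-fraction of the vertices of $\cT_n$ w.h.p., we get that $v_{k_n}=v^*$ with probability tending to $1$, in particular $1\le k_n<\infty$ w.h.p.

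The core of the argument is to identify the joint law, up to pruning width $m$, of the ancestral spine $v_0,v_1,\dots,v_{k_n}=v^*$ together with the $\cR$-enriched fringe data attached to it. Here the key input is an Aldous-type size-biasing identity: under the uniform measure on pointed plane trees $(T,v_0)$ of weight $\omega(T)$, if one reads off the outdegrees $d_i=d_{T}^+(v_i)$ and the ranks $p_i\in[d_i]$ of the distinguished children $v_{i-1}$ as $i=1,2,\dots$ increases, the joint weight factorises as a product $\prod_i\omega_{d_i}$ times partition functions $Z_{n_{i,j}}$ for the hanging subtrees, which is precisely the weight of a size-biased random walk whose one-step law is $\hat\xi$ together with an independent uniform mark in $[d_i]$ and independent subtrees of simply-generated type $\mathbf{w}$. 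Conditioning on $|T|=n$ and on no step producing an outdegree exceeding $\Omega_n$, I would couple this walk (stopped at the first step $i_1$ at which $d_i>\Omega_n$) with $i_1-1$ i.i.d.\ copies of $\hat\xi$ conditioned to be finite, followed by one copy distributed as $\tilde D_n$; the step-by-step coupling error is $o(1)$ by~\eqref{eq:om}, and because $\Pr{\hat\xi=\infty}=1-\mu>0$ in the type II/III regime the stopping time $i_1$ is $O_p(1)$, so the accumulated total-variation error over the $k_n=i_1$ spine steps tends to zero.

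Finally, because $P_m$ only looks at the fringe subtree at $v_0$ together with the $(2m+1)$ siblings nearest to $v_{i-1}$ at each non-condensation spine vertex $v_i$, the remaining subtrees to describe are all rooted at non-distinguished offspring of vertices whose outdegree is at most $\Omega_n$; by local convergence of simply generated trees (Theorem~\ref{te:convsigen}) and Lemma~\ref{le:extension}, each such enriched fringe converges in total variation to an independent copy of $(\cT,\beta)$ with Boltzmann enrichments, exactly matching the definition of $(\cT_n^*,\beta_n^*)$. Assembling the pieces by the triangle inequality yields the claim. The main obstacle I expect is the step-by-step coupling in the previous paragraph: one must control, uniformly in $n$ and in the conditioning on $|\cT_n|=n$, the error in replacing the ancestral outdegree-and-rank sequence along $v_0,v_1,\dots$ by the i.i.d.\ $\hat\xi$-sequence used in the construction of $(\cT_n^*,\beta_n^*)$, and the precise asymptotics of $\Pr{d^+_{\cT_n}(o)>\Omega_n}$ together with Janson's bounds on the second-largest outdegree are exactly what makes this coupling feasible.
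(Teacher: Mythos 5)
Your proposal takes a genuinely different route from the paper's, and you have correctly isolated the only hard step; but the paper avoids exactly that step by citation, whereas your sketch leaves it open.

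The paper's proof of Theorem~\ref{te:thmco} consists of two observations. First, it invokes \cite[Thm.~7.1]{2016arXiv161101048S} to obtain total-variation convergence, in the coordinate system $\VHT^\bullet$, of the extended outdegree field $\bar d$ of the pointed skeleton $(\cT_n,v_0)$ toward that of $\cT_n^*$; combined with the observation that $P_m(\cT_n^*)$ is almost surely finite and of stochastically bounded size, this already yields $d_{\textsc{TV}}(P_m(\cT_n^*),P_m(\cT_n,v_0))\to 0$. Second, it transfers this to the enriched trees by noting that, conditionally on the pruned skeleton $T^\bullet$, the families $(\beta_n(v))$ and $(\beta_n^*(v))$ over $T^\bullet$ are independent Boltzmann draws with the same conditional law, so coupled skeletons yield coupled enriched trees. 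The second step is exactly your first paragraph and your final sentence; you have this part right, and it is indeed the easy part.

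Where you diverge is that you try to reprove the skeleton convergence from scratch, essentially by a backward spine decomposition with Kesten/Aldous-type size-biasing together with a step-by-step coupling of the ancestral $(\text{outdegree},\text{rank})$ sequence with i.i.d.\ $\hat\xi$-steps. This is a recognisable and plausible programme — it is presumably close to what is done in the cited reference — but the crucial estimates are missing. Two concrete gaps. (a) Your factorisation of the pointed weight into a product of $\hat\xi$-steps, independent hanging subtrees and independent fringe data is not a pure independence statement once you condition on $|\cT_n|=n$: the hanging subtree sizes, the spine length and the condensation degree are all tied together by the size constraint, and making the coupling uniform in $n$ requires exactly the local limit/degree-concentration input encapsulated in~\eqref{eq:om}, \cite[Lem.~19.32, Lem.~15.7]{MR2908619}; you gesture at this but do not carry it out. (b) Your identification $v_{k_n}=v^*$ with high probability (uniqueness of the big-degree vertex, and that $v^*$ is an ancestor of almost every vertex) is itself a nontrivial consequence of the condensation analysis in the type II/III regime, and if used to establish $1\le k_n<\infty$ w.h.p.\ it risks circularity: the cleanest way to prove $k_n<\infty$ w.h.p.\ is precisely the total-variation convergence being proved. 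The paper gets $1 \le k_n < \infty$ w.h.p.\ implicitly from the TV convergence to $\cT_n^*$, whose spine always has a big-degree tip.

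So your plan is sound in outline, and your transfer-to-enrichments argument matches the paper verbatim in spirit. But as written it is a blueprint for reproving \cite[Thm.~7.1]{2016arXiv161101048S} rather than a proof, and the two places where you yourself say ``I would couple\dots'' and ``the main obstacle I expect\dots'' are precisely the steps that the paper handles by citation and that you would need to execute in full.
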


\subsection{Schr\"oder $\cN$-enriched parenthesizations}
\label{sec:schroeder}
	
Given a weighted species $\cN^\gamma$ with no structures of size less than $2$ and at least one structure with positive $\gamma$-weight, we may consider the species $\cS^\upsilon_{\cN}$ of {\em Schr\"oder $\cN$-enriched parenthesizations} discussed in Section~\ref{sec:diss}, that satisfies a weight-preserving isomorphism
\begin{align}
	\cS_{\cN}^\upsilon \simeq \cX + \cN^\gamma \circ \cS_\cN^\upsilon.
\end{align}
We have seen in Section~\ref{sec:intro1} that the Ehrenborg--M\'endez isomorphism allows us to consider many classes of combinatorial objects such as face-weighted dissections of polygons and face-weighted outerplanar maps both as enriched trees and as Schr\"oder-enriched parenthesizations.

Similarly to random enriched trees, who have a canonical coupling with a simply generated tree (with vertices as atoms), random enriched parenthesizations have a natural coupling with a simply generated tree whose atoms are leaves. Each viewpoint has its own advantages and disadvantages:
For example, if we study a random face-weighted outerplanar map with $n$ vertices and analytic weights, we may interpret this map either as a Galton--Watson tree conditioned on having $n$ leaves that is enriched by dissections, or as a (different) Galton--Watson tree conditioned on having $n$ vertices that is enriched by ordered sequences of dissections. The first coupling is more convenient from a combinatorial viewpoint, because the outdegrees of the vertices in the tree then correspond precisely to the sizes of the $2$-connected components of the map, but there are less results available for Galton--Watson trees conditioned on their number of leaves than for simply generated trees. There are fairly recent additions though that are useful in this context, for example by Abraham and Delmas~\cite{MR3164755}, Kortchemski~\cite{MR2946438} and Curien and Kortchemski~\cite{MR3245291}. 





\begin{lemma}[A coupling of random Schr\"oder-enriched parenthesizations with simply generated trees that have leaves as atoms]
	\label{le:couplingschroeder}
	Let $n \in \ndN$ with $|\cS_\cN[n]|_\upsilon > 0$ be given.  Set $p_0=1$ and for each $k \ge 2$ set $p_k = |\cN[k]|_{\gamma}/k!$.
	The outcome $\mS^\cN_n$ of the following procedure draws a random enriched tree from the set $\cS_\cN^\upsilon[n]$ with probability proportional to its $\upsilon$-weight.
	\begin{enumerate}
		\item Sample a random plane tree $\tau_n$ with $n$ leaves according to 
		\[
			\Pr{\tau_n = T} = (\sum_{S} \prod_{v \in S} p_{d_S^+(v)})^{-1} \prod_{v \in T} p_{d_T^+(v)}.
		\]
		with the sum-index $S$ ranging over all plane trees with $n$ leaves.
		\item For each inner vertex $v$ of $\tau_n$ choose an $\cN$-structure \[\delta_n(v) \in \cN[d_{\tau_n}^+(v)]\] at random with conditional distribution given by
		\[
		\Pr{\delta_n(v) = N \mid \tau_n} = \gamma(N) / |\cN[d_{\tau_n}^+(v)]|_\gamma
		\]
		for all $N \in \cN[d_{\tau_n}^+(v)]$.
		\item Choose a bijection \[\sigma: V(\tau_n) \to [n]\] between the vertex set of $\tau_n$ and $[n]$ uniformly at random, and distribute labels by applying the transport function: \[\mS^\cN_n = \cS_\cN[\sigma](\tau_n, \delta_n).\]
	\end{enumerate}
\end{lemma}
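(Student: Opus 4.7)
The plan is to compute directly, for each $S \in \cS_\cN^\upsilon[n]$, the probability that the procedure outputs $S$, and verify it equals $\upsilon(S)/|\cS_\cN^\upsilon[n]|_\upsilon$. The argument parallels that of Lemma~\ref{le:coupling}, the only twists being that leaves (rather than all vertices) serve as atoms and that the recursive decomposition is $\cS_\cN^\upsilon \simeq \cX + \cN^\gamma \circ \cS_\cN^\upsilon$ rather than of multiplicative form.

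First I would identify the normalizing constant appearing in step~1. Let $Z_n := \sum_T \prod_{v \in T} p_{d_T^+(v)}$, with the sum over plane trees with $n$ leaves, and let $Z(z) := \sum_n Z_n z^n$. Conditioning on whether the root of the tree is a leaf or internal gives the functional equation $Z(z) = z + \sum_{k \ge 2} p_k Z(z)^k$. On the other hand, using $\cN^\gamma(w) = \sum_{k \ge 2} p_k w^k$ (valid since $\cN$ has no structures of sizes $0$ or $1$), the isomorphism $\cS_\cN^\upsilon \simeq \cX + \cN^\gamma(\cS_\cN^\upsilon)$ yields the same functional equation for the EGF $\cS_\cN^\upsilon(z)$, and uniqueness of the power-series solution with no constant term forces $n! \, Z_n = |\cS_\cN^\upsilon[n]|_\upsilon$.

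Next I would count the triples $(\tau, \delta, \sigma)$ producing $S$. Using the recursive construction of $\cS_\cN$-objects, such a triple maps to $S$ precisely when the unordered rooted tree underlying $\tau$ equals that of $S$ after relabeling leaves via $\sigma$, and at each internal vertex the $\cN$-structure $\delta(v)$ transports to the $\cN$-structure $\delta_S(v)$ of $S$ under the bijection induced jointly by $\sigma$ and the plane ordering of children. For each internal vertex of $S$ with outdegree $d^+(v)$, choosing an ordering of its children contributes a factor $d^+(v)!$; once all orderings are fixed, $\sigma$ is uniquely determined, and each $\delta(v)$ is forced with $\gamma(\delta(v)) = \gamma(\delta_S(v))$ by invariance of $\gamma$ under transport. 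Writing $D_S := \prod_{v \text{ int.}} d^+(v)!$, we obtain $D_S$ triples producing $S$, each occurring with probability
\begin{align*}
\frac{\prod_{v} p_{d^+(v)}}{Z_n} \cdot \prod_{v \text{ int.}} \frac{\gamma(\delta_S(v))}{d^+(v)! \, p_{d^+(v)}} \cdot \frac{1}{n!} = \frac{\upsilon(S)}{D_S \cdot Z_n \cdot n!},
\end{align*}
where I used $|\cN[k]|_\gamma = k! \, p_k$ and the fact that leaves contribute $p_0 = 1$ trivially. Summing over the $D_S$ triples yields total probability $\upsilon(S)/(Z_n \cdot n!) = \upsilon(S)/|\cS_\cN^\upsilon[n]|_\upsilon$, as desired.

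The main delicacy is the bookkeeping in the species formalism, specifically making the $d^+(v)!$ planarizations at each internal vertex cancel exactly with the $d^+(v)!$ in $|\cN[d^+(v)]|_\gamma = d^+(v)! \, p_{d^+(v)}$. Once this cancellation is made explicit, the identification of generating functions and the remaining arithmetic are essentially formal.
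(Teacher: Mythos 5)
Your proof is correct and is essentially the same triple-counting argument as the paper's. The one genuine variation is in how you establish the normalizing-constant identity $n!\,Z_n = |\cS_\cN^\upsilon[n]|_\upsilon$: you observe that the ordinary generating function $Z(z)$ of weighted plane trees with leaves as atoms and the exponential generating function $\cS_\cN^\upsilon(z)$ satisfy the \emph{same} functional equation $f(z) = z + \sum_{k \ge 2} p_k f(z)^k$ and invoke uniqueness of the power-series solution, whereas the paper derives the identity by the direct combinatorial accounting $|\cS_\cN[n]|_\upsilon = \sum_{A \in \cA[n]} \prod_v |\cN[d^+_A(v)]|_\gamma = n!\sum_{T \in M_n} \prod_v p_{d^+_T(v)}$, using that each labelled unordered tree corresponds to $\prod_v d^+_A(v)!$ plane trees with labelled leaves and each unlabelled plane tree to $n!$ labelled ones. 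Your route is slightly cleaner and automatic once the isomorphism $\cS_\cN^\upsilon \simeq \cX + \cN^\gamma \circ \cS_\cN^\upsilon$ and the identity $\cN^\gamma(w) = \sum_{k\ge 2} p_k w^k$ are in hand; the paper's route is more elementary and exposes the same bijective bookkeeping that is reused in the subsequent triple count. The remainder of the argument — $D_S = \prod_{v\,\text{int.}} d^+(v)!$ triples, each with probability $\upsilon(S)/(D_S Z_n n!)$, using $p_0 = 1$ for leaves and weight-invariance of $\gamma$ under transport — coincides with the paper's.
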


Given parameters  $a,t>0$, we may tilt the weight-sequence $(p_k)_k$ by setting 
\[
p_0^{a,t} = a \qquad \text{and} \qquad p_k^{a,t} = p_k t^{k-1}
\]
 for $k \ge 1$. The modified weight of the tree is then given by
\[
\upsilon^{a,t}(T) = \prod_{v \in T} p^{a,t}_{d^+_T(v)} = a^n t^{n-1} \upsilon(T).
\]
Hence it makes no difference whether we draw a tree from $\cT_\ell^\upsilon[n]$ with probability proportional to its $\upsilon$-weight or to its $\upsilon^{a,t}$-weight. If the generating series \[p(z) := \sum_{k \ge 2} p_k z^k\] is analytic at $0$, then for each $t>0$ with $p(t)/t < 1$ there is a unique parameter $a = a(t)>0$ such that the tilted weights $p^{(t)} = (p^{a,t}_k)_{k \ge 0}$ form a probability weight-sequence. The expected value of the offspring distribution is given by
\[
\mu_t = \sum_{k \ge 2} k p_k^{a,t} = \sum_{k \ge 2} k p_k t^{k-1},
\]
which we may interpret as a strictly increasing function 
\[
\mu: [0, \rho_p] \to [0, \infty]
\] 
in $t$, with $\rho_p$ denoting the radius of convergence of $p(z)$. There is a canonical choice for the parameter $t$. If $\mu_{\rho_p} \ge 1$, we let $t_0>0$ be the unique parameter with $\mu_{t_0} = 1$, and say the weight-sequence $(p_k)_k$ has type I. If $0 < \mu_{\rho_p} < 1$, we set $t_0 = \rho_p$ and say $(p_k)_k$ has type II. Finally, if $\rho_p=0$, we say $(p_k)_k$ has type III and set $t_0=0$ and let $p^{(t_0)}$ be the probability weight-sequence with mass $1$ on the value $0$.

Let $\hat{\tau}$ denote the modified Galton--Watson tree from Section~\ref{se:modgwt} that corresponds to the offspring distribution $p^{(t_0)}$. Abraham and Delmas \cite[Thm. 1.2]{MR3227065} (see also \cite{MR3245291, MR3164755} for previous results in subcases) showed convergence of $\tau_n$ toward $\hat{\tau}$ in the cases I and II. We complete the picture by treating the non-analytic case III, in which we establish the same asymptotic behaviour as simply generated trees with vertices as atoms and super-exponential weights on the out-degrees \cite{MR2860856, MR2908619}. The idea of the proof is to  transform the tree $\tau_n$ in two different ways to simply generated trees and use the convergence of each. Recall that in  Section~\ref{se:loconv} we discussed the compact Polish space $\fmT$ of plane trees that may have vertices with infinite degree.

\begin{lemma}[Convergence of simply generated trees with leaves as atoms]
	\label{le:convll}
	The random tree $\tau_n$ converges in distribution toward the modified Galton--Watson tree $\hat{\tau}$ in the space $\fmT$ as $n$ becomes large.
\end{lemma}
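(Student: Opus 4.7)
\emph{Proof strategy.} The plan is to deduce the convergence $\tau_n \to \hat\tau$ in $\fmT$ from Janson's Theorem~\ref{te:convsigen} on simply generated trees with vertices as atoms, using two different transformations that together cover all three types of weight sequence $(p_k)_k$.

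For the analytic types I and II ($\rho_p > 0$), the tilted weights $p^{(t_0)}$ form a genuine probability law and $\tau_n$ is distributed as a Galton--Watson tree with offspring law $p^{(t_0)}$ conditioned on having $n$ leaves. In this setting the local convergence $\tau_n \to \hat\tau$ in $\fmT$ has been established by Abraham and Delmas~\cite[Thm.~1.2]{MR3227065}, with earlier partial results in~\cite{MR3164755, MR3245291}. It thus remains to treat the non-analytic type III case, in which $p^{(t_0)}$ degenerates to a point mass at $0$ and $\hat\tau$ becomes the deterministic tree consisting of a single root of infinite outdegree whose children are all leaves.

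For type III, I propose to verify directly that $\tau_n$ coincides with probability tending to one with the \emph{star} $\tau_n^\star$, that is, the plane tree with a single root of outdegree $n$ and $n$ leaf children. Writing $Z_n = \sum_S \prod_{v \in S} p_{d^+_S(v)}$ for the partition function over plane trees with $n$ leaves under $(p_k)_k$, the star contributes exactly $p_n$ to $Z_n$ (all $n$ leaves contributing the factor $p_0 = 1$), so the claim $\Pr{\tau_n = \tau_n^\star} \to 1$ amounts to the asymptotic $Z_n = p_n(1 + o(1))$. For any non-star tree $S$ with $n$ leaves and internal outdegrees $d_1, \ldots, d_m$ one has $m \ge 2$ with $\sum_i (d_i - 1) = n - 1$ and hence $\max_i d_i \le n - 2$; the super-exponential growth of $(p_k)_k$ ensured by $\rho_p = 0$ makes $\prod_i p_{d_i}$ negligible compared to $p_n$ uniformly in the tree shape. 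The second transformation alluded to above enters naturally here via the Ehrenborg--M\'endez isomorphism~\eqref{eq:emiso}, which rewrites $Z_n$ as the partition function of a simply generated tree with $n$ vertices and weight sequence $\tilde\omega_k = [z^k](1 - \Seq^\gamma_{\ge 1}(z))^{-1}$ that inherits type III from $(p_k)_k$; the desired asymptotic $Z_n \sim p_n$ then becomes a clean application of Janson's condensation-at-the-root result for super-exponential simply generated trees with vertices as atoms~\cite[Sec.~19.3]{MR2908619}.

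Once $\Pr{\tau_n = \tau_n^\star} \to 1$ is established, the convergence $\tau_n \to \hat\tau$ in the compact Polish space $\fmT$ is immediate, since in the one-point compactification topology on $\bar{\ndN}_0$ the finite star with $n$ leaves converges to the infinite star $\hat\tau$. The main obstacle is the weight estimate $Z_n = p_n(1 + o(1))$ itself: unlike in the analytic types, no single exponential growth rate suffices to tame the sum over non-star shapes, and one must combine the super-multiplicativity inequality $p_{a+1}p_{b+1} \ll p_{a+b+1}$ (valid for large $a + b$ under $\rho_p = 0$) with a careful enumeration of plane tree shapes with $n$ leaves, for which the reformulation via EM and Janson's condensation analysis provides a uniform template.
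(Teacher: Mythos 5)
Your handling of types I and II matches the paper (citing Abraham--Delmas), and you correctly isolate type III as the remaining case. The type III argument, however, has a genuine error: you aim to prove the strictly stronger statement $\Pr{\tau_n = \tau_n^\star} \to 1$, which is equivalent to $Z_n = p_n(1+o(1))$, and this is \emph{false} for general type III weight sequences. Take $p_0 = 1$ and $p_k = k!$ for $k \ge 2$, which has $\rho_p = 0$. The star with $n$ leaves contributes $p_n = n!$ to $Z_n$. But the $n-1$ trees with root outdegree $n-1$ and a single child subdivided into two leaves contribute $(n-1)\,p_{n-1}\,p_2 = 2(n-1)(n-1)!$, and the ratio $2(n-1)(n-1)!/n! = 2(n-1)/n \to 2$, so already these non-star trees carry roughly twice the mass of the star. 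Hence $\Pr{\tau_n = \tau_n^\star}$ stays bounded away from $1$. The problem is exactly the plane-tree multiplicity that you gesture at with ``careful enumeration'': even if $p_a p_b \ll p_{a+b}$ (which, by the way, is not implied by $\rho_p = 0$ alone---consider $p_k = k!$ on even $k$ and $p_k = 1$ on odd $k$), the number of tree shapes sharing a degree multiset grows and kills the savings. Appealing to Janson's type III result via the Ehrenborg--M\'endez map cannot rescue $Z_n \sim p_n$ either, because Janson's condensation theorem only gives convergence to the infinite star in $\fmT$ (which is the local statement), not that the conditioned tree \emph{is} the star with high probability---that stronger statement is likewise false for type III simply generated trees with vertices as atoms, by the same computation.

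What the paper actually does circumvents the issue entirely by never estimating $Z_n$. It defines two variants $\theta^1, \theta^2$ of the Ehrenborg--M\'endez transformation (descending along the first, respectively second, offspring), both turning $\tau_n$ into simply generated trees with a common type III weight sequence; Janson's theorem then gives convergence of each $\theta^i(\tau_n)$ to the infinite star in $\fmT$. The convergence of $\theta^2(\tau_n)$ shows that the first child of the root of $\tau_n$ is whp a leaf; conditionally on this, $\theta^1$ preserves the leaf/non-leaf status of the remaining root-children, so convergence of $\theta^1(\tau_n)$ shows the first $K$ of them are whp leaves for any fixed $K$. This yields the \emph{local} convergence $\tau_n \to \hat\tau$ in $\fmT$ without needing (or being able to have) the exact identity $\tau_n = \tau_n^\star$. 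You would need to replace your weight estimate by an argument of this local type; as written the step ``$Z_n = p_n(1+o(1))$'' is the gap, and it cannot be filled because the asymptotic is wrong.
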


The random enriched parenthesization $(\tau_n, \delta_n)$ may be viewed as a random point in the metric space $\fmA$ introduced in Section~\ref{sec:partB}. Lemma~\ref{le:convll} and Lemma~\ref{le:extension}  allow us to obtain convergence toward a limit object $(\hat{\tau}, \hat{\delta})$.

\begin{theorem}[Local convergence of random enriched parenthesizations]
	\label{te:localschroeder}
	
	Let $(\tau_n, \delta_n)$ denote the random $\cN^\gamma$-enriched parenthesization from Lemma~\ref{le:couplingschroeder}. 
	We define the random modified enriched parenthesization $(\hat{\tau}, \hat{\delta})$ as follows.
	\begin{enumerate}
		\item Let $\hat{\tau}$ denote the modified Galton--Watson tree from Section~\ref{se:modgwt} that corresponds to the offspring distribution $p^{(t_0)}$.
		\item For each vertex $v \in V(\hat{\tau})$ with finite outdegree $d_{\hat{\tau}}^+(v) < \infty$ choose \[\hat{\delta}(v) \in \cN[d_{\hat{\tau}}^+(v)]\] at random with conditional distribution given by
		\[
		\Pr{\hat{\delta}(v) = N \mid {\hat{\tau}}} = \gamma(N) / |\cN[d_{\hat{\tau}}^+(v)]|_\gamma
		\]
		for all $\cN$-structures $N \in \cN[d_{\hat{\tau}}^+(v)]$.
		For each vertex $v \in V({\hat{\tau}})$ with $d_{\hat{\tau}}^+(v) = \infty$ set $\hat{\delta}(v) = \infty$.
	\end{enumerate}
	Then $(\tau_n, \delta_n)$ converges in distribution toward $(\hat{\tau}, \hat{\delta})$ in the metric space $\fmA$.
\end{theorem}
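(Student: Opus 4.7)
The plan is to combine the local convergence result Lemma~\ref{le:convll} for the underlying tree with the general enrichment transfer principle of Lemma~\ref{le:extension}. The key observation is that the coupling in Lemma~\ref{le:couplingschroeder} for $(\tau_n, \delta_n)$ has exactly the same two-step structure as the coupling in Lemma~\ref{le:coupling}: after drawing $\tau_n$ from the simply generated distribution with weight sequence $(p_k)_k$, where $p_0=1$ and $p_k = |\cN[k]|_\gamma/k!$ for $k \ge 2$, the enriching $\cN$-structures $\delta_n(v)$ are independent and each drawn with conditional probability proportional to its $\gamma$-weight among $\cN$-objects on the offspring set. In the notation of Section~\ref{sec:partB}, one simply replaces the weighted species pair $(\cR,\kappa)$ by $(\cN,\gamma)$; the ambient space $\fmA$ of (possibly infinitely branching) enriched plane trees and the natural enrichment map are unchanged, except that the object space~\eqref{eq:spacex} is built from $\cN$ rather than $\cR$.

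First, I would invoke Lemma~\ref{le:convll} to conclude that $\tau_n \convdis \hat{\tau}$ in the compact Polish space $\fmT$, noting that the distinction between the analytic and non-analytic cases is already handled there. Then, applying Lemma~\ref{le:extension} to this sequence, with the weighted species $(\cN, \gamma)$ playing the role of $(\cR, \kappa)$, yields at once that the naturally enriched trees converge weakly in $\fmA$, that is, $(\tau_n, \delta_n) \convdis (\hat{\tau}, \hat{\delta})$. This matches the description of $(\hat{\tau}, \hat{\delta})$ in the statement of the theorem, since the enrichment at each vertex of finite outdegree is drawn from $\cN[d_{\hat{\tau}}^+(v)]$ with probability proportional to $\gamma$, and the placeholder $\infty$ is assigned at each vertex of infinite outdegree.

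The real substance of the argument sits in the two cited ingredients rather than in their combination. Lemma~\ref{le:convll} is the delicate step, as it must cover the non-analytic type~III regime via a condensation-type analysis for simply generated trees counted by leaves; this is where the main potential obstacle lies. By contrast, Lemma~\ref{le:extension} is essentially formal once convergence in $\fmT$ is known, because the conditional law of the enrichments given the tree is a product of factors depending only on the outdegrees, and outdegree convergence is built into convergence in $\fmT$. The only small verification needed is that replacing $(\cR,\kappa)$ by $(\cN,\gamma)$ in the proof of Lemma~\ref{le:extension} causes no topological issue, which is immediate since the construction of $\fmA$ depends on the enriching species only through the object sets appearing in~\eqref{eq:spacex}.
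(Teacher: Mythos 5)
Your proposal is correct and takes exactly the route the paper itself takes: the paper remarks just before stating the theorem that it follows from Lemma~\ref{le:convll} combined with Lemma~\ref{le:extension}, and the (one-line) proof in Section~\ref{sec:allproofs} says precisely this. Your additional observations that the coupling in Lemma~\ref{le:couplingschroeder} has the same two-step structure as in Lemma~\ref{le:coupling}, and that the construction of $\fmA$ depends on the enriching species only through the object sets in~\eqref{eq:spacex}, are accurate and make explicit the small verification the paper leaves implicit.
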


We will also refer to the type of the weight-sequence $(p_k)_k$ as the type of $\cS^\upsilon$, $(\tau_n, \delta_n)$ and $(\hat{\tau}, \hat{\delta})$.  In a certain sense, the types are compatible with those of simply generated trees: Suppose that we are given a species $\cA^\omega_\cR$ of $\cR^\kappa$-enriched trees such that
\[
\cR^\kappa = \Seq \circ \cH^\kappa
\]
for some species $\cH^\kappa$, whose weighting we also denote by $\kappa$ (committing a slight abuse of notation). The Ehrenborg--M\'endez isomorphism from Section~\ref{sec:diss} allows us identify $\cR^\kappa$-enriched trees with $(\cX \cdot \cH^\kappa)$-enriched parenthesizations:
\[
	\cA_{\cR^\kappa}^\omega \simeq \cS^\upsilon_{\cX \cdot \cH^\kappa}.
\]
We distinguished three types for the weight-sequence $\mathbf{w} = (\omega_k)_k$ with $\omega_k = |\cR[k]|_\kappa$. The following observation states that the type of $\mathbf{w}$ agrees with the type of $(p_k)_k$. Recall the definition of the parameter $\tau$ from Section~\ref{sec:types}.

\begin{lemma}[The types of the two weight-sequences agree]
	\label{le:types}
	The weight-sequence $\mathbf{w}$ has type I if  $\mu_{t_0}\ge1$, type II if $0 < \mu_{t_0} < 1$, and type III if $\mu_{t_0} = 0$. Thus, the type of the weight-sequence $(p_k)_k$ agrees with the type of the weight-sequence $(\omega_k)_k$. Moreover, it holds that $\tau=t_0$.
\end{lemma}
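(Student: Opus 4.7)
The plan is to translate everything into the single common series $\cH^\kappa(z)$. Starting from the Ehrenborg--M\'endez identification $\cA^\omega_{\cR^\kappa} \simeq \cS^\upsilon_{\cX \cdot \cH^\kappa}$ together with $\cR^\kappa = \Seq \circ \cH^\kappa$ and $\cH^\kappa(0)=0$, one reads off
\[
\phi(z) \;=\; \cR^\kappa(z) \;=\; \frac{1}{1-\cH^\kappa(z)}, \qquad p(z) \;=\; \cN^\gamma(z) \;=\; z\,\cH^\kappa(z).
\]
Differentiating yields the two basic formulas
\[
\psi(t) \;=\; \frac{t\,(\cH^\kappa)'(t)}{1-\cH^\kappa(t)}, \qquad \mu_t \;=\; p'(t) \;=\; \cH^\kappa(t) + t\,(\cH^\kappa)'(t),
\]
from which the key algebraic equivalence is immediate: whenever $\cH^\kappa(t)<1$,
\[
\psi(t)=1 \;\Longleftrightarrow\; t\,(\cH^\kappa)'(t) = 1 - \cH^\kappa(t) \;\Longleftrightarrow\; \mu_t=1.
\]
Both $\psi$ and $\mu$ are continuous and strictly increasing on their domains, so the level sets at $1$ are singletons.

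The next step is to relate the radii of convergence. Because $\phi(z) = \sum_{k\ge 0}\cH^\kappa(z)^k$ dominates $\cH^\kappa(z)$ coefficientwise, one has $\rho_\phi \le \rho_{\cH^\kappa} = \rho_p$, with strict inequality only in the ``overshoot'' subcase where $\cH^\kappa$ already reaches the value $1$ strictly inside its disc of convergence. In particular $\rho_p=0$ if and only if $\rho_\phi=0$. Moreover, since $\cH^\kappa$ has some positive coefficient at $z^j$ for $j\ge 1$ (by the nontriviality assumption on the weights), the derivative $(\cH^\kappa)'$ is strictly positive on every positive interval inside its disc of convergence.

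With these preparations, the correspondence of types is a case check on $(p_k)_k$. In type III one has $\rho_p=0$, hence $\rho_\phi=0$, and therefore $\nu=0$, so $\mathbf{w}$ is of type III with $\tau = 0 = t_0$. In type II, the inequality $\mu_{\rho_p}<1$ rewrites as $\rho_p\,(\cH^\kappa)'(\rho_p) < 1 - \cH^\kappa(\rho_p)$, which forces $\cH^\kappa(\rho_p)<1$; consequently $\rho_\phi=\rho_p$ and $\nu = \psi(\rho_\phi)<1$, with $\nu>0$ following from $(\cH^\kappa)'(\rho_p)>0$. Hence $\mathbf{w}$ is of type II and $\tau = \rho_\phi = \rho_p = t_0$. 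In type I there is a unique $t_0 \in (0,\rho_p]$ with $\mu_{t_0}=1$; combined with $(\cH^\kappa)'(t_0)>0$, the defining equation $t_0\,(\cH^\kappa)'(t_0) = 1 - \cH^\kappa(t_0)$ forces $\cH^\kappa(t_0)<1$, hence $t_0 \le \rho_\phi$, and the equivalence above yields $\psi(t_0)=1$. Monotonicity of $\psi$ then gives $\nu = \psi(\rho_\phi) \ge 1$ and $\tau = t_0$ by uniqueness.

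The only slightly delicate point, which I expect to be the main obstacle to a clean write-up, is the overshoot subcase of type I in which $\rho_\phi < \rho_p$: there $\cH^\kappa(\rho_\phi)=1$ while $(\cH^\kappa)'(\rho_\phi)<\infty$, so $\nu = \psi(\rho_\phi) = +\infty$; but since $\cH^\kappa(t_0)<1$ one still has $t_0 < \rho_\phi$, and the argument above goes through unchanged. Everything thus reduces to the single algebraic identity linking $\psi$ and $\mu$ through $\cH^\kappa$.
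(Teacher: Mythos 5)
Your proof is correct and follows essentially the same approach as the paper: both exploit the rational relations $\phi(z) = 1/(1-\cH^\kappa(z))$ and $p(z) = z\cH^\kappa(z)$ and the resulting algebraic identity linking $\psi$ and $\mu$, then run through the three type cases using monotonicity. The only cosmetic difference is that the paper first establishes the equivalence $\mu_{\rho_p}\ge 1 \Leftrightarrow \psi(\rho_\phi)\ge 1$ as a standalone step, invoking Janson's general principle (\cite[Lem.\ 3.1]{MR2908619}) that $\phi(\rho_\phi)=\infty$ forces $\psi(\rho_\phi)>1$ in the overshoot subcase, whereas you handle that subcase by computing the limit $\psi(\rho_\phi)=+\infty$ directly from the explicit formula $\psi(t)=t(\cH^\kappa)'(t)/(1-\cH^\kappa(t))$ — an entirely equivalent and slightly more self-contained route.
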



\subsection{Giant components in Gibbs partitions}
\label{se:gibbs}
\label{sec:gibbs}

Suppose that we are given weighted species $\cF^\upsilon$ and $\cG^\gamma$ such that the composition $\cF^\upsilon \circ \cG^\gamma$ is well-defined. That is, we assume that $\cG^\gamma(0)=0$.  If we consider a random compound structure from $(\cF^\upsilon \circ \cG^\gamma)[n]$ that gets drawn with probability proportional to its weight, then the corresponding random partition of the set $[n]$ may be  called a {\em Gibbs partition}. Pitman~\cite{MR2245368} gives an extensive survey on this topic, and since then further additions to the theory have been made~\cite{MR2453776,doi:10.1002/rsa.20771}. We are interested in cases where typically a giant component emerges as the size total size of the composite structure becomes large and provide some new results for Gibbs partitions with superexponential weights.

\subsubsection{The convergent case}


In this section, we discuss a setting where the random composite structure asymptotically looks like a Boltzmann distributed $(\cF^\upsilon)' \circ \cG^\gamma$-object, where the marked $*$-placeholder atom in the $\cF$-structure is replaced with a giant $\cG$-component. The term is inspired by Barbour and Granovsky's terminology  for the convergent case of random partitions satisfying a conditioning relation \cite{MR2121024}. The class of partitions satisfying a conditioning has a non-trivial intersection with that of Gibbs partitions, but neither contains the other. See for example Arratia, Barbour and Tavar\'e's book \cite{MR2032426} for a detailed discussion of this model.

\begin{definition}[Convergent type substitution]
	\label{def:convergent}
	Let $\cF^\upsilon$ and $\cG^\gamma$ be weighted species such that $\cG^\gamma(z)$ is not a polynomial and satisfies $\cG^\gamma(0)=0$. For each $n$ with $[z^n]\cF^\upsilon(\cG^\gamma(z)) >0$ let $\mS_n$ denote the isomorphism type of a random labelled compound structure from the set $(\cF^\upsilon \circ \cG^\gamma)[n]$ sampled with probability proportional to its weight. Suppose that the radius of convergence $\rho_\cG$ of the series $\cG^\gamma(z)$ is positive and that \[0<((\cF')^\upsilon \circ \cG^\gamma)(\rho_\cG) < \infty.\]
	Let $\hat{\mS}_n$ denote the isomorphism type of the composite structure obtained by sampling a $\mathbb{P}_{(\cF')^\upsilon \circ \cG^\gamma, {\rho}_\cG}$-distributed object $\hat{\mS}$ and replacing the $*$-placeholder atom in the $\cF$-structure by a random $\cG$-structure sampled from $\cG[n - |\hat{\mS}|]$ with probability proportional to its $\gamma$-weight. This is only well-defined if $n - |\hat{\mS}|>0$ and $|\cG[n - |\hat{\mS}|]|_\gamma > 0$, otherwise we set $\hat{\mS}_n$ to some place-holder value. We say the composition $\cF^\upsilon \circ \cG^\gamma$ has convergent type, if
	\begin{align}
	\label{eq:convtype}
	\lim_{n \to \infty} d_{\textsc{TV}}(\mS_n, \hat{\mS}_n) = 0.
	\end{align}
\end{definition}

\begin{definition}
	\label{pp:sd}
	\label{def:subexp}
	Let $d \ge 1$ be an integer. We say the coefficients of a power series $g(z) = \sum_{n =0}^\infty g_n z^n$ with non-negative coefficients and radius of convergence $r >0$ belongs to the class $\mathscr{S}_d$ of subexponential sequences with span $d$, if $g_n=0$ whenever $n$ is not divisible by $d$, and
	\begin{align}
	\label{eq:condition}
	\frac{g_n}{g_{n+d}} \sim r^d, \qquad \frac{1}{g_n}\sum_{i+j=n}g_ig_j \sim 2 g(r) < \infty
	\end{align}
	as $n \equiv 0 \mod d$ becomes large.
\end{definition}

The broad scope of this setting is illustrated by the following easy observation, which has been noted in various places, see for example \cite{MR772907}.
\begin{proposition}
	\label{pro:yehaa}
	If $g_n = h(n) n^{-\beta} \rho^{-n}$ for some constants $\rho>0$, $ \beta > 1$ and a slowly varying function $h$, then the series $\sum_{n \in d\ndN} g_n z^n$ belongs to the class $\mathscr{S}_d$.
\end{proposition}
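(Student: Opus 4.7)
My plan is to verify the two defining conditions of the class $\mathscr{S}_d$ directly from the asymptotic form $g_n = h(n)n^{-\beta}\rho^{-n}$. Since the series only involves indices $n\in d\ndN$, the divisibility requirement is automatic, and it suffices to check the asymptotics for $n\in d\ndN$. For the ratio condition, I would compute
\[
\frac{g_n}{g_{n+d}} \;=\; \rho^{d}\cdot\frac{h(n)}{h(n+d)}\cdot\Bigl(\frac{n+d}{n}\Bigr)^{\!\beta}
\]
and conclude that this tends to $\rho^{d}$, using slow variation of $h$ (which yields $h(n)/h(n+d)\to 1$) together with $((n+d)/n)^{\beta}\to 1$.

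For the convolution asymptotic, I would pass to the normalised sequence $a_n := g_n \rho^{n} = h(n) n^{-\beta}\one_{\{n\in d\ndN\}}$. Since $\beta>1$ and $h$ is slowly varying, Potter's bounds give $S:=\sum_n a_n <\infty$, and a direct substitution shows $S=g(\rho)$. Using $g_ig_j = a_i a_j\rho^{-n}$ for $i+j=n$, the relation $g_n^{-1}\sum_{i+j=n}g_ig_j \sim 2g(\rho)$ is equivalent to
\[
\frac{1}{a_n}\sum_{i+j=n} a_i a_j \longrightarrow 2S,
\]
i.e., to the subexponentiality of the mass function $(a_n/S)_{n\in d\ndN}$.

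The main step is therefore this convolution asymptotic, which is the classical fact that regularly varying mass functions of index $-\beta$ with $\beta>1$ are subexponential. I would prove it by splitting the sum at an auxiliary threshold $K$. In the two ``tail'' ranges $0\le i\le K$ and $n-K\le i\le n$, slow variation gives $a_{n-i}/a_n\to 1$ for every fixed $i$, so those pieces together contribute $(2+o(1))\,a_n\sum_{0\le i\le K, i\in d\ndN}a_i$, which after first letting $n\to\infty$ and then $K\to\infty$ yields the desired $2Sa_n$. The remaining range $K<i<n-K$ has to be shown to be $o(a_n)$; this is where Potter's inequality on $h$ enters, providing for any $\epsilon>0$ a uniform bound of the form $h(n-i)/h(n)\le C\bigl(n/(n-i)\bigr)^{\epsilon}$ valid for large $n$. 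Combined with the polynomial decay $n^{-\beta}$ and the hypothesis $\beta>1$, this forces the middle contribution to be negligible once $K$ is large. The delicate piece is precisely this middle-range estimate, which must be arranged so that the error vanishes in the correct order of limits $n\to\infty$ then $K\to\infty$; everything else is a routine manipulation of slowly varying functions.
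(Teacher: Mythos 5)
The paper does not actually prove this proposition; it is stated as a known ``easy observation'' with a citation to Embrechts and Omey~\cite{MR772907}, so there is no in-text argument to compare against. Your proof is correct and is essentially the classical proof of subexponentiality for regularly varying mass functions: the ratio condition and the finiteness of $g(\rho)$ are direct, and the convolution asymptotic follows from the three-way split at a threshold $K$ together with a Potter-type bound controlling the middle range. Two details are worth spelling out in a full write-up. First, $h(n)/h(n+d)\to 1$ does not follow from the pointwise definition of slow variation alone, since the ratio $(n+d)/n$ itself varies with $n$; one needs the Uniform Convergence Theorem for slowly varying functions (or, equivalently, the Karamata representation). Second, Potter's inequality gives $h(y)/h(x)\le C\max\bigl((y/x)^{\epsilon},(x/y)^{\epsilon}\bigr)$ only for $x,y\ge x_0(\epsilon)$, so the uniform bound $a_{n-i}/a_n\le M$ on the range $K<i<n-K$ is available only once $K\ge x_0$; since you take $K\to\infty$ last, this is harmless, but it should be stated explicitly rather than invoking Potter wholesale for all $i$ and $n$. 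With those two clarifications the argument is complete.
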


The notions introduced above are relevant to the study of enriched trees. The following result follows from \cite[Lem. 3.3]{doi:10.1002/rsa.20771} and shows that the coefficients of the exponential generating series of an arbitrary class of enriched trees form up to a constant shift of indices a subexponential sequence, as long as the series has positive radius of convergence.

\begin{lemma}[Subexponentiality of enriched trees]
	\label{le:subexp}
	If $\cR^\kappa$ is a weighted species, and $\cA_\cR^\omega$ the corresponding species of $\cR$-enriched trees such that the weight-sequence $\mathbf{w}=(\omega_k)_k$ with $\omega_k = |\cR[k]|_\kappa$ has type $I$ or $II$, then the series $\cA_\cR^\omega(z)/z$ belongs to the class $\mathscr{S}_d$ of subexponential sequences with  $d=\spa({\mathbf{w}})$.
\end{lemma}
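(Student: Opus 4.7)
The plan is to identify the generating series $\cA_\cR^\omega(z)$ with the simply generated tree partition function series $\cZ(z)$ of Section~\ref{sec:simplygen}, and then deduce the subexponentiality of its coefficients from classical asymptotic estimates already collected in Janson's survey. Starting from the recursive isomorphism \eqref{eq:weighting}, i.e.\ $\cA_\cR^\omega \simeq \cX \cdot \cR^\kappa(\cA_\cR^\omega)$, the generating series satisfies
\[
\cA_\cR^\omega(z) \;=\; z\,\phi(\cA_\cR^\omega(z)),
\]
where $\phi(u) = \cR^\kappa(u) = \sum_k \omega_k u^k$ is exactly the series associated with the weight sequence $\mathbf{w}$. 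Since $\cZ(z)$ was characterized in Section~\ref{sec:types} as the unique formal power series with $\cZ(0)=0$ solving $\cZ = z\phi(\cZ)$, I would conclude $\cA_\cR^\omega(z) = \cZ(z)$, so that the shifted series $g(z) := \cA_\cR^\omega(z)/z$ has coefficients $g_n = Z_{n+1}$.

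Next I would identify the basic parameters needed for Definition~\ref{def:subexp}. By \cite[Cor.~15.6]{MR2908619} the support of $(Z_{n+1})_n$ is eventually the arithmetic progression $d\ndN_0$ with $d = \spa(\mathbf{w})$, which is the correct span. Under types I and II the radius of convergence satisfies $r := \rho_\cZ = \tau/\phi(\tau) \in (0,\infty)$ by \eqref{eq:z1}, which gives the required finite positive radius. It only remains to verify the two asymptotic conditions in \eqref{eq:condition}.

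For that final step I would plug in the coefficient asymptotics for $Z_n$ that have been established in the simply generated tree literature: in the generic type I$\alpha$ setting one has $Z_n \sim c\, n^{-3/2} \rho_\cZ^{-n}$; in the I$\beta$ case and in the type II regime with $(\pi_k)$ in a stable domain of attraction one has $Z_n = h(n)\, n^{-\beta} \rho_\cZ^{-n}$ with $h$ slowly varying and some power-law exponent $\beta > 1$. Since in each of these cases $g_n = Z_{n+1}$ falls under the hypothesis of Proposition~\ref{pro:yehaa}, subexponentiality of $g$ with span $d$ follows immediately from that proposition.

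The main obstacle will be covering the remaining type II situations not directly fitting the stable framework, where the singular behaviour of $\phi$ dictates a non-standard exponent. For these I would rely on the case distinction already performed in \cite[Lem.~3.3]{Mreplaceme}, whose conclusion is precisely the subexponentiality statement needed here; indeed, the lemma as stated is cited from that reference, so the proof amounts to pointing out the translation between the enriched-tree setting and the simply generated tree setting, via the identification $\cA_\cR^\omega(z) = \cZ(z)$ above, and invoking that result.
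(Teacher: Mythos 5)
Your concluding step coincides with the paper's: the result is taken directly from \cite[Lem.~3.3]{Mreplaceme}, and the only translation needed is the identification $\cA_\cR^\omega(z) = \cZ(z)$, which follows from the fixed-point equation $\cA_\cR^\omega(z) = z\,\phi\bigl(\cA_\cR^\omega(z)\bigr)$ together with $\cA_\cR^\omega(0)=0$ (using the normalization $\omega_k = |\cR[k]|_\kappa/k!$ from Section~\ref{sec:partB}, so that $\phi = \cR^\kappa$). The facts you collect about the support and the radius of convergence $\rho_\cZ = \tau/\phi(\tau)$ are correct and part of the translation.

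The intermediate paragraph, however, is both incomplete and unnecessary, and you should drop it. You attempt to verify the two conditions in Definition~\ref{def:subexp} by appealing to power-law coefficient estimates of the form $Z_n = h(n)\,n^{-\beta}\,\rho_\cZ^{-n}$ with $h$ slowly varying, and then invoking Proposition~\ref{pro:yehaa}. Such estimates hold only in special sub-regimes: type I$\alpha$ (where $Z_n \sim c\, n^{-3/2}\rho_\cZ^{-n}$), and those I$\beta$ and II cases in which $(\pi_k)$ lies in the domain of attraction of a stable law. For a general type I$\beta$ weight sequence (mean one, infinite variance, but otherwise arbitrary tails) or a general type II weight sequence there is no reason for any regular asymptotic $Z_n \asymp h(n)n^{-\beta}\rho_\cZ^{-n}$ to hold; you acknowledge this as an ``obstacle'' yourself. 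The point of the cited result is precisely that subexponentiality of $\cZ(z)/z$ follows from the functional equation $\cZ = z\phi(\cZ)$ under types I and II without any local limit theorem, so the detour through Proposition~\ref{pro:yehaa} does no work. The honest proof is the one sentence at the end of your proposal.
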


The following general criterion follows from \cite[Thm. 3.4, Eq. (6.3)]{doi:10.1002/rsa.20771} and ensures that the composite structure $\cR = \cF^\upsilon \circ \cG^\gamma$ has convergent type in the settings we are interested in.

\begin{lemma}[Convergence of Gibbs partitions along subsequences]
	\label{le:gibbs}
	Suppose that there is an integer $0 \le m < d$ such that $\cG^\gamma(z) / z^m$ belongs to the class $\mathscr{S}_d$. Let $D = d/\gcd(m,d)$ and for each $0 \le a < D$, let  $\cF_a^\upsilon$ denote the restriction of $\cF^\upsilon$ to objects whose  size lies in $a + D\ndZ$. If the exponential generating series $\cF_a^\upsilon(z)$ is not constant, then the composition $\cF_a^\upsilon \circ \cG^\gamma$ has convergent type. That is,
	\[
		 d_{\textsc{TV}}(\mS_n^a, \hat{\mS}_n^a) \to 0, \qquad n \to \infty, \qquad n \equiv am \mod d,
	\]
	with $\mS_n^a$ denoting the isomorphism type of a random composite structure sampled from $(\cF_a^\upsilon \circ \cG^\gamma)[n]$ with probability proportional to its weight, and $\hat{\mS}_n^a$ as in Definition \ref{def:convergent}, but for the species $\cF_a^\upsilon$ and $\cG^\gamma$. If $\mS_n$ denotes the isomorphism type of a random  element from $(\cF^\upsilon \circ \cG^\gamma)[n]$ that is drawn with probability proportional to its weight, then it holds for $n \equiv am \mod d$ that
	\[
		\mS_n \eqdist \mS_n^a.
	\]
\end{lemma}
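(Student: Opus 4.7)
The plan is to establish the two assertions in turn: first a purely combinatorial lattice identity $\mS_n \eqdist \mS_n^a$, and then the total-variation convergence by reduction to the cited general criterion \cite[Thm.~3.4]{Mreplaceme}. The bulk of the real work has already been carried out in that reference, so the main task here is to handle the shift $m$ carefully and to argue that restricting to $\cF_a^\upsilon$ captures the entire distribution of $\mS_n$ along the subsequence $n \equiv am \pmod d$.

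\emph{Step 1 (identifying $\mS_n$ with $\mS_n^a$).}  I would begin by analyzing the decomposition of any element of $(\cF^\upsilon \circ \cG^\gamma)[n]$ into an $\cF$-structure on a set of size $k$ and $k$ independent $\cG$-structures whose sizes sum to $n$.  The hypothesis $\cG^\gamma(z)/z^m \in \mathscr{S}_d$ implies that $[z^j]\cG^\gamma(z) = 0$ unless $j \in m + d\ndN_0$, so each block has size in $m + d\ndN_0$.  Summing $k$ such sizes gives $n \equiv km \pmod d$.  When $n \equiv am \pmod d$ this forces $(k-a)m \equiv 0 \pmod d$, i.e.\ $\tfrac{d}{\gcd(m,d)}\,|\,(k-a)$, which is exactly $k \equiv a \pmod D$.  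Hence every compound structure of size $n$ has an $\cF$-component in $\cF_a$, so $\mS_n$ is in fact supported on $(\cF_a^\upsilon \circ \cG^\gamma)[n]$, and $\mS_n \eqdist \mS_n^a$ follows.

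\emph{Step 2 (reduction to the external criterion).}  For the first assertion I would apply \cite[Thm.~3.4]{Mreplaceme} to the pair $(\cF_a^\upsilon, \cG^\gamma)$.  The two hypotheses to check are (i) sub-exponentiality of the inner generating series along the correct lattice, and (ii) non-constancy of the outer generating series.  Condition (ii) is assumed.  Condition (i) is where the shift $m$ needs attention: the theorem is typically formulated for an inner series in $\mathscr{S}_d$, whereas here only $\cG^\gamma(z)/z^m$ lies in $\mathscr{S}_d$.  The right way to absorb this is to recognize that the restriction to $\cF_a^\upsilon$ was precisely designed so that the coefficients of $\cF_a^\upsilon(\cG^\gamma(z))$ along the arithmetic progression $n \equiv am \pmod d$ admit, after factoring out the deterministic shift coming from the $k$ blocks contributing $km$ to the total size, a clean expression in the sub-exponential coefficients of $\cG^\gamma(z)/z^m$.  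Concretely, for $k \equiv a \pmod D$ one writes $(\cG^\gamma(z))^k = z^{km}(\cG^\gamma(z)/z^m)^k$ and uses the convolution asymptotics in the definition of $\mathscr{S}_d$ to control $[z^n](\cF_a^\upsilon \circ \cG^\gamma)(z)$ as $n \equiv am \pmod d$ tends to infinity.  Equation~(6.3) of \cite{Mreplaceme}, which is explicitly singled out in the statement, is what encapsulates this shift bookkeeping, so I would invoke it as a black box at this point.

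\emph{Expected obstacle.}  The routine part is Step 1; the genuine obstacle is the bookkeeping in Step 2, namely to make sure that the restriction to $\cF_a^\upsilon$ and the shift by $z^m$ combine so that the sub-exponential hypothesis of \cite[Thm.~3.4]{Mreplaceme} applies without residual lattice artefacts.  Once that is settled, the conclusion of the external theorem is exactly the stated total-variation convergence $d_{\textsc{TV}}(\mS_n^a, \hat{\mS}_n^a) \to 0$ along $n \equiv am \pmod d$, and combined with $\mS_n \eqdist \mS_n^a$ from Step~1 this gives the lemma.
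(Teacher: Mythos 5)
Your proposal is correct and matches the paper's approach: the paper treats this lemma as a direct consequence of \cite[Thm.~3.4, Eq.~(6.3)]{Mreplaceme} and gives no separate proof, so invoking that result as a black box in Step~2 is exactly what the paper does. Your Step~1 (the elementary lattice argument showing that $n \equiv am \pmod d$ forces the number of blocks to lie in $a + D\ndZ$, hence $\mS_n \eqdist \mS_n^a$) correctly supplies the combinatorial bookkeeping the paper leaves implicit.
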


We are going to apply Lemma~\ref{le:gibbs} in many ways. For example, random face-weighted outerplanar maps correspond by the discussion in Section~\ref{sec:decompouter} to trees enriched with ordered sequences of dissections of polygons. We interpreted face-weighted dissections as classes of enriched trees in Section~\ref{sec:diss}, and hence their generating series is up to a constant shift subexponential by  Lemma~\ref{le:subexp}. This will allow us to apply Lemma~\ref{le:gibbs} to random type II outerplanar maps, where the condensation phenomenon yields large submaps given by ordered sequence of dissections, which may be interpreted as randomly-sized Gibbs partitions. The randomness of the size will even out the different behaviour observed along subsequences in Lemma~\ref{le:gibbs}, which allows us to establish  local weak convergence of {\em arbitary} face-weighted outerplanar maps in Theorem~\ref{te:facethm}. 

We will occasionally also make use of the following fact:
\begin{proposition}[{\cite[Thm. 1]{MR0348393}, \cite[Thm. C]{MR772907}}]
	\label{pro:sleeky}
	Suppose that the power series $g(z)$ belongs to $\mathscr{S}_d$ with radius of convergence $r$. Then for any complex function $f(z)$  that is analytic in an open set containing all $g(z)$ with $|z| \le r$, it holds that
	\[
	[z^n] f(g(z)) \sim f'(g(r)) [z^n]g(z), \qquad n \to \infty, \qquad n \equiv 0 \mod d.
	\]
\end{proposition}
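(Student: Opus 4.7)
The plan is to reduce the statement to the asymptotics of $[z^n] g(z)^k$ for each fixed $k \ge 1$ via a dominated-convergence argument applied to the Taylor expansion of $f$.

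\textbf{Step 1 (asymptotics of powers).} First I would establish by induction on $k \ge 1$ that
\[
[z^n] g(z)^k \;\sim\; k\, g(r)^{k-1}\, [z^n] g(z) \qquad \text{as } n \to \infty,\ n \equiv 0 \bmod d.
\]
The case $k = 1$ is trivial, and $k = 2$ is precisely the convolution hypothesis built into $\mathscr{S}_d$. For the inductive step I would write $g(z)^{k+1} = g(z) \cdot g(z)^k$ and split the convolution
\[
[z^n] g(z)^{k+1} \;=\; \sum_{\substack{i+j=n \\ i,j \in d\mathbb{Z}_{\ge 0}}} g_i \cdot [z^j] g(z)^k
\]
into two tails and a central block using a cut-off $N$. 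In the tail where $i \le N$ I apply the induction hypothesis, $[z^{n-i}] g(z)^k \sim k g(r)^{k-1} g_{n-i}$, combined with the ratio $g_{n-i}/g_n \to 1$ (which follows from $g_n / g_{n+d} \to r^d$); summing over $i \le N$ and then letting $N \to \infty$ gives a contribution of $k g(r)^k \,[z^n] g(z)$. The symmetric tail $j \le N$ gives $g(r)^k \, [z^n] g(z)$, so the two tails together produce the desired $(k+1) g(r)^k\,[z^n] g(z)$. The central block is handled by the same subexponential convolution bound $\sum_{i+j=n} g_i g_j \sim 2 g(r) g_n$ together with a provisional crude bound $[z^j] g(z)^k \le C_k g_j$, and is shown to be $o([z^n] g(z))$ as $N \to \infty$. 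This is the classical Chover--Ney--Wainger induction.

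\textbf{Step 2 (Kesten-type uniform bound).} To justify later swapping the limit and an infinite sum, I would prove that for every $\varepsilon > 0$ there is a constant $C_\varepsilon > 0$ with
\[
[z^n] g(z)^k \;\le\; C_\varepsilon\,(g(r) + \varepsilon)^{k-1}\, [z^n] g(z)
\]
for every $k \ge 1$ and every sufficiently large $n \equiv 0 \bmod d$. This is again an induction on $k$, but carried out while carefully tracking the multiplicative constants in the same splitting as in Step~1, so that they accumulate only through the geometric factor $(g(r) + \varepsilon)^{k-1}$.

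\textbf{Step 3 (termwise summation).} The hypothesis that $f$ is analytic on an open set $U$ containing the compact set $K := \{g(z) : |z| \le r\} \subset \{|w| \le g(r)\}$ ensures, since $[0, g(r)] \subset K \subset U$ and $K$ is compact, that $f$ is analytic on some disk $|w| < R$ with $R > g(r)$. Consequently $f(w) = \sum_{k \ge 0} f_k w^k$ with $|f_k| = O(R^{-k})$. Choosing $\varepsilon > 0$ in Step~2 small enough that $g(r) + \varepsilon < R$, the summands
\[
f_k \cdot \frac{[z^n] g(z)^k}{[z^n] g(z)}
\]
are dominated in absolute value by a summable sequence in $k$, uniformly in $n$. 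By Step~1 the $k$-th summand converges pointwise to $k f_k g(r)^{k-1}$, and dominated convergence yields
\[
\frac{[z^n] f(g(z))}{[z^n] g(z)} \;\longrightarrow\; \sum_{k \ge 1} k f_k g(r)^{k-1} \;=\; f'(g(r)),
\]
which is the asserted equivalence.

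The main obstacle is Step~2: extracting uniform-in-$k$ control from the convolution estimate requires delicate bookkeeping of the constants, since a naive induction would produce a factorially growing bound. Step~1 is then a routine $\varepsilon$-splitting argument, and Step~3 becomes essentially a formal interchange of limit and sum once the uniform domination is in place.
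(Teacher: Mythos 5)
The paper does not prove this proposition; it cites Chover--Ney--Wainger and Embrechts--Omey, so your argument is being compared to those sources rather than to anything in the text. Your overall architecture (power asymptotics $\to$ Kesten-type uniform bound $\to$ termwise summation) is the right skeleton, and you correctly flag Step~2 as the hard part. However, there are two issues, one minor and one a genuine gap that blocks the claimed conclusion.

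The minor error is in Step~1: from $g_n/g_{n+d} \to r^d$ one gets $g_{n-i}/g_n \to r^i$ for fixed $i \in d\ndN_0$, not $g_{n-i}/g_n \to 1$ (the latter only holds when $r = 1$). Your final line happens to be correct because the missing factor $r^i$ recombines with the partial sum to give $\sum_{i \le N} g_i r^i \to g(r)$, but the justification as written produces $\sum_i g_i = g(1)$ rather than $g(r)$. Either fix the ratio, or reduce first to $r=1$ by passing to $\tilde g(z) = g(rz)$; the latter is cleaner and leaves the statement of the proposition unchanged.

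The genuine gap is in Step~3. The hypothesis is that $f$ is analytic on an open set $U$ containing the compact set $K = g(\overline{D(0,r)})$. You infer from this that $f$ is analytic on a \emph{disk} $\{|w| < R\}$ with $R > g(r)$, and then expand $f$ as a power series at $0$ with geometrically decaying coefficients. But $K$ is not a disk: one has only $K \subset \overline{D(0,g(r))}$, and in general $K$ is a proper compact subset of that closed disk (e.g.\ $g(z) = 1 + z$ maps $\overline{D(0,1)}$ to $\overline{D(1,1)}$). Analyticity of $f$ on a neighbourhood of $K$ in no way forbids singularities of $f$ inside $\overline{D(0,g(r))}\setminus K$, in which case the Taylor series of $f$ at $0$ fails to converge at $g(r)$ and the dominated-convergence step collapses. (A secondary point: $[0,g(r)] \subset K$ is also not generally true, only $[g_0, g(r)] \subset K$ when $g_0 > 0$, though this is not what kills the argument.) This is not a cosmetic issue -- the applications in the paper genuinely invoke the general form, e.g.\ $f(x) = x/(1+x)$ applied to a series with image possibly exiting $D(0,1)$, and $f = \log(1 + \cdot)$ on an image that stays off $(-\infty,0]$ but need not stay inside a small disk. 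To handle $f$ merely analytic on a neighbourhood of $K$ you need to replace the Taylor expansion at a single point by the holomorphic functional calculus: either run the argument in a weighted convolution Banach algebra and invoke the Wiener--L\'evy theorem (this is the route implicit in Embrechts--Omey), or represent $f(g(z)) = \frac{1}{2\pi i}\oint_\Gamma f(w)(w - g(z))^{-1}\,dw$ over a contour $\Gamma$ encircling $K$ and establish the coefficient asymptotics of $(w - g(z))^{-1}$ uniformly in $w \in \Gamma$, which reduces to your Steps~1--2 for the single (parameterised) function $w \mapsto (w - \cdot)^{-1}$ but now requires uniformity over the contour. Your present Steps~1--3 prove a correct but strictly weaker statement: the conclusion under the hypothesis that $f$ is analytic on a disk of radius exceeding $g(r)$.
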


\subsubsection{The superexponential case}

If the radius of convergence of the exponential generating series of the species under consideration equals zero, then the limit object studied in the  convergent case is not well-defined, as Boltzmann-distributions only make sense for analytic species. For such superexponential weights, we may establish a regime where typically the composite structure consists of a single component, or of a giant component with a deterministically bounded rest.

\begin{lemma}
	\label{le:supen}
	Suppose that the species $\cA_\cR^\omega$ of $\cR$-enriched trees has type III, and set 
	\[
		a_i = [z^i] \cA_\cR^\omega(z)
	\]
	for all $i \ge 0$.  Then for any integer $k \ge 2$ it holds that
	\begin{align}
	\label{eq:starter}
	\sum_{\substack{i_1 + \ldots + i_k = n \\ 1 \le i_1, \ldots, i_k < n - (k-1)}} a_{i_1} \cdots a_{i_k} = o(a_{n-(k-1)}).
	\end{align}
	Here both sides of the equation are equal to zero, unless $n \equiv k \mod \spa(\mathbf{w})$.
\end{lemma}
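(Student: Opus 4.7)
The approach relies on the Lagrange--Bürmann inversion formula. The species identity $\cA_\cR^\omega \simeq \cX \cdot \cR^\kappa(\cA_\cR^\omega)$ translates to the functional equation $A(z) = z\phi(A(z))$, with $A := \cA_\cR^\omega$ and $\phi(z) = \sum_{k \ge 0}\omega_k z^k$. Lagrange--Bürmann yields
\[
[z^n]A^k = \frac{k}{n}\,[w^{n-k}]\phi(w)^n, \qquad a_{n-k+1} = \frac{1}{n-k+1}\,[w^{n-k}]\phi(w)^{n-k+1}.
\]
Among all compositions $(i_1,\ldots,i_k)$ of $n$ into positive parts, the $k$ extremal ones in which one coordinate equals $n-k+1$ and each remaining coordinate equals $1$ contribute precisely $k\omega_0^{k-1}a_{n-k+1}$ to $[z^n]A^k$. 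The assertion of the lemma is therefore equivalent to
\[
[z^n]A^k = k\,\omega_0^{k-1}\,a_{n-k+1} + o(a_{n-k+1}) \qquad \text{as } n \to \infty.
\]

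I would prove this asymptotic by induction on $k \ge 2$. For the inductive step with $k \ge 3$, decompose
\[
[z^n]A^k = \sum_{i=1}^{n-k+1} a_i\,[z^{n-i}]A^{k-1}.
\]
The induction hypothesis at level $k-1$ gives $[z^{n-i}]A^{k-1} = (k-1)\omega_0^{k-2}\,a_{n-i-k+2}\,(1+o(1))$ as $n-i \to \infty$. The two boundary terms $i = 1$ (via the induction hypothesis) and $i = n-k+1$ (via the direct evaluation $[z^{k-1}]A^{k-1} = \omega_0^{k-1}$) together contribute $k\omega_0^{k-1}a_{n-k+1}(1+o(1))$. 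The middle sum $\sum_{i=2}^{n-k}a_i\,[z^{n-i}]A^{k-1}$, after the change of variable $j = n-k+2-i$ and application of the induction hypothesis, equals $(k-1)\omega_0^{k-2}(1+o(1))$ times $\sum_{j=2}^{n-k}a_j\,a_{n-k+2-j}$, which is the $k=2$ case of the lemma evaluated at $n'=n-k+2$, hence $o(a_{n-k+1})$. Thus the entire argument reduces to the base case $k=2$.

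The base case $\sum_{i=2}^{n-2}a_i\,a_{n-i} = o(a_{n-1})$ is the heart of the proof. Factoring $\phi^n = \phi^{n-1}\cdot\phi$ and extracting $[w^{n-2}]$ via the Cauchy product, the claim reduces to
\[
\sum_{j \ge 1}\omega_j\,[w^{n-2-j}]\phi^{n-1} = o(n\,a_{n-1}), \qquad n \to \infty.
\]
A second use of Lagrange gives $[w^{n-2-j}]\phi^{n-1} = \frac{n-1}{j+1}\,[z^{n-1}]A^{j+1}$, and the type III hypothesis $\rho_\phi = 0$ forces super-exponential growth of $(a_n)_n$; in particular $a_{n-1-j}/a_{n-1}\to 0$ for every fixed $j\ge 1$. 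Combining these with a careful splitting of the $j$-summation and dominated convergence finishes the proof.

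The principal obstacle is uniform control of $[z^{n-1}]A^{j+1}$ against the possibly super-exponentially growing weights $\omega_j$. For $j$ of moderate size, the termwise decay $a_{n-1-j}/a_{n-1}\to 0$ combined with an inductive argument on $n$ handles the small-$j$ regime. For $j$ close to $n-1$, every positive composition of $n-1$ into $j+1$ parts forces most parts to equal $1$, so $[z^{n-1}]A^{j+1}$ is essentially $\omega_0^j$ times a polynomial in $n$; this contribution, even when weighted by the largest $\omega_j$, is overwhelmed by the super-exponential growth of $a_{n-1}$.
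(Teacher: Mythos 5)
Your reduction via Lagrange inversion is, in substance, the same identity the paper obtains from the cycle lemma: with $Z(m,n) = [w^m]\phi(w)^n$, the paper's Equations~\eqref{eq:erst} and~\eqref{eq:zweit} read $a_n = \frac{1}{n}Z(n-1,n)$ and $\sum a_{i_1}\cdots a_{i_k} = \frac{k}{n}Z(n-k,n)$, which is exactly your $[z^n]A^k = \frac{k}{n}[w^{n-k}]\phi(w)^n$. The induction on $k$ you sketch to pass from the base case to general $k$ is plausible in outline (modulo care near the boundary of the $i$-range and the lattice bookkeeping when $\spa(\mathbf{w})>1$); the paper handles all $k$ at once, but the combinatorial heart is the same.

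The genuine gap is the base case $k=2$, which both routes reduce to the statement that, in a balls-in-boxes model distributing $n-1$ (or $n-k$) balls into $n$ boxes with configuration weight $\omega_{j_1}\cdots\omega_{j_n}$, a fixed set of boxes is empty with probability tending to $1$. The paper resolves this by citing \cite[Thm.~11.7]{MR2908619}, which asserts precisely this for the random vector $(Y_i^{(n-1,n)})_i$. Your proposal instead tries to reprove it by hand, and the last paragraph of your sketch explicitly flags a ``principal obstacle'' without closing it. Two of the facts you invoke there are themselves unproved and nontrivial: the claim that ``$\rho_\phi=0$ forces $a_{n-1-j}/a_{n-1}\to 0$ for every fixed $j\ge 1$'' is essentially the $k=2$ statement with a shift (the paper derives the analogous fact, Equation~\eqref{eq:two}, as a \emph{consequence} of the lemma, not as an input, so invoking it risks circularity); and the assertion that the contribution from $j$ close to $n-1$ is ``overwhelmed by the super-exponential growth of $a_{n-1}$'' is neither quantified nor does it address the middle range of $j$ uniformly, which is where the difficulty actually sits. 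As written the proof does not close, and supplying the missing base case amounts to reproving Janson's balls-in-boxes concentration in the type III regime.
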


The probabilistic interpretation of Equation~\eqref{eq:starter} is that if we draw a $k$-tupel from $(\cA_\cR^\omega)^k[n]$ with probability proportional to its weight, then with high probability all trees in the forest have size $1$, except for a single giant tree which accounts for the total remaining mass.

\begin{theorem}
	\label{te:need}
	Let $\cF^\upsilon$ and $\cG^\gamma$ denote weighted combinatorial species, such that $\cG^\gamma(z)$ has radius of convergence zero and $\cF^\upsilon(z)$ has positive radius of convergence. Suppose that the coefficients
	\[
		a_i = [z^i] \cG^\gamma(z)
	\]
	are supported on a lattice $1 + d \ndN_0$ with $d \ge 1$, such that  there is an integer $I$ with $a_i >0$ for all $i \ge I$ with $i \equiv 1 \mod d$. Let $\mS_n$ be drawn from $(\cF^\upsilon \circ \cG^\gamma)[n]$ with probability proportional to its weight. If the coefficients $(a_i)_i$ satisfy Equation~\eqref{eq:starter} for all $k \ge 2$, then there is a constant $n_0$ such that with probability tending to $1$ as $n$ becomes large the composite structure $\mS_n$ has a giant component with size at least $n - n_0$. In particular, the total number of components in $\mS_n$ is at most $n_0 + 1$ with high probability.
\end{theorem}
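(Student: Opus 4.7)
The plan is to establish that the number of components $K_n$ of $\mS_n$ concentrates on the smallest feasible value $k_0 \in \{1, \ldots, d\}$ with $k_0 \equiv n \pmod d$ and $b_{k_0} T(n,k_0) > 0$, and that conditionally on $K_n = k_0$ the composition of sizes coincides with the ``dominant'' configuration identified by Lemma~\ref{le:supen}, namely one component of size $n - (k_0 - 1)$ together with $k_0 - 1$ components of size $1$. Since $k_0 \le d$, this will immediately give a giant component of size $n - (k_0-1) \ge n - (d - 1)$ with high probability, so that $n_0 := d - 1$ will satisfy the conclusion; the bound $K_n \le d = n_0 + 1$ supplies the ``in particular'' clause.

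Throughout I write $b_k = [z^k]\cF^\upsilon(z)$, $T(n,k) = [z^n]\cG^\gamma(z)^k$ and $c_n = [z^n]\cF^\upsilon(\cG^\gamma(z))$, so that $\Pr{K_n = k} = b_k T(n, k)/c_n$. The hypothesis \eqref{eq:starter} yields the sharp asymptotics $T(n, k) = k a_1^{k-1} a_{n-(k-1)} + o(a_{n-(k-1)})$ for each fixed $k \ge 2$. The first step will be to derive from \eqref{eq:starter} the pointwise estimate
\[
\frac{a_{m-jd}}{a_m} \longrightarrow 0 \qquad \text{as } m \to \infty, \; m \equiv 1 \pmod d
\]
for each fixed $j \ge 1$, by exhibiting a configuration of the form $(m - jd, s_1, \ldots, s_j)$ with admissible small sizes $s_i$ that contributes a positive multiple of $a_{m-jd}$ to the forbidden sum in \eqref{eq:starter} (for a suitable $n$ and $k$). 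Iterating the case $j = 1$ will give the bound $a_{m - jd} \le \epsilon^j a_m$ for every $\epsilon > 0$ and $m \ge M_\epsilon$, which in particular forces $a_m$ to grow faster than any exponential along the lattice $1 + d\ndN_0$.

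I will then prove $\Pr{K_n = k_0} \to 1$ by writing $\Pr{K_n = k_0}^{-1} = 1 + R_n$ with
\[
R_n = \sum_{j \ge 1} \frac{b_{k_0 + jd}\, T(n, k_0 + jd)}{b_{k_0}\, T(n, k_0)},
\]
and demonstrating $R_n \to 0$ via dominated convergence on the series in $j$. Each summand tends to zero for fixed $j$ by the asymptotics of $T(n, \cdot)$ combined with the step-one estimate (with $m = n - k_0 + 1$). The hard part will be producing a summable majorant: choosing $\epsilon > 0$ small enough that $\lambda \epsilon < 1$ for $\lambda = (a_1/\rho)^d$ and $\rho > 0$ the radius of convergence of $\cF^\upsilon$, the geometric tail bound $b_{k_0 + jd}/b_{k_0} \le C\rho^{-jd}$ together with $a_{m-jd}/a_m \le \epsilon^j$ gives the summable dominant $C(1 + jd)(\lambda\epsilon)^j$ on the regime $m - jd \ge M_\epsilon$, while the boundary regime $j > (m - M_\epsilon)/d$ contributes a vanishing remainder thanks to the super-exponential lower bound on $a_m$ from the first step.

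Finally, once $\Pr{K_n = k_0} \to 1$ is secured, a further application of \eqref{eq:starter} to $T(n, k_0)$ shows that its non-dominant contribution is $o(a_{n - (k_0 - 1)})$ while the dominant part is $k_0 a_1^{k_0 - 1} a_{n - (k_0 - 1)}$; hence conditionally on $K_n = k_0$ the composition of component sizes is a permutation of $(n - (k_0 - 1), 1, \ldots, 1)$ with probability tending to one, giving the maximum size $n - (k_0 - 1) \ge n - (d - 1)$. The main obstacle is the dominated convergence step for $R_n$, where the interplay between the geometric decay of $b_k$ (from the positive radius of convergence of $\cF^\upsilon$) and the super-exponential growth of $a_m$ (from the zero radius of $\cG^\gamma$) must be carefully balanced through the choice of $\epsilon$ and the splitting of the sum at the cut-off $m - jd \approx M_\epsilon$.
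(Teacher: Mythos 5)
Your reduction to concentration of $K_n$ on a value $k_0 \in \{1, \ldots, d\}$, and the resulting claim $n_0 = d - 1$, are both incorrect: the minimal feasible number of components is constrained not only by the lattice condition $k \equiv n \pmod d$ but also by which coefficients $b_k = [z^k]\cF^\upsilon(z)$ are nonzero. For instance, if $\cF^\upsilon \simeq \cX^5$ and $d=1$, then $K_n = 5$ for all admissible $n$, the giant component has size $n-4$, and $n_0 = d-1 = 0$ fails. The correct $n_0$ must account for the support of $\cF^\upsilon$; this is precisely why the paper introduces the auxiliary integer $t$ (a size realizable as a sum of parts from the support of $(a_i)$ together with an element of $\{m : b_m > 0\}$) and works with a cut-off $L \ge 1 + I + t$.

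The more fundamental gap is in the dominated-convergence step. Your summable majorant for the $j$th term of $R_n$ rests on the asymptotic $T(n,k) \sim k\, a_1^{k-1} a_{n-(k-1)}$ holding uniformly for $k = k_0 + jd$ with $j$ up to order $n/d$. But Equation~\eqref{eq:starter} is a pointwise-in-$k$ statement: it gives $T(n,k) = k\, a_1^{k-1} a_{n-(k-1)} + o(a_{n-(k-1)})$ as $n \to \infty$ for each \emph{fixed} $k$, with no rate and no control on how the error depends on $k$. When $j$ is near the top of its range the error term may dominate, so $C(1+jd)(\lambda\epsilon)^j$ is not an $n$-independent majorant and the dominated-convergence argument does not close. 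The paper circumvents exactly this: rather than asymptotics for $T(n,k)$ with growing $k$, it bounds the total weight $A_n^{\ge L}$ of compositions with at least $L$ parts via a self-improving recursion in $n$, showing the running maximum $\chi_n = \max_k A_k^{\ge L}/a_{k-t+1}$ stays bounded, and uses \eqref{eq:starter} only for fixed $k$. (As a minor further point, iterating your step-one bound $a_{m-d} \le \epsilon a_m$ requires $a_{d+1} > 0$, i.e.\ $d+1 \ge I$; otherwise the two-part configuration $(m-d,\, d+1)$ does not lie in the forbidden sum of \eqref{eq:starter}, and establishing $a_{m-d}/a_m \to 0$ from the estimates available for larger shifts is not immediate.)
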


Note that if we condition the largest component on having size $k$, then it is up relabelling distributed like drawing a $\cG$-structure from $\cG^\gamma[k]$ with probability proportional to its $\gamma$-weight. This follows easily by applying Lemma~\ref{le:bole} to the analytic species $\cF_{\le n}^\upsilon \circ \cG^\gamma_{\le n}$. (We cannot apply Lemma~\ref{le:bole}  directly to $\cF^\upsilon \circ \cG^\gamma$, as Boltzmann distributions only make sense for analytic species, but there is no difference between $\cF_{\le n}^\upsilon \circ \cG^\gamma_{\le n}[n]$ and $\cF^\upsilon \circ \cG^\gamma[n]$.) 

We also obtain a sufficient criterion for the  Gibbs partition to typically consist of a single component.

\begin{corollary}
	\label{co:ne}
	If additionally to the requirements of Theorem~\ref{te:need} it holds that \[
	[z^1] \cF^\upsilon(z) >0 \qquad \text{and} \qquad a_1, a_2 >0,
	\] then $\mS_n$ consists with high probability of a single component.
\end{corollary}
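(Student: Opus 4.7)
\emph{Proof proposal.} My plan is to apply Theorem~\ref{te:need} to bound the total number of components of $\mS_n$ by $n_0+1$ with high probability, and then compare the total weight $W_k(n)$ of configurations with exactly $k$ components to the weight $W_1(n)$ of single-component configurations for each $k \in \{2, \ldots, n_0+1\}$. Showing $W_k(n)/W_1(n) \to 0$ for each such fixed $k$ then gives the conclusion by a union bound.

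Unwinding the composition of species, these weights are
\begin{align*}
W_k(n) \;=\; \frac{|\cF[k]|_\upsilon \cdot n!}{k!} \sum_{\substack{i_1+\cdots+i_k=n \\ i_j\ge 1}} a_{i_1}\cdots a_{i_k}, \qquad W_1(n) \;=\; |\cF[1]|_\upsilon \cdot n! \cdot a_n,
\end{align*}
where $|\cF[1]|_\upsilon > 0$ by hypothesis and $|\cF[k]|_\upsilon < \infty$ because $\cF^\upsilon(z)$ is analytic at the origin. I would split the $k$-fold convolution in $W_k(n)$ into the $k$ ``extreme'' compositions (one index equal to $n-(k-1)$, the rest equal to $1$), which contribute $k\, a_1^{k-1}\, a_{n-(k-1)}$, and the remaining ``non-extreme'' compositions, which by Equation~\eqref{eq:starter} of Lemma~\ref{le:supen} contribute $o(a_{n-(k-1)})$. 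Since $a_1 > 0$, this reduces the problem to establishing the ratio asymptotic $a_{n-j}/a_n \to 0$ for every fixed $j \ge 1$.

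The key observation for the ratio bound is that the hypothesis $a_2 > 0$ forces the span $d$ to equal $1$ (so in particular $a_n > 0$ for all sufficiently large $n$), and then applying the $k=2$ case of Equation~\eqref{eq:starter} with $n+1$ in place of $n$ yields
\begin{align*}
a_2 \, a_{n-1} \;\le\; \sum_{\substack{i_1+i_2=n+1 \\ i_1,i_2\ge 2}} a_{i_1} a_{i_2} \;=\; o(a_n),
\end{align*}
i.e.\ $a_{n-1} = o(a_n)$. Iterating this bound gives $a_{n-j}/a_n \to 0$ for every fixed $j \ge 1$. Substituting back yields $W_k(n)/W_1(n) \to 0$ for each fixed $k \ge 2$, and combined with Theorem~\ref{te:need} a union bound over $k \in \{2, \ldots, n_0+1\}$ gives $\Pr{\mS_n \text{ has} \ge 2 \text{ components}} \to 0$.

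I expect the only non-routine step to be the derivation of $a_{n-j}/a_n \to 0$ from Equation~\eqref{eq:starter}; everything else is a mechanical consequence of the weighted composition formula and Theorem~\ref{te:need}. This is precisely where the two extra hypotheses are used: $a_2 > 0$ bootstraps the $k=2$ case of \eqref{eq:starter}, while $[z^1]\cF^\upsilon(z) > 0$ ensures $W_1(n)$ is a positive reference quantity to divide by.
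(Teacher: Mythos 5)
Your proof is correct and follows essentially the same route as the paper's: the paper also reduces to showing $[z^n]\cF^\upsilon_{\{2,\ldots,L-1\}}\circ\cG^\gamma(z) = o(a_n)$ via the decomposition into the dominant terms $k a_1^{k-1}a_{n-(k-1)}$ plus an $o(a_{n-(k-1)})$ remainder from~\eqref{eq:starter}, combined with the ratio bound $a_{n-1}=o(a_n)$ and Theorem~\ref{te:need}. The only cosmetic difference is that the paper cites the ratio bound as Equation~\eqref{eq:two} (established inside the proof of Theorem~\ref{te:need}), whereas you rederive it directly from the $k=2$ case of~\eqref{eq:starter}, and you spell out the weight computation a bit more explicitly.
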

A result similar to Corollary~\ref{co:ne} has been obtained by Wright~\cite[Thm. 3]{MR0229546}, who studied under which circumstances the $\Set \circ \cG^\gamma$ Gibbs partition typically consists of a single component.  In the setting considered there, the generating series $\cG^\gamma(z)$ has radius of convergence zero and the coefficients $a_i = [z^i]\cG^\gamma(z)$ satisfy
\begin{align}
\label{eq:pro}
\sum_{i=1}^{n-1} a_i a_{n-i} = o(a_{n}).
\end{align}
It is easy to adapt the proof of Theorem~\ref{te:need} to see that in Corollary~\ref{co:ne} we may replace the assumption \eqref{eq:starter} by Equation~\eqref{eq:pro}.  We leave the details to the inclined reader.

\subsection{Extremal component sizes}
\label{sec:compsize}

The sizes of the $\cR$-structures in the random enriched tree $\mA_n^\cR$ correspond to the outdegrees of the vertices in the coupled simply generated tree $\cT_n$. In many cases, such as for random connected graphs or outerplanar maps, we consider compound $\cR$-structures of the form $\cR^\kappa= \cF^{\upsilon} \circ \cG^{\gamma}$, and are interested in extremal sizes of the $\cG$-components, which in these examples correspond to the maximal $2$-connected subgraphs. Note that we may always assume that $\cR$ is a compound species, as there is the trivial isomorphism $\cR^\kappa \simeq \cX \circ \cR^\kappa$.

Recall the definition of the series $\phi$, the numbers $\nu$, $\tau$ and $\rho_\phi$, and the distribution $(\pi_k)_k$ from Section~\ref{sec:pretree}. Let $\xi$ be a random non-negative integer following the distribution $(\pi_k)_k$.  Combining Lemma~\ref{le:gibbs} with results for the largest degrees in simply generated trees \cite[Ch. 9 and Thm. 19.34]{MR2908619} and Jonsson and Stef\'ansson \cite{MR2764126}  we obtain the following asymptotic results for the extremal sizes of the $\cG$-structures in the random enriched tree $\mA_n^\cR$.

\begin{theorem}[Component size asymptotics]
	\label{te:compsize}
	Suppose that $\cR^\kappa = \cF^{\upsilon} \circ \cG^{\gamma}$ and set $\gamma_k = |\cG[k]|_\gamma / k!$ for all $k$.
	Let $B_{(1)} \ge  B_{(2)} \ge \ldots$ denote the descendlingy ordered list of the sizes of the $\cG$-components of the random enriched tree $\mA_n^\cR$. 
	\begin{enumerate}
		\item If $\mathbf{w}$ has type I$a$,  then 
		\[B_{(1)} \le \frac{\log n}{\log(\rho_\phi /\tau)} + o_p(\log n).\]
		In particular, if $\rho_\phi = \infty$, then
		\[
		B_{(1)} = o_p(\log n).
		\]
		\item If $\mathbf{w}$ has type $I\alpha$, then \[B_{(1)} = o_p(\sqrt{n}).\]
		If $\mathbf{w}$ has type $I\beta$, then \[
		B_{(1)} = o_p(n).
		\]
		In both cases there is a positive constant $C$ (that may depend on $\mathbf{w}$) such that for all $k \ge 1$
		\[
		\Pr{B_{(1)} \ge k} \le C n \Pr{\xi \ge k}.
		\]
		\item Suppose $\mathbf{w}$ has type II, $\gamma_k \sim c k^{-\beta} \rho_\cG^{-k}$ for some constants $\rho_\cG, c>0$ and $\beta>2$, and that  $\cF^\upsilon(z)$ is analytic at $\cG^\gamma(\rho_\cG)$. Set $\alpha = \min(2,\beta -1)$ and $c' = c / \cF^\upsilon(\cG^\gamma(\rho_\cG))$.
		\begin{enumerate}[a)]
			\item It holds that \[B_{(1)} = (1-\nu)n + O_p(n^{1/\alpha}) \quad \text{and} \quad n^{-1/\alpha}((1-\nu)n - B_{(1)}) \convdis X_\alpha,\] where $X_\alpha$ is an $\alpha$-stable random variable with Laplace transform \[\Ex{e^{-tX_\alpha}} = \exp(c' \Gamma(-\alpha)t^\alpha), \quad \operatorname{Re}\, t \ge 0.\]
			\item It holds that \[B_{(2)} = O_p(n^{1/\alpha}) \quad \text{and}\quad n^{-1/\alpha} B_{(2)} \convdis W,\] with $W$ satisfying the Fr\'echet distribution \[\Pr{W \le x} = \exp(- \frac{c'}{\alpha}x^{-\alpha}),\quad x \ge 0.\]
			\item For any $j \ge 2$, \[B_{(j)} = O_p(n^{1/\alpha}) \quad \text{and}\quad n^{-1/\alpha} B_{(j)} \convdis W_j,\] where $W_j$ has the density function \[c' x^{-\alpha -1}\frac{(c' \alpha^{-1} x^{-\alpha})^{j-2}}{(j-2)!}\exp(-c' \alpha^{-1} x^{-\alpha}),\quad x \ge 0,\] and \[c' \alpha^{-1} W_j^{-\alpha} \eqdist \Gamma(j-1,1).\] 
		\end{enumerate}
	\end{enumerate}
\end{theorem}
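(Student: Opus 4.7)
The strategy is to transfer the question to the simply generated tree $\cT_n$ coupled to $\mA_n^{\cR}$ by Lemma~\ref{le:coupling}, and then to combine known results on the extremal outdegrees of $\cT_n$ with the Gibbs-partition machinery of Section~\ref{sec:gibbs}. Conditional on $\cT_n$, each vertex $v$ carries an independent random $(\cF^\upsilon \circ \cG^\gamma)$-structure of size $d^+_{\cT_n}(v)$ drawn with probability proportional to its $\kappa$-weight, and the sorted list of $\cG$-component sizes of $\mA_n^{\cR}$ is obtained by concatenating the block sizes of all these Gibbs partitions. In particular, writing $\Delta_n := \max_v d^+_{\cT_n}(v)$, we always have $B_{(1)} \le \Delta_n$.

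This trivial inequality is already enough for item~(1) and the scaling bounds in item~(2), which are inherited directly from Janson's bounds on $\Delta_n$ in the respective subcases \cite[Theorem~19.16]{MR2908619}. The tail estimate $\mathbb{P}(B_{(1)} \ge k) \le Cn\,\mathbb{P}(\xi \ge k)$ follows from a union bound over the $n$ vertices, combined with the standard stochastic domination of the outdegree of any fixed vertex of $\cT_n$ by an independent copy of $\xi$ up to a uniform multiplicative constant, using that in type~I the tree $\cT_n$ is a $\xi$-Galton--Watson tree conditioned on having $n$ vertices \cite[Section~19]{MR2908619}.

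For item~(3) the argument combines the Jonsson--Stef\'ansson condensation phenomenon \cite{MR2764126} (in the refined form of Kortchemski \cite{MR3183575}) with the convergence of the inner Gibbs partition. The tail hypothesis on $\gamma_k$ together with Proposition~\ref{pro:sleeky} applied to $\cF^\upsilon$ at $\cG^\gamma(\rho_\cG)$ show that $\omega_k = [z^k](\cF^\upsilon \circ \cG^\gamma)(z)$ inherits the same power-law correction, which via~\eqref{eq:tt} translates into a heavy-tailed offspring law with $\mathbb{P}(\xi \ge k) \sim c'' k^{-\alpha}$. This drives condensation at a unique vertex $v^* \in \cT_n$ with outdegree $D_n^*$ satisfying
\[
(1-\nu)n - D_n^* = O_p(n^{1/\alpha}), \qquad n^{-1/\alpha}((1-\nu)n - D_n^*) \convdis X_\alpha,
\]
with the explicit $\alpha$-stable limit of the statement, while the remaining outdegrees are, to leading order, iid copies of $\xi$ conditioned to sum to $\nu n + O(n^{1/\alpha})$. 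Since $\cG^\gamma$ belongs to the class $\mathscr{S}_d$ up to a constant shift of indices by Proposition~\ref{pro:yehaa}, Lemma~\ref{le:gibbs} applies to the Gibbs partition at $v^*$ and yields a giant $\cG$-block of size $D_n^* - O_p(1)$. The $O_p(1)$ correction is absorbed by the $n^{1/\alpha}$-scaling, and (a) follows.

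For (b) and (c), the non-giant blocks inside $v^*$ are $O_p(1)$ (Lemma~\ref{le:gibbs}), so with high probability $B_{(j)}$ for $j \ge 2$ is realized at a non-condensation vertex whose outdegree is of order $n^{1/\alpha}$. Applying Lemma~\ref{le:gibbs} at each such vertex identifies its contribution to the block-size list as its outdegree minus $O_p(1)$, and the problem reduces to the joint asymptotics of the top non-condensation outdegrees of $\cT_n$. By \cite[Theorem~19.34]{MR2908619} these, after $n^{-1/\alpha}$-rescaling, converge to the top points of a Poisson process on $(0,\infty)$ with intensity $c'x^{-\alpha - 1}\,dx$, giving exactly the Fr\'echet law for $W$ and the densities claimed for the $W_j$. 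The main technical obstacle is to control the individual $O_p(1)$ Gibbs-partition corrections uniformly in $j$: since with high probability only finitely many non-condensation vertices have outdegree exceeding any fixed multiple of $n^{1/\alpha}$, it suffices to apply Lemma~\ref{le:gibbs} at those finitely many vertices, so the joint correction is $O_p(1)$ and negligible at scale $n^{1/\alpha}$.
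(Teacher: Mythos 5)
Your proposal follows essentially the same route as the paper: coupling with $\cT_n$ via Lemma~\ref{le:coupling}, the trivial bound $B_{(1)} \le Y_{(1)}$ (the maximum outdegree) to settle items (1)--(2) directly from Janson's bounds, and for item (3) the combination of the condensation phenomenon with the convergent-type Gibbs partition supplied by Lemma~\ref{le:gibbs} and Proposition~\ref{pro:yehaa}. The only place the two write-ups diverge is the handling of $j\ge 2$: the paper fixes $t_n=\log n$, uses $Y_{(i)}-B^+_{(i)}=O_p(1)$ to obtain $B^+_{(i)}\ge Y_{(i)}-t_n$ simultaneously for all $i\le j$, and then a pigeonhole argument (if $B_{(\ell)}>Y_{(\ell)}$ then $\ell$ large blocks would have to sit at fewer than $\ell$ vertices) to get the two-sided squeeze $Y_{(i)}-t_n\le B_{(i)}\le Y_{(i)}$; your ``only finitely many vertices exceed a fixed multiple of $n^{1/\alpha}$'' observation serves the same purpose and is a valid alternative phrasing of the same idea. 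One small slip worth flagging: from $\pi_k\sim c'''k^{-\beta}$ one gets $\Pr{\xi\ge k}\sim c''k^{-(\beta-1)}$, which coincides with $k^{-\alpha}$ only when $\beta\le 3$; this does not affect the rest of your argument since the cited extreme-degree results are formulated in terms of the power-law exponent of the offspring law rather than $\alpha$.
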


More recent results due to Kortchemski \cite{MR3335012} allow for the following generalization.
\begin{proposition}[Component size asymptotics in a subcase of case II]
	\label{pro:new}
	Suppose that the weight sequence $\mathbf{w}$ has type II and $\gamma_k = f(k) k^{-\beta} \rho_{\cG}^{-k}$ for some constants $\rho_{\cG}>0$, $\beta >2$ and a function $f$ that varies slowly at infinity. Set $\alpha=\min(2, \beta-1)$ and let $(Y_t)_{t\ge1}$ denote a spectrally positive L\'evy process with Laplace exponent
	\[
	\Ex{\exp(-\lambda Y_t)} = \exp(t \lambda^{\alpha}).
	\]
	Then there exists a slowly varying function $g$ such that
	\[
	\frac{(1-\nu)n - B_{(1)}}{g(n)n^{1/\alpha}} \convdis Y_1 \qquad \text{and} \qquad B_{(2)} = O_p(g(n)n^{1/\alpha}).
	\]
\end{proposition}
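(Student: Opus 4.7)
The plan is to reduce the claim to the asymptotic analysis of the extremal outdegrees of the coupled simply generated tree $\cT_n$ from Lemma~\ref{le:coupling}, and then to invoke Kortchemski's refined condensation results \cite{MR3335012} together with the convergent-type Gibbs partition lemma.

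First I would verify that the weight-sequence $\mathbf{w}$ inherits the regularly varying tail of $(\gamma_k)_k$. Since $\gamma_k = f(k)k^{-\beta}\rho_\cG^{-k}$ with $\beta > 2$, Proposition~\ref{pro:yehaa} shows that $\cG^\gamma(z)$ belongs to $\mathscr{S}_d$ for an appropriate span $d$. The type II assumption forces the radius of convergence of $\phi(z) = \cF^\upsilon(\cG^\gamma(z))$ to equal $\rho_\cG$, and $\cF^\upsilon$ is analytic in a neighbourhood of $\cG^\gamma(\rho_\cG)$. Proposition~\ref{pro:sleeky} then yields
\[
\omega_n \sim (\cF^\upsilon)'(\cG^\gamma(\rho_\cG)) \, f(n) \, n^{-\beta} \, \rho_\phi^{-n},
\]
so the offspring distribution $(\pi_k)_k$ attached to $\cT_n$ is subcritical (as $\nu < 1$) and has a regularly varying tail of index $-\beta$, with the slowly varying part inherited from $f$.

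Next I would apply Kortchemski's theorem from \cite{MR3335012}, which treats exactly this regime of non-generic subcritical Galton--Watson trees conditioned to have $n$ vertices with regularly varying offspring tail. His results supply a slowly varying function $g$ such that the largest outdegree $D_{(1)}(\cT_n)$ satisfies
\[
\frac{(1-\nu)n - D_{(1)}(\cT_n)}{g(n)\, n^{1/\alpha}} \convdis Y_1,
\]
while the second largest outdegree satisfies $D_{(2)}(\cT_n) = O_p(g(n) n^{1/\alpha})$.

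The final step transfers these statements from outdegrees of $\cT_n$ to $\cG$-component sizes of $\mA_n^\cR$. By the convergent-type Gibbs partition Lemma~\ref{le:gibbs}, conditional on $d^+_{\cT_n}(v) = m$ with $m \to \infty$, the $\cR$-structure carried by any vertex $v$ of $\cT_n$ is, in total variation, close to a $\mathbb{P}_{(\cF^\upsilon)' \circ \cG^\gamma,\, \rho_\cG}$-distributed object in which the $*$-placeholder is filled by a single giant $\cG$-component of size $m$ minus a stochastically bounded remainder. Applied at the condensed vertex, whose outdegree diverges with high probability by \eqref{eq:om}, this forces $B_{(1)} = D_{(1)}(\cT_n) - O_p(1)$. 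Every other $\cG$-component in $\mA_n^\cR$ is either one of the bounded non-giant pieces at the condensed vertex or a component living at another vertex, and in the latter case its size is bounded by $D_{(2)}(\cT_n)$. Combining these bounds with the asymptotics for $D_{(1)}$ and $D_{(2)}$ gives both conclusions of the proposition.

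The main obstacle is ensuring uniformity in the transfer from Gibbs partitions to $\cG$-components at the condensed vertex, whose outdegree is itself a random integer tending to infinity only along subsequences determined by the span constraints built into Lemma~\ref{le:gibbs}. This will be handled by conditioning on $d^+_{\cT_n}(v^*)$ and invoking dominated convergence, together with a sub-subsequence argument covering the finitely many residue classes modulo $d$ produced by the Gibbs partition lemma.
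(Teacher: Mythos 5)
Your proposal is correct and follows essentially the same route as the paper's proof, which the paper describes only as "entirely analogous to the proof of Theorem~\ref{te:compsize}, only instead of using the asymptotics in \cite{MR2908619} we build on Kortchemski \cite[Thm.\ 1]{MR3335012}": pass to the extremal outdegrees $Y_{(1)}, Y_{(2)}$ of the coupled simply generated tree $\cT_n$, invoke Kortchemski's limit theorem for those, and then transfer to the block sizes $B_{(1)}, B_{(2)}$ using the convergent-type Gibbs partition Lemma~\ref{le:gibbs} to show that the difference $Y_{(j)}-B_{(j)}^+$ is $O_p(1)$ and hence negligible on the scale $g(n)n^{1/\alpha}$. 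Your additional verification that $(\omega_n)$ inherits the regularly varying tail from $(\gamma_k)$ via Propositions~\ref{pro:yehaa} and~\ref{pro:sleeky}, and your caveat about the residue-class subsequences of Lemma~\ref{le:gibbs}, are reasonable elaborations of details the paper leaves implicit.
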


Theorem~\ref{te:compsize} and Proposition~\ref{pro:new} provide a novel and  simple proof for extremal component-size asymptotics of many examples of random discrete structures. As detailed in Section~\ref{sec:blocksizes}, they apply to random planar graphs and, more generally, random graphs from  so called planar-like classes. In this way, we may recover the central limit theorem for the largest block size in random planar graphs, and provide new results for the sizes of the $k$th-largest blocks for $k \ge 2$. For random outerplanar maps drawn with probability proportional to weights corresponding to the degrees of their faces we obtain novel results for the block-size limits in various settings.

\subsection{Applications to random discrete structures}
\label{sec:appcomb}
In this section, we apply the general results of Sections~\ref{sec:partB} to $\ref{sec:compsize}$ to the families of random discrete structures discussed in Section~\ref{sec:intro1}, and provide further main results such as the classification of local limits of random outerplanar maps.

\subsubsection{Applications to random weighted outerplanar maps}
\label{sec:outerplanar}
As discussed in Section~\ref{sec:decompouter}, the species $\cO$ of simple outerplanar maps is isomorphic to the species of $\Seq \circ \cD$-enriched trees, with $\Seq$ denoting the species of linear orders, and $\cD$ the species of dissections of edge-rooted polygons in which the root vertex does not contribute to the total size of the dissection.

Let $\gamma$ be a weighting on $\cD$, and $\kappa$ the corresponding weighting on $\Seq \circ \cD$ that assigns to each sequence of dissections $D_1, \ldots, D_t$ the weight $\gamma(D_1) \cdot \ldots \cdot \gamma(D_t)$. The weighting $\omega$ on the species $\cO$ that corresponds to $\kappa$ as in Equation~\eqref{eq:omw} is given by
\[
\omega(M) = \prod_D \gamma(D)
\]
for each simple outerplanar map $M$, with the index $D$ ranging over the blocks of $M$. Let $\mathbf{w} = (\omega_k)_k$ be the weight-sequence with $\omega_k = |(\Seq \circ \cD)[k]|_\kappa / k!$. Note that in the bijection of Section~\ref{sec:decompouter} each block of $M$ has a canonical root edge, depending on the location of the root edge of $M$. The weight $\gamma(D)$ may depend on the location of its root edge.

In the following we study the random map $\mO_n^\omega$ drawn from the set $\cO[n]$ with probability proportional to its $\omega$-weight. Recall that we let $d_{\textsc{block}}$ denote the block-distance on the vertex set of any connected graph, and that for any rooted graph $C^\bullet$, we let $V_k(C^\bullet)$ and $U_k(C^\bullet)$ denote the $k$-neighbourhoods with respect to the graph metric and the block-metric, respectively.

Note that if we set $\gamma(D)=1$ for each dissection $D$, then $\mO_n^\omega$ is the uniform outerplanar map with $n$ vertices. If we set $\gamma(D)=\one_{D \text{ is bipartite}}$, then $\mO_n^\omega$ is the uniform bipartite outerplanar map.  If $(\iota_k)_{k \ge 3}$ denotes a weight sequence, then we may set $\gamma(D) = \prod_F \iota_{|F|}$ with the index $F$ ranging over all inner faces of the dissection $D$, and $|F|$ denoting the number of bounding edges of the face $F$. So the model $\mO_n^\omega$ encompasses the case of random outerplanar map drawn with probability proportional to a product of weights corresponding to the degrees of its faces. The highlight of this section will be  that for {\em arbitrary} weight-sequences $(\iota_k)_{k \ge 3}$ with $\iota_\ell \ne 0$ for at least one $\ell$ the random map $\mO_n^\omega$ admits both a Benjamini--Schramm limit and a local weak limit near its root-edge.

\paragraph*{Local convergence - the infinite spine case}

\begin{figure}[t]
	\centering
	\begin{minipage}{1.0\textwidth}
		\centering
		\includegraphics[width=0.43\textwidth]{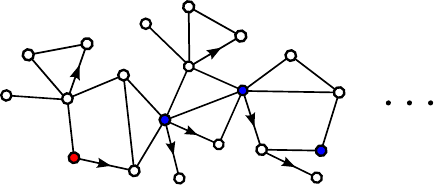}
		\caption{The local weak limit of random outerplanar maps with type I.}
		\label{fi:limouter}
	\end{minipage}
\end{figure}

We are going to apply Theorems~\ref{te:local}, \ref{te:strengthend} and \ref{te:thmben} to the random outerplanar map $\mO_n^\omega$. If the weight sequence $\mathbf{w}$ has type I, then the random $\Seq\circ\cD$-enriched tree $(\hat{\cT}, \hat{\beta})$ corresponds to a random locally finite outerplanar map \label{pp:oh}$\hat{\mO}$ according to the bijection in Section~\ref{sec:decompouter}. Likewise, the $\Seq\circ\cD$-enriched tree $(\cT^*, \beta^*)$ corresponds to a random locally finite outerplanar graph \label{pp:ohs}$\hat{\mO}_*$. The two limit objects are, in general, not identically distributed. See Remarks~\ref{re:mapdistr} and \ref{re:mapdistr2} below for detailed descriptions of their distributions. Theorems~\ref{te:local} and \ref{te:thmben}  yield that $\hat{\mO}$ is the local weak limit of $\mO_n^\omega$ with respect to convergence of neighbourhood around the origin of the root-edge, and $\hat{\mO}_*$ is the Benjamini--Schramm limit of $\mO_n^\omega$. See Figures~\ref{fi:limouter} and \ref{fi:limouter2} for illustrations of these limit objects.

\begin{theorem}[Local weak convergence and Benjamini--Schramm convergence of random outerplanar maps with type I] \text{ } \label{te:convouter}
	\begin{enumerate}
		\item
		If the weight-sequence $\mathbf{w}$ has type I, then the random outerplanar map $\mO_n^\omega$ converges in the local weak sense toward $\hat{\mO}$ as $n$ becomes large. A bit stronger, the $k$-neighbourhoods with respect to the block distance $d_{\textsc{BLOCK}}$ around the root vertex converge, that is
		\[
		\lim_{n \to \infty} \Pr{U_k(\mO_n^\omega) \in \cE} = \Pr{U_k(\hat{\mO}) \in \cE}
		\]
		for any set $\cE$ of finite unlabelled rooted graphs. Similarly, $\hat{\mO}_*$ is the Benjamini--Schramm limit of $\mO_n^\omega$, and 
		\[
		\lim_{n \to \infty} \Pr{U_k(\mO_n^\omega, v_n) \in \cE} = \Pr{U_k(\hat{\mO}_*) \in \cE}
		\]
		with $v_n$ denoting a uniformly at random drawn vertex of $\mO_n^\omega$.
		\item If the weight-sequence $\mathbf{w}$ has type I$\alpha$, then we obtain a stronger form of convergence. For any sequence of non-negative integers $k_n = o(n^{1/2})$, the total variation distance of the $k_n$-block-neighbourhoods converges to zero:
		\[
		d_{\textsc{TV}}(U_{k_n}(\mC_n^\omega,v_n), U_{k_n}(\hat{\mC})) \to 0.
		\]
		Moreover, $\hat{\mO}_*$ is the Benjamini--Schramm limit of $\mO_n^\omega$. If $v_n$ denotes a uniformly at random chosen vertex of $\mO_n^\omega$, then
		\[
		d_{\textsc{TV}}(U_{k_n}(\mO_n^\omega,v_n), U_{k_n}(\hat{\mO}_*)) \to 0.
		\]
	\end{enumerate}
\end{theorem}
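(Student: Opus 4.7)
The proof translates the assertion into the language of random enriched trees, then applies Theorems~\ref{te:local}, \ref{te:strengthend}, and \ref{te:thmben}. By the bijection of Section~\ref{sec:decompouter}, the species $\cO^\omega$ is isomorphic to the $\omega$-weighted species $\cA_{\Seq \circ \cD}^\omega$, so the map $\mO_n^\omega$ is, up to a uniformly random labeling, encoded by the enriched plane tree $(\cT_n, \beta_n)$ of Lemma~\ref{le:coupling}. The vertex sets of the map and the tree are naturally identified, so a uniformly random vertex of $\mO_n^\omega$ corresponds to a uniformly random vertex of $\cT_n$, and the limit objects $\hat{\mO}$ and $\hat{\mO}_*$ are the (locally finite, in type~I) outerplanar maps associated via the same bijection to $(\hat{\cT}, \hat{\beta})$ and $(\cT^*, \beta^*)$.

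The combinatorial heart of the proof is to read off block-neighborhoods from the enriched tree. The blocks of the map containing a vertex $v$ are precisely the dissections appearing in $\beta_n(v)$ together with the unique dissection in $\beta_n(\text{parent}(v))$ containing $v$ as a non-root vertex; in particular every block spans exactly two consecutive tree-levels. Hence the block-distance from the root vertex equals the tree-distance, and more generally the block-distance and tree-distance are within a factor of two of each other. It follows that $U_k(\mO_n^\omega)$ is a deterministic function of the trimmed enriched tree $(\cT_n, \beta_n)^{[k]}$ in the sense of Section~\ref{sec:partB}, while $U_k(\mO_n^\omega, v_n)$ is a deterministic function of the pointed enriched fringe subtree $f_{2k}((\cT_n, \beta_n), v_n)$ of Section~\ref{sec:covic}.

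Part~(1) follows directly. Theorem~\ref{te:local} gives $(\cT_n, \beta_n) \convdis (\hat{\cT}, \hat{\beta})$ in $\fmA$; in the type~I case the limit is almost surely locally finite, so the trimmed trees at any fixed height converge in distribution as random finite objects, and the deterministic continuous mapping yields $U_k(\mO_n^\omega) \convdis U_k(\hat{\mO})$ as rooted graphs. The Benjamini--Schramm statement is obtained in the same way from Theorem~\ref{te:thmben}(1). Since $V_k \subseteq U_k$, both conclusions imply the associated local weak convergence of neighbourhoods in the graph metric.

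For part~(2), Theorem~\ref{te:strengthend} supplies total variation convergence of $(\cT_n, \beta_n)^{[k_n]}$ toward $(\hat{\cT}, \hat{\beta})^{[k_n]}$ for any $k_n = o(\sqrt{n})$, and Theorem~\ref{te:thmben}(2) does the same for the pointed fringe subtrees, applied with index $2k_n = o(\sqrt{n})$. Since the total variation distance does not increase under a deterministic map, passing through the block-neighborhood function yields total variation convergence of the $k_n$-block-neighborhoods. The main delicacy lies in the bijective bookkeeping of the previous paragraph: one must verify that the block-cut structure of the map is entirely encoded by the enriched-tree data within an $O(k)$-radius tree-neighborhood, and that the canonical root-edge assigned to each dissection by the decomposition is consistent when one passes to the (possibly infinite) limit. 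Once this is nailed down, all convergence statements in the theorem follow from the general results of Sections~\ref{sec:partB} and \ref{sec:fringe} by a continuous-mapping argument.
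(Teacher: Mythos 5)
Your proposal is correct and follows essentially the same route as the paper, which simply cites Theorems~\ref{te:local}, \ref{te:strengthend}, and (implicitly, for the Benjamini--Schramm half) Theorem~\ref{te:thmben}. The bookkeeping you spell out---that each block of the map occupies two consecutive tree levels, so block-distance from the root equals tree-height and $U_k$-neighbourhoods are determined by the trimmed tree $(\cT_n,\beta_n)^{[k]}$ (resp.\ the pointed fringe subtree $f_k$, where even $f_k$ rather than $f_{2k}$ would do, since a vertex at block-distance $\le k$ from $v_n$ always lies below the $k$-th ancestor)---is exactly the observation the paper takes for granted, and then the continuous-mapping step finishes it.
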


As detailed in Remarks~\ref{re:mapdistr} and \ref{re:mapdistr2} below, the limit objects $\hat{\mO}$ and $\hat{\mO}_*$ have a canonical embedding in the plane and are hence not just graphs but infinite planar maps. The convergence in Theorem~\ref{te:convouter} respects the the planar structures, that is, we actually obtain the stronger form of convergence of neighbourhoods embedded into the plane.

\begin{figure}[t]
	\centering
	\begin{minipage}{1.0\textwidth}
		\centering
		\includegraphics[width=0.5\textwidth]{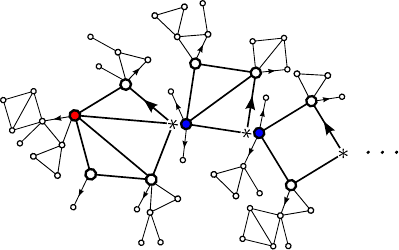}
		\caption{The Benjamini--Schramm limit of outerplanar maps with type I.}
		\label{fi:limouter2}
	\end{minipage}
\end{figure}

\begin{remark}
	\label{re:mapdistr}
	The distribution of the local weak limit $\hat{\mO}$ has the following description.
	\begin{enumerate}
		\item Let $(\mD_i^\bullet)_{i \ge 1}$ be a family of independent identically distributed $\cD^\bullet$-objects following a weighted Boltzmann distribution 
		$\mathbb{P}_{(\cD^\bullet)^\gamma, \tau}$. Concatenate the $\mD_i^\bullet$ by identifying the pointed vertex of $\mD_i^\bullet$ with the root $*$-vertex of $\mD_{i+1}^\bullet$ for all $i$. The resulting chain of dissections $C$ has an infinite spine given by the $*$-vertices of the $\mD_i^\bullet$. We consider this chain as rooted at the start of the spine, which is the root $*$-vertex of $\mD_1^\bullet$. 
		\item Let $\mO$ denote a random outerplanar map that follows a Boltzmann distribution $\mathbb{P}_{\cO^\omega, \tau/\phi(\tau)}$. For each non-spine vertex $v$ of $C$ take a fresh independent copy of $\mO$ and identify its root with $v$, such that the map is attached from the outside. For each spine-vertex $v$ of $C$ take two fresh independent copies of $\mO$ and identify their roots with $v$ by glueing one from each side.
		\item The resulting graph consisting of $C$ with one Boltzmann map glued to each non-spine vertex and two such maps glued to each spine vertex is distributed like $\hat{\mO}$.
	\end{enumerate}			
\end{remark}

\begin{remark}
	\label{re:mapdistr2}
	The distribution of the Benjamini--Schramm limit $\hat{\mO}_*$ may be described as follows. 
	\begin{enumerate}
		\item Similarly as for $\hat{\mO}$ we start with an i.i.d. family $(\mD_i^\bullet)_{i \ge 1}$ of $\mathbb{P}_{(\cD^\bullet)^\gamma, \tau}$-distributed pointed dissections. But this time, we form a chain $C_*$ by glueing together the pointed vertex of $\mD_{i+1}^\bullet$ with the root $*$-vertex of $\mD_i^\bullet$ for all $i$. $C_*$ has a spine consisting of the marked vertices of the $\mD_i^\bullet$ that we consider as rooted at the marked vertex of $\mD_1^\bullet$.
		\item Let $\mO$ denote a random outerplanar map that follows a Boltzmann distribution $\mathbb{P}_{\cO^\omega, \tau/\phi(\tau)}$. For each non-spine vertex $v$ of $C_*$ take a fresh independent copy of $\mO$ and identify its root with $v$. We do the same for the root of the chain, that is the first spine-vertex. For each non-root spine-vertex $v$ of $C_*$ take two fresh independent copies of $\mO$ and identify their roots with $v$ by glueing one from each side.
		\item The resulting graph is distributed like $\hat{\mO}_*$.	It consists of $C_*$ with one Boltzmann map glued to each non-spine vertex and the first spine-vertex, and two such maps glued to all the other spine-vertices.
	\end{enumerate}
\end{remark}

If the weight sequence $\mathbf{w}$ has type I$a$, then the root-degrees in $\hat{\mO}$ and $\hat{\mO}_*$ have finite exponential moments. Hence in this case both are almost surely recurrent by a general result~\cite[Thm. 1.1]{MR3010812} on distributional limits of random planar graphs.


\paragraph*{Local convergence - the finite spine case}

Our first result reduces the study of random outerplanar maps to the study of random dissections.

\begin{figure}[t]
	\centering
	\begin{minipage}{0.8\textwidth}
		\centering
		\includegraphics[width=1\textwidth]{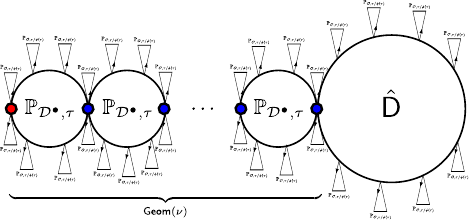}
		\caption{The local weak limit of type II random outerplanar maps.}
		\label{fi:limoutfresh}
	\end{minipage}
\end{figure}

\begin{theorem}[Local weak convergence of outerplanar maps with type II]
	\label{te:convout}
	Suppose that $\mathbf{w}$ has type II and that the composition $\Seq \circ \cD^\gamma$ has convergent type in the sense of Definition~\ref{def:convergent}.  Let $\mD_n^\gamma$ denote the random random dissection that gets drawn from $\cD[n]$ with probability proportional to its $\gamma$-weight. If $\mD_n^\gamma$ converges in the local weak sense toward a limit object $\hat{\mD}^\bullet$, then the random outerplanar map $\mO_n^\omega$ converges as well.  The limit object \label{pp:oh2}$\hat{\mO}$ may be constructed as illustrated in Figure~\ref{fi:limoutfresh}. That is:
	\begin{enumerate}
		\item Draw a random integer $L\ge0$ that follows the geometric distribution
		\[
		\Pr{L=\ell} = \nu^\ell(1-\nu).
		\]
		\item Let $(\mD_i^\bullet)_{1 \le i \le L}$ be a family of independent identically distributed $\cD^\bullet$-objects following a weighted Boltzmann distribution 
		$\mathbb{P}_{(\cD^\bullet)^\gamma, \tau}$. Concatenate the $\mD_i^\bullet$ by identifying the pointed vertex of $\mD_i^\bullet$ with the root $*$-vertex of $\mD_{i+1}^\bullet$ for all $i \le L-1$. Identify the root of the limit object $\hat{\mD}$ with the marked vertex of $\mD_L^\bullet$. The resulting chain $C$ has a finite spine with $L+1$ vertices given by the root-vertices of the $\mD_i^\bullet$ and $\hat{\mD}$.
		\item Let $\mO$ denote a random outerplanar map that follows a Boltzmann distribution $\mathbb{P}_{\cO^\omega, \tau/\phi(\tau)}$. For each non-spine vertex $v$ of $C$ take a fresh independent copy of $\mO$ and identify its root with $v$. For each spine-vertex $v$ of $C$ take two independent copies of $\mO$ and identify their roots with $v$ by glueing one from each side.  The resulting outerplanar map follows the distribution of $\hat{\mO}$.
	\end{enumerate}
\end{theorem}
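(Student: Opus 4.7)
My plan is to transfer the problem to the enriched tree picture via the coupling of Lemma~\ref{le:coupling}, apply Theorem~\ref{te:local} to obtain a limit for $(\cT_n, \beta_n)$ in $\fmA$, and then handle the single "explosion vertex" appearing in the condensation regime using the assumed convergent-type behaviour of $\Seq \circ \cD^\gamma$ together with the local weak convergence $\mD_n^\gamma \convdis \hat{\mD}^\bullet$.

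First I would use Lemma~\ref{le:coupling} to represent $\mO_n^\omega$ through the random simply generated tree $\cT_n$ and decoration $\beta_n$ via the species isomorphism $\cO \simeq \cA_{\Seq \circ \cD}$. Theorem~\ref{te:local} then gives $(\cT_n, \beta_n) \convdis (\hat{\cT}, \hat{\beta})$ in $\fmA$; in the type II regime $\hat{\cT}$ has a finite spine $u_0, u_1, \ldots, u_L$ where $L$ is geometric as stated and $u_L$ is the unique vertex of infinite outdegree, while each off-spine subtree is an independent $(\pi_k)_k$-Galton--Watson tree decorated $\kappa$-proportionally. A direct Boltzmann calculation from Lemma~\ref{le:bole} applied to $\cA_\cR^\omega \simeq \cX \cdot \cR^\kappa(\cA_\cR^\omega)$ shows that each off-spine enriched subtree is $\mathbb{P}_{\cA_\cR^\omega, \tau/\phi(\tau)}$-distributed, which via $\cA_\cR \simeq \cO$ is a Boltzmann outerplanar map at parameter $\tau/\phi(\tau)$ — matching the off-spine attachments in $\hat{\mO}$. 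For each non-terminal spine vertex $u_i$ with $i < L$, the conditional law of $\hat{\xi}$ given $\hat{\xi} < \infty$ is the size-bias of $\xi$, so combined with the $\kappa$-proportional sampling of $\hat{\beta}(u_i)$ this makes $\hat{\beta}(u_i)$ a $\mathbb{P}_{(\cR^\bullet)^\kappa, \tau}$-distributed pointed $\cR$-structure. The chain rule together with $\Seq' \simeq \Seq \cdot \Seq$ and $\cX \cdot \cD' \simeq \cD^\bullet$ yields
\[
(\Seq \circ \cD)^\bullet \simeq (\Seq \circ \cD) \cdot \cD^\bullet \cdot (\Seq \circ \cD),
\]
and Lemma~\ref{le:bole} splits $\hat{\beta}(u_i)$ into an independent triple: a left Boltzmann $(\Seq \circ \cD)^\kappa$-sequence, a pointed $\mathbb{P}_{(\cD^\bullet)^\gamma,\tau}$-dissection playing the role of $\mD_{i+1}^\bullet$, and a right Boltzmann $(\Seq \circ \cD)^\kappa$-sequence. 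Combining the two flanking sequences with the Boltzmann off-spine subtrees rooted at their atoms produces the two independent copies of $\mO$ glued at $u_i$ as required.

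The hard part will be the terminal spine vertex $u_L$, whose decoration in $\fmA$ is only the placeholder $\infty$. Conditionally on the size $k = d_{\cT_n}^+(u_L)$, the decoration $\beta_n(u_L)$ is the Gibbs partition sampled from $(\Seq\circ\cD^\gamma)[k]$ with probability proportional to weight; since $k \convp \infty$ in the condensation regime and $\Seq \circ \cD^\gamma$ has convergent type, Definition~\ref{def:convergent} forces this structure to be close in total variation to a $\mathbb{P}_{\Seq' \circ \cD^\gamma, \rho_\cD}$-Boltzmann structure in which the $*$-placeholder atom has been replaced by an independent $\mD_m^\gamma$ with $m$ the residual size. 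Using $\Seq' \simeq \Seq \cdot \Seq$ again, Lemma~\ref{le:bole} decomposes the Boltzmann part as an independent pair of $\mathbb{P}_{(\Seq \circ \cD)^\kappa,\rho_\cD}$-sequences flanking the placeholder; combined with the Boltzmann off-spine subtrees these give the two copies of $\mO$ glued at the last spine vertex. Since $m \convp \infty$, the hypothesis $\mD_n^\gamma \convdis \hat{\mD}^\bullet$ substitutes $\hat{\mD}$ for the giant dissection at $u_L$. The equality $\tau = \rho_\cD$ in the type II convergent regime (which follows from $\phi = \Seq \circ \cD^\gamma$ once one notes that convergent type forces $\cD^\gamma(\rho_\cD) < 1$) ensures that all the Boltzmann parameters on the spine and at the condensation vertex match.

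Finally, for any fixed $r$ the $r$-neighbourhood of the root-edge of $\mO_n^\omega$ depends only on the finite-depth part of $(\cT_n, \beta_n)$ around the root together with the local shape of the giant dissection at $u_L$ near its root; the joint convergence of these pieces established above yields $\mO_n^\omega \convdis \hat{\mO}$ in the local weak sense.
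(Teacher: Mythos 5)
Your proposal follows the same overall strategy as the paper: couple $\mO_n^\omega$ to the simply generated tree via Lemma~\ref{le:coupling}, use the convergence of the decorated tree in $\fmA$, handle the condensation vertex through the convergent-type Gibbs partition together with the assumed local weak convergence of $\mD_n^\gamma$, and read off the Boltzmann decompositions via $\Seq' \simeq \Seq\cdot\Seq$ and the chain rule. Your derivation of $\tau=\rho_\cD$ from $\phi = \Seq\circ\cD^\gamma$ and the convergent-type requirement $\cD^\gamma(\rho_\cD)<1$ is correct.

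There are, however, two places where you assert a step that actually needs real work, and the paper resolves them by different devices than you invoke. First, Theorem~\ref{te:local} gives weak convergence in $\fmA$, a product space with the one-point compactification at each coordinate; the decoration at the condensation vertex is encoded there only by the symbol $\infty$, so the indicator of the event $V_\ell(\mO_n^\omega)\simeq O$ is \emph{not} a continuous functional of the element of $\fmA$ and you cannot simply pass to the limit. You say ``conditionally on the size $k$, \ldots\ Definition~\ref{def:convergent} forces\ldots'' and then conclude, but this hides the genuinely hard part: one has to show that the map $\tilde g(\cdot,k)$ obtained by integrating over the conditional law of the large decoration extends continuously to $k=\infty$, and that the near-root structure of $\cT_n$ together with the random position and random degree of the condensation vertex converges jointly. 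The paper avoids this by replacing $\cT_n$ with Janson's $\cT_{1n}$ (the modified Galton--Watson tree with the tip pruned at a random level $\tilde D_n \ge \Omega_n$) and quoting the total-variation coupling \cite[Thm.~20.2]{MR2908619}; this is a quantitatively stronger statement than $\fmA$-convergence and is what makes the conditional computation transparent.

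Second, and separately, you assert in the last paragraph that ``the $r$-neighbourhood of the root-edge of $\mO_n^\omega$ depends only on the finite-depth part of $(\cT_n,\beta_n)$ around the root together with the local shape of the giant dissection near its root.'' This is true only after re-ordering the offspring of each vertex so that closeness in the map corresponds to small index in the ordered offspring list. The paper is explicit about this: it introduces the modified decoration $(\cT_n,\lambda_n)$ in which the atoms of $\lambda_n(v)$ are matched to the offspring of $v$ in increasing order of map-distance from the $*$-vertex, and then observes that for each $\ell$ and map $O$ there is a deterministic $m(\ell,O)$ such that $V_\ell(\mO_n^\omega)\simeq O$ is determined by $(\lambda_n(v))_{v\in V^{[m]}}$. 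Without this device the ``finite-depth part'' you refer to is not well-defined as a continuous finite-coordinate projection, so the localization collapses precisely at the condensation vertex, which is the heart of the problem. Both omissions are fillable -- your outline points at the right objects -- but as written the two key reductions are asserted rather than established, and that is exactly where the technical content of the theorem lives.
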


In Section~\ref{sec:diss22} we obtain concrete limit theorems for random dissections in two settings, where the limit either has an  infinite spine of circles glued together at edges if the dissection, or a finite random-length spine of this type with a doubly infinite path attached to its end. We observe that if $\mathbf{w}$ has type II and if the $\gamma$-weights correspond an enriched tree weighting on the species $\cD$ of dissections, then $\mD_n^\gamma$ also has type II. This means that in this setting the type I limit of $\mD_n^\gamma$ cannot appear as core of $\mO_n^\omega$.


\begin{lemma}
	\label{le:light}
	Suppose that the weighting $\gamma$ on the species of dissections $\cD$ is of the form
	\begin{align}
	\label{eq:needmorenames}
	\cD^\gamma = \cX \cdot (\Seq \circ \Seq_{\ge 1})^\upsilon (\cD^\gamma)
	\end{align}
	for an arbitrary weighting $(\Seq \circ \Seq_{\ge 1})^\upsilon$.
	This includes the case where we draw the random outerplanar map $\mO_n^\omega$ according to weights corresponding to its face-degrees. Recall that $\mO_n^\omega$ has type I, II or III depending on whether $\nu\ge 1$, $0< \nu < 1$ or $\nu=0$.
	\begin{enumerate}
		\item If $\cD^\gamma$ has type I, then $\mO_n^\omega$ has type I$a$ with $\nu= \infty$.
		\item Suppose that $\cD^\gamma$ has type II. Let $\tau_\cD$ denote the  radius of convergence of \[\phi_\cD(z) := (\Seq \circ \Seq_{\ge 1})^\upsilon(z),\] and set \[\nu_\cD := \lim_{t \nnearrow \tau_\cD} \phi_\cD'(t)t/\phi_\cD(t) \in ]0,1[.\]
		\subitem a) If $\tau_\cD < 1$, then 
		\[
		\nu = \frac{\tau_\cD}{(1- \tau_\cD)(1 - \nu_\cD)} \in ]0, \infty[.
		\]
		\subitem b) If $\tau_\cD \ge 1$, then $\mO_n^\omega$ has type I$a$ with $\nu=\infty$.
		\item If $\cD^\gamma$ has type III, then $\nu=0$ and $\mO_n^\omega$ has type III.
	\end{enumerate}
\end{lemma}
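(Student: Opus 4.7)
The plan is to reduce the question to a direct generating-function computation. I would begin by translating the compositional decomposition $\cO \simeq \cX \cdot \Seq(\cD(\cO))$ from Section~\ref{sec:decompouter} into the identity
\[
\phi(z) = \frac{1}{1 - \cD^\gamma(z)}
\]
for the weight-generating series $\phi(z) = (\Seq \circ \cD^\gamma)(z)$ of $\mathbf{w}$. Simultaneously, the hypothesis~\eqref{eq:needmorenames} identifies $\cD^\gamma$ as the generating series of simply generated trees whose weight sequence has generating series $\phi_\cD(z) := (\Seq \circ \Seq_{\ge 1})^\upsilon(z)$, so the structural formulas of Section~\ref{sec:types} apply: the singularity of $\cD^\gamma$ is $\rho_\cD = \tau^\ast/\phi_\cD(\tau^\ast)$ with $\cD^\gamma(\rho_\cD) = \tau^\ast$, where $\tau^\ast$ is the critical parameter of $\phi_\cD$ (equal to $\tau_\cD = \rho_{\phi_\cD}$ in types II and III). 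Differentiating $\cD^\gamma(z) = z\phi_\cD(\cD^\gamma(z))$ and eliminating $z$ via $z = \cD^\gamma(z)/\phi_\cD(\cD^\gamma(z))$ yields
\[
(\cD^\gamma)'(z) = \frac{\phi_\cD(\cD^\gamma(z))}{1 - \psi_\cD(\cD^\gamma(z))},
\]
so that $\psi(z) = z\phi'(z)/\phi(z) = z(\cD^\gamma)'(z)/(1 - \cD^\gamma(z))$. The lemma then reduces to computing $\nu = \lim_{z \nearrow \rho_\phi} \psi(z)$ case by case.

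In the type II subcase $\tau_\cD < 1$, the value $\cD^\gamma(\rho_\cD) = \tau_\cD$ stays below $1$, so $1 - \cD^\gamma$ does not vanish on $[0, \rho_\cD]$ and therefore $\rho_\phi = \rho_\cD$. Combining $\rho_\cD = \tau_\cD/\phi_\cD(\tau_\cD)$ with $(\cD^\gamma)'(\rho_\cD) = \phi_\cD(\tau_\cD)/(1 - \nu_\cD)$ then gives
\[
\nu = \frac{\rho_\cD (\cD^\gamma)'(\rho_\cD)}{1 - \cD^\gamma(\rho_\cD)} = \frac{\tau_\cD}{(1-\tau_\cD)(1 - \nu_\cD)},
\]
which is the announced formula. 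In the type II subcase $\tau_\cD \ge 1$, either $\cD^\gamma(\rho_\cD) = 1$ (when $\tau_\cD = 1$) so $\rho_\phi = \rho_\cD$, or by the intermediate value theorem there is a unique $z^\ast \in (0, \rho_\cD)$ with $\cD^\gamma(z^\ast) = 1$ (when $\tau_\cD > 1$), whence $\rho_\phi = z^\ast$. In either case the denominator of $\psi$ vanishes at $\rho_\phi$ while the numerator stays positive and finite, so $\nu = \infty$ and $\mO_n^\omega$ has type I$a$.

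The type I case of $\cD^\gamma$ is similar but with the roles of numerator and denominator reversed. The simply-generated-tree critical parameter $\tau^\ast$ satisfies $\psi_\cD(\tau^\ast) = 1$ and $\cD^\gamma(\rho_\cD) = \tau^\ast$, which makes the denominator in the formula for $(\cD^\gamma)'(\rho_\cD)$ vanish so that $(\cD^\gamma)'(\rho_\cD) = \infty$. A short sub-case analysis comparing $\tau^\ast$ with $1$ to pin down whether $\rho_\phi$ equals $\rho_\cD$ or the smaller solution of $\cD^\gamma(z) = 1$ then shows that in every situation $z(\cD^\gamma)'(z)/(1 - \cD^\gamma(z))$ diverges as $z \nearrow \rho_\phi$, so $\nu = \infty$ and $\mO_n^\omega$ has type I$a$. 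The type III case of $\cD^\gamma$ is the easiest: $\rho_{\phi_\cD} = 0$ forces $\rho_\cD = 0$ via~\eqref{eq:z1}, which in turn forces $\rho_\phi = 0$ because the coefficients of $\phi$ dominate those of $\cD^\gamma$, so $\nu = 0$ and $\mO_n^\omega$ has type III.

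I do not anticipate a real obstacle here; the main thing to be careful about is tracking which of $\rho_\cD$ or a smaller positive root of $1 - \cD^\gamma$ determines $\rho_\phi$ in each subcase, and keeping a clear notational distinction between the critical parameter $\tau^\ast$ of $\phi_\cD$ (which equals the radius $\tau_\cD$ of $\phi_\cD$ only in types II and III of $\cD^\gamma$) and the parameter $\tau$ of $\phi$ itself that appears in Section~\ref{sec:types}. The parenthetical remark that the face-degree-weighted model is included is not an additional claim: it follows from the Ehrenborg--M\'endez identification of $\cD^\gamma$ with $(\Seq \circ \Seq_{\ge 1})^\upsilon$-enriched trees carried out in Section~\ref{sec:diss}, under which a face-weight sequence $(\iota_k)_{k \ge 3}$ determines the $\upsilon$-weighting on $\Seq_{\ge 1}$.
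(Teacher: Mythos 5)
Your proposal follows essentially the same route as the paper's proof: both write $\phi(z)=1/(1-\cD^\gamma(z))$, express $\psi(z)=z(\cD^\gamma)'(z)/(1-\cD^\gamma(z))$, compute $(\cD^\gamma)'$ from the functional equation $\cD^\gamma(z)=z\phi_\cD(\cD^\gamma(z))$, and then determine $\nu=\psi(\rho_\phi)$ by a case analysis on whether $\rho_\phi$ coincides with $\rho_\cD$ or with a smaller root of $\cD^\gamma(z)=1$. The only departures are notational (your $\tau^\ast$ for the critical parameter of $\phi_\cD$ in the type I case, versus the paper's $\tau_1$); the substance of the argument, including the type III reduction via non-analyticity, is the same.
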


\begin{figure}[t]
	\centering
	\begin{minipage}{1.0\textwidth}
		\centering
		\includegraphics[width=1.0\textwidth]{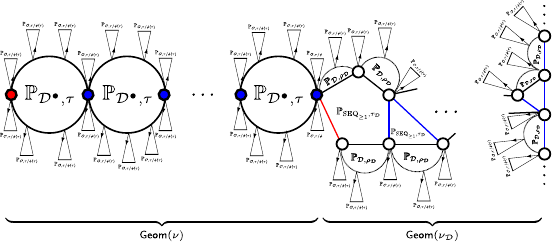}
		\caption{The local weak limit of type II random face-weighted outerplanar maps.
		}
		\label{fi:limoutice}
	\end{minipage}
\end{figure}

In light of Lemma~\ref{le:light} and Theorem~\ref{te:convout}, we obtain the following classification of the local weak limits of random outerplanar maps drawn with probability proportional to {\em arbitrary} weights assigned to their faces.

\begin{theorem}(Classification of local weak limits of random face-weighted outerplanar maps)
	\label{te:facethm}
	Let $(\iota_k)_{k \ge 3}$ be a sequence of non-negative weights, such that at least one weight is positive. Consider the case where $\mO_n^\omega$ is the random outerplanar map with $n$ vertices drawn with probability proportional to the product of $\iota$-weights corresponding to the degrees of its inner faces. That is, we define the weighting $\gamma$ on the class of dissections $\cD$  such that
	\[
	\cD^\gamma \simeq \cX \cdot (\Seq \circ \Seq_{\ge 1}^\iota)( \cD^\gamma) \qquad \text{with} \qquad \Seq_{\ge 1}^\iota(z) = \sum_{k \ge 1} \iota_{k+2} z^k.
	\]
	\begin{enumerate}
		\item  If $\mathbf{w}$ has type $I$, then $\mO_n^\omega$ convergences in the local weak sense by Theorem~\ref{te:convouter} toward a limit graph that contains no doubly-infinite paths.
		\item If $\mathbf{w}$ has type $II$, then the local weak limit of the  random outerplanar map $\mO_n^\omega$ is the random map $\hat{\mO}$ given in Theorem~\ref{te:convout} for the type II dissection limit $\hat{\mD}$ given in Theorem~\ref{te:bsconvdis} for random dissections with $\iota$-face-weights. The distribution of the limit planar map is illustrated in Figure~\ref{fi:limoutice}, where we use the notation \[\rho_\cD = \tau_\cD (1 - \Seq_{\ge 1}^\iota(\tau_\cD))\] with  $\tau_\cD$ denoting the radius of convergence of the generating series $\Seq_{\ge 1}^\iota(z)$. The limit almost surely contains  doubly-infinite paths and differs in this aspect from the limit of type I outerplanar maps.
		\item If $\mathbf{w}$ has type III, then $\mO_n^\omega$ converges in the local weak sense toward a single doubly-infinite path.

	\end{enumerate}
\end{theorem}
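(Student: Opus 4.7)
The plan is to treat each of the three regimes by translating the type of the outerplanar weight sequence $\mathbf{w}$ into constraints on the induced dissection weighting $\cD^\gamma$ via Lemma~\ref{le:light}, and then invoking the regime-specific convergence theorems already established. For case (1), when $\mathbf{w}$ has type I, Theorem~\ref{te:convouter} applies directly and yields the Benjamini--Schramm limit $\hat{\mO}_*$; the distributional description in Remark~\ref{re:mapdistr2} shows that $\hat{\mO}_*$ only contains one-sided infinite sequences of pointed dissections along its spine, so no doubly-infinite path appears in any vertex neighbourhood.

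For case (2), Lemma~\ref{le:light} implies that $\mathbf{w}$ has type II exactly when $\cD^\gamma$ has type II with $\tau_\cD < 1$ and the associated $\nu < 1$. To apply Theorem~\ref{te:convout} we need two ingredients: convergent type of the composition $\Seq \circ \cD^\gamma$ and local weak convergence of $\mD_n^\gamma$. For the first, we exploit that the isomorphism~\eqref{eq:needmorenames} identifies $\cD^\gamma$ as a species of enriched trees, so Lemma~\ref{le:subexp} yields $\cD^\gamma(z)/z \in \mathscr{S}_d$ for $d = \spa(\mathbf{w}_\cD)$; Lemma~\ref{le:gibbs} with $\cF^\upsilon = \Seq$ and $\cG^\gamma = \cD^\gamma$ then gives the convergent type along each relevant residue class, which combine to the Definition~\ref{def:convergent} condition. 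For the second, we invoke the type II part of Theorem~\ref{te:bsconvdis}, which supplies the dissection limit $\hat{\mD}$ depicted in Figure~\ref{fi:limoutice}. Plugging these into Theorem~\ref{te:convout} produces the limit $\hat{\mO}$ with its explicit weighted Boltzmann description.

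For case (3), Lemma~\ref{le:light} forces $\cD^\gamma$ itself to have type III. The canonical coupling of $\mO_n^\omega$ with a type III simply generated tree (Lemma~\ref{le:coupling}) ensures that, with high probability, a single vertex of $\cT_n$ carries almost all the mass via an outdegree of order $n$, and the $\Seq\circ\cD$-structure on its offspring has size $n - O(1)$. Applying Corollary~\ref{co:ne} (or the variant of Theorem~\ref{te:need} for ordered sequences) to this random-sized Gibbs partition collapses it to a single giant dissection drawn from $\cD[\cdot]$ proportionally to $\gamma$. Since $\cD^\gamma$ is type III, the type III conclusion of Theorem~\ref{te:bsconvdis} shows that this dominant dissection converges locally to a doubly-infinite path, and pulling back through the block decomposition of Section~\ref{sec:decompouter} gives the same limit for the neighbourhoods of a uniformly random vertex of $\mO_n^\omega$.

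The main obstacle I anticipate is the arithmetic bookkeeping of spans: both Lemma~\ref{le:gibbs} and the underlying local limits of $\mD_n^\gamma$ and $\cT_n$ are naturally indexed by residues modulo the span of the relevant weight sequence, while the theorem asserts convergence along the full sequence $n \to \infty$ (restricted to indices with $|\cO[n]|_\omega > 0$). To resolve this I plan to check that the limit object $\hat{\mO}$ produced by Theorem~\ref{te:convout} does not depend on the residue class: in the convergent-type description of $\Seq \circ \cD^\gamma$ the residue only affects the size of the single distinguished $\cG$-component, which is asymptotically invisible in the local weak topology, and the geometric spine length of $\hat{\mO}$ is determined purely by $\nu$. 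The same remark applies in the type III case, where only the size of the giant dissection depends on $n \bmod d$, and this does not affect the polygonal local picture. Once this independence is verified, the subsequential convergences may be concatenated to a full local weak limit.
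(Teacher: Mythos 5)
Your case~(1) is essentially the paper's argument, though you cite $\hat{\mO}_*$ (the Benjamini--Schramm limit around a random vertex, Remark~\ref{re:mapdistr2}) where the statement under proof concerns the local weak limit around the root-edge, which is $\hat{\mO}$ from Remark~\ref{re:mapdistr}; both are type I limits with no doubly-infinite paths, so the conclusion stands, but the references should be swapped.

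The gap in case~(2) is the span issue, and your proposed fix does not work. You write that the residue class ``only affects the size of the single distinguished $\cG$-component, which is asymptotically invisible in the local weak topology.'' This is false: in Lemma~\ref{le:gibbs}, the residue class $a$ determines the species $\cF_a^\upsilon = \Seq_a$ (sequences whose length lies in $a + D\ndZ$), so the subsequential limits $\hat{\mS}_n^a$ differ not just in the size of the giant dissection but in the \emph{distribution} of the finite-sized $\Seq'_a \circ \cD^\gamma$-structure surrounding it — i.e.\ in how many small dissections sit to either side of the large one. This is precisely what is visible at any fixed local scale. When $d := \spa(\mathbf{w}_\cD) \ge 2$ the composition $\Seq \circ \cD^\gamma$ genuinely fails to have convergent type, and Theorem~\ref{te:convout} does not apply as stated. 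The paper instead verifies that the \emph{random} degree $\tilde{D}_n$ at the tip of the spine has the property that $\Pr{\tilde{D}_n \equiv a \bmod d} \to \Pr{a_0 = a}$, where $a_0$ is the residue that a $\mathbb{P}_{\Seq' \circ \cD^\gamma, \tau}$-Boltzmann object would induce. This is a nontrivial calculation requiring the subexponentiality of $\cD^\gamma(z)/z$, Proposition~\ref{pro:sleeky} applied twice, and the identity $\cZ(\rho_\cZ) = \rho_\phi$ in the type II regime (Equations~\eqref{eq:solvemenow}--\eqref{eq:yay} in the proof). The randomness of $\tilde{D}_n$ averages the subsequential limits into the correct mixture; nothing ``combines'' automatically.

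Case~(3) has a similar and more serious gap. You propose to apply Corollary~\ref{co:ne} (or Theorem~\ref{te:need}) to the $\Seq\circ\cD^\gamma$-structure at the high-degree vertex, which is a Gibbs partition of \emph{random} size. But Theorem~\ref{te:need} and Corollary~\ref{co:ne} are statements for deterministic $n \to \infty$, and Remark~\ref{re:non-analytic} warns explicitly that for type III weights the ratio $r_k = [z^k]\Seq_{\ge 1}^\gamma(z) / [z^k](\Seq\circ\Seq_{\ge1}^\gamma)(z)$ need \emph{not} converge: one can have $\liminf r_k < 1$ or even $r_k = 0$ infinitely often. So one cannot conclude, without further control on the distribution of the random size, that the Gibbs structure collapses to a single dissection. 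The paper sidesteps this entirely by applying the Ehrenborg--M\'endez isomorphism $\cO^\omega \simeq \cX + (\cX \cdot \cD^\gamma)(\cO^\omega)$ and Lemma~\ref{le:couplingschroeder} to recouple $\mO_n^\omega$ with a simply generated tree $\tau_n$ whose atoms are \emph{leaves}. Then Lemma~\ref{le:convll} gives weak convergence of $\tau_n$ toward the infinite star, and Theorem~\ref{te:gibbst3} (whose own proof likewise uses the leaf-counted coupling) gives convergence of the giant dissection to a doubly-infinite path. Your outline misses this change of encoding, which is the essential idea that makes the non-analytic case tractable.
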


In the following example we provide an explicit weight sequence $(\iota_k)_{k \ge 3}$ for which the random outerplanar map $\mO_n^\omega$ exhibits the interesting type II limit with doubly infinite paths.

\begin{example}[Convergent type II outerplanar maps]
	\label{ex:cool}
	Consider the weight sequence $(\iota_k)_{k \ge 3}$ with \[\Seq_{\ge 1}^\iota(z) =  \label{eq:ttttemp} \sum_{k\ge 1} \iota_{k+2} z^k = z/2 + (1-4z)^{3/2}/12.\] Then it holds that
	\[
	\iota_k \sim (16 \sqrt{\pi})^{-1} k^{-5/2} 4^k, \qquad \nu_\cD =3/23 <1, \qquad \nu = 23/60<1.
	\]
	Thus the random $\iota$-face-weighted outerplanar map $\mO_n^\omega$ converges to the limit illustrated in Figure~\ref{fi:limoutice}.
\end{example}


The following result establishes the Benjamini--Schramm limit $\hat{\mO}_*$ of arbitrary  random face-weighted outerplanar maps. In general, this limit is not identical to the local weak limit~$\hat{\mO}$. Using a rerooting invariance of face-weighted dissections and the explicit description of the limit $\hat{\mO}_*$  obtained in the proof, we show that there is a natural coupling such that the map $\hat{\mO}_*$ is an induced subgraph of the map $\hat{\mO}$. In particular, the two limits are identically distributed if and only if the weight-sequence $\mathbf{w}$ has type III, in which case they both are deterministic doubly-infinite paths.

\begin{theorem}(Classification of Benjamini--Schramm limits of random face-weighted outerplanar maps)
	\label{te:bsfaces}
	Let $(\iota_k)_{k \ge 3}$ be a sequence of non-negative weights, such that at least one weight is positive, and let $\mO_n^\omega$ be the random outerplanar map with $n$ vertices drawn with probability proportional to the product of $\iota$-weights corresponding to the degrees of its inner faces as in Theorem~\ref{te:facethm}. That is, we consider the case of a weighting $\gamma$ on the class of dissections $\cD$  such that
	\[
	\cD^\gamma \simeq \cX \cdot (\Seq \circ \Seq_{\ge 1}^\iota)( \cD^\gamma) \qquad \text{with} \qquad \Seq_{\ge 1}^\iota(z) = \sum_{k \ge 1} \iota_{k+2} z^k.
	\]
	
	\begin{enumerate}
		\item  If $\mathbf{w}$ has type $I$, then $\mO_n^\omega$ convergences in the Benjamini--Schramm sense by Theorem~\ref{te:convouter} toward a limit graph $\hat{\mO}_*$ that contains no doubly-infinite paths. 
		\item If $\mathbf{w}$ has type $II$, then the  random outerplanar map $\mO_n^\omega$ converges in the Benjamini--Schramm limit toward a limit $\hat{\mO}_*$ that almost surely contains  doubly-infinite paths.
		\item If $\mathbf{w}$ has type III, then $\mO_n^\omega$ converges in the Benjamini--Schramm sense toward a single doubly-infinite path.
	\end{enumerate}
	For the cases I and II, the construction of $\hat{\mO}_*$ is almost identical to the construction of $\hat{\mO}$ in Remark~\ref{re:mapdistr} for the type I case and to the construction of $\hat{\mO}$ in Theorem~\ref{te:convout} for the type II case, with the only difference being that in both cases we identify the root-vertex with the root of only one $\mathbb{P}_{\cO^\omega, \tau/\phi(\tau)}$-distributed outerplanar map instead of two. In particular, $\hat{\mO}$ may be obtained from $\hat{\mO}_*$ by identifying the root vertex of $\hat{\mO}_*$ with the root vertex of an independent $\mathbb{P}_{\cO^\omega, \tau/\phi(\tau)}$-distributed map. 
\end{theorem}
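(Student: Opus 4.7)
The three cases split along the type of $\mathbf{w}$. The type I case is already settled by Theorem~\ref{te:convouter}(1), and the description of $\hat{\mO}_*$ in terms of a one-sided infinite chain of Boltzmann-distributed pointed dissections with Boltzmann outerplanar maps glued to all non-spine vertices and the initial root is precisely Remark~\ref{re:mapdistr2}. The type III case follows from the type II argument below, using that a type III weight sequence $\mathbf{w}$ forces (via Lemma~\ref{le:light} and Theorem~\ref{te:bsconvdis}) the Benjamini--Schramm limit of the giant dissection to degenerate to a deterministic doubly-infinite path, which swallows the map. So the heart of the argument is the type II case.

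For type II, the plan is to apply Theorem~\ref{te:thmco} to the enriched tree $(\cT_n,\beta_n)$ coupled to $\mO_n^\omega$ via $\cO \simeq \cX\cdot(\Seq\circ\cD)(\cO)$, using that a uniformly drawn vertex $v_n$ of $\mO_n^\omega$ corresponds (up to relabelling) to a uniform vertex of the enriched tree. Theorem~\ref{te:thmco} produces an almost surely finite height $k_n$ such that the $k_n$-th ancestor $v_{k_n}$ of $v_n$ carries a $\Seq\circ\cD$-structure of size larger than $\Omega_n$, and the pruned enriched fringe subtree $P_m(\mH_{k_n})$ converges in total variation, for each fixed $m$, to $P_m(\cT_n^*,\beta_n^*)$. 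This reduces the problem to analysing the giant $\Seq\circ\cD$-structure at $v_{k_n}$. Now Lemma~\ref{le:subexp} tells us that $\cD^\gamma(z)/z$ is subexponential, so the composition $\Seq\circ\cD^\gamma$ has convergent type by Lemma~\ref{le:gibbs}: conditional on its large total size, the sequence consists with high probability of one giant dissection whose size carries the mass, flanked on either side by a finite (Boltzmann-distributed) sequence of small dissections. These small dissections, together with the vertices of $\cT_n^*$ outside the giant, become roots of independent Boltzmann outerplanar maps exactly as in the construction of $\hat{\mO}$ from Theorem~\ref{te:convout}.

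The remaining step is to identify the local behaviour of the uniform vertex \emph{inside} the giant dissection. Here one invokes the rerooting invariance of face-weighted dissections: for the weight assignment $\gamma$ induced by face-weights $(\iota_k)_{k\ge 3}$, the law of the dissection is invariant under choice of root-edge, and consequently, conditional on its size and on its containing the marked atom, the marked atom is essentially uniform on its non-root vertices (the proportion assigned to the root vertices is asymptotically negligible). Hence the vicinity of $v_n$ inside the giant dissection is the Benjamini--Schramm limit of a random face-weighted dissection, which is exactly $\hat{\mD}$ of Theorem~\ref{te:bsconvdis}. Reassembling the pieces — one giant dissection producing a doubly infinite path (type II) or path segment (type III) anchored at the root, bordered by i.i.d.\ Boltzmann pointed dissections on both sides forming the spine, with Boltzmann outerplanar maps glued at non-spine vertices and a single such map glued at the root — yields the claimed description of $\hat{\mO}_*$.

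Comparing this assembly with the construction of $\hat{\mO}$ in Remark~\ref{re:mapdistr} and Theorem~\ref{te:convout}, the only difference is the number of Boltzmann outerplanar maps glued at the root vertex: two in $\hat{\mO}$ (one from each side of the spine passing through the root) versus one in $\hat{\mO}_*$ (the root being the endpoint of the spine coming from the random vertex). This makes the coupling assertion transparent: identifying the root of $\hat{\mO}_*$ with the root of an independent $\mathbb{P}_{\cO^\omega,\tau/\phi(\tau)}$-distributed map produces a version of $\hat{\mO}$, and in the type III case both reduce to the same deterministic doubly-infinite path, so the two limits coincide exactly in that regime. The main obstacle I anticipate is the final identification step, namely making precise that the distinguished atom in the giant dissection is close in total variation to a uniform vertex — this is where the rerooting invariance of the face-weighting and a careful accounting of size-biasing effects from Theorem~\ref{te:thmco} have to be combined cleanly.
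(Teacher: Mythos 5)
Your plan follows the paper's own proof closely: Theorem~\ref{te:thmco} to localize at the first ancestor $v_{k_n}$ whose $\cR$-structure exceeds $\Omega_n$, Lemma~\ref{le:light} plus Lemma~\ref{le:subexp} and Lemma~\ref{le:gibbs} to produce a unique giant dissection inside that $\cR$-structure, rerooting invariance of face-weighted dissections (together with Theorems~\ref{te:bsconvdis} and~\ref{te:gibbst3}) to identify the local limit near the marked atom as $\hat{\mD}$, and then a Boltzmann reassembly via the chain and product rules. One spot that would need to be tightened in a full write-up: Lemma~\ref{le:gibbs} only yields that $\Seq\circ\cD^\gamma$ itself has convergent type when the span $d$ of the subexponential series $\cD^\gamma(z)/z$ equals $1$; for $d\ge2$ only the sublattice restrictions are convergent, and the Boltzmann distribution of the flanking small dissections at the tip of the spine depends on the residue of the size mod $d$. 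The paper's proof of Theorem~\ref{te:bsfaces} deliberately extracts only the giant-block conclusion from Lemma~\ref{le:gibbs} and discards the small fragments at the tip altogether, since they sit on the far side of the giant dissection's $*$-vertex and hence escape any fixed-radius neighbourhood of $u_0$ once one shows $d_{\mD_n}(*,u_{k-1})\convp\infty$; this, together with the exchangeability-based rematching $\lambda_n$ so that $P_m$-pruning actually controls graph-metric neighbourhoods, is exactly the ``careful accounting'' you flagged at the end.
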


\paragraph*{Limit theorems for the largest blocks and faces}

As for the sizes of the largest blocks in the random outerplanar map $\mO_n^\omega$, we immediately obtain the following result.

\begin{corollary}
	\label{eq:cosize}
	In the setting of Theorem~\ref{te:facethm}, we may apply Theorem~\ref{te:compsize} and Proposition~\ref{pro:new} directly to obtain bounds and limits  for the sizes $B_{(i)}$ of the $i$th largest block in the random outerplanar map $\mO_n^\omega$.
\end{corollary}

Studying the face-degrees requires some extra work, which essentially amounts to applying the general results for enriched trees twice, once for the outerplanar maps and once for the dissections.


\begin{corollary}
	\label{co:gottaname} In the setting of Theorem~\ref{te:facethm}, suppose that the weight-sequence $\mathbf{w}$ has type II and 
	\[
	[z^k] \Seq_{\ge 1}^\iota(z) = f(k) k^{-\beta} r^{-\beta}
	\]
	for some constants $r>0$ and $\beta>2$, and a slowly varying function $f$. Set $\alpha=\min(2, \beta-1)$. Then there is a slowly varying function $g$ such that the size $F_{(1)}$ of the largest face satisfies the central limit theorem
	\[
	\frac{ (1-\nu)(1- \nu_\cD)n - F_{(1)}}{g(n) n^{1/\alpha}} \convdis X_\alpha,
	\] where $X_\alpha$ is an $\alpha$-stable random variable with Laplace transform \[\Ex{e^{-tX_\alpha}} = \exp(\Gamma(-\alpha)t^\alpha), \quad \operatorname{Re}\, t \ge 0.\]
	The size $F_{(2)}$ of the second largest face admits the bound $F_{(2)} = O_p(\bar{g}(n)n^{1/\alpha})$ for some sequence $\bar{g}(n)$ satisfying $\bar{g}(n)= o(n^{\epsilon})$ for all $\epsilon>0$.
\end{corollary}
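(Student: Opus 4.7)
\textbf{Proof proposal for Corollary~\ref{co:gottaname}.} The plan is to combine the two-level structure of the random face-weighted outerplanar map with successive applications of the stable limit law from Proposition~\ref{pro:new}. Recall that $\mO_n^\omega$ is encoded as a $\Seq\circ\cD$-enriched tree, and the sizes of the $\cD$-components are exactly the block sizes $B_{(1)}\ge B_{(2)}\ge\dots$. Since $\mathbf{w}$ has type II and the coefficients of $\cD^\gamma(z)$ are themselves regularly varying (because $\cD^\gamma$ is a class of enriched trees and the hypothesis on $\Seq_{\ge 1}^\iota$ transfers to a regular variation statement for $[z^k]\cD^\gamma(z)$ via Proposition~\ref{pro:sleeky} applied to the implicit equation $\cD^\gamma = \cX\cdot(\Seq\circ\Seq_{\ge 1}^\iota)(\cD^\gamma)$), Proposition~\ref{pro:new} applies and yields a slowly varying $g$ together with a spectrally positive $\alpha$-stable $Y_1$ such that
\[
\frac{(1-\nu)n - B_{(1)}}{g(n)\, n^{1/\alpha}} \convdis Y_1, \qquad B_{(2)} = O_p(g(n)\, n^{1/\alpha}).
\]

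The second step is to describe the giant block. Conditional on $B_{(1)}=k$, the largest block of $\mO_n^\omega$ is, by the coupling of Lemma~\ref{le:coupling} applied to $\cR^\kappa = \Seq\circ\cD^\gamma$ together with the sampling rules of Lemma~\ref{le:bole}, distributed like $\mD^\gamma_k$: a random dissection of a $k$-gon drawn with probability proportional to the product of $\iota$-face-weights. By Lemma~\ref{le:light}, the enriched-tree class $\cD^\gamma$ itself has type II with parameter $\nu_\cD<1$, and the same regular-variation hypothesis on $\Seq_{\ge 1}^\iota$ makes Proposition~\ref{pro:new} applicable a second time to $\mD^\gamma_k$. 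This produces a slowly varying $g'$ and an independent spectrally positive $\alpha$-stable $Y_1'$ with
\[
\frac{(1-\nu_\cD)k - F^{\text{big}}_{(1)}(k)}{g'(k)\, k^{1/\alpha}} \convdis Y_1',
\]
where $F^{\text{big}}_{(1)}(k)$ is the size of the largest face of $\mD^\gamma_k$.

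The third step is to identify $F_{(1)} = F^{\text{big}}_{(1)}(B_{(1)})$ with high probability. Any face not contained in the giant block is bounded by $B_{(2)} = O_p(g(n) n^{1/\alpha})$, while within the giant block the second largest face is of the same order $g'(B_{(1)}) B_{(1)}^{1/\alpha} = O_p(g'((1-\nu)n)\, n^{1/\alpha})$, which gives the claimed bound on $F_{(2)}$ after choosing $\bar g$ to dominate $g, g'$ (both slowly varying, hence $o(n^\epsilon)$). Plugging the estimate for $B_{(1)}$ from the first step into the second, using $B_{(1)}\convp\infty$ together with Slutsky, and invoking conditional independence of $Y_1$ and $Y_1'$ given $B_{(1)}$, gives
\[
(1-\nu)(1-\nu_\cD)n - F_{(1)} = (1-\nu_\cD)\, g(n) n^{1/\alpha} Y_1 + g'((1-\nu)n)\,((1-\nu)n)^{1/\alpha} Y_1' + o_p\!\bigl((g+g')(n)n^{1/\alpha}\bigr).
\]
Since the sum of two independent spectrally positive $\alpha$-stable variables (with positive scaling coefficients) is again spectrally positive $\alpha$-stable after normalization, the right-hand side divided by an appropriate slowly varying multiple of $n^{1/\alpha}$ converges in distribution to $X_\alpha$. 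Absorbing constants into a redefined slowly varying $g$ yields the stated CLT.

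The main obstacle will be the third step: rigorously justifying the conditional decomposition. One must check that, conditionally on $B_{(1)}=k$, the internal structure of the giant block is genuinely $\mD^\gamma_k$, independently of everything outside; this uses the product form of the Boltzmann weight and the decomposition of Lemma~\ref{le:coupling}, together with the fact that $B_{(1)}$ is measurable with respect to the simply generated tree $\cT_n$ while the $\cR$-decorations $\beta_n(v)$ are conditionally independent given $\cT_n$. A second delicate point is to verify that the slowly varying factor $g'(B_{(1)})$ can be replaced by $g'((1-\nu)n)$ up to an $o_p(1)$ factor when dividing through by $g'((1-\nu)n)n^{1/\alpha}$; this is a standard application of the uniform convergence theorem for slowly varying functions combined with the concentration $B_{(1)}/n \convp 1-\nu$.
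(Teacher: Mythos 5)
Your proposal follows the same two-stage strategy as the paper --- Proposition~\ref{pro:new} applied first to the out-degrees of $\cT_n$ to control $B_{(1)}$, then (conditionally) to $\mD^\gamma_{B_{(1)}}$ to control the largest face inside the giant block --- but you are more careful at the recomposition step. After establishing the conditional stable law for $(1-\nu_\cD)B_{(1)} - F_{(1)}$ normalized by $g(n)n^{1/\alpha}$, the paper's proof replaces $(1-\nu_\cD)B_{(1)}$ by $(1-\nu)(1-\nu_\cD)n$ in the numerator as if the difference $(1-\nu_\cD)[(1-\nu)n - B_{(1)}]$ were $o_p(g(n)n^{1/\alpha})$; but that difference is itself $\Theta_p(g_1(n)n^{1/\alpha})$ with $g_1$ of the same slowly varying order as $g$, so it is not negligible. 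Your decomposition into two (asymptotically independent) $\alpha$-stable contributions, followed by the stability identity $aX + bX' \eqdist (a^\alpha + b^\alpha)^{1/\alpha}X$, is the correct way to close the argument. Two points to tighten: (i) you should state explicitly that the combined normalization $h(n)$ with $h(n)^\alpha = ((1-\nu_\cD)g(n))^\alpha + ((1-\nu)^{1/\alpha}g'(n))^\alpha$ is again slowly varying (powers and sums of slowly varying functions are slowly varying), so one obtains a single stable limit scaled by $h(n)n^{1/\alpha}$ without needing $g/g'$ to converge; and (ii) the regular variation of $[z^k]\cD^\gamma(z)$ does not follow from Proposition~\ref{pro:sleeky} applied to the implicit equation $\cD^\gamma \simeq \cX\cdot\phi_\cD(\cD^\gamma)$, since that proposition covers the analytic substitution $\phi_\cD=\Seq\circ\Seq_{\ge1}^\iota$ but not the inversion to $\cD^\gamma$; for the latter one needs the local large-deviation transfer for non-generic simply generated trees, which the paper invokes as \cite[Eq.~(14)]{MR3335012}. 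Your conditional-independence argument is correct as sketched: given $\cT_n$, the $\cR$-decorations of Lemma~\ref{le:coupling} are independent, and conditional on its size the isomorphism type of the giant dissection is $\mD^\gamma_k$, independent of everything outside $\beta_n(v_1)$.
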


In the proof of Theorem~\ref{te:facethm} we applied Lemma~\ref{le:couplingschroeder} to also construct a coupling of $\mO_n^\omega$ with an $n$-leaves simply generated tree $\tau_n$ such that the out-degrees in $\tau_n$ correspond precisely to the sizes of the $\cD$-objects in $\mO_n^\omega$. Likewise, there is a coupling of the random face-weighted dissection $\mD_n^\gamma$ with such a tree, but with a different weight-sequence.

This yields an alternative approach to Corollaries~\ref{eq:cosize} and \ref{co:gottaname}, as all results (future and present) regarding the vertex outdegrees in a Galton--Watson tree conditioned on having $n$ leaves directly translate to results for the block-sizes and face-sizes in the random outerplanar map $\mO_n^\omega$, and the face-sizes in the random dissection $\mD_n^\gamma$. Conversely, we may also interpret the results from Corollary~\ref{eq:cosize} as statistics for the vertex outdegrees for this model of random trees.



\subsubsection{Applications to random block-weighted graphs} 
\label{sec:blockcl}

As discussed in Section~\ref{sec:bijblock}, if $\cC$ denotes the class of connected graphs and $\cB$ its nonempty subclass of graphs that are two-connected or  a single edge with its ends, then the corresponding class of rooted graphs $\cC^\bullet$ is isomorphic to the class of $\Set \circ \cB'$-enriched trees.

Let $\gamma$ be a weighting on the species $\cB$, and $\kappa$ the weighting on the species $\Set \circ \cB'$ that assigns to any collection of derived blocks the product of the individual $\gamma$-weights. The weighting $\omega$ on the species $\cC^\bullet$ and $\cC$, that corresponds to $\kappa$ as in Equation~\ref{eq:omw}, assigns to any connected graph $C$ from this class the weight
\[
\omega(C) = \prod_B \gamma(B)
\]
with the index $B$ ranging over all blocks of the graph $C$. We let $\mathbf{w} =(\omega_k)_k$ denote the weight sequence given by 
\[
\omega_k = |(\Set \circ \cB')[k]|_\kappa / k!.
\] 

In the following we are going to focus on the random connected graph $\mC_n^\omega$ drawn from the set $\cC[n]$ with probability proportional to its $\omega$-weight. It is no loss of generality to consider connected graphs, as we are going to observe in Theorem~\ref{te:discon} below.

\paragraph*{Giant connected components in disconnected graphs}

Let $\cG^\upsilon$ denote the weighted species of all (possibly disconnected) graphs such that the weight of each graph is the product of $\omega$-weights of its connected components. Similarly to the connected case, we may consider the random graph $\mG_n^\upsilon$ that gets sampled from the set $\cG[n]$ with probability proportional to its weight. The following result shows in complete generality that it is not a restriction to focus our studies on connected graphs, as "almost all" properties of the random graph $\mC_n^\omega$ carry over automatically to the random graph $\mG_n^\upsilon$.

\begin{theorem}
	\label{te:discon}
	The largest connected component of the random graph $\mG_n^\upsilon$ has size \[K_n = n + O_p(1)\] as $n$ becomes large. Up to relabelling, it is distributed like the randomly sized random connected graph $\mC_{K_n}^\omega$.
\end{theorem}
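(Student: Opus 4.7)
The plan is to recognize that $\cG^\upsilon \simeq \Set \circ \cC^\omega$, so drawing $\mG_n^\upsilon$ from $\cG[n]$ with probability proportional to its $\upsilon$-weight amounts to a Gibbs partition of $[n]$ into $\omega$-weighted connected components. The task thus reduces to applying the general Gibbs partition theory of Subsection~\ref{sec:gibbs} in two complementary regimes determined by the type of the weight-sequence $\mathbf{w}$.

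In the analytic regime where $\mathbf{w}$ has type I or II, the series $\cC^\omega(z)$ has positive radius of convergence $\rho_\cC$ with $\cC^\omega(\rho_\cC)<\infty$. Since $\cC^\bullet \simeq \cA_{\Set \circ \cB'}$ is a species of enriched trees, Lemma~\ref{le:subexp} ensures that the coefficient sequence of the pointed series lies in a subexponential class $\mathscr{S}_d$ with $d=\spa(\mathbf{w})$, from which one derives the analogous property for $\cC^\omega(z)/z$ after accounting for the index shift between labelled and vertex-pointed counts. Lemma~\ref{le:gibbs} applied to the composition $\Set \circ \cC^\omega$ then provides, along each relevant arithmetic subsequence, a total variation coupling $d_{\textsc{TV}}(\mG_n^\upsilon, \hat{\mS}_n)\to 0$ where $\hat{\mS}_n$ is obtained from Definition~\ref{def:convergent}: one samples a $\mathbb{P}_{(\Set)' \circ \cC^\omega, \rho_\cC}$-distributed structure $\hat{\mS}$ and substitutes the $*$-atom with a connected graph drawn from $\cC[n-|\hat{\mS}|]$ with probability proportional to its $\omega$-weight. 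Since the Boltzmann parameter lies in the interior of convergence, $|\hat{\mS}|$ has finite moments, hence $n-K_n = O_p(1)$, and by construction the giant component is, up to relabelling, distributed as $\mC_{K_n}^\omega$.

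In the superexponential regime where $\mathbf{w}$ has type III, the series $\cC^\omega(z)$ has zero radius of convergence and Lemma~\ref{le:gibbs} no longer applies. Instead we appeal to Lemma~\ref{le:supen} for the enriched-tree species $\cA_{\Set \circ \cB'}^\omega \simeq \cC^\bullet$ to furnish condition~\eqref{eq:starter} for the sequence $a_n = [z^n]\cC^\bullet(z) = n\,[z^n]\cC^\omega(z)$, and then pass to the unpointed coefficients $c_n = [z^n]\cC^\omega(z)$ by comparing $\sum c_{i_1}\cdots c_{i_k}$ to $\sum a_{i_1}\cdots a_{i_k}/(i_1\cdots i_k)$ on the constrained index set, using the recursive identity $\cC^\omega \simeq \cX\cdot(\Set\circ\cB')(\cC^\omega)$ to control the ratios $c_{n-1}/c_n$. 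Theorem~\ref{te:need}, applied with $\cF^\upsilon = \Set$ and $\cG^\gamma = \cC^\omega$, then delivers a deterministic constant $n_0$ such that with probability tending to one the graph $\mG_n^\upsilon$ contains a component of size at least $n-n_0$, giving $n - K_n = O_p(1)$ in this regime as well. The distributional identification of the giant component is once more immediate: conditioning on the vertex partition of $[n]$ and on which class is the unique largest one, independence of weights across components forces the restriction of $\mG_n^\upsilon$ to that class to be sampled proportionally to $\omega$ from the connected graphs on its vertex set, which relabelled is precisely $\mC_{K_n}^\omega$. The main technical obstacle is the index-shift issue in the superexponential case, namely transferring the subexponential-type condition from the pointed sequence $nc_n$ supplied by Lemma~\ref{le:supen} to the unpointed sequence $c_n$ required by Theorem~\ref{te:need}; once this is handled the rest of the argument is routine bookkeeping within the framework of Subsection~\ref{sec:gibbs}.
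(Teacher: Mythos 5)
Your proposal follows essentially the same route as the paper: recognize $\cG^\upsilon \simeq \Set \circ \cC^\omega$, split by type, handle the analytic types I/II via convergent Gibbs partitions, and handle type III via Lemma~\ref{le:supen} together with Theorem~\ref{te:need}. For types I/II the paper simply cites \cite[Thm.~4.2 and Section~5]{Mreplaceme}, so your reconstruction via Lemmas~\ref{le:subexp} and~\ref{le:gibbs} is a plausible unpacking of that reference rather than a different strategy.

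The one place where your sketch drifts is the index-shift step in the type~III case. You correctly write $\sum c_{i_1}\cdots c_{i_k} = \sum a_{i_1}\cdots a_{i_k}/(i_1\cdots i_k)$ with $a_n = n c_n$, but then invoke the recursion $\cC^\omega \simeq \cX\cdot(\Set\circ\cB')(\cC^\omega)$ to ``control the ratios $c_{n-1}/c_n$''. That is not needed and does not directly yield what you want. All that is required is to bound the denominator on the constrained index set: with $i_1+\cdots+i_k=n$ and $1 \le i_j < n-(k-1)$, the product $i_1\cdots i_k$ is at least of order $n$ (its minimum is attained near a configuration with $k-2$ ones, a two, and an entry of size $n-k$). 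Hence
\[
\sum_{\substack{i_1 + \cdots + i_k = n \\ 1 \le i_1,\dots,i_k < n-(k-1)}} c_{i_1}\cdots c_{i_k} \;\le\; O(n^{-1})\!\!\sum_{\substack{i_1 + \cdots + i_k = n \\ 1 \le i_1,\dots,i_k < n-(k-1)}}\!\! a_{i_1}\cdots a_{i_k} \;=\; o\!\left(n^{-1}a_{n-(k-1)}\right) \;=\; o\!\left(c_{n-(k-1)}\right),
\]
which is precisely condition~\eqref{eq:starter} for the unpointed coefficients, and Theorem~\ref{te:need} with $\cF^\upsilon = \Set$, $\cG^\gamma = \cC^\omega$ then gives the constant $n_0$. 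Your identification of the distribution of the giant component conditional on its size is correct and is exactly the observation the paper records immediately after Theorem~\ref{te:need}.
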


McDiarmid~\cite{MR2507738} observed such a result for uniform random graphs from proper addable minor-closed classes, together with convergence of the small fragments toward a Boltzmann limit graph. These results where later recovered and extended in Stufler~\cite[Thm. 4.2 and Section 5]{doi:10.1002/rsa.20771} to random block-weighted classes with analytic generating functions. In order to complete the picture, we have to treat the case with superexponential weights. Our proof goes by applying  general results on Gibbs partitions that we established in Theorem~\ref{te:need} and Lemma~\ref{le:supen}. Corollary~\ref{co:ne} also allows us to make the following observation.
\begin{corollary}
	\label{co:discon}
	If the weight-sequence $\mathbf{w}$ has type III, then there is an integer $n_0\ge 1$ such that the largest connected component in the random graph $\mG_n^\upsilon$ has with high probability at least $n-n_0$ vertices. If the complete graph with $2$ vertices has positive $\omega$-weight, then the random graph $\mG_n^\omega$ is connected with probability tending to $1$ as $n$ becomes large.
\end{corollary}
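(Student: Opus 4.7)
The plan is to view $\mG_n^\upsilon$ as a Gibbs partition of type $\Set \circ \cC^\omega$ and apply Theorem~\ref{te:need} and Corollary~\ref{co:ne} with $\cF^\upsilon = \Set$ and $\cG^\gamma = \cC^\omega$. The $\cG$-components in the Gibbs sense are then precisely the connected components of the random graph.

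For the first claim I would verify each hypothesis of Theorem~\ref{te:need}. The series $\exp(z)$ is entire and in particular has positive radius of convergence. The type~III assumption on $\mathbf{w}$ gives $\rho_\phi = 0$, hence $\tau = 0$, and by Equation~\eqref{eq:z1} both $\cC^{\bullet,\omega}(z)$ and $\cC^\omega(z)$ have radius of convergence zero. For the lattice condition I would take $d = \spa(\mathbf{w})$: combining \cite[Cor.~15.6]{MR2908619} with the coupling in Lemma~\ref{le:coupling} yields that the coefficient $c_i := [z^i]\cC^\omega(z)$ is positive precisely when the partition function $Z_i$ of the associated simply generated tree is, so it is positive for all sufficiently large $i$ with $i \equiv 1 \pmod{d}$.

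The main technical obstacle is that Lemma~\ref{le:supen} directly delivers Equation~\eqref{eq:starter} only for the enriched tree coefficients $\alpha_i := [z^i]\cC^{\bullet,\omega}(z) = i c_i$, not for the unpointed coefficients $c_i$. I would bridge this with the elementary estimate $i_1 \cdots i_k \ge \max_j i_j \ge n/k$, valid whenever $i_1 + \cdots + i_k = n$ with all $i_j \ge 1$. Combined with the lemma, for each fixed $k \ge 2$ this gives
\[
\sum_{\substack{i_1+\cdots+i_k = n \\ 1 \le i_j < n-(k-1)}} c_{i_1}\cdots c_{i_k} \;\le\; \frac{k}{n}\sum i_1\cdots i_k\, c_{i_1}\cdots c_{i_k} \;=\; \frac{k}{n}\cdot o\bigl((n-k+1)c_{n-k+1}\bigr) \;=\; o(c_{n-(k-1)}),
\]
which is exactly Equation~\eqref{eq:starter} for $(c_i)_i$. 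Theorem~\ref{te:need} then yields the first assertion: there is an integer $n_0 \ge 1$ such that the largest connected component of $\mG_n^\upsilon$ has at least $n - n_0$ vertices with high probability.

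For the second claim I would apply Corollary~\ref{co:ne}, which additionally requires $[z^1]\exp(z) > 0$, $c_1 > 0$ and $c_2 > 0$. The first is immediate. The unique one-vertex connected graph has empty block multiset so its $\omega$-weight equals the empty product, giving $c_1 = 1$. Finally, the only connected graph on two vertices is $K_2$, which is itself a single block, so $c_2 = \omega(K_2)/2 = \gamma(K_2)/2$, which is strictly positive by the standing hypothesis that $K_2$ carries positive $\omega$-weight. Corollary~\ref{co:ne} then gives that $\mG_n^\upsilon$ is connected with probability tending to one, completing the proof.
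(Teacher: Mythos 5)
Your proposal is correct and follows essentially the same route as the paper: the paper derives the first claim inside the proof of Theorem~\ref{te:discon} by applying Lemma~\ref{le:supen} to $(\cC^\bullet)^\omega$ and transferring \eqref{eq:starter} to the unrooted coefficients $c_n = a_n/n$ via the same $\frac{1}{i_1\cdots i_k}=O(n^{-1})$ bound you use (pigeonhole on $\max_j i_j \ge n/k$), then invokes Theorem~\ref{te:need}; the second claim is the same appeal to Corollary~\ref{co:ne} after noting $c_2 = \gamma(K_2)/2 > 0$. You merely spell out the remaining hypotheses ($c_1 = 1$, $[z^1]\exp(z)>0$, the lattice support via $Z_n$) that the paper leaves implicit.
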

This generalizes \cite[Ex. II.15]{MR2483235}, where a direct counting argument was used to show that almost all labelled graphs are connected.

\paragraph*{Local convergence - the infinite spine case} 
We are going to apply Theorems~\ref{te:local} and \ref{te:strengthend} to the random graph $\mC_n^{\omega}$. If the weight-sequence $\mathbf{w}$ has type I, then the tree $\hat{\cT}$ is locally finite and the decorated limit tree $(\hat{\cT}, \hat{\beta})$ corresponds, according to the bijection discussed in Section~\ref{sec:bijblock}, to a locally finite graph \label{pp:ch}$\hat{\mC}$. For each pair $(u_i, u_{i+1})$ of consecutive spine vertices of $\hat{\cT}$ there is a unique block in that graph containing $u_i$ and $u_{i+1}$. Hence, in the type I case, the limit $\hat{\mC}$ is shaped like an infinite sequence of doubly rooted blocks, where at each vertex a random finite graph is inserted. See Figure~\ref{fi:infgraph}. The doubly rooted blocks are independent and identically distributed. Conditionally on this infinite sequence of blocks, the random finite graphs inserted at each vertex are also independent and identically distributed. See Remark~\ref{re:distribution} below for a detailed justification and description of the individual distributions.

\begin{figure}[t]
	\centering
	\begin{minipage}{1.0\textwidth}
		\centering
		\includegraphics[width=0.5\textwidth]{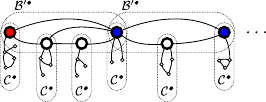}
		\caption{Illustration of the infinite limit graph  $\hat{\mC}$ for weight sequences with type I.}
		\label{fi:infgraph}
	\end{minipage}
\end{figure}

\begin{theorem}[Benjamini--Schramm convergence of random connected graphs]
	\label{te:loclunl} \text{ }
	\begin{enumerate}
		\item
	If the weight-sequence $\mathbf{w}$ has type I, then the random graph $\mC_n^\omega$ converges as $n$ becomes large in the Benjamini--Schramm sense toward the limit graph $\hat{\mC}$. Slightly stronger, the $k$-neighbourhoods with respect to the block distance $d_{\textsc{BLOCK}}$ around a uniformly at random drawn vertex $v_n$ converge. That is,
	\[
		\lim_{n \to \infty} \Pr{U_k(\mC_n^\omega,v_n) \in \cE} = \Pr{U_k(\hat{\mC}) \in \cE}
	\]
	for any set $\cE$ of finite unlabelled rooted graphs.
	\item If the weight-sequence $\mathbf{w}$ has type I$\alpha$, then we establish a significantly stronger form of convergence. For any sequence of non-negative integers $k_n = o(n^{1/2})$, the total variation distance of the $k_n$-block-neighbourhoods converges to zero:
	\[
		d_{\textsc{TV}}(U_{k_n}(\mC_n^\omega,v_n), U_{k_n}(\hat{\mC})) \to 0.
	\]
	\end{enumerate}
\end{theorem}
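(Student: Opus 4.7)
The plan is to translate the convergence of enriched fringe subtrees from Theorem~\ref{te:thmben} into convergence of block-metric neighbourhoods in the associated graphs. First I would use the coupling from Lemma~\ref{le:coupling} to identify $\mC_n^\omega$ with the graph decoded from the enriched tree $(\cT_n, \beta_n)$ via the block-decomposition isomorphism $\cC^\bullet \simeq \cX \cdot \Set(\cB'(\cC^\bullet))$. Since the vertices of $\cT_n$ are in canonical bijection with those of $\mC_n^\omega$, drawing a uniformly random vertex $v_0$ of $\cT_n$ amounts to drawing a uniformly random vertex of the graph, so the pair $(\mC_n^\omega, v_0)$ has the law of a rooted random connected graph drawn with probability proportional to its $\omega$-weight. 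The distinction between the tree-root used by the coupling and the uniform random vertex is irrelevant for block-neighbourhoods, which are intrinsic to the unrooted graph.

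Next I would verify that the rooted graph $U_k(\mC_n^\omega, v_0)$ is a deterministic function of the enriched fringe subtree $\mH_k = f_k((\cT_n, \beta_n), v_0)$ at the $k$th ancestor of $v_0$. Each edge of $\mC_n^\omega$ lies in a unique block, which is encoded in the $\cB'$-structure attached to the block's topmost tree-vertex. Crossing at most $k$ blocks in the graph corresponds to climbing at most $k$ levels in the block-decomposition tree before possibly descending, so every vertex and every edge in $U_k(\mC_n^\omega, v_0)$ is contained in the subtree rooted at $v_k$, and all the incidences are read off the $\cB'$-structures decorating that subtree — precisely the data recorded in $\mH_k$. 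I shall call the resulting set-theoretic map $g$.

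The same decoding extends to the infinite pointed enriched tree $(\cT^*, \beta^*)$: in the type I regime it is almost surely locally finite, and unwinding its $\Set \circ \cB'$-structures block-by-block produces the rooted connected locally finite graph $\hat{\mC}$ depicted in Figure~\ref{fi:infgraph}. In particular $U_k(\hat{\mC}) = g(f_k(\cT^*, \beta^*))$ for every $k$. Part (1) of the theorem then follows from Theorem~\ref{te:thmben}(1): the convergence $\mH_k \convdis f_k(\cT^*, \beta^*)$ on the discrete space of finite pointed enriched trees is transported through $g$ to give $U_k(\mC_n^\omega, v_0) \convdis U_k(\hat{\mC})$. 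For part (2) I would invoke Theorem~\ref{te:thmben}(2) together with the contraction property of total variation under deterministic maps,
\[
d_{\textsc{TV}}(U_{k_n}(\mC_n^\omega, v_0), U_{k_n}(\hat{\mC})) \le d_{\textsc{TV}}(\mH_{k_n}, f_{k_n}(\cT^*, \beta^*)) \to 0,
\]
valid for any $k_n = o(\sqrt n)$.

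The main technical point — and really the only one beyond invoking the already-established Theorem~\ref{te:thmben} — will be a careful bookkeeping of the block-tree dictionary: checking that no block-edge entering $U_k(v_0)$ is encoded by a $\cB'$-structure lying above $v_k$, and conversely that every block visible in the fringe data $\mH_k$ relevant for $U_k$ truly arises from a graph block of $\mC_n^\omega$ at block-distance at most $k$. The upward-climb argument sketched above handles both directions, and once this is pinned down the transfer of convergence is immediate.
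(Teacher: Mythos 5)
Your proposal takes a genuinely different route from the paper. The paper proves Theorem~\ref{te:loclunl} by applying Theorems~\ref{te:local} and \ref{te:strengthend} directly: the coupling $\cC^\bullet \simeq \cX\cdot\Set(\cB'(\cC^\bullet))$ roots the tree $\cT_n$ at the distinguished vertex of $\mA_n^\cR\in\cC^\bullet[n]$, and since $\omega$ is rerooting-invariant this vertex is already uniformly distributed on $V(\mC_n^\omega)$. Because $d_{\textsc{block}}(u,o)=\he_{\cT_n}(u)$, the block $k$-neighbourhood of the tree-root is a deterministic function of the trimmed tree $(\cT_n,\beta_n)^{[k]}$, and the theorem drops out of the convergence $(\cT_n,\beta_n)^{[k]}\convdis(\hat{\cT},\hat{\beta})^{[k]}$ (respectively, the total-variation convergence of trimmings for type I$\alpha$). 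You instead select a second uniform vertex $v_0$ and invoke Theorem~\ref{te:thmben}, routing through extended fringe subtrees and the backward-spine tree $(\cT^*,\beta^*)$. Both strategies work, but the paper's is the shorter one since the root of $\cT_n$ already \emph{is} the random vertex; your route is the one that is needed for non--rerooting-invariant models such as outerplanar maps rooted at the root-edge, where it is the only way to get the Benjamini--Schramm limit.

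There is, however, one genuine gap. You assert that ``unwinding'' $(\cT^*,\beta^*)$ block-by-block produces \emph{the} graph $\hat{\mC}$ of Figure~\ref{fi:infgraph}, but $\hat{\mC}$ is defined in the paper as the decoding of $(\hat{\cT},\hat{\beta})$, which is a \emph{different} infinite enriched tree: $\hat{\cT}$ has a forward spine with size-biased offspring, $\cT^*$ has a backward-growing spine. Their decodings do coincide in distribution for block-weighted graphs, but this is not automatic and is precisely the point where you would have to do work; the analogous identification \emph{fails} for outerplanar maps, where the paper carefully distinguishes the local-weak limit $\hat{\mO}$ from the Benjamini--Schramm limit $\hat{\mO}_*$ in Remarks~\ref{re:mapdistr} and~\ref{re:mapdistr2}. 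To close the gap you would need either a direct Boltzmann-sampler computation showing that the blocks seen along the spine of the decoded graph have the same joint law in both pictures (the calculation in the proof of Remark~\ref{re:distribution}, but performed twice and compared), or to invoke uniqueness of weak limits \emph{after} already knowing the convergence claimed in the theorem — which is circular. Either spell out the matching of laws explicitly, or replace the appeal to $\hat{\mC}$ by stating your limit as the graph decoded from $(\cT^*,\beta^*)$ and then separately verifying that its block-chain description agrees with Remark~\ref{re:distribution}.
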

Item {\em (2)} of Theorem~\ref{te:loclunl} applies in particular to the special case of a uniform random labelled graph $\mC_n$ from a subcritical graph class in the sense of Drmota, Fusy, Kang, Kraus, and Ru{\'e} \cite[Ch. 4.1]{MR2873207}. In our setting this corresponds to  a subcase of type I$a$,  which is itself a proper subcase of type I$\alpha$. 

Hence, Theorem \ref{te:loclunl} recovers the asymptotic degree distribution of random vertices in random graphs from subcritical classes, which was established by Bernasconi, Panagiotou and Steger \cite{MR2538796,MR2534261} using different methods. Our contribution in this regard is the probabilistic description of the limit distribution as the root degree of a limit graph constructed from the classical Kesten tree.

We emphasize that the present work focus on random labelled graphs. As stated in the introduction, random unlabelled enriched trees and special cases are treated in the author's work~\cite{StEJC2018}.

Local weak convergence of the uniform random labelled graph $\mC_n$ from a subcritical graph class was verified independently in~\cite[Lem 3.3]{2015arXiv151203572G}. The proof given there is by a generalization of an enumerative argument  by Aldous~\cite[Thm. 2, Eq. (11)]{MR1085326} for the special case where $\mC_n$ is the uniform labelled tree. 


Care has to be taken when trying to strengthen the form of convergence in  Theorem~\ref{te:loclunl}. First, it is clear that the random graph $\mC_n$  does \emph{not} converge almost surely in the Benjamini--Schramm sense. The sequence of random graphs $(\mC_n)_{n \ge 1}$ is independent. A well-known application of Kolmogorov's 0-1 law shows that any almost surely convergent sequence of independent random variables with values in a Polish space converges to a constant. In the context of local convergence, the target metric space is given by the neighbourhood metric on the collection of isomorphism classes of rooted locally finite connected graphs. Hence if the independent sequence of random graphs $(\mC_n)_{n \ge 1}$ converges almost surely, then the limit graph must follow a degenerate distribution. But the distributional limit $\hat{\mC}$ of (the uniformly rooted version of) $\mC_n$ is clearly not a constant graph.  There are examples of almost surely convergent \emph{dependent} sequences $(G_n)_{n \ge 1}$ of random trees~\cite{MR2060629} and random maps~\cite{MR3254733}.  There the goal is to establish a growth procedure that tells us explicitly (not just a mere proof of existence of a coupling as in Skorokhod's representation theorem) how to construct $G_{n+1}$ from $G_n$  in a clever way to get almost sure convergence.

A strengthened form of convergence may be stated in terms of random probability measures. Recall from Section~\ref{sec:lowe} that $\mathbb{B}$ denotes the Polish space of rooted locally finite connected graphs. We may consider the corresponding Polish space $\mathscr{M}(\mathbb{B})$ of probability measures on the Borel $\sigma$-field $\mathscr{F}$ of $\mathbb{B}$, equipped with the weak convergence topology. Let $\mathbb{P}_n$ denote  the random  probability measure (r.p.m. for short) on $\mathscr{F}$ that corresponds to the $n$ equally likely rooted versions of the graph $\mC_n$. That is, $\mathbb{P}_n$ is a \emph{random} element of $\mathscr{M}(\mathbb{B})$. It is not hard to see that \[
\text{$\mathbb{P}_n$ converges almost surely to a \emph{deterministic} probability measure $\hat{\mathbb{P}} \in \mathscr{M}(\mathbb{B})$},\] given by the law of the Benjamini--Schramm limit $\hat{\mC}$ of $(\mC_n)_{n \ge 1}$: As stated in~\eqref{eq:cone2}, this is equivalent to requiring that the random variable
\[
	\mathbb{P}_n(V_k^{-1}(G^\bullet)) = \frac{|\{v \in \mC_n \mid V_k(\mC_n, v) \simeq G^\bullet\}|}{n}
\]
converges almost surely to the constant $\hat{\mathbb{P}}(V_k^{-1}(G^\bullet)) = \Pr{V_k(\hat{\mC})=G^\bullet}$ for each integer $k \ge 1$ and each of the countably many finite connected rooted graphs $G^\bullet$ with radius at most $k$. And this follows from Theorem~\ref{te:thmben}, which transfers corresponding  bounds for the number of fringe subtrees in the tree $\cT_n$.

\begin{remark}[The distribution of the limit object] 
	\label{re:distribution}
	Recall the notation from Sections~\ref{sec:pretree} and \ref{se:locosi}. The distribution of the limit graph $\hat{\mC}$ may equivalently be described as follows. Compare with Figure~\ref{fi:infgraph}.
	\begin{enumerate}
		\item Let $(\mB'^\bullet_i)_{i \ge 1}$ denote an independent identically distributed sequence of random $\cB'^\bullet$-objects following a weighted Boltzmann distribution with parameter $\tau$.
		For each $i$, identify the pointed vertex of $\mB'^\bullet_i$ with the $*$-vertex of $\mB'^\bullet_{i+1}$ in order to form an infinite block-chain $\mB$. We consider $\mB$ as rooted at the $*$-vertex of $\mB'^\bullet_1$. 
		\item Let $\mC^\bullet$ be a random $\cC^\bullet$-object following a weighted Boltzmann distribution with parameter $\tau / \phi(\tau)$.
		For each vertex $v$ of the block-chain $\mB$ (including the root-vertex), take an independent copy of $\mC^\bullet$ and identify its root with the vertex $v$.
	\end{enumerate}
	The block neighbourhood $U_1(\hat{\mC})$ of the limit graph is distributed as follows.
	\begin{enumerate}
		\item Let $\mB'^\bullet$ denote an independent identically distributed sequence of random $\cB'^\bullet$-objects following a weighted Boltzmann distribution with parameter $\tau$.
		\item Draw a random integer $K \ge 0$ according to a Poisson distribution with parameter $(\cB'^\bullet)^\gamma(\tau)$.
		\item Let $\mB'_1, \ldots, \mB'_K$ be independent (conditionally on $K$) random $\cB'$-object that follow a weighted Boltzmann distribution with parameter $\tau$.
		\item Glue the $K+1$ blocks together at their $*$-vertices, and declare the resulting vertex as its root.
	\end{enumerate}
	In particular, the distribution of the degree $d(\hat{\mC})$ is given by
	\begin{align}
	\label{eq:rootdegree}
	d(\hat{\mC}) \eqdist d(\mB'^\bullet) + \sum_{i=1}^K d(\mB'_i).
	\end{align}
\end{remark}

The description of the root degree of the limit graph in \eqref{eq:rootdegree} is very useful, as the involved constants and distributions may be determined explicitly for many graph classes. For the specific example of random outerplanar graphs, we may recover the  expressions for the limit probabilities calculated by Bernasconi and Panagiotou \cite[Cor. 1.2]{MR2538796} in this way.

\begin{corollary}[Combinatorial applications]
	\label{co:cntemb}
	If $\mathbf{w}$ has type I$a$, then the degree $d(\mC_n^\omega,v_n)$ of a uniformly at random chosen point $v_n$ in $\mC_n^\omega$ is arbitrarily high uniformly integrable. Consequently, results by Kurauskas~\cite[Thm. 2.1]{2015arXiv150408103K} and Lyons~\cite[Thm. 3.2]{MR2160416} yield laws of large numbers for subgraph counts and spanning tree counts, where the limiting constant is expressed in terms of the limit graph $\hat{\mC}$.
\end{corollary}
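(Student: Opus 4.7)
My plan is to reduce the degree of a vertex in $\mC_n^\omega$ to outdegrees in the coupled simply generated tree $\cT_n$ supplied by Lemma~\ref{le:coupling}, establish uniform moment bounds for these outdegrees, and feed the resulting uniform integrability into Lemmas~\ref{le:kurauskas} and~\ref{le:lyons} together with the Benjamini--Schramm limit $\hat{\mC}$ provided by Theorem~\ref{te:loclunl}.

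First I would read off the pointwise degree bound from the block decomposition $\cC^\bullet \simeq \cX \cdot \Set(\cB'(\cC^\bullet))$ of Section~\ref{sec:bijblock}. For a non-root vertex $v$ of $\mC_n^\omega$, the blocks incident to $v$ fall into two categories: the block $B'$ that contains $v$ in the $\Set \circ \cB'$-structure of the parent $u$ of $v$ in $\cT_n$, whose non-$*$ vertices are all offspring of $u$ and hence satisfy $|B'|-1 \le d^+_{\cT_n}(u)$; and the blocks $B$ from $v$'s own $\Set \circ \cB'$-structure, for which $\sum_B (|B|-1) = d^+_{\cT_n}(v)$. Since any block $B$ contributes at most $|B|-1$ to the degree of $v$, summing yields
\[
d_{\mC_n^\omega}(v) \le d^+_{\cT_n}(v) + d^+_{\cT_n}(p(v)),
\]
with the parent contribution dropped when $v$ is the root.

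Averaging over a uniformly random vertex $v_n$ and using the identity $\sum_{v \ne \text{root}} d^+_{\cT_n}(p(v))^h = \sum_u d^+_{\cT_n}(u)^{h+1}$, I obtain
\[
\mathbb{E}[d_{\mC_n^\omega}(v_n)^h] \le C_h \bigl( \mathbb{E}[d^+_{\cT_n}(v_n)^h] + \mathbb{E}[d^+_{\cT_n}(v_n)^{h+1}] \bigr) \quad \text{for every } h \ge 1.
\]
In the type $\mathrm{I}a$ regime implicit in the statement, $\cT_n$ is distributed as a critical Galton--Watson tree of size $n$ with offspring law $\pi_k = \tau^k \omega_k / \phi(\tau)$, and since $\tau < \rho_\phi$ this law has exponential moments. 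The outdegree of a uniform vertex is then distributed as $\xi_1$ conditioned on $\xi_1 + \cdots + \xi_n = n - 1$; Lemma~\ref{le:llt1dim} supplies both the exact asymptotic $\mathbb{P}(\xi_1+\cdots+\xi_n=n-1) \sim c/\sqrt{n}$ and a uniform upper bound $\mathbb{P}(\xi_2 + \cdots + \xi_n = m) = O(1/\sqrt{n})$, so a short conditioning argument decomposing over the value of $\xi_1$ yields $\mathbb{E}[d^+_{\cT_n}(v_n)^k] \le C_k \mathbb{E}[\xi^k] < \infty$ uniformly in $n$ for every $k \ge 1$.

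Combining these estimates with the Benjamini--Schramm convergence of Theorem~\ref{te:loclunl} verifies every hypothesis of Lemma~\ref{le:kurauskas} for every $h$, and the $h = 1$ case in particular delivers the bounded expected average degree needed by Lemma~\ref{le:lyons}; the laws of large numbers for subgraph counts and for the number of spanning trees then follow immediately. The main technical obstacle is the uniform moment bound for the outdegrees in $\cT_n$: its derivation is routine once exponential moments of $\xi$ are available, but the argument genuinely relies on the exponential-moment regime, and weakening to type $\mathrm{I}\beta$ or a type $\mathrm{I}\alpha$ situation with only polynomial moments would at best give uniform integrability for a bounded range of powers.
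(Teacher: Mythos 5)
Your proposal is correct, but it takes a genuinely different route from the paper. The paper's proof first reduces to the root degree of the coupled enriched tree: because the weight $\omega(C)=\prod_B\gamma(B)$ is a product over unrooted blocks, the random rooted graph $\mA_n^{\cR}$ is distributed as $\mC_n^\omega$ rooted at a uniformly random vertex, so the degree $d(\mC_n^\omega,v_n)$ is equal in law to the degree of the root vertex, which is bounded by the root outdegree $d^+_{\cT_n}(o)$. It then bounds moments of $d^+_{\cT_n}(o)$ by combining the crude tail bound $\Pr{d^+_{\cT_n}(o)\ge x}\le \Pr{|\cT|=n}^{-1}\Pr{\xi\ge x}\le Cn^{3/2}e^{-cx}$ (cutting off at $x=\log^2 n$) with the ratio control from the proof of Theorem~\ref{te:strengthend} to identify the bulk behaviour with that of $\hat\xi$. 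You avoid rerooting invariance entirely: you bound $d_{\mC_n^\omega}(v)\le d^+_{\cT_n}(v)+d^+_{\cT_n}(p(v))$ pointwise, sum and re-index via $\sum_{v\ne o}d^+_{\cT_n}(p(v))^h=\sum_u d^+_{\cT_n}(u)^{h+1}$, and then compare the conditioned law of a uniform vertex outdegree with the unconditioned law of $\xi$ through the local limit theorem, getting $\Pr{\xi_1=k\mid \sum_i\xi_i=n-1}\le C\Pr{\xi=k}$ uniformly. Both approaches are valid in type I$a$ and hinge on the exponential moments of $\xi$. Your route is more self-contained and does not rely on the rerooting symmetry, which is a special feature of choosing the $\kappa$-weights to depend only on the unrooted blocks and would fail for a general weighting on $\Set\circ\cB'$; it also sidesteps the machinery borrowed from the proof of Theorem~\ref{te:strengthend}. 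The cost is that you need the explicit local limit theorem comparison and the $(h+1)$-st moment of $\xi$ to bound the $h$-th moment of the graph degree, whereas the paper works directly with the $p$-th moment of the root outdegree.
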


We close the section on the infinite spine case with the following remark regarding a more general model of random graphs, where the rerooting symmetry may break.

\begin{remark}
	We defined the weights governing the distribution of $\mC_n^\omega$ in such a way, that there is no difference between taking an $n$-vertex rooted graph sampled with probability proportional to its $\omega$-weight, and $\mC_n^\omega$ rooted at a uniformly at random chosen location. We may break this rerooting symmetry easily, by considering more general choices of $\kappa$-weights on $\cR = \Set \circ \cB'$. If $\mathbf{w}$ has type I, then the random rooted graph corresponding to $\mA_n^\cR$ has a local weak limit by  Theorem~\ref{te:local} and a Benjamini--Schramm limit by Theorem~\ref{te:thmben}, and the distributions of the two limits may very well differ.
\end{remark}

\paragraph*{Local convergence - the finite spine case}

If the weight-sequence $\mathbf{w}$ has type II, then $\hat{\cT}$ has an almost surely finite spine ending in a vertex with infinite degree. Each offspring of this vertex becomes the root of an independent copy of a subcritical Galton--Watson tree. The random length of the spine follows a geometric distribution, according to the discussion in Section~\ref{sec:types}. As the infinite offspring set of the tip of the spine carries no additional structures, there is a priori no natural interpretation in terms of sets of blocks. If we delete all enriched fringe subtrees dangling from the tip of the spine in the enriched limit tree $(\hat{\cT}, \beta)$, then we are left with an almost surely finite enriched tree that may be interpreted as a rooted random graph $\hat{\mH}^\bullet$ according to the bijection discussed in Section~\ref{sec:bijblock}. If the spine has length zero, then there is no $\cR$-structure assigned to the root of $\hat{\cT}$, and rather than setting $\hat{\mH}^\bullet$ to a single vertex in this case, we let it assume some placeholder value (for example, the empty set) that is different from all graphs. Theorem~\ref{te:local} shows how this random object encodes {\em some} information about asymptotic local properties of the random graph $\mC_n^\omega$.

\begin{corollary}[Convergence of block neighbourhoods in the cases II and III]
	\label{co:prel}
	Let $v_n$ be a uniformly at random drawn vertex of the graph $\mC_n^\omega$. Then for any rooted graph $G^\bullet$ and any integer $k \ge 1$ the following holds.
	\begin{enumerate}
		\item If the weight-sequence $\mathbf{w}$ has type II, then
		\begin{align}
			\label{eq:limit}
			\lim_{n \to \infty} \Pr{U_k(\mC_n^{\omega}, v_n) \simeq G^\bullet} = \Pr{U_k(\hat{\mH}^\bullet) \simeq G^\bullet}.
		\end{align}
		\item If the weight-sequence $\mathbf{w}$ has type III, then
		\begin{align}
			\lim_{n \to \infty} \Pr{U_k(\mC_n^{\omega}, v_n) \simeq G^\bullet} = 0.
		\end{align}
	\end{enumerate}
\end{corollary}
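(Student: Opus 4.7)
By the rerooting symmetry inherent in the labelled setting, sampling $(\mC_n^\omega, v_n)$ with $v_n$ uniform in the vertex set of $\mC_n^\omega$ is equivalent to sampling the random enriched tree $\mA_n^\cR$ for $\cR = \Set \circ \cB'$ and interpreting it as a rooted graph via the bijection of Section~\ref{sec:bijblock}. Lemma~\ref{le:coupling} in turn identifies this (up to labelling) with the enriched plane tree $(\cT_n, \beta_n)$. Under the block decomposition, a tree-vertex at height $h$ corresponds to a graph-vertex at block-distance exactly $h$ from the root, and the blocks incident to that tree-vertex are encoded as the partition classes of the $\cR$-structure there. In particular, the isomorphism type of $U_k(\mC_n^\omega, v_n)$ is a deterministic function of the trimmed enriched tree $(\cT_n, \beta_n)^{[k]}$ — the tree pruned at height $k$ with $\cR$-structures retained at heights $0, 1, \ldots, k-1$.

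For Case~II I apply Theorem~\ref{te:local}, which yields $(\cT_n, \beta_n) \convdis (\hat{\cT}, \hat{\beta})$ in the compact Polish space $\fmA$. For a fixed finite rooted graph $G^\bullet$, the event $\{U_k(\mC_n^\omega, v_n) \simeq G^\bullet\}$ is a finite disjoint union of cylinder events on $\fmA$, each of which fixes a specific $\cR$-decoration on a particular finite subtree of $\VHT$. Each such cylinder is clopen, since the finite points of the coordinate space $X$ in \eqref{eq:spacex} are isolated in the one-point compactification. The portmanteau theorem then gives
\[
\lim_{n \to \infty} \Pr{U_k(\mC_n^\omega, v_n) \simeq G^\bullet} \;=\; \Pr{U_k((\hat{\cT}, \hat{\beta})^{[k]}) \simeq G^\bullet},
\]
where the limiting event is interpreted to be false on configurations containing the placeholder $\infty$. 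To match the form asserted in the corollary, I perform case analysis on the spine length $L$ of $\hat{\cT}$: when $L \ge k$ the height-$k$ pruning of $(\hat{\cT}, \hat{\beta})$ coincides with that of $\hat{\mH}^\bullet$, so their $U_k$-neighbourhoods agree; when $L < k$, the placeholder convention together with the divergence in probability of the outdegree at the condensation vertex in $\cT_n$ — which forces $|U_k(\mC_n^\omega, v_n)|$ to tend to infinity — ensures that neither side receives positive contribution.

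For Case~III, $\nu = 0$ gives $\hat{\xi} = \infty$ almost surely, so the root of $\hat{\cT}$ has infinite outdegree. Equivalently, by the condensation identity \eqref{eq:om}, the root outdegree of $\cT_n$ diverges in probability. Via the bijection this forces the size of $U_1(\mC_n^\omega, v_n)$ — and hence of $U_k(\mC_n^\omega, v_n)$ for every $k \ge 1$ — to tend to infinity in probability, so $\Pr{U_k(\mC_n^\omega, v_n) \simeq G^\bullet} \to 0$ for every fixed finite $G^\bullet$.

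The principal obstacle is the Case~II alignment when $L < k$: the limit enriched tree then carries the $\infty$-placeholder at tree-height $L$ inside the observation window, while $\hat{\mH}^\bullet$ instead truncates the tip to a leaf with empty $\cR$-structure. Verifying that the placeholder convention at the tip, the convention at spine-length zero, and the growth of the partition into blocks at the condensation vertex of $\cT_n$ conspire to kill the contributions on both sides exactly, so that the two probabilities really do coincide, is the delicate part of the argument.
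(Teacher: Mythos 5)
Your strategy — determine $U_k(\mC_n^\omega, v_n)$ from the height-$k$ trimming $(\cT_n, \beta_n)^{[k]}$, observe that $\{U_k \simeq G^\bullet\}$ is a finite union of clopen cylinder events in $\fmA$, and pass to the limit via Theorem~\ref{te:local} — is indeed the right route, and matches the paper's own remark that the corollary is a direct application of that theorem; your Case~III argument is also fine for $k \ge 1$. The genuine gap lies exactly in the step you flag as the delicate part. You assert that for $L < k$ ``neither side receives positive contribution'' and attribute this to the placeholder conventions, but the paper's placeholder for $\hat{\mH}^\bullet$ applies only at $L = 0$. On the event $\{1 \le L < k\}$ (which has probability $\nu - \nu^k > 0$ once $k \ge 2$), $\hat{\mH}^\bullet$ is by the paper's construction an honest finite rooted graph: the tip of the spine is simply turned into a leaf carrying the empty $\cR[0]$-structure. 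Consequently $U_k(\hat{\mH}^\bullet)$ is a bona fide finite rooted graph on this event and can be isomorphic to a fixed $G^\bullet$ with strictly positive probability, whereas the left-hand side genuinely vanishes there because the trimmed limit tree $(\hat{\cT}, \hat{\beta})^{[k]}$ carries the $\infty$-coordinate inside the observation window.

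To see that this is not a fixable bookkeeping detail, take $k=2$ and $G^\bullet = K_2$ rooted at one end. The relation $U_2(\mC_n^\omega, v_n) \simeq K_2$ forces both vertices of $K_2$ to lie in a single $K_2$-block with no further blocks on either side, i.e.\ $\mC_n^\omega = K_2$ itself, so the left side equals $0$ for all $n \ge 3$. On the right, $U_2(\hat{\mH}^\bullet) \simeq K_2$ occurs whenever the spine has length $L=1$ and the root's offspring count $\hat\xi$ equals $1$, an event of probability $\pi_1(1-\nu) > 0$ whenever the single edge $K_2$ carries positive $\gamma$-weight (which it does for planar graphs and essentially every example in the paper). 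More globally, $\sum_{G^\bullet} \lim_{n} \Pr{U_k(\mC_n^\omega, v_n) \simeq G^\bullet} = \Pr{L \ge k} = \nu^k$ while $\sum_{G^\bullet} \Pr{U_k(\hat{\mH}^\bullet) \simeq G^\bullet} = \Pr{L \ge 1} = \nu$, and these differ for every $k \ge 2$. So the case analysis you propose cannot be closed as written: either $\hat{\mH}^\bullet$ must be read differently (for instance by retaining the $\infty$-marker at the tip so that $U_k$ fails to be a graph once the tip enters the $k$-ball — but then $\hat{\mH}^\bullet$ is not literally a rooted graph as the paper states), or the conclusion must be restricted (e.g.\ to $k=1$, where $\nu^k = \nu$ and the matching does go through). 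The proof you sketch resolves none of this, and your appeal to a ``placeholder convention at the tip'' for $1 \le L < k$ has no support in the paper's definition.
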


Note that the limit probabilities in Equation~\eqref{eq:limit} sum up to the probability that the spine of $\hat{\cT}$ has non-zero length, which is given by $\nu < 1$. Hence, contrary to the infinite spine case, the convergence in Corollary~\ref{co:prel} is not sufficient to imply Benjamini--Schramm convergence. It does, however, provide information on how parts of the limit object must look, {\em if} such a limit exists.  Our next main theorem  strengthens this greatly, by showing that Benjamini--Schramm convergence of $\mC_n^\omega$ is in fact  \emph{equivalent} to Benjamini--Schramm convergence of a \emph{randomly} sized $2$-connected graph. In particular, it is \emph{sufficient}, if the random graph $\mB_n^\gamma$ drawn from $\cB[n]$ with probability proportional to its weight converges as $n$ deterministically tends to infinity.

\begin{figure}[t]
	\centering
	\begin{minipage}{1.0\textwidth}
		\centering
		\includegraphics[width=1.0\textwidth]{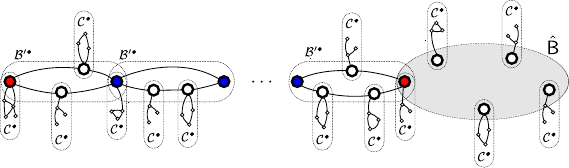}
		\caption{If the $2$-connected random graph $\mB_n^\gamma$  admits a distributional limit $\hat{\mathsf{B}}$, and the generating series $(\cB')^\gamma(z)$ belongs to the class $\mathscr{S}_1$ of subexponential sequences, then the random connected graph $\mC_n^\omega$ converges in the Benjamini--Schramm sense toward a limit $\hat{\mC}$. The limit consists of a finite chain of independent Boltzmann-distributed doubly-rooted blocks, with the limit $\hat{\mathsf{B}}$ attached to the tip of the chain. The random number of blocks in the chain follows a geometric distribution, and at each vertex of the chain and of $\hat{\mathsf{B}}$ an independent random Boltzmann distributed rooted connected graph is inserted.}
		\label{fi:conjlim}
	\end{minipage}
\end{figure}

\begin{theorem}[Characterization of Benjamini--Schramm convergence of $\mC_n^\omega$]
	\label{te:convpl}
	Suppose that $\mathbf{w}$ has type II. \label{pp:bn} Let $\mB_n^\gamma$ denote the random $2$-connected graph that gets drawn from $\cB[n]$ with probability proportional to its $\gamma$-weight. Then there is a random integer $K_n$ that is independent from the family $(\cB_n^\gamma)_n$  and satisfies $K_n \convdis \infty$, such that the random connected graph $\mC_n^\omega$ converges in the Benjamini--Schramm sense if and only if the graph $\mB_{K_n}^\gamma$ does. In this case, the limit \label{pp:ch2}$\hat{\mC}$ of $\mC_n^\omega$ contains the limit $\hat{\mB}$ as a subgraph via a coupling as illustrated in Figure~\ref{fi:conjlim}. The precise distribution of $\hat{\mC}$ is given  as follows.
	\begin{enumerate}
		\item Draw a random integer $K\ge0$ that follows the geometric distribution
		\[
		\Pr{K=k} = \nu^k(1-\nu).
		\]
		\item Let $(\mB'^\bullet_i)_{1 \le i \le K}$ denote a (conditionally) independent identically distributed sequence of random $\cB'^\bullet$-objects following a weighted Boltzmann distribution with parameter $\tau$.
		For each $i < K$, identify the pointed vertex of $\mB'^\bullet_i$ with the $*$-vertex of $\mB'^\bullet_{i+1}$ in order to form a finite chain of blocks. We consider this chain as rooted at the $*$-vertex of $\mB'^\bullet_1$. 
		\item Identify the root of the Benjamini--Schramm limit $\hat{\mB}$ with the pointed vertex of $\mB'^\bullet_{K}$ in the chain of blocks, and let $\mD$ denote the result.
		\item Let $\mC^\bullet$ be a random $\cC^\bullet$-object following a weighted Boltzmann distribution with parameter $\tau / \phi(\tau)$. For each vertex $v$ of $\mD$ take a fresh independent copy of $\mC^\bullet$ and identify its root with $v$. 
	\end{enumerate}

\end{theorem}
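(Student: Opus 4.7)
The plan is to combine the condensation convergence theorem (Theorem~\ref{te:thmco}) with the Gibbs-partition result for $\cR=\Set\circ\cB'$ (Lemma~\ref{le:gibbs} / Theorem~\ref{te:need}) to isolate a random ``giant block'' at the tip of the spine, and then to reduce Benjamini--Schramm convergence of $\mC_n^\omega$ to Benjamini--Schramm convergence of that giant block, which will turn out to be a random-sized copy of $\mB_{K_n}^\gamma$. By Lemma~\ref{le:coupling} the graph $\mC_n^\omega$ with a uniformly random vertex $v_n$ corresponds to the random $\cR$-enriched plane tree $(\cT_n,\beta_n)$ pointed at a uniformly random vertex, so it suffices to understand block-neighbourhoods in that pointed tree. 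Since $\mathbf{w}$ has type II, Theorem~\ref{te:thmco} yields that the pruned fringe subtree at the first ancestor $v_{k_n}$ of $v_n$ with outdegree exceeding $\Omega_n$ converges in total variation to the limit $(\cT_n^*,\beta_n^*)$: a random geometric-length spine whose non-tip vertices carry $\mathbb{P}_{(\Set\circ\cB')^\bullet,\tau}$-distributed pointed structures, whose tip $u_L$ carries a $\Set\circ\cB'$ structure of size $\tilde{D}_n$, and whose off-spine subtrees are i.i.d.\ critical $\xi$-Galton--Watson trees.

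Translating through the block decomposition of Section~\ref{sec:bijblock}, the non-tip part of the spine becomes a chain of $L$ independent $\mathbb{P}_{(\cB^\bullet)^\gamma,\tau}$-distributed doubly-rooted blocks, and off-spine Galton--Watson subtrees enriched via $\beta_n^*$ become i.i.d.\ $\mathbb{P}_{\cC^\bullet,\tau/\phi(\tau)}$-distributed rooted connected graphs glued at each vertex. The $\Set\circ\cB'$ structure of size $\tilde{D}_n$ at the tip $u_L$ is a Gibbs partition to which I apply Lemma~\ref{le:gibbs}: by Lemma~\ref{le:subexp} the series $(\cB')^\gamma(z)$ lies in $\mathscr{S}_d$, so conditionally on the size $\tilde{D}_n$ this partition asymptotically decomposes into one giant derived block of size $\tilde{D}_n - O_p(1)$ plus a $\mathbb{P}_{\Set\circ(\cB')^\gamma,\rho_{\cB'}}$-distributed Boltzmann remnant. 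Attaching the cutvertex $u_L$, the giant derived block gives rise to a block of size $K_n$, and conditionally on $K_n$ this block is, up to the choice of a marked vertex, distributed as $\mB_{K_n}^\gamma$; crucially, $K_n$ is a function of $\tilde{D}_n$ and of the Gibbs splitting, so it is independent of the graph realizations used to sample $\mB_{K_n}^\gamma$. Since $\tilde{D}_n\to\infty$ in probability by~\eqref{eq:om} and~\eqref{eq:thed}, we have $K_n\convdis\infty$.

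Now for the equivalence: with high probability $v_n$ belongs to the giant block (all other pieces contribute only $O_p(1)$ vertices), and under this event the transported root of $\mB_{K_n}^\gamma$ is uniformly distributed over its vertex set. Therefore the block-ball $U_k(\mC_n^\omega,v_n)$ decomposes into (i) a deterministic-in-distribution Boltzmann piece (the $L$-chain of doubly-rooted blocks with Boltzmann connected graphs attached at every vertex), which has a non-random limit depending only on $\mathbf{w}$ and $\gamma$, and (ii) the block-ball of radius $k$ around a uniform random vertex of $\mB_{K_n}^\gamma$, extended by one further layer of independent Boltzmann connected graphs. Since the two pieces are asymptotically independent and the Boltzmann piece converges in total variation, $U_k(\mC_n^\omega,v_n)$ converges in distribution for every $k$ if and only if the corresponding $k$-neighbourhood of $\mB_{K_n}^\gamma$ does; this gives the equivalence in both directions. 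The explicit distribution of the limit $\hat{\mC}$ stated in the theorem is then read off by substituting $\hat{\mB}$ in place of the giant-block neighbourhood. Finally, to assemble the compatible family of finite-radius limit distributions into a single probability measure on the Polish space $\mathbb{B}$ of rooted locally finite graphs, I invoke the projective-limit theorem (Lemma~\ref{le:project}) applied to the countable tower $(U_k)_{k\ge 0}$.

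The main obstacle is the ``only if'' direction: extracting convergence of $\mB_{K_n}^\gamma$ from that of $\mC_n^\omega$. This requires showing that the distribution of the limit $\hat{\mC}$ uniquely determines a candidate limit for $\mB_{K_n}^\gamma$, which in turn relies on the independence of the tree-level quantity $K_n$ from the graph realization of the giant block, together with an almost-sure disentanglement of the Boltzmann decorations from the giant block's own internal structure, so that the law of $\hat{\mB}$ can be recovered by conditioning out the independent Boltzmann pieces.
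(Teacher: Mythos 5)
Your overall strategy — isolate a giant block at the tip of the spine, reduce Benjamini--Schramm convergence of $\mC_n^\omega$ to that of the giant block, assemble the limit via the projective-limit lemma — is the right one and matches the paper's in spirit. Your choice to enter via Theorem~\ref{te:thmco} rather than Janson's trimming result \cite[Thm.\ 20.2]{MR2908619} is a legitimate variant: by the rerooting invariance, the root of $\cT_n$ already is a uniform random vertex of $\mC_n^\omega$, so the paper trims near the root, while you re-point at a random vertex and invoke Theorem~\ref{te:thmco}; either route delivers a spine terminating in a vertex of large random degree $\tilde D_n$.

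There is, however, a genuine gap in the Gibbs-partition step. You write ``by Lemma~\ref{le:subexp} the series $(\cB')^\gamma(z)$ lies in $\mathscr{S}_d$'' and apply Lemma~\ref{le:gibbs} to the composition $\Set\circ(\cB')^\gamma$ conditionally on the size $\tilde D_n$. But Lemma~\ref{le:subexp} asserts subexponentiality of $\cA_\cR^\omega(z)/z$ — the generating series of the \emph{enriched trees} — not of $(\cB')^\gamma(z)$. For arbitrary type-II block-weights $\gamma$ there is no reason whatsoever for $(\cB')^\gamma(z)$ to be subexponential, and consequently the composition $\Set\circ(\cB')^\gamma$ need not have convergent type. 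The paper flags exactly this point: its Step 3 explicitly says that one ``cannot apply Lemma~\ref{le:gibbs} directly'' and instead verifies that $\Set\circ\bigl((\cB')^\gamma\circ\cA_\cR^\omega\bigr)$ has convergent type, via the isomorphism $\cA_\cR^\omega \simeq \cX\cdot\Set\circ\bigl((\cB')^\gamma\circ\cA_\cR^\omega\bigr)$, the logarithm identity $(\cB')^\gamma(\cA_\cR^\omega(z)) = \log(1+a(z))$ with $a(z)=\cA_\cR^\omega(z)/z-1 \in \mathscr{S}_d$, and Proposition~\ref{pro:sleeky}. The point is that the randomness of $\tilde D_n$ (it is a conditioned root degree, not an arbitrary large deterministic size) is what makes the $\Set\circ(\cB')^\gamma$ structure at the tip behave \emph{as if} the composition had convergent type. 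Your argument, which applies Lemma~\ref{le:gibbs} ``conditionally on the size $\tilde D_n$,'' is the deterministic-size argument, and it fails without an unjustified subexponentiality hypothesis on $(\cB')^\gamma$. To repair this you would need to import the paper's detour through the full enriched-tree forest (its Steps 3--4), comparing the conditional law of the remnant with the $\Set\circ(\cB')^\gamma\circ\cA_\cR^\omega$ decomposition at the root.

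Secondly, the ``only if'' direction as you sketch it — ``conditioning out the independent Boltzmann pieces'' — hides the hard work. The paper's Steps 7--8 carry out an induction on the size of the test graph $G^\bullet$ using the strict positivity of $p=\Pr{|\hat{\mR}|=0}$ to deconvolve the Boltzmann decorations, then verify tightness by showing $\sum_{G^\bullet}p_\ell(G^\bullet)=1$ before invoking Lemma~\ref{le:project}. That tightness check is not automatic and is where the argument could silently break if the decorations were not controlled; your sketch should at least acknowledge that the recovery of the block-neighbourhood law is an inversion of a convolution that requires positivity and tightness estimates, not merely independence.
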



Note that Benjamini--Schramm convergence of $\mB_n^\gamma$ implies Benjamini--Schramm convergence of $\mB_{K_n}^\gamma$, but the converse need not hold. The first contribution of Theorem~\ref{te:convpl} is that, if the generating function of the weight-sequence has positive radius of convergence, then the random connected graph $\mC_n^\omega$ either has type I and converges by Theorem~\ref{te:loclunl} to a limit object with small blocks, or it has type II and then its limit, \emph{if} it exists, \emph{must} have precisely the shape as described in Theorem~\ref{te:convpl}. The second contribution is, that if we manage to deduce Benjamini--Schramm convergence of $\mB_n^\gamma$ in the type II regime, then Benjamini--Schramm convergence of $\mC_n^\omega$ follows automatically.

The degree distribution of random planar graphs has been studied by Drmota, Gim\'enez and Noy \cite{MR2802191} and Panagiotou and Steger \cite{MR2858393} by means of analytic combinatorics and Boltzmann samplers. In both papers the authors make use of the decomposition of graphs into components having higher connectivity. Hence we deem this a promising approach to establish and explicitly describe the Benjamini--Schramm limit of random planar graphs (or more generally uniform random graphs from classes defined by given $3$-connected components), and Theorem~\ref{te:convpl} is a first step in this direction. In the light of Corollary~\ref{co:prel}, Theorem~\ref{te:convpl} and Theorem~\ref{te:loclunl}, we pose the following question.
\begin{question}
	If the generating series $\cC^\omega(z)$ (or equivalently, $\cB^\gamma(z)$) has positive radius of convergence, does the random graph $\mC_n^\omega$ converges in the Benjamini--Schramm sense?
\end{question}
This includes random graphs from block-stable classes of graphs with analytic weights, and in particular the case of uniform connected graphs from proper minor-closed addable classes. Such a graph class is of the form $\mathbf{Ex}(M)$ for some non-empty set $M$ of $2$-connected graphs. See Section~\ref{sec:bijblock} for an explanation of this notation. By results of Bernardi, Noy and Welsh \cite{MR2644234} we know that uniform random graphs from minor-closed classes have either type I or II. In the type I case, Theorem~\ref{te:loclunl} immediately yields Benjamini--Schramm convergence. If the random graph has type II, then Theorem~\ref{te:convpl} may be used to reduce the Benjamini--Schramm convergence to the $2$-connected case.  It was established in \cite{MR2507738} furthermore that random graphs from proper minor-closed addable classes typically admit a giant component, and that the remaining fragments converge in total variation toward a limit called the Boltzmann Poisson random graph of the class. In \cite{doi:10.1002/rsa.20771}, these results where recovered using fundamentally different methods, and generalized to the random weighted model $\mC_n^\omega$.  Thus the weighted graph $\mC_n^\omega$ is a sensible model that allows for a unified study of  many other classes of graphs, and subjectively has the correct level of generality to discover the "abstract" reasons behind the limiting behaviour of these special cases.

The distribution of $K_n$ mentioned in Theorem~\ref{te:convpl} is made explicit in the proof. There is a deterministic sequence $\Omega_n$ that tends to infinity, such that if $d^+_{\cT_n}(o)$ denotes the root-degree of the simply generated tree $\cT_n$, then
\[
	K_n \eqdist  (d^+_{\cT_n}(o) \mid d^+_{\cT_n}(o) \ge \Omega_n) - R_n
\]
for a sequence of random variables $(R_n)_n$ that converge in total variation to the size of a $\mathbb{P}_{\Set \circ (\cB')^\gamma, \tau}$-distributed collection of blocks.  

The conditioned root-degree distribution crops up in other places of the present work too. For example, in Equations~\eqref{eq:yomei}, \eqref{eq:boltfin} we determined for certain classes of weights related to outerplanar maps the asymptotic probability for $(d^+_{\cT_n}(o) \mid d^+_{\cT_n}(o) \ge \Omega_n)$ to lie in lattices of the form $a + d \ndZ$. It might be interesting to investigate, whether the probability for  $K_n$ to lie in a given subset of $\ndN$ converges. The idea behind this is that for some interesting graph classes such as uniform cubic planar graphs the sequence $(\mB_n^\gamma)_n$ lies in the compact subspace of $(\mathbb{B}, d_{\textsc{BS}})$ of all graphs with a fixed upper bound for their vertex degrees, and hence the natural numbers may be partitioned into subsequences such that $\mB_n^\gamma$ converges weakly along each. If the probability for $K_n$ to lie in any of those sequences converges (uniformly), it follows that $\mB_{K_n}^\gamma$ converges weakly to a mixture of the limits of $\mB_n^\gamma$ along the subsequences.



Since we consider random weighted graphs, Theorem~\ref{te:convpl} also applies to other types of graph classes. We may easily force well-known subcritical graph classes (where the uniform random graph has type I$a$) into the type II regime, by adjusting the weights on the blocks. We demonstrate this for the well-known example of outerplanar graphs which have been studied individually in various contexts \cite{MR2350456, MR2538796}, but there are many further examples. 

\begin{example}[Type II outerplanar graphs]
	\label{ex:outerplanar}
	An outerplanar graph is a planar graph that admits at least one embedding in the plane such that every vertex may be reached from the outside. We consider the case where $\mC_n^\omega$ is a random $n$-vertex outerplanar graph that is drawn according to $\gamma$-weights assigned to its blocks.
	
	Outerplanar graphs that are $2$-connected have a unique Hamilton cycle which in any "outerplanar" drawing may be oriented in two directions. Hence any labelled edge-rooted dissection of a polygon may be obtained in a unique way by taking a vertex-rooted $2$-connected outerplanar graph and marking one of the two edges of the Hamilton cycle that are incident to the root vertex. This means that the random $\gamma$-weighted $n$-vertex $2$-connected outerplanar graph $\mB_n^\gamma$ is distributed like a random $\gamma$-weighted dissection $\mD_n^\gamma$ of an $n$-gon.
	
	We identify in Section~\ref{sec:diss22} three qualitatively distinct Benjamini--Schramm limits of random dissections where the $\gamma$-weights are products of $\iota$-weights assigned to their inner faces. That is, when \[\cD^\gamma \simeq \cX \cdot (\Seq \circ \Seq_{\ge 1}^\iota)(\cD^\gamma),\] see Section~\ref{sec:diss22} for details. The limits are illustrated in Figure~\ref{fi:limdis3}. One of them only admits one-sided infinite paths, while the others contain doubly-infinite paths. Let us assume that the $\gamma$-weighting on the blocks is given by the corresponding products of these face-weights.
	
	It is tempting to expect Theorem~\ref{te:convpl} to yield  different types of Benjamini--Schramm limits for $\mO_n^\omega$, depending on the limit in the $2$-connected case. However, the types for the connected and $2$-connected case are related, which can be shown analogously to the corresponding result for outerplanar maps in Lemma~\ref{le:light}. If  $\mC_n^\omega$ has type $I$, then the Benjamini--Schramm limit of $\mC_n^\omega$ is given as in Theorem~\ref{te:loclunl}. If $\mC_n^\omega$ has type $II$ and the weight-sequence for the face-weights is aperiodic, then Theorem~\ref{te:convpl} applies and $\mC_n^\omega$ converges toward the limit of Theorem~\ref{te:convpl} where $\hat{\mB}$ is the type II dissection limit of Theorem~\ref{te:bsconvdis}. If the weight-sequence is periodic, we still obtain convergence analogously to the proof of Theorem~\ref{te:facethm} for type II outerplanar maps. If $\mC_n^\omega$ has type III, then the Benjamini--Schramm limit is a deterministic doubly-infinite path, which may be justified analogously as in Theorem~\ref{te:bsfaces} for type III outerplanar maps.
\end{example}

\paragraph*{Block sizes}
\label{sec:blocksizes}
The maximum block-size of the random graph $\mC_n^\omega$ is an important parameter which influences its geometric shape. The coupling of $\mC_n^\omega$ with the simply generated tree $\cT_n$ has the property, that the extremal outdegrees of $\cT_n$ are upper bounds of the extremal block-sizes in $\mC_n^\omega$. Theorem~\ref{te:compsize} ensures that in many cases the $k$th largest block has the order of the $k$th largest outdegree, by providing a corresponding lower bound.

We detail the results obtained in this way. The extremal block size of (uniform) random graphs from block classes has been studied by various authors before, which is why we do not claim novelty of all subcases of Theorem~\ref{te:compsize}. We emphasize its applications to random planar graphs, and more generally to random graphs from planar-like classes:

\begin{corollary}
	\label{co:blocksize}
	If $\mC_n^\omega$ is the uniform random planar graph with $n$ vertices, then $\mathbf{w}$ has type II and $|\cB'[k]|_\gamma / k! \sim c \rho_{\cB}^{-k} k^{-5/2}$ for some constants $c, \rho_{\cB}>0$. Hence part {\em (3)} of Theorem~\ref{te:compsize} applies and yields limit theorems for the size of the $j$th largest block for all fixed $j \ge 1$.  More generally, these limit theorems also hold if the random weighted graph $\mC_n^\omega$ has type II and  satisfies $|\cB'[k]|_\gamma/k! \sim c \rho_{\cB}^{-k} k^{-\beta}$ for some $\beta > 2$.  This encompasses random graphs from so called planar-like classes introduced by Gim\'enez, Noy and Ru\'e \cite{MR3068033}. In order to obtain the central limit theorem for the size of the largest block, we may even replace the constant $c$ by any function of $k$ that varies slowly at infinity.
\end{corollary}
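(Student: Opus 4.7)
The plan is to verify that random planar graphs and more generally random graphs from planar-like classes fit into the framework of Theorem~\ref{te:compsize}(3) and Proposition~\ref{pro:new}, and then to invoke these results directly. The core task is therefore combinatorial (verifying the asymptotic estimate for the coefficients of $(\cB')^\gamma(z)/k!$ and checking the type~II condition for $\mathbf{w}$), not probabilistic.

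First I would recall the known analytic enumeration of $2$-connected planar graphs: by the results of Gim\'enez and Noy (and the extensions by Gim\'enez, Noy and Ru\'e \cite{MR3068033}), the exponential generating function $\cB(z)$ of labelled $2$-connected planar graphs has a unique dominant singularity $\rho_\cB > 0$ of square-root type, which yields an asymptotic estimate of the form $|\cB[k]|/k! \sim c_0 \rho_\cB^{-k} k^{-5/2}$. Differentiation preserves this asymptotic (up to the constant and a shift in the exponent that is absorbed into the polynomial factor): since $(\cB')^\gamma(z) = z^{-1}\cX \cdot \cB^{\gamma}(z)$ in the unweighted planar case (with $\gamma \equiv 1$), we obtain $|\cB'[k]|/k! \sim c\,\rho_\cB^{-k}k^{-5/2}$ with a new positive constant $c$, verifying the hypothesis on the $\cG$-coefficients with $\beta=5/2$.

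Next I would check that $\mathbf{w} = (\omega_k)_k$ with $\omega_k = |(\Set \circ \cB')[k]|_\kappa/k!$ has type~II in the uniform planar case. By construction $\phi(z) = \exp((\cB')^\gamma(z))$, so $\rho_\phi = \rho_\cB$, and $(\cB')^\gamma(\rho_\cB) < \infty$ since $\beta = 5/2 > 1$. Hence $\phi$ and its derivative are both finite at $\rho_\phi$, and $\nu = \rho_\phi \phi'(\rho_\phi)/\phi(\rho_\phi) = \rho_\cB (\cB')'^{\gamma}(\rho_\cB)$ is finite. The fact that $\nu < 1$ for the planar case is a consequence of the analysis carried out in \cite{MR3068033}, where the dominant singularity of $\cC^\omega(z)$ is strictly inside $\rho_\cB$, equivalently $\nu < 1$; this is precisely the type~II condition. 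Moreover, $\cF^\upsilon = \Set$ has exponential generating series $\exp(z)$, which is entire and hence analytic at $\cG^\gamma(\rho_\cG)=(\cB')^\gamma(\rho_\cB) < \infty$. All three hypotheses of Theorem~\ref{te:compsize}(3) are therefore satisfied with $\alpha = \min(2,\beta-1) = 3/2$, yielding the $3/2$-stable limit law for $(1-\nu)n - B_{(1)}$ together with the Fr\'echet-type limit laws for $B_{(j)}$, $j \geq 2$.

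For the more general assertion, the same argument applies verbatim once we are given the estimate $|\cB'[k]|_\gamma/k! \sim c\rho_\cB^{-k}k^{-\beta}$ with $\beta > 2$ and the type~II assumption on $\mathbf{w}$: analyticity of $\cF^\upsilon(z)=\exp(z)$ is automatic, and $\beta > 2$ guarantees $\cG^\gamma(\rho_\cG) < \infty$ so that the substitution is defined at the boundary. This covers planar-like classes in the sense of \cite{MR3068033}, for which the same singularity analysis gives the required estimate with $\beta = 5/2$. Finally, for the central limit theorem for the size of the largest block we may relax the constant $c$ to a slowly varying function $f(k)$: the hypothesis of Proposition~\ref{pro:new} is then met, and its conclusion provides the stated convergence $((1-\nu)n - B_{(1)})/(g(n)n^{1/\alpha}) \convdis Y_1$ for a suitable slowly varying $g$. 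The only real subtlety is bookkeeping the passage from $|\cB[k]|$ to $|\cB'[k]|_\gamma$ under the chosen weighting and confirming that the periodicity/span is trivial so that the asymptotic estimates transfer without a lattice correction; this is routine since in the planar case all block sizes $k \geq 2$ are realized.
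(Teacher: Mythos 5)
Your overall strategy matches the paper's: reduce the corollary to verifying (i) the coefficient asymptotics for $(\cB')^\gamma(z)$ and (ii) the type~II condition for $\mathbf{w}$, then invoke Theorem~\ref{te:compsize}(3) and Proposition~\ref{pro:new}. However, the combinatorial bookkeeping contains errors. The correct Gim\'enez--Noy estimate for labelled $2$-connected planar graphs is $|\cB[k]|/k! \sim c_1\, k^{-7/2}\rho_\cB^{-k}$, corresponding to a $\frac{5}{2}$-type singularity of $\cB(z)$ at $\rho_\cB$ --- \emph{not} $k^{-5/2}$ and \emph{not} a square-root singularity as you state. Differentiation does \emph{not} preserve this exponent; it shifts it by $+1$, giving $|\cB'[k]|/k! \sim c\, k^{-5/2}\rho_\cB^{-k}$, which is the required hypothesis. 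Your phrasing ``differentiation preserves this asymptotic'' combined with a starting exponent of $-5/2$ is internally inconsistent: starting at $-5/2$ and actually differentiating would land you at $-3/2$, which would violate the $\beta>2$ hypothesis of Theorem~\ref{te:compsize}(3). You arrive at the right answer only because you misremembered the $\cB'$-exponent as the $\cB$-exponent. The species identity $(\cB')^\gamma(z) = z^{-1}\cX\cdot\cB^\gamma(z)$ is also garbled; the relevant relations are $\cB^\bullet\simeq\cX\cdot\cB'$, so $(\cB^\bullet)^\gamma(z) = z(\cB')^\gamma(z)$, and $(\cB^\bullet)^\gamma(z) = z\frac{d}{dz}\cB^\gamma(z)$.

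The type~II justification also needs care. Your criterion ``the dominant singularity of $\cC^\omega(z)$ is strictly inside $\rho_\cB$, equivalently $\nu<1$'' is not correct as stated: $\rho_\cC < \rho_\cB$ holds in \emph{both} type~I and type~II regimes (since $\rho_\cZ = \tau/\phi(\tau) < \tau \le \rho_\phi$ whenever $\phi$ is nonconstant). What distinguishes type~II is that $\tau = \rho_\phi$, i.e.\ $\cC^\bullet(\rho_\cC) = \rho_\cB$, or equivalently $\nu = \rho_\cB\cB''(\rho_\cB) < 1$; the paper cites this as Claim~1 of Gim\'enez--Noy directly. Aside from the missing reference to the passage from the $(\cB')$-asymptotics to the $\omega_k$-asymptotics (via Embrechts--Omey style results on functions of power series, which the paper includes to justify the explicit $\pi_k \sim c_3 k^{-5/2}$ tail), the rest of your argument --- analyticity of $\cF^\upsilon = \exp$, the $\beta>2$ check, and the relaxation of $c$ to a slowly varying function via Proposition~\ref{pro:new} --- is fine and mirrors the paper.
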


The asymptotics for the size of the  $j$th largest blocks are new for $j \ge 2$. The central limit theorem for the size of the largest block has been known for some years for random graphs from planar-like classes, but was obtained using different, analytic methods such as singularity analysis and the saddle-point method. We give a short comparison of Corollary~\ref{co:blocksize} to previous results.

Panagiotou and Steger \cite[Thm. 1.2]{MR2675698} obtained by a detailed study of Boltzmann samplers  that for any $\epsilon >0$ and sequence $t_n \to \infty$ the uniform planar graph with $n$ vertices has with high probability a unique giant block with $(1\pm \epsilon) cn$ vertices and the next largest block has size between $n^{2/3}/\log(n)$ and $n^{2/3} t_n$. They also provide similar bounds \cite[Thm. 1.4, ii)]{MR2675698} in a slightly less general setting as in {\em (1)} of Theorem~\ref{te:compsize}.

Gim\'enez, Noy and Ru\'e \cite[Thm. 5.4]{MR3068033} applied an elaborated analytical framework to  obtain the strong result that the largest block size $X_n$ of the uniform random planar graph with $n$ vertices satisfies $\Pr{X_n =k} \sim n^{-2/3} f(k)$ uniformly for $k = (1-\nu)n + xn^{2/3}$ an integer and $x$ in a fixed compact interval. Here $f$ denotes the density function of the distribution of the random variable $X_{3/2}$ from Theorem~\ref{te:compsize}. They obtained that the next largest block has with high probability at most $O(n^{2/3})$ vertices, and provided extensions of this result to random graphs from planar-like classes, which are encompassed by the case a) of Theorem~\ref{te:compsize}. 

Drmota and Noy \cite[Thm. 3.1]{MR3184197} used analytic methods to show that if $\mathbf{w}$ has  type I$a$ (and, for simplicity, $\spa(\mathbf{w}) =1$) then $\Ex{B_{(1)}} = O(\log(n))$ and if additionally the series $\cB(z)$ satisfies the ratio test, then $\Pr{B_{(1)} \le k} \sim \exp(-\exp(\log(n)-g(k)))$ uniformly for $n,k \to \infty$ where $g(k)$ is a function with $g(k) \sim Ck$ for some constant $C>0$.

\paragraph*{The diameter of the block-tree} Let $\mC_n$ denote an instance of the random graph $\mC_n^\omega$ where each block receives either weight $0$ or $1$. Such random graphs are also called block-stable graphs or simply block-graphs. McDiarmid and Scott \cite[Thm. 1.2]{MR3530623} showed that  with high probability any path in the random graph $\mC_n$ passes through at most $5 \sqrt{n \log(n)}$ blocks. They conjectured, that the extra factor $\sqrt{\log(n)}$ can be replaced by any sequence tending to infinity.
\begin{conjecture}[McDiarmid and Scott]
	\label{con:mcscott}
	If $t_n$ denotes a sequence tending to infinity, then with high probability any path in the uniform random graph $\mC_n$ from a block class passes through at most $t_n \sqrt{n}$ blocks.
\end{conjecture}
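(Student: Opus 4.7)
My plan is to reduce the statement to a claim about the diameter of the plane tree supplied by the coupling of Lemma~\ref{le:coupling}, and then to appeal to existing tail bounds for simply generated trees.

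Under the block decomposition of Section~\ref{sec:bijblock}, the random graph $\mC_n$ corresponds bijectively to an $(\Set \circ \cB')$-enriched tree whose underlying shape is a simply generated tree $\cT_n$, with weight sequence $\mathbf{w}$ determined by the block weighting. Every tree edge of $\cT_n$ sits inside a unique block of $\mC_n$, and any two tree edges adjacent at a common vertex belong to different blocks, because the ``parent block'' of a vertex is distinct from each of the $\cB'$-structures attached at its children. Consequently, the number of distinct blocks crossed by any simple path in $\mC_n$ is bounded above by $\Di(\cT_n)+O(1)$, and the conjecture reduces to the statement
\[
\Pr{\Di(\cT_n) > t_n \sqrt{n}} \to 0 \qquad \text{for every sequence } t_n \to \infty,
\]
which is exactly Janson's open problem~\cite[Problem~21.8]{MR2908619}.

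For weight sequences of type $I\alpha$, which in particular covers uniform random graphs from the subcritical block classes of~\cite{MR2873207}, the Gaussian height tail bound~\eqref{eq:gwttail} of Addario-Berry, Devroye and Janson yields $\Pr{\He(\cT_n) > t_n \sqrt{n}/2} \le C \exp(-c t_n^2/4) \to 0$, and since $\Di(\cT_n) \le 2 \He(\cT_n)$, this verifies the conjecture in the type $I\alpha$ regime. For type $II$ weight sequences, which include uniform random planar graphs and the more general planar-like classes of~\cite{MR3068033}, the condensation phenomenon of Theorem~\ref{te:compsize} shows that $\cT_n$ possesses a single vertex of degree $(1-\nu)n+O_p(n^{1/\alpha})$, whose removal leaves a forest of critical or subcritical Galton--Watson subtrees. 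Bounding each subtree height via the stable tail estimates of Kortchemski~\cite{2015arXiv150404358K, MR3335012} and applying a union bound over the $O_p(n^{1/\alpha})$ non-trivial subtrees should give $\Di(\cT_n) = O_p(\sqrt{n \log n})$, matching the original McDiarmid--Scott bound; sharpening this to the conjectured $o_p(\sqrt{n})$ in the multiplicative sense requires a joint tail estimate for the two tallest subtrees attached to the condensation vertex.

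The principal obstacle is the type $I\beta$ regime, together with the subclasses of types $II$ and $III$ for which the effective offspring distribution has infinite variance and lies outside any classical domain of attraction. In these regimes none of the currently available Gaussian-type deviation inequalities apply, and a full proof of the conjecture would require developing new tail bounds for the height of simply generated trees associated with arbitrary weight sequences. Any such progress would, by the reduction above, immediately resolve the McDiarmid--Scott conjecture uniformly across all block-stable graph classes.
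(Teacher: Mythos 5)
You have correctly recognised that the statement is a conjecture the paper itself does not prove, and your reduction is exactly the one the paper carries out right after stating it: via the coupling of Lemma~\ref{le:coupling} the maximum number $D_n$ of blocks along a path in $\mC_n^\omega$ satisfies $D_n \in \{\Di(\cT_n), \Di(\cT_n)-1\}$ (the paper's Equation~\eqref{eq:teeemp}), the height tail bound~\eqref{eq:gwttail} settles the type $I\alpha$ case, and Janson's Problem~21.8 is precisely the open question about $\Di(\cT_n)$ for general weight sequences, whose affirmative answer would imply the conjecture (the paper's Corollary following the conjecture). So your route is essentially the same as the paper's.

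Two remarks on the details. First, your claim that ``any two tree edges adjacent at a common vertex belong to different blocks'' is not quite right: two children of a vertex $v$ can lie in the \emph{same} $\cB'$-structure of $\beta(v)$, so the two downward tree edges at $v$ can sit in a common block. This happens exactly at the lowest common ancestor of the endpoints of a path, and it is where the possible ``$-1$'' in $D_n \in \{\Di(\cT_n), \Di(\cT_n)-1\}$ comes from; the slip is harmless here, since it only improves your upper bound. Second, your projected $O_p(\sqrt{n\log n})$ for the type II regime is far too pessimistic. After condensation the tree $\cT_n$ is essentially a star: one vertex $v^*$ of degree $(1-\nu)n + O_p(n^{1/\alpha})$ carries roughly $n$ independent subcritical Galton--Watson fringe subtrees, whose maximal height is $O_p(\log n)$, plus a stochastically bounded stalk above $v^*$. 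Hence $\Di(\cT_n) = O_p(\log n)$, which confirms the conjecture in type II by a wide margin (and is what underlies the paper's Corollary~\ref{cor:planarpath}, giving a block-radius $\sim \log(n)/\log(1/\nu)$). The genuinely open regimes are type $I\beta$ outside the stable domains covered by Kortchemski, and type III, exactly as Janson's question leaves open.
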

By Lemma~\ref{le:coupling} there is a coupling of the random graph $\mC_n^\omega$ with a simply generated tree $\cT_n$ such that the diameter $\Di(\cT_n)$ and the maximum number $D_n$ of blocks along a path in $\mC_n^\omega$ satisfy
\begin{align}
\label{eq:teeemp}
 D_n \in \{\Di(\cT_n), \Di(\cT_n) - 1\}.
\end{align} Addario-Berry, Devroye and Janson \cite{MR3077536} established that if $\mathbf{w}$ has type I$\alpha$, then there are constants $C,c>0$ depending on $\mathbf{w}$ such that for all $h \ge 0$ and $n \ge 1$ it holds that \begin{align}\label{eq:tempref}\Pr{\Di(\cT_n) \ge h} \le C \exp(-c h^2/n).\end{align} By \eqref{eq:teeemp}, this yields an equivalent tail bound for $D_n$ and Conjecture~\ref{con:mcscott} holds in this case. Janson~\cite[Problem 21.8]{MR2908619} posed the question, whether a tail-bound of the form \eqref{eq:tempref} can be obtained for {\em any} sensible weight sequence.
{
	\begin{question}[Janson]
		\label{pro:jans}
		Given a weight sequence $(\omega_k)_k$ such that $\omega_0>0$ and $\omega_k >0$ for at least one $k \ge 2$, are there constants $C,c>0$ such that the tail-bound \eqref{eq:tempref} holds for the corresponding simply generated tree $\cT_n$?
	\end{question}
}
The coupling between $\mC_n^\omega$ and $\cT_n$ allows us to relate the two questions as follows.
\begin{corollary}[Relating the block-diameter with the diameter of simply generated trees]
	If Question \ref{pro:jans} can be answered in the affirmative, then a corresponding tail-bound also holds for the block diameter of the random graph $\mC_n^\omega$ and Conjecture \ref{con:mcscott} follows.
\end{corollary}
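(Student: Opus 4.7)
The plan is entirely a book-keeping exercise chaining together three already-established ingredients: the coupling of Lemma~\ref{le:coupling}, the comparison of block-diameter with tree-diameter in Equation~\eqref{eq:teeemp}, and the hypothesized subgaussian tail bound of Question~\ref{pro:jans}.

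First I would fix a weight sequence $\mathbf{w}$ satisfying the standing assumptions and assume for the sake of the implication that Question~\ref{pro:jans} has an affirmative answer: there exist constants $C,c>0$ (depending only on $\mathbf{w}$) such that
\[
\Pr{\Di(\cT_n) \ge h} \le C \exp(-c h^2/n) \qquad \text{for all } n \ge 1,\ h \ge 0.
\]
Next I would invoke Lemma~\ref{le:coupling} to couple $\mC_n^\omega$ with a simply generated tree $\cT_n$ whose weight sequence is exactly $\mathbf{w}$. Equation~\eqref{eq:teeemp} then gives the deterministic domination $D_n \le \Di(\cT_n)$, where $D_n$ is the maximum number of blocks traversed by any path in $\mC_n^\omega$. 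Combined with the hypothesised tail bound this yields
\[
\Pr{D_n \ge h} \le \Pr{\Di(\cT_n) \ge h} \le C \exp(-c h^2/n),
\]
which is the desired tail bound for the block diameter.

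Finally, to deduce Conjecture~\ref{con:mcscott}, I would specialize to a uniform block-stable class, where $\omega_k \in \{0,1\}$, so that the hypothesised bound applies, and substitute $h = t_n \sqrt{n}$ for a sequence $t_n \to \infty$. This gives
\[
\Pr{D_n \ge t_n \sqrt{n}} \le C \exp(-c\, t_n^2) \longrightarrow 0,
\]
so with high probability no path in $\mC_n$ traverses more than $t_n \sqrt{n}$ blocks, which is exactly Conjecture~\ref{con:mcscott}. There is no genuine obstacle in this deduction: all the non-trivial combinatorial work has been absorbed into Lemma~\ref{le:coupling} and the hypothesised Question~\ref{pro:jans}; the corollary is just a transfer statement. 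The main pedagogical point to emphasize in writing it up is that the coupling preserves outdegree information vertex-by-vertex, which is why the block-graph distance and the tree distance can differ by at most one along the geodesic realising $D_n$.
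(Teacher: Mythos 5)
Your proof is correct and takes the same approach the paper implicitly uses: the coupling of Lemma~\ref{le:coupling}, the observation~\eqref{eq:teeemp} that $D_n \in \{\Di(\cT_n), \Di(\cT_n) - 1\}$, and direct substitution of $h = t_n\sqrt{n}$ into the hypothesised bound. One small slip worth correcting: for a uniform block-stable class it is the $\gamma$-weights on blocks that take values in $\{0,1\}$, not the induced weight sequence $\mathbf{w} = (\omega_k)_k$ with $\omega_k = |(\Set \circ \cB')[k]|_\kappa / k!$, which can be any non-negative real; but this is harmless because Question~\ref{pro:jans} is posed for arbitrary weight sequences with $\omega_0 > 0$ and $\omega_k > 0$ for some $k \ge 2$, so the hypothesised tail bound applies regardless.
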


	If $\mathbf{w}$ has type I$\beta$, it is reasonable to expect that $\Di(\cT_n) / \sqrt{n}$ even converges in probability to zero. This is still an open conjecture \cite[Conj. 21.5]{MR2908619}, which however has been confirmed for an important class of weight sequences. Kortchemski \cite[Thm. 1.2]{MR3651047} showed that if the offspring distribution $(\pi_k)_k$ from Section~\ref{sec:pretree} has mean $1$ and belongs to the domain of attraction of a stable law of index $\alpha \in ]1,2]$, then there exists sequence $b_n$ of order $n^{1/\alpha}$ (more precisely, $b_n / n^{1/\alpha}$ is slowly varying) such that for every $\delta \in ]0, \alpha[$ there exist constants $C,c>0$ such that for all $u \ge 0$ and $n\ge1$
	\[
		\Pr{\Di(\cT_n) \ge u n /b_n} \le C \exp(-c u^{\delta}).
	\]
	Of course, this directly translates into an equivalent bound for $D_n$. See also  Duquesne~\cite[Thm. 3.1]{MR1964956},  Haas and Miermont~\cite[Thm. 8]{MR3050512} and Kortchemski~\cite{MR3185928} for related results regarding the metric properties of $\cT_n$ in this setting.

In the following nongeneric case, we may build on results due to Kortchemski \cite[Thm. 4 and Prop. 2.11]{MR3335012} to obtain more precise information. Recall the definition of the constant $\nu$ given in Section~\ref{sec:pretree}.

\begin{corollary}[Block radius asymptotics in a subcase of case II]
	\label{cor:planarpath}
	Suppose that the weight-sequence $\mathbf{w}$ has type II and $|\cB'[k]|_\gamma/k! = f(k) k^{-\beta} \rho_{\cB}^{-k}$ for constants $\rho_\cB>0$, $\beta>2$ and a slowly varying function $f$. Choose a vertex of $\mC_n^\omega$ uniformly at random and let $h_n$ denote the maximum number of blocks along a path starting in that vertex. Then for each function $t_n \to \infty$ it holds that
	\[
		|h_n - \log(n) / \log(1/\nu)| \le t_n
	\]
	with probability tending to one as $n$ becomes large. Moreover, $h_n / \log(n)$ converges to the constant $\log(1/\nu)$ in the space $\mathbb{L}_p$ for $p \ge 1$. In particular, this applies to the uniform random planar graph for which we have $\beta = 5/2$, and more generally to random graphs from planar-like classes in the sense of Gim\'enez, Noy and Ru\'e \cite{MR3068033}.
\end{corollary}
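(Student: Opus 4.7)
}
The strategy is to transfer the problem entirely to the coupled simply generated tree and then invoke the condensation-regime height bounds of Kortchemski. First I would apply Lemma~\ref{le:coupling}, which couples $\mC_n^\omega$ with a simply generated plane tree $\cT_n$ whose weight sequence is $\mathbf{w}$. Under the bijection between graph vertices and tree vertices, each edge of $\cT_n$ encodes a step between two vertices that lie in a \emph{distinct} block of $\mC_n^\omega$ (consecutive children on a root-path necessarily belong to different blocks), so that $d_{\textsc{block}}(u,v) = d_{\cT_n}(u,v)$ for all pairs $u,v$. Consequently, the quantity $h_n$ equals the tree-eccentricity of a uniformly chosen vertex of $\cT_n$.

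Second, I would verify that $\mathbf{w}$ falls into the non-generic condensation regime considered in \cite{MR3335012}. Since $\Set \circ \cB'$ has generating series $\phi(z) = \exp(g(z))$ with
\[
g(z) \;=\; \sum_{k \ge 1} \frac{|\cB'[k]|_\gamma}{k!}\, z^k \;=\; \sum_{k \ge 1} f(k)\, k^{-\beta}\, \rho_\cB^{-k}\, z^k,
\]
the hypothesis $\beta > 2$ ensures $g(\rho_\cB), g'(\rho_\cB) < \infty$, so $\rho_\phi = \rho_\cB$ and $\nu = \rho_\cB\, g'(\rho_\cB) \in (0,1)$. Proposition~\ref{pro:yehaa} shows that $g(z)$ lies in the subexponential class $\mathscr{S}_1$, and applying Proposition~\ref{pro:sleeky} to $\exp$ composed with $g$ yields
\[
\omega_k \;\sim\; e^{g(\rho_\cB)}\, f(k)\, k^{-\beta}\, \rho_\cB^{-k}.
\]
This is exactly the tail shape required to invoke the non-generic results of \cite{MR3335012}.

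Third, I would apply Theorem~4 and Proposition~2.11 of \cite{MR3335012}, which describe the global shape of $\cT_n$ in this setting: with high probability there is a unique condensation vertex $v_n^\star$ of outdegree $\sim (1-\nu)n$ located at stochastically bounded height, the subtrees dangling from $v_n^\star$ are, after normalisation, distributed like independent subcritical Galton--Watson trees with mean offspring $\nu$, and the maximum height among these subtrees concentrates around $\log(n)/\log(1/\nu)$ with sub-exponential tail bounds in both directions. For a uniformly chosen vertex $v_n$, the height within its own subtree is $O_p(1)$ (the size-biased subtree containing $v_n$ is overwhelmingly small compared to the deepest subtree), so the eccentricity $\mathrm{ecc}_{\cT_n}(v_n)$ is realised by travelling through $v_n^\star$ to the deepest leaf of some \emph{other} subtree, which gives
\[
\bigl|\, \mathrm{ecc}_{\cT_n}(v_n) - \log(n)/\log(1/\nu)\,\bigr| \;\le\; t_n
\]
with probability tending to $1$ for any $t_n \to \infty$. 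The $\mathbb{L}_p$-convergence is then obtained by combining this concentration with the crude bound $h_n \le \mathrm{Di}(\cT_n) \le 2 \He(\cT_n) + O(1)$ and Kortchemski's exponential upper tails on $\He(\cT_n)/\log n$, which secure uniform integrability of $(h_n/\log n)^p$ for every $p \ge 1$.

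The technically delicate step is the last one: controlling the distribution of the height of the subtree containing a \emph{uniformly random} vertex, which amounts to a size-biased analysis of subcritical Galton--Watson trees with heavy-tailed offspring. Once this size-biasing is handled (essentially, showing that the subtree containing $v_n$ is with high probability not the deepest one), the eccentricity identification and the upgrading to $\mathbb{L}_p$ convergence follow routinely from the quantitative tail estimates already available in \cite{MR3335012}.
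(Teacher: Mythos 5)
Your proposal follows essentially the same route the paper intends: couple $\mC_n^\omega$ with the simply generated tree $\cT_n$ via Lemma~\ref{le:coupling}, verify via Propositions~\ref{pro:yehaa} and~\ref{pro:sleeky} that $\mathbf{w}$ lands in Kortchemski's non-generic regime, and then read off the eccentricity concentration from his description of the condensation structure (unique vertex $v_n^\star$ of macroscopic degree at $O_p(1)$ height, pendant subtrees approximately i.i.d.\ subcritical Galton--Watson). The paper gives no proof beyond the citation to \cite{MR3335012}, so your sketch is in fact more detailed than what appears there, and the key ingredients — bounded depth of a uniformly random vertex in its (size-biased) subtree, concentration of $\max_j \He(T_j)$ around $\log n / \log(1/\nu)$, and uniform integrability from the tails of $\He(\cT_n)$ — are assembled correctly.

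One small inaccuracy in your first step: the identity $d_{\textsc{block}}(u,v) = d_{\cT_n}(u,v)$ does not hold in general. If the two children of the lowest common tree-ancestor $w$ that lie on the tree-paths toward $u$ and $v$ belong to the \emph{same} derived block of $\beta(w)$, then the block containing the edge descending toward $u$ coincides with the one descending toward $v$, and $d_{\textsc{block}}(u,v) = d_{\cT_n}(u,v) - 1$; otherwise equality holds. So one only has $d_{\textsc{block}}(u,v) \in \{d_{\cT_n}(u,v) - 1, d_{\cT_n}(u,v)\}$, which is exactly the off-by-one relation the paper records in Equation~\eqref{eq:teeemp} for the block diameter. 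This affects nothing asymptotically, but the claim as you state it is too strong. You should also phrase more carefully the tail bound you attribute to Kortchemski — the upper tail $\Pr{\max_j \He(T_j) \ge u\,\log n/\log(1/\nu)}$ decays polynomially in $n$ at rate $n^{-(u-1)}$ for fixed $u$, which is what actually delivers the uniform integrability — but this is a matter of wording rather than substance.
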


\subsubsection{Applications to random dissections} 
\label{sec:diss22}

Given a sequence of non-negative weights $(\gamma_k)_{k \ge 3}$ with $\gamma_k >0$ for at least one $k$, we may assign to each dissection  $D$ of a polygon the weight \[\omega(D) = \prod_F \gamma_{|F|},\] with the index $F$ ranging over the inner faces of $D$. Here $|F|$ denotes the degree of the face $F$. As discussed in Section~\ref{sec:diss},  the random dissection $\mD_n^\omega$ of an $n$-gon that gets drawn with probability proportional to its $\omega$-weight is distributed like the random enriched tree $\mA_{n-1}^\cR$ for the weighted species $\cR^\kappa = \Seq \circ \Seq_{\ge 1}^\gamma$ with the weighting $\gamma$ given by $\Seq_{\ge 1}^\gamma(z) = \sum_{k=1}^\infty \gamma_{k+2} z^k$. Let $\mathbf{w} = (\omega_k)_k$ be the weight-sequence with $\omega_k = |\cR[k]|_\kappa$. 

The connection of random type I dissections and critical conditioned Galton--Watson trees has been fruitfully applied in work by Kortchemski~\cite{MR3178472}, Curien and Kortchemski \cite{MR3245291}, and Curien,  Haas and Kortchemski~\cite{MR3382675}, who provide both scaling limits and combinatorial applications. In particular, vertex and face degrees where studied in \cite[Sec. 4.2]{MR3245291}, and the approach taken in this section is  similar, although  additional ideas are required to treat the type II and III cases. We summarize the main results in this section and refer to Section~\ref{sec:proofsdis} for details and proofs.


\paragraph*{Type I dissections}

The random dissection $\mD_n^\omega$ possesses the rerooting invariance property. That is, its distribution as rooted plane graph does not change if we reroot at a uniformly at random chosen vertex. Hence we may apply Theorems~\ref{te:local} and \ref{te:strengthend}  to obtain Benjamini--Schramm convergence if $\mathbf{w}$ has type I.

\begin{theorem}[Benjamini--Schramm convergence, type I]
	\label{te:bsconvdist1}
	Suppose that the weight sequence $\mathbf{w}$ has type I. Then the limit object $(\hat{\cT}, \hat{\beta})$ corresponds to a random infinite planar map \label{pp:dh}$\hat{\mD}$. The random dissection $\mD_n^\omega$ converges toward $\hat{\mD}$ in the Benjamini--Schramm sense. If the $\mathbf{w}$ has type I$a$,  then it even holds that for any sequence of non-negative integers $k_n = o(n^{1/2})$
	\[
	\lim_{n \to \infty} d_{\textsc{TV}}(U_{k_n}(\mD_n^\omega), U_{k_n}(\hat{\mD})) \to 0.
	\]
\end{theorem}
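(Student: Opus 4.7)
The plan is to leverage the encoding of dissections as $(\Seq \circ \Seq_{\ge 1})$-enriched trees established in Equation~\eqref{eq:isodis}, which makes $\mD_n^\omega$ an instance of the random enriched tree $\mA_{n-1}^\cR$ for $\cR^\kappa = \Seq \circ \Seq_{\ge 1}^\gamma$. Via Lemma~\ref{le:coupling}, we may couple $\mD_n^\omega$ with the random $\cR$-enriched plane tree $(\cT_n, \beta_n)$ so that the labeled enriched tree determines $\mD_n^\omega$ up to a uniform relabelling. The first step is to verify the rerooting invariance alluded to in the statement: since any rooted dissection may be re-rooted at any of its vertices to yield another rooted dissection, and since the $\omega$-weight depends only on the unrooted face structure, the distribution of $\mD_n^\omega$ viewed as a rooted graph coincides with its distribution re-rooted at a uniformly chosen vertex. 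Consequently, it suffices to establish local convergence around the root-vertex of the dissection.

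The second step is to apply Theorem~\ref{te:local}, which gives $(\cT_n, \beta_n) \convdis (\hat{\cT}, \hat{\beta})$ in the space $\fmA$. Under the type I assumption, the modified Galton--Watson tree $\hat{\cT}$ is almost surely locally finite, hence the enriched limit $(\hat{\cT}, \hat{\beta})$ corresponds via the bijection underlying \eqref{eq:isodis} to a well-defined (random) locally finite planar map $\hat{\mD}$; the identification of its distribution is immediate from the construction in Theorem~\ref{te:local} and the combinatorial bijection. The third step is to translate convergence in $\fmA$ to local graph convergence. The key observation is that for each fixed $k$, the block-neighborhood $U_k(\mD_n^\omega)$ of the root-vertex (viewing each inner face as a block) is a deterministic function of the pruned enriched tree $(\cT_n, \beta_n)^{[k]}$, and similarly for $\hat{\mD}$; thus $U_k(\mD_n^\omega) \convdis U_k(\hat{\mD})$. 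Since $V_k \subset U_k$, this yields convergence of the graph-metric neighborhoods as well, and together with the rerooting invariance we obtain Benjamini--Schramm convergence $\mD_n^\omega \convdis \hat{\mD}$.

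For the type I$a$ refinement, note that I$a$ (i.e.\ $\nu > 1$) is a subcase of I$\alpha$ since then $\tau < \rho_\phi$ and hence $\sigma^2 = \tau \psi'(\tau) < \infty$. Theorem~\ref{te:strengthend} then gives
\[
d_{\textsc{TV}}\bigl((\cT_n, \beta_n)^{[k_n]}, (\hat{\cT}, \hat{\beta})^{[k_n]}\bigr) \to 0
\]
for any $k_n = o(\sqrt{n})$. Since the block-neighborhood $U_{k_n}$ of the root in the dissection is determined by the pruned enriched tree at height $k_n$, this yields $d_{\textsc{TV}}(U_{k_n}(\mD_n^\omega), U_{k_n}(\hat{\mD})) \to 0$, and combining with the rerooting invariance gives the claimed bound around a uniformly chosen vertex.

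The main delicate point will be making the identification between $(\hat{\cT}, \hat{\beta})$ and the infinite planar map $\hat{\mD}$ fully rigorous: one must check that in the type I regime every $\Seq_{\ge 1}$-structure along the modified spine is almost surely finite (so each corresponding face has finite degree), so that the combinatorial decomposition \eqref{eq:isodis} extends continuously to the limit. A secondary subtlety is ensuring that the rerooting invariance indeed holds in this weighted setting and not merely in the uniform case; this follows because the product weight $\omega(D) = \prod_F \gamma_{|F|}$ is invariant under changing the distinguished root-edge among the edges of the outer polygonal boundary (which is itself canonically determined by the unrooted map via the unique Hamilton cycle of the $2$-connected dissection).
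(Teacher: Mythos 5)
Your argument for the fixed-$k$ weak convergence part is essentially the same as the paper's: pass through the coupled enriched tree via Theorem~\ref{te:local}, exploit the rerooting invariance of face-weighted dissections, and translate neighbourhoods of the dissection into trimmings of the enriched tree. (One caveat on notation: the paper's $U_k$ is the block-metric neighbourhood of Section~\ref{sec:lowe}, and since a dissection is $2$-connected, $U_1$ is the \emph{entire} dissection. Your re-interpretation of $U_k$ as a face-adjacency neighbourhood is a private convention. Judging by the paper's own proof, the second claim of the theorem is really about $V_{k_n}$, i.e.\ graph-metric balls, with the $U$ in the statement apparently a typo.)

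The serious gap is in the type I$a$ total-variation refinement. You assert that the graph-metric neighbourhood $V_{k_n}(\mD_n^\omega)$ (or your face-neighbourhood $U_{k_n}$) of the root is ``determined by the pruned enriched tree at height $k_n$'' and then feed this into Theorem~\ref{te:strengthend}. That determinacy claim is false: a vertex of the dissection at graph-distance $k_n$ from the root can easily have height in $\cT_n$ much larger than $k_n$, because the tree encoding nests diagonals in a way that is not controlled by the graph metric. For fixed $\ell$ and fixed target neighbourhood $G$ one may always enlarge the trimming depth $h(\ell,G)$ to compensate, which is exactly how the weak-convergence step works — but when $k_n \to \infty$ you must keep $h_n = o(\sqrt{n})$, and that requires a quantitative comparison between dissection distances and tree heights. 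The paper obtains this from Curien, Haas and Kortchemski's estimate (\cite[Lem.~9, Eq.~(8)]{MR3382675}) relating $\he_{\tau_n}(v)$ to $\he_{\mD_n^\omega}(v)$ up to an error of order $\epsilon \max(\Di(\tau_n),\sqrt n)$, combined with tightness of $\Di(\tau_n)/\sqrt n$ from the CRT scaling limit of $\tau_n$ (Kortchemski~\cite{MR2946438}) and the elementary bound $\he_{\cT_n}(u)\le\he_{\tau_n}(u)$. None of this appears in your argument; without it, you have no control on the height of vertices in $V_{k_n}(\mD_n^\omega)$, so Theorem~\ref{te:strengthend} cannot be applied, and the asserted TV convergence does not follow.
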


The limit admits an easy description. Recall the parameter $\tau$ and the series $\phi(z)$ from Section~\ref{sec:types}.

\begin{remark}
		\label{re:disdist}
		The distribution of $\hat{\mD}$ is given as follows.
		\begin{enumerate}
			\item Let $(F_i)_{i \ge 1}$ a family of independent copies of a random number $F\ge 3$ with distribution given by
			\begin{align}
			\label{eq:distsbf}
			\Pr{F=k} = \gamma_k (k-1) \tau^{k-2}.
			\end{align}
			\item Let $(D_i)_{i \ge 1}$ be random polygons such that $D_i$ has degree $F_i$ for all $i$. For each polygon, we distinguish an arbitrary edge as its root-edge and we orient that edge in counter-clockwise direction. We form an infinite planar map by identifying the root-edge of $D_{i+1}$ with an uniformly at random chosen non-root-edge of $D_i$ for all $i$. The resulting object $D$ is a planar map that we consider as rooted at the root-edge of $D_1$. The sequence of root-edges forms the spine of $D$.
			\item Let $\mD$ denote a random dissection following the Boltzmann distribution $\mathbb{P}_{\cD, \tau / \phi(\tau)}$. We identify each non-spine edge of $D$ with a fresh independent copy of $\mD$ (attached from the outside) and let $\hat{\mD}$ denote the result.
		\end{enumerate}
\end{remark}

\begin{figure}[t]
	\centering
	\begin{minipage}{1.0\textwidth}
		\centering
		\includegraphics[width=0.8\textwidth]{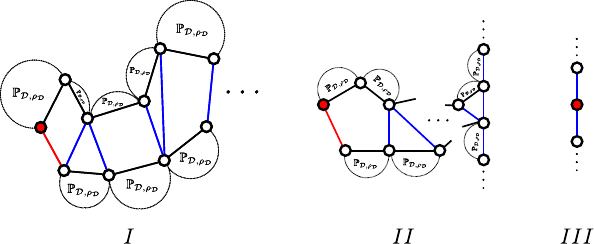}
		\caption{The three different types of Benjamini--Schramm limits of random face-weighted dissections of polygons.}
		\label{fi:limdis3}
	\end{minipage}
\end{figure}

Uniform dissections and triangulations of polygons where also studied by Bernasconi, Panagiotou and Steger \cite{MR2789731}. Using a different approach, they showed concentration results that imply laws of large numbers for the number of induced copies of (necessarily $2$-connected) subgraphs \cite[Thm. 1.4, 1.5]{MR2789731} and the degree of a random root \cite[Thm. 1.1]{MR2789731}.



\paragraph*{\em Type II dissections}

If $\mathbf{w}$ has type II, then we obtain convergence toward a limit object having a finite spine with path at its tip that grows to infinity in both directions, and corresponds to a face with large degree in $\mD_n^\omega$. The limit graph is illustrated in Figure~\ref{fi:limdis3}. Recall the parameter $\nu$ from Section~\ref{sec:types}.

\begin{theorem}[Benjamini--Schramm convergence, type II]
		\label{te:bsconvdis}
If $\mathbf{w}$ has type II, then the Benjamini--Schramm limit \label{pp:dh2} $\hat{\mD}$ of $\mD_n^\omega$ is given as follows.
\begin{enumerate}
	\item Draw a random integer $L\ge0$ that follows the geometric distribution
	\[
	\Pr{L=\ell} = \nu^\ell(1-\nu).
	\]
	\item Let $F$ denote the random integer with distribution given by 
	\[
		\Pr{F = k} = \gamma_k(k-1) \tau^{k-2} / \nu.
	\]
	\item Let $D_1, \ldots, D_L$ be random polygons such that the degree of $D_i$ is an independent copy of $F$ for all $i$. We consider each polygon as rooted at a directed edge in counter-clockwise direction. Form a planar map $D$ by identifying the root-edge of $D_{i+1}$ with a uniformly at random drawn non-root-edge of $D_{i}$ for all $1 \le i \le L-1$. Choose a random non-root-edge $e$ of $D_L$. We call the sequence of root-edges together with $e$ the spine of $D$.
	\item Identify the edge $e$ with an arbitrary edge of a path that grows to infinity in both directions. Let $D_P$ denote the result.
	\item Again let $\mD$ denote a random dissection following the Boltzmann distribution $\mathbb{P}_{\cD, \tau / \phi(\tau)}$. We identify each non-spine edge of $D_P$ with a fresh independent copy of $\mD$ (attached from the outside) and let $\hat{\mD}$ denote the result.
\end{enumerate}
\end{theorem}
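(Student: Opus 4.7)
My plan is to combine the Ehrenborg--M\'endez encoding of face-weighted dissections as $\cR^\kappa$-enriched trees with the condensation framework for type II simply generated trees and a Gibbs partition argument describing the shape of the offspring set at the condensation vertex. Via the isomorphism $\cD^\gamma \simeq \cX\cdot(\Seq\circ\Seq_{\ge 1}^\gamma)(\cD^\gamma)$ from~\eqref{eq:isodis}, the random dissection $\mD_n^\omega$ is identified with the random $\cR^\kappa$-enriched tree $\mA_{n-1}^\cR$ for $\cR^\kappa=\Seq\circ\Seq_{\ge 1}^\gamma$, whose tree vertices correspond bijectively to the non-root-edge vertices of the dissection. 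Under this coupling, a uniformly random vertex of the dissection differs from a uniformly random tree vertex only by the negligible event (probability $O(1/n)$) of hitting a root-edge endpoint, so Benjamini--Schramm convergence of $\mD_n^\omega$ around a uniform random vertex is a direct consequence of the local convergence of $\mA_{n-1}^\cR$ around a uniform random tree vertex as furnished by Theorem~\ref{te:thmco}.

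Specialising Theorem~\ref{te:thmco} to the present setting with $0<\nu<1$, the limit pointed enriched tree is the object $(\cT^\ast_n,\beta^\ast_n)$ from Subsection~\ref{sec:rndlimit}. Its spine $u_0,u_1,\dots,u_{i_1}$ begins at the centered vertex $u_0$, passes through $i_1-1$ vertices of independent finite size-biased outdegree (with $i_1-1\sim\Geom{1-\nu}$ counting the geometric waiting time before $\hat\xi_{i_1}=\infty$), and ends at the condensation vertex $u_{i_1}$ of giant outdegree $\tilde D_n\to\infty$ with a uniformly chosen child identified with $u_{i_1-1}$. All non-spine offspring initiate independent subcritical Galton--Watson trees enriched by weight-proportional $\cR^\kappa$-structures; a Boltzmann-sampler computation via Lemma~\ref{le:bole}(3), applied to the substitution $\cR^\kappa=\Seq\circ\Seq_{\ge 1}^\gamma$ and the fixed-point equation $\cD^\gamma \simeq \cX\cdot\cR^\kappa(\cD^\gamma)$, translates each such enriched Galton--Watson subtree into an independent Boltzmann dissection from $\mathbb{P}_{\cD,\tau/\phi(\tau)}$ glued at a non-spine edge.

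Translating the finite spine segment back via Ehrenborg--M\'endez, each finite-outdegree spine tree-vertex corresponds to one polygon $D_j$ in the chain, and the ``direction toward the next spine vertex'' picks one boundary edge of $D_j$ to carry the next chain neighbour. Using the explicit form $\pi_k=\tau^k\omega_k/\phi(\tau)$ from~\eqref{eq:tt} together with the combinatorial factor $k-1$ counting admissible boundary edges of a degree-$k$ polygon (and normalising by $\nu$ to account for restricting the size-biased outdegree to finite values) yields the asserted distribution $\Pr{F=k}=\gamma_k(k-1)\tau^{k-2}/\nu$, while the chain length is $L=i_1-1\sim\Geom{1-\nu}$ as required.

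The most delicate step is the analysis of the condensation vertex $u_{i_1}$ of giant outdegree $\tilde D_n$: its $\cR^\kappa$-structure on $\tilde D_n$ offspring, drawn proportional to weight, is a Gibbs partition of size $\tilde D_n$. Since $\cD^\gamma$ has positive radius of convergence (type II) and hence, by Lemma~\ref{le:subexp}, the series $\cD^\gamma(z)/z$ is subexponential, Lemma~\ref{le:gibbs} applies and forces this Gibbs partition to consist, with probability tending to one, of a single dominant $\Seq_{\ge 1}^\gamma$-block of size $\tilde D_n-O_p(1)$ together with a bounded remainder. The dominant block corresponds in the dissection to the unique face $F^\ast$ of enormous degree; the spine-child $u_{i_1-1}$ picks a uniformly random atom of this block, i.e.\ a uniformly random edge on $\partial F^\ast$, and since $|\partial F^\ast|\to\infty$, any fixed-radius neighborhood of that edge along $\partial F^\ast$ converges to a doubly-infinite path growing symmetrically in both directions. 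The bounded remainder of the Gibbs partition is absorbed into the Boltzmann dissections attached to non-spine edges and does not affect any fixed-radius neighborhood of the root. The hard part will be precisely this last verification: controlling that the uniformly random distinguished atom of the giant $\Seq_{\ge 1}^\gamma$-block produces a \emph{two-sided} infinite path rather than a one-sided or asymmetric limit, and that the $O_p(1)$ leftover components merge cleanly into the Boltzmann-branch description without perturbing the local picture.
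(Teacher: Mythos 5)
Your route is genuinely different from the paper's, but it has a concrete gap. The paper's proof (Section~\ref{sec:proofsdis}) uses the leaf-tree bijection of Equation~\eqref{eq:tautree} rather than the vertex-level $\Seq\circ\Seq_{\ge 1}^\gamma$-enriched-tree encoding you employ: it identifies $\mD_n^\omega$ with a simply generated tree $\tau_n$ with $n$ leaves whose vertices correspond to edges of the dissection, shows in Lemma~\ref{le:cordt} that the map $\bar{\tau}\colon \mathfrak{D}\to\fmT_\ell$ is a homeomorphism, and invokes Lemma~\ref{le:convll} (which for type~II cites Abraham--Delmas) for the convergence $\tau_n\convdis\hat{\tau}$ in $\fmT$; rerooting invariance of face-weighted dissections then upgrades this root-edge convergence to Benjamini--Schramm convergence, and no Gibbs-partition analysis at a condensation vertex is needed at all.

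Your alternative approach via Theorem~\ref{te:thmco} and the $\cR^\kappa=\Seq\circ\Seq_{\ge 1}^\gamma$ encoding is feasible in principle (it is in fact essentially the route the paper does take for face-weighted outerplanar maps in Theorem~\ref{te:facethm}), but the crucial step where you argue ``Since $\cD^\gamma$ has positive radius of convergence\ldots\ by Lemma~\ref{le:subexp}, the series $\cD^\gamma(z)/z$ is subexponential, Lemma~\ref{le:gibbs} applies'' is incorrect. The Gibbs partition at the condensation vertex is a $\Seq\circ\Seq_{\ge 1}^\gamma$-structure, so to apply Lemma~\ref{le:gibbs} you would need $\Seq_{\ge 1}^\gamma(z)$ (the face-degree generating series) to lie in a class $\mathscr{S}_d$, not $\cD^\gamma(z)/z$; Lemma~\ref{le:subexp} concerns the enriched-tree species, not its constituent inner blocks. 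Subexponentiality of $\Seq_{\ge 1}^\gamma$ is \emph{not} a consequence of $\mathbf{w}$ having type~II, since the face-weights $(\gamma_k)_k$ are essentially arbitrary. You also correctly flag the two-sided-limit verification and the absorption of the $O_p(1)$ remainder as unresolved. Both problems are sidestepped by the leaf-tree encoding: there the giant face is exactly the vertex of infinite degree in the modified Galton--Watson limit of $\tau_n$, so its emergence and two-sided structure come for free from Lemma~\ref{le:convll} and the homeomorphism, without any hypotheses on the inner class $\Seq_{\ge 1}^\gamma$.
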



\begin{corollary}
	\label{co:itsclear}
	Theorem~\ref{te:compsize} and Proposition~\ref{pro:new} yield bounds and limit laws for the sizes $B_{(i)}$ of the $i$th largest faces in the random dissection $\mD_n^\omega$ for various cases of weight sequences. In the non-generic type II setting, we hence obtain a central limit theorem for the size of the largest face.
\end{corollary}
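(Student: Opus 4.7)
The plan is to apply Theorem~\ref{te:compsize} and Proposition~\ref{pro:new} directly to the identification of $\mD_n^\omega$ as the random enriched tree of type $\cR^\kappa = \Seq \circ \Seq_{\ge 1}^\gamma$ described in Section~\ref{sec:diss}, by taking $\cF^\upsilon = \Seq$ and $\cG^\gamma = \Seq_{\ge 1}^\gamma$ in the compound decomposition $\cR^\kappa = \cF^\upsilon \circ \cG^\gamma$ required in Theorem~\ref{te:compsize}.

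The first step is to match the combinatorial parameters. Under the bijection of Section~\ref{sec:diss}, a $\Seq_{\ge 1}^\gamma$-component of size $k$ in the enriched tree corresponds to an inner face of the dissection whose degree equals $k+2$ (the two extra edges arise from the root-edge of the subdissection, whose endpoints do not contribute to the component size). Hence if $B_{(1)} \ge B_{(2)} \ge \cdots$ denote the descendingly ordered $\cG$-component sizes of $\mA_{n-1}^\cR$ as in Theorem~\ref{te:compsize}, then the degree of the $i$th largest inner face $F_{(i)}$ of $\mD_n^\omega$ satisfies $F_{(i)} = B_{(i)} + 2$, so bounds and limit laws for $B_{(i)}$ translate immediately into ones for $F_{(i)}$.

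Since $[z^k] \Seq_{\ge 1}^\gamma(z) = \gamma_{k+2}$, one directly reads off whether the hypotheses of the various cases of Theorem~\ref{te:compsize} are satisfied from the behaviour of the face-weight sequence $(\gamma_k)_{k \ge 3}$. In the non-generic type II regime, the weight sequence $\mathbf{w}$ associated to $\cR^\kappa$ has type II, and if additionally $\gamma_{k+2}$ is of the form $f(k) k^{-\beta} \rho^{-k}$ for some $\rho > 0$, some $\beta > 2$ and some function $f$ slowly varying at infinity, then Proposition~\ref{pro:new} applies (with $\cF^\upsilon(z) = 1/(1-z)$ analytic at $\cG^\gamma(\rho) < 1$, which holds since $\mathbf{w}$ has type II) and yields the central limit theorem
\[
\frac{(1 - \nu)(n-1) - B_{(1)}}{g(n) n^{1/\alpha}} \convdis Y_1
\]
with $\alpha = \min(2, \beta - 1)$ and $Y_1$ an $\alpha$-stable random variable as in Proposition~\ref{pro:new}, together with $B_{(2)} = O_p(g(n) n^{1/\alpha})$. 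Translating via $F_{(i)} = B_{(i)} + 2$, the additive shift by $2$ is negligible compared with the scaling $n^{1/\alpha}$, so we obtain the analogous central limit theorem for the degree $F_{(1)}$ of the largest face and a matching stochastic upper bound for $F_{(2)}$. No step presents an essential obstacle: the content is entirely bookkeeping between the component sizes governed by Theorem~\ref{te:compsize} and Proposition~\ref{pro:new} and the face degrees of $\mD_n^\omega$ under the enriched-tree bijection.
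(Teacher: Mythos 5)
Your proposal is correct and is precisely the argument the paper leaves implicit: the paper does not give a separate proof of this corollary, treating it as an immediate application of Theorem~\ref{te:compsize} and Proposition~\ref{pro:new} to the decomposition $\cR^\kappa = \Seq \circ \Seq_{\ge 1}^\gamma$ from Section~\ref{sec:diss}. Your bookkeeping between $\cG$-component sizes and face degrees ($F_{(i)} = B_{(i)} + 2$, and $n$ versus $n-1$) is the right dictionary, and these additive shifts are indeed negligible against the $n^{1/\alpha}$ and $n$ scales. One very minor inaccuracy: the shift of $2$ does not come from ``the root-edge of the subdissection'' as you phrase it, but rather from the decomposition $\cD \simeq \cX + \Seq_{\ge 2} \circ \cD$ (a face of degree $d$ gives a $\Seq_{\ge 2}$-structure of size $d-1$, the root-edge being absorbed) together with the Ehrenborg--M\'endez splitting $\Seq_{\ge 2} \simeq \cX \cdot \Seq_{\ge 1}$ (which removes one more unit of size). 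Since you only use the value $2$ of the shift, this does not affect the correctness of the proof.
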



\paragraph*{\em Type III dissections }

In the type III regime, the limit enriched tree consists of a  single vertex with infinitely many offspring, all of which are leaves.  The root corresponds to a vertex with large degree in the enriched tree representation of $\mD_n^\omega$. That is, a $\Seq \circ \Seq_{\ge 1}^\gamma$ structure with a large random total size that depends on $n$. The number of components equals the number of inner faces to which the root-vertex of $\mD_n^\omega$ is incident. The idea of the following result is that with high probability the root vertex is incident to only a single inner face, and any fixed-sized neighbourhood of the root looks with high probability like a straight line.

\begin{theorem}[Benjamini--Schramm convergence of type III dissections of polygons]
	\label{te:gibbst3}
	If the weight sequence $\mathbf{w}$ has type III, then the Benjamini--Schramm limit of $\mD_n^\omega$ is a doubly-infinite path as illustrated in Figure~\ref{fi:limdis3}.
\end{theorem}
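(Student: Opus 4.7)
\emph{Plan of the proof.} The strategy combines the cyclic-rotation invariance of face-weighted dissections with the superexponential concentration implied by type III. Rotational symmetry reduces Benjamini--Schramm convergence to a single vertex-degree estimate, and the super-exponential face-weights ensure that a uniformly random vertex has degree two with probability tending to one.

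Since $\omega(D) = \prod_F \gamma_{|F|}$ is invariant under cyclic rotations of the polygon's vertex labels, the underlying unrooted random planar map of $\mD_n^\omega$ has a rotation-invariant law. Hence, for every $k \ge 0$ and every rooted graph $G^\bullet$,
\[
\Pr{U_k(\mD_n^\omega, v_n) \simeq G^\bullet} = \Pr{U_k(\mD_n^\omega, 1) \simeq G^\bullet},
\]
where $v_n$ is uniformly random on the polygon vertices. Writing $D_n$ for the number of diagonals of $\mD_n^\omega$, the Euler relation gives $\Ex{d(v_n) - 2} = 2 \Ex{D_n} / n$. Thus, provided one can establish $\Ex{D_n} = o(n)$, Markov's inequality together with a union bound over the $2k + 1$ polygon neighbours of $v_n$ shows that $U_k(\mD_n^\omega, v_n)$ is, with probability tending to one, a path of length $2k$ rooted at its midpoint, which is exactly Benjamini--Schramm convergence toward the deterministic doubly-infinite path.

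To obtain the bound $\Ex{D_n} = o(n)$, I would pass through the bijection of Section~\ref{sec:diss} and Lemma~\ref{le:coupling}, which couples $\mD_n^\omega$ with a random $\Seq \circ \Seq_{\ge 1}^\gamma$-enriched plane tree $(\cT_n, \beta_n)$ of type III in such a way that $D_n + 1$ equals the total number of $\Seq_{\ge 1}^\gamma$-components of $\beta_n(v)$ summed over the vertices $v$ of $\cT_n$. Theorem~\ref{te:local} gives $(\cT_n, \beta_n) \to (\hat{\cT}, \hat{\beta})$ in $\fmA$, and in type III $\hat{\cT}$ is a star whose non-root vertices are leaves carrying the trivial size-zero $\cR$-structure, while $\hat{\beta}(\emptyset) = \infty$. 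Supplementing this pointwise statement with Lemma~\ref{le:supen}, the enriched-tree coefficients $a_i = [z^i] \cD^\omega(z)$ satisfy the superexponential condition \eqref{eq:starter}; applied iteratively to the recursive decomposition $\cD \simeq \cX \cdot (\Seq \circ \Seq_{\ge 1}^\gamma)(\cD)$, this forces the typical dissection to consist of a single giant inner face of degree of order $n$ together with only $O_p(1)$ smaller faces. A first-moment computation based on this structural description then yields $\Ex{D_n} = o(n)$.

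The main obstacle is in the last step, where one must convert the pointwise $\fmA$-convergence of Theorem~\ref{te:local} and the qualitative estimate \eqref{eq:starter} of Lemma~\ref{le:supen} into the quantitative first-moment bound $\Ex{D_n} = o(n)$. The cleanest route is to establish uniform integrability of the number of $\Seq_{\ge 1}^\gamma$-components at a uniformly random vertex of the enriched tree, combined with the rerooting invariance from the first step, so as to control the global sum over $v \in \cT_n$ of the component counts of $\beta_n(v)$ by $n$ times the expected component count at a single random vertex.
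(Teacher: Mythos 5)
Your approach is genuinely different from the paper's, and the gap you flag is real and, I believe, hard to close along the route you sketch. The reduction via rotation invariance, Euler's formula, and a union bound is sound: $\Ex{D_n} = o(n)$ does imply Benjamini--Schramm convergence to the doubly-infinite path. But this moment bound is strictly stronger than the theorem's conclusion (local weak convergence controls only $\Pr{d(v_n) = 2}$, not $\Ex{d(v_n)}$), and the tools you invoke do not deliver it. Your proposed fix --- uniform integrability of the $\Seq_{\ge 1}^\gamma$-component count of $\beta_n(v_0)$ at a uniform random vertex $v_0$ of $\cT_n$ --- fails at the first hurdle: in type III the outdegree $d^+_{\cT_n}(v_0)$, which dominates the component count, has expectation $(n-2)/(n-1)\to 1$ while it converges to $0$ in probability, because essentially all the outdegree mass sits on the root, which has degree $n - O_p(1)$. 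So the problem reduces to controlling the component count of the $\Seq \circ \Seq_{\ge 1}^\gamma$-Gibbs-partition of random size $\approx n$ sitting at the root of $\cT_n$. The paper explains in Remark~\ref{re:non-analytic} exactly why this is delicate: for a \emph{deterministic} size $k$, the probability that such a partition is a single component need not converge at all, with Bell~\cite{MR1771616} giving examples where $\liminf < 1$; hence neither Lemma~\ref{le:supen} nor Theorem~\ref{te:need} applies off the shelf, contrary to what your plan assumes.

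The paper's proof avoids any moment estimate. It couples $\mD_n^\omega$ with the plane tree $\tau_n$ with $n$ leaves (Section~\ref{sec:lgwt}, Lemma~\ref{le:couplingschroeder}), whose internal vertices are the inner faces, and proves in Lemma~\ref{le:convll} that $\tau_n \convdis \hat{\tau}$, the infinite star, in $\fmT$. The key idea for type III is to apply \emph{two} variants $\theta^1,\theta^2$ of the Ehrenborg--M\'endez transformation of Figure~\ref{fi:ehrenb2}, producing two different simply generated trees with $n$ vertices and superexponential weights; both converge to infinite stars by Theorem~\ref{te:convsigen}, and combining the two forces an unbounded initial run of leaf-children at the root of $\tau_n$. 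The homeomorphism $\bar{\tau}\colon \mathfrak{D}\to\fmT_\ell$ of Lemma~\ref{le:cordt} then directly translates convergence of $\tau_n$ into convergence of graph-distance neighbourhoods of $\mD_n^\omega$, which together with rerooting invariance yields the doubly-infinite path. This purely topological transfer is what sidesteps the random-size Gibbs-partition obstruction that your moment-bound route must confront.
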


Considered from a Gibbs-partition viewpoint, Theorem~\ref{te:gibbst3} is actually a small surprise. It is a priori not clear at all why the $\Seq \circ \Seq_{\ge 1}^\gamma$-structure belonging to the root of $\mD_n^\omega$ with a large {\em random} size should consist with high probability of a single component, when such a behaviour does not need to hold for random $\Seq \circ \Seq_{\ge 1}^\gamma$-structures with a large {\em deterministic} size:

\begin{remark}[Gibbs partitions: the non-analytic case]
\label{re:non-analytic}
 Being in the type III regime means that $\Seq_{\ge 1}^\gamma(z)$ is not analytic at the origin. If we take a random compound structure $\mS_k$ from $(\Seq \circ \Seq_{\ge 1}^\gamma)[k]$ with probability proportional to its weight for a {\em deterministic} $k$, and let $k$ tend to infinity, then the probability 
\[
	r_k = \frac{[z^k] \Seq_{\ge 1}^\gamma(z)}{[z^k] \Seq \circ \Seq_{\ge 1}^\gamma(z)}
\]
that $\mS_k$ consists of a single component may not converge at all. More precisely, the fact that $\Seq_{\ge 1}^\gamma(z)$ has radius of convergence $0$ ensures that
\begin{align}
	\label{eq:mayshow}
	\limsup_{k \to \infty} r_k = 1,
\end{align}
but there are concrete examples for which the limes inferior belongs to $[0, 1[$. This has been shown by Bell~\cite{MR1771616} for  composition schemes of the form $\Set \circ \cF^\kappa$, and his arguments may easily be adapted to verify \eqref{eq:mayshow}. It is also not hard to construct aperiodic weightings for which $r_k = 0$ for infinitely many $k$.

The $\Seq \circ \Seq_{\ge 1}^\gamma$-structure belonging to the root of $\mD_n^\omega$ is distributed like $\mS_{K_n}$ for a random integer $K_n$ independent from $(\mS_k)_k$, that satisfies $\Pr{\mK_n \ge \Omega_n} \to 1$ for some deterministic sequence $\Omega_n \to \infty$. If we look at the distribution of $\mK_n$ directly, then it is not clear why it behaves so nicely such that $\mS_{\mK_n}$ has with high probability only a single component, as $n$ becomes large. We circumvent this issue in the proof of Theorem~\ref{te:gibbst3}, by using the  flexibility of the Ehrenborg--M\'endez isomorphism to treat arbitrary non-analytic $\gamma$-weights.
\end{remark}

Theorem~\ref{te:gibbst3} may be reformulated to show that the random plane tree $\cT_\ell^\omega$ with $\ell$ leaves from Section~\ref{sec:lgwt} converges in the type III regime toward the same infinite star as simply generated trees do. See  Lemma~\ref{le:cordt} and Lemma~\ref{le:convll}  for a precise statement of this fact.

\subsubsection{Applications to random $k$-trees}
\label{sec:ktrees}
We are interested in obtaining the Benjamini--Schramm limit of the random $k$-dimensional tree $\mK_n$ with $n$ hedra as $n$ becomes large. A local limit around (a fixed vertex of) a uniformly at random drawn front was established in  \cite{2016arXiv160505191D}. As we shall see, the two limits are distinct, which is already evident from the fact that the degrees of the root vertices tends to different limit distributions. Interestingly, the two limits are however identically distributed as unrooted graphs.

We use the notation of Section~\ref{sec:decktrees}. As discussed there,  $\mK_{n}$ is distributed like the uniform front-rooted $k$-tree $\mK_n^\circ$ from the front-rooted class $\cK^\circ$, which by Equations~\eqref{eq:lab1} and \eqref{eq:lab2} admits the decomposition
\[
\cK^\circ \simeq \Set(\cK_1^\circ), \qquad \cK_1^\circ \simeq \cX \cdot \Set^k(\cK_1^\circ).
\]
Here $\cK_1^\circ$ denotes the class of front-rooted $k$-trees where the root-front is contained in precisely one hedron and consists of $k$ distinct $*$-place-holder vertices that do not contribute to the total size of the object. The decomposition shows that the uniform random $n$-sized element $\mK_{1,n}^\circ$ from the class $\cK_1^\circ$ corresponds to the random enriched tree $\mA_n^\cR$ for $\cR = \Seq_{\{k\}} \circ \Set$. The weight-sequence $\mathbf{w} = (\omega_k)_k$ is defined accordingly by $\omega_k = |\cR[k]|$.

 As $\phi(z)= \cR(z) = \exp(k z)$ is infinite when evaluated at its radius of convergence $\infty$, a general criterion given in \cite[Lem. 3.1]{MR2908619} applies and ensures that $\mathbf{w}$ has type I$a$. Thus \cite[Thm. 18.11]{MR2908619} yields that
\[
[z^n] \cK_1^\circ(z) \sim \sqrt{\frac{\phi(\tau)}{2 \pi \phi''(\tau)}} \left ( \frac{\phi(\tau)}{\tau} \right )^n n^{-3/2} = \frac{1}{k \sqrt{2 \pi}} (e k)^n n^{-3/2},
\]
as $\tau$ is defined by $\phi'(\tau)\tau = \phi(\tau)$ which yields $\tau = 1/k$.
Hence Lemma~\ref{le:gibbs} ensures that the decomposition $\Set \circ \cK_1^\circ$ has convergent type, that is,
\begin{align}
\label{eq:totalv}
\lim_{n \to \infty} d_{\textsc{TV}}(\mK_n, \mK^\circ + \mK_{1,n-|\mK|}^\circ) =0.
\end{align}
Here $\mK^\circ$ denotes a random $k$-tree that follows the Boltzmann-distribution $\mathbb{P}_{\cK^\circ,\frac{1}{ek}}$ and is independent from $(\mK_{1,n}^\circ)_n$. That is, $\mK^\circ$ is obtained  by glueing $\text{Poisson}( \cK_1^\circ(\frac{1}{ek}))$-many (with $\cK_1^\circ(\frac{1}{ek})=\tau=1/k$ by Equation \eqref{eq:z2}) independent $\mathbb{P}_{\cK_1^\circ,\frac{1}{ek}}$-distributed $\cK_1^\circ$-objects together at their root-fronts. If the Poisson-distributed number equals zero, then we define it to be just a single root-front.  Moreover, $\mK^\circ + \mK_{1,n -|\mK|}^\circ$ denotes the graph obtained by canonically identifying the root-fronts of the two $k$-trees $\mK^\circ$ and $\mK_{1,n -|\mK|}^\circ$ with each other. This is only well-defined if $|\mK^\circ| < n$, but the probability for this event tends to one as $n$ becomes large.

Theorem~\ref{te:thmben} that describes the asymptotic behaviour of  extended fringe subtrees in enriched trees implies that $\mK_{n,1}^\circ$ converges in the Benjamini--Schramm sense toward the limit graph $\hat{\mK}$ that corresponds to the  enriched tree $(\cT^*, \beta^*)$ according to the bijection in Section~\ref{sec:decktrees}, and we may use Equation~\eqref{eq:totalv} to deduce that $\mK_n$ also converges in the Benjamini--Schramm sense. 

Even more ambitiously, we may establish total variational convergence of arbitrary $o(\sqrt{n})$-neighbourhoods. This is best possible, as the diameter of $\mK_n$ has order $\sqrt{n}$. We also provide a simple description of the distribution of the limit $\hat{\mK}$  in Remark~\ref{re:limktreedis} below, as the interpretation of $(\cT^*, \beta^*)$ as a graph requires some thought.

\begin{theorem}
	\label{te:anamesname}
	Let \label{pp:hk}$\hat{\mK}$ denote the infinite random enriched tree corresponding to the limit $\cR$-enriched tree $(\cT^*, \beta^*)$ according to the bijection in Section~\ref{sec:decktrees}. Let $v_n$ be a uniformly at random drawn vertex from the random $n$-vertex $k$-tree $\mK_n$ and let $t_n= o(\sqrt{n})$ be a series of positive integers. Then the total variational distance between the graph-distance neighbourhoods $V_{t_n}(\cdot)$ of $(\mK_n, v_n)$ and $\hat{\mK}$ converges to zero, that is
	\[
		\lim_{n \to \infty} d_{\textsc{TV}}(V_{t_n}(\mK_n, v_n), V_{t_n}(\hat{\mK})) = 0.
	\]
	This establishes $\hat{\mK}$ as the Benjamini--Schramm limit of the random $k$-tree $\mK_n$.
\end{theorem}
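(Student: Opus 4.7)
The plan is to transfer the total variational convergence of enriched fringe subtrees from Theorem~\ref{te:thmben}(2) across the bijection of Section~\ref{sec:decktrees} in order to conclude convergence of graph-distance balls. First, using Equation~\eqref{eq:totalv} I reduce to the front-rooted model $\mK_{1,n}^\circ$: since $|\mK^\circ|$ is $\mathbb{P}_{\cK^\circ, 1/(ek)}$-distributed and hence $O_p(1)$, the uniformly random vertex $v_n$ of $\mK_n$ lies with probability $1 - O(1/n)$ among the tree-vertices of $\mK_{1,n-|\mK^\circ|}^\circ$ and is conditionally uniform there; moreover, on the same event the graph-ball $V_{t_n}(\mK_n, v_n)$ coincides with $V_{t_n}(\mK_{1,n-|\mK^\circ|}^\circ, v_n)$ provided the ball does not reach the root-front, a point ensured below.

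Second, since the weight-sequence associated to $\cR = \Seq_{\{k\}} \circ \Set$ is of type $\mathrm{I}\alpha$ (indeed of type $\mathrm{I}a$, as recorded in Section~\ref{sec:ktrees}), Theorem~\ref{te:thmben}(2) applied to the enriched plane-tree coupling $(\cT_n, \beta_n)$ of Lemma~\ref{le:coupling} gives, for any $k_n = o(\sqrt{n})$,
\[
d_{\textsc{TV}}\bigl(f_{k_n}((\cT_n, \beta_n), v_n),\, f_{k_n}(\cT^*, \beta^*)\bigr) \longrightarrow 0.
\]
I would choose $k_n = C t_n$ for a sufficiently large constant $C = C(k)$, which is still $o(\sqrt{n})$.

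Third, and this is the main obstacle, I would show that on an event of probability $1 - o(1)$ the graph-ball $V_{t_n}(\mK_{1,n}^\circ, v_n)$ is a deterministic measurable function of the enriched fringe subtree $f_{k_n}((\cT_n, \beta_n), v_n)$, and the same function applied to $f_{k_n}(\cT^*, \beta^*)$ returns $V_{t_n}(\hat{\mK})$. Unrolling the recursive identity $F_w = \{a_1\} \cup (F_{a_1} \setminus \{z(w)\})$, where $a_1$ is the tree-parent of $w$ and $z(w)$ is read off from the slot of the parent's $\cR$-structure in which $w$ sits, shows that every element of $F_w$ is either a tree-ancestor of $w$ or a root-front placeholder. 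Conversely, the tree-descendants $w$ having a fixed ancestor $v$ in $F_w$ form a subtree whose per-generation mean offspring is bounded by $(k-1)/k$, strictly subcritical for $k \ge 2$ (the case $k=1$ is trivial because then graph-distance equals tree-distance). Consequently the tree-span of a single graph-step is an almost surely finite random variable with light tails, and iterating along a graph path of length $t_n$ the total tree-span is dominated by a sum of $O(t_n)$ i.i.d.\ copies with finite exponential moments; the deviation bound of Lemma~\ref{le:deviation} then yields that with probability $1 - o(1)$ every vertex in $V_{t_n}(\mK_{1,n}^\circ, v_n)$ lies within tree-distance $k_n$ of $v_n$, once $C$ is chosen large enough. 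Combined with Inequality~\eqref{eq:ineqstr1}, which ensures that the tree-height of $v_n$ exceeds $k_n$ with probability $1 - o(1)$, the ball lies inside the fringe subtree, and the incidences between its vertices are fully reconstructible from the $\cR$-data therein via the recursive front identity.

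Fourth, since total variation distance is non-increasing under measurable maps and the discrepancy on the exceptional events contributes only $o(1)$, the convergence in the second step transfers to $d_{\textsc{TV}}(V_{t_n}(\mK_{1,n}^\circ, v_n), V_{t_n}(\hat{\mK})) \to 0$, and combining with the first step finishes the proof. The delicate part is the third step; once the subcriticality of the $v$-front descendant process is exploited, the remainder reduces to standard deviation analysis together with the deterministic reconstruction of hedra from slot data.
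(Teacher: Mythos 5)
Your high-level strategy matches the paper's: reduce from $\mK_n$ to the front-rooted model $\mK_{1,n}^\circ$ via Equation~\eqref{eq:totalv}, then apply Theorem~\ref{te:thmben}(2) to get total-variational convergence of $o(\sqrt n)$-height enriched fringe subtrees, and argue that the graph-distance ball of radius $t_n$ is with high probability determined by a tree-distance ball of radius $s_n = o(\sqrt n)$.

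The gap is in your third step, where you attempt to establish that $V_{t_n}(\mK_{1,n}^\circ, v_n)$ is with high probability contained in the tree-ball of radius $k_n = Ct_n$. Your argument--that the tree-span of a single graph-step has light tails because the front-descendant process is subcritical with per-generation mean $(k-1)/k$, and that iterating gives a sum dominated by $O(t_n)$ i.i.d.\ copies to which Lemma~\ref{le:deviation} applies--has two flaws. First, the spans of consecutive graph-steps are not independent: they are functionals of overlapping portions of the random enriched tree, and the vertex attaining the maximum tree-distance in the graph-ball is itself selected depending on the realised tree, so you cannot treat a witnessing graph-path as a priori fixed. Second, even granting i.i.d.\ domination, Lemma~\ref{le:deviation} only yields a per-path failure probability of order $\exp(-ct_n)$; this is not $o(1/n)$ when $t_n$ grows slowly (say $t_n = O(\log n)$ or bounded), so a union bound over the up to $n$ candidate vertices at tree-distance $> k_n$ does not close. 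Since the theorem is stated for \emph{any} $t_n = o(\sqrt n)$, including bounded sequences, this regime must be handled.

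The paper sidesteps both issues by invoking the concentration estimate of Lemma 6 in the cited work on $k$-trees: with probability at least $1 - C n^{-\log^c n}$ (super-polynomially close to $1$, so the union bound over all $\binom{n}{2}$ pairs is free), \emph{every} pair $x, y$ of tree-vertices satisfies
\[
|d_{\mK_{1,n}^\circ}(x,y) - \mathfrak{m}_k\, d_{\cT_n}(x,y)| \le \max\bigl(d_{\cT_n}(x,y)^{3/4}, \log^3 n\bigr),
\]
after which one may without loss of generality take $t_n \ge n^{1/4}$ (replacing smaller $t_n$ by $n^{1/4}$ only makes the claim harder), and the containment of the graph-ball in an $o(\sqrt n)$ tree-ball follows directly. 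Making your subcriticality heuristic rigorous would amount to reproving a version of that concentration inequality, which is precisely the work the paper outsources to the citation. You would also need to record, as the paper does, that with high probability the graph-ball misses the root-front placeholder vertices, which requires the left-tail bound~\eqref{eq:gwtlefttail} ensuring $\he_{\cT_n}(v_n) > s_n$.
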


We are going to describe the distribution of the Benjamini--Schramm limit $\hat{\mK}$ in detail using a random walk and family of independent Boltzmann distributions as illustrated in Figure~\ref{fi:liktree}.

\begin{figure}[t]
	\centering
	\begin{minipage}{1.0\textwidth}
		\centering
		\includegraphics[width=0.3\textwidth]{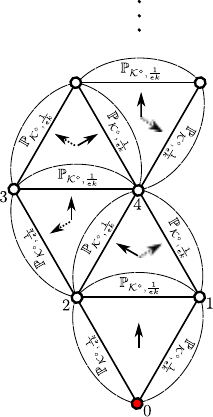}
	\caption{The Benjamini--Schramm limit of random $k$-trees.}
	\label{fi:liktree}
	\end{minipage}
\end{figure}

\begin{remark}
	\label{re:limktreedis}
	The limit $\hat{\mK}$ corresponding to $(\cT^*, \beta^*)$ may be described as follows.
	\begin{enumerate}
		\item Imagine a random walker that sits inside of a hedron, which we may interpret as a convex subset in $k$-dimensional space. For example, for $k=2$ the walker sits inside of a triangle. He leaves the hedron by passing through any of the $k+1$ fronts. We label the unique vertex not contained in this front as $u_0$. We take another hedron and glue one of its fronts from the outside to the front through which the walker just passed, so that the walker finds himself trapped again. That is, he is now trapped in the second hedron of a $k$-tree with two hedra in total.
		\item In the $i$th step (we start with $i=1$), the random walker  chooses uniformly at random one of the $k$ fronts of his prison hedra through which he has {\em not} passed before. We label the unique vertex not contained in this front with $u_i$. He leaves his prison $k$-tree by passing through the chosen front, and we trap him as before by attaching a new hedra from the outside.
		\item This yields a $k$-tree $\mG$  consisting of an infinite ordered sequence of glued-together hedra. The vertices of the $k$-tree are labelled by $u_0, u_1, \ldots$. As in Equation~\eqref{eq:totalv}, let $\mK^\circ$ denote a random front-rooted $k$-tree that follows the Boltzmann-distribution $\mathbb{P}_{\cK^\circ,\frac{1}{ek}}$. For each front $F$ of the infinite $k$-tree $\mG$ take a fresh independent copy $\mK_F^\circ$ of $\mK^\circ$ and identify $F$ with the root-front of $\mK_F^\circ$ in a canonical way. Let $\hat{\mK}$ denote the result and root it at the vertex $u_0$.
	\end{enumerate}
	We included  the vertices $u_0, u_1, \ldots$ in the description of $\hat{\mK}$ as they correspond to the backwards growing spine of the infinite $\cR$-enriched tree  $(\cT^*, \beta^*)$.  The limit is illustrated in Figure~\ref{fi:liktree}, where the red vertex corresponding to $u_0$ being the root. 
\end{remark}

From this, we may deduce the precise limit distribution of a random vertex in the random $k$-tree $\mK_n$.

\begin{remark}
	\label{re:ktreesrem}
	The  root-degree $d_{\hat{\mK}}(u_0)$ has finite exponential moments and its probability generating function $u(z)$ given by 
	\[
		u(z) = z^k v(z)^k, \qquad v(z) = exp((z v(z)^k -1)/k).
	\]
	That is, for $k=2$ it is given by the number of vertices  of a forest of $k$ subcritical Galton--Watson trees with offspring distribution $\text{Poisson}(1/2)$. In general, the distribution is given by the number of type $A$ vertices in a 2-type Galton--Watson tree, where  type $A$ vertices have no offspring, the root always has type $B$, and the $(A,B)$-offspring of a type $B$ vertex is given by $(Z, (k-1)Z)$, for a $\text{Poisson}(1/k)$-distributed random variable $Z$.
\end{remark}

Since $2$-trees are planar, it follows by a general result \cite[Thm. 1]{MR3010812} on distributional limits of planar graphs that the limit $\hat{\mK}$ is almost surely recurrent for $k=2$. In \cite[Thm. 2]{2016arXiv160505191D} a local weak limit $\hat{\mK}^\circ$ was established that describes the convergence of neighbourhoods of a fixed vertex of the root-front in a random front-rooted $k$-tree as its number of hedra tends to infinity. The two limits are distinct as rooted graphs, since their root-degrees follow different distributions. However, they are identically distributed as {\em unrooted} graphs:

\begin{remark}
	\label{re:comparison}
	We may compare the Benjamini--Schramm limit $\hat{\mK}$ of the random $k$-tree $\mK_n$ with the limit $\hat{\mK}^\circ$ that describes the convergence of neighbourhood of a uniform random vertex of a uniform random front in $\mK_n$. Our main observation is that their root-degrees have different distributions, but the two graphs are identically distributed as {\em unrooted} graphs. 
	
	The limit   $\hat{\mK}^\circ$ of \cite[Thm. 2]{2016arXiv160505191D} admits a description similar as in Remark~\ref{re:limktreedis} with the following difference. For $\hat{\mK}$ we started with an initial hedra that the random walker leaves through a front and we label the opposite vertex as the root $u_0$. So in the construction of $\hat{\mK}$, $u_0$ is always incident to precisely $k$ independent $\mathbb{P}_{\cK^\circ, \frac{1}{ek}}$-distributed objects. For $\hat{\mK}^\circ$ we have to start with an initial hedra that is rooted at a front and one of the vertices of that front is distinguished as the root-front. The random walker then leaves the hedra through a uniformly at random drawn non-root-front and proceeds afterwards as in step {\em (2)} of Remark~\ref{re:limktreedis}, that is, avoiding fronts through which he has passed before. So in the resulting limit $\hat{\mK}^\circ$, the root is incident to a  random number of independent $\mathbb{P}_{\cK^\circ, \frac{1}{ek}}$-distributed objects and that number is with probability $(k-1)/k$ strictly larger than $2$. Now if we simply reroot $\hat{\mK}^\circ$ at the vertex opposite of the front through which the  random walker leaves the initial hedra, then the result is distributed like $\hat{\mK}$. So $\hat{\mK}$ may be obtained from $\hat{\mK}^\circ$ by rerooting at a random location depending on the first move of the random walker. This verifies that $\hat{\mK}$ and $\hat{\mK}^\circ$ are identically distributed as {\em unrooted} graphs.
\end{remark}

\subsubsection{Applications to random planar maps} 
\label{sec:appmaps}

We study the random block-weighted planar map $\mM_n^\omega$ that corresponds as described in Section~\ref{sec:decmaps} to the random enriched tree $\mA_{2n +1}^{\cQ^\kappa}$ with $\cQ^\kappa$ denoting the $\kappa$-weighted class of non-separable planar maps. So in this section, the weight-sequence $\mathbf{w}=(\omega_k)_k$ is defined by $\omega_k = |\cQ[k]|_\kappa$ for all $k\ge0$.

\paragraph*{\em Local convergence - the infinite spine case} 

\begin{figure}[t]
	\centering
	\begin{minipage}{1.0\textwidth}
		\centering
		\includegraphics[width=0.3\textwidth]{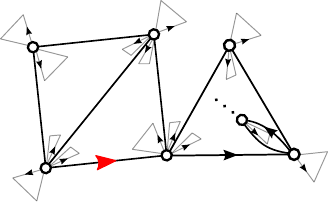}
		\caption{The local weak limit of random block-weighted planar maps with type I.}
		\label{fi:plalimit}
	\end{minipage}
\end{figure}

In the type I regime, our framework yields the following distributional limit.

\begin{theorem}
	\label{te:plant1}
	Suppose that the weight-sequence $\mathbf{w}$ has type I. Let $\hat{\mM}$ denote the planar map corresponding to the infinite $\cQ^\kappa$-enriched tree $(\hat{\cT}, \hat{\beta})$.
	\begin{enumerate}
	\item The random block-weighted planar map $\mM_n^\omega$ converges in the local weak sense toward $\hat{\mM}$. The limit respects the planar embedding of the map. That is, for each fixed $k$ the neighbourhood $V_k(\mM_n^\omega)$ of the origin of the root-edge of $\mM_n^\omega$ converges weakly as plane graph or edge-rooted planar map toward the neighbourhood $V_k(\hat{\mM}^\omega)$.
	\item If $\mathbf{w}$ has type I$a$, then it even holds for any sequence $k_n = o(\sqrt{n})$ that 
	\[
		\lim_{n \to \infty} d_{\textsc{TV}}( V_{k_n}(\mM_n^\omega), V_{k_n}(\hat{\mM})) =0.
	\]
	\end{enumerate}
\end{theorem}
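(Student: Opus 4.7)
The plan is to transfer the local convergence of enriched trees, established in Theorems~\ref{te:local} and~\ref{te:strengthend}, to the planar map setting via Tutte's substitution decomposition from Section~\ref{sec:decmaps}. By Lemma~\ref{le:coupling}, the random $\cQ^\kappa$-enriched tree $\mA_{2n+1}^{\cQ^\kappa}$ corresponding to $\mM_n^\omega$ may be coupled with the random enriched plane tree $(\cT_n, \beta_n)$; analogously, the candidate limit map $\hat{\mM}$ corresponds to the infinite limit enriched tree $(\hat{\cT}, \hat{\beta})$. Since the substitution $\cM \simeq \cQ(\cX \cdot \cM)$ preserves the cyclic order of corners around each vertex, the bijection between enriched trees and maps naturally respects the planar embedding, and convergence may be established at the level of edge-rooted planar maps rather than merely graphs.

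The key combinatorial observation is that Tutte's decomposition translates block-adjacency in the map into parent--offspring adjacency in the enriched tree. The $\cQ$-structure at the root of the enriched tree is the block of the map containing the root-edge; its atoms correspond to the non-root corners of that block, each of which is identified with the root of a smaller sub-map. Iterating, the block-distance neighborhood $U_k(\mM_n^\omega)$ of the root-vertex (with respect to $d_{\textsc{BLOCK}}$) is a deterministic function $\Phi_k$ of the pruned enriched tree $(\cT_n, \beta_n)^{[k]}$, and $\Phi_k$ takes values in the countable set of finite edge-rooted planar maps. Moreover, $V_k \subseteq U_k$ as subgraphs, since any path of graph-length $k$ traverses at most $k$ edges and hence passes through at most $k$ distinct blocks of the map.

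For part~(1): in the type I regime the limit tree $\hat{\cT}$ is almost surely locally finite, so $(\hat{\cT}, \hat{\beta})^{[k]}$ is an almost surely finite random enriched tree for each fixed $k$. Theorem~\ref{te:local} combined with the continuous mapping theorem applied to $\Phi_k$ yields $U_k(\mM_n^\omega) \convdis U_k(\hat{\mM})$ as rooted planar maps, and the inclusion $V_k \subseteq U_k$ then gives local weak convergence of $\mM_n^\omega$ toward $\hat{\mM}$ that respects the planar embedding. For part~(2): type I$a$ is a subcase of type I$\alpha$, so Theorem~\ref{te:strengthend} applies, giving $d_{\textsc{TV}}((\cT_n, \beta_n)^{[k_n]}, (\hat{\cT}, \hat{\beta})^{[k_n]}) \to 0$ for $k_n = o(\sqrt{n})$. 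Since total variation distance contracts under deterministic functions, applying $\Phi_{k_n}$ and then restricting to the graph-distance $k_n$-ball yields $d_{\textsc{TV}}(V_{k_n}(\mM_n^\omega), V_{k_n}(\hat{\mM})) \to 0$.

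The principal technical obstacle lies in carefully verifying the correspondence underlying $\Phi_k$, in particular tracking how vertices of the map arise from identifications of corners during the iterated Tutte decomposition, so that block-adjacency in the map coincides with parent--offspring adjacency in the enriched tree and $V_k\subseteq U_k$ as rooted planar maps (not merely as abstract graphs). Once this bookkeeping is in place, the remaining arguments reduce to standard facts about convergence of random variables under measurable functions, together with the contraction property $d_{\textsc{TV}}(\Phi(X),\Phi(Y)) \le d_{\textsc{TV}}(X,Y)$.
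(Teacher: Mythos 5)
Your proof has a genuine gap in the key combinatorial claim that $U_k(\mM_n^\omega)$ (and hence $V_k(\mM_n^\omega)$) is a deterministic function $\Phi_k$ of the height-$k$ truncation $(\cT_{2n+1},\beta_{2n+1})^{[k]}$. This is false for planar maps. You are implicitly reasoning by analogy with the block decomposition of graphs in Section~\ref{sec:bijblock}, where tree-vertices correspond to graph-vertices and the block-metric $k$-ball is determined by the height-$k$ pruning. But in Tutte's decomposition $\cM\simeq\cQ(\cX\cdot\cM)$ the atoms are \emph{corners}, not vertices, of the map, and a single map-vertex can have incident corners at arbitrarily large tree-heights. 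Concretely: if $Q=\beta_{2n+1}(o)$ is the root block, then at any corner $c$ of $Q$ incident to the root vertex $r$ we glue a sub-map $M_c$ whose root vertex is identified with $r$; the root block $Q'$ of $M_c$ then again contains $r$, so $Q'$ is at block-distance $1$ from $r$ but its corners have tree-height $\ge 2$, and iterating this nesting produces blocks through $r$ at every tree-height. Thus even $U_1(\mM_n^\omega)$ (and a fortiori $V_1(\mM_n^\omega)$) depends on the tree at unbounded height, and the continuous-mapping / TV-contraction steps that you apply to $\Phi_k$ and $\Phi_{k_n}$ do not go through. The paper flags exactly this issue in its proof of Theorem~\ref{te:plant1}.

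The actual argument works around this as follows. For part~(1), one does not try to read off $V_k$ from a $k$-truncation; instead, for each fixed finite target map $M$ one chooses a height $K=K(M)$ (not $k$) and a set $\cE_K$ of trimmed enriched plane trees such that $V_k(N)\simeq M$ holds precisely when the $K$-truncation of the corresponding enriched tree lies in $\cE_K$. The point is that $M$ has finitely many edges, hence the root vertex of $M$ has bounded degree, which in turn caps how deep in the tree the relevant corners can sit; this makes a finite $K(M)$ possible, even though no single $K$ works for all $M$ simultaneously. Convergence then follows from Theorem~\ref{te:local} applied at height $K$, for each fixed $M$. For part~(2), no such map-by-map reduction is available because one must control all maps of size up to $k_n$ at once; the paper instead invokes the metric comparison from the proof of Theorem~\ref{te:main1} (Equations~\eqref{eq:sca1} and \eqref{eq:sca2}), which shows that with high probability all corners incident to vertices within graph-distance $k_n=o(\sqrt n)$ of the root have tree-height at most some $s_n=o(\sqrt n)$, and only then applies Theorem~\ref{te:strengthend} at height $s_n$. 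Your proposal is missing both of these ingredients.
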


The distribution of the limit planar map $\hat{\mM}$ admits an easy description, which is illustrated in Figure~\ref{fi:plalimit}.

\begin{remark}
	\label{re:distrmap}
	The distribution of $\hat{\mM}$ may be described as follows.
	\begin{enumerate}
	\item	 Let $\mQ$ denote a random non-separable map following a Boltzmann distribution $\mathbb{P}_{\cQ^\kappa, \tau}$. Likewise, let $\mQ^\bullet$ denote random corner-rooted map following a $\mathbb{P}_{(\cQ^\bullet)^\kappa, \tau}$-distribution. 
	\item We start with an independent copy of $\mQ^\bullet$. The root-edge of this object will be the root-edge of $\hat{\mM}$. We declare all corners as unvisited.
	\item In each step, pick an arbitrary unvisited corner $c$. If it is a root-corner, then attach an independent copy $Q$ of $\mQ^\bullet$ at the corner $c$. Otherwise use an independent copy $Q$ of $\mQ$ instead. 
	\item If $Q$ is the empty map with no edges, then just declare the corner $c$ as visited. Otherwise, the attaching $Q$ replaces the old corner $c$ by two new corners. Declare the one incident with the root-edge of $Q$ as visited.
	\end{enumerate}
\end{remark}


\paragraph*{\em Local convergence - the finite spine case}
In the type II regime, which encompasses the case of a uniform $n$-edge planar map, we obtain the following result (that is similar to Theorem~\ref{te:convpl}).

\begin{theorem}
	\label{te:thmplan2}
	Suppose that the weight-sequence $\mathbf{w}$ has type II. Let \label{pp:qn}$\mQ_n^\kappa$ denote a random $n$-edge non-separable map that is drawn with probability proportional to its $\kappa$-weight. 
	
	There is a random integer $K_n$ that is independent from the family $(\mQ_n^\kappa)_n$  and satisfies $K_n \convdis \infty$, such that the random planar map $\mM_n^\omega$ converges in the local weak sense if and only if the random map $\mQ_{K_n}^\gamma$ does.
	
	
	In this case the limit $\hat{\mM}$  of $\mM_n^\omega$ is a modification of the type $I$ limit described in Remark~\ref{re:distrmap}, with the only difference being that instead of an infinite spine of $\mathbb{P}_{(\cQ^\bullet)^\kappa, \tau}$-distributed corner-rooted non-separable maps we take a spine having geometric length $L$ with
	\[
		\Pr{L=\ell} = \nu^\ell (1 - \nu)
	\]
	and then attach $\hat{\mQ}$ at the root-corner of the map at the tip of spine. 
	
	That is, in the description of Remark~\ref{re:distrmap} we 
may use exactly $L$ independent copies of the corner-rooted map. As soon as we would have to use the $(L+1)$th independent copy, we attach the limit $\hat{\mQ}$ instead, whose corners are of course all declared unvisited. In particular for $L=0$ we directly start the recursive description with $\hat{\mQ}$.
\end{theorem}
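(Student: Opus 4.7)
The plan is to combine the coupling of Lemma~\ref{le:coupling} with Theorem~\ref{te:local} and the condensation phenomenon. Applying Lemma~\ref{le:coupling} with $\cR^\kappa = \cQ^\kappa$, the random block-weighted map $\mM_n^\omega$ corresponds to a random $\cQ$-enriched plane tree $(\cT_{2n+1}, \beta_{2n+1})$, where $\cT_{2n+1}$ is a simply generated tree with the weight-sequence $\mathbf{w}$, and conditionally on the tree each $\beta_{2n+1}(v)$ is drawn from $\cQ[d^+_{\cT_{2n+1}}(v)]$ with probability proportional to its $\kappa$-weight, independently across $v$. Theorem~\ref{te:local} then yields $(\cT_{2n+1}, \beta_{2n+1}) \convdis (\hat{\cT}, \hat{\beta})$ in the space $\fmA$. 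Because $\mathbf{w}$ has type II, the spine of $\hat{\cT}$ has a.s.\ geometric length $L$ with $\Pr{L=\ell} = \nu^\ell(1-\nu)$, ending at a unique vertex $v_\infty$ of infinite outdegree whose children start independent $\xi$-Galton--Watson trees, and $\hat{\beta}(v_\infty) = \infty$ is only a placeholder.

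The next step is to replace this placeholder by $\hat{\mQ}$. By the condensation phenomenon recalled in \eqref{eq:om}--\eqref{eq:thed}, there is a sequence $\Omega_n \to \infty$ such that with probability tending to one the tree $\cT_{2n+1}$ contains a unique vertex $v_n^\star$ with outdegree $\Delta_n := d^+_{\cT_{2n+1}}(v_n^\star) > \Omega_n$, and conditionally on $\Delta_n$ this vertex receives a $\cQ$-structure distributed (up to relabelling) as $\mQ_{\Delta_n}^\kappa$, independently of everything else in the enriched tree. Since $\Delta_n \convp \infty$ and $\mQ_n^\kappa \convdis \hat{\mQ}$ by hypothesis, a standard conditioning argument (condition on $\Delta_n = k$ and use continuity of expectations of bounded continuous functionals) yields $\mQ_{\Delta_n}^\kappa \convdis \hat{\mQ}$. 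Combining this with the convergence of the remainder of the enriched tree, and exploiting the conditional independence of $\beta_{2n+1}(v_n^\star)$ from the other $\cQ$-structures, gives joint convergence of $(\cT_{2n+1}, \beta_{2n+1})$ together with the $\cQ$-structure at the condensation vertex toward the object obtained from $(\hat{\cT}, \hat{\beta})$ by substituting an independent copy of $\hat{\mQ}$ for the $\infty$-placeholder at $v_\infty$.

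To conclude, I would transport this enriched-tree convergence into local weak convergence of planar maps via the bijection from Section~\ref{sec:decmaps}. For any fixed $k$, the $k$-neighborhood $V_k(\mM_n^\omega)$ of the root-edge is a deterministic function of the enriched tree and, in the limit, explores only a bounded portion of the spine together with the $k$-neighborhood of the corresponding corners in $\hat{\mQ}$ and in the independent non-separable maps attached elsewhere. Since the spine has a.s.\ finite length $L$ and $\hat{\mQ}$ is itself a local weak limit, the limiting neighborhood $V_k(\hat{\mM})$ is almost surely determined by finitely many of the attached $\cQ$-structures. Thus $V_k(\mM_n^\omega) \convdis V_k(\hat{\mM})$ where $\hat{\mM}$ has exactly the claimed distribution, namely a spine of $L$ independent $\mathbb{P}_{(\cQ^\bullet)^\kappa, \tau}$-distributed corner-rooted maps followed by $\hat{\mQ}$ at its tip, with an independent $\mathbb{P}_{\cQ^\kappa, \tau}$-distributed non-separable map attached at every remaining unvisited corner as in the recursive description of Remark~\ref{re:distrmap}.

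The chief obstacle is a continuity property of the bijection in the type II regime: a priori the $k$-neighborhood of the root-edge might require $\cQ$-structures arbitrarily far from the root of the enriched tree, and one must show that with high probability only the finite spine together with the condensation structure contribute. This can be handled by a tightness/truncation argument, cutting the enriched tree at depth $M$ along every off-spine branch and bounding the probability that $V_k(\mM_n^\omega)$ escapes this truncated region, uniformly in $n$, by a quantity that vanishes as $M \to \infty$. Once this is in place, the weak convergence at the level of enriched trees established above directly implies the desired local weak convergence of $\mM_n^\omega$ toward $\hat{\mM}$.
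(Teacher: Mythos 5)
Your overall picture is right — couple with the simply generated tree, identify the spine of geometric length $L$, locate the condensation vertex of degree $\tilde D_n$ whose $\cQ$-structure converges to $\hat{\mQ}$, and transport back to the map. The gap is in the transport step, and the truncation you propose does not close it. Cutting the enriched tree at depth $M$ along off-spine branches controls the \emph{depth} of the exploration, which is indeed harmless (those branches carry subcritical Galton--Watson trees). The real difficulty is \emph{breadth}: the condensation vertex has degree $\tilde D_n \to \infty$, its offspring correspond to the corners of a non-separable map of growing size, and the corners of $\hat{\mQ}$ visited by $V_k(\mM_n^\omega)$ may sit at arbitrarily large positions in the ordered offspring list if that list is taken in the default order inherited from Lemma~\ref{le:coupling}. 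Consequently, the event $\{V_k(\mM_n^\omega) = M\}$ is not determined by any fixed finite window of the enriched plane tree, and the convergence you get from Theorem~\ref{te:local} in the space $\fmA$ --- where the $\cR$-structure at a vertex of unbounded degree collapses to the single point $\infty$ in the one-point compactification --- is genuinely too coarse to push through the map bijection.

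The missing ingredient in the paper's proof is the height-respecting re-matching of offspring, producing the enriched plane tree $(\cT_n, \lambda_n)$: for each vertex $v$, the ordered offspring of $v$ are matched to the corners of $\lambda_n(v)$ in non-decreasing order of their graph-distance from the origin of the root-edge in $\lambda_n(v)$. This does not change the distribution of the unordered enriched tree, hence not of $\mM_n^\omega$, but it guarantees that for every $\ell$ and every finite rooted map $M$ there is a constant $m=m(\ell,M)$, independent of $n$, such that $\{V_\ell(\mM_n^\omega)=M\}$ is determined by $(\lambda_n(v))_{v\in V^{[m]}}$, the restriction to the left-ball $V^{[m]}$ of the Ulam--Harris tree. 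Combined with Janson's $\cT_{1n}$ construction and the total-variation convergence $d_{\textsc{TV}}((d^+_{\cT_n}(v))_{v\in V^{[m]}}, (d^+_{\cT_{1n}}(v))_{v\in V^{[m]}}) \to 0$ from \cite[Thm.~20.2]{MR2908619} --- which is strictly stronger than the $\fmA$-convergence of Theorem~\ref{te:local} --- one obtains $d_{\textsc{TV}}$ convergence of the relevant finite windows of $\cQ$-structures, after which the hypothesis $\mQ_n^\kappa\convdis\hat{\mQ}$ slots in exactly where the condensation structure $\lambda_{1n}(v^*)$ sits. Your plan to condition on $\Delta_n=k$ and use continuity of bounded functionals gives $\mQ_{\Delta_n}^\kappa\convdis\hat{\mQ}$, which is fine, but without the re-ordering you cannot conclude that a fixed-$k$ map neighbourhood stabilizes, so the deduction $V_k(\mM_n^\omega)\convdis V_k(\hat{\mM})$ does not yet follow from what you have assembled.
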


\paragraph*{\em Block sizes and block diameter}

The diameter of the coupled tree $\cT_{2n+1}$ is an upper bound for the block-diameter of the random $n$-edge block-weighted planar map $\mM_n^\omega$. The outdegrees in $\cT_{2n+1}$ correspond precisely to the number of half-edges or corners in the blocks of $\mM_n^\omega$. Hence all known bounds and limit theorems regarding the extremal out-degrees in simply generated trees also apply to the block-sizes in $\mM_n^\omega$. See Janson~\cite[Chapters 9, 19]{MR2908619} for an overview of such results, and Kortchemski \cite[Thm. 1]{MR3335012} for a recent addition. Formally,  Theorem~\ref{te:compsize} and Proposition~\ref{pro:new}  apply, as we may express $\cR=\cQ^\kappa$ trivially as a compound structure $\cR \simeq \cX \circ \cQ^\kappa$.

Banderier, Flajolet, Schaeffer, and Soria \cite{MR1871555} studied many natural models such as uniform (bipartite) planar maps, which in our setting correspond to type II block-weighted maps with $\omega_k \sim c k^{-5/2} \tau^{-k}$ for some $c>0$. Using analytic methods, they also established a local limit law \cite[Thm. 3]{MR1871555} for the size of the largest block, which is a stronger result than the central limit theorem obtained by the probabilistic approach in this setting.

Addario-Berry~\cite{2015arXiv150308159A} used a similar probabilistic approach as in the present paper to relate the block-sizes of random planar maps to out-degrees in simply generated trees.

\subsection{Scaling limits of metric spaces based on $\cR$-enriched trees}
\label{sec:partA}

\begin{figure}[t]
	\centering
	\begin{minipage}{1.0\textwidth}
		\centering
		\includegraphics[width=0.35\textwidth]{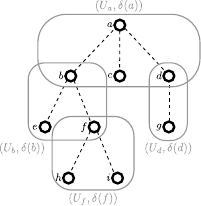}
	\caption{Patching together discrete semi-metric spaces.}
	\label{fi:patch}
	\end{minipage}
\end{figure}

\subsubsection{Patching together discrete semi-metric spaces}
\label{sec:patch}

We describe a model of semi-metric spaces patched together from metrics associated to the vertices of a tree. Let $A$ be a rooted tree with vertex set $V(A)$ and for each vertex $v$ let $M_v$ denote its offspring set. Let $\delta$ be a map that assigns to each vertex $v$ of $A$ a semi-metric $\delta(v)$ on the set $U_v := M_v \cup \{v\}$. We may define a semi-metric $d$ on the vertex vertex set $V(A)$ that extends the semi-metrics $\delta(v)$ by patching together as illustrated in Figure~\ref{fi:patch}. Formally, this semi-metric is defined as follows. Consider the graph $G$ on $V(A)$ obtained by connecting any two vertices $x \ne y$ if and only if there is some vertex $v$ of the tree $A$ with $x,y \in U_v$ and assigning the weight $\delta(v)(x,y)$ to the edge. The resulting graph is connected and the distance of any two vertices $a$ and $b$ is defined by the minimum of all sums of edge-weights along paths joining $a$ and $b$ in the graph $G$.

We now introduce our model of random semi-metric spaces associated to random enriched trees. Let $\cR^\kappa$ be a weighted species such that the weight sequence $\om = (\omega_k)_k$ given by $\omega_k = |\cR[k]|_\kappa / k!$ satisfies $\omega_0 > 0$ and $\omega_k > 0$ for some $k \ge 2$. 
Consider the weighting $\omega$ on the species $\cA_\cR$ introduced in Section~\ref{sec:intro1}, that is $\omega(A, \alpha) = \prod_{v \in [n]} \kappa(\alpha(v))$ for all $(A, \alpha) \in \cA_\cR[n]$.
For any integer $n\ge0$ with $|\cA_\cR[n]|_\omega > 0$ we form the random $\cR$-enriched tree $\mA_n^\cR=(\mA_n, \alpha_n)$ drawn from the set $\cA_\cR[n]$ with probability proportional to its $\omega$-weight. Suppose that for each finite subset $U \subset \ndN$ and each $\cR$-structure $R \in \cR[U]$ we are given a random semi-metric $\delta_R$ on the set $U \cup \{*_U\}$ with $*_U$ denoting an arbitrary fixed element not contained in $U$. For example, we could set $*_U := \{ U\}$.  We may form the random semi-metric space \label{pp:xn}$\mX_n =([n], d_{\mX_n})$ as follows. For each vertex $v$ of $\mA_n$ with offspring set $M_v$ let $\delta_n(v)$ be the semi-metric on the set $M_v \cup \{v\}$ obtained by taking an independent copy of $\delta_{\alpha_n(v)}$ and identifying $*_{M_v}$ with $v$. Let $d_{\mX_n}$ denote the semi-metric patched together from the family $(\delta_n(v))_v$ as described in the preceding paragraph.

In order for this to be a sensible model of a random tree-like structure we require the following three assumptions. 
\begin{enumerate}
\item We assume that there is a real-valued random variable $\chi\ge0$ such that for any $\cR$-structure $R$ the diameter of the semi-metric $\delta_R$ is stochastically bounded by the sum of $|R|$ independent copies $\chi_1^R, \ldots, \chi_{|R|}^R$ of $\chi$.
\item For any bijection $\sigma: U \to V$ of finite subsets of $\ndN$ and for any $\cR$-structure $R \in \cR[U]$ we require that the semi-metric $\delta_{\cR[\sigma](R)}$ is identically distributed to the push-forward of the semi-metric $\delta_R$ by the bijection $\bar{\sigma}: U \cup \{*_U\} \to V \cup \{*_V\}$ with $\bar{\sigma}|_U = \sigma$. 
\item We assume that there is at least one $\cR$-structure $R$ having positive $\kappa$-weight $\kappa(R)>0$ such that the diameter of $\delta_R$ is not almost surely zero.
\end{enumerate}
The first requirement ensures that the semi-metric space $\mX_n$ maintains a tree-like structure and the second that the symmetries of the enriched tree do not influence the choice of the random semi-metrics. The third requirement ensures that $\mX_n$ is not a degenerate space.

\subsubsection{Scaling limits and a diameter tail-bound}
\label{sec:scali}
In the following, we interpret $\mX_n$ in a canonical way as a metric space by passing to the quotient space, in which points with distance zero from each other are glued together.

\begin{theorem}[Scaling limit of the enriched tree based model of random metric spaces]
\label{te:main1}
Suppose that the weight sequence $\mathbf{w}$ has type I$a$. Then the rescaled space $(\mX_n, n^{-1/2}d_{\mX_n})$ converges weakly to a constant multiple of the (Brownian) continuum random tree $\CRT$ with respect to the Gromov--Hausdorff metric as $n \equiv 1 \mod \spa(\mathbf{w})$ tends to infinity. 
\end{theorem}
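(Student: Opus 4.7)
The plan is to use the coupling from Lemma~\ref{le:coupling} to identify $\mA_n^\cR$ with the decorated tree $(\cT_n,\beta_n)$ and to reduce the Gromov--Hausdorff convergence of $(\mX_n, n^{-1/2} d_{\mX_n})$ to Aldous's scaling limit for $\cT_n$. Since type~I$a$ is a subcase of type~I$\alpha$, we have $(\cT_n, (\sigma/(2\sqrt{n})) d_{\cT_n}, \emptyset) \convdis (\CRT, d_\CRT, \bar{0})$ in the Gromov--Hausdorff sense. The main step is to exhibit, for an appropriate deterministic constant $C>0$, a correspondence between $\mX_n$ and $\cT_n$ whose distortion is small. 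I would employ the identity correspondence $R_n = \{(v_{\mX_n}, v_{\cT_n}) : v \in V(\mA_n)\}$ and prove
\[
\sup_{u,v \in V(\mA_n)} \bigl|d_{\mX_n}(u,v) - C\, d_{\cT_n}(u,v)\bigr| = o_p(\sqrt{n}).
\]
Together with the rescaled convergence of $\cT_n$ toward $\CRT$, this yields the scaling limit of $(\mX_n, n^{-1/2} d_{\mX_n})$ toward $(2C/\sigma)\cdot \CRT$.

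The constant $C$ is the expected weight of a typical parent-to-child edge under the decoration law. Concretely, if $\xi$ has distribution $(\pi_k)_k$ from Section~\ref{sec:pretree}, if conditionally on $\xi=k$ the $\cR$-structure $R$ is sampled from $\cR[k]$ with probability proportional to $\kappa$, and if $X$ is a uniformly chosen atom of $R$, I set $C = \Ex{\delta_R(*_U, X)}$. Finiteness is ensured by the stochastic domination $\mathrm{diam}(\delta_R) \preccurlyeq \chi_1 + \dots + \chi_{|R|}$ and by the finite exponential moments of $\xi$ in the type~I$a$ regime (assuming a matching moment on $\chi$, which is the natural regularity hypothesis on the patching metric). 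To compare $d_{\mX_n}$ with $C\,d_{\cT_n}$, I use the sandwich coming from the patched graph $G$: an upper bound is $d_{\mX_n}(u,v) \le \sum_{w} \mathrm{diam}(\delta_n(w))$, summed over vertices $w$ on the tree path from $u$ to $v$, by the triangle inequality in $G$; a matching lower bound uses that any path in $G$ from $u$ to $v$ must cross every intermediate ancestor level, and hence use at each such level an edge of the $\delta_n$-structure at the parent.

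Uniform concentration along paths is the heart of the argument. Conditionally on $\cT_n$, the edge-weight increments along a fixed root-to-vertex path are independent with uniformly bounded mean and variance (again via the $\chi$-domination and the exponential moments in type~I$a$), so a classical law of large numbers gives pointwise concentration around $C \cdot d_{\cT_n}(u,v)$ with deviations of order $o(\sqrt{n})$ whenever the tree distance is $O(\sqrt{n})$. The main obstacle is to upgrade this pointwise statement to a uniform one over all $\sim n^2$ vertex pairs. I would first reduce to root-to-vertex distances by decomposing via the lowest common ancestor, then restrict attention to heights $h \le C'\sqrt{n}$ using the height tail bound~\eqref{eq:gwttail}, and finally apply a Kolmogorov-type maximal inequality to the running sum of centred edge-weights along the lexicographic depth-first-search traversal of $\cT_n$. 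The DFS queue bound~\eqref{eq:dfsbound} controls the total number of ``active'' ancestors at each step, so that a union bound over the $O(n)$ steps leaves an overall error of order $o_p(\sqrt{n})$. This chaining, combined with the $\chi$-domination, yields the claimed distortion bound and hence the scaling limit.
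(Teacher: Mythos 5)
Your overall strategy---reduce to a distortion bound for the identity correspondence between $\mX_n$ and $\cT_n$, invoke Aldous's scaling limit for $\cT_n$, and use height/DFS tail bounds for uniformity---is the right one and matches the paper's in spirit. However, the constant $C$ you propose is wrong, and this is not a bookkeeping slip but reflects a conceptual gap in the concentration step. You define $C$ by sampling the parent degree from $(\pi_k)_k$; but the degree of a parent on an ancestral path (equivalently, the parent degree of a uniformly random edge) is \emph{size-biased}, i.e.\ distributed as $(k\pi_k)_k$. The correct constant is $\Ex{\eta}$ where $\eta$ is sampled by first drawing the degree $k$ with probability $k\pi_k$, then drawing $R$ from $\cR[k]$ proportionally to $\kappa$, then taking $\delta_R(*_U, X)$ for a uniform atom $X$. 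Your statement that ``conditionally on $\cT_n$, the edge-weight increments along a fixed root-to-vertex path are independent\ldots\ so a law of large numbers gives pointwise concentration around $C\cdot d_{\cT_n}(u,v)$'' hides the issue: given $\cT_n$, the conditional mean of the path sum is $\sum_w m(d^+_{\cT_n}(w))$ with $m(k)=\Ex{\eta_R\mid|R|=k}$, and identifying this with a constant times $d_{\cT_n}(u,v)$ requires a separate concentration of degrees along the path, whose typical distribution is size-biased. The paper handles both layers of randomness at once by passing to the pointed/truncated size-biased tree $\hat{\cT}^{(\ell)}$: there the spine increments are literally i.i.d.\ copies of $\eta$, and the local limit factor $\Theta(n^{3/2})$ from conditioning on $|\cT|=n$ is absorbed by the exponential deviation bound of Lemma~\ref{le:deviation}.

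A secondary concern is the uniformity step. You propose a Kolmogorov-type maximal inequality ``along the lexicographic DFS traversal,'' but the accumulated ancestor weight as DFS progresses is not a partial sum of a random walk---it pushes and pops according to the stack discipline, so the standard maximal inequality does not apply directly without a more careful argument. The paper's route---fixing the height $\ell$ and the spine position $k$, and union-bounding over $\ell, k \le n$ against the exponential tails, with the $\Theta(n^{3/2})$ change-of-measure factor---sidesteps this cleanly. Your use of the height tail bound~\eqref{eq:gwttail} and the DFS queue bound~\eqref{eq:dfsbound} is the right instinct (the paper uses these precisely to prove Lemma~\ref{le:tail1}), but the main distortion estimate in Theorem~\ref{te:main1} is obtained via the spine decomposition rather than a DFS chaining argument.
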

The scaling factor is made explicit in the corresponding proof in Section~\ref{sec:propartA}.
In order to ensure that extremal parameters of the rescaled (pointed) metric space such as the height and diameter converge not only in distribution, but also in higher moments, we also show a diameter tail-bound in Lemma~\ref{le:tail1} below.
	
\begin{lemma}[Tail bound for the diameter]
\label{le:tail1} 
Suppose that the weight sequence $\mathbf{w}$ has type I$\alpha$.
Then there are positive constants $C$ and $c$ such that for all $n$ and $x \ge 0$ it holds that
\[
\Pr{\Di(\mX_n) \ge x} \le C ( \exp(-c x^2/n) + \exp(-c x)).
\]
\end{lemma}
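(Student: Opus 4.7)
Via the coupling of Lemma~\ref{le:coupling}, I identify $\mA_n^\cR$ with the random $\cR$-enriched simply generated plane tree $(\cT_n, \beta_n)$ --- the random labeling plays no role for metric quantities --- and take the root $o$ of $\cT_n$ as a basepoint of $\mX_n$. Writing $X_v := \Di(\delta_n(v))$, the triangle inequality together with the tree structure of $\cT_n$ yields the pathwise bound
\[
\Di(\mX_n) \;\le\; 2 \max_{a \in V(\cT_n)} H(a), \qquad H(a) := \sum_{v \text{ strict ancestor of } a} X_v,
\]
since the $\mX_n$-distance between any two vertices $a,b$ is at most the sum of the diameters of the local semi-metrics $\delta_n(v)$ along the tree path joining $a$ and $b$ through their common ancestor. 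Because the $\delta_n(v)$ are conditionally independent given $(\cT_n,\beta_n)$ and each $X_v$ is stochastically dominated by a sum of $d_{\cT_n}^+(v)$ i.i.d.\ copies of $\chi$, a quantile coupling vertex by vertex produces a family $(\chi_i^v)$ of i.i.d.\ copies of $\chi$, independent of $\cT_n$, with $X_v \le \sum_{i=1}^{d_{\cT_n}^+(v)} \chi_i^v$ almost surely. Consequently $H(a)$ is dominated by a sum of $N_a := \sum_{v \text{ strict ancestor of } a} d_{\cT_n}^+(v)$ i.i.d.\ copies of $\chi$.

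To control $N_a$ uniformly in $a$ I use the DFS-queue identity~\eqref{eq:queues}: if $j$ and $k$ denote the lexicographic and reverse-lexicographic DFS indices of $a$, then $N_a = Q_j + Q'_k + h_{\cT_n}(a) - 2$, hence
\[
N_a \;\le\; M := M_Q + M_{Q'} + \He(\cT_n), \qquad M_Q := \max_j Q_j, \quad M_{Q'} := \max_k Q'_k.
\]
Under the I$\alpha$ hypothesis on $\mathbf{w}$ the bounds~\eqref{eq:gwttail} and~\eqref{eq:dfsbound} give each of the three summands a sub-Gaussian tail of the form $\exp(-cT^2/n)$, so a union bound produces $\Pr{M \ge T} \le C_1 \exp(-c_1 T^2/n)$ for every $T \ge 0$. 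On the event $\{M \le T\}$, since $\chi \ge 0$, each $H(a)$ is dominated by a common sum of $T$ i.i.d.\ copies of $\chi$. Assuming (as the two-term tail implicitly requires) that $\chi$ has a finite moment generating function in a neighborhood of zero, Lemma~\ref{le:deviation} applied to $\chi_i - \mu$ with $\mu := \Ex{\chi}$ yields
\[
\Pr{\sum_{i=1}^T \chi_i \ge \mu T + z} \;\le\; C_2 \bigl(\exp(-c_2 z^2/T) + \exp(-c_2 z)\bigr), \qquad z \ge 0.
\]

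Given $x \ge 0$, I choose $T = x/(4\mu)$ and $z = x/4$, so that $2(\mu T + z) = x$ and $z^2/T = \mu x/4$, and union-bound over the at most $n$ vertices of $\cT_n$:
\[
\Pr{\Di(\mX_n) \ge x} \;\le\; C_1 \exp\!\bigl(-c_1 x^2/(16\mu^2 n)\bigr) + n C_2 \bigl(\exp(-c_2 \mu x/4) + \exp(-c_2 x/4)\bigr).
\]
For $x$ large enough compared to $\log n$ --- say $x \ge K \log n$ with $K$ a sufficiently large constant --- the prefactor $n$ is absorbed into the exponential to yield the desired $\exp(-c\,x)$ term. For smaller $x$ one necessarily has $x = O(\sqrt{n}\,)$ (or $n$ itself is bounded), in which case $C \exp(-c x^2/n)$ is bounded below by a positive constant and the claimed inequality $\Pr{\Di(\mX_n) \ge x} \le C(\exp(-cx^2/n) + \exp(-cx))$ holds trivially by taking $C$ large.

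The main technical issue is the simultaneous control of three independent sources of randomness --- the shape of $\cT_n$, the conditionally independent local semi-metrics $\delta_n(v)$, and the deviation of the $\chi$-sums from their common mean --- in such a way that the output carries the precise two-scale tail $\exp(-cx^2/n) + \exp(-cx)$. In particular, the fact that the union-bound factor $n$ from ranging over vertices must be absorbed into the exponential part of the concentration inequality forces the unconventional parameter choice $T, z = \Theta(x)$ above (rather than the more symmetric $T \asymp \sqrt{n}$), and reconciling this with the tree-tail exponent $T^2/n$ is the part of the argument that requires the delicate case analysis over small and large $x$ outlined in the last step.
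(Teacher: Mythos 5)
Your proof is correct and takes essentially the same route as the paper's: both bound $\Di(\mX_n)$ by the maximum over vertices of a sum of $\chi$-increments along ancestor paths, control that ancestor-degree sum uniformly via the DFS-queue identity \eqref{eq:queues} together with the sub-Gaussian tails \eqref{eq:gwttail} and \eqref{eq:dfsbound}, and then invoke the medium-deviation inequality (Lemma~\ref{le:deviation}) combined with a union bound over the $n$ vertices, handling small $x$ trivially by the choice of constants. The only cosmetic difference is organization --- you package the tree control into a single event $\{M_Q + M_{Q'} + \He(\cT_n) \le T\}$ with $T \asymp x$, whereas the paper splits $\{\He(\mX_n)\ge h\}$ into three sub-events parametrized by fixed $r,s>0$ --- but the ingredients, the resulting two-scale tail, and the way the $n$-prefactor gets absorbed are the same.
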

Note that if the random variable $\chi$ is bounded, then $\Pr{\Di(\mX_n) \ge x} = 0$ for all $x$ larger than a constant multiple of $n$ and hence it follows that there are constants $C,c>0$ with \[\Pr{\Di(\mX_n) \ge x} \le C \exp(-c x^2/n)\] for all $n$ and $x \ge 0$. The requirements of Lemma~\ref{le:tail1} are slightly weaker than in Theorem~\ref{te:main1}, since we did not assume exponential moments. The main ingredient in the proof is a similar tail-bound \eqref{eq:gwttail} for the height of Galton--Watson trees.

Theorem~\ref{te:main1} and Lemma~\ref{le:tail1} are inspired by previous work on scaling limits and diameter tail-bounds in \cite{MR3382675, inannals, MR3634279}, but the author felt it would nevertheless be interesting to complement the results on the local convergence in the general setting of random enriched trees by examples describing global geometric properties. We limit the scope of this paper to the "Brownian" case where the global geometric properties are similar to a large critical Galton--Watson trees whose offspring distribution has finite variance, but extensions to arbitrary enriched trees in more involved settings of weight-sequences could be very interesting.

\subsubsection{Applications}
\label{sec:app1}

The main application of the results in Section~\ref{sec:scali} is to the model $\mM_n^\omega$ of random  planar maps with block-weights introduced in Section~\ref{sec:decmaps}.  Furthermore, Theorem~\ref{te:main1} and Lemma~\ref{le:tail1} mildly generalize results for uniform random graphs from subcritical graph classes \cite[Thm. 5.1, Thm. 7.1]{inannals} and uniform outerplanar maps \cite[Thm. 1.2]{MR3634279} to the setting of weighted random structures, that is, block-weighted random graphs and block-weighted or face-weighted outerplanar maps.

Let $\iota > 0$ denote a random variable which has finite exponential moments.
Given a connected graph $G$ we may consider the first-passage percolation metric $d_{\textsc{FPP}}$ on $G$ by assigning an independent copy of $\iota$ to each edge of $G$, letting for any two vertices $x,y$ the distance \label{pp:fpp}$d_{\textsc{FPP}}(x,y)$ be given by the minimum of all sums of weights along paths joining $x$ and $y$. We let ${\Di}_{\textsc{FPP}}(G)$ denote the diameter with respect to the $d_{\textsc{FPP}}$-distance. 

We may construct the first-passage percolation metric $(\mM_n^\omega, d_{\textsc{FPP}})$ as the quotient space of a random semi-metric space $(\mX_n, d_{\mX_n})$ as in Section~\ref{sec:patch}. Here we  make good use of the freedom to assign semi-metric spaces to biconnected maps, as the non-root-vertices of the enriched tree $\mA_{2n+1}^{\cQ^\kappa}$ correspond to the corners of the random planar map $\mM_n^\omega$ and not to the vertices. For each $\cQ$-structure $Q$ we let $\delta_Q$ denote the semi-metric space whose points are the corners of $Q$ with the distance of two corners $x_1, x_2$ being defined as the first-passage percolation distance in $(Q, d_{\textsc{FPP}})$ of the vertices incident to $x_1$ and $x_2$. So the quotient space of $(\mX_n, d_{\mX_n})$ is, as a random metric space, distributed like $(\mM_n^\omega, d_{\textsc{FPP}})$. Theorem~\ref{te:main1} and Lemma~\ref{le:tail1} now immediately yield the following result.

\begin{theorem}
	\label{te:convplan}
	Suppose that the weight sequence $\mathbf{w}$ has type I$a$. Then the rescaled space $(\mM_n^\omega, n^{-1/2}d_{\textsc{FPP}})$ converges weakly to a constant multiple of the (Brownian) continuum random tree $\CRT$ with respect to the Gromov--Hausdorff metric as $n \equiv 1 \mod \spa(\mathbf{w})$ tends to infinity. Moreover, there are constants $C,c>0$ such that for all $n$ and $x \ge 0$ it holds that
	\[
	\Pr{\Di_{\textsc{FPP}}(\mM_n^\omega) \ge x} \le C ( \exp(-c x^2/n) + \exp(-c x)).
	\]
\end{theorem}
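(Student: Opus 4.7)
The plan is to realize the first-passage percolation metric on $\mM_n^\omega$ as the quotient of an enriched-tree based random semi-metric space of the form considered in Theorem~\ref{te:main1} and Lemma~\ref{le:tail1}, and then directly apply those two results. I first fix the species $\cR^\kappa := \cQ^\kappa$ of $\kappa$-weighted non-separable planar maps, so that by the bijection of Section~\ref{sec:decmaps} the random enriched tree $\mA_{2n+1}^{\cQ^\kappa}$ corresponds to $\mM_n^\omega$; crucially, the atoms of $\cQ$ and of $\cM$ are corners, so vertices of the underlying rooted tree bijectively label the corners of $\mM_n^\omega$. By assumption, the weight sequence $\mathbf{w}$ given by $\omega_k = |\cQ[k]|_\kappa/k!$ has type I$a$, which is a subcase of type I$\alpha$, so both Theorem~\ref{te:main1} and Lemma~\ref{le:tail1} are available.

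Next I specify the random semi-metrics $\delta_Q$ attached to each $\cQ$-structure $Q$ on a finite set $U$ of corners. Assign independent copies of $\iota$ to the edges of $Q$ and let $\delta_Q$ be the semi-metric on $U \cup \{*_U\}$ in which the distance between two corners is the first-passage percolation distance in $Q$ between their incident vertices, with $*_U$ identified with the root-vertex of $Q$. I then verify the three assumptions of Section~\ref{sec:patch}: (1) the diameter of $\delta_Q$ is at most the sum of all edge-weights of $Q$, and since a non-separable map with $|Q|$ corners has $|Q|/2$ edges, this diameter is stochastically bounded by $\chi_1^Q + \cdots + \chi_{|Q|}^Q$ for $\chi := \iota$, and $\chi$ has finite exponential moments by hypothesis; (2) the distribution of $\delta_Q$ depends only on the isomorphism type of $Q$, as the weights on edges are i.i.d.; (3) some $\cQ$-structure $Q$ with positive $\kappa$-weight has at least one edge (which is forced by $\omega_k>0$ for some $k\ge 2$), and $\iota$ is not a.s.\ zero since it has finite exponential moments and would otherwise fail to make the construction non-degenerate (and if $\iota\equiv 0$ the claim is trivial).

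Let $\mX_n$ denote the resulting random semi-metric space on $[2n+1]$ patched together as in Section~\ref{sec:patch}. Two corners $c,c'$ are incident to the same vertex of $\mM_n^\omega$ if and only if they lie in a common $\cQ$-structure $\delta_n(v)$ that identifies $*_{M_v}$ with the same tree-vertex $v$, and the block-decomposition ensures that corners belonging to distinct vertices of $\mM_n^\omega$ are separated by a strictly positive sum of $\iota$-weights almost surely. Consequently, the quotient metric space $(\mX_n, d_{\mX_n})/\sim$ is naturally isometric to $(V(\mM_n^\omega), d_{\textsc{FPP}})$, so
\[
	(\mM_n^\omega, d_{\textsc{FPP}}) \eqdist (\mX_n, d_{\mX_n}) / \sim
\]
as random compact metric spaces. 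Since the canonical projection $\mX_n \to (\mX_n, d_{\mX_n})/\sim$ is a correspondence with distortion zero, the two spaces agree in $(\ndK^\bullet, d_{\textsc{GH}})$, and in particular their diameters coincide.

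Applying Theorem~\ref{te:main1} to $(\mX_n, n^{-1/2} d_{\mX_n})$ yields the Gromov--Hausdorff convergence to a constant multiple of the CRT, and applying Lemma~\ref{le:tail1} to $\Di(\mX_n) = \Di_{\textsc{FPP}}(\mM_n^\omega)$ yields the stated exponential tail bound. The only mild subtlety lies in verifying the quotient identification, i.e.\ that no two corners incident to distinct vertices of $\mM_n^\omega$ are ever glued together; this follows because along any path in the graph $G$ of Section~\ref{sec:patch} that connects two such corners there is at least one edge crossing an edge of $\mM_n^\omega$, which carries a strictly positive $\iota$-weight almost surely, so the patched distance is positive.
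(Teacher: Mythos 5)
Your proposal is correct and follows essentially the same approach as the paper: realize $(\mM_n^\omega, d_{\textsc{FPP}})$ as the quotient of the random semi-metric space $\mX_n$ built from the $\cQ$-enriched tree $\mA_{2n+1}^{\cQ^\kappa}$ and apply Theorem~\ref{te:main1} and Lemma~\ref{le:tail1}. In fact you are more explicit than the paper about two points the paper leaves tacit, namely that $*_U$ should be mapped to the root-vertex of the block and the verification of the three hypotheses from Section~\ref{sec:patch}; only your parenthetical ``if and only if'' in the quotient paragraph is slightly off (corners incident to a common cutvertex of $\mM_n^\omega$ live in distinct $\cQ$-structures), but this does not affect the argument since the forward identification is handled correctly by patching through zero-weight links and the backward separation is exactly your final paragraph.
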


\section{Proofs}
\label{sec:allproofs}
\subsection{Proofs of the results on random enriched trees in  Section~\ref{sec:partB}}
\label{sec:propartB}

We start with a proof for the coupling of random enriched trees with simply generated trees.
\begin{proof}[Proof of Lemma~\ref{le:coupling}]
Let $\fmT_n$ denote the set of plane trees with $n$ vertices and \[Z_n = \sum_{T \in \fmT_n} \omega(T)\] the partition function of the weight sequence $(\omega_k)_k$. Let $\cA$ denote the species of rooted unordered trees. Every unordered rooted tree $A \in \cA[n]$ corresponds to $\prod_{v \in V(A)} d^+_A(v)!$ ordered trees (with labels in the set $[n]$) and every plane tree corresponds to $n!$ ordered (labelled) trees. Hence
\begin{align}
\label{eq:zuerst}
|\cA_\cR[n]|_\omega   = \sum_{A \in \cA[n]} \prod_{v \in V(A)} |\cR[d^+_A(v)]|_\kappa = n! \sum_{T \in \fmT_n} \omega(T) = n! Z_n.
\end{align}
We may view the tree $\cA_\cR[\sigma][(\cT_n, \beta_n)]$ as a labelled ordered enriched tree by keeping the orderings on the offspring sets of $\cT_n$. Each enriched tree $(A, \alpha) \in \cA_\cR[n]$ may be ordered in $\prod_{v \in V(A)} d^+_A(v)!$ many ways, so the event $\mA_n^\cR = (A, \alpha)$ as unordered tree corresponds to $\prod_{v \in V(A)} d^+_A(v)!$ many outcomes for $\mA_n^\cR$ as ordered tree. Each of these outcomes corresponds to a unique value $(T, \beta)$ for the random enriched plane tree $(\cT_n, \beta_n)$ and a unique value for the random partition $\sigma$. Hence
\begin{align}
\label{eq:zuerst1}
\Pr{(\mA_n,\alpha_n) = (A, \alpha)} = \frac{1}{n!} \sum_{(T, \beta)} \Pr{(\cT_n, \beta_n) = (T, \beta)}
\end{align}
with the sum index $(T, \beta)$ ranging over all enriched plane trees corresponding to the enriched tree $(A, \alpha)$ as described. It holds that $\Pr{\mA_n^\cR = (A, \alpha)} > 0$ if and only if  $\Pr{(\cT_n, \beta_n) = (T, \beta)}$ for all $(T, \beta)$ corresponding to $(A, \alpha)$, and in this case it holds that
\begin{align*}
\Pr{(\cT_n, \beta_n) = (T, \beta)} &= \Pr{(\cT_n, \beta_n) = (T, \beta) \mid \cT_n = T} \, \Pr{\cT_n = T} \\
&= \left(\prod_{v \in V(A)} \frac{\kappa(\beta(v))}{|\cR[M_v]|_\kappa} \right) \left(\frac{1}{Z_n}\prod_{v \in V(A)} \omega_{d^+_A(v)} \right ) \\
&= \frac{1}{Z_n} \prod_{v \in V(A)} \frac{\kappa(\beta(v))}{ d^+_{A}(v)!}.
\end{align*}
As there are $\prod_{v \in V(A)} d^+_A(v)!$ many choices for $(T, \beta)$, it follows from Equations~\eqref{eq:zuerst} and \eqref{eq:zuerst1} that
\[
\Pr{(\mA_n,\alpha_n) = (A, \alpha)} = \frac{1}{n! Z_n} \omega(A, \alpha) = \frac{1}{|\cA_\cR[n]|_\omega}\omega(A, \alpha).
\]
This concludes the proof.
\end{proof}

Next, we are going to prove the local convergence of our model of random enriched trees.

\begin{proof}[Proof of Theorem \ref{te:local}]
By construction, the set of enriched trees $\fmA$ is a subset of the compact product space $X^{\VHT}$ with $X = \{*, \infty\} \sqcup \bigcup_n \cR[n]$. We are going to argue that $\fmA$ is  also compact. For any vertex $v \in \VHT$ and integers $i>k\ge0$ set
\[
U_{v,k,i} = \{f \in X^{\VHT} \mid |f(v)| = k, f(vi) \ne *\}.
\]
Then each subset $U_{v,k,i}$ is open. Thus the subspace \[\fmA = \bigcap_{v,k,i} (X^{\VHT} \setminus U_{v,k,i})\]  is closed and hence compact.

Since $\fmA$ is compact, any sequence of random enriched plane trees has a convergent subsequence. In particular, the sequence $(\cT_n, \beta_n)$ of random enriched plane trees converges towards a limit object $(\bar{\cT}, \bar{\beta})$ along a subsequence $(n_k)_k$. We are going to show that $(\bar{\cT}, \bar{\beta}) \eqdist (\hat{\cT}, \hat{\beta})$ regardless of the subsequence. By standard methods \cite[Thm. 2.2]{MR0310933} this implies that $(\cT_n, \beta_n) \convdis (\hat{\cT}, \hat{\beta})$.

Consider the continuous map 
\begin{align}
\label{eq:varphi}
\varphi: X \to \bar{\ndN}_0, \, X \mapsto |X|
\end{align}
 where we set $|*| := 0$ and $|\infty| := \infty$. The induced continuous map $\varphi^{\VHT}: X^{\VHT} \to \bar{\ndN}_0^{\VHT}$ may be interpreted as the projection that sends an enriched plane tree $(T, \beta)$ to the plane tree $T$. Thus $\varphi^{\VHT}(\fmA) \subset \fmT$.
Hence it holds that \[\cT_{n_k} \eqdist \varphi^{\VHT}(\cT_{n_k}, \beta_{n_k}) \convdis \varphi^{\VHT}(\bar{\cT}) \eqdist \bar{\cT}.\]
 But $\cT_n \convdis \hat{\cT}$ by Theorem~\ref{te:convsigen} and thus $\bar{\cT} \eqdist \hat{\cT}$. 

Moreover, in order to show $(\bar{\cT}, \bar{\beta}) \eqdist (\hat{\cT}, \hat{\beta})$ it suffices to show that for any finite set $V \subset \VHT$ we have that $\bar{\beta}(v) \eqdist \hat{\beta}(v)$ for all $v \in V$. The set $X^V$ is countable, hence this is equivalent to 
\begin{align}
\label{eq:1star}
\Pr{\bar{\beta}(v) = R_v \text{ for all $v \in V$}} = \Pr{\hat{\beta}(v) = R_v \text{ for all $v \in V$}}
\end{align}
for all $(R_v)_v \in X^V$. Since $\bar{\cT} \eqdist \hat{\cT}$ it suffices to consider the case
\[
\Pr{|\hat{\beta}(v)| = |R_v| \text{ for all $v \in V$}} \ne 0
\] because otherwise both sides of Equation~\eqref{eq:1star} equal zero. 
 In particular, since the tree $\hat{\cT}$ has almost surely at most one vertex with infinite degree, we may assume that at the number of vertices $v \in V$, with $R_v = \infty$, equals one or zero.

{\em Case a): $R_v \neq \infty$ for all $v \in V$.} Then 
\begin{align}
\label{eq:orff1}
\Pr{\bar{\beta}(v) = R_v \text{ for all $v \in V$}} &= \lim_{k \to \infty} \Pr{\beta_{n_k}(v) = R_v \text{ for all $v \in V$}}.
\end{align}
Set $V_* = \{v \in V \mid R_v = *\}$. Since $\bar{\cT} \eqdist \hat{\cT}$ we have that
\begin{multline}
\label{eq:orff2}
\lim_{k \to \infty} \Pr{|\beta_{n_k}(v)| = |R_v| \text{ for all $v \in V \setminus V_*$}, \, \beta_{n_k}(v) = * \text{ for all $v \in  V_*$}} \\ = \Pr{|\hat{\beta}(v)| = |R_v| \text{ for all $v \in V \setminus V_*$}, \, \hat{\beta}(v) = * \text{ for all $v \in  V_*$} }.
\end{multline}
Moreover, by Lemma~\ref{le:coupling} and the definition of $(\hat{\cT}, \hat{\beta})$ it follows that
\begin{align}
\label{eq:orff3}
\Pr{&\beta_{n_k}(v) = R_v \text{ for all $v \in V$} \mid \, |\beta_{n_k}(v)| = |R_v| \text{ for all $v \in V \setminus V_*$}, \, |\beta_{n_k}(v)| = * \text{ for all $v \in V_*$}} \nonumber \\ 
&=  \prod_{v \in V \setminus V_*} \kappa(R_v) / |\cR[|R_v|]|_\kappa \nonumber  \\
&=  \Pr{\hat{\beta}(v) =  R_v \text{ for all $v \in V$} \mid |\hat{\beta}(v)| = |R_v| \text{ for all $v \in V \setminus V_*$}, \, |\hat{\beta}(v)| = * \text{ for all $v \in V_*$}}.
\end{align}
Hence Equation~\eqref{eq:1star} holds in this case.

{\em Case b): $R_u = \infty$ for precisely one $u \in V$.} By a similar argument as in case a) it follows that for all $K \ge 1$
\[
\Pr{\bar{\beta}(v) = R_v \text{ for all $v\ne u$}, |\bar{\beta}(u)| \ge K} = \Pr{\hat{\beta}(v) = R_v \text{ for all $v\ne u$}, |\hat{\beta}(u)| \ge K}.
\]
Letting $K$ tend to infinity yields Equation~\eqref{eq:1star}.
\end{proof}

Next, we are going to prove the strong type of convergence, if the weight sequence has type I$\alpha$.

\begin{proof}[Proof of Theorem~\ref{te:strengthend}]
Let $\cE$ denote the countably infinite set of all $\cR$-enriched plane trees and set
\[
\cE_k = \{(T,\beta)^{[k]} \mid (T,\beta) \in \cE\}.
\]
We have to show that
\begin{align}
\label{eq:toshowstr}
\lim_{n \to \infty}	\sup_{\cH \subset \cE_{k_n}} |\Pr{(\cT_n, \beta_n)^{[k_n]} \in \cH} - \Pr{(\hat{\cT}, \hat{\beta})^{[k_n]} \in \cH}| =0.
\end{align}
By assumption the weight sequence $\mathbf{w}$ has type $I\alpha$. Hence the random tree $\cT_n$ is distributed like a critical Galton--Watson tree conditioned on having $n$ vertices, with offspring distribution $\xi$ given in \eqref{eq:tt}. In particular, $\xi$ has finite non-zero variance.  For any $k$ and enriched plane tree $(T, \beta) \in \cE$ with height $\He(T) \ge k$ it holds that the probability $\Pr{(\cT_n,\beta_n)^{[k]}=(T,\beta)^{[k]}}$ is zero if and only if $\Pr{(\hat{\cT}, \hat{\beta})^{[k]} = (T, \beta)^{[k]}}$ is zero. Let \[
\cE_k' =\{ (\tau,\gamma) \in \cE_k \mid \He(\tau) = k, \Pr{(\cT_n,\beta_n)^{[k]} = (\tau,\gamma)} >0 \}
\] denote the subset of all enriched plane trees satisfying this property. By assumption, there is a sequence $t_n \to 0$ with $k_n = n^{1/2} t_n$. The left-tail upper bound ~\eqref{eq:gwtlefttail} for the height of $\cT_n$ implies that $\He(\cT_n) \ge k_n$ with probability tending to $1$ as $n$ becomes large. Thus, as $\He(\hat{\cT})=\infty$, it suffices to verify \eqref{eq:toshowstr} with the index $\cH$ ranging over all subsets of $\cE'_{k_n}$ instead of $\cE_{k_n}$.

Recall that for any tree $T$ and non-negative integer $\ell$ we let $L_\ell(T)$ denote the number of vertices with height $\ell$ in $T$. Let $\hat{\xi}$ denote the size-biased version of $\xi$ with distribution given by $\Pr{\hat{\xi}=i}=i\Pr{\xi=i}$. Then $L_0(\hat{\cT})=1$ and, as $\Ex{\xi}=1$, for all integers $k \ge 1$
\begin{align}
\label{eq:ineqstr2}
\Ex{L_{k}(\hat{\cT})} = \Ex{L_{k-1}(\hat{\cT})} + \Ex{\hat{\xi}} -1 = 1 + k(\Ex{\hat{\xi}} -1).
\end{align}
 For any $C>0$ and all $k$ and $n$ we define with foresight the subset $\cE_{C,k,n} \subset \cE'_k$ by
\begin{align}
\label{eq:defbige}
\cE_{C,k,n} = \{(T,\beta)^{[k]} \mid (T,\beta) \in \cE, L_k(\cT) \le  C (nt_n)^{1/2}, \sum_{i=0}^{k} L_i(T) \le C n t_n\} \cap \cE'_k.
\end{align}
Using Markov's inequality, Inequality \eqref{eq:ineqstr2} and the similar Inequality \eqref{eq:ineqstr1} for the number $L_k(\cT_n)$, we may easily check that there is a constant $C>0$ such that $(\cT_n, \beta_n)^{[k_n]}$ and $(\hat{\cT}, \hat{\beta})^{[k_n]}$ belong to $\cE_{C,k_n,n}$ with probability tending to $1$ as $n$ becomes large. Hence it suffices to verify \eqref{eq:toshowstr} with the index $\cH$ ranging only over all subsets of $\cE_{C,k_n,n}$ instead of $\cE_{k_n}$. 

In order to check this modification of \eqref{eq:toshowstr}, we are going to show
\begin{align}
	\label{eq:stuff}
	\lim_{n \to \infty} \sup_{(\tau,\gamma) \in \cE_{C,k_n,n}} |\Pr{(\cT_n, \beta_n)^{[k_n]} =(\tau,\gamma)} / \Pr{(\hat{\cT}, \hat{\beta})^{[k_n]} = (\tau,\gamma)} -1| =0.
\end{align}
Note that since $\cE_{C, k_n, n} \subset \cE_{k_n}'$ by definition, we do not divide by zero in \eqref{eq:stuff}. The crucial point in the following argument is that for any $(\tau,\gamma) \in \cE_{C, k_n, n}$ 
\[
\Pr{(\cT_n, \beta_n)^{[k_n]} = (\tau,\gamma) \mid \cT_n^{[k_n]} = \tau} = \Pr{(\hat{\cT}, \hat{\beta})^{[k_n]} =(\tau,\gamma) \mid \hat{\cT}^{[k_n]} = \tau},
\]
and hence
\begin{align}
	\label{eq:stuff2}
	\Pr{(\cT_n, \beta_n)^{[k_n]} =(\tau,\gamma)} / \Pr{(\hat{\cT}, \hat{\beta})^{[k_n]} = (\tau,\gamma)} = \Pr{\cT_n^{[k_n]} = \tau} / \Pr{\hat{\cT}^{[k_n]}=\tau}.
\end{align}
Let $D(\tau)$ denote the number of edges of $\tau$ and $\ell(\tau)$ the number of vertices of the restriction $\tau^{[k_n-1]}$. The probability for a trimmed simply generated tree to assume a given tree is known: Let $(\xi_i)_i$ be a family of independent copies of $\xi$ and set $S_m = \sum_{j=1}^m (\xi_j -1)$ for all $m$.  By Equation (15.11) in Janson's survey \cite{MR2908619} it holds that
\[
\Pr{\cT_n^{[k_n]} = \tau} = \frac{n}{n-\ell(\tau)}(D(\tau)- \ell(\tau) + 1)  \frac{\Pr{S_{n-\ell(\tau)} = \ell(\tau) -D(\tau) -1 }}{\Pr{S_n = -1}} \prod_{v \in \tau^{[k_n-1]}} \Pr{\xi = d^+_\tau(v)}.
\]
The probability for $\hat{\cT}^{[k_n]}=\tau$ is easily verified to be
\[
\Pr{\hat{\cT}^{[k_n]}=\tau} = L_{k_n}(\tau) \prod_{v \in \tau^{[k_n-1]}} \Pr{\xi = d^+_\tau(v)},
\]
as there are 
\[
L_{k_n}(\tau) = D(\tau)- \ell(\tau) + 1
\] possible places for the tip of the spine of $\hat{\cT}^{[k_n]}$ to end up, and the probability for the offspring of a spine-vertex to have size $i$ with the successor being at a fixed position $j \le i$ is given by $\Pr{\hat{\xi}=i}/i = \Pr{\xi=i}$. By Definition \eqref{eq:defbige} it holds that 
\begin{align}
\label{eq:klein}
\ell(\tau) \le Cn t_n \quad \text{and} \quad
\ell(\tau) - D(\tau) -1 = -L_{k_n}(\tau) \in [-C(nt_n)^{1/2},0] 
\end{align}
uniformly for all $(\tau,\gamma) \in \cE_{C,k_n,n}$.  Hence \eqref{eq:stuff2} simplifies to
\[
\Pr{\cT_n^{[k_n]} = \tau} / \Pr{\hat{\cT}^{[k_n]}=\tau} = (1+o(1)) \frac{\Pr{S_{n-\ell(\tau)} = \ell(\tau) -D(\tau) -1 }}{\Pr{S_n = -1}}. 
\]
with a uniform  term $o(1)$. Since $\Ex{\xi} =1$ and $\xi$ has finite variance $\sigma^2$, the local limit theorem in Lemma~\ref{le:llt1dim} for lattice distributed random variables ensures that \[\sup_{x \in  \spa(\mathbf{w}) \ndZ }| \sqrt{m}\Pr{S_m = x} - \frac{\spa(\mathbf{w})}{\sqrt{2 \pi \sigma^2}}\exp(-\frac{x^2}{2m\sigma^2})| \to 0\] as $m$ becomes large. 
Inequality \eqref{eq:klein} implies that $l(\tau) - D(\tau) -1 = o(n^{1/2})$. Hence it follows that uniformly in $(\tau,\gamma) \in \cE_{C,k_n,n}$
\[
\frac{\Pr{S_{n-\ell(\tau)} = \ell(\tau) -D(\tau) -1 }}{\Pr{S_n = -1}} = 1 + o(1).
\]
This verifies the limit in \eqref{eq:stuff} and hence completes the proof.
\end{proof}

\begin{proof}[Proof of Lemma~\ref{le:extension}]
	The proof is analogous to the proof of Proof of Theorem \ref{te:local}. As $\fmA$ is compact, the sequence $(\tau_n, \beta_n)$ converges toward a limit object $(\bar{\tau}, \bar{\beta})$ along a subsequence $(n_k)_k$, and we need to show that $(\bar{\tau}, \bar{\beta}) \eqdist (\hat{\tau}, \hat{\beta})$. As the projection  $\varphi: \fmA \to \fmT$ from \eqref{eq:varphi} is continuous, it follows that \[\hat{\tau} \eqdist \bar{\tau}.\] Given an arbitrary  finite subset $V \subset \VHT$, it remains to show that \[(\bar{\beta}(v))_{v \in V} \eqdist (\hat{\beta}(v))_{v \in V}\] in the countable space $X^V$. Let $(R_v)_v \in X^V$ and set \[V_\infty := \{v \in V \mid R_v = \infty\}. \] We need to show that for all positive integers $K$
	\begin{multline*}
		\Pr{\bar{\beta}(v) = R_v \text{ for } v \in V \setminus V_\infty, \,\, |\bar{\beta}(v)| \ge K \text{ for } v \in V_\infty} = \\ \Pr{\hat{\beta}(v) = R_v \text{ for } v \in V \setminus V_\infty, \,\, |\hat{\beta}(v)| \ge K \text{ for } v \in V_\infty}.
	\end{multline*}
	But this follows from $\hat{\tau} \eqdist \bar{\tau}$ entirely analogous as in Equations~\eqref{eq:orff1}, \eqref{eq:orff2} and \eqref{eq:orff3}.
\end{proof}

\subsection{Proofs for the limits of re-rooted enriched trees in Section~\ref{sec:fringe}}

\begin{proof}[Proof of Theorem~\ref{te:thmben}] We start with the first statement of the theorem. Suppose that the weight-sequence $\mathbf{w}$ has type I.
	It was shown in \cite[Thm. 5.1]{inaihpb} that
	\begin{align}
		\label{eq:ccoo}
		(\cT_n, v_0) \convdis \cT^*
	\end{align}
	in the space $\fmT^\bullet$. For any finite $\cR$-enriched plane tree $A = (T, \gamma)$ that is pointed at a vertex $v$ it holds that if $n$ is large enough, then the probability for $f_k(\cT_n, v_0) = (T,v)$ is positive, if and only if the probability for $f_k(\cT^*) = (T,v)$ is positive. We may hence assume that both probabilities are positive. Let $V \subset \VHT^\bullet$ denote the set of vertices that correspond to the tree $T$. Then
	\begin{align}
	\label{eq:tti}
	&\Pr{f_k( (\cT_n, \beta_n), v_0) = (A, v)} \nonumber \\
	&\qquad= \Pr{f_k(\cT_n, v_0) = (T,v)} \Pr{ \beta_n(v) = \gamma(v) \text{ for all $v \in V$} \mid f_k(\cT_n, v_0) = (T,v)} \nonumber \\
	&\qquad=  \Pr{f_k(\cT_n, v_0) = (T,v)} \prod_{v \in V} \frac{\kappa(\gamma(v))}{|\cR[d^+_T(v)]|_\kappa } \nonumber \\
	&\qquad= \Pr{f_k(\cT_n, v_0) = (T,v)} \Pr{ \beta^*(v) = \gamma(v) \text{ for all $v \in V$} \mid f_k(\cT_n, v_0) = (T,v)}.
	\end{align}
	The limit in \eqref{eq:ccoo} implies that
	\[
	\Pr{f_k(\cT_n, v_0) = (T,v)} \to \Pr{f_k(\cT^*) = (T,v)}.
	\]
	It follows that
	\[
		\Pr{\mH_k = (A,v)} \to \Pr{f_k(\cT^*, \beta^*) = (A,v) }.
	\]
	As $f_k(\cT^*)$ is almost surely finite, this yields
	\[
		\mH_k \convdis f_k(\cT^*, \beta^*).
	\]
	
	We proceed with the second statement of the theorem. In the subcase where the weight-sequence $\mathbf{w}$ has type I$\alpha$, it was shown in \cite[Thm. 5.2]{inaihpb} that for any sequence $k_n = o(\sqrt{n})$ of non-negative integers it holds that
	\[
	d_{\textsc{TV}}(f_{k_n}(\cT_n,v_0), f_{k_n}(\cT^*)) \to 0.
	\]
	It follows by Equation~\eqref{eq:tti}  that
	\begin{align*}
	d_{\textsc{TV}}( \mH_{k_n},  f_{k_n}(\cT^*, \beta^*) ) &= \frac{1}{2} \sum_{ ((T, \gamma),v)} \left| \Pr{\mH_{k_n} = ((T, \gamma),v)} - \Pr{f_{k_n}(\cT^*, \beta^*) = ((T, \gamma),v)} \right| \\
	&\le d_{\textsc{TV}}(f_{k_n}(\cT_n,v_0), f_{k_n}(\cT^*)) \to 0.
	\end{align*}
	
	We now show the third statement of the theorem.  Let $N_T$ and  denote the number of vertices $x \in \cT_n$ with $f_k(\cT_n, x) = (T,v)$,  and likewise let $N_H$ denote the number of vertices $y \in \cT_n$ with  $f_k((\cT_n, \beta_n), y) = H$ for $H := (A,v)$. The extended fringe subtrees satisfying $f_k(\cT_n, x) = (T,x)$ are disjoint when considered as subtrees of $\cT_n$. Consequently, each  has a conditionally independent chance with probability $p = \prod_{v \in V} \frac{\kappa(\gamma(v))}{|\cR[d^+_T(v)]|_\kappa }$ that it additionally satisfies $f_k((\cT_n, \beta_n), x) = (A,v)$. Hence, 
	\begin{align}
	\label{eq:na444}
	N_H \eqdist \sum_{i=1}^{N_T} X_i
	\end{align}
	for an i.i.d. family $(X_i)_{i \ge 1}$ of Bernoulli distributed random variables with $\Pr{X_i = 1} = p$ and $\Pr{X_i = 0} = 1- p$.  
	
	Let $(\xi_i)_{1 \le i \le n}$ be a family of independent copies of $\xi$ and set $S_n = \sum_{j=1}^n \xi_j$. Let $d_1, \ldots, d_r$ denote the depth-first-search ordered outdegree sequence of the tree $T$. In the following we assume that $r < n$. Janson~\cite[Eq. (17.1)]{MR3432572} argued that
	\[
		N_T \eqdist \left( f(\xi_1, \ldots, \xi_n) \, \Big|\, S_n = n-1 \right)
	\]
	with $f: \ndN_0^n \to \ndR$ a function so that $f(x_1, \ldots, x_n)$ counts the number of indices $1 \le i_0 \le n$ with $x_{i_0+j} = d_j$ for all $1 \le j \le r$. (Here we set $x_{i_0 + j} := x_{i_0 +j -n}$ in case $i_0 + j >n$.) Note that for any two such indices $i_0 \ne i'_0$ it must hold that $|i_0 - i'_0| \ge r$ because the sequence $d_1, \ldots, d_r$ is the depth-first-search ordered outdegree sequence of a tree. Hence for any $1 \le i \le n$ and any integer $\bar{x}_i \in \ndN_0$ it holds that
	\[
		| f(x_1, \ldots, x_n) - f(x_1, \ldots, x_{i-1}, \bar{x}_i, x_{i+1}, \ldots, x_n)| \le 2.
	\]
	Setting $\pi_T = \prod_{j=1}^r \Pr{\xi=d_j}$, McDiarmid's inequality yields
	\begin{align*}
		\Pr{ |N_T - n \pi_T| > n\epsilon} &\le \Pr{S_n = n-1}^{-1} \Pr{| f(\xi_1, \ldots, \xi_n) -n \pi_T| > n\epsilon} \\
		&\le 2 \Pr{S_n = n-1}^{-1} \exp(-n \epsilon^2 / 2).
	\end{align*}
	By~\cite[Thm. 18.1]{MR3432572} it holds that $\Pr{S_n = n-1} = \exp(o(n))$, hence
	\begin{align}
	\Pr{ |N_T - n \pi_T| > n\epsilon} \le \exp((o(1) - \epsilon^2 / 2)n).
	\end{align}
	Combining this with Equation~\eqref{eq:na444} and the well-known Chernoff bounds yields
	\[
		\Pr{ |N_H - n p\pi_T| > n\epsilon} \le \exp((o(1) - \epsilon^2 c)n)
	\]
	for some constant $c>0$ that does not depend on $n$ or $A$. As $\pi_H= p\pi_T$ this concludes the proof of the deviation bound for $N_H$. By the Borel--Cantelli Lemma it follows that
	\[
		N_H / n \to \pi_H
	\]
	holds almost surely.
		
	It remains to verify the fourth part. Suppose that $\pi_H > 0$.	If $\mathbf{w}$ even has type I$\alpha$, then known limits for fringe subtrees~\cite[Cor. 1.8]{MR3432572} imply 
	\[
	\frac{N_T - n \pi_T}{\sqrt{n}} \convdis \cN(0, \sigma_T^2)
	\]
	for a constant $\sigma_T > 0$. As $p>0$, the central limit theorem states
	\[
		\frac{\sum_{i=1}^n X_i - n p}{\sqrt{n}} \convdis \cN(0, \sigma_p^2)
	\]
	for some $\sigma_p>0$. Since $(X_i)_{i\ge 1}$ is independent from the random variable $N_T$, Skorokhod's representation theorem allows us to assume that there are independent random variables $Y \sim \cN(0, \sigma_T^2)$ and $Z \sim \cN(0, \sigma_p^2)$ such that  $\lim_{n \to \infty} \frac{\tilde{N}_T - n \pi_T}{\sqrt{n}} = Y$ and $\lim_{k \to \infty} \frac{Z_k - k p}{\sqrt{k}} = Z$ hold almost surely for some random variables $\tilde{N}_T \eqdist N_T$ (for all $n \ge 1$, as the dependence on $n$ is implicit) and $Z_k \sim \text{Bin}(k,p)$, $k \ge 1$ such that $(\tilde{N}_T)_{n \ge 1}$ is independent from $(Z_k)_{k \ge 1}$. By~\eqref{eq:na444} it follows that
	\[
	\frac{N_A - n p \pi_T}{\sqrt{n}}  \eqdist \frac{Z_{\tilde{N}_T} - n p \pi_T}{\sqrt{n}},
	\]
	and
	\begin{align*}
	\frac{Z_{\tilde{N}_T} - n p \pi_T}{\sqrt{n}} = \frac{Z_{\tilde{N}_T} - \tilde{N}_T \pi_T}{\sqrt{\tilde{N}_T}} \sqrt{\frac{\tilde{N}_T}{n}} + \pi_T\frac{\tilde{N}_T - np}{\sqrt{n}} 
	\to \pi_T(Y + Z)
	\end{align*}
	almost surely. Since $Y$ and $Z$ are independent, it follows that
	\[
	\frac{N_A - n \pi_H}{\sqrt{n}} \convdis \cN(0, \pi_T^2(\sigma_T^2 + \sigma_p^2)).
	\]
\end{proof}

\begin{proof}[Proof of Theorem~\ref{te:thmco}]
	In \cite[Thm. 7.1]{inaihpb} it was shown that for any finite set $x_1, \ldots, x_r \in \VHT^\bullet$ of vertices it holds that
	\begin{align}
		\label{eq:yd}
		d_{\textsc{TV}}( (\bar{d}_{ (\cT_n, v_0)}(x_i))_{1 \le i \le r}, (\bar{d}_{ \cT_n^*}(x_i))_{1 \le i \le r}) \to 0.
	\end{align}
	The height of the specified vertex in $\cT_n^*$ is stochastically bounded, hence it follows that for each fixed $m \ge 0$ the size of the pruned tree $P_m(\cT_n^*)$ is stochastically bounded. Thus, for each $\epsilon >0$, we may choose a sufficiently large set of vertices $V \subset \VHT^\bullet$ such that for all $n$ it holds with probability at least $1 - \epsilon$ that the vertex set of the pruned tree $P_m(\cT_n^*)$ is a subset of $V$. Hence by the limit in \eqref{eq:yd} it follows that
	\[
		d_{\textsc{TV}}(P_m(\cT_n^*), P_m(\cT_n, v_0)) \le 2 \epsilon
	\]
	for sufficiently large $n$. As $\epsilon>0$ was arbitrary, it follows that
	\begin{align}
		\label{eq:yyd}
			d_{\textsc{TV}}(P_m(\cT_n^*), P_m(\cT_n, v_0)) \to 0.
	\end{align}
	Let $T^\bullet=(T,x)$ be a finite pointed plane tree where $x$ has height at least $1$ and  $P_m(T,x) = (T,x)$. Let $V \subset \fmT^\bullet$ denote the subset of vertices that correspond to the vertices of $T$, except for the sons of the root of $T$ that lie more than $m$ to the left or more than $m$ to the right of the unique spine successor. 
	
	Given $P_m(\cT_n^*) = (T,x)$, the family of $(\beta_n^*(v))_{v \in V}$ of $\cR$-structures is (conditionally) independent, and for each $v \in V$ the $\cR$-structure $\beta_n^*(v)$ gets drawn from the set $\cR[ d_T^+(v)]$ with probability proportional to its $\kappa$-weight. The same holds for the conditional distribution of the family $(\beta_n(v))_{v \in V}$ given $P_m(\cT_n,v_0) = (T,x)$. Thus, for any finite pointed plane tree $T^\bullet$ it holds that
	\[
		( P_m( \cT_n^*, \beta_n^*) \mid P_m(\cT_n^*) = T^\bullet) \eqdist ( P_m( (\cT_n, \beta_n), v_0) \mid P_m(\cT_n) = T^\bullet).
	\]
	By the limit in \eqref{eq:yyd}, it follows that
	\[
		d_{\textsc{TV}}( P_m( \cT_n^*, \beta_n^*),  P_m( (\cT_n, \beta_n), v_0)) \to 0.
	\]
\end{proof}

\subsection{Proofs for the results on Schr\"oder enriched parenthesizations in Section~\ref{sec:schroeder}}

\begin{proof}[Proof of Lemma~\ref{le:couplingschroeder}]
	Let $M_n$ denote the set of plane trees with $n$ leaves and no vertices with outdegree $1$ and set \[L_n = \sum_{T \in M_n} \prod_{v \in T} p_{d^+_T(v)}.\] 
	
	Let $\cA$ denote the species of unordered rooted trees with unlabelled internal vertices and leaves as atoms.  Note that although the internal vertices of an $\cA$-object are unlabelled, they are nevertheless distinguishable, as each may be identified with the set of labels of the leaves of the fringe-subtree at that vertex. Hence every element $A \in \cA[n]$ corresponds to $\prod_{v \in A} d^+_A(v)!$ ordered trees with labelled leaves. Moreover, for any plane tree with $n$ leaves, there are $n!$ ways to label the leaves from $1$ to $n$. Consequently,
	\begin{align}
	\label{eq:etzad}
	|\cS_\cN[n]|_\upsilon = \sum_{A \in \cS_\cN[n]} \prod_{v \in A} |\cN[d^+_A(v)]|_\gamma = \sum_{T \in M_n}  n! \prod_{v \in T} \frac{|\cN[d^+_T(v)]|_\gamma}{d^+_T(v)!} =  n! L_n.
	\end{align}
	An element of $\cS_\cN[n]$ consists of a tree $A \in \cA[n]$ together with a function $f$ that assigns to each vertex $v$ of $A$ with offspring set $O_v$ an $\cN$-structure $f(v) \in \cN[O_v]$. There are $\prod_{v \in A} d^+_A(v)!$ many ways to order the offspring sets. We may consider the tree $\cS_\cN[\sigma](\tau_n, \delta_n)$ as ordered by keeping the ordering of $(\tau_n, \delta_n)$, so the event $\mS_n^\cN=(A,f)$ as unordered trees would correspond to $\prod_{v \in A} d^+_A(v)!$ different outcomes for $\mS_n^\cN$ as ordered tree. Each of these outcomes corresponds to a unique value $(T, \delta)$ for $(\tau_n, \delta_n)$ and a unique value for the permutation $\sigma$. Hence the probability for the event that $\mS_n^\cN = (A, f)$ as unordered trees is given by
	\begin{align}
	\label{eq:etzad1}
	\Pr{ \mS_n^\cN = (A, f)} =   \frac{1}{n!} \sum_{(T, \delta)} \Pr{(\tau_n, \delta_n) = (T, \delta)}
	\end{align}
	with the sum index $(T, \delta)$ ranging over all $\prod_{v \in A} d^+_A(v)!$ enriched plane trees corresponding to $(A,f)$ as described. For each such enriched plane tree $(T, \delta)$ it holds that $\Pr{(\tau_n,\delta_n) = (T, \delta)} > 0$ if and only if  $\Pr{\mS_n^\cN = (A, f)} > 0$.  If this is the case, then
	\begin{align*}
	\Pr{(\tau_n, \delta_n) = (T, \delta)} &= \Pr{(\tau_n, \delta_n) = (T, \delta) \mid \tau_n = T} \, \Pr{\tau_n = T} \\
	&= \left(\prod_{v \in V(T)} \frac{\gamma(\delta(v))}{|\cN[d^+_T(v)|_\gamma} \right) \left(\frac{1}{L_n}\prod_{v \in T} p_{d^+_T(v)} \right ) \\
	&= \frac{1}{L_n} \prod_{v \in V(A)} \frac{\gamma(f(v))}{ d^+_{A}(v)!} \\
	&= \frac{1}{L_n} \left (\prod_{v \in V(A)} d^+_A(v)! \right)^{-1} \upsilon(A,f).
	\end{align*}
	Since there are $\prod_{v \in A} d^+_A(v)!$ choices for $(T, \delta)$,  it follows from Equations~\eqref{eq:etzad} and \eqref{eq:etzad1} that
	\[
	\Pr{\mS_n^\cN = (A, f)} = \frac{1}{n! L_n} \upsilon(A, f) = \frac{1}{|\cS_\cN[n]|_\upsilon}\upsilon(A, f).
	\]
	This concludes the proof.
\end{proof}

\begin{figure}[t]
	\centering
	\begin{minipage}{1.0\textwidth}
		\centering
		\includegraphics[width=0.6\textwidth]{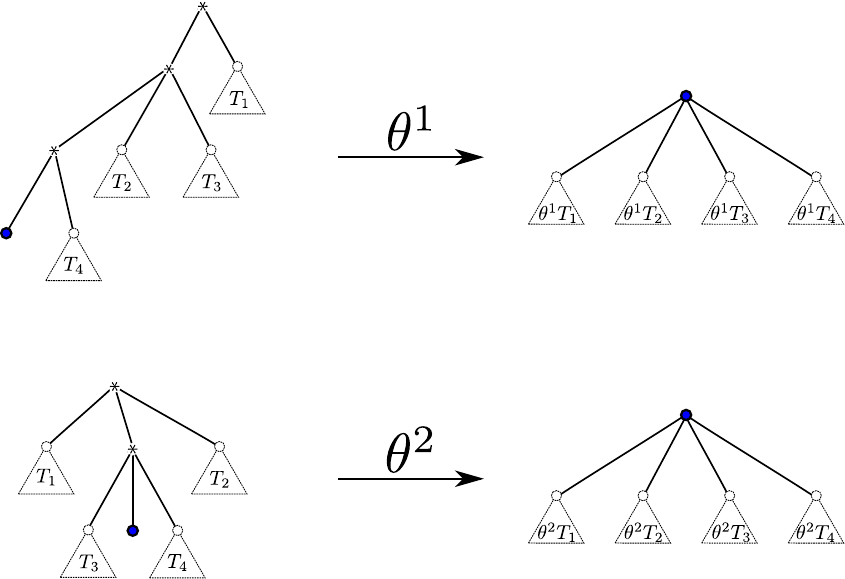}
		\caption{Variants of the Ehrenborg--M\'endez transformation.}
		\label{fi:ehrenb2}
	\end{minipage}
\end{figure}  

\begin{proof}[Proof of Lemma~\ref{le:convll}]
	It remains to treat the case $III$, where $\hat{\tau}$ consists of a root-vertex with infinitely many offspring, all of which are leaves. In order to show $\tau_n \convdis \hat{\tau}$, we need to show that for arbitrarily large $K \ge 1$ the root of $\tau_n$ has with high probability at least $K$ children, with the first $K$ of them being leaves.
	
	Let $\cT_\ell^\upsilon$ denote the species of plane trees with leaves as atoms as discussed in Section~\ref{sec:lgwt}, where each tree $T \in \cT_\ell^\upsilon[n]$ receives the weight 
	\[
	\upsilon(T) = \prod_{v \in T} p_{d^+_T(v)}.
	\]
	As discussed in Section~\ref{sec:lgwt}, $\cT_\ell^\upsilon$ is a Schr\"oder enriched parenthesization, since it satisfies an isomorphism
	\[
		\cT_\ell^\upsilon \simeq \cX + \Seq_{\ge 2}^p \circ \cT_\ell^\upsilon \qquad \text{with} \qquad \Seq_{\ge 2}^p(z) = \sum_{k=2}^\infty p_k z^k.
	\]

	The random tree $\tau_n$ is distributed like the outcome of sampling an element from $\cT_\ell^\upsilon[n]$ with probability proportional to its weight, and dropping the labels. 
	
	Recall the Ehrenborg--M\'endez transformation of Schr\"oder enriched parenthesizations that is given in Equation~\eqref{eq:schrenr} in Section~\ref{sec:diss} and is illustrated in Figure~\ref{fi:ehrenb}.  Any choice of a natural root-vertex of $\Seq_{\ge 2}^{p}$ yields a different isomorphism by Equation~\eqref{eq:schrenr}. We may choose the left-most element of sequence as distinguished point, but just as well the second from left. Moreover, as illustrated in Figure~\ref{fi:ehrenb}, when the descending along the distinguished offspring vertices, the transformation orders the encountered fringe trees starting with those of maximal height, then those just below, and so on. This is not essential at all, we may just as well reverse the order and put the fringe trees of the non-distinguished offspring of the root first, then those of height $2$, and so on, to obtain an isomorphism as in Equation~\eqref{eq:schrenr}. 
	
	Thus we may define two variants $\theta^1$ and $\theta^2$ of the Ehrenborg--M\'endez transformation as illustrated in  Figure~\ref{fi:ehrenb2}, where in $\theta^1$ we descend along the first offspring, in $\theta^2$ along the second, and in both the order the fringe subtrees by first placing those at height $1$, then those at height $2$, and so on.
	
	By Lemma~\ref{le:coupling} it follows that the transformed trees $\theta^1(\tau_n)$ and $\theta^2(\tau_n)$ are both distributed like simply generated trees with a common weight-sequence given by its generating series $\Seq ( \Seq_{\ge 2}^p(z)/z)$. Since we assumed $(p_k)_k$ to have type III, it follows that this series has radius of convergence $0$. Hence the simply generated trees $\theta^1(\tau_n)$ and $\theta^2(\tau_n)$ also have type III and  converge in distribution toward the star $\hat{\tau}$ by Theorem~\ref{te:convsigen}.
	
	Let $K \ge 1$ be an arbitrary fixed integer. The convergence of $\theta^2(\tau_n)$ toward $\hat{\tau}$ implies that with probability tending to $1$ as $n$ becomes large the first offspring of the root in $\tau_n$ is a leaf.  If we apply the transformation $\theta^1$ to any plane tree $T$ (that has no vertex with out-degree $1$) where the first offspring of the root is a leaf, then for all $k \ge 2$ the $k$th offspring of the root in $\theta^1(T)$ corresponds precisely to the $k$th offspring of the root in $T$, and it is a leaf in $T$ if and only if it is a leaf in $\theta^1(T)$. Hence, since the first offspring in $\tau_n$ is with high probability a leaf, it follows from the convergence $\theta^1(\tau_n) \convdis \hat{\tau}$ that the root of $\tau_n$ has with high probability at least $K$ offspring with the first $K$ all being leaves. Since $K \ge 1$ was arbitrary, this confirms $\tau_n \convdis \hat{\tau}$.
\end{proof}

Having Lemma~\ref{le:convll} at hand, Theorem \ref{te:local} follows directly by applying  Lemma~\ref{le:extension}. We proceed with the proof of Lemma~\ref{le:types}.

\begin{proof}[Proof of Lemma~\ref{le:types}]
	Recall that
	\[
	\phi(z) = 1 / (1 -  \cH^\kappa(z)) \qquad \text{and} \qquad p(z) = z \cH^\kappa(z).
	\]
	It is clear that $\phi(z)$ is analytic at $0$ if and only if $\rho_p>0$. Hence the weight-sequence $\mathbf{w}$ has type III if only if $\mu_{t_0}=0$, and in this case we have $t_0= 0 = \tau$.
	
	Suppose now that $\rho_\phi, \rho_p >0$.
	We first show that 
	\begin{align}
	\label{eq:tetete}
	\mu_{\rho_p} \ge 1 \qquad \text{if and only if} \qquad  \psi(\rho_\phi) \ge 1.
	\end{align}
	To do so, we are going to distinguish two different cases. 
	
	First, suppose that $\cH^\kappa(\rho_p) \ge 1$. It holds for all $0< t < \rho_p$ that 
	\[
	\mu_{t} = \sum_{k \ge 2} k p_k t^{k-1} \ge \sum_{k \ge 2} p_k t^{k-1} = \cH^\kappa(t).
	\]
	Letting $t$ tend to $\rho_p$ from below yields
	$
	\mu_{\rho_p} \ge 1.
	$
	Moreover, $\cH^\kappa(\rho_p) \ge 1$ implies that $\cH^\kappa(\rho_\phi) = 1$ and hence $\phi(\rho_\phi) = \infty$. By a general principle given in Janson~\cite[Lem. 3.1]{MR2908619} this implies that $\psi(\rho_\phi) > 1$. Hence \eqref{eq:tetete} is valid if $\cH^\kappa(\rho_p) \ge 1$.
	
	To conclude the verification of \eqref{eq:tetete}, it remains to consider the case $\cH^\kappa(\rho_p) < 1$. In this case, it follows that $\rho_\phi = \rho_p < \infty$ and $p(\rho_\phi) < \rho_\phi$.
	A quick calculation shows that $\mu_z = p'(z)$ and 
	\begin{align}
	\label{eq:flyingcircus}
	\psi(z) = z \phi'(z) / \phi(z) = (p'(z)z - p(z)) / (z - p(z)).
	\end{align}
	Thus $\psi(\rho_\phi) \ge 1$ if and only if $\mu_{\rho_p} \ge 1$. This verifies $\eqref{eq:tetete}$ in the case $\cH^\kappa(\rho_p) < 1$.
	
	This shows that $\mathbf{w}$ has type $I$ if and only if $\mu_{t_0}>0$.  In this case it holds that $\mu_{t_0} = p'(t_0)=1$, and it follows from Equation~\eqref{eq:flyingcircus}  that $\psi(t_0) = 1$. Hence $t_0=\tau$ in the type $I$ regime.
	
	We have also shown that $\mathbf{w}$ has type $II$ if and only if $0<\mu_{\rho_p}<1$. In this case, it holds that $t_0 = \rho_p$ and $\tau = \rho_\phi$. As shown above, if $\cH^\kappa(\rho_p) \ge 1$, then we would be in the type $I$ regime, so it must hold that $\cH^\kappa(\rho_p) < 1$, and consequently $\rho_p = \rho_\phi$. This verifies $t_0 = \tau$ in the type $II$ regime.
\end{proof}

\subsection{Proofs for the results on Gibbs partitions in Section~\ref{sec:gibbs}}

Before starting with the proof of Lemma~\ref{le:supen}, let us recall the cycle lemma.
\begin{lemma}[\cite{MR0138139}]
	\label{le:cycle}
	For each sequence of integers \[k_1, \ldots, k_n \ge -1\] with \[\sum_{i=1}^n k_i = -r \le 0\] there exist precisely $r$ values of $0 \le j \le n-1$ such that the cyclic shift \[(k_{1,j}, \ldots, k_{n,j}) :=   (k_{1+j},\ldots, k_n, k_1, \ldots, k_{j})\] satisfies \[\sum_{i=1}^{u} k_{i,j} > -r\] for all $1 \le u \le n-1$.
\end{lemma}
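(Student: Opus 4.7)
The plan is to translate the condition on cyclic shifts into a condition on strict left-to-right minima of a periodically extended partial-sum sequence, then count those minima directly from periodicity. I would begin by setting $T_u := k_1 + \cdots + k_u$ for $0 \le u \le n$, so that $T_0 = 0$ and $T_n = -r$, and extending the sequence beyond $n$ by the recursion $T_{m+n} := T_m - r$ for all $m \ge 0$. Under this notation the condition ``$\sum_{i=1}^u k_{i,j} > -r$ for $1 \le u \le n-1$'' becomes $T_{j+u} > T_{j+n}$ for $1 \le u \le n-1$, i.e., $T_{j+n}$ is a strict minimum of $T_{j+1}, \dots, T_{j+n}$ (and automatically lies below $T_j$ when $r \ge 1$).

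Next I would introduce strict left-to-right minima (SLRMs) in the extended sequence: an index $m \ge 1$ is an SLRM if $T_m < T_i$ for every $0 \le i < m$. The key claim, which I would prove assuming $r \ge 1$, is that $j \in \{0, \dots, n-1\}$ is a good shift if and only if $j+n$ is an SLRM. The implication from SLRM to good shift is immediate by restriction. For the converse I would use periodicity to extend the inequality $T_{j+n} < T_{j+u}$, $1 \le u \le n-1$, to all smaller indices: for $i \in \{0, \dots, j-1\}$, the index $i+n$ lies in $\{n, \dots, j+n-1\}$, which is a subset of $\{j+1, \dots, j+n-1\}$ since $n \ge j+1$, and so the good-shift bound together with $T_i = T_{i+n} + r$ yields $T_{j+n} < T_{i+n} = T_i - r \le T_i$. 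This bookkeeping through the periodic wrap-around is the main technical obstacle: one must check that the range restriction $u \le n-1$ (rather than $u \le n$) is precisely what allows the converse implication to pass through the periodic identification.

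Finally I would count the SLRMs in the window $\{n, n+1, \dots, 2n-1\}$, which is in bijection with shifts $j \in \{0, \dots, n-1\}$ via $j \mapsto j+n$. Letting $R_m := \min_{0 \le i \le m} T_i$, the hypothesis $k_i \ge -1$ forces $R_m$ to be non-increasing with strict decrements of exactly $1$ occurring precisely at SLRMs; hence the number of SLRMs in $\{a, \dots, b\}$ equals $R_{a-1} - R_b$. Periodicity gives
\[
\min_{n \le i \le 2n-1} T_i \;=\; \min_{0 \le i \le n-1}(T_i - r) \;=\; R_{n-1} - r,
\]
so $R_{2n-1} = \min(R_{n-1}, R_{n-1} - r) = R_{n-1} - r$ and the window contains exactly $r$ SLRMs, proving the count. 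The degenerate case $r = 0$ (where the periodic extension is genuinely periodic and the SLRM reformulation breaks down) is not needed for the intended applications and can be treated separately by direct inspection.
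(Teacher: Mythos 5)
The paper offers no proof of this statement: it is the classical cycle lemma of Dvoretzky and Motzkin, stated with a citation to \cite{MR0138139}, so there is no in-paper argument to compare yours against. Judged on its own, your proof is correct for $r \ge 1$ and is one of the standard routes. The translation of ``$j$ is a good shift'' into ``$j+n$ is a strict left-to-right minimum of the periodically extended walk $(T_m)$'' is sound; in particular the wrap-around step in the converse direction, where you map $i \in \{0,\dots,j-1\}$ to $i+n \in \{n,\dots,j+n-1\} \subseteq \{j+1,\dots,j+n-1\}$ and use $T_i = T_{i+n}+r$, is exactly the point that needs checking and you handle it correctly. The count is also right: since $k_i \ge -1$, the running minimum $R_m$ decreases by exactly $1$ at each strict left-to-right minimum and is constant otherwise, so the number of such minima in $\{n,\dots,2n-1\}$ is $R_{n-1}-R_{2n-1}=r$ by periodicity. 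This is where the hypothesis $k_i \ge -1$ enters, and nowhere else.

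One caveat about the case you set aside: for $r=0$ the lemma as stated is not merely degenerate but false. Take $n=2$ and $(k_1,k_2)=(-1,1)$, so $r=0$; the shift $j=1$ gives the sequence $(1,-1)$ whose only relevant partial sum is $1>0=-r$, so there is one good shift rather than zero. Hence ``direct inspection'' would refute, not establish, the $r=0$ case, and your proof should simply state the hypothesis $r\ge 1$. This costs nothing for the paper, which applies the lemma in the proof of Lemma~\ref{le:supen} only with $r=1$ and $r=k\ge 2$.
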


\begin{proof}[Proof of Lemma~\ref{le:supen}]

	In order to verify Equation~\eqref{eq:starter}, we use the notation 
	\[
	Z(m,n) := \sum_{\substack{k_1, \ldots, k_n \ge 0 \\ k_1 + \ldots + k_n = m}} \omega_{k_1} \cdots \omega_{k_n}
	\]
	for all $m,n \ge 0$. Note that for each $n$ the coefficient $a_n$ is equal to the sum of all products \[\omega_{j_1} \cdots \omega_{j_{n}}\]
	such that
	\[
	j_1 + \ldots + j_n = n-1
	\]
	and
	\[
	j_1 + \ldots + j_s \ge s
	\]
	for all $s < n$. By Lemma~\ref{le:cycle} it follows that
	\begin{align}
	\label{eq:erst}
	a_n = \frac{1}{n} Z(n-1,n).
	\end{align}
	It follows also that the expression
	\[
	\sum_{ \substack{ i_1, \ldots, i_k \ge 1 \\ i_1 + \ldots + i_k = n}} a_{i_1} \cdots a_{i_k}
	\]
	is the sum of all products
	\[
	\omega_{j_1} \cdots \omega_{j_n}
	\]
	such that 
	\[
	j_1 + \ldots + j_n = n-k
	\]
	and 
	\[
	j_1 + \ldots + j_s \ge s-k+1
	\]
	for all $s < n$. Thus, Lemma~\ref{le:cycle} implies that
	\begin{align}
	\label{eq:zweit}
	\sum_{ \substack{ i_1, \ldots, i_k \ge 1 \\ i_1 + \ldots + i_k = n}} a_{i_1} \cdots a_{i_k} = \frac{k}{n} Z(n-k,n).
	\end{align}
	Let $(Y_1^{(n-1,n)}, \ldots, Y_n^{(n-1,n)})$ denote a random vector of non-negative integers satisfying
	\[
	Y_1^{(n-1,n)} + \ldots + Y_n^{(n-1,n)} = n-1
	\]
	and distribution given by
	\[
	\Pr{(Y_1^{(n-1,n)}, \ldots, Y_n^{(n-1,n)}) = (j_1, \ldots, j_n)} = \frac{\omega_{j_1} \cdots \omega_{j_n}}{Z(n-1,n)}.
	\]	
	By Equations~\eqref{eq:erst} and \eqref{eq:zweit} it holds that
	\begin{align*}
	\frac{1}{\omega_0^{k-1}a_{n-k+1}}\sum_{ \substack{ i_1, \ldots, i_k \ge 1 \\ i_1 + \ldots + i_k = n}} a_{i_1} \cdots a_{i_k} &= \frac{k Z(n-k,n)}{\omega_0^{k-1}Z(n-k, n-k+1)}(1 + O(n^{-1})) \\
	&= k \Pr{Y_1^{(n-1,n)} = 0, \ldots, Y_{k-1}^{(n-1,n)}= 0}^{-1} (1 + O(n^{-1})).
	\end{align*}
	By \cite[Thm. 11.7]{MR2908619} it holds that
	\[
	\lim_{n \to \infty}\Pr{Y_1^{(n-1,n)} = 0, \ldots, Y_{k-1}^{(n-1,n)}= 0}  = 1.
	\]
	Thus 
	\begin{align}
	\label{eq:quint}
	\lim_{n \to \infty} \frac{1}{\omega_0^{k-1}a_{n-k+1}}\sum_{ \substack{ i_1, \ldots, i_k \ge 1 \\ i_1 + \ldots + i_k = n}} a_{i_1} \cdots a_{i_k} = k.
	\end{align}
	Note that $a_1 =  \omega_0$. Hence 
	\[
	\frac{1}{\omega_0^{k-1}a_{n-k+1}}\sum_{ \substack{ i_1, \ldots, i_k \ge 1 \\ i_1 + \ldots + i_k = n}} a_{i_1} \cdots a_{i_k}  = k +  \frac{1}{\omega_0^{k-1}a_{n-k+1}}\sum_{\substack{1 \le i_1, \ldots, i_k < n - (k-1) \\ i_1 + \ldots + i_k = n }} a_{i_1} \cdots a_{i_k}.
	\]
	Thus Equation~\eqref{eq:starter} follows by Equation~\eqref{eq:quint}.
\end{proof}

\begin{proof}[Proof of Theorem~\ref{te:need}]
	Let us first argue that it suffices to show that there is a number $L$ such that with high probability the Gibbs partition $\mS_n$ has at most $L$ components.
	
	Applying Lemma~\ref{le:bole} to the composition $\cF_{\le n}^\upsilon \circ \cG^\gamma_{\le n}$ yields that we may sample $\mS_n$ up to relabelling by choosing an arbitrary number $x>0$ and then drawing a random $\cF$-object $\mF_n$ according to the Boltzmann distribution $\mathbb{P}_{\cF_{\le n}^\upsilon, \cG_{\le n}^\gamma(x)}$, then for each of its atoms $1 \le i \le |\mF_n|$ an independent $\cG$-object $\mG_i$ according to a $\mathbb{P}_{\cG_{\le n}^\gamma,x}$-distribution, and conditioning the resulting composite structure $(\mF_n, (\mG_i)_i)$ on having total size $n$. It follows that for any integer $k \ge 0$ with $[z^k] \cF^\upsilon(z) >0$ the conditional distribution $( (\mG_i)_{1 \le i \le k} \mid |\mF| =k)$ does not depend on the species $\cF^\upsilon$ any more In particular, conditioned on having $k$ components, the component sizes of the  Gibbs partition $\mS_n$ are identically distributed as the component sizes of an $n$-sized $\Seq_{\{k\}} \circ \cG^\gamma$ Gibbs partition. Thus Equation~\eqref{eq:starter} yields that the largest component of this conditioned Gibbs partition has with high probability size $n - (k-1)$. Thus, in order to verify  Theorem~\ref{te:need}, it suffices to show that for some fixed number $L$ the Gibbs partition $\mS_n$ has with high probability at most $L$ components.
	
	It remains to show that for sufficiently large $L$ it holds that
	\begin{align}
	\label{eq:tos1}
	[z^n] \cF^\upsilon_{\ge L} \circ \cG^\gamma (z) = o([z^n] \cF^\upsilon_{< L} \circ \cG^\gamma (z))
	\end{align}
	as $n$ tends to infinity. This is trivial if the generating series $\cF^\upsilon(z)$ is a polynomial. Hence throughout the remaining proof we only consider the case where the set 
	\[
	\Omega = \{ m \in \ndN \mid [z^m]\cF^\upsilon(z) >0 \}
	\]
	is infinite.
	
	By assumption  there is a constant $d \ge 1$ such that $a_i = 0$ for $i \notin 1 + d \ndZ$, and a constant $I \ge 1$ such that $a_i > 0$ for all $i \in 1 + d \ndZ$ with $i \ge I$. For each $1 \le b \le d$ it holds that
	\[
	(\cF^\upsilon \circ \cG^\gamma)[n] = (\cF^\upsilon_{b + d \ndZ} \circ \cG^\gamma)[n]
	\]
	whenever $n \equiv b \mod d$.  Thus it suffices to consider the case
	\begin{align}
	\label{eq:thas}
	\Omega \subset b + d \ndZ \qquad \text{and} \qquad n \equiv b \mod d.
	\end{align}
	
	By assumption, the series $\cF^\upsilon(z)$ has positive radius of convergence. Hence there is a constant $C>1$ such that
	\begin{align}
	[z^m] \cF^\upsilon(z) \le C^m
	\end{align}
	for all $m \ge 1$. It will be convenient to use the notation
	\[
	A_m^{\ge L}  := [z^m] \sum_{\ell \ge L} C^\ell \cG^\gamma(z)^\ell
	\]
	for all $L \ge 1$. Clearly it holds that
	\[
	[z^n] \cF^\upsilon_{\ge L} \circ \cG^\gamma (z) \le A_n^{\ge L}
	\]
	for all $n$ and $L$. Thus, in order to show Equation~\eqref{eq:tos1} it suffices to find an integer $L$ such that
	\begin{align}
	\label{eq:tos2}
	A_n^{\ge L} = o([z^n] \cF^\upsilon_{< L} \circ \cG^\gamma (z))
	\end{align}
	as $n \equiv b \mod d$ becomes large.

	We choose an integer $t \ge 1$ with $t \equiv b \mod d$ such that there exists an element $s \in \Omega$ and integers $j_1, \ldots, j_{s-1} \ge 1$ with 
	\[
	j_1 + \ldots + j_{s-1} = t-1 \qquad \text{and} \qquad a_{j_1}, \ldots, a_{j_{s-1}} > 0.
	\]
	This is possible as we assumed that $\Omega \subset b + d \ndZ$ is infinite and $a_i>0$ for all $i \in b + d\ndZ$ with $i \ge I$. Furthermore, we fix an  integer $L$ that satisfies
	\begin{align}
	\label{eq:assumption}
	L \ge 1 + I + t \qquad \text{and} \qquad L \equiv 1 \mod d.
	\end{align}
	It follows that there is a constant $c>0$ such that
	\begin{align}
	[z^n] \cF^\upsilon_{<L} \circ \cG^\gamma \ge c a_{n - t +1}
	\end{align}
	for all large enough integers $n$ with $n \equiv b \mod d$. Thus, in order to verify Equation~\eqref{eq:tos2}, it suffices to show
	\begin{align}
	\label{eq:tos3}
	A_n^{\ge L} = o(a_{n-t+1}), \qquad n \to \infty, \qquad n \equiv b \mod d.
	\end{align}

	For $k=2$ Equation~\eqref{eq:starter} yields
	\begin{align}
	\label{eq:one}
	\sum_{i=2}^{m-2} a_i a_{m-i} = o(a_{m-1}), \qquad m \to \infty
	\end{align}
	with both sides of the equation being equal to zero unless $m \equiv 2 \mod d$.
	In particular, it holds that
	\begin{align}
	\label{eq:two}
	a_{m-i} = o(a_{m}), \qquad m \to \infty, \qquad m \equiv 1 \mod d
	\end{align}
	for all $i$ with $a_{i+1} >0$. If $i$ is not a multiple of $d$, then $a_{m-i} = 0$  for all $m$ satisfying $m \equiv 1 \mod d$, and Equation~\eqref{eq:two} holds trivially.
	
	We define
	\[
	\chi_n = \max \{ A_k^{\ge L} / a_{k-t+1} \mid k \in b + d \ndZ,  I-1 +t \le k \le n\}.
	\]
	It holds that
	\begin{align}
	\label{eq:pp1}
	A_n^{\ge L} 
	&= \sum_{L \le \ell \le L + L -2} C^\ell \sum_{i_1 + \ldots + i_\ell = n} a_{i_1} \cdots a_{i_\ell} + C^{L -1} \sum_{i_1 + \ldots + i_{L} = n}  a_{i_1} \cdots a_{i_{L-1}} A_{i_{L}}^{\ge L} \nonumber \\
	&\le O(1) \left(  \sum_{L \le \ell \le L + L -2}  \sum_{i_1 + \ldots + i_\ell = n} a_{i_1} \cdots a_{i_\ell} + \sum_{i_1 + \ldots + i_{L} = n}  a_{i_1} \cdots a_{i_{L-1}} A_{i_{L}}^{\ge L} \right).
	\end{align}
	It follows by Equations~\eqref{eq:assumption}, \eqref{eq:starter}, and \eqref{eq:two} that
	\begin{align}
	\label{eq:pp2}
	\sum_{L \le \ell \le 2L-2} \sum_{i_1 + \ldots + i_\ell = n} a_{i_1} \cdots a_{i_\ell} &  = \sum_{\substack{L \le \ell \le L + L -2 \nonumber \\ \ell \equiv 1 \mod d}} O(a_{n-(\ell -1)}) \\
	&= o(a_{n-t+1})
	\end{align}
	as $n \equiv b \mod d$ becomes large. As for the second sum in Equation~\eqref{eq:pp1}, note that each summand \[a_{i_1} \cdots a_{i_{L-1}} A_{i_{L}}^{\ge L}\] is equal to zero, unless 
	\begin{align*}
	i_1, \ldots, i_{L-1} \equiv 1 \mod d \qquad \text{and} \qquad i_{L} \ge L.
	\end{align*}
	Using $L \equiv 1 \mod d$, $ n \equiv b \mod d$, and $i_1 + \ldots + i_{L} = n$ it follows that we only need to consider summands where 
	\[
	L \le i_{L} \le n-(L-1) \qquad \text{and} \qquad i_{L} \equiv b \mod d.
	\]
	For such indices, it holds that
	\[
	A_{i_{L}}^{\ge L} \le a_{i_{L}-t+1} \chi_{i_{L}} \le a_{i_{L}-t+1} \chi_{n-(L-1)}.
	\]
	It follows by Equations~ \eqref{eq:starter} and \eqref{eq:two} that
	\begin{align*}
	\sum_{i_1 + \ldots + i_{L} = n} a_{i_1} \cdots a_{i_{L-1}} A_{i_{L}}^{\ge L} &\le \chi_{n-L+1} \sum_{\substack{i_1 + \ldots + i_{L} = n \\ i_L \ge L}} a_{i_1} \cdots a_{i_{L}-t+1} \\
	&= \chi_{n-L+1} O(a_{n-(L-1) - (t-1)}) \\
	&= \chi_{n-L+1} o(a_{n-t+1}).
	\end{align*}
	By Equations~\eqref{eq:pp1}, \eqref{eq:pp2} and \eqref{eq:assumption} it follows that
	\begin{align}
	\label{eq:pp3}
	A_n^{\ge L} \le o(a_{n-t+1}) + \chi_{n-L+1} o(a_{n-t+1}), \qquad n \to \infty, \qquad n \equiv b \mod d.
	\end{align}
	In particular, there is some $n_0 \ge 1$ such that $n_0 \equiv b \mod d$ and for all $n \ge n_0$ with $n \equiv b \mod d$ it holds that
	\[
	A_n^{\ge l} /a_{n-t+1} \le \frac{1}{2} + \frac{1}{2} \chi_{n-L+1}. 
	\]
	Hence
	\begin{align*}
	\chi_n &= \max(A_n^{\ge L} / a_{n-t+1}, \chi_{n-d}) \\
	&\le \max \left(\frac{1}{2} + \frac{1}{2} \chi_{n-(L-1)}, \chi_{n-d} \right) \\
	&\le \max \left(\frac{1}{2} + \frac{1}{2} \chi_{n-d}, \chi_{n-d} \right) \\
	& \le \max(1, \chi_{n-d}).
	\end{align*}
	Iterating this inequality yields
	\begin{align*}
	\chi_n \le \max(1, \chi_{n_0}).
	\end{align*}
	Hence $\chi_n = O(1)$ and it follows from Equation~\eqref{eq:pp3} that
	\[
	A_n^{\ge L} = o(a_{n-t+1}).
	\]
	This verifies Equation~\eqref{eq:tos3} and hence completes the proof.
\end{proof}

\begin{proof}[Proof of Corollary~\ref{co:ne}]
	If $[z^1]\cF^\upsilon(z)>0$ and $a_1, a_2>0$, then Equation~\eqref{eq:two} yields
	\begin{align}
	\label{eq:yup}
	a_{n-1} = o(a_n).
	\end{align}
	Theorem~\ref{te:need} yields that there is some integer $L \ge 2$ such that the Gibbs partition $\mS_n$ has with high probability less than $L$ components. That is,
	\[
	[z^n] \cF^\upsilon_{\ge L} \circ \cG^\gamma(z) = o( [z^n] \cF^\upsilon_{< L} \circ \cG^\gamma(z)).
	\]
	Equations~\eqref{eq:starter} and \eqref{eq:yup} readily yield that
	\[
	[z^n] \cF^\upsilon_{\{2, \ldots, L-1\}} \circ \cG^\gamma(z) = o(a_n).
	\]
	Hence
	\[
	[z^n] \cF^\upsilon_{\ge 2} \circ \cG^\gamma(z) = o(a_n).
	\]
	In other words, the mass of all composite structures with at least two components is negligible compared to the mass of composite structures with a single component. Thus the Gibbs partition $\mS_n$ consists with high probability of a single component.
\end{proof}

\subsection{Proofs for the component size asymptotics in Section~\ref{sec:compsize}}

\begin{proof}[Proof of Theorem~\ref{te:compsize}] 

	Let $Y_{(1)} \ge Y_{(2)}  \ge \ldots \ge Y_{(n)}$ denote the descendingly ordered list of the outdegrees of the vertices in $\cT_n$, and $v_1, \ldots, v_n$ the corresponding vertices. Here we fix the ordering between vertices having the same outdegree in any canonical way, for example according to the lexicographic ordering of their location in $\cT_n$. 
	
	If we replace the maximum component-sizes $B_{(j)}$ with the maximum outdegrees $Y_{(j)}$ in Theorem~\ref{te:compsize} or Proposition~\ref{pro:new}, then all bounds and limit theorems hold by \cite[Thm. 19.34, Thm. 19.3, Equation (19.20) and Ch. 9]{MR2908619}. (See also \cite{MR2764126}.) As it always holds that $B_{(1)} \le Y_{(1)}$, this already concludes the proof for the points {\em (1)} and {\em (2)} of Theorem~\ref{te:compsize}.
	 
	In the setting of {\em (3)}, we intend to show weak convergence of the extremal $\cG$-component sizes. The limit theorems for the $Y_{(j)}$ imply that there is a deterministic function $\ell(n) \to \infty$ such that for each fixed $j \ge 1$ the probability for the event $Y_{(j)} \ge \ell(n)$ tends to one as $n$ becomes large.
	Let $B_{(j)}^+$ denote the size of the largest $\cG$-structure of the $\cR$-object of the vertex $v_j$. As the composition $\cF^\nu \circ \cG^\gamma$ has convergent type by Lemma~\ref{le:gibbs}, it follows that there is a random non-negative integer $X$ such that for each fixed $j$
	\begin{align}
		\label{eq:yeah}
		\lim_{n \to \infty} d_{\textsc{TV}}(Y_{(j)} - B_{(j)}^+,X) =0.
	\end{align}
	As $Y_{(1)} \ge B_{(1)} \ge B_{(1)}^+$, this immediately yields $B_{(1)} = B_{(1)}^+$ with high probability and hence $B_{(1)}$ satisfies the same central limit theorem as $Y_{(1)}$. This concludes the proof of part {\em a)} of Theorem~\ref{te:compsize}. 
	
	It remains to  establish limit theorems for $B_{(j)}$ if $j \ge 2$ in the setting of {\em (3)}. Equation~\eqref{eq:yeah} implies that for, let us say $t_n = \log n$, it holds with probability tending to one that $B_{(i)}^+ \ge Y_{(i)} - t_n$ for all $i \le j$. As $Y_{(i)}$ has polynomial order for any fixed $i$, it follows that the $j$ largest $\cG$-components lie with high probability in $\cR$-structures of different vertices. If this event takes place, then it must also hold that $Y_{(i)} \ge B_{(i)}$ for all $i \le j$. This is due to the fact that if $B_{(\ell)} > Y_{(\ell)}$ for some $\ell$, then the $\cG$-objects corresponding to the $B_{(i)}$ for $i \le \ell$  would have to  belong to the $\cR$-structures of the vertices $v_i$ for $i < \ell$, and then the pigeon hole principle tells us that at least two such objects must belong to the same vertex. Hence it holds with high probability that $B_{(i)} \le Y_{(i)}$ for all $i \le j$.  For the lower bound, we observe that if $B_{(i)}^+ \ge Y_{(i)} - t_n$ for all $i \le j$, then 
	\[
	B_{(i)} \ge \min(B_{(1)}^+, \ldots, B_{(i)}^+)  \ge \min(Y_{(1)}, \ldots, Y_{(i)}) - t_n = Y_{(i)} - t_n.
	\]
	So $Y_{(i)} - t_n \le B_{(i)} \le Y_{(i)}$ and hence the limit theorems for the $Y_{(i)}$ also hold for the $B_{(i)}$. This concludes the proof.
	\end{proof}

The proof of Proposition~\ref{pro:new} is entirely analogous to the proof of Theorem~\ref{te:compsize}, only instead of using the asymptotics given in \cite{MR2908619}, we build on results by Kortchemski~\cite[Thm. 1]{MR3335012} for the asymptotic behaviour of the largest and second largest degree in non-generic Galton--Watson trees.

\subsection{Proofs of the applications to outerplanar maps in Section~\ref{sec:outerplanar}}

Theorem~\ref{te:convouter} follows by a direct application of Theorems~\ref{te:local} and \ref{te:strengthend}, that guarantee convergence of the pruned enriched tree $(\cT_n, \beta_n)^{[k]}$ in the type I regime.

\begin{proof}[Proof of Remark~\ref{re:mapdistr}]
	We have to interpret the limit enriched tree $(\hat{\cT}, \hat{\beta})$ as a graph. Note that the fringe subtree at its second spine vertex follows the same distribution as the whole tree. So we are going to interpret $(\hat{\cT}, \hat{\beta})$ without this fringe subtree as a graph, and then use this recursion. 
	
	The root $o$ of $\hat{\cT}$ receives an $\cR$-object $\hat{\beta}(o)$ of which a uniformly at random drawn atom forms the second spine vertex. Taking a $\hat{\xi}$-sized $\cR^\kappa = \Seq \circ \cD^\gamma$ object with probability proportional to its $\kappa$-weight and marking a uniformly at random chosen non-$*$-vertex, is equivalent to taking a $\mathbb{P}_{(\cR^\bullet)^\kappa, \tau}$ object. So, $\hat{\beta}(o)$ with the marked atom given by the second spine-vertex follows a $\mathbb{P}_{(\cR^\bullet)^\kappa, \tau}$-distribution.
	
	The rules for the operations on species in Section~\ref{sec:opspe}  show in a purely algebraic fashion, that
	\[
	(\cR^\bullet)^\kappa \simeq (\Seq' \circ \cD^\gamma) \cdot (\cD^\bullet)^\gamma.
	\]
	Any $\Seq'$-object can be decomposed in a unique way into the two linear orders before and after the $*$-atom, yielding $\Seq' \simeq \Seq \cdot \Seq$. Consequently, 
	\[
	(\cR^\bullet)^\kappa \simeq (\Seq \circ \cD^\gamma) \cdot (\cD^\bullet)^\gamma \cdot (\Seq \circ \cD^\gamma).
	\]
	The interpretation is that, in counter-clockwise order, we first glue a sequence of dissections together at their $*$-vertices, then comes the marked dissection, and afterwards again a sequence of unmarked dissections. 
	
	Now, the rules in Lemma~\ref{le:bole} governing the relation between weighted Boltzmann distributions and operations on species yield that the sequences of dissections before the $\cD^\bullet$-object, the $\cD^\bullet$-object itself, and the sequence of dissections after the $\cD^\bullet$-object are independent, and follow Boltzmann distributions  $\mathbb{P}_{(\cD^\bullet)^\gamma, \tau}$ and $\mathbb{P}_{\Seq \circ \cD^\gamma, \tau}$. In the tree $(\hat{\cT}, \hat{\beta})$, each of the non-marked atoms of the $\cR$-object $\beta(o)$ becomes the root of an independent copy of $(\cT, \beta)$. As graph, $(\cT, \beta)$ follows a Boltzmann distribution $\mathbb{P}_{\cO^\omega, \tau/\phi(\tau)}$.
	
	Now, the isomorphism
	\[
	\cO^\omega \simeq \cX \cdot (\Seq \circ \cD^\gamma)(\cO^\omega)
	\]
	and the rules in Lemma~\ref{le:bole} state, that if we glue the $*$-vertices of a $\mathbb{P}_{\Seq \circ \cD^\gamma, \tau}$-distributed sequence of dissections together, and identify each non-$*$-vertex with the root of a fresh independent copy of a $\mathbb{P}_{\cO^\omega, \tau/\phi(\tau)}$-distributed outerplanar map, then the result again follows a $\mathbb{P}_{\cO^\omega, \tau/\phi(\tau)}$ Boltzmann distribution.
	
	Summing up, the graph corresponding to $(\hat{\cT}, \hat{\beta})$, without the fringe-subtree at the second spine vertex, corresponds to a pointed  $\mathbb{P}_{(\cD^\bullet)^\gamma, \tau}$-distributed dissection, with two independent copies of a $\mathbb{P}_{\cO^\omega, \tau/\phi(\tau)}$-distributed outerplanar map attached to its root, and one fresh independent copy attached to every other vertex, except for the marked vertex.
	
	As the fringe subtree at the second spine vertex is distributed like $(\hat{\cT}, \hat{\beta})$ itself, it follows that the graph $\hat{\mO}$ corresponding to $(\hat{\cT}, \hat{\beta})$ is distributed like an infinite chain of such triples, where the marked vertex of any triple is identified with the $*$-vertex of the subsequent one. So $\hat{\mO}$ has the distribution as described in the remark.
\end{proof}

\begin{proof}[Proof of Remark~\ref{re:mapdistr2}]
	We have to show that the random graph corresponding to the enriched tree $(\cT^*, \beta^*)$ follows the described distribution. Here we may build on the intermediate results in the proof of Remark~\ref{re:mapdistr}. The fringe-subtree of $(\cT^*, \beta^*)$ is distributed like $(\cT, \beta)$, and hence corresponds to a $\mathbb{P}_{\cO^\omega, \tau/\phi(\tau)}$-distributed random outerplanar map. If $u_0, u_1, \ldots$ denotes the spine of $(\cT^*, \beta^*)$, then for all $i \ge 1$ the enriched fringe subtree $f( (\cT^*, \beta^*), u_i)$ without the enriched fringe subtree $f( (\cT^*, \beta^*), u_{i-1})$ is distributed like $(\hat{\cT}, \hat{\beta})$ without the fringe subtree at its second spine-vertex. The graph corresponding to this object has been identified in the proof of Remark~\ref{re:mapdistr} as a  $\mathbb{P}_{(\cD^\bullet)^\gamma, \tau}$-distributed dissection, with two independent copies of a $\mathbb{P}_{\cO^\omega, \tau/\phi(\tau)}$-distributed outerplanar map attached to its root, and one fresh independent copy attached to every other vertex, except for the marked vertex. Hence $\hat{\mO}_*$ looks like an infinite chain of these objects, with a single additional independent $\mathbb{P}_{\cO^\omega, \tau/\phi(\tau)}$-distributed map attached to the marked vertex of the first element in the spine.
\end{proof}


\begin{proof}[Proof of Theorem~\ref{te:convout}]
	In Lemma~\ref{le:coupling}, we constructed the enriched plane tree $(\cT_n, \beta_n)$ by first generating the random tree $\cT_n$, and then sampling for each vertex $v \in \cT_n$ an $\cR$-structure $\beta_n(v) \in \cR[d^+_{\cT_n}(v)]$  with probability proportional to its $\kappa$-weight. The labels of $\beta_n(v)$ correspond in a canonical way to the ordered set of offspring of the vertex $v$, which is why the tree $(\cT_n, \beta_n)$ may be interpreted as an enriched tree. The final $\cR$-enriched tree $\mA_n^\cR = (\mA_n, \alpha_n)$ is then obtained by relabelling through a uniformly at random drawn bijection.
	
	The precise way for identifying the offspring of an vertex $v$ with the atoms of the $\cR$-structure $\beta_n(v)$ does not affect the {\em distribution} of the resulting random unordered enriched tree $\mA_n^\cR$. We may match the offspring of $v$ and the atoms of $\beta_n(v)$ according to any rule, which only takes $\beta_n(v)$ into account. Different matchings may very well change the isomorphism type of the corresponding $\cR$-enriched tree, but its distribution does not change. This may verified by using the fact that $(\cT_n, \beta_n)$ is distributed like $(\cT,\beta)$ conditioned on having $n$ vertices, and that $(\cT, \beta)$ has this invariance property, as any offspring of a vertex $v\in \cT$ becomes the root of an independent copy of $(\cT, \beta)$, regardless to which atom of $\beta(v)$ it gets matched.
	
	In the case of random outerplanar maps, the species $\cR$ is given by $\cR^\kappa = \Seq \circ (\cD)^\gamma$. We may consider the random enriched plane tree $(\cT_n, \lambda_n)$ constructed from $(\cT_n, \beta_n)$ by matching for each vertex $v \in \cT_n$ the offspring of $v$ with the atoms of $\beta_n(v)$ by ordering them in the following way. We start by putting the neighbours of the $*$-vertices in the derived dissections in any canonical order, and then proceed with the vertices at distance $2$ from their respective $*$-vertices, and so  on.  We may construct an outerplanar map  out of $(\cT_n, \lambda_n)$ according to the bijection in Section~\ref{sec:decompouter}, which is distributed like the random outerplanar map $\mO_n^\omega$. Thus, in the following we assume that $\mO_n^\omega$ corresponds directly to $(\cT_n, \lambda_n)$.
	
	For any integer $m \ge 0$, consider the subset $V^{[m]} \subset \VHT$ of the vertices of the Ulam--Harris tree given by 
	\[
	V^{[m]} = \{ (i_1, \ldots, i_t) \mid t \le m, i_1, \ldots, i_\ell \le m\}.
	\]
	That is, we consider the first $m$ sons of the root, and for each of those again the first $m$ sons, and so on, until we reach generation $m$.
	The reason why we consider $(\cT_n, \lambda_n)$ is that for any integer $\ell \ge 0$ and any finite outerplanar map $O$ there exists a constant $m(\ell, O) \ge 0$, such that the event, that the $\ell$-neighbourhood $V_\ell(\mO_n^\omega)$ is identical to $O$ as (half-edge-rooted) planar maps, is already completely determined by the family $(\lambda_n(v))_{v \in V^{[m(\ell, O)]}}$. There are two reasons for this. First, vertices with distance at most $\ell$ from the origin of the root-edge in $\mO_n^\omega$ also have block-distance  at most $\ell$ from the origin, and hence height at most $\ell$ in $\cT_n$. Second, by the construction of $\lambda_n$, for any vertex $v$ the subset of its ordered sequence of sons, that still lies in the $\ell$-neighbourhood of $v_n$, is an initial segment in the ordered list. If $V_\ell(\mO_n^\omega) \simeq O$ as (half-edge-rooted) planar maps, then the length of this initial segment must be bounded by the number of vertices of $O$. Hence, if we take $m(\ell, O)$ large enough depending on $\ell$ and the size of the map $O$, then $(\lambda_n(v))_{v \in V^{[m(\ell, O)]}}$ contains all information necessary to decide whether $V_\ell(\mO_n^\omega) \simeq O$ as (half-edge-rooted) planar maps. 
	
	Janson~\cite[Sec. 20]{MR2908619} constructs a deterministic sequence $\Omega_n \to \infty$ and a modified Galton--Watson tree $\cT_{1n}$ that is obtained from $\hat{\cT}$ by sampling a random degree $\tilde{D}_n \ge \Omega_n$ independently from $\hat{\cT}$, and pruning $\hat{\cT}$ at its unique vertex $v^*$ with infinite degree, keeping only the first $\tilde{D}_n$ children of $v^*$. The distribution of $\tilde{D}_n$ is given in Equation (20.4) of Janson's survey \cite{MR2908619}, and the construction of $\Omega_n$ in \cite[Lem. 19.32]{MR2908619}. We will not require detailed knowledge of these, but will make use of his result \cite[Thm. 20.2]{MR2908619}, which states that for any fixed integer $m \ge 0$ it holds that
	\begin{align}
	\label{equ:conv1}
	\lim_{n \to \infty} d_{\textsc{TV}}( (d^+_{\cT_n}(v))_{v \in V^{[m]}}, (d^+_{\cT_{1n}}(v))_{v \in V^{[m]}}) = 0.
	\end{align}
	In other words, the tip of the spine in $\hat{\cT}$ corresponds to a vertex with large degree in $\cT_n$. 
	
	The tree $\cT_{1n}$ is almost surely finite, and we may turn it into an enriched plane tree $(\cT_{1n}, \beta_{1n})$, by sampling for each vertex $v$ an element $\beta_{1n}(v)$ from $\cR[d^+_{\cT_{1n}}(v)]$ with probability proportional to its weight. Again we may match the non-$*$-vertices of the set of derived blocks $\beta_{1n}(v)$ with the ordered offspring of $v$ according to their distance from their respective $*$-vertices in order to obtain an enriched tree $(\cT_{1n}, \lambda_{1n})$, in precisely the same way as we constructed $(\cT_n, \lambda_n)$ out of $(\cT_n, \beta_n)$. For any finite family of vertices $v_i$, $i \in I$, integers $d_i \ge 0$ and $\cR$-structures $R_i \in \cR[d_i]$, it holds that
	\begin{align*}
	\Pr{ \beta_n(v_i) = R_i, i \in I \mid d_{\cT_n}^+(v_i) = d_i, i \in I} &= \prod_{i \in I} \kappa(R_i) / |\cR[d_i]|_\kappa  \\
	&= \Pr{ \beta_{1n}(v_i) = R_i, i \in I \mid d_{\hat{\cT}_{1n}}^+(v_i) = d_i, i \in I}.
	\end{align*}
	So, Equation~\eqref{equ:conv1} already implies
	\begin{align}
	\label{equ:conv22}
	\lim_{n \to \infty} d_{\textsc{TV}}( (\lambda_n(v))_{v \in V^{[m]}}, (\lambda_{1n}(v))_{v \in V^{[m]}}) = 0.
	\end{align}
	Let $\mO_{1n}^\omega$ denote the outerplanar map corresponding to $(\cT_{1n}, \lambda_{1n})$ according to the bijection in Section~\ref{sec:decompouter}. Setting $m = m(\ell, O)$, it follows that
	\begin{align}
	\label{equ:reduct}
	\lim_{n \to \infty} |\Pr{V_\ell(\mO_n^\omega) \simeq O} - \Pr{V_\ell(\mO_{1n}^\omega) \simeq O} | = 0.
	\end{align}
	
	Thus, it remains to determine the limit probability for the event that $V_\ell(\mO^\omega_{1n})$ is equal to $O$ as half-edge-rooted planar map.  For any $k\ge 0$ with $|\cR[k]|_\kappa>0$ let $\mR_k$ denote a random $\cR = \Seq \circ \cD^\gamma$ structure sampled from $\cR[k]$ with probability proportional to its $\kappa$-weight.  Let $\hat{\mR}$ and $\hat{\mR}'$  denote two independent  Boltzmann distributed $\cR$-objects with parameter $\tau$. 
	
	If $|\hat{\mR}| + |\hat{\mR}'| \le k$ and $|\cD[k - |\hat{\mR}|]|_\gamma >0$, then we let $\hat{\mR}_k$ denote the outerplanar map obtained by identifying the root-vertices of $\hat{\mR}$, an independent random dissection $\hat{\mD}_k$ from $\cD[k - |\hat{\mR}| - |\hat{\mR}'|]$ sampled with probability proportional to its $\gamma$-weight, and $\hat{\mR}'$, such that in the resulting outerplanar map is rooted at the oriented root-edge of $\hat{\mR}$ and such that in the counter-clock-wise ordered list of edges incident to the origin of the root-edge, we encounter the roots of $\hat{\mR}$, $\hat{\mD}_k$ and $\hat{\mR}'$ in this order. If $|\hat{\mR}| + |\hat{\mR}'| > k$ or $|\cD[k - |\hat{\mR}|]|_\gamma =0$, then we set $\hat{\mR}_k = \diamond$ for some placeholder symbol $\diamond$.

	As $\hat{\mR}$ and $\hat{\mR}'$ are  almost surely finite, it follows that $\hat{\mR}_k \ne \diamond$ with high probability. By assumption, the composition $\Seq \circ \cD^\gamma$ has convergent type with parameter $\tau$, so $\mR_k$ behaves asymptotically like a Boltzmann-distributed $\Seq' \circ \cD^\gamma$ object with parameter $\tau$, plus a large dissection. It holds that $\Seq' \simeq \Seq \cdot \Seq$, since a derived sequence consists of an ordered initial segment, then the derived atom, and the ordered final segment. This means that as (unlabelled) half-edge-rooted planar maps, it holds that
	\begin{align}
	\label{equ:needaname}
	\lim_{k \to \infty} d_{\textsc{TV}}(\mR_k,\hat{\mR}_k) = 0.
	\end{align}
	If we let $u^*$ denote the vertex at the tip of the spine in $\cT_{1n}$, then the $\Seq \circ \cD^\gamma$ structure $\beta_{1n}(u^*)$ is distributed like $\mR_{D_n}$ with $D_n$ being a copy of $\tilde{D}_n$ that is independent of all other random variables considered so far. As $D_n \ge \Omega_n$ and $\Omega_n \to \infty$, Equation~\eqref{equ:needaname} implies that, up to labelling, 
	\begin{align}
	\label{equ:needaname2}
	\lim_{n \to \infty} d_{\textsc{TV}}(\lambda_{1n}(u^*),\hat{\mR}_{D_n}) = 0.
	\end{align}
	
	For any ordered list $R$ of $\cD$-objects let $O(R)$ denote  the outerplanar map  obtained by identifying the $*$-vertices of all dissections with each other, such that the result is rooted at the root-edge of the first dissection, and the remaining dissections are ordered counter-clockwise around the origin of the root-edge according to their order in $R$.
	
	Then, for any $r \ge 0$ the $r$-neighbourhood $V_r(O(R))$ is given by the union of the $r$-neighbourhoods of the $*$-vertices in the components. This may be expressed by
	\[
	V_r(O(R)) = O( (V_r(Q))_{Q \in R}).
	\]
	The sizes of $\hat{\mR}$ and $\hat{\mR}'$ are almost surely finite, hence with probability tending to one as $k$ becomes large it holds that $|\hat{\mD}_k| \ge k - \log k$. We assumed that random $n$-sized blocks sampled with probability proportional to its $\gamma$-weight converge in the Benjamini--Schramm sense toward a limit graph $\hat{\mD}$. It follows that $\hat{\mD}_k$ also converges in the local weak sense toward $\hat{\mD}$. 	Let $\hat{\mD}^\circ$ denote the result of declaring the origin of the root-edge of $\hat{\mD}$ to be a $*$-placeholder vertex. It holds that
	\[
	V_r(O(\hat{\mR}_k)) = O(V_r(\hat{\mR}),  V_r(\hat{\mD}_k), V_r(\hat{\mR}')).
	\]
	Hence, as $k$ becomes large,
	\begin{align}
	\label{equ:jjnn}
	V_r(O(\hat{\mR}_k)) \convdis O(V_r(\hat{\mR}),  V_r(\hat{\mD}^\circ), V_r(\hat{\mR}')) = V_r( O(\hat{\mR}, \hat{\mD}^\circ, \hat{\mR}')).
	\end{align}
	Since $D_n \ge \Omega_n$ and $\Omega_n \to \infty$, it follows that $O(\lambda_{1n}(u^*))$ converges in the local weak sense toward $O( \hat{\mR}, \hat{\mD}^\circ, \hat{\mR}')$. In order to decide whether $V_\ell(\mO_{1n}^\omega) \simeq O$ as half-edge-rooted planar maps, it is more than enough to know the $\ell$-neighbourhoods $V_\ell(O(\beta_{1n}(v)))$ for all $v \in V^{[m]}$. (It would also suffice to just consider the $(\ell-h_{\cT_{1n}}(v))$-neighbourhoods of the vertices $v$). The limit~\eqref{equ:jjnn} implies that
	\begin{align}
	\label{equ:jjmm}
	(V_\ell(O(\lambda_{1n}(v))))_{v \in V^{[m]}} \convdis (V_\ell(O(\hat{\lambda}(v))))_{v \in V^{[m]}},
	\end{align}
	where we let $(\hat{\cT}, \hat{\lambda})$ denote the limit enriched plane tree obtained from $(\hat{\cT}, \hat{\beta})$ by matching the offspring of any vertex $v$ with finite outdegree $d_{\hat{\cT}}^+(v) < \infty$ with the atoms of the set of derived blocks $\hat{\beta}(v)$ in the same way as we did for $(\cT_n, \lambda_n)$. For the unique vertex $v^*$ with $d_{\hat{\cT}}^+(v^*) = \infty$, we let $\lambda(v^*)$ be given by $(\hat{\mR}, \hat{\mD}^\circ, \hat{\mR}')$, where we also  match the countably infinite offspring of $\lambda(v^*)$ with the countably infinite number of non-$*$-vertices of $(\hat{\mR},\hat{\mD}^\circ, \hat{\mR}')$ in the same way. (It is easily verified that the random map $\hat{\mD}^\circ$ has countably infinite many vertices. For an upper bound, we only need the fact that it is locally finite, and the lower bound follows as it is the limit of a sequence of random graphs whose size deterministically tends to infinity.)
	
	The convergence in \eqref{equ:jjmm} implies that the random rooted graph $\hat{\mO}$ that corresponds to the $\cR$-enriched plane tree $(\hat{\cT}, \hat{\lambda})$ satisfies
	\[
	\lim_{n \to \infty} \Pr{V_\ell(\mO_n^\omega) \simeq O} = \Pr{V_\ell(\hat{\mO}) \simeq O}.
	\]
	As $O$ and $\ell$ were arbitrary, it follows that $\hat{\mO}$ is the local weak limit of the random map $\mO_n^\omega$.
	
	It remains to argue, that $\hat{\mO}$ is distributed as claimed in Theorem~\ref{te:convout}. Recall that we constructed $\hat{\mO}$ by concatenating the independent identically distributed dissections $(\mD_i^\bullet)_{1 \le i \le L}$, where $L$ follows a geometric distribution with parameter $\nu$, glue the limit $\hat{\mD}$ at the tip of this chain, and finally identify each vertex of this graph with the root of one or two independent copies of the Boltzmann-distributed random outerplanar map $\mO$.
	
	The height of the vertex $v^*$ in $\hat{\cT}$ is distributed like $L$, and the $\cR$-structures along the spine in $\hat{\cT}$ actually follow Boltzmann distributions of $(\Seq \circ \cD^\gamma)^\bullet$ with parameter $\tau$. As $\Seq' \simeq \Seq \cdot \Seq$ it follows that
	\[
	(\Set \circ \cD^\gamma)^\bullet \simeq (\Seq \circ \cD^\gamma) \cdot \cD^\bullet \cdot (\Seq \circ \cD^\gamma).
	\]
	The product rule in Section~\ref{sec:WeBoSa} implies that each of the blocks containing consecutive spine vertices actually follows a Boltzmann distributions for $(\cD^\bullet)^\gamma$ with parameter $\tau$, and the remainder of the corresponding $(\Seq \circ \cD^\gamma)^\bullet$-object is independent from this block is composed of two independent sequences of blocks following $\Seq \circ \cD^\gamma$-Boltzmann distributions with parameter $\tau$. The isomorphism $\cA_\cR^\omega \simeq \cX \cdot \cR^\kappa(\cA_\cR^\omega)$ and the composition rule in Section~\ref{sec:WeBoSa} imply that if we take a Boltzmann distributed $\Seq \circ \cD^\gamma$-structure with parameter $\tau$, glue the $*$-vertices together, and identify its vertices with the roots of independent copies of $\mO$, then the result follows a Boltzmann distribution for $\cO^\omega$ with parameter $\tau /\phi(\tau)$. So, summing up, the random graph $\hat{\mO}$ corresponding to $(\hat{\cT}, \hat{\beta})$ is distributed as described in Theorem~\ref{te:convout}.
\end{proof}

\begin{proof}[Proof of Lemma~\ref{le:light}]
	We start with Claim {\em (1)}. As $\phi(z) = 1/(1 - \cD^\gamma(z))$, it holds that
	\begin{align}
	\label{eq:need4speed}
	\psi(z) = z\phi'(z)/\phi(z) = z (\cD^\gamma)'(z) / (1-\cD^\gamma(z)).
	\end{align}
	We have to show that $\psi(x)$ tends to infinity, as $x$ tends from below to the radius of convergence $\rho_\phi$ of the series $\phi(z)$. Equation~\eqref{eq:need4speed} implies that
	\[
	\psi(z) = z (\cD^\gamma)'(z) \sum_{k \ge 0} (\cD^\gamma(z))^k
	\]
	is a power series with non-negative coefficients. Consequently, it suffices to show that the sum $\psi(\rho_\phi)$ is infinite.

	Set $\psi_\cD(z) = z \phi_\cD'(z)/\phi(z)$. We assumed that $\cD^\gamma$ has type I, hence there is a constant $\tau_1$ that is bounded by the radius of convergence $\rho_{\phi_\cD}$ of $\phi_\cD(z)$ and satisfies $\psi_\cD(\tau_1) = 1$. Let $\rho_{\cD}>0$ denote the radius of convergence of $\cD^\gamma(z)$. By Equation~\eqref{eq:z1} and \eqref{eq:z2} it holds that $\rho_\cD = \tau_1 / \phi_\cD(\tau_1)$ and $\cD^\gamma(\rho_\cD) = \tau_1$.  We treat the two cases $\tau_1 \le 1$ and $\tau_1 > 1$ separately.

	If $\tau_1\le 1$, then for all $0 \le x < \rho_\cD$ it holds that $\cD^\gamma(x) < \cD^\gamma(\rho_\cD)\le1$ and consequently also $\phi(x)<\infty$. Hence $\rho_\phi \ge \rho_\cD$. Equation~\eqref{eq:needmorenames} yields
	\[
	(\cD^\gamma)'(\rho_\cD) = \phi_\cD(\cD \gamma(\rho_\cD)) / (1 - \rho_\cD \phi_\cD'(\cD^\gamma(\rho_\cD))) = \phi_\cD(\tau_1) / (1 - \psi_\cD(\tau_1)) = \infty,
	\]
	since $\psi_\cD(\tau_1)=1$.
	The series $\psi(x)$ is non-decreasing in the interval $[0, \rho_\phi]$, hence Equation~\eqref{eq:need4speed} implies that $\psi(\rho_\phi) \ge \psi(\rho_\cD) = \infty$. Thus $\mO_n^\omega$ has type I$a$ with $\nu=\infty$.
	
	If $\tau_1 > 1$, then $\rho_\phi$ is the unique number with $\cD^\gamma(\rho_\phi)=1$. Equation~\eqref{eq:need4speed} immediately implies that
	\[
	\psi(\rho_\phi) = \rho_\phi (\cD^\gamma)'(\rho_\phi)/(1- \cD^\gamma(\rho_\phi)) = \infty.
	\]
	Hence $\nu=\infty$ and Claim {\em (1)} follows.
	
	As for Claim {\em (2)}, $\cD^\gamma$ having type II means that $0< \nu_\cD < 1$ and $\tau_\cD = \tau_1 = \rho_{\phi_\cD}>0$.  If $\tau_\cD < 1$, then $\cD^\gamma(\rho_\cD) = \tau_\cD$ implies that $\phi(z) = 1/(1 - \cD^\gamma(z))$ has radius of convergence $\rho_\phi = \rho_\cD$.
	It follows that 
	\[
	\nu = \psi(\rho_\phi) = \frac{\rho_\cD (\cD^\gamma)'(\rho_\cD)}{1 - \cD^\gamma(\rho_\cD)} = \frac{ \rho_\cD (\cD^\gamma)'(\rho_\cD) }{1 - \tau_\cD}.
	\]
	Since $\cD^\gamma(z) = z\phi_\cD(\cD^\gamma(z))$ it also holds that
	\[
	(\cD^\gamma)'(\rho_\cD) = \frac{\phi_\cD(\cD^\gamma(\rho_\cD))}{1- \rho_\cD \phi_\cD'(\cD^\gamma(\rho_\cD))} = \frac{\phi_\cD(\tau_\cD)}{1 - \psi_\cD(\tau_\cD)} = \frac{\tau_\cD}{\rho_\cD(1 - \nu_\cD)}.
	\]
	Hence
	\[
	\nu = \frac{\tau_\cD}{(1- \tau_\cD)(1 - \nu_\cD)} \in ]0, \infty[.
	\]
	
	If $\tau_\cD\ge 1$ then Equation~\eqref{eq:z2} implies that there is a number $t_0$ with $\cD^\gamma(t_0)=1$. Hence $\phi(z)$ has radius of convergence $\rho_\phi = t_0$, and
	\[
	\psi(\rho_\phi) = t_0 (\cD^\gamma)'(t_0)/(1- \cD^\gamma(t_0)) = \infty.
	\]
	This shows that in this case $\mO_n^\omega$ has type I$a$ with $\nu= \infty$.
	
	As for Claim {\em (3)}, it is clear by the discussion in Section~\ref{sec:types} that $\cD^\gamma$ has type III if and only if $\cD^\gamma(z)$ is not analytic (at the origin). In that case $\phi(z)$ is also not analytic and hence $\mO_n^\omega$ has type III.
\end{proof}

\begin{proof}[Proof of Theorem~\ref{te:facethm}]
	If $\mathbf{w}$ has type I, then we do not have to show anything at all.
	
	If $\mathbf{w}$ has type II, then the random dissection $\mD_n^\gamma$ also has type II by Lemma~\ref{le:light}, and converges by Theorem~\ref{te:bsconvdis} toward a limit graph $\hat{\mD}$. By Lemma~\ref{le:subexp} we know that the series $\cD^\gamma(z)/z$ belongs to the class $\mathcal{S}_d$ of subexponential sequences with span $d$ for some $d \ge 1$. {\em If} $d=1$, then it follows that $\cD^\gamma(z)$ belongs to $\mathcal{S}_1$ and  $\Seq \circ \cD^\gamma$ has convergent type by Lemma~\ref{le:gibbs}. So in this case, Theorem~\ref{te:convout} may be applied and readily yields local weak convergence of $\mO_n^\omega$. For $d \ge 2$, the situation is a bit more complicated, because then the composition $\Seq \circ \cD^\gamma$ does not have convergent type. Instead, by  Lemma~\ref{le:gibbs} we know that the limit behaviour  of the small fragments not contained in the giant component of a random element $\mR_k$ from $\Seq \circ \cD^\gamma[k]$ (drawn with probability proportional to its weight) depends along which of the lattices $a + d \ndN$, $0 \le a < d$  we let $k$ tend to infinity. But this is not really a problem. Although we cannot apply Theorem~\ref{te:convout} directly, its proof needs only a small modification to be adapted to this situation:
	
	Lemma~\ref{le:gibbs} states that if $\Seq_a$ denotes the restriction of $\Seq$ to sequences with length in $a + \ndZ$, and if $k$ satisfies $k \equiv a \mod d$, then $\mR_k$ takes values only in $(\Seq_a \circ \cD^\gamma)[k]$. If $\mR_k^s$ denotes the $\Seq' \circ \cD^\gamma$-object obtained by deleting the (or any single) largest component from $\mR_k$, then $\mR_k^s$ converges in total variation toward a $\mathbb{P}_{\Seq'_a \circ \cD^\gamma, \tau}$-distributed limit object $\hat{\mR}^a$. Since \[\Seq \simeq \Seq_0 + \ldots + \Seq_{d-1},\] it follows that \[ \Seq' \circ \cD^\gamma \simeq \Seq'_0 \circ \cD^\gamma + \ldots + \Seq'_{d-1} \circ \cD^\gamma.\] Hence the rules for weighted Boltzmann distributions given in Lemma~\ref{le:bole} imply that the $\mathbb{P}_{\Seq_a \circ \cD^\gamma, \tau}$-distributed Boltzmann object $\hat{\mR}$ is distributed like $\hat{\mR}^{a_0}$ for an independent random number $0 \le a_0 < d$ with distribution
	\[
		\Pr{a_0 = a} = (\Seq'_a \circ \cD^\gamma)(\tau) / (\Seq' \circ \cD^\gamma)(\tau).
	\]
	
	The idea of the proof of Theorem~\ref{te:convout} was that instead of showing convergence of the graph corresponding to the enriched tree $(\cT_n, \beta_n)$, it suffices to show convergence of the graph corresponding to a  modified enriched tree $(\cT_{1n}, \beta_{1n})$. This tree was obtained from $(\hat{\cT}, \hat{\beta})$ by taking a certain deterministic sequence $\Omega_n$, sampling a certain random degree $\tilde{D}_n \ge \Omega_n$ independently from $\hat{\cT}$, and pruning $\hat{\cT}$ at its unique vertex $v^*$ with infinite degree, keeping only the first $\tilde{D}_n$ children of $v^*$. The $\cR^\kappa = \Seq \circ \cD^\gamma$ object $\beta_{1n}(v^*)$ corresponding to $v^*$ is distributed like $\mR_{\tilde{D}_n}$. In the proof, the assumption that $ \lim_{k \to \infty} d_{\textsc{TV}}(\mR_k^s, \hat{\mR}) = 0$ (as unlabelled $\Seq' \circ \cD^\gamma$-objects) was combined with $\tilde{D}_n \ge \Omega_n \to \infty$ to deduce
	\begin{align}
		\label{eq:usain}
		\lim_{n \to \infty} d_{\textsc{TV}}( \mR_{\tilde{D}_n}^s, \hat{\mR}) = 0.
	\end{align}
	This is the only place where the assumption, that $\Seq \circ \cD^\gamma$ has convergent type, was used. So the only modification we need to make is how to deduce Equation~\eqref{eq:usain}.

	The construction of $\Omega_n$ was done in  \cite[Lem. 19.32]{MR2908619} such that if $N_k$ denotes the number of vertices with degree $k$ in $\cT_n$, then 
	\begin{align}
		\label{eq:bolt1}
		\sum_{k \le \Omega_n} k N_k = \nu n + o_p(n) \qquad \text{and} \qquad \sum_{k > \Omega_n} k N_k = (1 - \nu)n + o_p(n).
	\end{align}
	If $o$ denotes the root vertex of $\cT_n$, then
	\begin{align}
		\label{eq:bolt0}
		\Pr{d_{\cT_n}^+(o) = k} = \frac{n}{n-1} \Ex{ \frac{k N_k}{ n}},
	\end{align}
	by \cite[Lem. 15.7]{MR2908619}, so Equations~\eqref{eq:bolt1} may be reformulated as
	\begin{align}
		\label{eq:bolt2}
		\Pr{d_{\cT_n}^+(o) \le \Omega_n} = \nu + o(1) \qquad \text{and} \qquad \Pr{d_{\cT_n}^+(o) > \Omega_n} = 1 - \nu + o(1).
	\end{align}
	The distribution of $\tilde{D}_n$ is given in \cite[Equation (20.4)]{MR2908619} by $\Pr{\tilde{D}_n = k} = 0$ for $k \le \Omega_n$, and 
	\begin{align}
		\Pr{\tilde{D}_n = k} = \frac{ k\Ex{N_k}}{\sum_{\ell > \Omega_n} \ell \Ex{ N_\ell}} 
	\end{align}
	for all $k > \Omega_n$. By Equations~\eqref{eq:bolt0} and \eqref{eq:bolt2} this may be expressed by
	\begin{align}
		\label{eq:boltfin}
		\Pr{\tilde{D}_n = k} = \Pr{ d_{\cT_n}^+(o) = k \mid d_{\cT_n}^+(o) > \Omega_n}.
	\end{align}
	
	For any set $\cE$ of $\Seq' \circ \cD^\gamma$-objects we have
	\begin{align*}
		\Pr{ \mR^s_{\tilde{D}_n} \in \cE} &= \sum_{a=0}^{d-1}  \Pr{\tilde{D}_n \equiv a \mod d} \Pr{ \mR^s_{\tilde{D}_n} \in \cE \mid \tilde{D}_n \equiv a \mod d}.
	\end{align*} 
	It follows from Lemma~\ref{le:gibbs} and $\tilde{D}_n \ge \Omega_n \to \infty$ that uniformly for all $\cE$ 
	\[
		\lim_{n \to \infty} \Pr{ \mR^s_{\tilde{D}_n} \in \cE \mid \tilde{D}_n \equiv a \mod d} = \Pr{\hat{\mR}^a \in \cE}.
	\]
	So in order to verify \eqref{eq:usain}, it remains to check that
	\begin{align}
		\label{eq:yomei}
		\lim_{n \to \infty}  \Pr{\tilde{D}_n \equiv a \mod d} = \Pr{a_0 = a}.
	\end{align}
	The offspring distribution $\xi$ of the Galton--Watson tree $\cT$ is distributed like the size of a $\mathbb{P}_{\Seq \circ \cD^\gamma, \tau}$-distributed compound structure. That is,
	\[
		\Ex{z^\xi} = \Seq(\cD^\gamma(\tau z)) / \Seq(\cD^\gamma(\tau)).
	\]
	Consequently, the size-biased version $\hat{\xi}$ with $
	\Pr{\hat{\xi}=\infty} = 1 - \nu 
	$ and $\Pr{\hat{\xi}=k} = k \Pr{\xi=k}$ satisfies
	\[
		\Pr{\hat{\xi}=k} = [z^k] (z \frac{\text{d}}{\text{d}z} \Ex{z^\xi} ) = [z^k] \frac{ (\Seq \circ \cD^\gamma)^\bullet(\tau z)}{ (\Seq \circ \cD^\gamma)(\tau)} = [z^k] \frac{ (\Seq \circ \cD^\gamma)^\bullet(\tau z)}{ (\Seq \circ \cD^\gamma)^\bullet(\tau)} \Ex{\xi}.
	\]	
	Since $\Pr{\hat{\xi}< \infty} = \nu = \Ex{\xi}$, it follows that $(\hat{\xi} \mid \hat{\xi} < \infty)$ is distributed like the size of a $\mathbb{P}_{(\Seq \circ \cD^\gamma)^\bullet, \tau}$-distributed structure. As
	\[
		(\Seq \circ \cD^\gamma)^\bullet \simeq (\Seq' \circ \cD^\gamma )(\cD^\bullet)^\gamma,
	\]
	it follows from Lemma~\ref{le:bole} that the size of a $\mathbb{P}_{(\Seq \circ \cD^\gamma)^\bullet, \tau}$-distributed composite structure is distributed like the sum of the sizes of a $\mathbb{P}_{\Seq' \circ \cD^\gamma, \tau}$-distributed composite structure $X$ and an independent $\mathbb{P}_{(\cD^\bullet)^\gamma, \tau}$-distributed dissection $Y$. The latter may always be expressed as $1$ plus a multiple of $d$. Hence
	\[
		\Pr{\hat{\xi} \equiv a \mod d \mid \hat{\xi} < \infty} = \Pr{X + Y \equiv a \mod d} = \Pr{X \equiv a -1 \mod d}.
	\]
	Recall that $\Seq_a \circ \cD^\gamma$ are precisely the composite structures from $\Seq \circ \cD^\gamma$ with size in $a + d \ndZ$, so $\Seq_a' \circ \cD^\gamma$ are precisely the structures from $\Seq' \circ \cD^\gamma$ with size in $a-1 + d \ndZ$. Hence
	\begin{align*}
		\Pr{X \equiv a -1 \mod d}  &= \mathbb{P}_{\Seq' \circ \cD^\gamma, \tau}(\bigcup_{k} (\Seq_a' \circ \cD^\gamma)[k]) \\
		&= (\Seq_a' \circ \cD^\gamma)(\tau) / (\Seq' \circ \cD^\gamma)(\tau) \\
		&= \Pr{a_0 = a}.
	\end{align*}
	It follows by Equation~\eqref{eq:boltfin} that in order to verify \eqref{eq:yomei}, we need to check that
	\begin{align}
		\label{eq:solvemenow}
		\lim_{n \to \infty} \Pr{ d_{\cT_n}^+(o) \equiv a \mod d \mid d_{\cT_n}^+(o) > \Omega_n} = \Pr{\hat{\xi} \equiv a \mod d \mid \hat{\xi} < \infty}.
	\end{align}
	Note that Equation~\eqref{eq:bolt2} states that 
	\[
		\Pr{ d_{\cT_n}^+(o) \le \Omega_n} \sim \Pr{\hat{\xi} < \infty}
	\] and hence for any fixed integer $k$ it holds that as $n$ becomes large
	\begin{align*}
		\Pr{ d_{\cT_n}^+(o) = k \mid d_{\cT_n}^+(o) \le \Omega_n} &= \Pr{ d_{\cT_n}^+(o) = k } / (o(1) + \Pr{\hat{\xi} < \infty}) \\ &= \Pr{\hat{\xi} = k \mid \hat{\xi} < \infty} + o(1).
	\end{align*}
	This verifies
	\[
		(d_{\cT_n}^+(o) \mid d_{\cT_n}^+(o) \le \Omega_n) \convdis (\hat{\xi} \mid \hat{\xi} < \infty).
	\]
	Thus it follows by Equation~\eqref{eq:bolt2} that Equation~\eqref{eq:solvemenow} is actually equivalent to
	\begin{align}
		\label{eq:solvemenow2}
		\lim_{n \to \infty} \Pr{ d_{\cT_n}^+(o) \equiv a \mod d } = \Pr{\hat{\xi} \equiv a \mod d \mid \hat{\xi} < \infty}.
	\end{align}
	Note that
	\begin{align}
		\label{eq:heelp}
		\Pr{ d_{\cT_n}^+(o) \equiv a \mod d} = \frac{[z^n] z(\Seq_a \circ \cD^\gamma \circ \cZ)(z)}{ [z^n]\cZ(z)}
	\end{align}
	with $\cZ(z)$ being given by
	\[
		\cZ(z) = z \phi(\cZ(z)), \qquad \phi(z) = 1/(1- \cD^\gamma \circ \cZ (z)).
	\]
	This implies that 
	\[
		\bar{\cZ}(z) := \cZ(z) / z -1 = f( (\cD^\gamma \circ \cZ) (z)), \qquad \text{with} \qquad f(x) = \frac{x}{1-x}.
	\]
	Taking the inverse of $f$, it follows that
	\[
	(\cD^\gamma \circ \cZ)(z) = g(\bar{\cZ}(z)), \qquad \text{with} \qquad g(x) = \frac{x}{1+x}.
	\]
	The (sequence of coefficients of the) series $\bar{\cZ}(z)$ belongs to the class $\mathscr{S}_1$ of subexponential sequences by  Lemma~\ref{le:subexp}. The function $g(x)$ is analytic on $\ndC \setminus \{-1\}$. Hence we may apply Proposition~\ref{pro:sleeky} to obtain 
	\begin{align}
		\label{eq:yay}
	[z^{n-1}] (\cD^\gamma \circ \cZ)(z) = [z^{n-1}] g(\bar{\cZ}(z)) \sim g'( \bar{\cZ}(\rho_\cZ)) [z^n] \cZ(z),
	\end{align}
	with $\rho_\cZ$ denoting the radius of convergence of $\cZ(z)$. 
	But Equation~\eqref{eq:yay} also implies that $(\cD^\gamma \circ \cZ)(z)$ also belongs to the class $\mathscr{S}_1$ with radius of convergence $\rho_\cZ$. So we may apply Proposition~\ref{pro:sleeky} to obtain
	\begin{align*}
		[z^{n-1}] (\Seq_a \circ \cD^\gamma \circ \cZ)(z) &\sim \Seq_a'((\cD^\gamma \circ \cZ)(\rho_{\cZ})) [z^{n-1}](\cD^\gamma \circ \cZ)(z)
	\end{align*}
	and 
	\begin{align*}
		[z^{n-1}] (\Seq_a \circ \cD^\gamma \circ \cZ)(z) &\sim \Seq'((\cD^\gamma \circ \cZ)(\rho_{\cZ})) [z^{n-1}](\cD^\gamma \circ \cZ)(z).
	\end{align*}
	Thus it follows from Equation~\eqref{eq:heelp} that
	\begin{align*}
	\Pr{ d_{\cT_n}^+(o) \equiv a \mod d} \sim  \frac{\Seq_a'((\cD^\gamma \circ \cZ)(\rho_{\cZ}))}{\Seq'((\cD^\gamma \circ \cZ)(\rho_{\cZ}))}
	\end{align*}
	Note that in the type II setting it holds that $\cZ(\rho_\cZ) = \rho_\phi$, And  an elementary computation shows that
	\[
\frac{\Seq_a'(\cD^\gamma(\rho_\phi))}{\Seq'(\cD^\gamma(\rho_\phi))} ={\Pr{\hat\xi \equiv a \mod d}}/{\nu} = \Pr{\hat \xi \equiv a \mod d \mid \hat \xi < \infty}.
	\]
	This verifies Equations~\eqref{eq:solvemenow} and \eqref{eq:solvemenow2}, and thus concludes the proof for the case where the weight sequence $\mathbf{w}$ has type II.

	It remains to treat the case where  $\mathbf{w}$ has type III. Equation~\eqref{eq:outerschroeder} stated an isomorphism
	\begin{align}
	\cO^\omega \simeq \cX + (\cX \cdot \cD^\gamma)(\cO^\omega),
	\end{align}
	which represents face-weighted outerplanar maps as $\cX \cdot \cD^\gamma$-enriched parenthesizations. Thus Lemma~\ref{le:couplingschroeder} provides a coupling of $\mO_n^\omega$ with an enriched plane tree $(\tau_n, \delta_n)$, such that $\tau_n$ is a simply generated tree with leaves as atoms. 
	
	Recall that in Lemma~\ref{le:couplingschroeder}, we constructed the enriched plane tree $(\tau_n, \lambda_n)$ by first generating the random tree $\tau_n$, and then sampling for each vertex $v \in \cT_n$ an $\cN = \cX \circ \cD^\gamma$-structure $\delta_n(v) \in \cR[d^+_{\tau_n}(v)]$  with probability proportional to its weight. The labels of $\delta_n(v)$ correspond in a canonical way to the ordered set of offspring of the vertex $v$, which is why the tree $(\tau_n, \delta_n)$ may be interpreted as an enriched tree. The final enriched parenthesization $\mS_n^\cN$ is then obtained by relabelling through a uniformly at random drawn bijection.
		
	The precise way for identifying the offspring of an vertex $v$ with the atoms of the $\cN$-structure $\delta_n(v)$ does not affect the {\em distribution} of the resulting random unordered enriched parenthesization $\mS_n^\cN$. We may match the offspring of $v$ with the atoms of $\delta_n(v)$ according to any rule, which only takes $\delta_n(v)$ into account. Different matchings may very well change the corresponding $\cN$-enriched Schr\"oder parenthesization, but its distribution does not change. 

 
	 Thus we may consider the random enriched plane tree $(\tau_n, \lambda_n)$ constructed from $(\tau_n, \delta_n)$ by matching for each vertex $v \in \tau_n$ the offspring of $v$ with the atoms of the dissection $\tau_n(v)$ by ordering them in a canonically non-decreasing way according to their distance from the root-vertex of the dissection. This way, for any $\ell \ge 0$ the $\ell$-neighbourhood of the root in $\lambda_n(v)$ corresponds to an initial segment of the ordered offspring of $v$ in $\tau_n$.
	 
	 The outerplanar map corresponding to $(\tau_n, \lambda_n)$ is distributed like $\mO_n^\omega$, so we may assume without loss of generality that $\mO_n^\omega$ corresponds directly to $(\tau_n, \lambda_n)$. 
	 
	 By Lemma~\ref{le:types} and Lemma~\ref{le:convll} it holds that $\tau_n$ converges weakly in $\fmT$ toward an infinite star, that is, a tree consisting of a single root-vertex with infinitely many offspring, all of which are leaves. Hence there is a deterministic sequence  of integers $\Omega_n$ with $\Omega_n \to \infty$ such that with probability tending to $1$ as $n$ becomes large the root $o$ of $\tau_n$ has at least $\Omega_n$ children and the first $\Omega_n$ of them are all leaves.
	 
	 Since the weight-sequence $\mathbf{w}$ has type III, it follows that the random dissection $\mD_n^\gamma$ also has type III by Lemma~\ref{le:light}, and converges in the local weak sense by Theorem~\ref{te:gibbst3} toward a doubly infinite rooted path $P$. As $d_{\tau_n}^+(o) \ge \Omega_n$ with high probability, it follows that the dissection $\lambda_n(o)$ also converges in the local weak sense toward $P$.
	 
	 Let $\ell \ge 0$ be a fixed integer, it follows that with high probability the $\ell$-neighbourhood $V_\ell(\lambda_n(o))$ is a path with length $2 \ell$ that is rooted at its center vertex. All its $2 \ell +1$ vertices have with high probability also no children in $\tau_n$ (since they form an initial segment with fixed finite length of the offspring of the root). Hence, with high probability it holds that $V_\ell(\mO_n^\omega) \simeq V_\ell(\lambda_n^\omega) \simeq V_\ell(P)$. This confirms that the doubly infinite path $P$ is the local weak limit of $\mO_n^\omega$. This concludes the proof.
\end{proof}

\begin{proof}[Proof of Theorem~\ref{te:bsfaces}]
	The type I case is fully described by Theorem~\ref{te:convouter}, so it remains to treat the other two cases. Suppose that the weight-sequence $\mathbf{w}$ has type II or III.
	
	Let $v_0$ denote a uniformly at random selected vertex of the simply generated tree $\cT_n$.  Let $v_0, \ldots, v_h$ denote the path joining $v_0$ with the root of $\cT_n$, that is, the spine of the pointed tree $(\cT_n, v_0)$. Given the location of $v_0$ as a coordinate in $\VHT$, and the outdegrees $d_{\cT_n}^+(v_i)$ for all $1 \le i \le h$, the fringe subtrees at the non-spine offspring of the $v_i$ for $i \ge 1$ and at the vertex $v_0$ are exchangeable. This gives a certain degree of freedom in matching the vertices of the $\cR$-structures $\beta_n(v_i)$ to the offspring of $v_i$ in the tree $\cT_n$ without changing the \emph{distribution} of the unordered labelled enriched tree $\mA_n^\cR$, as long as we do not change which atom corresponds to $v_{i-1}$.

	For each $\cR = \Seq \circ \cD^\gamma$ structure $R$ let $G(R)$ denote the rooted outerplanar map obtained by gluing together the $*$-vertices of the dissections in $R$. Note that in the map $G(\beta_n(v_i))$ there is a priori no relation between the distance of a vertex to the distinguished vertex $v_{i-1}$, and its location in the linearly ordered list of offspring of $v_{i}$. This is not ideal, as we would like  to have siblings of $v_{i-1}$ that are close in the offspring list also to be close in the map $G(\beta_n(v_i))$.

	For each $1 \le i \le h$ let $a_i \in [d_{\cT_n}^+]$ denote the atom corresponding to the vertex $v_{i-1}$. Let $\sigma_i: [d_{\cT_n}^+(v_i)] \to [d_{\cT_n}^+(v_i)]$ denote the permutation that fixes the atom $a_i$, and permutes the remaining elements in a canonical way such that the $\cR$-structure $\lambda_n(v_i) := \cR[\sigma_i](\beta_n(v_i))$ has the property, that in the map $G(\lambda_n(v_i))$  it holds that the distance $d_{G(\lambda_n(v_i))}(a, a_i)$ is non-decreasing along  $a = a_i, a_i -1, \ldots$ and also non-decreasing along $a = a_i, a_i +1, \ldots$. This way, for each finite pointed map $G$ and each $\ell \ge 0$ there is a finite number $m$ such that whenever the $\ell$-neighbourhood $V_\ell(G(\lambda_n(v_i)))$ is equal to $G$ as rooted map, then all its vertices correspond to siblings of $v_{i-1}$ that lie at most $m$ to the left or $m$ to the right of $v_{i-1}$.
	
	Let $((\cT_n, \lambda_n), v_0)$ denote the pointed $\cR$-enriched plane tree obtained in this way, where we let $\lambda_n(v_i)$ for $1 \le i \le h$ be constructed as above, and for all other vertices $v$ we set $\lambda_n(v) = \beta_n(v)$. By the discussion above, the unordered pointed enriched tree corresponding to $((\cT_n, \lambda_n), v_0)$ is up to vertex labelling identically distributed as the random enriched tree $\mA_n^\cR$ pointed at a uniformly selected vertex.  As the unlabelled outerplanar map corresponding to an enriched tree does not depend on the vertex ordering or labelling, we may assume that $((\cT_n, \lambda_n), v_0)$ corresponds directly to the outerplanar map $\mO_n^\omega$ rooted at a uniformly at random selected vertex.
	
	Recall that in Subsection~\ref{sec:rndlimit} we defined a random pointed $\cR$-enriched plane tree $(\cT_n^*, \beta_n^*)$ having a finite spine $u_0, \ldots, u_k$,  with random length $k \ge 1$ and root-degree $d_{\cT_n^*}^*(u_k)$ given by a random variable $\tilde{D}_n$ defined in Equation~\eqref{eq:thed}. Again, by exchangeability, we may modify the matchings of the atoms of the $\cR$-structures with the offspring  vertices in the same way as we did for $( (\cT_n, \beta_n),v_0)$, without changing the distribution of the corresponding pointed outerplanar map. This yields a pointed enriched tree $(\cT^*_n, \lambda_n^*)$. Let $\hat{\mO}_n$ denote the corresponding outerplanar map.
	
	In Equation~\eqref{eq:om} we characterized a certain deterministic sequence $\Omega_n \to \infty$ that satisfies $\tilde{D}_n > \Omega_n$ for all $n$  by Equation~\eqref{eq:thed}.

	If the weight-sequence $\mathbf{w}$ has type II, then it follows by Lemma~\ref{le:light} that the species of dissections   $\cD^\gamma$  also has type II. Hence by Lemma~\ref{le:subexp} we know that $\cD^\gamma(z)/z$ belongs to the class $\mathscr{S}_d$ of subexponential series with span $d$ for some $d \ge 1$. It follows from Lemma~\ref{le:gibbs} and $\tilde{D}_n \ge \Omega_n \to \infty$ that the largest block in the $\Seq \circ \cD^\gamma$-structure $\lambda_n^*(u_k)$ has size $\tilde{D}_n + O_p(1)$ that converges in probability toward $\infty$ in the space $\bar{\ndN}_0$.
	
	If the weight-sequence $\mathbf{w}$ has type III, then by Lemma~\ref{le:light} the species of dissections $\cD^\gamma$ also has type III. By Theorem~\ref{te:need} and Lemma~\ref{le:supen} it follows that with high probability the largest dissection in $\lambda_n^*(u_k)$ has size $\tilde{D}_n + O_p(1)$ with the $O_p(1)$-term even admitting a deterministic upper bound with high probability. 
	
	Thus, regardless whether the weight-sequence $\mathbf{w}$ has type II or III, the vertex $u_{k-1}$ lies with probability tending to $1$ as $n$ becomes large in a large component $\mD_n$ of $\lambda_n^*(u_k)$, which up to relabelling is distributed like drawing a dissection from $\cD^\gamma[s_n]$ with probability proportional to its $\gamma$-weight for some random integer $s_n$ with $s_n \convp \infty$ in the space $\bar{\ndN}_0$. Moreover, given that $u_{k-1}$ lies in $\mD_n$, its location is uniformly distributed among all non-$*$-vertices of $\mD_n$. By Theorems~\ref{te:bsconvdis} and \ref{te:gibbst3} it follows that $\mD_n$ rooted at its $*$-vertex converges toward a limit dissection $\hat{\mD}$. In particular, for any fixed $\ell \ge 0$, the size of the $\ell$-neighbourhood of the $*$-vertex in $\hat{\mD}$ is stochastically bounded, and hence $u_{k-1}$ does not lie in there with probability tending to $1$ as $n$ becomes large. As this holds for arbitrary $\ell \ge 0$, this implies that $d_{\mD_n}(*, u_{k-1}) \convp \infty$ in the space $\bar{\ndN}_0$. The total variational distance between the location of $u_{k-1}$ and a uniformly at random chosen vertex of $\mD_n$ (including the $*$-vertex) tends to zero as $n$ becomes large. Since $\mD_n$ satisfies the rerooting invariance, it follows that $\mD_n$ rooted at $u{h-1}$ also converges toward the limit $\hat{\mD}$ in the local weak sense. As $d_{\mD_n}(*, u_{k-1}) \convp \infty$, we know that for any fixed $\ell$ the $\ell$-neighbourhood of $u_{k-1}$ in the map $G(\lambda_n^*(u_k))$ lies with high probability entirely in $\mD_n$ and does not contain the $*$-vertex. It follows that the random map $G(\lambda_n^*(u_k))$ rooted at $u_k$ converges in the local weak sense toward the dissection limit $\hat{\mD}$.
	
	Let $\hat{\mO}_*$ denote the rooted map obtained by taking the map corresponding to the pointed enriched fringe subtree of $(\cT_n^*, \lambda_n^*)$ at the vertex $u_{k-1}$, identifying the vertex $u_{k-1}$ with the root vertex of the dissection limit $\hat{\mD}$, and identifying the each non-root vertex of $\hat{\mD}$ with the root vertex of an independent copy of the map corresponding to the enriched tree $(\cT, \beta)$. Note that $\hat{\mO}_*$ does not depend on $n$, as the only part of $(\cT_n^*, \lambda_n^*)$ that does is the degree of the vertex $u_k$. By the discussion above, it follows that $\hat{\mO}_*$ is the local weak limit of the map $G(\cT_n^*, \lambda_n^*)$ centered at $u_0$, as every offspring of $u_k$ in $(\cT_n^*, \lambda_n^*)$ becomes the root of an independent copy of $(\cT, \beta)$.

	In Theorem~\ref{te:thmco} we stated that the pointed enriched fringe subtree of $((\cT_n, \beta_n), v_0)$ at the first ancestor of the vertex $v_0$ with degree at least $\Omega_n$ behaves like $(\cT_n^*, \beta_n^*)$, and hence the same holds for $((\cT_n, \lambda_n), v_0)$ with $(\cT_n^*, \lambda_n^*)$.  That is, if $\mH_{k_n}$ denotes the pointed enriched fringe subtree of $((\cT_n, \lambda_n), v_0)$ at the first ancestor $v_{k_n}$ of $v_0$ that has degree bigger than $\Omega_n$, then it holds that
	\begin{align}
		\label{eq:appro}
		d_{\textsc{TV}}(P_m(\mH_{k_n}), P_m(\cT_n^*, \lambda_n^*)) \to 0
	\end{align}
	as $n$ becomes large, with $P_m(\cdot)$ denoting the pruning operator defined in Section~\ref{sec:covic}. That is, roughly speaking, $P_m(\cdot)$ prunes away all offspring of sons of the root vertex that lie more than $m$ to the left or right from the unique spine offspring. 
	
	Let $G^\bullet$ be a given rooted outerplanar map and $\ell \ge 1$ a fixed integer. By the construction of  $\lambda_n$ it follows that for any fixed  $m > \max(\ell, |G^\bullet|)$  the pruned tree $P_m(\cT_n^*, \lambda_n^*)$ contains all information necessary to decide whether the $\ell$-neighbourhood $V_\ell(\hat{\mO}_n, u_0)$ is equal to $G^\bullet$ as rooted outerplanar map. Likewise, $P_m(\mH_{k_n})$ contains all information to decide whether $V_\ell( G(\mH_{k_n})) = G^\bullet$. The reason for this is that we constructed the $\lambda_n$ in such a way that vertices that siblings of $u_{k-1}$ that are close to $u_{k-1}$ in the map $G(\cT_n^*, \lambda_n^*)$ are also close to $u_{k-1}$ in the linear order of the offspring of $u_k$ in the tree $\cT_n^*$, and likewise for $P_m(\mH_{k_n})$.
	
	Since the random map $\hat{\mO}_n$ rooted at $u_0$ converges in the local weak sense toward the limit map~$\hat{\mO}_*$, the limit in \eqref{eq:appro} implies that  
	\[
		\Pr{V_\ell( G(\mH_{k_n}), v_0) = G^\bullet} \to \Pr{V_\ell(\hat{\mO}_n, u_0) = G^\bullet}
	\]
	as $n$ becomes large. As $\ell$ and $G^\bullet$ where arbitrary, it follows that $(G(\mH_{k_n}), v_0)$ converges in the local weak sense toward $\hat{\mO}_*$.

	Above we verified that $d_{\mD_n}(*, u_{k-1}) \convp \infty$ and consequently for any fixed $\ell \ge 1$ it holds that with high probability the $\ell$-neighbourhood $V_\ell( \hat{\mO}_n, u_0)$ does not contain the $*$-vertex of $\mD_n$, that is, the vertex $u_k$.  By the limit in \eqref{eq:appro} it follows that likewise the $\ell$-neighbourhood $V_\ell( G(\mH_{k_n}), v_0)$ does with high probability  not contain the  spine-vertex $v_{k_n}$. But this implies also that $V_\ell(\mO_n^\omega, v_0) = V_\ell(G(\mH_{k_n}), v_0)$ holds with probability tending to $1$. It follows that $(\mO_n^\omega, v_0)$ converges in the local weak sense toward the limit $\hat{\mO}_*$. 

	It remains to describe the distribution of $\hat{\mO}_*$. If the weight sequence $\mathbf{w}$ has type III, then $\hat{\mD}$ is a deterministic doubly-infinite path and the tree $\cT$ consists almost surely of a single vertex. Thus, in this case $\hat{\mO}_*$ is also a deterministic doubly infinite path.
	
	Suppose that the weight sequence $\mathbf{w}$ has type II. In order to describe the distribution of $\hat{\mO}_*$ we make use of the following observations.
	\begin{enumerate}
		\item The enriched tree $(\cT, \beta)$ corresponds to a $\mathbb{P}_{\cO^\omega, \tau/\phi(\tau)}$-distributed map.
		\item For each $1 \le i \le u_{k-1}$ the $\cR^\bullet$-structure $\lambda_n^*(u_i)$ (pointed at $u_{i-1}$) is distributed like $G(\mS_1, \mD^\bullet, \mS_2)$, with $\mS_1$ and $\mS_2$ being independent $\mathbb{P}_{\cR, \tau}$-distributed sequences of dissections, and $\mD^\bullet$ an independent $\mathbb{P}_{(\cD^\bullet)^\gamma, \tau}$-distributed pointed dissection.
		\item If we assign $\mathbb{P}_{\cR, \tau}$-distributed $\cR$-structure to a single vertex, and attach an independent copy of $(\cT, \beta)$ to each of its atoms, then the outerplanar map corresponding to this enriched tree follows a $\mathbb{P}_{\cO^\omega, \tau/\phi(\tau)}$ distribution.
		\item If we draw a random $\mathbb{P}_{(\cD^\bullet)^\gamma, \tau}$-distributed structure and then switch the $*$-vertex with the pointed vertex, the distribution does not change. (That is, the $*$-vertex becomes a regular vertex with the label of the pointed vertex, and the pointed vertex becomes a label-free  $*$-vertex.)
	\end{enumerate}

	The first claim follows from the fact that the size of $|\cT|$ is distributed like the size of a $\mathbb{P}_{\cO^\omega, \tau/\phi(\tau)}$-distributed map, and the map corresponding to the conditioned tree $(\cT_n, \beta_n)$ is drawn up to relabelling from $\cO^\omega[n]$ with probability proportional to its weight.
	
	As for the second claim, notice first that the $\cR$-structure $\lambda_n^*(u_i)$ with the distinguished atom $u_{i-1}$ is up to relabelling distributed like a $(\hat{\xi} \mid \hat{\xi} < \infty)$-sized random $\cR^\bullet$ structure drawn with probability proportional to its weight. As $(\hat{\xi} \mid \hat{\xi} < \infty)$ follows the size of a $\mathbb{P}_{\cR^\bullet, \tau}$-distributed object, it already follows that $\lambda_n^*(u_i)$ with the distinguished atom $u_{i-1}$ follows up to relabelling a $\mathbb{P}_{\cR^\bullet, \tau}$-distribution. Using the chain rule of Proposition~\ref{pro:chainprod} and the isomorphism 
	\[
		\Seq' \simeq \Seq^2
	\]
	it follows that
	\[
		\cR^\bullet \simeq (\Seq \circ \cD^\gamma) (\cD^\bullet)^\gamma (\Seq \circ \cD^\gamma).
	\]
	By the product rule of Lemma~\ref{le:bole} it follows that, up to relabelling, we may sample a $\mathbb{P}_{\cR^\bullet, \tau}$-distribution by taking two independent $\mathbb{P}_{\cR, \tau}$-distributed sequences of dissection and placing an independent $\mathbb{P}_{(\cD^\bullet)^\gamma, \tau}$ pointed dissection in the middle.

	In order to verify the third claim, notice that the isomorphism
	\[
		\cA_\cR^\omega \simeq \cX \cdot \cR \circ \cA_\cR^\omega
	\]
	combined with the product and composition rule of Lemma~\ref{le:bole} tells us that we may sample a $\mathbb{P}_{\cA_\cR^\omega, \tau/\phi(\tau)}$-distributed $\cR$-enriched tree by assigning a $\mathbb{P}_{\cR, \tau}$-distributed structure to a root vertex and identifying each of its atoms with an independent $\mathbb{P}_{\cA_\cR^\omega, \tau/\phi(\tau)}$-distributed enriched tree. Together with the first claim, this verifies the third claim.

	The fourth claim follows easily by symmetry.

	Thus, for $1 \le i \le k-1$ the pointed map corresponding to the pointed enriched fringe subtree of $(\cT_n, \lambda_n^*)$ at the spine vertex $u_i$ is distributed like the result of taking $G(\mO_1, \mD^\bullet, \mO_2)$, with $\mO_i$ denoting independent maps following a $\mathbb{P}_{\cO^\omega, \tau/\phi(\tau)}$-distribution, and identifying the pointed vertex of $\mD^\bullet$ with the root of the pointed map corresponding to the enriched fringe subtree of $(\cT_n, \lambda_n^*)$ at the vertex $u_{i-1}$.
	
	So, summing up, the map $\hat{\mO}^*$ may be sampled as follows.
	\begin{enumerate}
		\item 
		Let $(\mD_i^\bullet)_{1 \le i \le k-1}$ be a family of independent identically distributed $\cD^\bullet$-objects following a 
		$\mathbb{P}_{(\cD^\bullet)^\gamma, \tau}$-distribution.
		 Concatenate the $\mD_i^\bullet$ by identifying the pointed vertex of $\mD_i^\bullet$ with the root $*$-vertex of $\mD_{i-1}^\bullet$ for all $i \ge 2$. 
		Identify the $*$-vertex of $\mD_{k-1}$ with the root-vertex of the dissection limit $\hat{\mD}$. This vertex corresponds to $u_{k-1}$. Likewise, the $*$-vertex of $\mD_i$ corresponds to $u_i$ for $i \ge 1$ and the pointed vertex of $\mD_1$ to the vertex $u_0$. We let $C$ denote the result and mark the vertices $u_1, \ldots, u_{k-1}$ with the colour blue.
		\item  Each vertex of $C$ gets identified with the root vertex of an independent copy of a $\mathbb{P}_{\cO^\omega, \tau/\phi(\tau)}$-distributed outerplanar map that gets attached from outside, except for the blue vertices, which receive two such maps, one from each side.
	\end{enumerate}
	The Benjamini--Schramm limit $\hat{\mO}_*$ is similar to the local weak limit $\hat{\mO}$ of Theorem~\ref{te:facethm}. In the construction of $\hat{\mO}_*$, the roles of the $*$-vertex and pointed vertex of the pointed dissection are reversed compared to the construction of $\hat{\mO}$. But, as we argued in the fourth claim above, this makes no difference for the resulting distribution. Thus the only difference between $\hat{\mO}_*$ and $\hat{\mO}$ is that the root of $\hat{\mO}$ is identified with the root of a $\mathbb{P}_{(\cD^\bullet)^\gamma, \tau}$-distributed dissection and two $\mathbb{P}_{\cO^\omega, \tau/\phi(\tau)}$-distributed outerplanar maps, and the root of $\hat{\mO}_*$ on the other hand gets identified with the root of a $\mathbb{P}_{(\cD^\bullet)^\gamma, \tau}$-distributed dissection but only one $\mathbb{P}_{\cO^\omega, \tau/\phi(\tau)}$-distributed outerplanar map. Thus $\hat{\mO}$ is distributed like the result of taking the Benjamini--Schramm limit $\hat{\mO}_*$ and identifying its root vertex with the root of an independent  $\mathbb{P}_{\cO^\omega, \tau/\phi(\tau)}$-distributed outerplanar map.
\end{proof}

\begin{proof}[Proof of Corollary~\ref{co:gottaname}]
		We assumed that $\mO_n^\omega$ has type II, hence Theorem~\ref{te:facethm} implies that $\mD_n^\gamma$ has type II with $\Seq_{\ge1}^\iota$ having radius of convergence $\tau_\cD$ and $\Seq_{\ge 1}^\iota(\tau_\cD) <1$. We also assumed that
		\begin{align}
					\label{eq:outt1}
		[z^k] \Seq_{\ge 1}^\iota(z) = f(k) k^{-\beta} r^{-\beta}
		\end{align}
		for a constant $\beta>2$ and some constant $r>0$ which necessarily must be equal to $\tau_\cD$.
		It follows by Propositions~\ref{pro:sleeky} and \ref{pro:yehaa}  that the series $\phi_\cD(z) = \Seq \circ \Seq_{\ge 1}^\iota(z)$ satisfies
		\begin{align}
		[z^k] \phi_\cD(z)  \sim (1 - \Seq_{\ge 1}^\iota(\tau_\cD))^{-2} f(k) k^{-\beta} \tau_\cD^{-k}.
		\end{align}

		By a general result for the partition function of simply generated trees \cite[Eq. (14)]{MR3335012}, that is based on a local large-deviation theorem established  in \cite{MR2440928}, it follows that
		\begin{align}
		\label{eq:outt2}
		[z^k] \cD^\gamma(z) \sim \tau_\cD  (1 - \nu_\cD)^{-\beta} (1 - \Seq_{\ge 1}^\iota(\tau_\cD))^{-2} f(k) k^{-\beta} \rho_\cD^{-k}
		\end{align}
		with 
		\[
		\rho_\cD = \tau_\cD ( 1 - \Seq_{\ge 1}^\iota(\tau_\cD)).
		\]
		Since
		\[
		\cD^\gamma(\rho_\cD) = \tau_\cD < 1,
		\] we may apply Proposition~\ref{pro:sleeky} to obtain
		\begin{align*}
		[z^k] 1/(1 - \cD^\gamma(z)) \sim \hat{c}  f(k) k^{-\beta} \rho_\cD^{-k}, \qquad \hat{c} = \tau_\cD  (1 - \nu_\cD)^{-\beta} (1 - \tau_\cD)^{-2}(1 - \Seq_{\ge 1}^\iota(\tau_\cD))^{-2}.
		\end{align*}
		Let $Y_{(1)}$ and $Y_{(2)}$  denote the size of the largest and second-largest block of $\mO_n^\omega$.
		By \cite[Thm. 1]{MR3335012} it follows from \eqref{eq:outt2} that there is a slowly varying function $g_1$ such that $Y_{(2)} = O_p(g_1(n) n^{1/\alpha})$ and that 
		\begin{align}
		\frac{ (1-\nu)n - Y_{(1)}}{g_1(n) n^{1/\alpha}} \convdis X_\alpha,
		\end{align} where $X_\alpha$ is an $\alpha$-stable random variable with Laplace transform \[\Ex{e^{-tX_\alpha}} = \exp(\Gamma(-\alpha)t^\alpha), \quad \operatorname{Re}\, t \ge 0.\]
		For each $1 \le i \le n$ let $B^+_i$ denote the largest dissection in the $\Seq \circ \cD^\gamma$-structure $\beta_n(v_i)$ of the vertex $v_i$, and let $F^+_i$ denote the size of the largest face of $B^+_i$. Lemma~\ref{le:gibbs} and \eqref{eq:outt2} imply that
		\begin{align}
			\label{eq:squee}
			Y_{(1)} - |B^+_1| = O_p(1).
		\end{align}
		Consequently 
		\begin{align}
		\label{eq:sizeb}
		\frac{ (1-\nu)n - |B^+_1|}{g_1(n) n^{1/\alpha}} \convdis X_\alpha.
		\end{align}
		By Corollary~\ref{co:itsclear} and Equation~\eqref{eq:outt1} we know that the largest face $F_{(1)}(\mD_k^\gamma)$ and second largest face $F_{(2)}(\mD_k^\gamma)$ in a random dissection $\mD_k^\gamma$ of a $k$-gon satisfy $F_{(2)}(\mD_k^\gamma) = O_p(g_2(k)k^{1/\alpha})$ and
		\begin{align}
			\label{eq:connn}
		\frac{ (1-\nu_\cD)k - F_{(1)}(\mD_k^\gamma)}{g_2(k) k^{1/\alpha}} \convdis X_\alpha
		\end{align}
		as $k$ becomes large for some slowly varying function $g_2(k)$. For any $k$ it holds that
		\[
			(B_1^+ \mid |B_1^+| = k) \eqdist \mD_k^\gamma.
		\]
		By the limit in \eqref{eq:sizeb} it follows that
		\begin{align}
			\label{eq:plpl}
			\frac{ (1-\nu_\cD)|B_1^+| - F_1^+}{g_2(|B_1^+|) |B_1^+|^{1/\alpha}} \convdis X_\alpha.
		\end{align}
		Any slowly varying function $h$ has the property, that for any compact interval $[a,b]$ with $a>0$ it holds that
		\[
			\lim_{x \to \infty}	\sup_{t \in [a,b]} \left | \frac{h(tx)}{h(x)} -1 \right | = 0.
		\]
		(See for example Feller's book \cite{MR0270403} for this standard fact.) Hence \eqref{eq:sizeb} implies that
		\[
			g_2(|B_1^+|)=g_2(n)(1 + o_p(1)).
		\]
		Moreover, 
		\[
			|B_1^+|^{1/\alpha} = (1-\nu)^{1/\alpha}n^{1/\alpha}(1 + o_p(1)).
		\]
		Setting $g(n) = g_2(n)(1-\nu)^{1/\alpha}$, it follows from \eqref{eq:plpl} that
		\[
			\frac{(1- \nu)(1-\nu_\cD)n - F_1^+}{g(n)n^{1/\alpha}} \convdis X_\alpha.
		\]
		Equation~\eqref{eq:squee} implies that the second largest block  of $\beta_n(v_1)$ has size $O_p(1)$. Hence the size of the second largest face in $\beta_n(v_1)$ has order $O_p(g_2(n)n^{1/\alpha})$ and the size of the largest face outside of $\beta_n(v_1)$ is bounded by $Y_{(2)} = O_p(g_1(n)n^{1/\alpha})$. Thus \[
			F_{(2)} = O_p(\max(g_1(n), g_2(n)) n^{1/\alpha}).
		\] Any   slowly varying function $h$ satisfies $h(t)t^{-\epsilon} \to 0$ as $t$ becomes large for all $\epsilon > 0$ \cite{MR0270403}, and hence so does $\max(g_1(n), g_2(n))$.
\end{proof}

\subsection{Proofs of the applications to random weighted graphs in Section~\ref{sec:blockcl}}

\label{sec:proofgraph}

We are going to list the proof of the results from Section~\ref{sec:blockcl} roughly in order of their appearance, with the exception of the observations that were already sufficiently justified there.

\begin{proof}[Proof of Theorem~\ref{te:discon}]
	We need to show that the Gibbs partition  $\Seq \circ \cC^\omega$ admits a giant component with size $n + O_p(1)$. This was observed in Stufler~\cite[Thm. 4.2 and Section 5]{doi:10.1002/rsa.20771} for the case where the weight-sequence $\mathbf{w}$ has type I or II. 
	
	In the superexponential case, when $\mathbf{w}$ has type III, we know by Lemma~\ref{le:supen} that the coefficients
	\[
		a_n = [z^n] (\cC^\bullet)^\omega(z)
	\]
	satisfy
	\begin{align*}
	\sum_{\substack{i_1 + \ldots + i_k = n \\ 1 \le i_1, \ldots, i_k < n - (k-1)}} a_{i_1} \cdots a_{i_k} = o(a_{n-(k-1)}).
	\end{align*}
	for all $k \ge 2$. It follows that the coefficients of the unrooted graphs
	\[
		c_n = [z^n] \cC^\omega(z) = \frac{a_n}{n}
	\]
	satisfy
	\begin{align*}
		\sum_{\substack{i_1 + \ldots + i_k = n \\ 1 \le i_1, \ldots, i_k < n - (k-1)}} c_{i_1} \cdots c_{i_k} 
		&= \sum_{\substack{i_1 + \ldots + i_k \\  1 \le i_1, \ldots, i_k < n - (k-1)}} \frac{1}{i_1 \cdots i_k} a_{i_1} \cdots a_{i_k} \\
		&\le O (n^{-1}) \sum_{\substack{i_1 + \ldots + i_k = n \\ 1 \le i_1, \ldots, i_k < n - (k-1)}} a_{i_1} \cdots a_{i_k}  \\
		&= o(n a_{n-(k-1)}) \\
	    &= o(c_{n-(k-1)}).
	\end{align*}
	Hence we may apply Theorem~\ref{te:need} to obtain that there is a fixed integer $n_0 \ge 0$ such that the size $K_n$ of the largest connected component satisfies
	\begin{align}
		\label{eq:crossref}
		K_n \ge n-n_0
	\end{align}
	with probability tending to $1$ as $n$ becomes large.
\end{proof}

\begin{proof}[Proof of Corollary~\ref{co:discon}]
	Suppose that the weight-sequence $\mathbf{w}$ has type III. It was already observed in Equation~\eqref{eq:crossref} that there is a constant $n_0 \ge 0$ such that the largest connected component of the random graph $\mG_n^\upsilon$ has size at least $n \ge n_0$ with high probability.
	
	If the complete graph with $2$ vertices receives positive weight, then it holds that \[[z^2]\cC^\omega(z) >0\] and hence it follows from Corollary~\ref{co:ne} that $\mG_n^\upsilon$ is with high probability connected.
\end{proof}

Theorem~\ref{te:loclunl} and Corollary~\ref{co:prel} are direct applications of Theorems~\ref{te:local} and \ref{te:strengthend}.  The claims made in Example~\ref{ex:outerplanar} for random weighted outerplanar graphs may be justified by analogous arguments as for Theorem~\ref{te:facethm} and Lemma~\ref{le:light} for random weighted outerplanar maps, which we do not aim to repeat here.

Corollaries~\ref{co:blocksize} and \ref{cor:planarpath} are sufficiently justified by the explanations given in Section~\ref{sec:blockcl}, except for the fact that we need to check that they apply to uniform random planar graphs, which we do here:

\begin{lemma}
	If $\mC_n^\omega$ is the uniform $n$-vertex planar graph, then the weight-sequence $\mathbf{w}$ has type II and the corresponding probability weight-sequence $\pi_k$ given in Equation~\ref{eq:tt} satisfies $\pi_k \sim c k^{-5/2}$ as $k \to \infty$ for some constant $c>0$.
\end{lemma}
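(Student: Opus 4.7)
The plan is to deduce both claims from the asymptotic enumeration results of Gim\'enez and Noy for planar graphs. They established that $|\cB[n]|/n! \sim c_B n^{-7/2} \rho_B^{-n}$ and $|\cC[n]|/n! \sim c_C n^{-7/2} \rho_C^{-n}$ for explicit constants $c_B,c_C,\rho_B,\rho_C>0$, and that the singularity of $\cB(z)$ at $\rho_B$ is of square-root type. Here, $\cB'(z)$ in the species sense agrees with the formal derivative of $\cB(z)$, so the asymptotics transfer to $[z^n]\cB'(z) \sim c_B' n^{-5/2} \rho_B^{-n}$ and $[z^n]\cB''(z) \sim c_B'' n^{-3/2} \rho_B^{-n}$; in particular $\cB''(\rho_B)$ is finite.

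First I would verify the type II condition. Since $\phi(z) = (\Set \circ \cB')(z) = \exp(\cB'(z))$, we have $\rho_\phi = \rho_B>0$ and $\psi(z) = z\phi'(z)/\phi(z) = z\cB''(z)$, so $\nu = \psi(\rho_B) = \rho_B\, \cB''(\rho_B) \in (0,\infty)$. The positivity $\nu>0$ is immediate, and the strict inequality $\nu < 1$ (the so-called non-critical or subcritical composition regime) is precisely what is shown by Gim\'enez and Noy: they verify that the implicit equation $\cC^\bullet(z) = z\,\phi(\cC^\bullet(z))$ has its dominant singularity driven by the inner function at $\rho_C = \rho_B/\phi(\rho_B)$ with $\cC^\bullet(\rho_C) = \rho_B$. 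Were $\nu = 1$, the singular exponent of $\cC(z)$ would be $-5/2$ rather than the observed $-7/2$, since the singularity type would come from a vanishing $1-z\phi'(\cC^\bullet(z))$ rather than being inherited from the Puiseux expansion of $\cB'$. This establishes type II.

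For the tail behaviour of $\pi_k$, I would apply singularity analysis to $\phi(z) = \exp(\cB'(z))$. The square-root singularity of $\cB$ yields a Puiseux expansion
\[
\cB(z) = B_0 + B_1(1 - z/\rho_B) + B_2(1 - z/\rho_B)^2 + B_3(1 - z/\rho_B)^{5/2} + O((1 - z/\rho_B)^3),
\]
and differentiating produces a $(1-z/\rho_B)^{3/2}$ term in $\cB'(z)$. Since $\exp$ is entire, $\phi$ inherits the same singular type at $\rho_B$, and the transfer theorem gives $[z^k]\phi(z) \sim c\, k^{-5/2}\rho_B^{-k}$ for some $c>0$. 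Because we are in type II, Equation~\eqref{eq:tt} gives $\tau = \rho_B$ and $\pi_k = \rho_B^k\, \omega_k/\phi(\rho_B) = \rho_B^k\,[z^k]\phi(z)/\phi(\rho_B) \sim (c/\phi(\rho_B))\, k^{-5/2}$, which is the desired asymptotic.

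The only genuine input beyond routine singularity analysis is the strict inequality $\nu < 1$; this is the delicate part, but it is already contained in Gim\'enez and Noy's detailed analysis of the composition scheme for $\cC^\bullet$, and in fact could alternatively be read off from the mere fact that the two exponents for $\cB$ and $\cC$ coincide at $-7/2$.
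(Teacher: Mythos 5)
Your proof is correct, and both arguments rest on the same external input---the asymptotic enumeration results of Gim\'enez and Noy---but the technical route for deducing $\omega_k\sim c\,k^{-5/2}\rho_\cB^{-k}$ differs. The paper avoids complex-analytic singularity analysis entirely: after noting $[z^k]\cB'(z)\sim c'k^{-5/2}\rho_\cB^{-k}$, it invokes the closure of subexponential sequences under composition with analytic functions (Embrechts--Omey, the same machinery as Proposition~\ref{pro:sleeky} / Lemma~\ref{le:subexp}) to pass from $\cB'$ to $\exp(\cB')$. You instead work with the Puiseux expansion of $\cB$ at $\rho_\cB$, differentiate, exponentiate, and apply the transfer theorem. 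Both are sound, but the paper's route is in the spirit of the rest of the paper (real-variable subexponentiality), whereas yours needs the analytic-continuation/$\Delta$-domain hypotheses that singularity analysis silently requires, which Gim\'enez and Noy do verify. For $\nu<1$, the paper simply cites \cite[Claim~1]{MR2476775}; you cite the same fact but additionally sketch an attractive alternative argument---that $\nu\ge 1$ would force the square-root (critical) singularity type for $\cC^\bullet$ and hence the exponent $-5/2$ for $\cC$, contradicting the observed $-7/2$. That observation is valid (it rules out $\nu>1$ as well as $\nu=1$, since both put the composition scheme in the critical/supercritical regime), and it is a nice self-contained way to see the type-II classification directly from the two matching polynomial exponents. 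One small stylistic slip: you call the singularity of $\cB$ ``square-root type,'' but the dominant singular term is $(1-z/\rho_\cB)^{5/2}$; you correct this implicitly with the Puiseux expansion you write down, so the mathematics is unaffected.
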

\begin{proof}
	By enumeration results given in \cite{MR2476775}, there are constants $c_1, \rho_\cB >0$ such that the number of $2$-connected planar graphs is asymptotically equivalent to $c_1 n^{-7/2} \rho_\cB^{-n} $. So the exponents of $(\cB')^\gamma(z)$ admit the same asymptotic expression, only with the exponent $-5/2$ instead of $-7/2$. It follows by general properties for functions of power-series, see for example Embrechts and Omney \cite[Sec. 2.2]{MR772907}, that there is a constant $c_2>0$ such that
	\[
		\omega_k = [z^k]\exp( (\cB')^\gamma(z)) \sim c_2 n^{-5/2} \rho_\cB^{-n}.
	\]
	Moreover, by \cite[Claim 1]{MR2476775} we know that \[\nu = \rho_{\cB} \cB''(\rho_{\cB}) < 1.\] (See Section~\ref{sec:pretree} for the definition of the parameter $\nu$.)  Hence the weight sequence $\mathbf{w} = (\omega_k)_k$ has type II and $\tau=\rho_\cB$. Thus $\pi_k \sim c_3 k^{-5/2}$ as $k \to \infty$. This concludes the proof.
\end{proof}

\begin{proof}[Proof of Remark~\ref{re:distribution}]
	Essentially, we have to show that the description of the limit graph follows the distribution of the limit enriched tree $(\hat{\cT}, \hat{\beta})$ interpreted as a graph according to the bijection in Section~\ref{sec:bijblock}.
	
	Here it is useful to note that the fringe subtree at its second spine vertex follows the same distribution as the whole tree. So we are going to interpret $(\hat{\cT}, \hat{\beta})$ without this fringe subtree as a graph, and then use this recursion. 
	
	The root $o$ of $\hat{\cT}$ receives an $\cR$-object $\hat{\beta}(o)$ of which a uniformly at random drawn atom forms the second spine vertex. Taking a $\hat{\xi}$-sized $\cR^\kappa = \Set \circ (\cB')^\gamma$ object with probability proportional to its $\kappa$-weight and marking a uniformly at random chosen non-$*$-vertex, is equivalent to taking a $\mathbb{P}_{(\cR^\bullet)^\kappa, \tau}$ object. So, $\hat{\beta}(o)$ with the marked atom given by the second spine-vertex follows a $\mathbb{P}_{(\cR^\bullet)^\kappa, \tau}$-distribution.
	
	We may apply the rules for operations on species in Section~\ref{sec:opspe} and the isomorphism $\Set' \simeq \Set$ to deduce that 
	\[
	(\cR^\bullet)^\kappa \simeq (\Set' \circ (\cB')^\gamma) \cdot (\cB'^\bullet)^\gamma \simeq  (\Set \circ (\cB')^\gamma) \cdot (\cB'^\bullet)^\gamma.
	\]
	The interpretation is that an unordered sequence of derived blocks where one atom is marked consists of a distinguished marked block and an unordered collection of unmarked blocks.  
	
	We apply the rules in Lemma~\ref{le:bole} governing the relation between weighted Boltzmann distributions and operations on species. This yields that the marked block and the collection of unmarked blocks are independent, and follow Boltzmann distributions  $\mathbb{P}_{(\cB'^\bullet)^\gamma, \tau}$ and $\mathbb{P}_{\Set \circ (\cB')^\gamma, \tau}$. In the tree $(\hat{\cT}, \hat{\beta})$, each of the non-marked atoms of the $\cR$-object $\beta(o)$ becomes the root of an independent copy of $(\cT, \beta)$. As graph, $(\cT, \beta)$ follows a Boltzmann distribution $\mathbb{P}_{(\cC^\bullet)^\omega, \tau/\phi(\tau)}$.
	
	The isomorphism
	\[
	(\cC^\bullet)^\omega \simeq \cX \cdot (\Set \circ (\cB'^\bullet)^\gamma)((\cC^\bullet)^\omega)
	\]
	and the rules in Lemma~\ref{le:bole} imply, that if we glue the $*$-vertices of a $\mathbb{P}_{\Set \circ (\cB')^\gamma, \tau}$-distributed collection of blocks together, and identify each non-$*$-vertex with the root of a fresh independent copy of a $\mathbb{P}_{(\cC^\bullet)^\omega, \tau/\phi(\tau)}$-distributed connected rooted graph, then the result again follows a $\mathbb{P}_{(\cC^\bullet)^\omega, \tau/\phi(\tau)}$ Boltzmann distribution.
	
	Thus, the graph corresponding to $(\hat{\cT}, \hat{\beta})$, without the fringe-subtree at the second spine vertex, corresponds to a pointed  $\mathbb{P}_{(\cB'^\bullet)^\gamma, \tau}$-distributed block, where every vertex except for the pointed vertex gets identified with the root of a fresh copy of a $\mathbb{P}_{(\cC^\bullet)^\omega, \tau/\phi(\tau)}$-distributed random connected graph.
	
	Since the  fringe subtree at the second spine vertex of $(\hat{\cT}, \hat{\beta})$ is distributed like $(\hat{\cT}, \hat{\beta})$ itself, it follows that the graph $\hat{\mC}$ corresponding to $(\hat{\cT}, \hat{\beta})$ is distributed like an infinite chain of such joint objects, where the marked vertex of any object is identified with the $*$-vertex of the subsequent one. Thus, $\hat{\mC}$ follows the distribution described in the remark.	
\end{proof}

\begin{proof}[Proof of Corollary~\ref{co:cntemb}]
	We have to check if $\mathbf{w}$ has type I$a$, then for arbitrarily large $p$ the root-degree of $\mC_n^\omega$ is bounded in $\mathbb{L}_p$ as $n$ becomes large. This implies arbitrarily high uniform integrability.
	
	In the coupling with the simply generated tree $\cT_n$, the root-degree in the graph is bounded by the root-degree $d_{\cT_n}^+(o)$ in the tree. So it suffices to consider the moments of $d_{\cT_n}^+(o)$. As $\mathbf{w}$ has type I$a$, the offspring distribution $\xi$ has finite exponential moments, and $\Pr{|\cT|=n}$ has order $n^{-3/2}$. That is, there are constants $C,c>0$ such that for all $x \ge 0$
	\[
		\Pr{d_{\cT_n}^+(o) \ge x} \le \Pr{|\cT| = n}^{-1} \Pr{\xi \ge x} \le C n^{3/2} \exp(-c x).
	\]
	In particular, 
	\[
		\lim_{n \to \infty} n^p \Pr{d_{\cT}^+(o) \ge \log^2 n} = 0.
	\]
	Equation~\eqref{eq:stuff} implies that
	\[
		\lim_{n \to \infty} \sup_{x \le \log^2 n} |\Pr{d_{\cT_n}^+(o) =x} / \Pr{d_{\hat{\cT}}^+(o) =x}| -1 | = 0,
	\]
	with $d_{\hat{\cT}}^+(o) \eqdist \hat{\xi}$ also having finite exponential moments. This yields
	\[
		\Ex{ (d_{\cT_n}^+(o))^p)} =  o(1) + \sum_{x=1}^{\lfloor \log^2 n \rfloor} x^p \Pr{\hat{\xi}=x}(1 + o(1)) = o(1) + \Ex{\hat{\xi}^p}.
	\]
\end{proof}

\begin{proof}[Proof of Theorem~\ref{te:convpl}]
	The proof is rather lengthy, hence we divide it into parts, starting with the overall strategy.
	
	\paragraph*{The proof strategy.} Our overall strategy is to follow these steps:
	\begin{enumerate}
	\item We first show that we may work with a modified version $(\cT_n, \lambda_n)$ obtained from the tree $(\cT_n, \lambda_n)$ by matching the vertices of the $\cR$-structures with the offspring sets in a more convenient way, such that $\ell$-neighbourhoods of the corresponding graph are determined \emph{initial segments} of the offspring sets. This is important as we are going to encounter a vertex with large degree for which we, by this trick, require information on the structures corresponding to the atoms of an initial segment of its offspring.
	
	\item The next step is that we to approximate the tree $(\cT_n, \lambda_n)$ by a tree $(\cT_{1n}, \lambda_{1n})$ obtained from the enriched tree $(\hat{\cT}, \hat{\beta})$ by replacing its tip of the spine by a large $\cR$-structure having an independent random size $\tilde{D}_n$, that has a deterministic lower bound which tends to infinity.  We have full information on the behaviour of the tree $(\cT_{1n}, \lambda_{1n})$, except for the Gibbs partition $\mS_n$ at the tip of its spine that gets sampled from $\cR[\tilde{D}_n]$ with probability proportional to its $\kappa$-weight. Studying its behaviour takes two steps. 
	
	\item First, we  show that the composition of $\Set$ with $(\cB')^\gamma \circ \cA_\cR^\omega$ has convergent type.  Roughly speaking, this states that in the $\cR$-structure corresponding to the root of $\cT_n$ there is a typically a distinguished block such that the union of the block, and all the fringe subtrees dangling from it, has size $n + O_p(1)$.
	
	\item We combine this fact together with properties of the coupling of $(\cT_n, \lambda_n)$ with $(\cT_{1n}, \lambda_{1n})$ to deduce that $\mS_n$ may be approximated in total variation by a large randomly sized block $\hat{\mB}_n$ and a $\mathbb{P}_{\Set \circ (\cB')^\gamma, \tau}$-distributed remainder $\hat{\mR}$. Thus we may approximate the enriched tree $(\cT_{1n}, \lambda_{1n})$ by a modified version $(\cT_{1n}^*, \lambda_{1n}^*)$, where the tip of the spine $v^*$ receives a copy of $\hat{\mR}$ and the block $\hat{\mB}_n$.
	
	\item The coupling that we construct has the property, that the $\ell$-neighbourhood of the random root of the graph $\mC_n^\omega$ is, as unlabelled rooted graph, with high probability equal to the $\ell$-neighbourhood of the graph $(\mC_{1n}^\omega, v_{1n})$ that corresponds to the tree $(\cT_{1n}^*, \lambda_{1n}^*)$. The only part in the construction of this enriched tree, that depends on $n$, is the randomly sized block $\hat{\mB}_n$. If the random block $\hat{\mB}_n$ converges in the local weak sense toward a limit $\hat{\mB}$, then Benjamini--Schramm convergence of $\mC_{1n}^\omega$ and hence also of $\mC_n^\omega$ follows.

	\item In the statement Theorem~\ref{te:convpl} we gave an explicit and simple description of the limit object. We check that its distribution coincides with the distribution of the graph corresponding to the limit enriched tree.
	
	\item We have verified that weak convergence of the randomly sized $2$-connected graph $\hat{\mB}_n$ implies convergence of the random connected graph $\mC_n^\omega$. Conversely, if we know that $\mC_n^\omega$ (and hence also $\mC_{1n}^\omega$) converges in the Benjamini--Schramm sense, then our coupling allows us to deduce local weak convergence of the randomly sized block $\hat{\mB}_n$. We do this in two steps. The first step is to verify that for all $\ell \ge 0$, the graph $G_n$, obtained by gluing the blocks of $\lambda_{1n}^*(v^*)$ together at their $*$-vertex, has the property that $V_\ell(G_n)$ converges weakly for all fixed $\ell$.
	
	\item The second step is to use the convergence of the neighbourhoods $V_\ell(G_n)$ to deduce that for each fixed $\ell$, the neighbourhood $V_\ell(\hat{\mB}_n)$ converges weakly toward a limit distribution $\mu_\ell$. The family $(\mu_\ell)_\ell$ is projective, hence we may deduce from this that the graph $\hat{\mB}_n$ converges in the local weak sense toward the projective limit of $(\mu_\ell)_\ell$.
	\end{enumerate}

	\paragraph*{1. Matching the vertices in a convenient manner:  the enriched tree $(\cT_n, \lambda_n)$.}
	Recall that in Lemma~\ref{le:coupling}, we construct the enriched plane tree $(\cT_n, \beta_n)$ by first generating the random tree $\cT_n$, and then sampling for each vertex $v \in \cT_n$ an $\cR$-structure $\beta_n(v) \in \cR[d^+_{\cT_n}(v)]$  with probability proportional to its $\kappa$-weight. The labels of $\beta_n(v)$ correspond in a canonical way to the ordered set of offspring of the vertex $v$, which is why the tree $(\cT_n, \beta_n)$ may be interpreted as an enriched tree. The final $\cR$-enriched tree $\mA_n^\cR = (\mA_n, \alpha_n)$ is then obtained by relabelling through a uniformly at random drawn bijection.
	
	As we already noted in the proof of Theorem~\ref{te:convout}, the precise way for identifying the offspring of an vertex $v$ with the atoms of the $\cR$-structure $\beta_n(v)$ does not affect the {\em distribution} of the resulting random unordered enriched tree $\mA_n^\cR$. We may match the offspring of $v$ and the atoms of $\beta_n(v)$ according to any rule, which only takes $\beta_n(v)$ into account. Different matchings may  change the isomorphism type of the corresponding $\cR$-enriched tree, but its distribution does not change. It is not hard to verify this by using the fact that $(\cT_n, \beta_n)$ is distributed like $(\cT,\beta)$ conditioned on having $n$ vertices, and that $(\cT, \beta)$ has this invariance property:  any offspring of a vertex $v\in \cT$ becomes the root of an independent copy of $(\cT, \beta)$, regardless to which atom of $\beta(v)$ it gets matched.
	
	In the case of random connected graphs, the species $\cR$ is given by $\cR^\kappa = \Set \circ (\cB')^\gamma$. We may consider the random enriched plane tree $(\cT_n, \lambda_n)$ constructed from $(\cT_n, \beta_n)$ by matching for each vertex $v \in \cT_n$ the offspring of $v$ with the atoms of $\beta_n(v)$ by ordering them following way. We start with the neighbours of the $*$-vertices in the derived block and order them in any canonical way, and then proceed with the vertices at distance $2$ from their respective $*$-vertices, and so  on.  We may construct a rooted graph  out of $(\cT_n, \lambda_n)$ according to the bijection in Section~\ref{sec:bijblock}, which is distributed like the random graph $\mC_n^\omega$ rooted at a uniformly at random drawn vertex $v_n$. So, in the following we assume that $(\mC_n^\omega, v_n)$ corresponds directly to $(\cT_n, \lambda_n)$.

	 The reason why we consider the modified tree $(\cT_n, \lambda_n)$ instead of working with $(\cT_n, \beta_n)$ directly is that for any integer $\ell \ge 0$ and any finite rooted connected graph $G^\bullet$ there exists a constant $m(\ell, G^\bullet) \ge 0$, such that the event, that the $\ell$-neighbourhood $V_\ell(\mC_n^\omega, v_n)$ is isomorphic to $G^\bullet$ as rooted graphs, is already completely determined by the family $(\lambda_n(v))_{v \in V^{[m(\ell, G^\bullet)]}}$. There are two reasons for this. First, vertices with distance at most $\ell$ from $v_n$ in $\mC_n^\omega$ also have block-distance  at most $\ell$ from $v_n$, and hence height at most $\ell$ in $\cT_n$. Second, by the construction of $\lambda_n$, for any vertex $v$ the subset of its ordered sequence of sons, that still lies in the $\ell$-neighbourhood of $v_n$, is an initial segment in the ordered list. If $V_\ell(\mC_n^\omega, v_n) \simeq G^\bullet$, then the length of this initial segment must be bounded by the number of vertices of $\mG^\bullet$. Hence, if we take $m(\ell, G^\bullet)$ large enough depending on $\ell$ and the size of $\mG^\bullet$, then $(\lambda_n(v))_{v \in V^{[m(\ell, G^\bullet)]}}$ contains all information necessary to decide whether $V_\ell(\mC_n^\omega, v_n) \simeq G^\bullet$. The same holds in general for arbitrary  $\cR$-enriched plane trees, as long as the matching of the $\cR$-structures with the offspring sets is done in the same way.

	\paragraph*{2. Coupling the tree $(\cT_n, \lambda_n)$ with the tree $(\cT_{1n}, \lambda_{1n})$}

	For any integer $m \ge 0$, we consider the subset $V^{[m]} \subset \VHT$ of the vertices of the Ulam--Harris tree given by 
	\[
		V^{[m]} = \{ (i_1, \ldots, i_t) \mid t \le m, i_1, \ldots, i_\ell \le m\}.
	\]
	Here we consider the first $m$ sons of the root, and for each of those again the first $m$ sons, and so on, until we reach generation $m$.

	Janson~\cite[Sec. 20]{MR2908619} constructs a deterministic sequence $\Omega_n \to \infty$ and a modified Galton--Watson tree $\cT_{1n}$ that is obtained from $\hat{\cT}$ by sampling a random degree $\tilde{D}_n \ge \Omega_n$ independently from $\hat{\cT}$, and pruning $\hat{\cT}$ at its unique vertex $v^*$ with infinite degree, keeping only the first $\tilde{D}_n$ children of $v^*$. In \cite[Thm. 20.2]{MR2908619} it is stated that for any fixed integer $m \ge 0$ it holds that
	\begin{align}
	\label{eq:conv1}
	\lim_{n \to \infty} d_{\textsc{TV}}( (d^+_{\cT_n}(v))_{v \in V^{[m]}}, (d^+_{\cT_{1n}}(v))_{v \in V^{[m]}}) = 0.
	\end{align}
	That is to say, the tip of the spine in $\hat{\cT}$ corresponds to a vertex with large degree $\tilde{D}_n$ in $\cT_n$.

	The construction of $\Omega_n$ is stated in  \cite[Lem. 19.32]{MR2908619} such that if $N_k$ denotes the number of vertices with degree $k$ in $\cT_n$, then 
	\begin{align}
	\label{eqq:bolt1}
	\sum_{k \le \Omega_n} k N_k = \nu n + o_p(n) \qquad \text{and} \qquad \sum_{k > \Omega_n} k N_k = (1 - \nu)n + o_p(n).
	\end{align}
	Letting $o$ denote the root vertex of $\cT_n$, it holds by \cite[Lem. 15.7]{MR2908619} that
	\begin{align}
	\label{eqq:bolt0}
	\Pr{d_{\cT_n}^+(o) = k} = \frac{n}{n-1} \Ex{ \frac{k N_k}{ n}}.
	\end{align}
	Hence Equations~\eqref{eqq:bolt1} may be rephrased by
	\begin{align}
	\label{eqq:bolt2}
	\Pr{d_{\cT_n}^+(o) \le \Omega_n} = \nu + o(1) \qquad \text{and} \qquad \Pr{d_{\cT_n}^+(o) > \Omega_n} = 1 - \nu + o(1).
	\end{align}
	The distribution of $\tilde{D}_n$ is stated in \cite[Equation (20.4)]{MR2908619} by $\Pr{\tilde{D}_n = k} = 0$ for $k \le \Omega_n$, and 
	\begin{align}
	\Pr{\tilde{D}_n = k} = \frac{ k\Ex{N_k}}{\sum_{\ell > \Omega_n} \ell \Ex{ N_\ell}} 
	\end{align}
	for all $k > \Omega_n$. By Equations~\eqref{eqq:bolt0} and \eqref{eqq:bolt2} it follows that
	\begin{align}
	\label{eqq:boltfin}
	\Pr{\tilde{D}_n = k} = \Pr{ d_{\cT_n}^+(o) = k \mid d_{\cT_n}^+(o) > \Omega_n}.
	\end{align}
	
	Since the tree $\cT_{1n}$ is almost surely finite,  we may turn it into an enriched plane tree $(\cT_{1n}, \beta_{1n})$, by sampling for each vertex $v$ an element $\beta_{1n}(v)$ from $\cR[d^+_{\cT_{1n}}(v)]$ with probability proportional to its weight. Again we may match the non-$*$-vertices of the set of derived blocks $\beta_{1n}(v)$ with the ordered offspring of $v$ according to their distance from their respective $*$-vertices in order to obtain an enriched tree $(\cT_{1n}, \lambda_{1n})$, in precisely the same way as we constructed $(\cT_n, \lambda_n)$ out of $(\cT_n, \beta_n)$. For any finite family of vertices $v_i$, $i \in I$, integers $d_i \ge 0$ and $\cR$-structures $R_i \in \cR[d_i]$, it holds that
	\begin{align*}
		\Pr{ \beta_n(v_i) = R_i, i \in I \mid d_{\cT_n}^+(v_i) = d_i, i \in I} &= \prod_{i \in I} \kappa(R_i) / |\cR[d_i]|_\kappa  \\
		&= \Pr{ \beta_{1n}(v_i) = R_i, i \in I \mid d_{\hat{\cT}_{1n}}^+(v_i) = d_i, i \in I}.
	\end{align*}
	So, Equation~\eqref{eq:conv1} already implies
	\begin{align}
		\label{eq:conv22}
		\lim_{n \to \infty} d_{\textsc{TV}}( (\lambda_n(v))_{v \in V^{[m]}}, (\lambda_{1n}(v))_{v \in V^{[m]}}) = 0.
	\end{align}
	
	\paragraph*{3. The randomly sized Gibbs partition at the tip of the spine - first part.}

The asymptotic behaviour of the graph corresponding to $(\cT_{1n}, \lambda_{1n})$ depends on the  behaviour of the randomly sized Gibbs partition $\lambda_{1n}(v^*)$, that gets sampled from $(\Set \circ (\cB')^\gamma)[\tilde{D}_n]$ with probability proportional to its weight. The problem is, that it does not need to hold that $\Set \circ (\cB')^\gamma$ has convergent type, so there may be no sensible limit for a  random element from $(\Set \circ (\cB')^\gamma)[k]$ as $k$ deterministically tends to infinity. However, we are dealing with a randomly sized Gibbs partition, and this makes all the difference.  We are going to verify that the composition of $\Set$ with $(\cB')^\gamma \circ \cA_\cR^\omega$ has convergent type, and then use the fact that $\tilde{\cD}_n$ is distributed like the root degree of $\cT_n$ conditioned to be large.
	
	The equation
	\[
		 z \exp( (\cB')^\gamma (\cA_\cR^\omega(z))) = \cA_\cR^\omega(z)
	\]
	may be rewritten by
	\[
		\exp( (\cB')^\gamma (\cA_\cR^\omega(z))) = 1 + a(z) 
	\]
	with $a(z)$ satisfying $a(0)=0$. Consequently,
	\[
		 (\cB')^\gamma (\cA_\cR^\omega(z)) = \log(1 + a(z)).
	\]
	Here $\log$ denotes the principal branch of the logarithm, which is holomorphic in the domain $\ndC \setminus ]-\infty, 0]$. The power series $a(z)$ has radius of convergence $\tau/\phi(\tau)<\infty$ and satisfies $a(\tau/\phi(\tau)) = \phi(\tau) -1 < \infty$. We may check that the set \[\{1 + a(z) \mid z \in \ndC, |z| \le \tau / \phi(\tau)\}\] is contained in the domain $\ndC \setminus ]-\infty,0]$. Indeed, 
	\[
	 \nu = \tau \phi'(\tau) / \phi(\tau) = \tau  (\cB'')^\gamma(\tau) \ge (\cB')^\gamma(\tau) = (\cB')^\gamma( \cA^\omega_\cR(\tau / \phi(\tau))).
	\]
	Since all coefficients of $(\cB')^\gamma( \cA^\omega_\cR(z))$ are non-negative, it follows that for all $z \in  \ndC$ with $|z| \le \tau / \phi(\tau)$ 
	\[
		|(\cB')^\gamma( \cA^\omega_\cR(z))| \le \nu.
	\]
	As $\mathbf{w}$ has type II, we know that $\nu < 1$. By basic properties of the complex exponential function, it follows that
	\[
		1 + a(z) = \exp((\cB')^\gamma( \cA^\omega_\cR(z))) \notin ]-\infty,0]
	\]
	whenever $|z| \le \tau/\phi(\tau)$. 
	
	The (sequence of coefficients of the) series $a(z)$ belongs to the class $\mathscr{S}_d$ with $d = \spa(\mathbf{w})$  by  Lemma~\ref{le:subexp}. Hence we may apply Proposition~\ref{pro:sleeky} to obtain, since we always assume that $n \equiv 1 \mod \spa(\mathbf{w})$,
	\begin{align*}
	[z^{n-1}] ((\cB')^\gamma \circ \cA_\cR^\omega)(z) &= [z^{n-1}] \log(1 + a(z))  \\ &\sim \frac{1}{ 1 + a(\tau/\phi(\tau))} [z^{n-1}]a(z),
	\end{align*}
	Since $a(z)$ belongs to the class $\mathscr{S}_d$, this also implies that $((\cB')^\gamma \circ \cA_\cR^\omega)(z)$ belongs to $\mathscr{S}_d$. Thus the composition of $\Set$ with $(\cB')^\gamma \circ \cA_\cR^\omega$ has convergent type. 
	
	\paragraph*{4. Randomly sized Gibbs partitions and the tree $(\cT_{1n}^*, \lambda_{1n}^*)$}
	
	Knowing that  the composition $\Set$ with $(\cB')^\gamma \circ \cA_\cR^\omega$ has convergent type will help us to determine the limit behaviour of the randomly sized Gibbs partition drawn from $(\Set \circ (\cB')^\gamma)[\tilde{D}_n]$ with probability proportional to its weight. We are going to show that it consists typically of a giant component with a stochastically bounded rest that converges in total variation toward a $\mathbb{P}_{\Set \circ (\cB')^\gamma, \tau}$-distribution. Hence, it behaves precisely as if the composition $\Set \circ (\cB')^\gamma$ had convergent type, although the latter need not hold at all.
	
	The isomorphism
	\[
		\cA_\cR^\omega \simeq \cX \cdot \Set \circ ( (\cB')^\gamma \circ \cA_\cR^\omega)
	\]
	allows us to view the forest obtained from $(\cT_n, \beta_n)$ by removing the root-vertex as a Gibbs partition corresponding to the composition of $\Set$ and $(\cB')^\gamma \circ \cA_\cR^\omega$. As this composition has convergent type, it follows that it exhibits a giant component with a stochastically bounded remainder, that converges in total variation to a $\mathbb{P}_{\Set' \circ (\cB')^\gamma \circ \cA_\cR^\omega, \tau/\phi(\tau)}$-distribution. Applying the rules for Boltzmann distributions in Lemma~\ref{le:bole}, we obtain that the collection of blocks of this limit, that correspond to the root of the tree, follow a $\mathbb{P}_{\Set' \circ (\cB')^\gamma, \tau}$-distribution. Note that $\Set'$ and $\Set$ are isomorphic species, so there is no real difference to a $\mathbb{P}_{\Set \circ (\cB')^\gamma, \tau}$-distribution.
	
	 The question is, how does the size of the root block of the giant $(\cB')^\gamma \circ \cA_\cR^\omega$-component behave? To answer this, note that since the limit \eqref{eq:conv22} holds for arbitrarily large $m$, there is a coupling of $(\cT_n, \lambda_n)$ and $(\cT_{1n}, \lambda_{1n})$ such that
	 \begin{align}
		 \label{eq:sklein}
		 (1 + \sup\{m \ge 0 \mid \lambda_n(v) = \lambda_{1n}(v) \text{ for all $v \in V^{[m]}$}\})^{-1} \convp 0.
	 \end{align}
	 We know that $\lambda_{n}(o)$ consists of a single block $B(n)$, whose asymptotic size we do not know yet, and a remainder $R(n)$ that converges in total variation toward a $\mathbb{P}_{\Set \circ (\cB')^\gamma, \tau}$-distributed limit. Moreover, we know that the total size of the union $S(n)$ of fringe subtrees dangling from the remainder $R(n)$ in $\cT_n$ is stochastically bounded. As $\lambda_{n}(o)=\lambda_{1n}(o)$ with probability tending to $1$ as $n$ becomes large, we may also express $\lambda_{1n}(o)$ as the disjoint union of a block $B(1n)$ and a remainder $R(1n)$ such that with high probability $B(1n)=B(n)$ and $R(1n) = R(n)$.  We are going to argue, that with high probability the vertex $v^*$ does not lie in the union $S(1n)$ of fringe subtrees dangling from $R(1n)$ in the tree $\cT_{1n}$. Indeed, given $\epsilon>0$ we may take $m$ large enough such that $v^* \in V^{[m]}$ with probability at least $1- \epsilon$ for all $n$. As with high probability $R(n) = R(1n)$ and $\lambda_{n}(v)=\lambda_{1n}(v)$ for all $v \in V^{[m]}$, it follows that whenever $v^*$ lies in $V^{[m]} \cap S(1n)$, then the size of $S(n)$ is at least $\Omega_n \to \infty$. As the size of $S(n)$ is stochastically bounded it follows that the probability for $v^*$ to lie in $S(1n)$ is  bounded from below by $1- 2\epsilon$ for all large enough $n$. As $\epsilon>0$ was arbitrary, it follows that with high probability $v^*$ does not lies in $S(1n)$. 	 
	 
	 Depending on the location of the vertex $v^*$, there are two possible behaviours for $B(1n)$ and $R(1n)$. If the tip of the spine $v^*$ has height at least $1$, then the $\Set \circ (\cB')^\gamma$-object $\lambda_{1n}(o)$ together with its unique atom that belongs to the spine of $\cT_{1n}$ gets drawn from $(\Set \circ (\cB')^\gamma)^\bullet[\hat{\xi}]$ with probability proportional to its weight. By the rules for Boltzmann samplers in Lemma~\ref{le:bole}, this means that it follows a $\mathbb{P}_{ (\cR^\bullet)^\kappa, \tau}$-distribution. As
	 \[
		 \cR^\bullet \simeq (\cB'^\bullet)^\gamma \cdot (\Set \circ (\cB')^\gamma),
	 \]
	 it follows that in this case, as unlabelled objects, the block containing the spine vertex follows a $\mathbb{P}_{ (\cB'^\bullet)^\gamma, \tau}$-distribution (and is with high probability equal to $B(1n)$), and is independent from its remainder, which follows a $\mathbb{P}_{ \Set \circ (\cB')^\gamma, \tau}$-distribution.
	 
	 If the tip $v^*$ is equal to the root $o$, then the corresponding $\Set \circ (\cB')^\gamma$-object gets drawn from $\Set \circ (\cB')^\gamma[\tilde{D}_n]$ with probability proportional to its weight. It holds uniformly for every set $\cE$ of unlabelled $\Set \circ (\cB)^\gamma$-objects that
	 \begin{align*}
		 \Pr{R(1n) \in \cE \mid v^* = o} &= \frac{ \Pr{R(1n) \in \cE} - \Pr{R(1n) \in \cE \mid v^* \ne o} \Pr{v^* \ne o} }{\Pr{v^* = 0}} \\
		 &\sim \frac{\mathbb{P}_{\cR^\kappa, \tau}(\cE) - \mathbb{P}_{\cR^\kappa, \tau}(\cE) \nu }{1- \nu} \\
		 &= \mathbb{P}_{\cR^\kappa, \tau}(\cE).
	 \end{align*}
	 As $\tilde{D}_n \ge \Omega_n \to \infty$ and the size of $R(1n)$ is stochastically bounded, it follows that in this case the block $B(1n)$ becomes large. Note that conditioned on having specific size $k$, the block $B(1n)$ gets drawn with probability proportional to its $\gamma$-weight among all elements from $\cB'[k]$. Let $\hat{\mR}$ denote a $\mathbb{P}_{\cR^\kappa, \tau}$-distributed collection of blocks, and $\hat{\mB}_n$ a block drawn from $\cB'[\tilde{D}_n - |\hat{\mR}|]$ with probability proportional to its weight. The latter is only possible if $\tilde{D}_n - |\hat{\mR}| >0$ and $|\cB'[\tilde{D}_n - |\hat{\mR}|]|_\gamma >0$, but this holds with high probability. We have shown that if we draw a composite structure $\mS_n$ from $\cR[\tilde{D}_n]$ with probability proportional to its weight, then
	 \begin{align}
		 \label{eq:jukname}
		 d_{\textsc{tv}}( \mS_n, \{\hat{\mB}_n\} \cup \hat{\mR}) \to 0
	 \end{align}
	 as $n$ becomes large. Let $(\cT_{1n}^*, \lambda_{1n}^*)$ denote the tree constructed by modifying the tree $(\hat{\cT}, \hat{\beta})$ in a similar way as we did for $(\cT_{1n}, \lambda_{1n})$, with the sole difference that instead of assigning a $\tilde{D}_n$-sized random $\cR$-object to the tip of the spine, we use the union $\{\hat{\mB}_n\} \cup \hat{\mR}$. Equation~\eqref{eq:jukname} shows that for all $m \ge 0$ it holds that
	 \[
		 d_{\textsc{TV}}( (\lambda_{1n}(v))_{v \in V^{[m]}}, (\lambda_{1n}^*(v))_{v \in V^{[m]}} \to 0.
	 \]
	 Hence, by the limit in~\eqref{eq:sklein} it follows that there is a coupling of  $(\cT_n, \lambda_n)$ and $(\cT_{1n}, \lambda_{1n})$ such that
	 \[
	 (1 + \sup\{m \ge 0 \mid \lambda_n(v) = \lambda_{1n}^*(v) \text{ for all $v \in V^{[m]}$}\})^{-1} \convp 0.
	 \]
	 
	 \paragraph*{5. Weak convergence of $\hat{\mB}_n$ implies Benjamini--Schramm convergence of $\mC_n^\omega$.}
	 
	Let $(\mC_{1n}^\omega, v_{1n})$ denote the rooted graph corresponding to $(\cT_{1n}^*, \lambda_{1n}^*)$ according to the bijection in Section~\ref{sec:bijblock}. Let $G^\bullet$ be an arbitrary finite rooted graph and let $\ell \ge1$ be an integer. As discussed in the first step, there is an integer $m(\ell, G^\bullet) \ge 0$ such that for any fixed $m \ge m(\ell, G^\bullet)$ the family $(\lambda_{1n}^*(v))_{v \in V^{[m]}}$ already contains all information necessary to decide whether $V_\ell(\mC_{1n}^\omega, v_{1n}) \simeq G^\bullet$ as rooted graphs.

	For any set $R$ of derived graphs let $G(R)$ denote  the derived graph obtained by identifying the $*$-vertices of all blocks with each other. Then, for any $r \ge 0$ the $r$-neighbourhood $V_r(G(R))$ is given by the union of the $r$-neighbourhoods of the $*$-vertices in the components. This may be expressed by
	\[
		V_r(G(R)) = G( \{ V_r(Q) \mid Q \in R\}).
	\]
	
	Suppose that the randomly sized derived  $2$-connected graph $\hat{\mB}_n$ converges in the local weak sense toward a limit $\hat{\mB}$. To unify notation, we treat the root-vertex of $\hat{\mB}$ like a $*$-placeholder vertex. It holds that
	\begin{align}
		\label{eq:jjnn}
		V_r(G(\lambda_{1n}^*(v^*))) = G(\{V_r(\hat{\mR}),  V_r(\hat{\mB}_n)\}) \convdis G(\{V_r(\hat{\mR}),  V_r(\hat{\mB})\}) = V_r( G(\hat{\mR} \cup \{\hat{\mB}\})).
	\end{align}
	 In order to decide whether $V_\ell(\mC_{1n}^\omega, v_{1n}) \simeq G^\bullet$, it is more than enough to know the $\ell$-neighbourhoods $V_\ell(G(\beta_{1n}(v)))$ for all $v \in V^{[m]}$. (It would also suffice to just consider the $(\ell-h_{\cT_{1n}}(v))$-neighbourhoods of the vertices $v$). The limit~\eqref{eq:jjnn} implies that
	\begin{align}
		\label{eq:jjmm}
		(V_\ell(G(\lambda_{1n}(v))))_{v \in V^{[m]}} \convdis (V_\ell(G(\hat{\lambda}(v))))_{v \in V^{[m]}},
	\end{align}
	where we let $(\hat{\cT}, \hat{\lambda})$ denote the limit enriched plane tree obtained from $(\hat{\cT}, \hat{\beta})$ by matching the offspring of any vertex $v$ with finite outdegree $d_{\hat{\cT}}^+(v) < \infty$ with the atoms of the set of derived blocks $\hat{\beta}(v)$ in the same way as we did for $(\cT_n, \lambda_n)$. For the unique vertex $u^*$ with $d_{\hat{\cT}}^+(u^*) = \infty$, we let $\lambda(u^*)$ be given by $\hat{\mR} \cup \{\hat{\mB}\}$, where we also  match the countably infinite offspring of $\lambda(v^*)$ with the countably infinite number of non-$*$-vertices of $\hat{\mR} \cup \{\hat{\mB}^\circ\}$ in the same way. This is possible, since it is easily verified that the random graph $\hat{\mB}$ has countably infinite many vertices. For an upper bound, we only need the fact that it is locally finite, and the lower bound follows as it is the limit of a sequence of random graphs whose size deterministically tends to infinity.
	
	The convergence in \eqref{eq:jjmm} implies that the random rooted graph $\hat{\mG}^\bullet$ that corresponds to the $\cR$-enriched plane tree $(\hat{\cT}, \hat{\lambda})$ satisfies
	\[
		\lim_{n \to \infty} \Pr{V_\ell(\mC_n^\omega, v_n) \simeq G^\bullet} = \Pr{V_\ell(\hat{\mG}^\bullet) \simeq G^\bullet}.
	\]
	As $G^\bullet$ and $\ell$ where arbitrary, it follows that $\hat{\mG}^\bullet$ is the Benjamini--Schramm limit of the random graph $\mC_{1n}^\omega$ and thus also of $\mC_n^\omega$. 
	
	\paragraph*{6. Distribution of the limit}
	We are going to argue, that $\hat{\mG}^\bullet$ is distributed like the graph $\hat{\mC}$ described in Theorem~\ref{te:convpl}. Recall that we constructed $\hat{\mC}$ by concatenating the independent identically distributed blocks $(\mB_i'^\bullet)_{1 \le i \le K}$, where $K$ follows a geometric distribution with parameter $\nu$, glue the limit $\hat{\mB}$ at the tip of this chain, and finally identify each vertex of this graph with the root of an independent copy of the Boltzmann-distributed random graph $\mC^\bullet$. The height of the vertex $u^*$ in $\hat{\cT}$ is distributed like $K$, and the $\cR$-structures along the spine in $\hat{\cT}$ actually follow Boltzmann distributions of $(\Set \circ \cB')^\bullet$ with parameter $\tau$. As
	\[
		(\Set \circ (\cB')^\gamma)^\bullet \simeq (\Set \circ (\cB')^\gamma) \cdot \cB'^\bullet,
	\]
	the product rule in Section~\ref{sec:WeBoSa} implies that each of the blocks containing consecutive spine vertices actually follows a Boltzmann distributions for $(\cB'^\bullet)^\gamma$ with parameter $\tau$, and the remainder of the corresponding $(\Set \circ (\cB')^\gamma)^\bullet$-object is independent from this block and follows a $\Set \circ (\cB')^\gamma$ distribution with parameter $\tau$. The isomorphism $\cA_\cR^\omega \simeq \cX \cdot \cR^\kappa(\cA_\cR^\omega)$ and the composition rule in Section~\ref{sec:WeBoSa} imply that if we take a Boltzmann distributed $\Set \circ \cB'$-structure with parameter $\tau$, glue the $*$-vertices together, and identify its vertices with the roots of independent copies of $\mC^\bullet$, then the result follows a Boltzmann distribution for $(\cC^\bullet)^\omega$ with parameter $\tau /\phi(\tau)$. So, summing up, the random graph $\hat{\mG}^\bullet$ corresponding to $(\hat{\cT}, \hat{\beta})$ is distributed like $\hat{\mC}$. 
	
	\paragraph*{7. Convergence of $\mC_n^\omega$ implies convergence of $\hat{\mB}_n$ - first part.} 
	We have shown that 
	if $\mC_n^\omega$ converges in the Benjamini--Schramm sense, then so does the random graph $\hat{\mB}_n$. 
	
	Indeed, suppose that $\mC_n^\omega$ admits a distributional limit $\hat{\mC}^\circ$. It follows that this graph is also the distributional limit of the rooted graph $(\mC_{1n}, v_{1n})$ that corresponds to the enriched tree $(\cT_{1n}^*, \lambda_{1n}^*)$. 
	
	We are first going to show that the derived graph $G_n := G(\lambda_{1n}^*(v^*))$ has the property, that for each $\ell \ge 1$ the neighbourhood $V_\ell(G_n)$ converges weakly in the countable discrete set of unlabelled rooted graph with height at most $\ell$. Let us start with the root-degree. For any $k$ it holds that
	\begin{align}
		\label{eq:sum1}
		\Pr{d_{\mC_{1n}}(v_{1n}) = k}  &= \Pr{d_{\mC_{1n}}(v_{1n}) = k \mid v^* = o} (1-\nu) + \Pr{d_{\mC_{1n}}(v_{1n}) = k \mid v^* \ne o} \nu \nonumber \\
		&= \Pr{ d(G_n) = k}(1-\nu) + \Pr{d(\hat{\mR}^\bullet) = k} \nu,
	\end{align}
	with $\hat{\mR}^\bullet$ denoting a $\mathbb{P}_{(\cR^\bullet)^\kappa, \tau}$-distributed random object. Since 
	\[
		\Pr{d_{\mC_{1n}}(v_{1n}) = k} \to \Pr{d(\mC^\circ) = k},
	\]
	it follows that
	\[
		\Pr{ d(G_n) = k} \to d_k
	\]
	for some $d_k \ge 0$.  Equation~\eqref{eq:sum1} also implies that the limits $(d_k)_{k \ge 0}$ satisfy $\sum_{k \ge 0} d_k = 1$. In other words, the $1$-neighbourhood $V_1(G_n)$ converges weakly.

	We proceed to show weak convergence of the $\ell$-neighbourhood $V_\ell(G_n)$ by induction on $\ell$. So assume that $V_{i}(G_n)$ admits a weak limit for all $i< \ell$. Let $G^\bullet$ denote a finite graph. Recall that there is a $m = m(G^\bullet, \ell)$ such that the family $(\lambda_{1n}^*(v))_{v \in V^{[m]}}$ contains all informations to decide whether $V_\ell(\mC_{1n}) \simeq G^\bullet$. A bit more precise, we only require knowledge of the neighbourhoods $V_{\ell - \he_{\cT_{1n}^*}(v)}(G(\lambda_{1n}^*(v)))$ for all $v \in V^{[m]}$ with $\he_{\cT_{1n}^*}(v) < \ell$. Let $V(\ell) \subset V^{[m]}$ denote the subset of vertices with height less than $\ell$. Thus, there is a finite set $\scM$ of families $(G_v)_{v \in V(\ell)}$ of rooted graphs such that 
	\[
	V_\ell(\mC_{1n}) \simeq G^\bullet \qquad \text{if and only if} \qquad  \left(V_{\ell - \he_{\cT_{1n}^*}(v)}(G(\lambda_{1n}^*(v)))\right)_{v \in V(\ell)} \in \scM.
	\]
	Thus
	\begin{align}
	\label{eq:suum}
	\Pr{V_\ell(\mC_{1n}) \simeq G^\bullet} &= \sum_{h = 0}^\ell \Pb{V_{\ell - \he_{\cT_{1n}^*}(v)}(G(\lambda_{1n}^*(v)))_{v \in V(\ell)} \in \scM \mid \he_{\cT_{1n}^*}(v^*) = h} \nu^{h}(1-\nu).
	\end{align}
	Note that the left-hand side of this equation converges, since we assumed that $\mC_{1n}$ has a weak limit. As for the right hand-side, all summands with $h \ge 1$ converge by induction, as they depend on the neighbourhood $V_i(\lambda_{1n}^*(v^*))$  only for $i < \ell$. Let
	\[
		\scM_0 = \{ (G_v)_{v \in V^(\ell)} \in \scM \mid G_o \simeq G^\bullet\}
	\]
	denote the subset that corresponds to the event that the $\ell$-neighbourhood at the root vertex $o$ is already isomorphic to $G^\bullet$. Note that all elements $(G_v)_{v \in V^(\ell)} \in \scM \setminus \scM_0$ satisfy $|G_o|<|G^\bullet|$.	
	
	If $v^* = o$, then any non-root vertex that belongs to the tree $\cT_{1n}^*$ receives an $\cR$-structure according to a $\mathbb{P}_{\Set \circ (\cB')^\gamma, \tau}$-distribution, which assumes a zero-sized object with positive probability. Hence
	\begin{align}
		\label{eq:hard}
		\Pr{V_{\ell - \he_{\cT_{1n}^*}(v)}(G(\lambda_{1n}^*(v)))_{v \in V(\ell)} \in \scM_0 \mid \he_{\cT_{1n}^*}(v^*) = 0} = \Pr{ V_\ell(G_n) \simeq G^\bullet} C(G^\bullet)
	\end{align}
	for some constant $C(G^\bullet)>0$. In fact, we may set
	\begin{align}
		\label{eq:cbul}
		C(G^\bullet) = p^{|V_{\ell-1}(G^\bullet)|}
	\end{align}
	with $p>0$ denoting the probability, that a $\mathbb{P}_{\Set \circ (\cB')^\gamma, \tau}$-distributed structure has size $0$. To justify this, note that when $\Pr{V_\ell(G_n) \simeq G^\bullet}=0$, then both sides of Equation~\eqref{eq:hard} are always zero. If $\Pr{V_\ell(G_n) \simeq G^\bullet}>0$, then it holds that
	\begin{multline*}
	\Pr{V_{\ell - \he_{\cT_{1n}^*}(v)}(G(\lambda_{1n}^*(v)))_{v \in V(\ell)} \in \scM_0 \mid \he_{\cT_{1n}^*}(v^*) = 0} \Pr{ V_\ell(G_n) \simeq G^\bullet}^{-1} = \\ \Pr{V_\ell(\mC_{1n}) \simeq G^\bullet \mid v^* = o, V_\ell(G(\lambda_{1n}^*(o))) \simeq G^\bullet}.
	\end{multline*}
	 Conditional on $v^* = o$ and $V_\ell(G(\lambda_{1n}^*(o))) \simeq G^\bullet$, the event $V_\ell(\mC_{1n}) \simeq G^\bullet$ takes place if and only if each offspring vertex $v$ of the root that corresponds to a vertex in $V_{\ell-1}(G(\lambda_{1n}^*(o))) \simeq V_{\ell -1}(G^\bullet)$ satisfies $|\lambda_{1n}^*(v)|=0$. There are precisely $V_{\ell -1}(G^\bullet)$ many such vertices, and each receives an independent $\mathbb{P}_{\Set \circ (\cB')^\gamma, \tau}$-distributed structure. Hence this conditional probability is equal to $ p^{|V_{\ell-1}(G^\bullet)|}$.
	
Having verified that Equation~\eqref{eq:cbul}, it follows from Equation~\eqref{eq:suum} that
	\begin{align}
	\label{eq:compli}
	\Pr{V_\ell(\mC^\circ) \simeq G^\bullet} + o(1) &= \sum_{h = 1}^\ell \Pb{V_{\ell - \he_{\cT_{1n}^*}(v)}(G(\lambda_{1n}^*(v)))_{v \in V(\ell)} \in \scM \mid \he_{\cT_{1n}^*}(v^*) = h} \nu^{h}(1-\nu) \nonumber \\
	&+ 	\Pb{V_{\ell - \he_{\cT_{1n}^*}(v)}(G(\lambda_{1n}^*(v)))_{v \in V(\ell)} \in \scM \setminus \scM_0 \mid \he_{\cT_{1n}^*}(v^*) = 0}(1-\nu) \nonumber \\
	&+ \Pr{ V_\ell(G_n) \simeq G^\bullet} C(G^\bullet) (1-\nu).
	\end{align}
	On the right-hand side, the first summand converges, since we assumed that $V_i(G_n)$ converges for $i < \ell$. The second summand contains only conditions of the form $V_\ell(G_n) \simeq G'$ for graphs $G'$ with size $|G'| < |G^\bullet|$. Since $C(G^\bullet)>0$, it follows by induction on $|G^\bullet|$ that \[
	\Pr{V_\ell(G_n) \simeq G^\bullet} \to p_\ell(G^\bullet)
	\]
	for some $p_\ell(G^\bullet) \ge 0$ as $n$ becomes large.
	
	In order to deduce weak convergence of $V_\ell(G_n)$, it remains to verify that $\sum_{G^\bullet} p_\ell(G^\bullet) = 1$, with the sum index $G^\bullet$ ranging over all unlabelled rooted graphs. Suppose that this does not hold, that is, $\sum_{G^\bullet} p_\ell(G^\bullet) = 1 - \epsilon$ for some $\epsilon>0$.  Then for any fixed $s \ge 1$ 
	\[
		\Pr{|V_\ell(G_n)| \ge s} = 1 - \Pr{|V_\ell(G_n)| < s} \to 1 - \sum_{G^\bullet, |G^\bullet|<s} p_\ell(G^\bullet) \ge \epsilon.
	\]
	Thus, there is a sequence $s_n \to \infty$ such that 
	\[
		 \Pr{|V_\ell(G_n)| \ge s_n } \ge \epsilon/2
	\]
	We know that $|V_{\ell-1}(G_n)|$ is stochastically bounded, hence there is a constant $S>0$ such that
	\[
		\Pr{|V_{\ell-1}(G_n)| \le S} \ge 1 - \epsilon/4
	\]
	for all $n$. Consequently,
	\[
		\Pr{|V_\ell(G_n)| \ge s_n, |V_{\ell-1}(G_n)| \le S} \ge \epsilon/4
	\]
	for all $n$. Using the third term in Equation~\eqref{eq:compli} as a lower bound, it follows that
	\begin{align*}
		\Pr{|V_\ell(C_{1n})| \ge s_n } &\ge \sum_{\substack{ G^\bullet, |V_\ell(G^\bullet)| \ge s_n \\ |V_{\ell-1}(G_n)| \le S}} \Pr{V_\ell(G_n) \simeq G^\bullet} p^{|V_{\ell-1}(G^\bullet)|} (1 - \nu) \\
		& \ge p^S(1-\nu)(1-\nu) \Pr{|V_\ell(G_n)| \ge s_n, |V_{\ell-1}(G_n)| \le S} \\
		&\ge p^S(1-\nu)(1-\nu) \epsilon/4
	\end{align*}
	for all $n$. But clearly $\Pr{|V_\ell(C_{1n})| \ge s_n }$ tends to zero. Hence it must hold that \[\sum_{G^\bullet}p_\ell(G^\bullet) = 1.\] In other words $V_\ell(G_n)$ converges weakly.
	
	\paragraph*{8. Convergence of $\mC_n^\omega$ implies convergence of $\hat{\mB}_n$ - second part.}
	Recall that $\lambda_{1n}^*(v^*)$ is the union of the graph $\hat{\mB}_n$ and the Boltzmann-distributed object $\hat{\mR}$. Let $\ell \ge 1$ be given, and let $G^\bullet$ denote a rooted (connected) graph with height at most $\ell$. Then it holds that
	\begin{align}
		\label{eq:y4p}
		\Pr{V_\ell(G_n) \simeq G^\bullet} = \sum_{(H_1, H_2) \vdash G^\bullet} \Pr{V_\ell(B_n) \simeq H_1} \Pr{V_\ell(G(\hat{\mR})) \simeq H_2},
	\end{align}
	with the index $(H_1, H_2)$ ranging over all ordered pairs of unlabelled connected graphs that are rooted at a $*$-placeholder vertex, such that the graph obtained by identifying the roots of $H_1$ and $H_2$ is equal to $G^\bullet$. It follows that
	\[
		\Pr{V_\ell(B_n) \simeq G^\bullet} =  p^{-1} \left( \Pr{V_\ell(G_n) \simeq G^\bullet} - \sum_{\substack{(H_1, H_2) \vdash G^\bullet \\ H_2 \ne *}} \Pr{V_\ell(B_n) \simeq H_1} \Pr{V_\ell(G(\hat{\mR})) \simeq H_2}  \right)
	\]
	with $p = \Pr{|\hat{\mR}|=0}>0$. Note that in this sum it always holds that $|H_1| < |G^\bullet|$. Since  $\Pr{V_\ell(G_n) \simeq G^\bullet}$ converges toward $p_\ell(G^\bullet)$, it follows by induction on $|G^\bullet|$ that 
	\[
	\Pr{V_\ell(B_n) \simeq G^\bullet} \to q_\ell(G^\bullet)
	\]
	for some constant $q_\ell(G^\bullet)\ge0$. Equation~\eqref{eq:y4p} implies that
	\begin{align*}
		1 &= \sum_{G^\bullet} \sum_{(H_1, H_2) \vdash G^\bullet} q_\ell(H_1) \Pr{V_\ell(G(\hat{\mR})) \simeq H_2} \\
		&= \sum_H q_\ell(H) \sum_{H'} \Pr{V_\ell(G(\hat{\mR})) \simeq H'} \\
		&= \sum_H q_\ell(H),
	\end{align*}
	with the sum indices $H$ and $H'$ ranging over all rooted unlabelled (connected) graphs. This shows that $V_\ell(\hat{\mB}_n)$ converges weakly toward a random graph $\mQ_\ell$ for all $\ell \ge 1$. Clearly it holds that
	\[
		\mQ_\ell \simeq V_\ell(\mQ_k)
	\]
	for all $k \ge \ell$, since
	\[
		V_\ell(\hat{\mB}_n) = V_\ell(V_k(\hat{\mB}_n))
	\]
	for all $n$ and $V_\ell(\cdot)$ is a continuous map from the Polish space   of locally finite graphs, equipped with the metric from Equation \eqref{eq:thebsdistance}, to itself. This means that the distributions $\mu_\ell = \cL(\mQ_\ell)$ form a projective family $(\mu_\ell)_{\ell \in \ndN}$ with projections $f_{i,j} = V_i(\cdot)$ for all $i \le j$. Note that if we form the projective limit of the copies of $ (\mathbb{B}, d_{\textsc{BS}})$ with respect to the projections $(f_{i,j})_{i \le j}$, then each of its element may be interpreted as a locally finite graph. By Lemma~\ref{le:project}, it follows that there exists a random locally finite rooted graph $\hat{\mB}$ such that for all $\ell \ge 1$ it holds that \[
	\cL(V_\ell(\hat{\mB})) = \mu_\ell.
	\]
	Thus $\hat{\mB}$ is the local weak limit of the sequence $\hat{\mB}_n$ of randomly sized $2$-connected graphs.
\end{proof}

\subsection{Proofs of the applications to random weighted dissections in Section~\ref{sec:diss22}}

\label{sec:proofsdis}

As discussed in Section~\ref{sec:diss}, the species $\cD^\omega$ of  dissections, where each face with degree $k$ receives weight $\gamma_k$, satisfies an isomorphism of the form
\[
	\cD^\omega \simeq \cX + \Seq_{\ge 2}^{\gamma'} \circ \cD^\omega, \qquad \Seq_{\ge 2}^{\gamma'}(z) = \sum_{k=2}^\infty \gamma_{k+1} z^k. 
\]
The species $\cT_\ell^\omega$ of plane trees with leaves as atoms, where each vertex with out-degree $k \ge 2$ receives weight $\gamma_{k+1}$, satisfies by the discussion in Section~\ref{sec:lgwt} also an isomorphism
\begin{align}
	\label{eq:uselater}
	\cT_\ell^\omega \simeq \cX + \Seq_{\ge 2}^{\gamma'} \circ \cT_\ell^\omega.
\end{align}
We may use this to construct a bijection $\tau$ from the set of unlabelled $\cD$-objects to the set of locally finite plane trees that have no vertex with outdegree $1$, such that dissections with $n$ non-root vertices correspond to trees with $n$ leaves and the weight $\omega(D)$ of a dissection $D$ equals the weight $\omega(\tau(D))$ of the corresponding tree. This implies  that the random tree
\begin{align}
\label{eq:tautree}
\tau_n := \tau(\mD_n^\omega)
\end{align} gets drawn from $\cT_\ell^\omega[n]$ with probability proportional to its weight.

\begin{figure}[t]
	\centering
	\begin{minipage}{1.0\textwidth}
		\centering
		\includegraphics[width=0.5\textwidth]{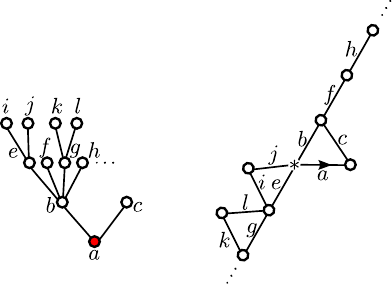}
		\caption{Correspondence of trees to "dissections with faces of infinite degrees". The letters show how the vertices of the tree correspond to the edges of the dissection.}
		\label{fi:dbij}
	\end{minipage}
\end{figure}

Given a dissection $D$ we let $\tau(D)$ denote the rooted plane tree whose vertices correspond to the edges of $D$, and whose root is given by the root-edge of $D$. The offspring of the root is given by the non-root edges incident to the root-face, in a canonical order. It will be convenient to  order the edges in a monotonically increasing way according to their proximity to the root-edge, by starting with the first edge to the left of the root-edge, then the first to the right, then the second to the left of the root-edge, then the second to the right, and so on. Each of these edges may be interpreted as the root-edge of the dissection attached to the root-face as in Figure~\ref{fi:disspol}. Hence we may recurse to complete the construction of $\tau(D)$. Note that the leaves of $\tau(D)$ correspond to non-root edges incident to the outer face of $D$. Hence we may say that the origin of the root edge of $D$ corresponds to the root vertex of $\tau(D)$, and each leaf of $\tau(D)$ corresponds to a unique non-$*$-vertex of $D$.

In Section~\ref{se:loconv} we discussed the compact Polish space $\fmT$ of plane trees that may have vertices with infinite degree. We let the $\fmT_\ell \subset \fmT$ denote the compact subspace of trees that have no vertex with outdegree $1$. We let $\mathfrak{D}$ denote the set of all unlabelled $\cD$-objects where we additionally allow "faces of infinite degree", given by doubly infinite paths. This allows us to extend the bijection $\tau$ to a bijection $\bar{\tau}$ from $\mathfrak{D}$ to $\fmT_\ell$ as illustrated in Figure~\ref{fi:dbij}. The set $\mathfrak{D}$ is a collection of rooted plane graphs that we may equip with the metric
\[
d_{\mathfrak{D}}(D_1, D_2) = 1 / (1 + \sup \{ \ell \ge 0 \mid V_{\ell}(D_1) = V_{\ell}(D_2) \text{ as edge-rooted planar maps} \}).
\]

The following observation is analogous to a lemma by Bj\"ornberg and Stef\'ansson \cite[Lem. 2.1]{MR3317412}, who studied infinite discrete looptrees.

\begin{lemma}[Correspondence between dissections and trees]
	\label{le:cordt}
	The bijection $\bar\tau: \mathfrak{D} \to \fmT_\ell$ is a homeomorphism.
\end{lemma}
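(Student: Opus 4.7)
The plan is to show that $\bar\tau$ is a bijective map for which both $\bar\tau$ and $\bar\tau^{-1}$ preserve the neighbourhood filtrations used to define the two metrics, and then invoke the characterization of homeomorphisms via sequential continuity. Since $\mathfrak{D}$ and $\fmT_\ell$ are metric spaces, it suffices to check that $\bar\tau$ and $\bar\tau^{-1}$ are continuous. Bijectivity is essentially automatic: the recursive construction of $\bar\tau$ on finite dissections is Joyal's theorem of implicit species applied to the isomorphism $\cT_\ell \simeq \cX + \Seq_{\ge 2} \circ \cT_\ell$ (an unweighted version of~\eqref{eq:uselater}), and the extension to objects in $\mathfrak{D}$ with infinite-degree faces is handled by letting doubly infinite paths correspond to vertices of infinite outdegree in the tree, which is well-defined by the chosen canonical ordering (first-to-left, first-to-right, second-to-left, $\ldots$) of offspring around each face.

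The core of the proof is the following key quantitative comparison between the two notions of "neighbourhood of the root". For a vertex of $\bar\tau(D)$ sitting at position $(i_1,\ldots,i_h)\in\VHT$ (so that $h$ is its height and each $i_j\ge 1$ is its rank in its parent's ordered offspring set), the corresponding edge of $D$ lies within graph distance at most $\sum_{j=1}^h \lceil i_j/2\rceil$ of the root-edge, because each step from a parent-edge to its $i$-th child crosses a face and stays within graph distance $\lceil i/2\rceil$ around that face by our ordering rule. Conversely, any edge of $D$ whose both endpoints lie within graph distance $r$ of the root-edge is reached from the root-edge by a sequence of at most $r$ face-crossings, each of which corresponds to choosing offspring of index at most $2r+1$. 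This gives the two matching bounds: for every $m\ge 0$ one may choose $\ell=\ell(m)=m\lceil m/2\rceil$ such that the outdegrees of $\bar\tau(D)$ on the finite subtree $V^{[m]}$ of $\VHT$ are determined by $V_\ell(D)$, and conversely for every $r\ge 0$ one may choose $m=m(r)=2r+1$ such that $V_r(D)$ as an edge-rooted planar map is determined by $\bar\tau(D)$ restricted to $V^{[m]}$.

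From these two determinations the bicontinuity is immediate. If $D_n\to D$ in $\mathfrak{D}$, then for any fixed $m$ and all large enough $n$ one has $V_{\ell(m)}(D_n)=V_{\ell(m)}(D)$ as edge-rooted maps, hence the outdegree families of $\bar\tau(D_n)$ and $\bar\tau(D)$ agree on $V^{[m]}$; since $(V^{[m]})_{m\ge 1}$ exhausts $\VHT$, this yields $\bar\tau(D_n)\to\bar\tau(D)$ in the product topology defining $\fmT$, and the limit lies in $\fmT_\ell$ by construction. Conversely, if $\bar\tau(D_n)\to\bar\tau(D)$ in $\fmT_\ell$, then for any $r$ the outdegrees agree on $V^{[m(r)]}$ for large $n$, so $V_r(D_n)=V_r(D)$ as edge-rooted planar maps and thus $D_n\to D$ in $\mathfrak{D}$.

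The main technical obstacle is verifying the two "bounded-distance" claims in a way that treats finite and infinite faces uniformly: one must check that the canonical alternating ordering of the offspring around a face of possibly infinite degree still places the edges that are close in the graph metric at small indices, and that the recursive construction extends continuously to the limiting doubly-infinite paths. This is where keeping track of how the two sides of the root-edge are enumerated is important, and it is essentially the only place where a careful case analysis is required; everything else is a bookkeeping argument on depths versus graph distances.
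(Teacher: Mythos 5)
Your overall plan (bicontinuity via explicit ``determination radii'') fails in the direction that actually needs proving, because the uniform bound you assert does not exist. You claim that for every $r$ one may take $m(r)=2r+1$ so that $V_r(D)$ is determined by the outdegree data of $\bar\tau(D)$ restricted to the finite left-ball $V^{[m(r)]}\subset\VHT$, and more specifically that ``any edge of $D$ whose both endpoints lie within graph distance $r$ of the root-edge is reached from the root-edge by a sequence of at most $r$ face-crossings.'' This is false. Take $D_n$ to be the fan triangulation of an $n$-gon in which every diagonal emanates from the root vertex $v_0$. Then every vertex of $D_n$ is adjacent to $v_0$, so already $V_1(D_n)=D_n$ depends on $n$; on the other hand, $\bar\tau(D_n)$ is a caterpillar of depth $n-2$ whose internal spine stays at bounded offspring indices, so its restriction to $V^{[3]}$ (or any fixed $V^{[m]}$) is eventually constant in $n$. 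Concretely, the edge $(v_0,v_k)$ has both endpoints at graph distance at most $1$ from the root, but sits at tree depth $k-2$: graph distance in $D$ gives no control on tree depth in $\bar\tau(D)$, so no uniform $m(r)$ exists. Consequently the implication ``outdegrees agree on $V^{[m(r)]}$ for large $n$ $\Rightarrow V_r(D_n)=V_r(D)$'' in your converse direction is invalid, and continuity of $\bar\tau^{-1}$ is not established.

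The paper takes a different and shorter route that sidesteps this entirely. Since $\fmT_\ell$ is compact and $\mathfrak{D}$ is Hausdorff, it suffices to prove that $\bar\tau^{-1}$ alone is continuous; the inverse direction is then automatic for a continuous bijection from a compact space. To prove continuity of $\bar\tau^{-1}$ at a fixed point $T_0$ with $D_0=\bar\tau^{-1}(T_0)$, the paper writes, for each fixed $\ell$, the event $\{V_\ell(D)=V_\ell(D_0)\}$ as an open condition on a \emph{$D_0$-dependent} finite set $V\subset\VHT$ of coordinates, of the form $d^+_{\bar\tau(D)}(v)\in\cE(v)$ with each $\cE(v)$ either a singleton or a tail set $\{k,k+1,\ldots,\infty\}$ in $\overline{\ndN}_0$. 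The finite set $V$ is chosen from $T_0$ (essentially, the tree-positions of the finitely many edges in $V_\ell(D_0)$, plus their children), so it varies with $D_0$ and can be arbitrarily large or deep; there is no claim of a bound depending on $\ell$ alone. Your argument needs to be reorganised along these lines: drop the forward direction (it is free by compactness) and, for the backward direction, replace the uniform $V^{[m(r)]}$ by a finite set adapted to the specific limit $D_0$ under consideration.
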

\begin{proof}
	The space $\fmT_\ell$ is compact, hence it suffices to show that the inverse $\bar{\tau}^{-1}$ is continuous. To this end, let $(T_i)_{i \ge 1}$ be a sequence in $\fmT_\ell$ that converges toward a limit tree $T_0$, and set $D_i=\bar{\tau}^{-1}(T_i)$ for all $i \ge 0$. We need to show that the $D_i$ converges toward $D_0$ in the space $\mathfrak{D}$ as $i$ becomes large. 
	
	Let $\ell \ge 1$ be an arbitrarily large integer. The neighbourhood $V_\ell(D_0)$ contains only finitely many edges, since $D_0$ is locally finite. Consequently, there is a finite subset $V \subset \VHT$ of the vertex set of the Ulam--Harris tree described in Section~\ref{se:loconv}, and a family of subsets $(\cE(v))_{v \in V}$ with
	\[
		\cE(v) \in \{ \{0\}, \{1\}, \ldots\} \cup \{ \{0, 1, \ldots, \infty\}, \{1, 2, \ldots, \infty\}, \ldots\}
	\]
	for all $v$, such that any dissection $D \in \mathfrak{D}$ satisfies $V_\ell(D) = V_\ell(D_0)$ if and only if the plane tree $T=\bar{\tau}(D)$ satisfies $d^+_T(v) \in \cE(v)$ for all $v \in V$. Since $\cE(v)$ is an open subset of the space $\ndN_0 \cup \{\infty\}$ equipped with the one-point compactification topology, it follows that
	\[
		U = \{T \in \fmT_\ell \mid d^+_T(v) \in \cE(v) \text{ for all } v \in V\}
	\]
	is an open neighbourhood of $T_0$ in $\fmT_\ell$. Since $\lim_{i \to \infty} T_i = T_0$, this implies that for sufficiently large $i$ it holds that $T_i \in U$, and hence $V_{\ell}(D_i) = V_{\ell}(D_0)$ and
	\[
		d_{\mathfrak{D}}(D_i, D_0) \le 1/(1 + \ell).
	\]
	As $\ell \ge 1$ was arbitrary, it follows that  $D_0 = \lim_{i \to \infty} D_i$ in the space $\mathfrak{D}$.
\end{proof}

Recall that any tree $T \in \cT_\ell^\omega[n]$ receives the weight 
\[
	\omega(T) = \prod_{v \in T} p_{d^+_T(v)},
\]
with $p_0=1$ and $p_k = \gamma_{k+1}$ for $k \ge 1$. In Section~\ref{sec:schroeder} we associated a probability distribution $p^{(t_0)}$ to $(p_k)_k$ and showed in Lemma~\ref{le:convll} that the modified Galton--Watson tree from Section~\ref{se:modgwt}, that corresponds to the offspring distribution $p^{(t_0)}$, is the weak limit of $\tau_n$ in the space $\fmT$. Let $\hat{\tau}$ denote this limit tree.

We are now in the position to provide the proofs of the results stated in Section~\ref{sec:diss22}. Lemma~\ref{le:convll} and Lemma~\ref{le:cordt} prove the Benjamini--Schramm convergence of the random dissection $\mD_n^\omega$ for arbitrary weights.  Verifying  the convergence of $o(\sqrt{n})$-neighbourhoods in the I$a$ regime requires some additional arguments, for which we build on the work by Curien, Haas and Kortchemski \cite{MR3382675}.

\begin{proof}[Proof of Theorem \ref{te:bsconvdist1} and Remark \ref{re:disdist}]
	Suppose that the weight-sequence $\mathbf{w}$ has type I.  Note that since $\tau=t_0$ by Lemma~\ref{le:types}, the distribution of the number $\mF$ given in Equation~\ref{eq:distsbf} corresponds to the number of non-root edges in a dissection corresponding to a tree with a root and size-biased $p^{(t_0)}$ offspring. Moreover, a $\mathbb{P}_{\cD^\omega, \tau/\phi(\tau)}$-distributed dissection corresponds to a Galton--Watson tree with offspring distribution $p^{(t_0)}$. Hence the description of the limit in Remark~\ref{re:disdist} is identical to the distribution of the dissection corresponding to the modified Galton--Watson tree $\hat{\tau}$. This verifies that the graph described in Remark~\ref{re:disdist} is the Benjamini--Schramm limit of $\mD_n^\omega$.
	
	The infinite $\cR^\kappa = \Seq \circ \Seq_{\ge 1}^\gamma$ enriched tree $(\hat{\cT}, \hat{\beta})$ corresponds to a locally finite rooted plane graph $\hat{\mD}$. We now verify  $\hat{\mD}$  is also the Benjamini--Schramm limit of $\mD_n^\omega$, which implies $\hat{\mD} \eqdist \tau^{-1}(\hat{\tau})$ and hence verifies Remark~\ref{re:disdist}. For any $\ell \ge 1$ and any rooted graph $G$ there is an integer $h \ge 1$ and a set $\cE$ of trimmed $\cR$-enriched trees such that any $\cR$-enriched tree $(T,\alpha)$ corresponding to a rooted dissection $D$, the $\ell$-graph-distance neighbourhood $V_\ell(D)$ is isomorphic as plane graph to $G$ if and only if the trimming $(T, \alpha)^{[h]}$ at height $h$ belongs to the set $\cE$. Consequently, Theorem~\ref{te:local} implies that 
	\[
	\Pr{V_{\ell}(\mD_n^\omega)  \simeq G} = \Pr{ (\cT_n, \beta_n)^{[h]} \in \cE} \to \Pr{ (\hat{\cT}, \hat{\beta})^{[h]} \in \cE} = \Pr{V_{\ell}(\hat{\mD}) \simeq G},
	\]
	as $n \equiv 1 \mod \spa(\mathbf{w})$ becomes large. This establishes $\hat{\mD}$ as the Benjamini--Schramm limit of $\mD_n^\omega$, and consequently it must hold that $\hat{\mD} \eqdist \tau^{-1}(\hat{\tau})$.

	In order to conclude the proof of Theorem~\ref{te:bsconvdist1}, we need to show the convergence of $o(\sqrt{n})$-neighbourhoods in the I$a$ setting. Let $t_n = o(\sqrt{n})$ be sequence of non-negative integers. We need to show that
	\begin{align}
	\label{eq:rainbow}
	\lim_{n \to \infty}	d_{\textsc{TV}}(V_{t_n}(\mD_n^\omega) , V_{t_n}(\hat{\mD})) = 0. 
	\end{align}
	Without loss of generality we may assume that $t_n$ tends to infinity as $n$ becomes large. If the weight-sequence $\mathbf{w}$ has type I$a$, then Theorem~\ref{te:strengthend} says that for any sequence $h_n = o(\sqrt{n})$ it holds that
	\[
	\lim_{n \to \infty}	d_{\textsc{TV}}((\cT_n, \beta_n)^{[h_n]}, (\hat{\cT}, \hat{\beta})^{[h_n]}) = 0. 
	\]
	Hence in order to verify \eqref{eq:rainbow}, it suffices to show that there is a sequence $h_n = o(\sqrt{n})$ such that with high probability all vertices in $V_{t_n}(\mD_n^\omega)$ have height most $h_n$ in $(\cT_n, \beta_n)$.
	
	It was shown by Curien, Haas and Kortchemski in \cite[Lem. 9, Eq. (8)]{MR3382675} that there is a constant $C>0$ such that for all $\epsilon>0$ it holds with high probability that all vertices $v \in \mD_n^\omega$ satisfy
	\[
		\he_{\tau_n}(v) \le C \he_{\mD_n^\omega}(v) + C \epsilon \max(\Di(\tau_n), \sqrt{n}).
	\]
	The rescaled tree $(\tau_n, n^{-1/2} d_{\tau_n})$ converges toward the CRT in the Gromov--Hausdorff topology by results of Kortchemski~\cite{MR2946438}, consequently the rescaled diameter $\Di(\tau_n) / \sqrt{n})$ is tight. It follows that there is a sequence $h_n = o(\sqrt{n})$ such that with high probability all vertices $v$ with $\he_{\mD_n^\omega(v)} \le t_n$ satisfy $h_{\tau_n}(v) \le h_n$. It is easy to check that $h_{\cT_n}(u) \le h_{\tau_n}(u)$ for all vertices $u$ of $\mD_n^\omega$, so all vertices in $V_{t_n}(\mD_n^\omega)$ have with high probability height at most $h_n$ in $(\cT_n, \beta_n)$. This verifies \eqref{eq:rainbow}.
\end{proof}

If the weight-sequence $\mathbf{w}$ has type II, then analogously to the type I case the limit $\hat{\tau}$ corresponds to the dissection $\hat{\mD}$ of Theorem~\ref{te:bsconvdis}. Hence Lemma~\ref{le:convll} and Lemma~\ref{le:cordt} prove Theorem~\ref{te:bsconvdis}. If $\mathbf{w}$ has type III, then $\hat{\tau}$ is the infinite star that corresponds to a dissection with a single face of infinite degree, in other words, a doubly infinite path. This verifies Theorem~\ref{te:gibbst3}.

\subsection{Proofs of the applications to random $k$-trees in Section~\ref{sec:ktrees}}

\begin{proof}[Proof of Theorem~\ref{te:anamesname}]
	It was established in \cite[Lem. 6]{2016arXiv160505191D} that there are constants $\mathfrak{m}_k, C,c>0$ such that with probability at least $1 - C n^{- \log^c n}$ any two vertices $x,y \in \cT_n$  satisfy
	\begin{align}
	\label{eq:ldev}
	|d_{\mK_{1,n}^\circ}(x,y) - \mathfrak{m}_k d_{\cT_n}(x,y)| \le \max( d_{\cT_n}(x,y)^{3/4}, \log^{3}(n)).
	\end{align}
	Note that the vertices of $\cT_n$ correspond to the vertices of $\mK_{1,n}^\circ$ outside of the $k$-element root-front. The behaviour around a uniform random vertex of $\mK_{1,n}^\circ$ is asymptotically identical to uniform random vertex $u_n$ of $\cT_n$, because the probability to hit the root-front tends to zero as $n$ becomes large.
	
	Without loss of generality we may assume that $t_n \ge n^{1/4}$. It follows that with probability tending to one as $n$ becomes large the $t_n$ graph-distance neighbourhood $V_{t_n}(\mK_{1,n}^\circ, u_n)$ is a subset of the union of the root-front of $\mK_{1,n}$ and the  $s_n = t_n + t_n^{3/4} = o(\sqrt{n})$ tree-distance neighbourhood of $u_n$ in $\cT_n$. But $s_n = o(\sqrt{n})$ implies that $\he_{\cT_n}(u_n) > s_n$ with probability tending to one, so $V_{t_n}(\mK_{1,n}^\circ, u_n)$ does not contain the root of $\cT_n$ and hence no vertex of the root-front of $\mK_{1,n}^\circ$ at all.
	
	In particular, if $(\mH_i^n)_{i \ge 0}$ denotes the growing enriched fringe subtree representation of $(\cT_n, \beta_n)$ at $u_n$ (that is, $\mH_i^n$ is the  enriched fringe subtree at the $i$th ancestor of $u_n$), then with high probability the vector $(\mH_{1}^n, \ldots, \mH_{s_n}^n)$ contains all information about $V_{t_n}(\mK_{1,n}^\circ, u_n)$. Theorem~\ref{te:thmben} ensures that
	\[
	\lim_{n \to \infty} d_{\textsc{TV}}( (\mH_1^n, \ldots, \mH_{s_n}^n), (\hat{\mH}_1, \ldots, \hat{\mH}_{s_n}^n)) = 0,
	\]
	with $(\hat{\mH}_i)_{i \ge 0}$ denoting the growing enriched fringe tree representation of $(\cT^*, \beta^*)$ along its backwards growing spine. This completes the proof.
\end{proof}

\begin{proof}[Proof of Remark~\ref{re:limktreedis}]
	Let $(\hat{\mH}_i)_{i \ge 0}$ denote the growing enriched fringe tree representation of $(\cT^*, \beta^*)$ along its backwards growing spine $u_0, u_1, \ldots$ (That is, $\hat{\mH}_i$ is the enriched fringe subtree at the vertex $u_i$). Then $\hat{\mH}_0$ is distributed like the $\cR = \Set^k$ enriched tree $(\cT, \beta)$ and the corresponding $k$-tree follows a Boltzmann distribution $\mathbb{P}_{{\cK_1^\circ}, \frac{1}{ek} }$.  The decomposition $\cK_{1}^\circ \simeq \cX \cdot \Set^k(\cK_{1}^\circ)$ together with the rules of Lemma~\ref{le:bole} then show that $\hat{\mH}_0$ corresponds to a  root-front consisting of placeholder $*$-vertices, such that $u_0$ is connected to each vertex of the root-front, and each of the $k$ fronts incident to $u_0$ is identified with the root-front of an independent copy of $\mK^\circ$.
	
	For any $i \ge1$, the $\cR$-object of the offspring of $u_i$ in $\hat{\mH}_i$ follows a $\mathbb{P}_{\cR^\bullet, \cK_1^\circ(\frac{1}{ek})}$ distribution. It holds that
	\begin{align*}
	\cR^\bullet &\simeq \Set^\bullet \cdot \Set^{k-1} + \Set \cdot \Set^\bullet \cdot \Set^{k-2}  + \ldots + \Set^{k-1} \Set^\bullet \simeq \sum_{i =1}^k \cX \cdot \Set^k,
	\end{align*}
	since $\Set^\bullet \simeq \cX \cdot \Set' \simeq \cX \cdot \Set$. This may be interpreted by stating that an $\cR^\bullet$-object consists of an $\cR$-object, with one additional hedron attached to any of the $k$ possible locations.
	
	Hence Lemma~\ref{le:bole} yields that $\hat{\mH}_i$ is distributed like taking a hedra consisting of $u_i$ and a root-front of $k$ distinct $*$-placeholder vertices, identifying each of the $k$ fronts incident to $u_i$ with the root-front of a fresh independent copy of $\mK^\circ$, and then selecting one of the $k$ fronts uniformly at random and identifying it with the root-front of $\hat{\mH}_{i-1}$ in any canonical way. Permuting the vertices of the root-front of $\hat{\mH}_{i-1}$ by any fixed permutation does not change its {\em distribution}, so it does not matter which matching of the front-vertices we choose.
	
	Letting $i$ formally tend to infinity yields the limit $\hat{\mK}$. Now, instead of attaching the independent copies of $\mK^\circ$ in each step, we may just as well procrastinate and do that after having glued together the infinitely many hedra containing the root-fronts. This yields the description with the random walker in Remark~\ref{re:limktreedis}.
\end{proof}

\begin{proof}[Proof of Remark~\ref{re:ktreesrem}]
	In the description of the limit $\hat{\mK}$ in Remark~\ref{re:limktreedis}, the vertex $u_0$ is at first incident to precisely $k$ fronts, and then each front gets identified the root-front of an independent copy of $\mK^\circ$. So 
	\[
	d_{\hat{\mK}}(u_0) \eqdist k + \sum_{i=1}^k d_i
	\]
	with $d_i$ being independent copies of the number $d$ of non-root-front vertices incident to a fixed root-front vertex in $\mK^\circ$. As $\mK^\circ$ follows a $\mathbb{P}_{\Set \circ \cK_1^\circ,\frac{1}{ek}}$-distribution with $\cK_1^\circ(\frac{1}{ek}) = 1/k$, it follows from Lemma~\ref{le:bole} and the decomposition $\cK_1^\circ \simeq \cX \cdot \Set^k(\cK_1^\circ)$, that $d$ is distributed like the sum of $Z$ and $(k-1)Z$ many independent copies of $d$. This recursion yields the description of $d$ in terms of mono-type vertices in a $2$-type Galton--Watson tree.
	
	In particular, we may stochastically bound the number $d_{\hat{\mK}}(u_0)$ by the size of a $\text{Poisson}((k-1)/k)$-Galton--Watson tree, by considering the tree obtained from the $2$-type tree by identifying for each vertex the $Z$ type $A$ offspring vertices with any $Z$ type $B$ offspring. This verifies that $d_{\hat{\mK}}(u_0)$ has finite exponential moments.
\end{proof}

\subsection{Proofs of the applications to random weighted planar maps in Section~\ref{sec:appmaps}}

\begin{proof}[Proof of Theorem~\ref{te:plant1}]
	As for Claim~{\em (1)}, let $k$ be a positive integer and $M$ an edge-rooted planar map. We have to be careful as the $k$-neighbourhood $V_k(\mM_n^\omega)$ may contain vertices of arbitrarily large height in the corresponding tree $(\cT_{2n+1}, \beta_{2n+1})$. However, as $M$ has only finitely many edges, there is an integer $K$ and a (possibly infinite) set $\cE_K$ of $\cQ$-enriched planed trees trimmed at height $K$ such that any $\cQ$-enriched plane tree $(T,\alpha)$ that corresponds to a map $N$ satisfies $V_{k}(N) \simeq M$ if and only if $(T, \alpha)^{[K]} \in \cE_K$. Theorem~\ref{te:local} implies that $\Pr{ (\cT_{2n+1}, \beta_{2n+1})^{[K]} \in \cE_K}$ converges toward $\Pr{ (\hat{\cT}, \hat{\beta})^{[K]} \in \cE_K}$, and consequently 
	\[
	\lim_{n \to \infty} \Pr{ V_k(\mM_n^\omega) \simeq M} = \Pr{ V_k(\hat{\mM}) \simeq M}.
	\]
	For Claim~{\em (2)}, we are faced with the same problem on how to relate  graph-metric neighbourhoods to trimming heights. In Theorem~\ref{te:main1} we establish a scaling limit for a general model of semi-metric spaces based on random enriched trees, which as described in Theorem~\ref{te:convplan} yields a scaling limit for type I$a$ block-weighted planar maps with respect to the first-passage percolation metric. The idea there is to relate the metric on $\mM_n$ metric to a semi-metric on the set of corners of $\mM_n$, and then in turn this metric to the tree-distance in the coupled enriched plane tree. As $k_n=o(\sqrt{n})$, it follows by Equations~\eqref{eq:sca1} and \eqref{eq:sca2} in the proof of Theorem~\ref{te:main1} that there is another, larger sequence $s_n = o(\sqrt{n})$ such that with probability tending to one as $n$ becomes all corners $c$ incident to a vertex $v$ with height $\he_{\mM_n^\omega}(v) \le k_n$ satisfy $\he_{\cT_{2n+1}}(c) \le s_n$. Whenever this event takes place, the trimmed tree $(\cT_{2n+1}, \beta_{2n+1})^{[s_n]}$ already contains all information on the graph metric neighbourhood $V_{k_n}(\mM_n^\omega)$. Theorem~\ref{te:strengthend} ensures that
	\[
	\lim_{n \to \infty}d_{\textsc{TV}}((\cT_{2n+1}, \beta_{2n+1})^{[s_n]}, (\hat{\cT}, \hat{\beta})^{[s_n]}) = 0,
	\]
	since $s_n = o(\sqrt{n})$. Consequently,
	\[
	\lim_{n \to \infty} d_{\textsc{TV}}( V_{k_n}(\mM_n^\omega), V_{k_n}(\hat{\mM})) =0.
	\]
\end{proof}

\begin{proof}[Proof of Theorem~\ref{te:thmplan2}]
	The proof is analogous to the proof of Theorem~\ref{te:convout} for the local weak limit of random outerplanar maps. In fact, it is much simpler, as $\cR=\cQ$ is not a compound structure and hence we do not have to implement the limits of convergent type Gibbs partitions.
	
		Similar as in the proof of Theorem~\ref{te:convout} (and as justified in detail there), we may modify the matching of the $2$-connected maps $(\beta_n(v))_{v \in \cT_n}$ to the offspring sets in $\cT_n$ in a canonical way to create an enriched tree $(\cT_n, \lambda_n)$ such that for each $v \in \cT_n$ with offspring $v_1,\ldots, v_k$ the sequence of heights (that is, distance from the origin of the root-edge) $h_{\lambda_n(v)}(v_1), \ldots, h_{\lambda_n(v)}(v_k)$  in the map $\lambda_n(v)$ is non-decreasing. (By abuse of notation, we identify here the height of a corner with the height of the unique incident vertex.) So $(\cT_n, \beta_n)$ and $(\cT_n, \lambda_n)$ correspond to different maps, but they follow the same {\em distribution}, and we may assume that $\mM_n^\omega$ corresponds directly to $(\cT_n, \lambda_n)$. Likewise we may construct an enriched tree $(\hat{\cT}, \hat{\lambda})$ out of $(\hat{\cT}, \hat{\beta})$ in the same way without changing the distribution of the corresponding map, and let $\hat{\mM}$ denote the map that corresponds directly to $(\hat{\cT}, \hat{\lambda})$ where the offspring of the unique vertex with infinite degree is identified with the corners of the limit $\hat{\mQ}$ of non-separable maps in the same height-preserving way.
	
		As the ordering on the offspring sets respects the heights, it follows that for each integer $\ell\ge1$ and each finite planar map $M$ (considered as rooted at the origin of its root-edge) there is a constant integer $m = m(\ell, M) \ge 1$ that does not depend on $n$, such that we may decide whether $V_\ell(\mM_n^\omega) = M$ (as unlabelled edge-rooted planar maps) by only looking at the family $(\lambda_n(v))_{v \in V^{[m]}}$, with
	\[
	V^{[m]} = \{ (i_1, \ldots, i_t) \mid t \le m, i_1, \ldots, i_\ell \le m\} \subset \VHT
	\]
	a left-ball subset of the Ulam--Harris tree. Likewise, the event $V_\ell(\hat{\mM})=M$ is entirely determined by the family $(\hat{\lambda}(v))_{v \in V^{[m]}}$.
	
		Janson~\cite[Sec. 20]{MR2908619} constructs a deterministic sequence $\Omega_n \to \infty$ and a modified Galton--Watson tree $\cT_{1n}$ that is obtained from $\hat{\cT}$ by sampling a random degree $\tilde{D}_n \ge \Omega_n$ independently from $\hat{\cT}$, and pruning $\hat{\cT}$ at its unique vertex $v^*$ with infinite degree, keeping only the first $\tilde{D}_n$ children of $v^*$.
		The tree $\cT_{1n}$ is almost surely finite, and we may turn it into an enriched plane tree $(\cT_{1n}, \lambda_{1n})$, by sampling for each vertex $v\in \cT_{1n}$ an element from $\cQ[d^+_{\cT_{1n}}(v)]$ with probability proportional to its $\kappa$-weight, and matching its vertices with the offspring of $v$ in a canonical way that respects their height, in the same way as in the construction of $(\cT_n, \lambda_n)$ out of $(\cT_n, \beta_n)$. Janson's result \cite[Thm. 20.2]{MR2908619} ensures that	
		\begin{align*}
		\lim_{n \to \infty} d_{\textsc{TV}}( (d^+_{\cT_n}(v))_{v \in V^{[m]}}, (d^+_{\cT_{1n}}(v))_{v \in V^{[m]}}) = 0.
		\end{align*}
		For each family $(k_v)_{v \in V^{[m]}}$ of non-negative integers  with  $\Pr{d^+_{\cT_n}(v) = k_v \text{ for all }v \in V^{[m]}} >0$ it holds that
		\begin{multline*}
			( (\lambda_n(v))_{v \in V^{[m]}} \mid d^+_{\cT_n}(v) = k_v \text{ for all $v \in V^{[m]}$}) \\ \eqdist ( (\lambda_{1n}(v))_{v \in V^{[m]}} \mid d^+_{\cT_{1n}}(v) = k_v \text{ for all $v \in V^{[m]}$}).
		\end{multline*}
		This is easily verified, as for each $v \in V^{[m]}$ the conditional distribution of $\lambda(v)$ given $d_{\cT_n}^+(v) = k_v$ samples a random $k_v$-sized $\cR$-object from $\cR[k_v]$ with probability proportional to its weight, and likewise for $\lambda_{1n}(v)$. It follows that
		\begin{align*}
		\lim_{n \to \infty} d_{\textsc{TV}}( (\lambda_n(v))_{v \in V^{[m]}}, (\lambda_{1n}(v))_{v \in V^{[m]}}) = 0.
		\end{align*}
		By the construction of $m = m(\ell, M)$, this implies
				\begin{align*}
				\lim_{n \to \infty} |\Pr{V_\ell(\mM_n^\omega) = M} - \Pr{V_\ell(\mM_{1n}^\omega) = M} | = 0.
				\end{align*}
		with $\mM_{1n}^\omega$ denoting the planar map corresponding to $(\cT_{1n}, \beta_{1n})$.
		
		The only part of $(\cT_{1n}, \beta_{1n})$ that actually depends on $n$ is the offspring of the vertex~$v_*$. 
		
		Suppose that the random non-separable map $\mQ_{\tilde{D}_n}^\kappa$ has a distributional limit $\hat{\mQ}$. Hence $\lambda_{1n}(v_*)$  converges toward $\hat{\mQ}$ in the local weak sense. As $\hat{\mM}$ corresponds to $(\hat{\cT}, \hat{\lambda})$, and $(\hat{\cT}, \hat{\lambda})$ is distributed like $(\cT_{1n}, \beta_{1n})$ if we would replace the offspring of $v_*$ by the limit $\hat{\mQ}$, it follows that
		\[
			 \lim_{n \to \infty} \Pr{V_\ell(\mM_n^\omega) = M} = \Pr{V_\ell(\hat{\mM}) = M}.
		\]
		That is, the map $\hat{\mM}$ is the local weak limit $\mM_n^\omega$.
		
		Conversely, if $\mM_n^\omega$ converges in the local weak sense, it follows by analogous arguments as in step 7 and 8 in the proof of Theorem~\ref{te:thmplan2} that $\mQ_{\tilde{D}_n}^\kappa$ does as well.
	
	It remains to explain the distribution of the limit map. The tree $\hat{\cT}$ is composed of normal vertices, that receive offspring according to an independent copy of $\xi$, and special vertices, that receive offspring according to an independent copy of $\hat{\xi}$. The root is special, if a special vertex has finitely many offspring one selected uniformly at random and declared special. Thus the length of the spine of $\hat{\cT}$ follows a geometric distribution with parameter $\nu$, and the tip of the spine is a vertex having an infinite number of normal offspring. The distribution of the limit map is then easily deduced from the fact that drawing a $\cQ$-structure from $\cQ[\xi]$ with probability proportional to its weight follows a  $\mathbb{P}_{\cQ^\bullet, \tau}$-Boltzmann distribution, and sampling a $(\hat{\xi} \mid \hat{\xi} < \infty)$-sized $\cQ$-structure with probability proportional to its weight follows a $\mathbb{P}_{(\cQ^\bullet)^\kappa, \tau}$-distribution.
\end{proof}

\subsection{Proofs of the scaling limits and diameter tail bounds in Section~\ref{sec:partA}}
\label{sec:propartA}

\begin{proof}[Proof of Theorem~\ref{te:main1}]
Consider the coupling of the random enriched tree $\mA_n^\cR$ with the enriched plane tree $(\cT_n, \beta_n)$ given in Lemma~\ref{le:coupling}. By assumption, the weight sequence $\mathbf{w}$ has type I$a$. Hence by the discussion in Section~\ref{sec:pretree} the simply generated tree  $\cT_n$ is distributed like a  critical Galton--Watson tree $\cT$ conditioned on having size $n$, such that the offspring distribution $(\pi_k)_k$ has finite exponential moments. In particular, as $n \equiv 1 \mod \spa(\mathbf{w})$ tends to infinity, we have that $\sigma \cT_n / (2 \sqrt{n})$ converges towards the CRT $\CRT$ with $\sigma^2$ denoting the variance of the offspring distribution. 

For any finite set $U$ and $\cR$-structure $R \in \cR[U]$ let $\eta_R$ denote the $\delta_R$-distance from the $*_U$ point to a uniformly at random chosen label from $U$. Moreover, let $\eta$ denote the random number given by choosing an integer $k$ according to the distribution $(k\pi_k)_k$, choosing a random $\cR$-structure $\mR$ from $\cR[k]$ with probability proportional to its $\kappa$-weight and setting $\eta = \eta_\mR$. By assumption, for any $\cR$-structure $R$ the random variable $\eta_R$ is bounded by the sum of $|R|$ copies of a random variable $\chi \ge 0$ having finite exponential moments. Since the distribution $(k \pi_k)_k$ also has finite exponential moments, it follows that $\eta$ has finite exponential moments.

We are going to show that the Gromov--Hausdorff distance between $\Ex{\eta} \cT_n/ \sqrt{n}$ and $\mX_n/\sqrt{n}$ converges in probability to zero. This immediately implies that \[\sigma \mX_n / (2 \Ex{\eta} \sqrt{n}) \convdis \CRT\] and we are done.

Let $s>1$ and $t>0$ be arbitrary constants and set $s_n = \log(n)^s$ and $t_n = n^t$. Let $\epsilon >0$ be given and let $\cE_1$ denote the event that there exists a vertex $v \in \cT_n$ and an ancestor $u$ of $v$ with the property that \[d_{\cT_n}(u,v) \ge s_n \quad \text{and} \quad d_{\mX_n}(u,v) \notin (1 \pm \epsilon) \Ex{\eta} d_{\cT_n}(u,v).\] Likewise, let $\cE_2$ denote the event that there exists a vertex $v$ and an ancestor $u$ of $v$ with \[d_{\cT_n}(u,v) \le s_n \quad \text{and} \quad d_{\mX_n}(u, v) \ge t_n.\] We are going to show that with high probability none of the events $\cE_1$ and $\cE_2$ takes place.

This suffices to show the claim: Take $s=2$ and $t=1/4$ and suppose that the complementary events $\cE_1^{c}$ and $\cE_2^{c}$ hold. Given vertices $a \ne b$ let $x$ denote their lowest common ancestor in the tree $\cT_n$. If $x \in \{a,b\}$ then we have \[d_{\mX_n}(a,b) = d_{\mX_n}(a,x) + d_{\mX_n}(b,x).\] If $x \ne a,b$, then let $a'$ denote the offspring of $x$ that lies on the $\cT_n$-path joining $a$ and $x$ and likewise $b'$ the offspring of $x$ lying on the path joining $x$ and $b$. Hence we have that \[d_{\mX_n}(a,b) = d_{\mX_n}(a,x) + d_{\mX_n}(b,x) + R \quad \text{with} \quad R = d_{\mX_n}(a,a') - d_{\mX_n}(a',x) - d_{\mX_n}(b',x).\] By property $\cE_2^c$ and the triangle inequality it follows that $|R| = -R \le 2 n^{1/4}$. Thus, regardless whether $x \in \{a,b\}$, it holds that
\[
d_{\mX_n}(a,b) = d_{\mX_n}(a,x) + d_{\mX_n}(b,x) + O(n^{1/4}).
\]
Moreover, if $d_{\cT_n}(a,x) \ge \log(n)^2$, then it follows by property $\cE_1^c$ that \begin{align} \label{eq:sca1}
d_{\mX_n}(a,x) \in (1 \pm \epsilon)\Ex{\eta} d_{\cT_n}(a,x).
\end{align}
 Otherwise, if $d_{\cT_n}(a,x) < \log(n)^2$ then it follows by property $\cE_2^c$ that $d_{\mX_n}(a,x) \le n^{1/4}$ and thus \begin{align}
 \label{eq:sca2}
 |d_{\mX_n}(a,x) - \Ex{\eta} d_{\cT_n}(a,x)| \le Cn^{1/4}\end{align}
  for a fixed constant $C$ that does not depend on $n$ or the points $a$ and $x$. It follows that 
\[
	|d_{\mX_n}(a,b)/\sqrt{n} - \Ex{\eta}d_{\cT_n}(a,b)/\sqrt{n}| \le \epsilon \Di(\cT_n)/\sqrt{n} + o(1),
\]
with $\Di(\cT_n)$ denoting the diameter.
Thus  \[d_{\text{GH}}(\mX_n, \Ex{\eta}\cT_n) \le \epsilon \Di(\cT_n)/\sqrt{n} + o(1)\] holds with high probability. Since we may choose $\epsilon$ arbitrarily small, and $\Di(\cT_n)/\sqrt{n}$ converges in distribution (to a multiple of the diameter of the CRT),  it follows that $d_{\text{GH}}(\mX_n, \Ex{\eta}\cT_n) \to 0$ in probability and we are done.

It remains to show that the events $\cE_i^c$ hold with high probability. To this end, recall the construction of the modified Galton--Watson tree $\hat{\cT}$ from Section~\ref{sec:pretree} that corresponds to the distribution $(\pi_k)_k$. Given $\ell \ge 0$  we may consider the truncated version $\hat{\cT}^{(\ell)}$ which has a finite spine of length $\ell$. At the top of the spine the special node becomes normal and reproduces normally. We call this vertex the outer root, i.e. $\hat{\cT}^{(\ell)}$ is a random pointed plane tree. This construction was introduced in \cite[Ch. 3]{MR3077536} and has the property, that for any plane tree $T$ with a distinguished vertex $r$  that has height $\ell = h_T(r)$ we have that \[\Pr{\hat{\cT}^{(\ell)} = (T,r)} = \Pr{\cT = T}.\] This is due to the fact that the probability, that a special node has offspring of size $k$ and precisely the $i$th child is declared special, is given by $k \pi_k / k = \pi_k$. For any vertices $v$ of $\cT$ and $u$ of $\hat{\cT}^{(\ell)}$ we choose random $\cR$-structures $\beta(v) \in \cR[d_{\cT}(v)]$ and $\hat{\beta}^{(\ell)}(u) \in \cR[d_{\hat{\cT}^{(\ell)}}^+(u)]$, each with probability proportional to its $\kappa$-weight. In particular, $(\cT_n, \beta_n)$ is distributed like $(\cT, \beta)$ conditioned on having $n$ vertices. For any pointed enriched plane tree $((T,r), \gamma)$ we have that \[\Pr{(\hat{\cT}^{(\ell)}, \hat{\beta}^{(\ell)}) = ((T,r),\gamma)} = \Pr{(\cT,\beta) = (T, \gamma)}.\]

For each $R \in \bigcup_{n \ge 0} \cR[n]$ let $(\delta_R^i)_{i \in \ndN_0}$ be a family of independent copies of $\delta_R$. Given an $\cR$-enriched plane $S = (T, \gamma)$ we may form the family $(\delta^S(v))_{v \in T}$ of random metrics by traversing bijectively the vertices of $T$ in a fixed order, let's say in depth-first-search order, and assigning to each vertex $v$ the "leftmost" unused copy from the list $(\delta_{\gamma(v)}^1, \delta_{\gamma(v)}^2, \ldots)$. The metrics can be patched together to a metric $d^S$ on the vertex set of the plane tree $T$ just as described in Section~\ref{sec:partA}.

We may assume that all random variables considered so far are defined on the same probability space and that the metric $d_{\mX_n}$ of $\mX_n$ coincides with the metric $d^{(\cT_n, \beta_n)}$. Given $(\delta^i_R)_{R,i}$ let $\cH$ denote the finite set of $\cR$-enriched plane trees of size $n$ such that the event $\cE_1$ takes place if and only if $(\cT_n, \beta_n) \in \cH$. By the definition of the event $\cE_1$ for any $H=(T, \gamma) \in \cH$ we may fix a vertex $v_H$ of $H=(T, \gamma)$ having the property that there exists an ancestor $u$ with \[d_T(u,v_H) \ge s_n \quad \text{and} \quad d^H(u, v_H) \notin (1 \pm \epsilon)\Ex{\eta}d_T(u,v).\] Let $\ell_H$ denote the height $h_T(v_H)$. The probability for the critical Galton--Watson tree $\cT$ to have size $n$ is $\Theta(n^{-3/2})$ and hence the conditional distribution of the event $\cE_1$ given $(\delta^i_R)_{R,i}$ equals
\[
\sum_{H \in \cH} \Pr{(\cT_n, \beta_n) = H \mid (\delta^i_R)_{R,i}}  = \Theta(n^{3/2}) \sum_{(T, \gamma) \in \cH} \Pr{(\hat{\cT}^{(\ell_H)}, \hat{\beta}) = ((T, v_H), \gamma) \mid (\delta^i_R)_{R,i}}
.\]
Let $v_0, \ldots, v_{\ell}$ denote the spine of $\hat{\cT}^{(\ell)}$, i.e. $v_{\ell}$ is the outer root, $v_0$ is the inner root, and $(v_0, \ldots, v_{\ell})$ is the directed path connecting the roots. It follows that the probability for the event $\cE_1$ is bounded by
\[
\tag{$*$} \Theta(n^{3/2}) \sum_{\ell=1}^n \sum_{k=0}^{\ell - s_n} \Pr{d^{(\hat{\cT}^{(\ell)}, \hat{\beta})}(v_k, v_{\ell}) \notin (1 \pm \epsilon) \Ex{\eta} (\ell-k)}
\]
But the $d^{(\hat{\cT}^{(\ell)}, \hat{\beta})}$-distance between two spine vertices $v_i$ and $v_j$ is distributed like the sum $\eta_1 + \ldots + \eta_{|i-j|}$ of independent copies $(\eta_i)_i$ of $\eta$. We know that $\eta$ has finite exponential moments and hence, by the deviation inequality in Lemma~\ref{le:deviation}, the bound $(*)$ converges to zero as $n \equiv 1 \mod \spa(\mathbf{w})$ tends to infinity. Thus the event $\cE_1$ holds with high probability. By the same arguments we may bound the probability for the event $\cE_2$ by \[\Theta(n^{3/2}) \sum_{\ell = 1}^{n} \sum_{k=1}^{\min(s_n,\ell)} \Pr{\eta_1 + \ldots + \eta_{k} \ge t_n}\]
which also converges to zero. This concludes the proof.
\end{proof}

\begin{proof}[Proof of Lemma~\ref{le:tail1}]
It suffices to show that there are constants $C,c,N>0$ such that for all $n \ge N$ and $h \ge \sqrt{n}$ we have that \[\Pr{\He(\mX_n) \ge h} \le C (\exp(-c h^2/n) + \exp(-c h)).\]

Recall the coupling in Lemma~\ref{le:coupling} of the random graph $\mA_n^\cR$ with a simply generated tree $\cT_n$ sharing the same vertex set. The weight sequence $\mathbf{w}$ has type I$\alpha$ by assumption, hence $\cT_n$ is distributed like a critical Galton--Watson tree conditioned on having $n$ vertices with the offspring distribution having finite non-zero variance.

For any vertex $v$ set $\ell(v) = \sum_u d_{\cT_n}^+(u)$ with the sum index $u$ ranging over all ancestors (not equal to $v$) of the vertex $v$ in the plane tree $\cT_n$. Let $s>r>0$ be constants. Given $h \ge \sqrt{n}$ let $\cE^h$ denote the event that $\He(\mX_n) \ge h$. Clearly \[\cE^h \subset \cE_1^h \cup \cE_2^h \cup \cE_3^h\] with $\cE_1^h$ the event that $\He(\cT_n) \ge rh$, $\cE_2^h$ the event that $\He(\cT_n) \le rh$, and there exists a vertex $v$ with $\ell(v) \ge s h$ and $\cE_3^h$ the event that $\He(\cT_n) \le rh$, $\ell(v) \le sh$ for all vertices $v$ and $\He(\mX_n) \ge h$.

We are going to show that if we choose $r$ and $s$ sufficiently small then each of these events is sufficiently unlikely. By the tail bound (\ref{eq:gwttail}) for Galton--Watson trees it follows that there are constants $C_1, c_1 > 0$ such that \[\Pr{\cE_1^h} \le C_1 \exp(-c_1 h^2 / (r^2 n)).\]

In order to bound the probability for the event $\cE_2^h$ suppose that we are given a vertex $v$ with $\hei_{\cT_n}(v) \le rh$, and $\ell(v) \ge s h$. Let $(Q_i)_i$ and $(Q'_i)_i$ denote the DFS and reverse DFS queues as defined in Section~\ref{sec:dfs} and let $j$ and $k$ denote the indices corresponding to the vertex $v$ in the lexicographic ordering of $\cT_n$ and its mirror image, respectively. By Equality~(\ref{eq:queues}) it follows that \[Q_j + Q'_k = 2 + \ell(v) - \hei_{\cT_n}(v) \ge (s-r)h\] and hence $Q_j \ge (s-r)/2$ or $Q'_k \ge (s-r)/2$. It follows by Inequality~\ref{eq:dfsbound} that \[\Pr{\cE_2^h} \le C_2 \exp(-c_2 h^2/n)\] for some constants $C_2, c_2 > 0$ that do not depend on $n$ or $h$.

We assumed that there is a real-valued random variable $\chi \ge 0$ such that for any $\cR$-structure $R$ the diameter of the metric $\delta_R$ is bounded by the sum of $|R|$ independent copies $\chi_1^R, \ldots, \chi_{|R|}^R$ of $\chi$. In particular, for any vertex $v$ of $\mX_n$ the height $\hei_{\mX_n}(v)$ is bounded by the sum of $\ell(v)$ independent copies of $\chi$. It follows that \[\Pr{\cE_3^h} \le n \Pr{\chi_1 + \ldots + \chi_{\lfloor sh \rfloor} \ge h}\] with $(\chi_i)_{i \in \ndN}$ a family of independent copies of $\chi$. By the inequality in Lemma~\ref{le:deviation} it follows that there are constants $\lambda, c>0$ such that 
\[ \Pr{\cE^h_3} \le 2 \exp(\log(n) + \lfloor sh \rfloor(c \lambda^2 + \lambda \Ex{\chi}) - \lambda h).\] We assumed that $h \ge \sqrt{n}$ and $n \ge N$, hence we may take $s$ sufficiently small and $N$ sufficiently large (depending only on $\lambda$ and $c$ and thus not depending on $h$ or $n$) such that there are constants $C_3, c_3$ with \[\Pr{\cE^h_3} \le C_3 \exp(-c_3 h)\] for all $h$ and $n \ge N$. Hence \[\Pr{\cE^h} \le \Pr{\cE_1^h} + \Pr{\cE_2^h} + \Pr{\cE_3^h} \le C_4 (\exp(-c_4 h^2/n) + \exp(-c_4 h))\] for some constants $C_4, c_4>0$ and we are done.
\end{proof}

\bibliographystyle{plain}
\bibliography{entrees}

\end{document}